\newcommand{\Tw}{\mathrm{Tw}}
\newcommand{\lhom}{\mathrm{Hom}}
\newcommand{\shom}{\mathrm{hom}}
\newcommand{\Fuk}{\mathcal{F}_T}
\newcommand{\wrapped}{\mathcal{W}_T}
\newcommand{\length}{\mathrm{leng}}
\newcommand{\cl}{\mathrm{cl}}
\newcommand{\vect}{\vec{V}}
\newcommand{\stab}{\mathrm{Stab}}
\newcommand{\imagin}{\sqrt{-1}}
\newcommand{\Aut}{\operatorname{Aut}}
\newcommand{\Sim}{\operatorname{Sim}}
\newcommand{\Cone}{\operatorname{Cone}}
\newcommand{\Z}{\mathbb{Z}}
\newtheorem{thm}{Theorem}[section]
\newtheorem{lem}[thm]{Lemma}
\newtheorem{prop}[thm]{Proposition}
\newtheorem{definition}[thm]{Definition}
\newtheorem{remark}[thm]{Remark}
\newtheorem*{thm*}{Theorem}
\newtheorem{property}{Property}
\newtheorem{notation}{Notation}
\numberwithin{equation}{section}
\begin{document}

\title[pseudo-Anosov autoequivalences and their hyperbolic actions]{Pseudo-Anosov autoequivalances arising from Symplectic topology and their hyperbolic actions on stability conditions}

\author{Hanwool Bae}
\address[Hanwool Bae]{Center for Quantum Structures in Modules and Spaces, Seoul National University, Seoul, South Korea}
\email{hanwoolb@gmail.com}

\author{Sangjin Lee}
\address[Sangjin Lee]{Korea Institute for Advanced Study, 85 Hoegiro Dongdaemun-gu, Seoul 02455, Republic of Korea.}
\email{sangjinlee@kias.re.kr}

\begin{abstract}
	Within $N$-Calabi-Yau categories associated with quivers whose base graphs form trees, we delve into the study of the asymptotic behaviors of autoequivalences of a specific type.
	These autoequivalences, which we call "Penner type," exhibit straightforward asymptotic characteristics, making them noteworthy exemplars of "pseudo-Anosov" autoequivalences  in the sense of \cite{Fan-Filip-Haiden-Katzarkov-Liu21}, and also in a stronger sense that we define in the present paper. 
	
	In addition, we provide a practical methodology for calculating the stretching factors of Penner type autoequivalences. 
	We expect that this computational approach can have applications.
	As an example, we establish a positive lower bound on the translation length of the induced action these autoequivalences have on the space of stability conditions. 
	Our anticipation is that this lower bound is, in fact, exact. 
	Notably, we have observed instances of Penner type $\Phi$ where the induced actions align precisely with this lower bound.
	In other words, these examples induce hyperbolic actions on the space of stability conditions.	
\end{abstract}

\maketitle

\section{Introduction}
\label{section introduction}

\subsection{Background and results}
\label{subsection background and results}
Let $\Sigma$ be an oriented closed surface with genus $\geq 2$. 
The {\em mapping class group} of $\Sigma$ $\mathrm{Mod}(\Sigma)$ is defined to be the group of isotopy classes of orientation preserving diffeomorphisms of $\Sigma$.
The Nielsen--Thurston classification theorem \cite{Nielsen44, Thurston88} classifies all elements of $\mathrm{Mod}(\Sigma)$ into three classes; {\em periodic, reducible}, and {\em pseudo-Anosov}.
The last class, pseudo-Anosov, is not only generic in $\mathrm{Mod}(\Sigma)$, but also most interesting among those three classes because the other classes are simpler in the following sense: 
A power of a periodic class is the identity class, and every reducible class can be seen as a combination of periodic and pseudo-Anosov classes. 
The most interesting class has attracted many attentions, for example, see \cite{Fathi-Shub79, Marlies-Anatole82, Rivin08}, etc.

For a given surface $\Sigma$, the {\em Teichmüller space of $\Sigma$}, denoted as $\mathrm{Teich}(\Sigma)$, is defined as the space of hyperbolic metrics on $\Sigma$ up to isotopy.
It is well-known that $\mathrm{Mod}(\Sigma)$ and $\mathrm{Teich}(\Sigma)$ are closely interconnected.
For instance, every element of $\mathrm{Mod}(\Sigma)$ acts on $\mathrm{Teich}(\Sigma)$ as an isometry with respect to the Teichmüller metric.
Furthermore, the three types of mapping classes can be characterized based on their actions on $\mathrm{Teich}(\Sigma)$.
For more insights into the connections between $\mathrm{Mod}(\Sigma)$ and $\mathrm{Teich}(\Sigma)$, please refer to \cite{Farb-Margalit12} and the references therein.

Recently, people have started to recognize analogies between Teichm\"{u}ller theory on surfaces $\Sigma$ and the theory of Bridgeland stability conditions on triangulated categories $\mathcal{D}$.
Table \ref{table connections}, which is almost the same as \cite[Table 1]{Fan-Filip-Haiden-Katzarkov-Liu21}, summarizes the connections.
See \cite{Gaiotto-Moore-Neitzke13, Dimitrov-Haiden-Katzarkov-Kontsevich14, Bridgeland-Smith15, Haiden-Katzarkov-Kontsevich17} for more details.
\begin{table}[h!]
	\caption{Connections between surface/Teichm\"{u}ller theory and the theory of Bridgeland stability conditions on triangulated categories}
	\begin{tabular}{ c | c }
		\label{table connections}
		On $\Sigma$  & On $\mathcal{D}$ \\ \hline\hline
		diffeomorphisms of $\Sigma$ & autoequivalences  \\ \hline
		closed curves $C$ & objects $E$ of $\mathcal{D}$ \\ \hline
		intersections $C_1 \cap C_2$ & morphisms $\lhom_\mathcal{D}(E_1, E_2)$ \\ \hline
		flat metrics & stability conditions \\ \hline 
		geodesics & stable objects \\ \hline
		length of $C$ & mass of $E$ \\ \hline
		slope of $C$ & phase of $E$ \\ \hline 
		entropy & behavior of the mass of generators \\  
	\end{tabular}
\end{table}

Given the connections between surface/Teichm\"{u}ller theory and the theory of Bridgeland stability conditions, as well as between $\mathrm{Mod}(\Sigma)$ and $\mathrm{Teich}(\Sigma)$, one might wonder if there exists an analogy to the generic mapping class, i.e., pseudo-Anosov mapping class, in category theory. 
More precisely, one can ask the following questions: 
\begin{enumerate}
	\item[Question 1.] {\em Is there a categorical counterpart to a generic mapping class, i.e., pseudo-Anosov maps? If such an analogue exists, do we have examples of the counterpart?}
	\item[Question 2.] {\em Assuming the existence of a categorical analogue for pseudo-Anosov maps, what property of pseudo-Anosov maps does this analogue satisfy?}
\end{enumerate}
These questions have been extensively investigated by numerous researchers, including Dimitrov, Haiden, Katzarkov, and Kontsevich in \cite{Dimitrov-Haiden-Katzarkov-Kontsevich14}, as well as Fan, Filip, Haiden, Katzarkov, and Liu in \cite{Fan-Filip-Haiden-Katzarkov-Liu21}, and Kikuta in \cite{Kikuta22}.
This paper contributes to the study of the above questions and provides partial answers to them. 

In the rest of Section \ref{section introduction}, we will explain the known-answers to the questions and our results. 
To do that, let us briefly introduce some properties of pseudo-Anosov maps first. 

The definition of pseudo-Anosov maps is the following:
\begin{definition}
	\label{def pseudo-Anosov surface automorphism}
	A surface automorphism $\phi : \Sigma \to \Sigma$ is {\bf pseudo-Anosov} if there exists a transverse pair of measured singular foliations on $\Sigma$, $(\mathcal{F}^s, \mu_s)$(stable) and $(\mathcal{F}^u, \mu_u)$(unstable), and a real number $\lambda>1$, such that 
	\begin{itemize}
		\item the stable and unstable foliations $\mathcal{F}^s$ and $\mathcal{F}^u$ are preserved by $\phi$, and 
		\item their transverse measures are multiplied by $\lambda^{-1}$ and $\lambda$, respectively.
	\end{itemize}
	Or equivalently, 
	\[\phi(\mathcal{F}^s,\mu_s) = \left(\mathcal{F}^s, \lambda^{-1}\mu_s\right), \phi(\mathcal{F}^u,\mu_u) = \left(\mathcal{F}^u, \lambda \mu_u\right)\]
\end{definition}

When presented with a pseudo-Anosov map $\phi: \Sigma \to \Sigma$, a natural question arises regarding how to obtain the corresponding stable and unstable foliations. 
In this context, a well-known property emerges as a tool for acquiring the desired pair of foliations and characterizing the pseudo-Anosov mapping class:
\begin{property}[Corollary 14.25 of \cite{Farb-Margalit12}]
	\label{property 1}
	Let $\phi:\Sigma \to \Sigma$ be a pseudo-Anosov map, and let $(\mathcal{F}^u,\mu_u)$ denote the unstable foliation for $\phi$. 
	It is known that for any simple closed curve $C \subset \Sigma$ not homotopic to $0 \in H_1(\Sigma)$, we have 
	\[\lim_{n \to \infty}[\phi^n(C)] = [(\mathcal{F}^u,\mu_u)],\]
	in the space of projective classes.
\end{property}

Property \ref{property 1} is a corollary of Property \ref{property 2} given below.
We refer the reader to \cite[Chapter 14]{Farb-Margalit12} for the relation between Properties \ref{property 1} and \ref{property 2}.
\begin{property}[Theorem 5 of \cite{Thurston88}]
	\label{property 2}
	For any diffeomorphism $\phi: \Sigma \to \Sigma$, there is a finite set of algebraic integers $1 \leq \lambda_1 < \lambda_2 < \dots < \lambda_k$ such that for any homotopically nontrivial simple closed curve $C$, there is a real number $\lambda_i$ such that for any Riemannian metric $g$ on $\Sigma$, 
	\[\lim_{n \to \infty}\length_g(\phi^nC)^{\tfrac{1}{n}}=\lambda_i,\]
	where $\length_g$ denotes the length of a shortest representative in the homotopy class. 
	Moreover, $\phi$ represents a pseudo-Anosov mapping class if and only if $k=1$ and $\lambda_1 >1$. 
	In this case, $\lambda_1$ is called the {\em stretch factor} of $\phi$. 
\end{property}
See \cite[Expos\'{e} 12, Section IV]{Fathi-Laudenbach-Poenaru79} for the proof of Property \ref{property 2}.

We also note that the actions of elements of $\mathrm{Mod}(\Sigma)$ on $\mathrm{Teich}(\Sigma)$ characterize pseudo-Anosov mapping classes.
To be more precise, let us define the notion of {\em hyperbolic action} first. 
\begin{definition}
	\label{def hyperbolic action} 
	Let $(X,d)$ be a metric space, and let $f: X \to X$ be an isometry. 
	\begin{enumerate}
		\item The {\bf translation length} of $f$ is defined by 
		\[\ell(f):=\inf_{x \in X} \left\{d\left(x,f(x)\right)\right\}.\]
		\item The isometry $f$ is said to be {\bf hyperbolic} if $\ell(f)>0$ and there exists $x \in X$ such that $\ell(f) = d \left(x, f(x)\right)$.
	\end{enumerate}
\end{definition}

Bers \cite{Bers78} (resp.\ Daskalopoulos and Wentworth \cite{Daskalopoulos-Wentworth03}) proved the following property when $\mathrm{Teich}(\Sigma)$ is equipped with the Teichm\"{u}ller metric (resp.\ Weil--Petersson metric).
\begin{property}
	\label{property 3}
	A mapping class $\phi \in \mathrm{Mod}(\Sigma)$ induces a hyperbolic action on $\mathrm{Teich}(\Sigma)$ if and only if $\phi$ is of pseudo-Anosov type. 
\end{property}

The above characterizations of pseudo-Anosov mapping class are generalized in category theory as follows:
Motivated by Definition \ref{def pseudo-Anosov surface automorphism}, Dimitrov, Haiden, Katzarkov, and Kontsevich\cite{Dimitrov-Haiden-Katzarkov-Kontsevich14} suggested a definition of ``pseudo-Anosov autoequivalence" on a triangulated category by replacing a pair of foliations with a stability condition.
Let us recall that in Definition \ref{def pseudo-Anosov surface automorphism}, a pseudo-Anosov map $\phi$ has the stable/unstable foliations such that $\phi$ stretches them by the factors $\lambda^{-1}$ and $\lambda$, respectively, with $\lambda >1$.
In \cite{Dimitrov-Haiden-Katzarkov-Kontsevich14}, an autoequivalence $\Phi$ on a triangulated category is pseudo-Anosov if $\Phi$ admits a stability condition $\sigma$ such that $\Phi$ stretches the real/imaginary axes by $\lambda^{-1}$ and $\lambda$.

Some properties of pseudo-Anosov autoequivalences are studied by Kikuta in \cite{Kikuta22}.
Especially, \cite[Theorem 4.9]{Kikuta22} proves that every pseudo-Anosov autoequivalence acts hyperbolically on the space of stability conditions up to $\mathbb{C}$-action. 
In other words, \cite{Kikuta22} proved an analogy of Property \ref{property 3}.
For the details on the space of stability conditions (up to $\mathbb{C}$-action) and the metric on it, see Bridgeland \cite{Bridgeland07}, and also see Sections \ref{section pseudo-Anosov autoequivalences and stability conditions} and \ref{section actions on the space of stability conditions}.

In \cite{Fan-Filip-Haiden-Katzarkov-Liu21}, Fan, Filip, Haiden, Katzarkov, and Liu proposed another definition of pseudo-Anosov autoequivalence. 
Let us refer the notion of pseudo-Anosov given in \cite{Dimitrov-Haiden-Katzarkov-Kontsevich14} as ``DHKK pseudo-Anosov". 
According to \cite{Fan-Filip-Haiden-Katzarkov-Liu21}, the notion of DHKK pseudo-Anosov is too restrictive in some cases, for example, see \cite[Remark 2.16 and Proposition 3.7]{Fan-Filip-Haiden-Katzarkov-Liu21}.
Thus, \cite{Fan-Filip-Haiden-Katzarkov-Liu21} gave a less-restrictive definition by generalizing Property \ref{property 2}, and proved that DHKK pseudo-Anosov autoequivalences are pseudo-Anosov in the sense of \cite{Fan-Filip-Haiden-Katzarkov-Liu21}.

The new notion of pseudo-Anosov autoequivalence is defined as follows: 
First, \cite{Fan-Filip-Haiden-Katzarkov-Liu21} fixed a stability condition $\sigma$ of a triangulated category $\mathcal{C}$, then they can measure a {\em mass} of objects $E \in \mathcal{C}$ with respect to $\sigma$. 
Let $m_\sigma(E)$ denote the mass of $E$. 
We note that, as mentioned in Table \ref{table connections}, in the analogy between Teichmüller theory and category theory, an object $E$, the chosen stability condition $\sigma$, and the mass $m_\sigma(E)$ correspond to a closed curve $C$, a flat metric $g$, and the length of $C$ with respect to $g$ $\length_g(C)$. 
Thus, a proper generalization of Property \ref{property 2} is the following: 
an autoequivalence $\Phi: \mathcal{C}\to \mathcal{C}$ is pseudo-Anosov if and only if, for any nonzero object $E \in \mathcal{C}$, $m_\sigma(\Phi^n E)$ has the same exponential growth bigger than $1$, or equivalently there exists $\lambda$ such that
\[\lim_{n \to \infty} \left(m_\sigma(\Phi^n E)\right)^{\tfrac{1}{n}}= \lambda >1.\]
See Section \ref{section pseudo-Anosov autoequivalences and stability conditions}, especially Definition \ref{def growth filtration}, for the formal definition. 

In the current paper, we suggest a stronger notion of pseudo-Anosov autoequivalence.
In order to be define the stronger version, we observe the asymptotic behavior of $\phi^\pm_\sigma(\Phi E)$ where $\phi^\pm_\sigma$ means the maximal/minimal phase with respect to $\sigma$. 
We then define $\Phi$ as a {\em strong pseudo-Anosov} autoequivalence if it satisfies the following conditions:
\begin{itemize}
	\item $\Phi$ qualifies as a pseudo-Anosov autoequivalence in the sense described in \cite{Fan-Filip-Haiden-Katzarkov-Liu21}.
	\item For any nonzero $E$, $\phi^\pm_\sigma(\Phi^n E)$ exhibits consistent asymptotic behavior.
\end{itemize}
For more details, see Section \ref{subsection strong pseudo-Anosov autoequivalence}. 

One can see the notion of pseudo-Anosov (and strong pseudo-Anosov) autoequivalences as answers to the first half of Question 1, and the second half is to find examples of pseudo-Anosov autoequivalences. 
To the best of our knowledge, there are not many known examples of pseudo-Anosov autoequivalences; 
There are a few examples of DHKK pseudo-Anosov, and most of the examples in \cite{Fan-Filip-Haiden-Katzarkov-Liu21} are ``weak" pseudo-Anosov.

In the main part of the paper, we give a construction of (strong) pseudo-Anosov autoequivalences, and we prove that the constructed functors satisfy a generalized version of Property \ref{property 3} under an assumption. 

Before going further, we would like to note that the second-named author generalized a weaker version of Property \ref{property 1} in symplectic setting. 
More precisely, Lee \cite{Lee19} gave a construction of symplectic automorphisms $\Phi$ on symplectic manifolds of general dimension, such that $\Phi^n(L)$ converges as $n \to \infty$ if $L$ is a Lagrangian submanifold.
The constructed symplectic automorphism is called {\em Penner type} since the construction in \cite{Lee19} is a higher dimensional generalization of {\em Penner construction} \cite{Penner88} of pseudo-Anosov surface maps. 

A symplectic manifold has a triangulated category as an invariant. 
The invariant triangulated category is the (derived) Fukaya category. 
Moreover, a symplectic automorphism induces an autoequivalence on the Fukaya category. 
In this situation, it would be natural to expect that a symplectic automorphism of Penner type induces a pseudo-Anosov autoequivalence on the corresponding Fukaya category.

Motivated by the expectation, we define the notion of Penner type autoequivalences on some triangulated categories in a purely categorical way, and prove that every Penner type autoequivalence is strong pseudo-Anosov.
More detailed statement is the following theorem:
\begin{thm}[= Theorems \ref{thm pseudo-Anosov} and \ref{thm strong pseudo-Anosov}]
	\label{thm pseudo-Anosov intro} 
	For a given tree $T$ and an integer $N \geq 3$, let $\mathcal{D}$ denote the finite dimensional derived category of the Ginzburg dg-algebra $\Gamma_N T$. 
	If $\Phi: \mathcal{D} \to \mathcal{D}$ is of Penner type defined in Definition \ref{def Penner type}, then we have the following:
	\begin{enumerate}
		\item (Theorem \ref{thm pseudo-Anosov}.) $\Phi$ is a pseudo-Anosov autoequivalence in the sense of \cite{Fan-Filip-Haiden-Katzarkov-Liu21}, or equivalently, Definition \ref{def growth filtration}.
		Moreover, there exists an algorithmic computation of the exponential growth of mass of $\Phi^n E$ for any nonzero $E \in \mathcal{D}$.
		\item (Theorem \ref{thm strong pseudo-Anosov}.) $\Phi$ is strong pseudo-Anosov in the sense of Definition \ref{def strong pesudo-Anosov}. 
		Moreover, there exists an algorithmic computation of the growth of the maximal/minimal phase of $\Phi^n E$ for any nonzero $E \in \mathcal{D}$. 
	\end{enumerate} 
\end{thm}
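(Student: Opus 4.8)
The plan is to linearize the action of $\Phi$ on a graded refinement of the Grothendieck group, read off a Perron--Frobenius eigenvalue $\lambda>1$ from the resulting matrix, and then argue that for Penner type $\Phi$ the genuine mass and the extreme phases of $\Phi^nE$ are governed by that matrix without essential loss; this is the categorical analogue of Penner's reduction of a filling pair of multicurves to a linear-algebra problem. Concretely, write $V(T)=\{1,\dots,n\}$ and let $S_1,\dots,S_n\in\mathcal{D}$ be the simple $\Gamma_NT$-modules; they are the simple objects of the canonical finite-length heart $\mathcal{H}\subset\mathcal{D}$, each is $N$-spherical, and $\lhom^\bullet_{\mathcal{D}}(S_i,S_j)$ is concentrated in the degrees dictated by $T$ ($0$ and $N$ when $i=j$; $1$ and $N-1$ when $i,j$ are adjacent; zero otherwise). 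Because $T$ is a tree it is bipartite, $V(T)=V_+\sqcup V_-$, and a Penner type $\Phi$ (Definition~\ref{def Penner type}) is a composite of positive powers of the twists $T_{S_i}$ with $i\in V_+$ and negative powers of the twists $T_{S_j}$ with $j\in V_-$.

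On the graded lattice $\Lambda:=\bigoplus_{i=1}^n\Z[t^{\pm1}]\,[S_i]$, with $t$ recording the shift $[1]$, the spherical-twist triangle $\lhom^\bullet(S_i,E)\otimes S_i\to E\to T_{S_i}(E)$ together with the $\mathrm{Ext}$-pattern above shows that each $T_{S_i}^{\pm1}$ acts by an explicit transvection-type $\Z[t^{\pm1}]$-matrix built from the graded Euler form of $T$; hence $\Phi$ acts by one explicit matrix $M_\Phi$ over $\Z[t^{\pm1}]$. The role of the bipartite splitting is that, after a single overall sign-and-shift normalization, every nonzero entry of the matrices of the $T_{S_i}$ ($i\in V_+$) and of the $T_{S_j}^{-1}$ ($j\in V_-$) is a monomial in $t$ with nonnegative coefficient. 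Consequently $M_\Phi$ preserves the cone $\Lambda_{\geq0}:=\bigoplus_i\mathbb{N}[t^{\pm1}]\,[S_i]$ of effective graded classes, and --- since $T$ is connected --- a fixed power of $M_\Phi$ carries $\Lambda_{\geq0}\setminus\{0\}$ into its strict interior. Perron--Frobenius then yields $\lambda>1$, a positive eigenvector, and $\|M_\Phi^{\,m}v\|^{1/m}\to\lambda$ for every nonzero $v\in\Lambda_{\geq0}$; as $\lambda$ is the spectral radius of the explicit nonnegative integer matrix $M_\Phi|_{t=1}$ it is algorithmically computable, and its independence of $v$ supplies the analogue of the ``$k=1$'' clause of Property~\ref{property 2}.

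The main obstacle is upgrading this K-theoretic statement to one about the mass $m_\sigma$, since Harder--Narasimhan factors can cancel in K-theory and make $m_\sigma(\Phi^nE)$ unrelated to the norm of its class; the content of ``Penner type'' is exactly that no such cancellation happens. I would prove by induction on $n$, pushing $E$ through the twist triangles, that $\Phi^nE$ admits a finite filtration whose graded pieces are the shifts $S_i[k]$ with multiplicities equal to the coefficients of $M_\Phi^{\,n}[E]\in\Lambda_{\geq0}$; the tree and bipartite hypotheses are what force the long exact sequences produced at each step to split as claimed and the shifts introduced at successive stages to stay coherently ordered. Fixing a stability condition $\sigma$ with heart $\mathcal{H}$ and all $\phi_\sigma(S_i)$ in a common unit interval, this filtration is therefore, after grouping equal pieces, the Harder--Narasimhan filtration of $\Phi^nE$, so $m_\sigma(\Phi^nE)=\sum_{i,k}(\text{mult.\ of }S_i[k])\,m_\sigma(S_i)$ --- a fixed linear functional applied to $M_\Phi^{\,n}[E]$ evaluated at $t=1$, since mass is shift-invariant. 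Combined with the previous paragraph this gives $\lim_n m_\sigma(\Phi^nE)^{1/n}=\lambda>1$ for every nonzero $E$, with $\lambda$ computed from $M_\Phi$; this is part~(1).

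Part~(2) is read off from the same filtration. Up to the bounded error coming from the finitely many values $\phi_\sigma(S_i)$, the maximal and minimal phases $\phi^\pm_\sigma(\Phi^nE)$ equal the top and bottom $t$-degrees of the Laurent-polynomial vector $M_\Phi^{\,n}[E]$. Those extreme degrees are governed by the max-plus (tropical) matrices obtained from $M_\Phi$ by recording the highest (resp.\ lowest) $t$-degree occurring in each entry, so $\phi^\pm_\sigma(\Phi^nE)=c^\pm n+O(1)$ with $c^\pm$ the corresponding tropical eigenvalues --- explicitly computable and, by the same primitivity, independent of $E$. This is precisely the consistent asymptotic behavior demanded by Definition~\ref{def strong pesudo-Anosov}, so $\Phi$ is strong pseudo-Anosov and part~(2) holds.
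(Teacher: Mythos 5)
You correctly identify the obstacle (Harder--Narasimhan factors can cancel in K-theory, and ``Penner type'' must be what prevents this) and your linear-algebraic skeleton --- the $\mathbb{Z}[t^{\pm1}]$-matrix, Perron--Frobenius for part (1), extreme $t$-degrees (equivalently, valuations of Puiseux eigenvalues) for part (2) --- matches the paper's machinery ($M_\Phi$, $M_\Phi(t)$) quite closely. The gap is in the step you describe as ``I would prove by induction on $n$ \dots that $\Phi^nE$ admits a finite filtration whose graded pieces are the shifts $S_i[k]$ with multiplicities equal to the coefficients of $M_\Phi^{\,n}[E]\in\Lambda_{\geq0}$.'' This claim is false for general $E$, and the paper explicitly says so: in Section~\ref{subsection the main idea} the authors note that one can easily find $E$ for which the analogue $(\blacksquare')$ fails, i.e.\ no power $\Phi^n E$ can be built from the $S_v$ using only $x$- and $z$-cones. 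The reason is that applying $\tau_u$ or $\tau_w^{-1}$ to a twisted complex containing a $y$-arrow produces identity arrows (see Lemma~\ref{lem effect on C_1}, cases (1) and (5)); cancelling those identity arrows \emph{does} shrink the dimension vector, so the multiplicity count you need is strictly less than the coefficient of $M_\Phi^{\,n}[E]$, and the ``no cancellation'' inequality in your Equation--\eqref{eqn advantage}-style argument only goes one way (Lemma~\ref{lem maximality} gives the upper bound, not the equality). The bipartite/tree hypothesis is not enough to make the long exact sequences split as you claim; if it were, the entire content of Sections~\ref{section preparations for the proof of pseudo-Anosov theorem}--\ref{section proof of thm pseudo-Anosov} would be unnecessary.

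What the paper does instead is strictly weaker but sufficient: it introduces \emph{categorically carried-by} (Definition~\ref{def categorically carried by 2}), for which your clean filtration picture really does hold (Lemmas~\ref{lem y-morphism}--\ref{lem linear algebra}), and then \emph{partially carried-by} (Definitions~\ref{definition partially carried-by} and~\ref{definition partially carried-by 2}), meaning $\Phi^n E$ eventually splits as $\mathrm{Cone}(A_n[-1]\to B_n)$ with $B_n$ carried-by and the $y$-arrows confined to $A_n$. This gives only the inequality $m_\sigma(\Phi^nE)\geq m_\sigma(B_n)$, not your equality, but it squeezes the growth rate between $\log\lambda_\Phi$ from above (Lemma~\ref{lem maximality}) and from below (via $B_n$ and Theorem~\ref{thm without y morphism}). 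Making this work requires (a) the $y$-rank monotonicity (Lemma~\ref{lem y-rank}), (b) the invariance of the partially carried-by triple under $\tau$ (Lemma~\ref{lem partially carried-by preserved}, which is the bulk of Section~\ref{section proof of thm pseudo-Anosov}), and (c) the eventual partial-carried-by-ness (Lemma~\ref{lem partially carried-by}). Your proposal skips all of this. The same criticism applies to part (2): your tropical/extreme-degree statement is only proved in the paper via the same partially-carried-by decomposition (Lemma~\ref{lem shifting number}, steps (ii)--(iii)), not via a direct filtration of $\Phi^nE$ for arbitrary $E$.
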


We also demonstrate that a Penner-type autoequivalence exhibits properties that generalize those found in pseudo-Anosov surface mapping class.
For example, in surface theory, it is well-known that the logarithm of the stretch factor of a pseudo-Anosov surface automorphism is the same as its entropy. 
The generalization of the property is a simple corollary of the algorithmic computation given in Theorem \ref{thm pseudo-Anosov intro}. 
\begin{thm}[= Theorem \ref{thm stretch factor and entropy}]
	\label{thm stretch factor and entropy intro}
	Under the same conditions of Theorem \ref{thm pseudo-Anosov intro}, the categorical entropy of $\Phi$ (see Definition \ref{def categorical entropy}) is the same as the logarithm of the stretch factor, defined in Definition \ref{def growth filtration}, of $\Phi$. 
\end{thm}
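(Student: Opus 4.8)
The plan is to sandwich the categorical entropy $h(\Phi)$ between $\log\lambda$ and $\log\lambda$, where $\lambda$ is the stretch factor from Definition \ref{def growth filtration}, using the algorithmic mass computation supplied by Theorem \ref{thm pseudo-Anosov intro}(1). Recall that the categorical entropy of $\Phi$ is computed, after fixing a split-generator $G$ of $\mathcal D$ and a stability condition $\sigma$, as
\[
h(\Phi) \;=\; \lim_{n\to\infty}\frac1n\log\,\delta_t(G,\Phi^n G)\big|_{t=0},
\]
and that by Ikeda's / DHKK's comparison results this coincides with $\lim_{n\to\infty}\frac1n\log m_\sigma(\Phi^n G)$ for any nonzero $G$ (the mass-growth interpretation of entropy), so the whole statement reduces to identifying $\lim_{n\to\infty} m_\sigma(\Phi^n G)^{1/n}$ with $\lambda$.

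\textbf{Upper bound $h(\Phi)\le\log\lambda$.} First I would take $G=\bigoplus_i S_i$ the direct sum of the simple modules (the vertex simples of $\Gamma_N T$), which split-generates $\mathcal D$. By Theorem \ref{thm pseudo-Anosov intro}(1) the mass $m_\sigma(\Phi^n S_i)$ has exponential growth rate equal to the stretch factor $\lambda$ for every $i$ — this is exactly the pseudo-Anosov property, that \emph{every} nonzero object has mass growth $\lambda$. Since mass is subadditive under direct sums, $m_\sigma(\Phi^n G)\le \sum_i m_\sigma(\Phi^n S_i)$, so $\limsup_n \frac1n\log m_\sigma(\Phi^n G)\le\log\lambda$, hence $h(\Phi)\le\log\lambda$.

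\textbf{Lower bound $h(\Phi)\ge\log\lambda$.} Here I would use that $h(\Phi)$ dominates the mass-growth of any single object: for the split-generator $G$ one has $\delta_0(G,\Phi^nG)\ge c\cdot m_\sigma(\Phi^n G)/m_\sigma(G)$ for a constant $c>0$ depending only on $\sigma$ and $G$ (this is the standard inequality relating the triangulated-generation "distance" to mass, valid once $\sigma$ is a stability condition on $\mathcal D$). Taking $\frac1n\log$ and letting $n\to\infty$ gives $h(\Phi)\ge \lim_n\frac1n\log m_\sigma(\Phi^nG)=\log\lambda$, where the last equality is again Theorem \ref{thm pseudo-Anosov intro}(1) applied to the nonzero object $G$. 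Combining the two bounds yields $h(\Phi)=\log\lambda$.

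\textbf{Main obstacle.} The two bounds above are formally almost immediate \emph{given} Theorem \ref{thm pseudo-Anosov intro}(1); the real content that must be in place is precisely the equality between the stretch factor of Definition \ref{def growth filtration} (defined via the growth filtration on $K$-theory / on objects) and the naive mass growth rate $\lim_n m_\sigma(\Phi^n E)^{1/n}$ for a \emph{single generating} object, together with the fact that the space of stability conditions is nonempty for $\mathcal D=\mathcal D^b(\Gamma_N T)$ (so that there is a $\sigma$ to compute entropy against, and the mass-growth interpretation of categorical entropy of Ikeda applies). Once the algorithmic computation of Theorem \ref{thm pseudo-Anosov intro} identifies that growth rate as the top eigenvalue $\lambda$ of the relevant nonnegative integer matrix — uniformly in the object — the identification of $h(\Phi)$ with $\log\lambda$ follows, and I would simply remark that this is the categorical analogue of the classical equality (entropy $=$ $\log$ stretch factor) for pseudo-Anosov surface homeomorphisms.
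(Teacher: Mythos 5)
Your proof is correct, and it takes a genuinely different route than the paper's. The paper's proof is a one-liner that cites Theorem 5.8 of \cite{Bae-Choa-Jeong-Karabas-Lee22} (the authors' prior work), which directly computes $h_0(\Phi)$ as the logarithm of the spectral radius of the matrix $A_\Phi$ with $(v,w)$-entry $\dim\lhom^*_{\wrapped}(L_v,\Phi(S_w))$, and then simply observes that $A_\Phi=M_\Phi$, so the spectral radii agree. Your route instead goes through Ikeda's mass-growth characterization of entropy (which the paper states as Theorem \ref{thm Ikeda} and uses elsewhere, in Section \ref{subsection proof of lemma phase}): once one knows $h_0(\Phi)=\lim_{n\to\infty}\tfrac1n\log m_\sigma(\Phi^n G)$, the conclusion is immediate from Theorem \ref{thm pseudo-Anosov}, since that theorem says the mass growth rate of $\Phi^nE$ equals $\log\lambda_\Phi$ for every nonzero $E$. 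Your approach is arguably more self-contained relative to this paper (it relies on a theorem already restated and used in the text, rather than on a separate result from the authors' other work), though it needs the check, which the paper does make in Definition \ref{def algebraic stability condition} and around it, that $\sigma_\star$ is an algebraic stability condition so Ikeda's theorem applies. The upper-bound/lower-bound argument you sketch after invoking Ikeda is redundant — Ikeda's theorem plus Theorem \ref{thm pseudo-Anosov} applied to $G$ already closes the proof — but it is not wrong. One small note: the paper's Theorem \ref{thm stretch factor and entropy} also contains a second item (that $\Phi$ and $\Phi^{-1}$ share the stretch factor), which the paper derives from $h_0(\Phi)=h_0(\Phi^{-1})$; you would get that the same way from your argument since Ikeda's theorem applies equally to $\Phi^{-1}$.
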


More interestingly, we prove that the Penner type autoequivalences satisfy a generalization of Property \ref{property 3} under an assumption.
To prove that, we first show that every Penner type autoequivalence, without any assumption, induces an action on the space of stability condition having {\em positive translation length}. 
Then, by adding an assumption, we can show that the induced action is hyperbolic. 
More formally, we prove Theorem \ref{thm hyperbolic action intro}.

\begin{thm}[= Theorems \ref{thm shifting number} and \ref{thm hyperbolic action}]
	\label{thm hyperbolic action intro}
	In the same setting as Theorem \ref{thm pseudo-Anosov intro}, the following hold:
	\begin{itemize}
		\item[(i)] (Theorem \ref{thm shifting number}.) $\Phi$ induces an action with positive translation length on the space of stability conditions of $\mathcal{D}$. Moreover, $\Phi$ also induces an action with positive translation length on the space of stability conditions up to $\mathbb{C}$-action.
		\item[(ii)] (Theorem \ref{thm hyperbolic action}.) Under an assumption mentioned in Section \ref{subsection hyperbolic actions of Penner type autoequivalences on stab under an assumption}, the above mentioned actions are hyperbolic. 
	\end{itemize}
\end{thm}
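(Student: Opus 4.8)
The plan is to extract both statements from Bridgeland's formula for the metric $d$ on $\stab(\mathcal{D})$ together with the algorithmic control of masses and phases supplied by Theorems~\ref{thm pseudo-Anosov} and~\ref{thm strong pseudo-Anosov}. Recall from \cite{Bridgeland07} that
\[
d(\sigma,\tau)=\sup_{0\neq E\in\mathcal{D}}\max\Big\{\ \big|\phi^-_\sigma(E)-\phi^-_\tau(E)\big|,\ \ \big|\phi^+_\sigma(E)-\phi^+_\tau(E)\big|,\ \ \big|\log\tfrac{m_\sigma(E)}{m_\tau(E)}\big|\ \Big\},
\]
and that every isometry $\Phi$ has a well-defined \emph{stable translation length} $\bar\ell(\Phi):=\lim_{n\to\infty}\tfrac1n d(\sigma,\Phi^n\sigma)$, independent of $\sigma$, with $\bar\ell(\Phi)\le\ell(\Phi)$; so it is enough to bound $\bar\ell(\Phi)$ from below. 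Since $m_{\Phi^n\sigma}(E)=m_\sigma(\Phi^{-n}E)$ and $\phi^\pm_{\Phi^n\sigma}(E)=\phi^\pm_\sigma(\Phi^{-n}E)$, the substitution $E=\Phi^nF$ turns each entry of the supremum into a quantity governed by the asymptotics of $m_\sigma(\Phi^nF)$ and $\phi^\pm_\sigma(\Phi^nF)$, which the cited theorems compute.

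\emph{Part (i).} On $\stab(\mathcal{D})$ itself the mass entry already suffices: for any fixed $F$,
\[
\tfrac1n\,d(\sigma,\Phi^n\sigma)\ \ge\ \tfrac1n\log\frac{m_\sigma(\Phi^nF)}{m_\sigma(F)}\ \xrightarrow[n\to\infty]{}\ \log\lambda ,
\]
where $\lambda>1$ is the stretch factor of Theorem~\ref{thm pseudo-Anosov}; hence $\ell(\Phi)\ge\bar\ell(\Phi)\ge\log\lambda>0$ (consistently, Theorem~\ref{thm stretch factor and entropy intro} identifies $\log\lambda$ with the categorical entropy of $\Phi$). Passing to $\stab(\mathcal{D})/\mathbb{C}$, the $\mathbb{C}$-action only rescales all masses by a single common factor and shifts all phases by a single common constant, so one gains exactly these two degrees of freedom. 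Since the inverse of a Penner type autoequivalence is again of Penner type, $\Phi^{-1}$ is pseudo-Anosov as well, so mass grows exponentially in \emph{both} directions and no single rescaling can neutralize it; and using $\max(|x|,|y|)\ge\tfrac12|x-y|$ the common phase shift drops out of any difference of two phase terms, so the phase asymptotics of Theorem~\ref{thm strong pseudo-Anosov} contribute a further linear lower bound for $d_{/\mathbb{C}}\big(\bar\sigma,\overline{\Phi^n\sigma}\big)$. This linear rate is the \emph{shifting number} of $\Phi$, and its positivity, together with the algorithm that evaluates it, is the content of Theorem~\ref{thm shifting number}.

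\emph{Part (ii).} For hyperbolicity (Definition~\ref{def hyperbolic action}) one must exhibit a stability condition at which $d(\sigma,\Phi\sigma)$ attains its infimum — a categorical substitute for the axis of a pseudo-Anosov map. The natural candidate is the renormalized limit $\sigma_\infty:=\lim_{n\to\infty} z_n\cdot(\Phi^n\sigma)$ for a suitable sequence $z_n\in\mathbb{C}$, i.e.\ the ``unstable stability condition'', which one builds explicitly from the Perron–Frobenius eigenvector of the combinatorial matrix driving the algorithms of Theorems~\ref{thm pseudo-Anosov} and~\ref{thm strong pseudo-Anosov}: that eigenvector prescribes the central charges of the generating objects, hence a candidate heart-and-central-charge pair on which $\Phi$ acts by $\Phi\cdot\sigma_\infty=z_0\cdot\sigma_\infty$ for an explicit $z_0\in\mathbb{C}$. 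The assumption of Section~\ref{subsection hyperbolic actions of Penner type autoequivalences on stab under an assumption} is precisely what guarantees that $\sigma_\infty$ is a genuine point of $\stab(\mathcal{D})$: that the prescribed heart is a bounded $t$-structure, that every object has a Harder–Narasimhan filtration with respect to it, and that the support/positivity property holds. Granting this, $\Phi$ translates the flat $\mathbb{C}$-orbit of $\sigma_\infty$ by exactly $d(\sigma_\infty,\Phi\sigma_\infty)$; comparing this value with the lower bound from Part (i) forces $d(\sigma_\infty,\Phi\sigma_\infty)=\ell(\Phi)$, so the action is hyperbolic, and the same construction handles $\stab(\mathcal{D})/\mathbb{C}$.

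The main obstacle lies entirely in Part (ii): showing that the Perron–Frobenius data actually assemble into a \emph{bona fide} Bridgeland stability condition — this fails in general, which is exactly why Theorem~\ref{thm hyperbolic action} is conditional — and then evaluating $d(\sigma_\infty,\Phi\sigma_\infty)$ as an honest supremum over \emph{all} nonzero objects rather than over generators alone, so that it meets the Part (i) lower bound. Both tasks require propagating the combinatorial control of $m_\sigma(\Phi^nE)$ and $\phi^\pm_\sigma(\Phi^nE)$ from generators to arbitrary objects, uniformly in $E$; this is where the tree hypothesis on the quiver and the specific Penner-type form of $\Phi$ (Definition~\ref{def Penner type}) are essential. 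By contrast, the lower bounds of Part (i) are comparatively routine once Theorems~\ref{thm pseudo-Anosov} and~\ref{thm strong pseudo-Anosov} are in hand.
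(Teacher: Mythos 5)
Your part (i) is essentially correct and follows the same spirit as the paper's Theorem~\ref{thm lowerbound}: both reduce the lower bound on $\ell(\Phi)$ to the asymptotics of masses and phases via the Bridgeland distance formula. You package the telescoping inequality as the general fact $\bar\ell(\Phi)\le\ell(\Phi)$, whereas the paper runs the telescope directly inside the proofs of Lemmas~\ref{lem lower bound for mass} and~\ref{lem lower bound for phase} (and for the phase bound, routes through Ikeda's theorem on the $t$-parameterized mass growth $h_{\sigma,t}(\Phi)$ rather than through the boundedness of $|\phi^\pm_\sigma-\phi^\pm_{\sigma_\star}|$ within a connected component). Either route is fine for the positivity statement, and the quantitative lower bounds $\log\lambda_\Phi$ and $\tfrac12\bigl(\tau^+(\Phi)-\tau^-(\Phi)\bigr)$ come out the same way.

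Your part (ii), however, departs from the paper's argument and contains a genuine gap. First, a consistency problem: you propose $\Phi\cdot\sigma_\infty = z_0\cdot\sigma_\infty$ for $z_0\in\mathbb{C}$. If such a $\sigma_\infty\in\stab^\dagger(\Fuk)$ existed, then $[\sigma_\infty]=[\Phi\sigma_\infty]$ in $\stab^\dagger/\mathbb{C}$, giving $d_B([\sigma_\infty],[\Phi\sigma_\infty])=0$ and hence zero translation length — contradicting your own Part (i). What you really mean is a fixed point up to the full $\widetilde{GL}^+(2,\mathbb{R})$-action by a hyperbolic element \emph{outside} the $\mathbb{C}$-subgroup; this is precisely the DHKK notion of pseudo-Anosov. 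But the paper deliberately avoids claiming DHKK pseudo-Anosov-ness (cf.\ the discussion in Section~\ref{subsection background and results} that DHKK is too restrictive), so this is a strictly stronger statement than Theorem~\ref{thm hyperbolic action} asserts, and there is no argument given that the ``renormalized limit'' of $\Phi^n\sigma$ exists or lands in $\stab^\dagger(\Fuk)$. Second, you misidentify the role of the assumption. Conditions (A) and (B) of Theorem~\ref{thm hyperbolic action} are numerical inequalities, e.g.\ $\log\lambda_\Phi\ge\max\{h(\Phi),-\ell(\Phi)\}+N-2$, whose sole purpose is to ensure that the mass term dominates the phase terms in the distance formula $d(\sigma_\star,\Phi\sigma_\star)$. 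They have nothing to do with the existence of a limiting stability condition or a Harder--Narasimhan property; see Remark~\ref{rmk meaning of the assumption}. The paper's proof of Theorem~\ref{thm hyperbolic action} does not build a fixed point at all: it shows, by direct degree/valuation estimates on $M_\Phi(t)$, that $d(\sigma_\star,\Phi\cdot\sigma_\star)\le\log\lambda_\Phi$ at the \emph{explicit} stability condition $\sigma_\star$ (which is emphatically not $\Phi$-invariant up to rescaling), and then matches this against the lower bound from Theorem~\ref{thm lowerbound}. So the infimum in the definition of hyperbolicity is achieved at an ordinary point of $\stab^\dagger(\Fuk)$, not at an axis constructed from Perron--Frobenius data, and your strategy would have to be replaced wholesale by this simpler two-sided estimate.
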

We note that we need an extra assumption in Theorem \ref{thm hyperbolic action intro} (ii) since the techniques employed in the current paper are suitable to observe the asymptotic behaviors of actions, but not efficient to observe one time behaviors. 
However, if a Penner type autoequivalence $\Phi$ satisfies the added assumption, then the techniques of the paper can show that $\Phi$ induces hyperbolic actions.
And, we expect that every Penner type $\Phi$ will induce hyperbolic actions even though we do not prove it in the present paper.

\subsection{Further directions}
\label{subsection further directions}
In this subsection, we briefly suggest a list of further questions and possible applications. 
It is important to note that the listed avenues represent just a few examples, and there are likely numerous additional applications to be explored. 
We welcome suggestions and comments on other potential applications.

One can succinctly summarize our results as a generalization of Penner's pseudo-Anosov construction within finite-dimensional derived categories of Ginzburg dg-algebras.
Thus, it becomes pertinent to inquire whether this generalization remains applicable in broader categorical contexts.
It's worth noting that within the realm of surface theory, Penner \cite{Penner88} employed the concept of (measured) train tracks and the space of measured train tracks to prove the generation of pseudo-Anosov maps through his methodology. 
Therefore, when contemplating the extension of our findings to encompass general categories, one viable avenue involves the exploration of generalized notions of measured train tracks within the domain of category theory.
It could be one direction for further study. 

Based on the connection between surface/Teichm\"{u}ller theory and the theory of stability conditions, one could expect that some results in surface theory could be generalized in the category theory. 
The results presented in this article offer infinitely many examples of pseudo-Anosov autoequivalences. 
We hope that these examples will prove valuable for exploring such generalizations.
Before ending the introduction section, we briefly mention about the possible applications. 

The first application is to study the {\em systole}. 
In Teichm\"{u}ller theory, it is well-known, for example by \cite{Fathi-Laudenbach-Poenaru79}, that 
\[\min \left\{\log \lambda_f | f \in \mathrm{Mod}(\Sigma) \text{  is pseudo-Anosov and  } \lambda_f \text{  is the stretching factor of  } f \right\}\]
equals the length of the shortest geodesic (systole) of the moduli space of $\Sigma$.
In other words, the stretch factor of a pseudo-Anosov surface automorphism is an upper bound of the systole. 
We note that Fan \cite{Fan22} defined the notion of {\em categorical systole}.
Moreover, \cite{Fan22} also proved that the categorical systole has an upper bound if the underlying category is $\mathrm{D}^b\mathrm{Coh}(X)$ where $X$ is a projective K3 surface. 
From these background story, we hope that a pseudo-Anosov autoequivalence and its stretching factor give an upper bound for categorical systole.

Another possible application is to study the group of autoequivalences. 
For example, in surface theory, algebraic properties of the mapping class group $\mathrm{Mod}(\Sigma)$ have been studied extensively, and sometimes, pseudo-Anosov surface maps play a role in the research.
One example is a result of Bestvina and Fujiwara \cite{Bestvina-Fujiwara02}, which proved that if a subgroup $G \subset \mathrm{Mod}(\Sigma)$ contains two independent pseudo-Anosov mapping classes, then $G$ an its action on the curve complex satisfy the weak proper discontinuity condition. 
See \cite{Bestvina-Fujiwara02, McCarthy-Papadopoulos89} and references therein. 
Similarly, we hope that pseudo-Anosov autoequivalences could play a role in the research of algebraic properties of the group of autoequivalences.

We would like to note that our result not only constructs examples of pseudo-Anosov, but also provides a practical tool of computing the stretching factors of the constructed examples. 
Moreover, in the later sections, it will be turned out that our computation tool also computes the {\em shifting numbers} of examples. 
We note that the notion of shifting numbers is defined in \cite{Fan-Filip23} as an analogue of Poin\'{c}are translation numbers.
One conjecture presented and studied in \cite{Fan-Filip23, Fan23} is that the shifting numbers provide a quasimorphism on the group of autoequivalences. 
We hope that our computation of shifting numbers could be useful to study the conjecture. 

The paper consists of 11 sections and the organization is the following: 
The current section and Section \ref{section settings and the main idea} are introductory sections.
In the next section, we explain the setting of the current paper and the main idea. 
We note that everything in the paper could be written in purely algebraic language, but the main idea arises from symplectic topology. 
Thus, in Section \ref{section settings and the main idea}, we explain both of algebraic and symplectic settings as well as their equivalence. 
Sections \ref{section twisted complex}--\ref{section actions on the space of stability conditions} are preliminary sections explaining the notion of twisted complexes, stability conditions, pseudo-Anosov autoequivalences, etc. 
In each section in the preliminary part, we explain preliminaries in the first few subsections, then we apply the preliminaries to our setting.
But, for an expert, it would be not a bad idea to skip this preliminary part. 
After the preliminary part, we study the asymptotic behaviors of autoequivalences in Sections \ref{section preparations for the proof of pseudo-Anosov theorem}--\ref{section strong pseudo-Anosov} and prove Theorem \ref{thm pseudo-Anosov intro}. 
In Section \ref{section translation of Penner type autoequivalences}, we prove that the actions on the space of stability condition, induced from Penner type autoequivalences, have positive translation lengths by giving their positive lower bound. 
Moreover, we can see that under an assumption, the action is hyperbolic, i.e., we prove Theorem \ref{thm hyperbolic action intro}.
In the last section, we provide examples explaining three topics, the computational aspect of the paper, the reason why we define a stronger notion of pseudo-Anosov, and hyperbolic actions on the space of stability conditions. 

\subsection{Acknowledgment}
\label{subsection acknowledgement}
We express our gratitude to Yu-Wei Fan and Fabian Haiden for kindly providing answers to our questions regarding their article \cite{Fan-Filip-Haiden-Katzarkov-Liu21}.

During this work, the first-named author was supported by the National Research Foundation of Korea(NRF) grant funded by the Korea government(MSIT) (No.2020R1A5A1016126). The second-named author was supported by a KIAS Individual Grant (MG094401) at Korea Institute for Advanced Study.

\section{Settings and the main idea}
\label{section settings and the main idea}
We first set our algebraic setting in Section \ref{subsection setting}.
In Section \ref{subsection symplectic topological setting}, we explain the setting from symplectic topological viewpoint. 
And in the last subsection of Section \ref{section settings and the main idea}, we give the main idea of Theorem \ref{thm pseudo-Anosov intro}.

\subsection{Setting}
\label{subsection setting}
In this subsection, we describe our setting.

Let $T$ denote a tree, i.e., a graph without cycles. 
We let $V(T)$ and $E(T)$ denote the set of vertices and the set of edges, respectively.

Since $T$ is a tree, there exists a decomposition of $V(T)$ into two disjoint subsets $V_+(T)$ and $V_-(T)$ such that if two vertices $v_1, v_2 \in V(T)$ are adjacent to each other, then either 
\begin{itemize}
	\item $v_1 \in V_+(T)$ and $v_2 \in V_-(T)$, or 
	\item $v_1 \in V_-(T)$ and $v_2 \in V_+(T)$. 
\end{itemize}
Or equivalently, if $v_1, v_2 \in V(T)$ are connected by even number of edges, then either 
\begin{itemize}
	\item $v_1, v_2 \in V_+(T)$, or 
	\item $v_1, v_2 \in V_-(T)$. 
\end{itemize}
We say that a vertex $v \in V(T)$ is {\em positive} (resp.\ {\em negative}) if a vertex $v$ is in $V_+(T)$ (resp.\ $V_-(T)$).

\begin{remark}
	\label{rmk sign conventions} 
	We note that the labeling of $V_+(T)$ and $V_-(T)$, or signs in the subscriptions of $V_+(T)$ and $V_-(T)$, are randomly given.
	There is no natural way of giving the signs. 
	In the rest of the paper, for any tree $T$, we will assume that the choice of labeling/signs is a part of given data for convenience. 
	And, the other choice will be called {\em the other sign convention}.
	 
	We would like to point out that if one choose the other sign convention, it does not effect on the contents of the paper.
	The change exchanges the names/roles of two disjoint subsets of $V(T)$, but one can prove every claim in the paper with the other sign convention.  
	See also Remarks \ref{rmk sign is not important 0}, \ref{rmk sign is not important 1}, \ref{rmk sign is not important 2}, etc.
\end{remark}
 
We note that every edge $e \in E(T)$ connects one positive vertex and one negative vertex. 
Thus, one can direct edges of $T$ so that every edge starts from a positive vertex and ends at a negative vertex. 

Since the edges are directed, $T$ is a directed graph, i.e., a {\em quiver}. 
Let $N \geq 3$ be a given integer. 
Then, one can define the {\em Ginzburg $N$-Calabi--Yau dg-algebra} of the quiver $T$, as defined in \cite{Ginzburg06}. 
Let $\Gamma_N T$ denote the Ginzburg $N$-Calabi--Yau dg-algebra. 
The triangulated category we are interested in the paper is the {\em finite dimensional derived category} of $\Gamma_N T$, i.e., the full subcategory of the derived category of dg-modules over $\Gamma_N T$ whose homology is of finite total dimension. 
Let $\mathcal{D}(\Gamma_N T)$ denote the triangulated category.
For more details about $\Gamma_N T$ and $\mathcal{D}(\Gamma_N T)$, we refer the reader to \cite[Sections 7.2 and 7.3]{Keller12}.
See also \cite{Ginzburg06, Keller-Yang11, Keller11, Keller12, King-Qiu15}.

\begin{remark}
	\label{rmk sign is not important 0}
	\mbox{}
	\begin{enumerate}
		\item We note that if one chooses the other sign convention for $V(T)$, then every edge of $T$ has the reverse direction.
		Since the construction of Ginzburg dg-algebra depends on the quiver, not the base graph, the different sign convention could affect on the resulting Ginzburg dg-algebra.
		However, it does not change the equivalence class of the triangulated category $\mathcal{D}(\Gamma_N T)$. 
		Here, the equivalence relation we are considering is the Morita equivalence, in other words, the triangulated closures of categories of modules of finite total dimension over two Ginzburg dg-algebras are quasi-equivalent. 
		We skip the proof since it is straightforward.  
		\item Moreover, we also note that even if one directs the edges of $T$ randomly, the resulting Ginzburg dg-algebra does not change up to Morita-equivalence. 
		In other words, if two quiver $Q_1$ and $Q_2$ have the same tree as their base graph, then their Ginzburg dg-algebra are Morita equivalent to each other. 
	\end{enumerate}
\end{remark}

We briefly describe some properties of $\mathcal{D}(\Gamma_N T)$ without proofs. 
For the details, we refer \cite[Section 7]{King-Qiu15} and references therein. 

First, we note that, as a triangulated category, $\mathcal{D}(\Gamma_N T)$ admits a generating set 
\[\{S_v | v \in V(T)\}.\]
Each generator $S_v$ can be understood as the simple module of the dg algebra $\Gamma_N T$ associated to a vertex $v \in V(T)$. 
Thus, by taking proper shifts of $\{S_v |v \in V(T)\}$, one can assume that the morphism spaces between $\{S_v | v \in V(T)\}$ satisfies the following: 
\begin{itemize}
	\item For all $v \in V(T)$, $\lhom^i_{\mathcal{D}(\Gamma_N T)}(S_v, S_v) = 
	\begin{cases} \mathbb{k} \text{  if  } i = 0, N, \\ 0 \text{  otherwise}.
	\end{cases}$
	\item For $u \in V_+(T), w \in V_-(T)$ such that $u$ and $w$ are connected by an edge, 
	\[\lhom^i_{\mathcal{D}(\Gamma_N T)}(S_u,S_w) = \begin{cases}
		\mathbb{k} \text{  if  } i =1, \\ 0 \text{  otherwise}, 
		\end{cases} 
		\lhom^i_{\mathcal{D}(\Gamma_N T)}(S_w,S_u) = \begin{cases}
		\mathbb{k} \text{  if  } i =N-1, \\ 0 \text{  otherwise}.
	\end{cases}\]
	\item for all $u, w \in V(T)$ which do not satisfy any of the above conditions, 
	\[\lhom^i_{\mathcal{D}(\Gamma_N T)} (S_u, S_w) = 0 \text{  for all  } i.\]
\end{itemize}
We note that $\lhom_{\mathcal{D}}^i(X,Y)$ denotes the morphism space in $\mathcal{D}(\Gamma_N T)$ from $X$ to $Y$. 

\begin{definition}
	\label{def sim}
	For simplicity, we say $\boldsymbol{v \sim w}$ for $v, w \in V(T)$ if $v$ and $w$ are connected by an edge in $T$.  
\end{definition}

Also, for all $v \in V(T)$, one can see that $S_v$ is a spherical object of $\mathcal{D}(\Gamma_N T)$.
Thus, there exists the Seidel-Thomas twist functor \cite{Seidel-Thomas01}, or equivalently the spherical twist, along $S_v$ for all $v \in V(T)$. 
Let $\tau_v : \mathcal{D}(\Gamma_N T) \to \mathcal{D}(\Gamma_N T)$ denote the spherical twist along $S_v$.
The spherical twists and their inverses construct the autoequivalences of Penner type which appeared in the statement of Theorem \ref{thm pseudo-Anosov intro} as follows:
\begin{definition}
	\label{def Penner type}
	An autoequivalence $\Phi: \mathcal{D}(\Gamma_N T) \to \mathcal{D}(\Gamma_N T)$ is of {\bf Penner type} if 
	\begin{enumerate}
		\item[(I)]  either $\Phi$ or $\Phi^{-1}$ is a product of the elements of the following set:
		\[\{\tau_u, \tau_w^{-1} | u \in V_+(T), w \in V_-(T)\},\]
		\item[(II)] for every $v \in V(T)$, either $\tau_v$ or $\tau_v^{-1}$ should appear in the product at least once.
	\end{enumerate}
\end{definition}

\begin{remark}
	\label{rmk sign is not important 1}
	We note that Definition \ref{def Penner type} is independent of the sign convention described in Remark \ref{rmk sign conventions}, even thought the statement uses a specific choice of signs.
	We note that for convenience, in the most part of the paper, we will assume that a Penner type autoequivalence $\Phi$ is a product of 
	\[\{\tau_u, \tau_w^{-1} | u \in V_+(T), w \in V_-(T)\},\]
	and we will prove the contents of the paper under the assumption.
	If $\Phi^{-1}$ is a product of the above spherical twists and the inverses, one can imagine that, by choosing the other sign convention and by applying the arguments given in the paper, the same results can be proven for such $\Phi$. 
	See Remark \ref{rmk sign is not important 2}.
\end{remark}

\subsection{Symplectic topological setting}
\label{subsection symplectic topological setting}
As mentioned in the introduction section, we can state our main results and their proof without mentioning any of symplectic topology.
However, since we are motivated by symplectic topology, in the subsection, we introduce the symplectic topological setting that corresponds to the setting given in Section \ref{subsection setting}.
 
As we did in Section \ref{subsection setting}, let $T$ denote a given tree, and let $N \geq 3$ be a given integer. 
We consider the plumbing of multiple copies of $T^*S^N$ along $T$. 
Let $P_N(T)$ denote the plumbing space.
Then, it is well-known that $P_N(T)$ admits a Weinstein structure.

Since $P_N(T)$ is a plumbing of multiple copies of $T^*S^N$, the zero section of each copies of $T^*S^N$ can be seen as a Lagrangian sphere in $P_N(T)$. 
In other words, we have a Lagrangian sphere in $P_N(T)$ for each vertex $v \in V(T)$. 
Let $S_v$ be the Lagrangian sphere corresponding to $v \in V$. 

Let $\Fuk$ denote the triangulated envelope of the compact Fukaya category.
We note that Abouzaid and Smith \cite{Abouzaid-Smith12} proved a generation result for $\Fuk$. 
According to the generation result, $\Fuk$ is generated by a set of Lagrangian spheres
\[\{S_v | v \in V(T)\}.\]
Moreover, if $\hom_{\Fuk}^i(X,Y)$ denotes the degree $i$ morphism space of $\Fuk$ from $X$ to $Y$, and if $\lhom_{\Fuk}^i(X,Y)$ denotes the homology of $\hom^i_{\Fuk}(X,Y)$, the following holds:
\begin{lem}
	\label{lem grading}
	\mbox{}
	\begin{enumerate}
		\item It is possible to assume that $\{S_v\}_{v \in V}$ are properly graded so that 
		\begin{itemize}
			\item for all $v \in V(T)$, $\shom^*_{\Fuk}(S_v, S_v) = 
			\begin{cases} \mathbb{k} \text{  if  } * = 0, N, \\ 0 \text{  otherwise},
			\end{cases}$
			\item for $u \in V_+(T), w \in V_-(T)$ such that $u$ and $w$ are connected by an edge, 
			\[\shom^*_{\Fuk}(S_u,S_w) = \begin{cases}
				\mathbb{k} \text{  if  } * =1, \\ 0 \text{  otherwise}, 
			\end{cases} \shom^*_{\Fuk}(S_w,S_u) = \begin{cases}
				\mathbb{k} \text{  if  } * =N-1, \\ 0 \text{  otherwise},
			\end{cases}\]
			\item for all $u, w \in V(T)$ which do not satisfy any of the above conditions, 
			\[\shom^*_{\Fuk} (S_u, S_w) = 0.\]
		\end{itemize}
		\item Moreover, because of the degree reason, the differential map of $\hom_{\Fuk}^*(S_v, S_w)$ is the zero map for all $v, w \in V(T)$.
		Thus, $\hom_{\Fuk}^*(S_v,S_w) = \lhom_{\Fuk}^*(S_v,S_w)$. 
	\end{enumerate}
\end{lem}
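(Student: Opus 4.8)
The plan is to prove Lemma~\ref{lem grading} by a direct Floer-theoretic computation in the geometric model for $\Fuk$, in which (after a small Hamiltonian perturbation) the spheres $\{S_v\}_{v\in V(T)}$ are pairwise transverse and the self-Floer complex of each $S_v$ is computed from a perfect Morse function on $S^N$. Two standing facts make this legitimate: since $N\geq 3$, each $S_v\cong S^N$ is simply connected and the plumbing $P_N(T)$ is an exact Weinstein domain with $c_1=0$, so $\Fuk$ is $\Z$-graded and each $S_v$ carries a grading that is unique up to an overall integer shift; and exactness of $P_N(T)$ rules out disc and sphere bubbling, so all the Floer complexes below are defined over $\mathbb{k}$ and compute the expected groups (one may cite the standard references together with the generation statement of Abouzaid--Smith already used above).

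First I would record the three model computations, up to grading shift. (i) For any $v$, the self-Floer cohomology of the zero section of $T^*S^N$ is its ordinary cohomology, so $\lhom^*_{\Fuk}(S_v,S_v)\cong H^*(S^N;\mathbb{k})$, which is $\mathbb{k}$ in degrees $0$ and $N$ and $0$ elsewhere; this does not depend on the overall shift of $S_v$, because self-Floer cohomology is invariant under $[1]$. With a perfect Morse function on $S^N$ the underlying complex $\shom^*_{\Fuk}(S_v,S_v)$ has exactly one generator in degree $0$ and one in degree $N$, and since $\mu^1$ raises degree by one while $N\geq 3$, it must vanish, proving $\shom^*_{\Fuk}(S_v,S_v)=\lhom^*_{\Fuk}(S_v,S_v)$. (ii) If $v\sim w$, then $S_v$ and $S_w$ meet transversally in a single point of $P_N(T)$, so $\shom^*_{\Fuk}(S_v,S_w)$ is one-dimensional and concentrated in a single degree $d_{vw}$, with zero differential; the $N$-Calabi--Yau duality $\lhom^*_{\Fuk}(S_w,S_v)\cong\lhom^{N-*}_{\Fuk}(S_v,S_w)^\vee$ then forces the complementary degree $d_{wv}=N-d_{vw}$. (iii) If $v\neq w$ and $v\not\sim w$, then $S_v$ and $S_w$ are disjoint in $P_N(T)$ --- this is precisely what plumbing along $T$ means --- so $\shom^*_{\Fuk}(S_v,S_w)=0$. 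At this point part~(2) of the lemma is already finished: in each case the complex $\shom^*_{\Fuk}(S_v,S_w)$ is zero, concentrated in a single degree, or concentrated in degrees $0$ and $N$ with $N\geq 3$, so the differential has nowhere to go.

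It remains to normalize the gradings, which is the only step using that $T$ is a tree. I would replace each $S_v$ by $S_v[a_v]$ for integers $a_v$ to be determined; this leaves the self-Floer cohomologies unchanged, while for a directed edge $u\to w$ (with $u\in V_+(T)$, $w\in V_-(T)$) the group $\lhom^*_{\Fuk}(S_u[a_u],S_w[a_w])$ becomes concentrated in degree $d_{uw}+a_w-a_u$. Demanding that this equal $1$, i.e.\ $a_u-a_w=d_{uw}-1$ for every edge of $T$, also makes $\lhom^*_{\Fuk}(S_w[a_w],S_u[a_u])$ concentrated in degree $N-1$ by the Calabi--Yau duality above. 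This is a linear system with one equation per edge in the unknowns $\{a_v\}$; since $T$ is a tree it contains no cycle, so there is no consistency obstruction: fix a root $r$, set $a_r:=0$, and propagate the prescribed differences along the unique path from $r$ to each vertex. These shifts yield part~(1).

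I do not anticipate a genuine obstacle: the substance is the standard computation of Floer cohomology of Lagrangian spheres in an exact plumbing, combined with the elementary fact that a tree carries no monodromy obstruction to solving the grading equations. The only points deserving care are (a) invoking exactness of $P_N(T)$ to know the Floer complexes are well defined and compute the stated groups (no bubbling), and (b) fixing grading conventions consistently so that the Calabi--Yau pairing really matches degree $1$ with degree $N-1$ --- both of which are routine.
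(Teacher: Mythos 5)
Your proof is correct, but there is nothing in the paper to compare it against: the paper does not prove Lemma~\ref{lem grading}. It states the Floer cohomology groups of the $\{S_v\}$ as a known fact (citing the equivalent computation on the Ginzburg dg-algebra side in Section~\ref{subsection setting}, with references to King--Qiu and Keller, together with the Abouzaid--Smith generation result) and then simply \emph{declares} that the gradings are fixed as in the lemma for the rest of the paper.

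Your argument fills this gap with a genuinely self-contained geometric route: gradability of each $S_v$ from simple connectivity and $c_1=0$, the Morse-theoretic computation of each self-Floer complex, the one-point transverse intersection for adjacent spheres, disjointness for non-adjacent ones, $N$-CY duality to pin down the complementary degree, and finally the tree-walk normalization of shifts. All the steps check out: the linear system $a_u-a_w=d_{uw}-1$ over the edges of a tree has no monodromy obstruction; the shift $\lhom^k(A[m],B[n])=\lhom^{k+m-n}(A,B)$ is applied with the correct sign; and the vanishing of $\mu^1$ follows immediately from degree concentration since $N\ge 3$. The one thing you could make slightly more explicit, for a reader checking conventions, is that the duality $\lhom^*(S_w,S_u)\cong \lhom^{N-*}(S_u,S_w)^\vee$ is already visible at the chain level here because $S_u\pitchfork S_w$ is a single point, so the complementary degree is really the Maslov index at that intersection --- but your CY-duality phrasing is equally valid and perhaps cleaner. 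The paper's implicit algebraic alternative is to read these graded dimensions off the simple $\Gamma_NT$-modules directly; your geometric version is equivalent and arguably more transparent about where the $N\ge 3$ hypothesis is used.
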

Throughout the paper, we assume that the generating set $\{S_v\}_{v \in V}$ of $\Fuk$ is graded as given in Lemma \ref{lem grading}.

\begin{remark}
	\label{rmk sign is not important 2}
	\mbox{}
	\begin{enumerate}
		\item We note that after introducing the notion of twisted complex in Section \ref{section twisted complex}, $\Fuk$ can be seen as the category of twisted complexes generated from $\{S_v| v \in V(T)\}$. 
		The generation result of \cite{Abouzaid-Smith12} proves the equivalence between $\Fuk$ and the category of twisted complexes. 
		\item By Section \ref{subsection setting} and the argument above, one can observe that $\mathcal{D}(\Gamma_N T)$ and $\Fuk$ have the same generating set $\{S_v | v \in V(T)\}$. 
		Moreover, 
		\[\lhom^i_{\mathcal{D}(\Gamma_N T)}(S_v,S_w) = \lhom_{\Fuk}^i(S_v,S_w),\]
		for all $i \in \mathbb{Z}$ and $v, w \in V(T)$. 
		It proves the equivalence between $\mathcal{D}(\Gamma_N T)$ and $\Fuk$.  
		\item 	If we have the other sign convention for $T$(see Remark \ref{rmk sign conventions}) for 
		\[V(T) = V_+(T) \sqcup V_-(T),\]
		then $S_v$ in Lemma \ref{lem grading} should be the same Lagrangian sphere with different shift. 
		For convenience, let $\{R_v\}_{v \in V(T)}$ be the Lagrangian spheres defined as follows:
		\[R_v = \begin{cases}
			S_v \text{  if  } v \in V_+(T), \\ S_v[2-N] \text{  if  } v \in V_-(T).
		\end{cases}\]
		Then, Lemma \ref{lem grading} holds for another collection of Lagrangian spheres $\{R_v\}_{v \in V(T)}$.
		Thus, if we choose the other sign collection, then $\{R_v\}_{v \in V(T)}$ can replace $\{S_v\}_{v \in V(T)}$.
	\end{enumerate}
\end{remark}

For the later sections, we set the following notation for generating morphisms of $\shom_{\Fuk}^*(S_u, S_w)$:
\begin{definition}
	\label{def xyz maps}
	\mbox{}
	\begin{enumerate}
		\item For all $v \in V(T)$, let {\bf $\boldsymbol{e_v}$} denote the identity morphisms in $\shom_{\Fuk}^*(S_v, S_v)$. 
		Thus, the grading of $e_v$ is $0$, i.e., $|e_v|=0$. 
		\item For all $v \in V(T)$, let the {\bf $\boldsymbol{z}$-morphism $\boldsymbol{z_v}$} denote a nonzero morphism in $\shom_{\Fuk}^*(S_v, S_V)$ such that $|z_v|=N$. 
		\item For all $u \in V_+(T), w \in V_-(T)$ such that $u$ and $w$ are connected by an edge, let the {\bf $\boldsymbol{x}$-morphism $\boldsymbol{x_{u,w}}$} (resp.\ {\bf $\boldsymbol{y}$-morphism $\boldsymbol{y_{w,u}}$}) denote a fixed nonzero morphism in $\shom_{\Fuk}^*(S_u,S_w)$ (resp.\ $\shom_{\Fuk}^*(S_w,S_u)$). 
		In particular, $|x_{u,w}| = 1, |y_{w,u}|=N-1$. 
	\end{enumerate}
\end{definition}

With the notation in Definition \ref{def xyz maps}, Lemma \ref{lem grading} can be summarized as follows:
\begin{itemize}
	\item For all $v \in V(T)$, $\shom^*_{\Fuk}(S_v, S_v) = \mathbb{k} \langle e_v, z_v \rangle$.
	\item For $u \in V_+(T), w \in V_-(T)$ such that $u$ and $w$ are connected by an edge, 
	\[\shom^*_{\Fuk}(S_u,S_w) = \mathbb{k} \langle x_{u,w} \rangle, \shom^*_{\Fuk}(S_w,S_u) = \mathbb{k} \langle y_{w,u} \rangle.\]
	\item for all $u, w \in V(T)$ which do not satisfy any of the above conditions, $\shom^*_{\Fuk} (S_u, S_w) = 0$.
\end{itemize}

Now, we define a counterpart of Definition \ref{def Penner type} in symplectic setting. 
Since $S_v$ is a Lagrangian sphere, there is a well-known symplectic automorphism of $P_N(T)$. 
The symplectic automorphism is the {\em generalized Dehn twist along $S_v$}.
Let $\tau_v$ denote the generalized Dehn twist along $S_v$.

\begin{remark}
	We note that the Lagrangian sphere $S_v$ gives only a Hamiltonian isotopy class of $\tau_v$. 
	In order to define a specific representative of the class, one needs extra choices, for example, a Dehn twist profile. 
	For more detail, we refer the reader to \cite[Section 2.1]{Mak-Wu18}.
	For convenience, we assume that $\tau_v$ is a specific symplectic automorphism without mentioning the extra choices. 
	Since we are interested in the induced functors on $\Fuk$, it is okay not to specify a representative of Dehn twists. 
\end{remark}

One can define a class of symplectic automorphisms, which corresponds to the {\em Penner type} defined in Definition \ref{def Penner type}.
\begin{definition}
	\label{def Penner type symplectic automoprhism}
	A symplectic automorphism $\Phi: P_N(T) \to P_N(T)$ is of {\bf Penner type} if 
	\begin{enumerate}
		\item[(I)]  either $\Phi$ or $\Phi^{-1}$ is a product of the elements of the following set:
		\[\{\tau_u, \tau_w^{-1} | u \in V_+(T), w \in V_-(T)\},\]
		\item[(II)] for every $v \in V(T)$, either $\tau_v$ or $\tau_v^{-1}$ should appear in the product at least once.
	\end{enumerate}
\end{definition}
We note that a symplectic automorphism $\Phi: P_n(T) \to P_n(T)$ induces an exact autoequivalence on $\Fuk$. 
If $\Phi$ is a symplectic automorphism of Penner type, then the induced autoequivalence on $\Fuk$ is also of Penner type in the sense of Definition \ref{def Penner type}, under the equivalence between $\mathcal{D}(\Gamma_N T)$ and $\Fuk$.

Even though we study autoequivalences on $\Fuk$ in the current paper, we also will use the wrapped Fukaya category of $P_N(T)$.
Let $\wrapped$ denote (the derived category of) the wrapped Fukaya category of $P_N(T)$. 
Then, it is well-known that $\Fuk$ is a fully faithful subcategory of $\wrapped$. 
Moreover, the following facts in Lemma \ref{lem wrapped} are also well-known.
\begin{lem}
	\label{lem wrapped}
	\mbox{}
	\begin{enumerate}
		\item For each $v \in V(T)$, let $L_v$ be a cotangent fiber of $T^*S_v \subset P_N(T)$. 
		Then, $\wrapped$ is generated by
		\[\{L_v | v\in V(T)\}.\]
		See \cite{Chantraine-Rizell-Ghiggini-Golovko17, Ganatra-Pardon-Shende18a} for the proof. 
		\item For each $v \in V(T)$, one can assume that the above Lagrangian submanifold $L_v$ is properly graded so that 
		\[\shom^i_{\wrapped}(L_v,S_v)  = \begin{cases}
			\mathbb{k} \text{  if  } i=0, \\
			0 \text{  otherwise},
		\end{cases} \text{  or equivalently,    } \shom^i_{\wrapped}(S_v,L_v)  = \begin{cases}
		\mathbb{k} \text{  if  } i=N, \\
		0 \text{  otherwise},
		\end{cases}\]
		where $\shom_{\wrapped}^i$ denote the degree $i$ morphism space in $\wrapped$. 
		\item If $v, w \in V(T)$ satisfy $v \neq w$, then 
		\[\shom^i_{\wrapped}(L_v, S_w) = \shom^i_{\wrapped}(S_w,L_v) =0.\]
		\item Because of the degree reason, for any $v, w \in V(T)$, $\hom_{\wrapped}^*(L_v,S_w)$ has the zero differential map. 
		Especially, $\hom_{\wrapped}^i(L_v,S_w) = \lhom_{\wrapped}^i(L_v,S_w)$ for all $i \in \mathbb{Z}$ and $v, w \in V(T)$, where $\lhom_{\wrapped}^*(L_v,S_w)$ denotes the homology of $\hom_{\wrapped}^*(L_v,S_w)$.
	\end{enumerate}
\end{lem}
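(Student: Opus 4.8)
The plan is to split the four assertions into one citation and a short geometric computation, reducing (2)--(4) to ordinary Lagrangian Floer theory after placing the fibers $L_v$ in sufficiently generic position. For (1): $P_N(T)$ is the Weinstein manifold obtained by plumbing copies of $T^*S^N$ along $T$, with the $S_v$ as the core spheres, and the generation of the wrapped Fukaya category of such a manifold by the cotangent fibers of its skeleton is precisely the content of \cite{Chantraine-Rizell-Ghiggini-Golovko17} and \cite{Ganatra-Pardon-Shende18a}; applied to the spheres $S_v$ this gives that $\{L_v \mid v \in V(T)\}$ generates $\wrapped$, so no new work is needed there.

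For (2) and (3) I would first pin down the fibers: take $L_v$ to be a cotangent fiber of the $v$-th copy $T^*S^N$ over a point $q_v \in S_v$ lying outside every plumbing locus that meets that copy. Such a point exists because $S^N$ with $N \geq 2$ remains nonempty after removing the finitely many small balls where the plumbings are performed, and the corresponding non-compact fiber then stays within the $v$-th copy and away from the other copies. With this choice $L_v$ is disjoint from $S_w$ for every $w \neq v$, and it meets $S_v$ transversally in the single point $q_v$. The key input is the standard fact that when one of two Lagrangians is compact no wrapping is needed, so that $\shom_{\wrapped}(L_v, S_w)$ and $\shom_{\wrapped}(S_w, L_v)$ are computed by the ordinary Floer complexes $CF^*(L_v, S_w)$ and $CF^*(S_w, L_v)$ for a small compactly supported perturbation. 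For $w \neq v$ these complexes are empty, which is (3); for $w = v$ the complex $CF^*(L_v, S_v)$ has the single generator $q_v$ and is therefore one-dimensional, and since the grading of a Lagrangian brane is only determined up to an overall shift we may shift $L_v$ so that $q_v$ sits in degree $0$, proving $\shom^i_{\wrapped}(L_v, S_v) = \mathbb{k}$ for $i = 0$ and $0$ otherwise.

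The equivalent statement for $\shom^*_{\wrapped}(S_v, L_v)$ follows from Poincaré duality in Lagrangian Floer cohomology, valid because $S_v$ is compact: $\shom^*_{\wrapped}(S_v, L_v) \simeq \shom^{N-*}_{\wrapped}(L_v, S_v)^\vee$, where $N = \dim_{\mathbb{R}} S_v$, which puts the one-dimensional space in degree $N$. Assertion (4) is then immediate: each complex $\shom^*_{\wrapped}(L_v, S_w)$ is concentrated in a single degree (degree $0$ when $v = w$, and it vanishes when $v \neq w$), so the degree $+1$ differential is forced to be zero and $\shom^*_{\wrapped}(L_v, S_w) = \lhom^*_{\wrapped}(L_v, S_w)$.

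I expect the genuine obstacle to be bookkeeping rather than geometry. One must verify that the shift normalizing $\shom^0_{\wrapped}(L_v, S_v) = \mathbb{k}$ can be chosen independently of the grading of $\{S_v\}$ already fixed in Lemma \ref{lem grading} --- which is fine, since the grading of $L_v$ is a free additional choice --- and that the degree shift in the duality isomorphism is exactly $N$, this being the one point where the dimension $2N$ of $P_N(T)$ enters. The reduction of wrapped Floer cohomology against a compact Lagrangian to ordinary Floer cohomology is standard but should be stated with a precise reference; beyond that the argument is essentially formal.
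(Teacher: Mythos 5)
The paper does not actually supply a proof of this lemma; it records items (1)--(4) as ``well-known'' and only cites \cite{Chantraine-Rizell-Ghiggini-Golovko17, Ganatra-Pardon-Shende18a} for the generation result. Your write-up supplies a proof, and it is correct. The key geometric choice --- placing $L_v$ over a basepoint $q_v \in S_v$ disjoint from every plumbing ball so that $L_v$ is entirely contained in the $v$-th copy of $T^*S^N$ and meets no $S_w$ with $w \neq v$ --- is exactly what makes (3) immediate, and the observation that wrapping is unnecessary when one Lagrangian is compact (so that $\shom_{\wrapped}(L_v, S_w)$ reduces to the ordinary transversal Floer complex) correctly gives the chain-level count with a single generator $q_v$ when $w=v$. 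Your remarks about the two bookkeeping points are also the right ones: the grading of $L_v$ is a free choice independent of the gradings already fixed for $\{S_v\}$ in Lemma \ref{lem grading}, and the degree shift in the duality $\shom_{\wrapped}^*(S_v, L_v) \cong \shom_{\wrapped}^{N-*}(L_v, S_v)^\vee$ is $N = \dim_{\mathbb{R}} S_v$ (this is most directly seen at the chain level, where the degrees of $q_v$ in $CF^*(L_v, S_v)$ and $CF^*(S_v, L_v)$ sum to $N$). Part (4) then follows because a complex concentrated in a single degree has trivial differential. The only thing I would tighten is the appeal to ``Poincar\'e duality in Lagrangian Floer cohomology'': since you only need the statement for a rank-one complex concentrated in a single degree, it is cleaner (and avoids any orientation or compactness caveats) to use the chain-level degree identity directly rather than invoke the full duality isomorphism. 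Otherwise the argument is complete and is the proof the paper has in mind.
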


We summarize the equivalence between two settings given in Sections \ref{subsection setting} and \ref{subsection symplectic topological setting} in the following table:
\begin{table}[h!]
	\caption{Equivalences between the settings given in Sections \ref{subsection setting} and \ref{subsection symplectic topological setting}}
	\begin{tabular}{ c | c }
		\label{table category=symplectic}
		Categorical setting  & Symplectic setting \\ \hline\hline
		\shortstack{Ginzburg dg-algebra \\ $\Gamma_N T$} & \shortstack{The endomorphism space of a generator of $\wrapped$\\ $\hom_{\wrapped}^*(\bigoplus_{v \in V(T)}L_v, \bigoplus_{v \in V(T)}L_v)$}  \\ \hline
		$\mathcal{D}(\Gamma_N T)$ & $\Fuk$ \\ \hline
		The perfect derived category of $\Gamma_N T$ & $\wrapped$ \\ \hline	
		Spherical twists & Dehn twists \\ \hline
		Penner type autoequivalences on $\mathcal{D}(\Gamma_N T)$ & \shortstack{(The autoequivalences on $\Fuk$ induced by) \\ Penner type symplectic automorphisms} \\ \hline 
	\end{tabular}
\end{table}

We note that {\em in the rest of the paper, we will work with the notation given in Section \ref{subsection symplectic topological setting}}, based on the equivalence given in Table \ref{table category=symplectic}.
Especially, our category will be denoted by $\Fuk$, and we will heavily use the $x, y, z$-morphisms defined in Definition \ref{def xyz maps}.

\subsection{The main idea}
\label{subsection the main idea}
In this subsection, we explain the motivating idea for Theorem \ref{thm pseudo-Anosov intro}. 
Since the proof of Theorem \ref{thm pseudo-Anosov intro} given in Sections \ref{section preparations for the proof of pseudo-Anosov theorem}--\ref{section strong pseudo-Anosov} is technical, we would like to give a background idea arising from symplectic topology before starting the technical parts.
However, because the goal is to introduce the idea, the contents in this subsection do not contain all the details. 
It would not be a bad idea to skip this subsection for the first read, and come back after reading preliminary sections, i.e., Sections \ref{section twisted complex}-\ref{section actions on the space of stability conditions}.

Let $T$ be a tree and let $\Phi: \Fuk \to \Fuk$ be an autoequivalence of Penner type. 
To prove Theorem \ref{thm pseudo-Anosov intro}, we need to keep track of the changes in the sequence $\Phi^n E$ as $n$ increases. 
The changes we would like to keep track are those of mass and maximal/minimal phases with respect to a stability condition. 

Let us recall that our category $\Fuk$ could be seen as the Fukaya category of $P_N(T)$. 
In \cite{Lee19}, the second-named author studied the changes of the sequence $\Phi^m(L)$ and proved that $\Phi$ satisfies a generalized version of Property \ref{property 1}, where $\Phi$ is a symplectic automorphism of Penner type and $L \subset P_N(T)$ is a Lagrangian submanifold. 
The main idea of \cite{Lee19} is to generalize Thurston's idea of proving Property \ref{property 1}. 
Moreover, by \cite{Lee19}, for sufficiently large $m$, it would be natural to expect that the following holds:
\begin{enumerate}
	\item[($\star$)] As an object of $\Fuk$, $\Phi^m(L)$ is generated by the zero sections $\{S_v|v \in V(T)\}$ by taking direct sums, shifts, and cones of $x$ and $z$-morphisms. 
	Especially, one does not need to take a cone of $y$-morphism to generate $\Phi^m(L)$.
\end{enumerate}
The main idea of proving Theorem \ref{thm pseudo-Anosov intro} originated from the expectation ($\star$).
In the rest of the subsection, we explain how ($\star$) motivates Theorem \ref{thm pseudo-Anosov intro}, as well as why it is natural to expect ($\star$).

First, let us introduce the idea of Thurston for proving Property \ref{property 1} briefly. 
For more details, we refer the reader to \cite[Chapter 15]{Farb-Margalit12}. 
If a pseudo-Anosov surface map $\Phi: \Sigma \to \Sigma$ is given, the idea is to use a combinatorial tool called {\em measured train track} in order to keep track the change of $\Phi^m(C)$, where $C$ is a simple closed curve in $\Sigma$.
The definition of measured train track will appear after introducing the definitions of trains track and weight.
\begin{definition}
	\label{def train track} 
		A {\bf train track} in a surface $\Sigma$ is a smoothly embedded trivalent graph in $\Sigma$. 
		The term ``smoothly embedded" means that at all points, including vertices, of a train track, the tangent line is well-defined.  
\end{definition}
An example and non-example of train track are given in Figure \ref{figure train track} (a) and (b). 
We note that at each vertex of a train track, the three (half-)edges meeting at the vertex are divided into two sets, one on each side of the vertex. 

\begin{figure}
	\centering
\begingroup%
  \makeatletter%
  \providecommand\color[2][]{%
    \errmessage{(Inkscape) Color is used for the text in Inkscape, but the package 'color.sty' is not loaded}%
    \renewcommand\color[2][]{}%
  }%
  \providecommand\transparent[1]{%
    \errmessage{(Inkscape) Transparency is used (non-zero) for the text in Inkscape, but the package 'transparent.sty' is not loaded}%
    \renewcommand\transparent[1]{}%
  }%
  \providecommand\rotatebox[2]{#2}%
  \newcommand*\fsize{\dimexpr\f@size pt\relax}%
  \newcommand*\lineheight[1]{\fontsize{\fsize}{#1\fsize}\selectfont}%
  \ifx\svgwidth\undefined%
    \setlength{\unitlength}{340.15748031bp}%
    \ifx\svgscale\undefined%
      \relax%
    \else%
      \setlength{\unitlength}{\unitlength * \real{\svgscale}}%
    \fi%
  \else%
    \setlength{\unitlength}{\svgwidth}%
  \fi%
  \global\let\svgwidth\undefined%
  \global\let\svgscale\undefined%
  \makeatother%
  \begin{picture}(1,0.61666667)%
    \lineheight{1}%
    \setlength\tabcolsep{0pt}%
    \put(0,0){\includegraphics[width=\unitlength,page=1]{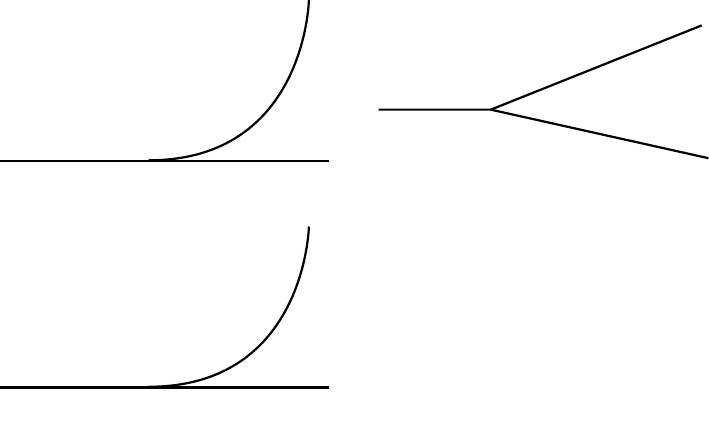}}%
    \put(0.05694931,0.08749532){\color[rgb]{0,0,0}\makebox(0,0)[lt]{\lineheight{1.25}\smash{\begin{tabular}[t]{l}$5$\end{tabular}}}}%
    \put(0.37543533,0.22616912){\color[rgb]{0,0,0}\makebox(0,0)[lt]{\lineheight{1.25}\smash{\begin{tabular}[t]{l}$2$\end{tabular}}}}%
    \put(0.38824925,0.09117798){\color[rgb]{0,0,0}\makebox(0,0)[lt]{\lineheight{1.25}\smash{\begin{tabular}[t]{l}$3$\end{tabular}}}}%
    \put(0,0){\includegraphics[width=\unitlength,page=2]{traintrack.pdf}}%
    \put(0.18522427,0.35057165){\makebox(0,0)[lt]{\lineheight{1.25}\smash{\begin{tabular}[t]{l}$(a)$\end{tabular}}}}%
    \put(0.18522427,0.0086713){\makebox(0,0)[lt]{\lineheight{1.25}\smash{\begin{tabular}[t]{l}$(c)$\end{tabular}}}}%
    \put(0.72571846,0.0086713){\makebox(0,0)[lt]{\lineheight{1.25}\smash{\begin{tabular}[t]{l}$(d)$\end{tabular}}}}%
    \put(0.72571846,0.35057165){\makebox(0,0)[lt]{\lineheight{1.25}\smash{\begin{tabular}[t]{l}$(b)$\end{tabular}}}}%
  \end{picture}%
\endgroup%
		
	\caption{(a) is an example of train track, (b) is a non-example of train track,
	(c) is an example of weight, and (d) shows how to construct an isotopy class of curves from weights on a train track.}
	\label{figure train track}
\end{figure}

\begin{definition}
	\label{def measured train track}
	\mbox{}
	\begin{enumerate}
		\item A {\bf weight} of a train track is a set of non-negative integers assigned to edges of the train track. 
		\item A weight satisfies the {\bf switch condition} if the sums of the weights on each side of each vertex are equal to each other.
		\item A {\bf measured train track} is a pair of a train track and a weight on the train track satisfying the switch condition. 
	\end{enumerate}
\end{definition}
\begin{remark}
	We note that in Definition \ref{def measured train track} (1), we follow the convention of \cite{Farb-Margalit12}. 
	Sometimes, a weight is defined to be a collection of non-negative {\em real numbers}, instead of integers.
\end{remark}

We would like to point out that if one has a measured train track, then one can construct an isotopy class of a curve from the given measured train track, as follows: 
In a small neighborhood of the train track, one can consider parallel copies of each edges. 
The number of copies of an edge is the same as the weight assigned to the edge. 
Then, because of the switch condition, there is a unique way to connect the parallel copies of edges. 
By connecting them, one has an isotopy class corresponding to the given measured train track. 
See Figure \ref{figure train track} (c) and (d) for an example. 

If an isotopy class of a curve is obtained from a measured train track, i.e., a pair of a train track $\mathcal{R}$ and a weight on $\mathcal{R}$, by the above procedure, then we can roughly think that the isotopy class can be {\em encoded on} the train track $\mathcal{R}$.
More formally, we define the notion of {\em carried by}.
\begin{definition}
	\label{def carried by}
	A curve $C \subset \Sigma$ is {\bf carried by} a train track $\mathcal{R}$ if there is a weight $w_C$ on $\mathcal{R}$ such that the measured train track $(\mathcal{R}, w_C)$ gives the isotopy class of $C$ through the above process. 
\end{definition}

If $\Phi: \Sigma \to \Sigma$ is a pseudo-Anosov surface map, it is known that there is a train track $\mathcal{R}_\Phi$ and a matrix $M_\Phi$ such that 
\begin{enumerate}
	\item[$(\square)$] if a curve $C$ is carried by the train track $\mathcal{R}_\Phi$ with a weight $w_C$, then $\Phi(C)$ is also carried by $\mathcal{R}_\Phi$ with the weight $M_\Phi \cdot w_C$. 
\end{enumerate}
We note that since the weight $w_C$ is a set of non-negative integers, we can see $w_C$ as a vector in a Euclidean space $\mathbb{R}^K$ where $K$ is the number of edges in $\mathcal{R}_\Phi$.
Then, the matrix multiplication $M_\Phi \cdot w_C$ makes sense, and to keep track of changes of $\Phi^n(C)$ as $n$ increases, it is enough to study the changes of $M_\Phi^n \cdot w_C$ as $n \to \infty$. 
Moreover, $\Phi$ also satisfies that,
\begin{enumerate}
	\item[$(\blacksquare)$] for any simple closed curve $C$, there exists a natural number $m$ such that $\Phi^m(C)$ is carried by $\mathcal{R}_\Phi$. 
\end{enumerate}
Thus, for any simple closed curve $C$, the matrix $M_\Phi$ could be a tool of keeping track of the asymptotic behavior of $\Phi^n(C)$ as $n \to \infty$.

In \cite{Lee19}, the second-named author generalized the above arguments in a symplectic setting. 
More precisely, if $\Phi: P_N(T) \to P_N(T)$ is a symplectic automorphism of Penner type, \cite{Lee19} constructed a generalization of the train track $\mathcal{R}_\Phi$ associated to $\Phi$, called {\em Lagrangian branched submanifold}, such that if a Lagrangian $L$ is carried by the Lagrangian branched submanifold associated to $\Phi$, then $\Phi(L)$ is also carried by it. 
We note that the notion of ``carried by" in symplectic setting is a generalized one, and is slightly different from the notion in Definition \ref{def carried by}.

For convenience, let $\mathcal{L}_\Phi$ denote the Lagrangian branched submanifold associated to $\Phi$. 
Let us recall that on a surface $\Sigma$, if a curve $C$ is carried by a train track $\mathcal{R}$, then the isotopy class of $C$ can be recovered for the corresponding weight and $\mathcal{R}$.
And, at least locally, $C$ looks like a parallel copy of $\mathcal{R}$. 
Similarly, if $L$ is carried by $\mathcal{L}_\Phi$, then we expect that $L$ seems {\em similar} to $\mathcal{L}_\Phi$.

Let us briefly review the construction of $\mathcal{L}_\Phi$.
To do that, let us fix a simple example.
The example we consider is the plumbing of two $T^*S^N$ along the Dynkin diagram of $A_2$-type, or simply $P_N(A_2)$. 
Because $A_2$ has two vertices, there exist one positive and one negative vertex. 
Let $S_u$ (resp.\ $S_w$) denote the zero section of $T^*S^N$ corresponding to the positive (resp.\ negative) vertex.
And, let $\tau_u$ (resp.\ $\tau_w$) denote the Dehn twist along $S_u$ (resp.\ $S_w$).
In this setting, 
\begin{itemize}
	\item if a Penner type symplectic automorphism $\Phi: P_N(A_2) \to P_N(A_2)$ is a product of $\tau_u$ and $\tau_w^{-1}$, $\mathcal{L}_\Phi$ is either $\left(S_u \cup \tau_u(S_w)\right)$ or $\left(S_w \cup \tau_u(S_w)\right)$, and 
	\item if a Penner type symplectic automorphism $\Phi^{-1}$ is a product of $\tau_u$ and $\tau_w^{-1}$, $\mathcal{L}_\Phi$ is either $\left(S_w \cup \tau_w(S_u)\right)$ or $\left(S_u \cup \tau_w(S_u)\right)$.
\end{itemize}

Let assume that $\Phi$ is the first case, i.e., $\Phi$ is a product of positive Dehn twists and the inverses of negative Dehn twists.
We would like to emphasize that $\tau_u(S_w) = \tau_w^{-1}(S_u)$.  
We also note that $\tau_u(S_w) = \tau_w^{-1}(S_u)$ is equivalent to a cone of the $x$-morphism $\in \lhom_{\Fuk}^*(S_u,S_w)$. 
Thus, if $L$ is {\em similar} to $\mathcal{L}_\Phi$ in $\Fuk$ {\em categorically} as we expected, then $L$ is generated from $\{S_v\}_{v \in V(T)}$ by taking direct sums, shifts, and 
\begin{itemize}
	\item cones of $z$-morphisms, which corresponds to $S_u$ or $S_w$ part of the Lagrangian branched submanifold, and
	\item cones of $x$-morphisms, which corresponds to $\tau_u(S_w)=\tau_w^{-1}(S_u)$ part of the Lagrangian branched submanifold. 
\end{itemize}
In other words, one can expect that $(\star)$ holds. 

We note that for a general tree $T$, not just $A_2$, a similar construction of $\mathcal{L}_\Phi$ holds. 
For more details, see \cite[Section 3]{Lee19}.
Thus, for a general $T$ and a Penner type $\Phi$, we can expect that if $L$ is carried-by $\mathcal{L}_\Phi$, the $(\star)$ holds. 

For convenience, we define the following:
\begin{definition}
	\label{def categorically carried by}
	Let $T$ be a tree, and let $E$ be a nonzero object of $\Fuk$. 
	We say that $E$ is {\bf categorically carried-by} if $E$ satisfies the condition $(\star)$, i.e., $E$ is generated from $\{S_v\}_{v \in V(T)}$ by taking the direct sums, shifts, and cones of $x$- and $z$-morphisms. 
\end{definition}

\begin{remark}
	\label{rmk categorically carried by} 
	\mbox{}
	\begin{enumerate}
		\item We note that Definition \ref{def categorically carried by} is a brief introduction to the notion of categorically carried-by and the notion is more rigorously defined in Section \ref{subsection sketch of the proof}.
		See Definition \ref{def categorically carried by 2}.
		\item We would like to point out that it would be more natural to define the term {\em categorically carried-by ``something.''}
		However, we remark that to the best of the author's knowledge, the categorical counter part of Lagrangian branched submanifold is not defined/studied in the literature. 
		In order to stress out the similarity, we use the expression ``$E$ is categorically carried-by'' without specifying what carries the object $E$.
		\item We also would like to point out that the condition ($\star$) is dependent on the choice of the sign convention. 
		By taking it into account, one can use the following expression:
		$E$ is categorically carried-by the {\em sign convention}. 
	\end{enumerate}
\end{remark}

We can also expect that if $L$ is categorically carried-by, a categorical generalization of $(\square)$ holds.
To give a formal statement of the expected generalization of $(\square)$, let us recall that every object of $\Fuk$ is generated by $\{S_v | v \in V(T)\}$.
Let $a_v$ be the minimal number of $S_v$ we need to generate $E$, and let $\vect(E) = \left(a_v\right)_{v \in V(T)}$. 
Then, the expected generalization is the following:
\begin{enumerate}
	\item[$(\square')$] If $E$ is categorically carried-by, then so is $\Phi E$.
	Moreover, there exists a matrix $M_\Phi$ such that $\vect\left(\Phi(L)\right) = M_\Phi \cdot \vect\left(L\right)$. 
\end{enumerate}
If the above expectation holds, then at least for a categorically carried-by $E$, it is enough to analyze $M_\Phi$ to study the changes in the sequence $\Phi^n E$ as $n$ increases.
It explains why $(\star)$ motivates Theorem \ref{thm pseudo-Anosov intro}.

In order to complete the proof of Theorem \ref{thm pseudo-Anosov intro}, we also should consider an object $E$ which is {\em not} categorically carried-by. 
For such a $E$, one can expect that $(\blacksquare)$ could be generalized, i.e., 
\begin{enumerate}
	\item[$(\blacksquare')$] for any object $E$, there exists a natural number $m$ such that $\Phi^n E$ is categorically carried-by, or equivalently, for generating $\Phi^n E$, taking cone of a $y$-morphism is not needed.
\end{enumerate}

However, one can easily find an object $E$ such that $(\blacksquare')$ does not hold. 
We expect that the problem happens when we convert the geometric setting to the categorical one. 
To be more precise, let us note that our triangulated category $\Fuk$ can have {\em nongeometric} objects, i.e., objects which {\em cannot} be represented by a Lagrangian submanifold.
For those nongeometric objects, the geometric motivation from the surface theory and \cite{Lee19} could not work.
We also note that the example of $E$ not satisfying $(\blacksquare')$ is a nongeometric object. 

In order to handle the objects $E$ such that $(\blacksquare')$ does not hold, we focus on the number of (cones of) $y$-morphisms in the generation of $\Phi^n E$. 
Since $\Phi^n E$ is not categorically carried-by, the number of $y$-morphisms is not zero, but we can show that the number {\em decreases} as $n$ increases. 
For more details, see Section \ref{section proof of thm pseudo-Anosov}. 
Than, based on the observation, we will prove that for any object $E$, if $n$ is large enough, $\Phi^n E$ can be divided into two parts, one containing all $y$-morphisms and the other which is independent of (the effects of) $y$-morphisms, or equivalently, categorically carried-by. 
See Lemma \ref{lem partially carried-by}.

\begin{remark}
	\label{rmk weak pseudo-Anosov}
	In \cite[Section 3.1]{Fan-Filip-Haiden-Katzarkov-Liu21}, the authors proved that if $T$ is the $A_2$-quiver, and if $N \geq 3$ is an odd number, then every Penner type autoequivalence is {\em weak pseudo-Anosov}. 
	The idea in \cite{Fan-Filip-Haiden-Katzarkov-Liu21} is to keep track of the asymptotic behavior of Penner type autoequivalence $\Phi$, i.e., $\Phi^n E$, on the Grothendieck group. 
	However, if the equivalence class of $E$ is zero in the Grothendieck group, then one cannot keep track of the asymptotic behavior. 
	Because of the difficulty, \cite{Fan-Filip-Haiden-Katzarkov-Liu21} considered odd $N$ only, and even for an odd $N$, $\Phi$ is {\em weak} pseudo-Anosov. 
\end{remark}

Based on the idea given in the current subsection, we prove Theorem \ref{thm pseudo-Anosov intro} in Sections \ref{section preparations for the proof of pseudo-Anosov theorem}--\ref{section proof of thm pseudo-Anosov}. 
Before that, we explain preliminaries in Sections \ref{section twisted complex}--\ref{section actions on the space of stability conditions}.

\section{Twisted complex}
\label{section twisted complex}
Section \ref{section twisted complex} consists of two subsections. 
Section \ref{subsection definition of twisted complex} is a preliminary section in where we will define the notion of twisted complex, set notation, and introduce well-known properties of twisted complexes. 
In Section \ref{subsection twisted complexes in the Fukaya category of a plumbing space}, we will apply the contents of Section \ref{subsection definition of twisted complex} to our setting, i.e., the category $\Fuk$ associated to a tree $T$. 

\subsection{Definition of Twisted complex} 
\label{subsection definition of twisted complex}
In this subsection, we briefly review the notion of twisted complexes. 
Since there are many references for the notion, we omit details. 
However, we state Lemma \ref{lem induced functor} that plays an important role in the paper. 
We note that most of this subsection follows \cite[Chapter 3]{Seidel08}.

We fix an $\mathcal{A}_\infty$-category $\mathcal{C}$ over a field $\mathbb{k}$ in this subsection. 
\begin{definition}
	\label{def twisted complex}
	A {\bf twisted complex} $E$ in $\mathcal{C}$ is a pair 
	\[E= \left(\bigoplus_{i\in I} V \otimes X_i, F = (F_{j,i})\right),\]
	such that 
	\begin{itemize}
		\item the index set $I$ is an ordered finite set (and, for simplicity, we always assume that $I =\{1, 2, \dots, k\}$ for some $k \in \mathbb{N}$),
		\item $V_i$ is a finite-dimensional $\mathbb{k}$-vector space, 
		\item $X_i$ is an object of $\mathcal{C}$, 
		\item for every $j, i \in I$, $F_{j,i} \in \mathrm{Mor}(V_i, V_j) \otimes \shom^1_\mathcal{C}(X_i,X_j)$ where $\mathrm{Mor}(V_i,V_j)$ is the space of $\mathbb{k}$-linear maps from $V_i$ to $V_j$, 
		\item $F$ is {\em strictly lower-triangular}, i.e., for every $j \leq i$, $F_{j,i} = 0 \in \mathrm{Mor}(V_i,V_j) \otimes \shom^1_\mathcal{C}(X_i,X_j)$,  
		\item $F$ satisfies the {\em generalized Maurer--Cartan equation}.
	\end{itemize}
\end{definition}
For more details, for example, the generalized Maurer--Cartan equation, see \cite[Chapter 3]{Seidel08}. 
For simplicity, we assume that if 
\[\psi_{j,i} \otimes f_{j,i} = 0 \in \mathrm{Mor}(V_i, V_j) \otimes \hom_\mathcal{C}(X_i, X_j),\] then $\psi_{j,i}=0$ and $f_{j,i}=0$. 

Let $\Tw(\mathcal{C})$ denote the category of twisted complexes in $\mathcal{C}$.
Then, it is well-known that $\Tw(\mathcal{C})$ also admits an $\mathcal{A}_\infty$-category structure.
Let $\left\{\mu_{\Tw(\mathcal{C})}^d\right\}_{d \in \mathbb{N}}$ denote the higher structure maps of $\Tw(\mathcal{C})$.

\begin{definition}
	\label{def chain of arrows}
	Let $E = \left(\bigoplus_{i \in I} V_i \otimes X_i, (F_{j,i})\right)$ be a twisted complex in $\mathcal{C}$.
	A {\bf chain of arrows of $\boldsymbol{E}$ from $\boldsymbol{V_i \otimes X_i}$ to $\boldsymbol{V_j \otimes X_j}$} is a subcollection 
	\[\left\{F_{i_1,i_0}, F_{i_2, i_1}, \dots, F_{i_k,i_{k-1}}\right\} \subset \left\{F_{j,i} | i, j \in I\right\}.\]
	We note that $V_i \otimes X_i$ is regarded a twisted complex in $\mathcal{C}$ with the index set $\{i\}$.
\end{definition}

Before going further, we note that we can express a twisted complex $E= \left(\bigoplus_{i\in I} V \otimes X_i, F = (F_{j,i})\right)$ as a quiver equipped with extra data, satisfying the following: 
\begin{itemize}
	\item The quiver has $|I|$-many vertices. Let $\{v_i\}_{i \in I}$ denote the set of vertices. 
	\item For each $i <j$, there is an arrow from $v_i$ to $v_j$. Let $e_{j,i}$ denote the arrow from $v_i$ to $v_j$. 
	\item The extra data are assignments of $V_i \otimes X_i$ and $F_{j,i}$ to $v_i$ and $e_{j,i}$.
\end{itemize}
An example is  
\[\left(X_1 \oplus X_2 \oplus X_3, F_{j,i}\right) = \begin{tikzcd}
	X_1 \ar[r, "F_{2,1}"'] \ar[rr, bend left, "F_{3,1}"] & X_2 \ar[r, "F_{3,2}"'] & X_3.
\end{tikzcd}\]

The main goal of Section \ref{subsection definition of twisted complex} is to state Lemma \ref{lem induced functor}.
\begin{lem}
	\label{lem induced functor}
	Let $\Phi: \mathcal{C} \to \mathcal{C}$ be an autoequivalence.
	Then, there is an induced functor $Tw\Phi: Tw\mathcal{C} \to Tw\mathcal{C}$ where $Tw\mathcal{C}$ is the category of twisted complexes in $\mathcal{C}$. 
	Moreover, if $E =\left(\bigoplus_i V_i \otimes X_i, (F_{j,i})\right)$ is a twisted complex in $\mathcal{C}$, then there exists $G_{j,i}$ such that $Tw\Phi(E) = \left(\bigoplus_i V_i \otimes \mathcal{F}(X_i), G=(G_{j,i})\right)$.
\end{lem}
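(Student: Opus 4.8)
\textbf{Proof proposal for Lemma \ref{lem induced functor}.}

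The plan is to construct $Tw\Phi$ explicitly on objects and morphisms using the $\mathcal{A}_\infty$-functor structure of $\Phi$, and then verify it is an autoequivalence by exhibiting $Tw(\Phi^{-1})$ as a quasi-inverse. First I would recall that an $\mathcal{A}_\infty$-autoequivalence $\Phi:\mathcal{C}\to\mathcal{C}$ comes with a family of multilinear maps $\Phi^d:\shom_\mathcal{C}(X_{d-1},X_d)\otimes\cdots\otimes\shom_\mathcal{C}(X_0,X_1)\to\shom_\mathcal{C}(\Phi X_0,\Phi X_d)$ satisfying the $\mathcal{A}_\infty$-functor equations, with $\Phi^1$ the ordinary map on morphism spaces. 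On a twisted complex $E=\left(\bigoplus_i V_i\otimes X_i,\,(F_{j,i})\right)$ I would \emph{define}
\[
Tw\Phi(E):=\left(\bigoplus_i V_i\otimes \Phi(X_i),\ G=(G_{j,i})\right),\qquad
G_{j,i}:=\sum_{k\geq 1}\ \sum_{i=i_0<i_1<\cdots<i_k=j}\Phi^k\!\left(F_{i_k,i_{k-1}},\dots,F_{i_1,i_0}\right),
\]
where each term is interpreted by pairing the $\mathrm{Mor}(V_\bullet,V_\bullet)$-factors by composition and feeding the $\shom^1_\mathcal{C}$-factors into $\Phi^k$; the sum is finite because $F$ is strictly lower-triangular and $I$ is finite. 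Note each summand lands in degree $1$ since $\Phi^k$ has degree $1-k$ on a chain of $k$ degree-one morphisms, so $G_{j,i}\in\mathrm{Mor}(V_i,V_j)\otimes\shom^1_\mathcal{C}(\Phi X_i,\Phi X_j)$, and strict lower-triangularity of $G$ is immediate from that of $F$.

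The first substantive step is to check that $G$ satisfies the generalized Maurer--Cartan equation, so that $Tw\Phi(E)$ is a genuine twisted complex. This is where the $\mathcal{A}_\infty$-functor relations for $\Phi$ enter: expanding $\sum_d \mu^d_\mathcal{C}(G,\dots,G)$ and regrouping the summands according to which $\Phi^k$-blocks they pass through, one matches it term-by-term against $\Phi$ applied to $\sum_d\mu^d_\mathcal{C}(F,\dots,F)$, which vanishes because $E$ is a twisted complex. The bookkeeping is standard but is the technical heart; I would organize it by viewing a twisted complex as the quiver-with-data described above and summing over directed paths. Next I would define $Tw\Phi$ on morphisms: a morphism between twisted complexes is itself a matrix of morphisms in $\mathcal{C}$, and I set $(Tw\Phi)^1$, and more generally $(Tw\Phi)^d$, by the same ``sum over chains of arrows threaded through $\Phi^{\bullet}$'' formula, now with the input morphisms interleaved among the connecting maps $F$. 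Checking that this defines an $\mathcal{A}_\infty$-functor on $Tw\mathcal{C}$ is again a consequence of the $\mathcal{A}_\infty$-functor equations for $\Phi$, by the same combinatorial argument.

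Finally, to see $Tw\Phi$ is an autoequivalence I would apply the construction to a quasi-inverse $\Psi$ of $\Phi$ (with chosen $\mathcal{A}_\infty$-natural transformations $\Psi\Phi\simeq\mathrm{id}$, $\Phi\Psi\simeq\mathrm{id}$) to get $Tw\Psi$, and observe that $Tw$ is compatible with composition of $\mathcal{A}_\infty$-functors and sends natural equivalences to natural equivalences; hence $Tw\Psi$ is a quasi-inverse to $Tw\Phi$. The claimed form $Tw\Phi(E)=\left(\bigoplus_i V_i\otimes\mathcal{F}(X_i),(G_{j,i})\right)$ then holds by construction (with $\mathcal{F}=\Phi$ in the notation of the statement), and the existence of the $G_{j,i}$ is exactly the formula above. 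I expect the main obstacle to be the Maurer--Cartan/relation verification: it is a purely formal diagram chase, but keeping the $V_i$-linear-algebra factors and the $\shom_\mathcal{C}$-factors separate while summing over all chains of arrows requires care. If a cleaner route is preferred, one can instead cite that any (strictly unital) $\mathcal{A}_\infty$-functor extends canonically to twisted complexes — this is standard, e.g. \cite[Chapter 3]{Seidel08} — and simply record the resulting formula for $G_{j,i}$; I would present the explicit construction since the formula for $G$ is used later in the paper.
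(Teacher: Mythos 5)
Your construction is exactly the one the paper is invoking: the paper's proof is a bare citation to \cite[Chapter (3m)]{Seidel08}, and what you write out — the sum over chains of arrows fed through the $\Phi^k$, the Maurer--Cartan check via the $\mathcal{A}_\infty$-functor equations, extension to morphisms, and quasi-invertibility via $Tw\Psi$ — is Seidel's argument that the citation stands in for; the paper even records your formula for $G_{j,i}$ later (Equation \eqref{eqn induced functor} in Remark \ref{rmk nonzero chain of arrows}). One small caveat: you drop the Koszul signs $(-1)^*$ in the formula for $G_{j,i}$, which the paper's Equation \eqref{eqn induced functor} keeps; those signs are needed for the Maurer--Cartan cancellation you describe, so in a full write-up they would have to be carried through.
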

\begin{proof}
	See \cite[Chapter (3m)]{Seidel08}.
\end{proof}

\begin{remark}
	We note that for simplicity, if the induced functor $Tw \Phi$ on $Tw \mathcal{C}$ from a functor $\Phi: \mathcal{C} \to \mathcal{C}$ will be denoted $\Phi$ again. 
	We also note that for a twisted complex $E \in Tw(\mathcal{C})$, $\Phi(E)$ denotes the twisted complex obtained by applying Lemma \ref{lem induced functor}.
	In other words, if $E = \left(\bigoplus_{i \in I} V_i \otimes X_i, (F_{j,i})\right)$, then $\Phi(E) =\left(\bigoplus_i V_i \otimes \mathcal{F}(X_i), G=(G_{j,i})\right)$.
\end{remark}

We also define the notion of sub-twisted complex.
\begin{definition}
	\label{def subcomplex}
	Let $E = \left(\bigoplus_{i=1}^K V_i \otimes X_i, (F_{j,i}) \right)$ be a twisted complex. 
	A twisted complex $E' = \left(\bigoplus_{j=1}^{K'} W_j \otimes Y_j, G_{j,i} \right)$ is a {\bf sub-twisted complex of $\boldsymbol{E}$} if there exists $i_0$ such that 
	\begin{itemize}
		\item for all $1 \leq j \leq K'$, $W_j = V_{i_0 + j}$ and $Y_j = X_{i_0 + j}$, and
		\item for all pair $1 \leq i <j \leq K'$, $G_{j,i} =F_{i_0+j, i_0 +i}$.  
	\end{itemize} 
\end{definition}

\begin{remark}
	\label{rmk definition of subtwisted complex}
	\mbox{}
	\begin{enumerate}
		\item By Lemma \ref{lem induced functor} and Definition \ref{def subcomplex}, one can check that if $E'$ is a sub-twisted complex of $E$, then $\Phi E'$ is a sub-twisted complex of $\Phi E$. 
		\item We also remark that the notion of sub-twisted complex is {\em different} from the notion of {\em subcomplex}.
		One can find the definition of subcomplex in \cite[Section (3l)]{Seidel08}.
	\end{enumerate}
\end{remark}

\subsection{Twisted complexes in $\Fuk$}
\label{subsection twisted complexes in the Fukaya category of a plumbing space}
Throughout Section \ref{subsection twisted complexes in the Fukaya category of a plumbing space}, let $N \geq 3$ be an integer, and let $T$ denote a tree. 
We also assume that the sign convention for $V_+(T)$ and $V_-(T)$ is given. 
In the setting, we will prove Lemma \ref{lem minimal twisted complex}, i.e., every twisted complex in $\Fuk$ is equivalent to a twisted complex of a specific form. 

First, we recall that since $\Fuk$ is generated by $\{S_v|v \in V(T)\}$ and because of Lemma \ref{lem grading}, every object of $\Fuk$ is equivalent to a {\em good} twisted complex defined as follows:
\begin{definition}
	\label{def good twisted complex}
	A twisted complex $E=\left(\bigoplus_{i \in I} V_i \otimes X_i, F_{j,i} \right)$ in $\Fuk$ is {\bf good} if, for every $i$, $X_i = S_{v_i}[d_i]$ for some $v_i \in V(T)$ and for some $d_i \in \mathbb{Z}$.
\end{definition} 

In the most part of the current article, {\em a twisted complex in $\Fuk$} means a {\em good} twisted complex. 
The advantage of considering good twisted complexes is that the arrows of good twisted complexes can be written in a simpler form. 
See Proposition \ref{prop good twisted complex}. 
\begin{prop}
	\label{prop good twisted complex}
	Let $E=\left(\bigoplus_{i \in I} V_i \otimes X_i, F_{j,i} \right)$ be a good twisted complex in $\Fuk$. 
	Then, for all $i < j \in I$, there exist $\psi_{j,i} \in \mathrm{Mor}(V_i,V_j)$ and $f_{j,i} \in \shom_{\Fuk}^1(S_{v_i}[d_i], S_{v_j}[d_j])$ such that
	\begin{itemize}
		\item $F_{j,i} = \psi_{j,i} \otimes f_{j,i}$, and
		\item $f_{j,i}$ is one of $x, y, z$-morphisms, $e_v$, or the zero morphism.    
	\end{itemize}
\end{prop}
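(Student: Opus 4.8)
The plan is to unwind the definition of a \emph{good} twisted complex and then invoke Lemma \ref{lem grading}, which completely records the morphism spaces among the shifts of the generators $\{S_v\}_{v\in V(T)}$. The statement is essentially a formal consequence of the fact that all the relevant $\shom$ spaces are at most one-dimensional with explicitly named generators.

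First, since $E$ is good, each summand is $X_i = S_{v_i}[d_i]$ for some vertex $v_i\in V(T)$ and some $d_i\in\Z$, so the entry $F_{j,i}$ lies in $\mathrm{Mor}(V_i,V_j)\otimes\shom^1_{\Fuk}(S_{v_i}[d_i],S_{v_j}[d_j])$. Using the degree-shift identity $\shom^1_{\Fuk}(S_{v_i}[d_i],S_{v_j}[d_j])\cong\shom^{1+d_i-d_j}_{\Fuk}(S_{v_i},S_{v_j})$ together with Lemma \ref{lem grading}, the space $\shom^1_{\Fuk}(S_{v_i}[d_i],S_{v_j}[d_j])$ is at most one-dimensional, and whenever it is nonzero it is spanned by exactly one of the morphisms listed in Definition \ref{def xyz maps}: namely $e_{v_i}$ when $v_i=v_j$ and $d_j=d_i+1$; $z_{v_i}$ when $v_i=v_j$ and $d_j=d_i+1-N$; $x_{v_i,v_j}$ when $v_i\in V_+(T)$, $v_j\in V_-(T)$, $v_i\sim v_j$ and $d_j=d_i$; and $y_{v_i,v_j}$ when $v_i\in V_-(T)$, $v_j\in V_+(T)$, $v_i\sim v_j$ and $d_j=d_i+2-N$ (up to the sign convention chosen for shifts).

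With this computation in hand the conclusion follows by a two-case split. If $\shom^1_{\Fuk}(S_{v_i}[d_i],S_{v_j}[d_j])=0$, then necessarily $F_{j,i}=0$, and we take $\psi_{j,i}=0$ and $f_{j,i}=0$. Otherwise this space equals $\mathbb{k}\langle g_{j,i}\rangle$ for one of the named morphisms $g_{j,i}$, and the canonical isomorphism $\mathrm{Mor}(V_i,V_j)\otimes\mathbb{k}\langle g_{j,i}\rangle\cong\mathrm{Mor}(V_i,V_j)$ forces $F_{j,i}=\psi_{j,i}\otimes g_{j,i}$ for a unique $\psi_{j,i}\in\mathrm{Mor}(V_i,V_j)$; we then set $f_{j,i}=g_{j,i}$, which by construction is an $x$-, $y$-, or $z$-morphism or some $e_v$. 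This is consistent with the simplicity convention adopted just after Definition \ref{def twisted complex}, and no appeal to the generalized Maurer--Cartan equation or to strict lower-triangularity beyond the hypothesis $i<j$ is required.

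I do not expect a genuine obstacle here. The only point demanding a little care is the bookkeeping of the grading shift — that is, confirming that each of $e_v,z_v,x_{u,w},y_{w,u}$ can genuinely occur as a \emph{degree-one} arrow between suitably shifted copies of the generators — but as the list above shows, this is a routine verification directly from Lemma \ref{lem grading}.
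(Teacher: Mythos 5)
Your argument is correct and follows the paper's own proof: since $E$ is good, each summand is $S_{v_i}[d_i]$, and Lemma \ref{lem grading} forces $\shom^1_{\Fuk}(X_i,X_j)$ to be either zero or one-dimensional spanned by one of $e_v$, $x_{u,w}$, $y_{w,u}$, $z_v$, from which the factorization $F_{j,i}=\psi_{j,i}\otimes f_{j,i}$ is immediate. The only caveat is a sign in your degree bookkeeping: with the paper's shift convention one has $\shom^1_{\Fuk}(S_{v_i}[d_i],S_{v_j}[d_j])\cong\shom^{1-d_i+d_j}_{\Fuk}(S_{v_i},S_{v_j})$, so for instance an $e_v$-arrow requires $d_j=d_i-1$ (consistent with the usage in the proof of Lemma \ref{lem minimal twisted complex}) rather than $d_j=d_i+1$ — but this is exactly the ambiguity you already flagged and it changes none of the dichotomy or the conclusion.
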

\begin{proof}
	Since $E$ is a good twisted complex, for every $i \in I$, $X_i$ is a shift of $S_v$ for some $v \in V(T)$. 
	Thus, Lemma \ref{lem grading} implies that 
	\[\dim\hom_{\Fuk}^1\left(X_i, X_j\right) = 1 \text{  or  } 0.\]
	Thus, every element of $\mathrm{Mor}(V_i, V_j) \otimes \hom_{\Fuk}(X_i, X_j)$ can be written in the desired way. 
\end{proof}
In the rest of the paper, we assume that every good twisted complex in $\Fuk$ satisfies the two conditions of Proposition \ref{prop good twisted complex}.
Then, for a good twisted complex, we can define the notion of {\em nonzero chain of arrows}.
\begin{definition}
	\label{def nonzero chain of arrows}
	Let $E = \left(\bigoplus_{i \in I} V_i \otimes S_{v_i} [d_i], F_{j,i} = \psi_{j,i} \otimes f_{j,i}\right)$. 
	Then, a chain of arrows $\left\{F_{i_1,i_0}, \dots, F_{i_k,i_{k-1}}\right\}$ is {\bf nonzero} if the composition of linear parts of the chain of arrows is nonzero, i.e., 
	\[\psi_{i_k,i_{k-1}} \circ \dots \circ \psi_{i_1,i_0} \neq 0.\]
\end{definition}

\begin{remark}
	\label{rmk nonzero chain of arrows}
	The importance of Definition \ref{def nonzero chain of arrows} arises when we compute $\Phi(E)$ where 
	\[E = \left(\bigoplus_{i \in I} X_i, F_{j,i} = \psi_{j,i} \otimes f_{j,i}\right) \in \Fuk\]
	is a good twisted complex and $\Phi$ is an autoequivalence on $\Fuk$, by applying Lemma \ref{lem induced functor}. 
	By applying Lemma \ref{lem induced functor}, we know that 
	\[\Phi(E) \simeq \left(\bigoplus_{i \in I} \Phi(X_i), G_{j,i}\right),\]
	for some $G_{j,i}$. 
	Moreover, \cite[Chapter (3m)]{Seidel08} provides a formula for $G_{j,i}$ and the formula is 
	\begin{gather}
		\label{eqn induced functor}
		G_{j,i} = \sum_{i_0 =i < i_1 < \dots < i_k=j} (-1)^* \psi_{i_k,i_{k-1}} \circ \dots \circ \psi_{i_1, i_0} \Phi^k\left(f_{i_k,i_{k-1}}, \dots, f_{i_1,i_0}\right).
	\end{gather}
	We note that in \cite{Seidel08}, $\Phi$ is a $\mathcal{A}_\infty$-functor, thus $\Phi^k$ is defined as a part of the $\mathcal{A}_\infty$-functor.
	We also note that we are omitting the details on the sign $(-1)^*$. 
	By the formula \eqref{eqn induced functor}, $G_{j,i}$ is determined by the nonzero chain of arrows from $X_i$ to $X_j$. 
	Especially, if there exists no nonzero chain of arrows from $X_i$ to $X_j$, then $G_{j,i}$ should be the zero map. 
\end{remark}

We also define a {\em minimal} twisted complex.
\begin{definition}
	\label{def minimal twisted complex}
	A good twisted complex $E=\left(\bigoplus_{i \in I} V_i \otimes S_{v_i}[d_i], (\psi_{j,i} \otimes f_{j,i}) \right)$ in $\Fuk$ is {\bf minimal} if there exists no $j >i$ such that $f_{j,i} = e_v$.
\end{definition}

Lemma \ref{lem zero differential for minimal} explains why we need minimal twisted complexes. 
Before that, let us remark that $\Fuk \subset \wrapped$. 
Thus, $\Tw(\Fuk) \subset \Tw(\wrapped)$ and if $E$ is a twisted complex in $\Fuk$, $E$ is also a twisted complex in $\wrapped$. 

\begin{lem}
	\label{lem zero differential for minimal} 
	Let $E$ be a minimal twisted complex in $\Fuk$. 
	Then, the morphism space \[\shom^*_{\Tw(\wrapped)}\left(E, L_v\right)\] has the zero differential for all $v \in V(T)$. 
\end{lem}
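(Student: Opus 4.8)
The plan is to compute the differential on $\shom^*_{\Tw(\wrapped)}(E,L_v)$ directly from the $A_\infty$-structure on twisted complexes, using the fact that $E$ is a good \emph{minimal} twisted complex built from shifts of the zero sections $S_{w}$ and the arrows $f_{j,i}$ which are $x$-, $y$-, or $z$-morphisms (no $e_w$ appears, by minimality). First I would recall that for a twisted complex $E=\bigl(\bigoplus_i V_i\otimes S_{v_i}[d_i],\ (\psi_{j,i}\otimes f_{j,i})\bigr)$, an element of $\shom^*_{\Tw(\wrapped)}(E,L_v)$ is a tuple of morphisms $S_{v_i}[d_i]\to L_v$, and the differential $\mu^1_{\Tw(\wrapped)}$ applied to such a tuple is the sum of the internal differentials of $\shom^*_{\wrapped}(S_{v_i}[d_i],L_v)$ — which vanish by Lemma~\ref{lem wrapped}(4) — together with the correction terms $\mu^{k+1}_{\wrapped}(\alpha, f_{i_k,i_{k-1}},\dots,f_{i_1,i_0})$ running over chains of arrows of $E$, where $\alpha\in\shom^*_{\wrapped}(S_{v_{i_k}}[d_{i_k}],L_v)$. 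So the entire differential is carried by these higher products, and the claim reduces to showing every such product vanishes.

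The key step is a degree/support count on each term $\mu^{k+1}_{\wrapped}(\alpha, f_{i_k,i_{k-1}},\dots,f_{i_1,i_0})$. By Lemma~\ref{lem wrapped}(2)--(3), $\shom^*_{\wrapped}(S_w,L_v)$ is nonzero only when $w=v$ and then is one-dimensional concentrated in degree $N$; so $\alpha$ forces $v_{i_k}=v$ up to shift, and the output of the product must land in $\shom^*_{\wrapped}(S_{v_{i_0}}[d_{i_0}],L_v)$, which is nonzero only if $v_{i_0}=v$ too. Thus any nonvanishing term requires a \emph{nonzero} chain of arrows from an $S_v$-summand to another $S_v$-summand. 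Now I would analyze what such a chain can look like: each $f_{j,i}$ is an $x$-, $y$-, or $z$-morphism, with $|x|=1$, $|y|=N-1$, $|z|=N$; an $x$- or $y$-morphism changes the underlying vertex to an adjacent one, while a $z$-morphism keeps the vertex fixed but has degree $N$. Since $T$ is a tree, a chain of $x/y$-arrows returning to the same vertex must backtrack along edges, and I would need to rule this out or show such compositions are zero as morphisms — this is where the good-twisted-complex normal form and the $A_\infty$-relations (or explicit knowledge of the products $\mu^k$ on the generators coming from the Ginzburg algebra $\Gamma_N T$) enter. Combined with a degree bookkeeping — the total degree of $\mu^{k+1}$ output is $\sum|f| + |\alpha| + (2-k-1)$, and this must equal $N$ for a nonzero target — one pins down the arithmetic so tightly that no admissible configuration survives.

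The main obstacle I anticipate is precisely the combinatorial-plus-algebraic argument that no nonzero chain of arrows connects two $S_v$-summands in a \emph{minimal} good twisted complex in a way that contributes to $\mu^{k+1}$ with the $L_v$-input. The subtlety is that minimality only forbids $e_w$ as an arrow; it does not a priori forbid a composition like $S_v \xrightarrow{x} S_{w} \xrightarrow{y} S_v$ (degrees $1+(N-1)=N$), so I cannot argue by degrees alone for $k=2$. The resolution should come from the structure of $\Gamma_N T$: the relevant $A_\infty$-products $\mu^3_{\wrapped}(\alpha, y, x)$ or $\mu^3_{\wrapped}(\alpha, x, y)$ either vanish identically for the tree quiver, or the composed linear part $\psi\circ\psi$ of such a backtracking chain is forced to be zero in a minimal complex (one should be able to cancel/reduce any such configuration, which is essentially why one can always pass to a minimal model). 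I would make this precise by invoking the explicit endomorphism algebra description in Table~\ref{table category=symplectic}, showing that $x_{u,w}y_{w,u}$ and $y_{w,u}x_{u,w}$ are (up to the $A_\infty$-relations and the CY pairing) proportional to $z$-morphisms of degree $N$ rather than to new higher products hitting $L_v$, and then checking the remaining low-$k$ cases by hand. Once every higher product into $\shom^*_{\wrapped}(-,L_v)$ is shown to vanish, the differential on $\shom^*_{\Tw(\wrapped)}(E,L_v)$ is zero, as claimed.
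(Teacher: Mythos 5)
Your proposal follows the same approach as the paper's proof: express $\mu^1_{\Tw(\wrapped)}$ on $\shom^*_{\Tw(\wrapped)}(E,L_v)$ as a sum of higher products $\mu^{k+1}_{\wrapped}(\alpha,f_{j_{k+1},j_k},\dots,f_{j_2,j_1})$ over chains of arrows of $E$; use that $\shom^*_{\wrapped}(S_w,L_v)$ is one-dimensional, concentrated in a single degree, and supported only at $w=v$, to pin down the source and target of the chain; and then argue by degree bookkeeping. This is exactly what the paper does.

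Two points deserve flagging. First, your degree formula ``$\sum|f| + |\alpha| + (2-k-1)$ must equal $N$'' sets the target degree to $N$, but the output lands in $\shom^*(S_v[d_{j_1}],L_v)$, which is concentrated in degree $N-d_{j_1}$, and $|\alpha| = N - d_{j_m}$. The correct constraint is therefore $d_{j_m} = d_{j_1}+1$ (the paper's equation). Combined with the fact that each twisted-complex arrow has \emph{shifted} degree $1$, so that $d_{j_{t+1}}-d_{j_t} = \delta_t - 1$ where $\delta_t$ is the raw degree of $f_{j_{t+1},j_t}$, this gives $\sum_t(\delta_t-1)=1$: exactly one arrow of raw degree $2$ and the rest of raw degree $1$. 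With your ``$=N$'' version you would conclude $\sum\delta_t = k-1 < k$ and wrongly infer that \emph{no} configuration survives; the real arithmetic leaves open exactly the $N=3$, length-two chain $S_v\xrightarrow{x}S_w\xrightarrow{y}S_v$ you worry about (for $N\geq 4$ there is no raw-degree-$2$ generator between the $S_w$'s, so everything does vanish).

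Second, you correctly sense that this $x$-then-$y$ chain cannot be ruled out by degree alone, but you do not supply the missing argument. The paper's proof dispatches the same point by asserting that each $f_{j_{t+1},j_t}$ is a $z$-morphism (equivalently, that all intermediate vertices $v_{j_t}$ equal $v$). For $N\geq 4$ this follows from the shift bookkeeping above; for $N=3$ it does not, and one needs a separate reason that $\mu^3_{\wrapped}(\alpha,y_{w,v},x_{v,w})$ cannot contribute to the differential. You gesture toward Lemma~\ref{lem noxyyx}-style cancellation of the composed linear parts $\psi\circ\psi$, but that lemma is proved later in the paper and under the stronger ``simplified'' hypothesis, so it cannot simply be cited here. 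The proposal thus reproduces the paper's outline, together with its one implicit step (the treatment of intermediate vertices when $N=3$), which both your sketch and the paper leave to be filled in explicitly.
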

\begin{proof}
	In this proof, we will work with the higher product structures of $\mathcal{A}_\infty$-categories $\wrapped$ and $\Tw(\wrapped)$. 
	Let $\{\mu^d_{\wrapped}\}_{d \in \mathbb{N}}$ (resp.\ $\{\mu^d_{\Tw(\wrapped)}\}_{d \in \mathbb{N}}$) denote the higher structure maps on $\wrapped$ (resp.\ $\Tw(\wrapped)$).
		
	We note that for a fixed $v \in V(T)$, $L_v \simeq \mathbb{k} \otimes L_v$ is a twisted complex in $\wrapped$.  
	Let 
	\[E=\left(\bigoplus_{i \in I} X_i=(V_i \otimes S_{v_i}[d_i]), (F_{j,i}=\psi_{j,i} \otimes f_{j,i}) \right)\] 
	be a minimal twisted complex in $\Fuk$.
	Then, the morphism space in $\Tw(\wrapped)$  
	\[\shom^*_{\Tw(\wrapped)}(E, L_v) = \bigoplus_{v_{i_s} =v} \otimes \shom^*_{\wrapped}(S_{v_{i_s} = v}[d_{i_s}],L_v) = \bigoplus_{v_{i_s} =v} V_{i_s} \otimes \mathbb{k} [-d_{i_s} - N]\]
	is a chain complex of vector spaces with the differential map $\mu^1_{\Tw(\wrapped)}$. 
	
	Now let us assume that $\mu^1_{\Tw(\wrapped)}(\vec{v} \otimes f) \neq 0$ for some $\vec{v} \otimes f \in V_{i_s} \otimes \shom^*_{\wrapped}(S_v[d_{i_s}],L_v)$. 
	Since $\mu^1_{\wrapped}(f) =0$ for any $f \in \shom^*_{\wrapped}(S_v[d_{i_s}],L_v)$,  there should be an increasing sequence $j_1, j_2, \dots, j_m \in \{i_1, \dots, i_k\}$ of length $m \geq 2$ such that
	\begin{itemize}
		\item $j_m = i_s$,
		\item $0 \neq \mu^m_{\wrapped}(f, f_{j_m,j_{m-1}}, \dots, f_{j_2,j_1}) \in \hom^*_{\wrapped}\left(S_v[d_{j_m}],L_v\right)$. 
	\end{itemize} 
	Then, $\mu^1_{\Tw(\wrapped)}\left(V_{i_s = j_m} \otimes \shom^*_{\wrapped}(S_v[d_{i_s =j_m}],L_v)\right)$ will hit $V_{j_1} \otimes \shom^*_{\wrapped}\left(S_v[d_{j_1}],L_v\right)$.
	
	Since $\mu^1_{\Tw(\wrapped)}$ is a degree $1$ map, and since 
	\[\shom^*_{\wrapped}(S_v[d_{j_m}],L_v) = \mathbb{k}[-d_{j_m}-N] \text{  and  } \shom^*_{\wrapped}(S_v[d_{j_1}],L_v) = \mathbb{k}[-d_{j_1}-N],\]
	we have an equation $-d_{j_m} -N +1 = -d_{j_1} -N$, or equivalently, $d_{j_m}  = d_{j_1} +1$.
	
	On the other hand, because $E$ is a minimal twisted complex, $f_{j_{t+1},j_t} \in \shom^*_{\wrapped}(S_v[d_{j_{t+1}}],S_v[d_{j_t}])$ is a $z$-morphism.
	Also, since all arrows in the twisted complex have degree $1$, $f_{j_{t+1},j_t}$ has degree $1$. 
	These two facts implies that 
	\[f_{j_{t+1},j_t} \in \shom_{\wrapped}^N(S_v,S_v) = \shom_{\wrapped}^{N+d_{j_t} - d_{j_{t+1}}}(S_v[d_{j_t}], S_v[d_{j_{t+1}}])=\shom_{\wrapped}^1(S_v[d_{j_t}], S_v[d_{j_{t+1}}]).\]
	
	As a consequence of the above arguments, we have 
	\[d_{j_m} = d_{j_1} + (m-1)(N-1).\]
	Since $N \geq 3, m \geq 2$, it is a contradiction to $d_{j_m} = d_{j_1} +1$.
	Thus, $\mu^1_{\Tw(\wrapped)}(\vec{v} \otimes f) = 0$ for any $\vec{v} \otimes f \in V_{i_s} \otimes \shom^*_{\wrapped}(S_v[d_{i_s}],L_v)$.
	It implies that the differential map of $\shom^*_{\Tw(\wrapped)}(E, L_v)$ is the zero map for any $v \in V(T)$. 
\end{proof}

Lemma \ref{lem minimal twisted complex} is the main Lemma of the current subsection.
Before discussing the main Lemma, we define Definition \ref{def well-ordered} which appears in the statement of Lemma \ref{lem minimal twisted complex}.

\begin{definition}
	\label{def well-ordered}
	A minimal twisted complex $E=\left(\bigoplus_{i \in I} V_i \otimes S_{v_i}[d_i], (F_{j,i}) \right)$ is {\bf well-ordered} if
	\begin{itemize}
		\item $d_i \leq d_{i+1}$ for all $i$, and 
		\item if $d_i = d_{i+1}$ and $v_i \in V_-(T)$, then $v_{i+1} \in V_-(T)$.
	\end{itemize}
\end{definition}

\begin{lem}
	\label{lem minimal twisted complex}
	\mbox{}
	\begin{enumerate}
		\item[(i)] Every minimal twisted complex $E$ in $\Fuk$ is equivalent to a minimal, well-ordered twisted complex $E'$.
		Moreover, one can choose $E'$ in the way that $E$ and $E'$ have the same nonzero arrows.
		\item[(ii)] For all object $X \in \Fuk$, there exists a minimal twisted complex $E$ such that $X$ and $E$ are equivalent.  
	\end{enumerate}
\end{lem}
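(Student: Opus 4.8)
The plan is to establish part (ii) first, and then deduce (i) by a reordering argument.

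For (ii), I would start from an arbitrary object $X\in\Fuk$. By the generation theorem of Abouzaid--Smith \cite{Abouzaid-Smith12} together with the discussion preceding Definition \ref{def good twisted complex}, $X$ is equivalent to a good twisted complex $E=\bigl(\bigoplus_{i\in I}V_i\otimes S_{v_i}[d_i],\ (F_{j,i}=\psi_{j,i}\otimes f_{j,i})\bigr)$, where by Proposition \ref{prop good twisted complex} each $f_{j,i}$ is an $x$-, $y$-, or $z$-morphism, an identity morphism $e_v$, or zero. If $E$ is not minimal, there is a pair $i_0<j_0$ with $f_{j_0,i_0}=e_v$ and $\psi_{j_0,i_0}\neq 0$; here $X_{i_0}$ and $X_{j_0}$ are the two shifts of the same $S_v$ that put $e_v$ in internal degree $1$. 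Choosing bases of $V_{i_0}$ and $V_{j_0}$ adapted to the rank of $\psi_{j_0,i_0}$, the standard cancellation lemma for twisted complexes (Gaussian elimination; see \cite[Chapter 3]{Seidel08}) splits off from $E$ a direct sum of copies of the contractible complex $\bigl(S_v[d_{i_0}]\xrightarrow{\,e_v\,}S_v[d_{j_0}]\bigr)$, which may be discarded. The result is a good twisted complex equivalent to $X$ with strictly smaller $\sum_{i}\dim V_i$; since this is a nonnegative integer, iterating terminates at a good twisted complex with no identity arrows, that is, a minimal twisted complex equivalent to $X$. Goodness is preserved at each step because the modified arrows are sums of compositions of the old ones, hence again of the form described in Proposition \ref{prop good twisted complex}.

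For (i), let $E=\bigl(\bigoplus_{i\in I}V_i\otimes S_{v_i}[d_i],\ (F_{j,i})\bigr)$ be minimal. The heart of the matter is a degree count. A nonzero arrow $F_{j,i}$ has internal degree $1$, and by minimality and Lemma \ref{lem grading} it is, up to a linear factor, an $x$-, $y$-, or $z$-morphism; comparing degrees after the shifts $[d_i]$ and $[d_j]$ one finds that an $x$-arrow forces $d_i=d_j$ with $v_i\in V_+(T)$ and $v_j\in V_-(T)$, a $y$-arrow forces $d_j-d_i=N-2$, and a $z$-arrow forces $d_j-d_i=N-1$. Since $N\geq 3$, in every case the source precedes the target for the partial order on $I$ defined by ``$d$ increasing, and for equal $d$, vertices of $V_+(T)$ before vertices of $V_-(T)$''. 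This partial order is therefore refined by the original ordering of $I$ (trivially, since $F$ is strictly lower triangular) and also by any total order obeying the two conditions of Definition \ref{def well-ordered}, and such a total order clearly exists.

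Now pick such a total order $\preceq$ on $I$ and let $E'$ be $E$ with $I$ reordered by $\preceq$. Because every nonzero arrow of $E$ runs forward for $\preceq$, the arrow matrix of $E'$ is again strictly lower triangular, so $E'$ is a genuine good, well-ordered twisted complex with exactly the same objects and arrows as $E$---hence the same nonzero arrows, and visibly still minimal. Finally $E\simeq E'$: the original ordering and $\preceq$ are two linear extensions of the same partial order, hence differ by a sequence of transpositions of adjacent, mutually unconnected objects, and each such transposition is realized by the evident isomorphism of twisted complexes. This proves (i). The main obstacle is precisely the degree bookkeeping in the previous paragraph: one must verify, using the shift conventions of \cite{Seidel08} and the morphism-space computation of Lemma \ref{lem grading}, that $y$- and $z$-arrows strictly raise the degree while $x$-arrows preserve it and point from $V_+(T)$ to $V_-(T)$---it is this, together with the hypothesis $N\geq 3$, that makes the degree-then-sign ordering admissible. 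Everything else reduces to the cancellation lemma and to the order-independence of twisted complexes up to isomorphism.
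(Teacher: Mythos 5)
Your approach for part (i) replaces the paper's iterated adjacent swaps with a single global reordering justified by linear-extension theory. The idea is workable, but the execution has a gap. You show that every nonzero arrow runs forward for the partial order defined by ``$d$ increasing, then $V_+(T)$ before $V_-(T)$,'' and then claim the original ordering of $I$ refines this partial order because $F$ is strictly lower triangular. That does not follow: lower-triangularity only forces the original ordering to refine the DAG generated by the nonzero arrows, which is a strictly weaker relation. For a counterexample, take $E=S_u[5]\oplus S_w[0]$ with no arrows, indexed in that order---$F\equiv 0$ is vacuously lower triangular, yet your degree order wants $S_w[0]$ first. So the original ordering and the target well-ordering need not both be linear extensions of your degree order, and the transposition argument, as set up, does not apply. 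The repair is to run the same reasoning with the arrow DAG in place of the degree order: both total orders are linear extensions of it (the original one trivially, the well-ordering by your degree computation, which shows the arrow DAG is contained in the degree order), so they differ by adjacent transpositions of arrow-incomparable pairs, each an isomorphism of twisted complexes. This is essentially the paper's iterated-swap argument repackaged.

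The appeal to Gaussian elimination in part (ii) also glosses over a reordering step. When the identity arrow $\alpha=F_{j_0,i_0}$ connects non-adjacent positions, cancellation produces correction terms of the shape $F_{\ell,i_0}\circ\alpha^{-1}\circ F_{j_0,k}$, and some of these run backward in the index order: with $i_0<\ell<k<j_0$, $F_{\ell,i_0}=\psi\otimes x_{v,w}$, and $F_{j_0,k}=\psi'\otimes y_{w,v}$, the composite is a nonzero $z_w$-arrow from $X_k$ to $X_\ell$, and the degree constraints do not forbid this configuration, so the corrected arrow data is not lower triangular for the original order. To salvage the argument you need to reorder after each cancellation, which in turn requires knowing that the corrected arrows are acyclic---that is true, but it takes its own degree bookkeeping of the same flavour as (i). The paper avoids the issue by using (i) to move the identity arrow to be adjacent before cancelling; since you prove (ii) before (i), that route is closed and the reordering and acyclicity claims need to be justified independently.
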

\begin{proof}
	By Lemma \ref{lem grading}, (i) is easy to prove. 
	To be more precise, let $E=\left(\bigoplus_{i \in I} V_i \otimes S_{v_i}[d_i], (F_{j,i}) \right)$ be a minimal twisted complex with $i$ such that $d_i > d_{i+1}$. 
	Then, by Lemma \ref{lem grading}, $f_{i+1,i}$ should be the zero map. 
	Thus, one can switch the position of $V_i \otimes S_{v_i}[d_i]$ and $V_i \otimes S_{v_{i+1}}[d_{i+1}]$ in $E$ without affecting the quasi-equivalent class of $E$. 
	Since the index set $I$ is a finite set, by repeating the above procedure finitely many times, we can obtain a minimal well-ordered twisted complex $E'$ such that $E \simeq E'$.
	We remark that when we convert $E$ to $E'$, we only change the positions of objects and we do not change any nonzero arrows. 
	
	In order to prove (ii), we recall that every $X \in \Fuk$ is equivalent to a good twisted complex. 
	Thus, without loss of generality, let us assume that $X$ is a good twisted complex, i.e., 
	\[X=\left(\bigoplus_{i \in I} V_i \otimes S_{v_i}[d_i], (F_{j,i}=\psi_{j,i} \otimes f_{j,i}) \right).\]

	We will prove (ii) by the induction on the cardinality of the index set $I$. 
	If $|I|= 1$, then it is easy to show that (ii) holds. 
	
	Now, let assume that $|I| = k$ and the induction hypothesis holds.  
	We apply the following two-step algorithm finding a minimal twisted complex that is equivalent to $X$. 
	
	The first step is to check the minimality of $X$.
	If $X$ is minimal, the algorithm stops immediately. 
	If $X$ is not minimal, then there is a pair of integer $i_0, j_0 \in I$ such that 
	\begin{itemize}
		\item $f_{j_0,i_0} = e_v$ for some $v \in V(T)$, and
		\item $j_0 - i_0 = \min \{j-i | f_{j,i} = e_v \text{  for some  } v \in V(T)\}$.
	\end{itemize}

	The second step consists of two cases. 
	The first case is that the pair $(i_0,j_0)$ satisfies $j_0 > i_0 +1$, and the second case is that the pair $(i_0,j_0)$ satisfies $j_0 = i_0 +1$. 
	
	If the first case happens, i.e., $j_0 - i_0 >1$, then, we consider the following subcomplex 
	\[X_0 = \left(\bigoplus_{i_0 < i < j_0} V_i \otimes S_{v_i}[d_i], (\psi_{j,i} \otimes f_{j,i})\right) \subset X.\]
	Then, by definition, $X_0$ is a minimal twisted complex. 
	
	By applying (i), we know that $X_0$ is equivalent to a well-ordered minimal twisted complex 
	\[X'_0 = \left(\bigoplus_{i_0 < i < j_0} V'_i \otimes S_{v'_i}[d'_i], (F'_{j,i}) \right).\]
	We note that we can convert $X_0$ to $X'_0$ by changing the positions of objects, without changing the nonzero arrows. 
	
	Now, we consider a slightly bigger subcomplex 
	\[X_1 = \left(\bigoplus_{i_0 \leq i \leq j_0} V_i \otimes S_{v_i}[d_i], (\psi_{j,i} \otimes f_{j,i})\right) \subset X.\]
	In $X_1$, it is easy to check that 
	\begin{equation}
		\label{eqn X_1} 
		X_1 \simeq \left(\begin{tikzcd}
			V_{i_0} \otimes S_v[d_{i_0}] \ar[r] \ar[rr, blue, bend left, "\psi_{j_0,i_0} \otimes e_v"] & X'_0 \ar[r] & V_{j_0} \otimes S_v[d_{j_0}]
		\end{tikzcd}\right).
	\end{equation}
	
	Since $X'_0$ is well-ordered, without changing the quasi-equivalence class of $X_1$, one can move $V_{i_0} \otimes S_v[d_{i_0}]$ (resp.\ $V_{j_0} \otimes S_v[d_{j_0}]$) backward (resp.\ forward) in the right-hand side of Equation \eqref{eqn X_1}, {\em up until} $V_{i_0} \otimes S_v[d_{i_0}]$ and $V_{j_0} \otimes S_v[d_{j_0}]$ are adjacent.
	Let $X'_1$ be the twisted complex equivalent to $X_1$, obtained from the above procedure. 
	
	Now, we replace the subcomplex $X_1$ in $X$ with $X'_1$, and let the new twisted complex be denoted by $X$ again. 
	Then, the new $X$ is equivalent to the original twisted complex, and the second case of the second step happens.
	Thus, it is enough to consider the second case of the second step only. 
	
	Let us assume that the second case happens, i.e., $j_0 = i_0 +1$. 
	For simplicity, we let $v$ denote the vertex $v = v_{i_0} = v_{i_0+1}$. 
	We note that $d_{j_0} =d_{i_0} -1$ because $f_{j_0,i_0}=e_v$ is degree $1$ map. 
	We can observe that 
	\[\mathrm{Cone} \left(V_{i_0} \otimes S_v[d_{i_0}] \xrightarrow{\psi_{i_0+1,i_0} \otimes e_v} V_{i_0+1} \otimes S_v[d_{i_0+1}]\right) \simeq \mathrm{Cone} \left(V'_{i_0} \otimes S_v[d_{i_0}] \xrightarrow{\phi'_{i_0+1,i_0} \otimes 0\text{-morphism}} V'_{i_0+1} \otimes S_v[d_{i_0+1}]\right),\]
	with \[V'_{i_0} = \mathrm{Ker} \psi_{i_0+1,i_0} \text{  and  } V'_{i_0+1} = V_{i_0+1} / \mathrm{Im}\psi_{i_0+1,i_0}.\]
	Thus, 
	\[X=\left(\bigoplus_{i \in I} V_i \otimes S_{v_i}[d_i], (F_{j,i}=\psi_{j,i} \otimes f_{j,i}) \right)\simeq X' = \left(\bigoplus_{i \in I} V'_i \otimes S_{v_i}[d_i], (F'_{j,i}=\phi'_{j,i} \otimes f'_{j,i}) \right),\]
	with 
	\[V'_i = \begin{cases}
		V_i \text{  if  } i \neq i_0, i_0+1, \\
		\mathrm{Ker} \psi_{i_0+1,i_0} \text{  if  } i = i_0, \\
		 V_{i_0+1} / \mathrm{Im}\psi_{i_0+1,i_0} \text{  if  } i = i_0 +1.
	\end{cases}\]
	We note that $\psi_{i_0+1,i_0}$ is nonzero. 
	Thus, 
	\[\sum_{i \in I} \mathrm{dim}V_i > \sum_{i \in I} \mathrm{dim}V'_i.\] 
	
	If $\psi_{i_0+1,i_0}$ is injective (resp.\ surjective), $V'_{i_0}$ (resp.\ $V'_{i_0+1}$) is $0$-dimensional.
	Thus, for that case, the induction hypothesis completes the proof. 
		
	If $\psi_{i_0+1, i_0}$ is not injective or surjective, then we repeat the algorithm for the new twisted complex $X'$.  
	Since the algorithm decreases $\left(\sum_{i \in I} \mathrm{dim}V_i\right)$, by applying the algorithm finitely many times, the algorithm should stop, i.e., one can obtain a minimal twisted complex $X'$ such that $X \simeq X'$.  
\end{proof}

\section{Pseudo-Anosov autoequivalences and stability conditions}
\label{section pseudo-Anosov autoequivalences and stability conditions}
Section \ref{section pseudo-Anosov autoequivalences and stability conditions} introduces the definition of {\em pseudo-Anosov autoequivalence} originally given in \cite{Fan-Filip-Haiden-Katzarkov-Liu21}. 
The definition requires for the triangulated category that a pseudo-Anosov autoequivalence is defined on to have a stability condition. 
Thus, we also review the notion of stability conditions in Section \ref{subsection definition of pseudo-Anosov autoequivalence}, and we will discuss the existence of stability conditions on our category $\Fuk$ in the following subsections. 
In the last subsection, we suggest another notion of pseudo-Anosov autoequivalence, which is stronger than that of \cite{Fan-Filip-Haiden-Katzarkov-Liu21}.

\subsection{Definition of pseudo-Anosov autoequivalence}
\label{subsection definition of pseudo-Anosov autoequivalence}
In this subsection, we introduce the main character of this paper, {\em pseudo-Anosov autoequivalences}. 
We note that the notion is originally defined in \cite{Fan-Filip-Haiden-Katzarkov-Liu21}.
Thus, we briefly review the notion and, for more details, we refer the reader to \cite[Section 2]{Fan-Filip-Haiden-Katzarkov-Liu21} and references therein.

Fan, Filip, Haiden, Katzarkov, and Liu \cite{Fan-Filip-Haiden-Katzarkov-Liu21} defined the notion of pseudo-Anosov autoequivalence on a triangulated category $\mathcal{D}$ under the assumption that the space of stability conditions on $\mathcal{D}$ is not empty. 
We start this subsection by reviewing the definition of {\em stability conditions}.

\begin{definition}[Bridgeland \cite{Bridgeland07}]
	\label{def stability condition}
	Let $\mathcal{D}$ be a triangulated category, let $K(\mathcal{D})$ be the Grothendieck group of $\mathcal{D}$, and let $\cl: K(\mathcal{D}) \to \Gamma$ be a group homomorphism to a finite rank abelian group $\Gamma$. 
	A {\bf Bridgeland stability condition $\boldsymbol{\sigma = (Z_\sigma, P_\sigma)}$} consists of 
		\begin{itemize}
			\item a group homomorphism $Z_\sigma: \Gamma \to \mathbb{C}$, which is called the {\bf central charge}, and 
			\item a collection of full additive subcategories $P_\sigma= \{P_\sigma(\phi)\}_{\phi \in \mathbb{R}}$ of $\mathcal{D}$, which is called the {\bf $\boldsymbol{\sigma}$-semistable object of phase $\boldsymbol{\phi}$},
		\end{itemize}
	such that 
	\begin{enumerate}
		\item[(i)] $Z_\sigma(E):=Z_\sigma(\cl([E])) \in \mathbb{R}_{>0} \cdot e^{i\pi\phi}$ for any $0 \neq E \in P(\phi)$,
		\item[(ii)] $P_\sigma(\phi+1) = P_\sigma(\phi)[1]$, 
		\item[(iii)] if $\phi_1 > \phi_2$ and $A_i \in P(\phi_i)$, then $\lhom_\mathcal{D}(A_1,A_2) = 0$, 
		\item[(iv)]  for any $0 \neq E \in \mathcal{D}$, there exists a (unique) collection of exact triangles, which is called {\bf Harder-Narasimhan filtration of $\boldsymbol{E}$} 
		\begin{equation*}
			\begin{tikzcd}
				0 =E_0\arrow{r} & E_1 \arrow{d}\arrow{r} & E_2 \arrow{d} \ar[r] & \dots \ar[r] & E_{n-1} \ar[r] \ar[d] & E \ar[d]\\
			              	& A_1 \arrow[ul, dashed]         & A_2 \arrow[ul, dashed]       & \dots        & A_{n-1} \ar[ul, dashed]       & A_n \ar[ul, dashed]
			\end{tikzcd} 
		\end{equation*}
		such that for each $i =1, \dots, n-1$, $E_{i-1} \to E_i \to A_i \dashrightarrow$ is an exact triangle, $A_i \in P_\sigma(\phi_i)$, and $\phi_1 > \phi_2 > \dots > \phi_n$, and
		\item[(v)] (Support property \cite{Kontsevich-Soibelman08}) there exists a constant $C > 0$ and a norm $\|\cdot \|$ on $\Gamma \otimes_\mathbb{Z} \mathbb{R}$ such that 
		\[\|\cl([E])\| \leq C|Z_\sigma(E)|\]
		for any $E \in P_\sigma(\phi)$ and any $\phi \in \mathbb{R}$.
	\end{enumerate}
\end{definition}

\begin{remark}
	\label{rmk two different categories}
	We note that in the previous sections, we considered {\em $\mathcal{A}_\infty$-categories}.
	In this section, we consider {\em triangulated categories}.
	To be more precise, we need to distinguish $\mathcal{A}_\infty$-categories and triangulated categories.
	However, in the rest of the paper, if there is no chance of confusions, then we will abuse notation for convenience. 
	For example, our category $\Fuk$ is an $\mathcal{A}_\infty$-category and the corresponding triangulated category is $H^0\Fuk$. 
	Especially, the morphism space in $H^0\Fuk$ is the space of degree $0$ morphisms in $\Fuk$. 
	Thus, to be more precise, we need to distinguish $\lhom_{\Fuk}^*$ and $\lhom_{H^0\Fuk}$. 
	However, for convenience, we use $\lhom_{\Fuk}^0$, instead of $\lhom_{H^0\Fuk}$.
\end{remark}

Throughout this subsection, we assume that the space of stability conditions on a triangulated category $\mathcal{D}$, denoted by $\stab(\mathcal{D})$, is not empty. 
Then, a stability condition on $\mathcal{D}$ defines the {\em mass} function as follows:
\begin{definition}
	\label{def stability mass function}
	\mbox{}
	\begin{enumerate}
		\item Let $\sigma=\left(Z_\sigma,P_\sigma\right)$ be a stability condition on $\mathcal{D}$. 
		For a given $E \in \mathcal{D}$, let us assume that the Harder-Narasimhan filtration of $E$ with respect to $\sigma$ is 
		\begin{equation*}
			\begin{tikzcd}
				0 =E_0 \arrow{r}  & E_1 \arrow{d}\arrow{r} & E_2 \arrow{d} \ar[r] & \dots \ar[r] & E_{n-1} \ar[r] \ar[d] & E \ar[d]\\
	                       	 & A_1 \arrow[ul, dashed]         & A_2 \arrow[ul, dashed]       & \dots        & A_{n-1} \ar[ul, dashed]       & A_n \ar[ul, dashed]
			\end{tikzcd}.
		\end{equation*}
		The semistable objects $A_i$ in the above diagram are called the {\bf $\boldsymbol{\sigma}$-semistable components of $\boldsymbol{E}$}. 
		\item The {\bf mass of $\boldsymbol{E}$ with respect to $\boldsymbol{\sigma}$} is defined to be
		\begin{gather}
			\label{eqn mass function with respect to sigma}
			m_\sigma(E):= \sum_{i=1}^n |Z_\sigma(A_i)|,
		\end{gather}
		where $A_i$s are $\sigma$-semistable components of $E$. 
		\item The function $m_\sigma: \mathcal{D} \to \mathbb{R}$ in Equation \eqref{eqn mass function with respect to sigma} is called the {\bf mass function with respect to $\boldsymbol{\sigma}$}.
	\end{enumerate}
\end{definition}

The notion of {\em pseudo-Anosov autoequivalence} is defined by measuring the exponential growth rate of $m_\sigma(\Phi^m E)$ where $\Phi: \mathcal{D}\to \mathcal{D}$ is an autoequivalence. 
To be more precise, we state Proposition \ref{prop growth filtration}.
\begin{prop}[Lemma 2.10 of \cite{Fan-Filip-Haiden-Katzarkov-Liu21}]
	\label{prop growth filtration}
	Let $\sigma$ be a stability condition on $\mathcal{D}$, and let $\Phi: \mathcal{D} \to \mathcal{D}$ be an autoequivalence on $\mathcal{D}$. 
	Then, for any $\lambda \in \mathbb{R}_{>0}$,
	\begin{gather}
		\label{eqn mass filtration}
		\mathcal{D}_{\sigma, \lambda}(\Phi):= \left\{E \in \mathcal{D} \Big\vert \limsup_{n \to \infty} \tfrac{1}{n} \log m_\sigma(\Phi^n E) \leq \log \lambda\right\}
	\end{gather}
	is a $\Phi$-invariant thick triangulated subcategory of $\mathcal{D}$.
\end{prop}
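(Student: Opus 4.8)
The plan is to deduce the proposition from three elementary facts about the mass function and then propagate them through the exponential growth rate. Write $\rho(E) := \limsup_{n\to\infty}\tfrac1n\log m_\sigma(\Phi^n E)$, so that $\mathcal{D}_{\sigma,\lambda}(\Phi)=\{E\in\mathcal D:\rho(E)\le\log\lambda\}$; this is tautologically a full subcategory closed under isomorphism, since $m_\sigma$ depends only on the isomorphism class of an object. First I would record: (a) $m_\sigma(E)\ge 0$, with equality iff $E\cong 0$; (b) $m_\sigma(E[1])=m_\sigma(E)$, which is immediate from $P_\sigma(\phi+1)=P_\sigma(\phi)[1]$ and $|Z_\sigma(A[1])|=|Z_\sigma(A)|$; (c) the triangle inequality $m_\sigma(B)\le m_\sigma(A)+m_\sigma(C)$ for every exact triangle $A\to B\to C\to A[1]$; and (d) $m_\sigma(E\oplus F)=m_\sigma(E)+m_\sigma(F)$.

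Granting these, the argument is bookkeeping with $\limsup$. Since $\Phi$ is an exact autoequivalence it commutes with $[1]$ and sends triangles to triangles, so $\rho(E[1])=\rho(E)$ and, for a triangle $A\to B\to C\to A[1]$, applying $\Phi^n$ and then (c) gives $m_\sigma(\Phi^nB)\le m_\sigma(\Phi^nA)+m_\sigma(\Phi^nC)\le 2\max\{m_\sigma(\Phi^nA),m_\sigma(\Phi^nC)\}$; taking $\tfrac1n\log(\cdot)$ and $\limsup$ yields $\rho(B)\le\max\{\rho(A),\rho(C)\}$. Rotating the triangle in the two remaining ways and using (b) gives the symmetric inequalities, so whenever two of $A,B,C$ lie in $\mathcal{D}_{\sigma,\lambda}(\Phi)$ the third does as well; together with $0\in\mathcal{D}_{\sigma,\lambda}(\Phi)$ (as $m_\sigma(0)=0$) this makes $\mathcal{D}_{\sigma,\lambda}(\Phi)$ a triangulated subcategory. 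Thickness follows from (d): if $E\oplus F\in\mathcal{D}_{\sigma,\lambda}(\Phi)$ then $m_\sigma(\Phi^nE)\le m_\sigma(\Phi^n E\oplus\Phi^n F)=m_\sigma(\Phi^n(E\oplus F))$, hence $\rho(E)\le\rho(E\oplus F)\le\log\lambda$. Finally $\Phi$-invariance is formal: $\rho(\Phi E)=\limsup_n\tfrac1n\log m_\sigma(\Phi^{n+1}E)=\limsup_m\tfrac{m}{m-1}\cdot\tfrac1m\log m_\sigma(\Phi^m E)=\rho(E)$ because $\tfrac{m}{m-1}\to 1$.

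The only step with real content is the triangle inequality (c) --- and the closely related additivity (d) --- so that is where I would concentrate. The standard route is to observe that $m_\sigma(E)$ equals the infimum of $\sum_i|Z_\sigma(E_i)|$ over all finite towers of exact triangles exhibiting $E$ as an iterated extension of objects $E_i$, the Harder--Narasimhan filtration being one such tower and in fact the minimizer. Given a triangle $A\to B\to C\to A[1]$, one splices an HN tower for $A$ with an HN tower for $C$, using the octahedral axiom, into a tower for $B$ whose factors are precisely the semistable factors of $A$ and of $C$; this gives $m_\sigma(B)\le m_\sigma(A)+m_\sigma(C)$. For (d) one uses that the semistable factors of $E\oplus F$ are the phase-by-phase direct sums of those of $E$ and $F$ and that $|Z_\sigma(A'\oplus B')|=|Z_\sigma(A')|+|Z_\sigma(B')|$ when $A',B'$ have the same phase. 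I expect no surprises beyond this; for the precise statement and proof of (c) I would simply refer to \cite{Fan-Filip-Haiden-Katzarkov-Liu21} and the earlier literature cited there.
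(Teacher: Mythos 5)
Your argument is correct, but note that the paper you are reconstructing does \emph{not} actually prove this proposition: it is stated with the attribution ``Lemma~2.10 of \cite{Fan-Filip-Haiden-Katzarkov-Liu21}'' and no proof is supplied, so there is nothing in the paper for your proof to agree or disagree with.

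As a reconstruction of the proof in the cited source, yours is sound and follows the expected route: reduce everything to the standard properties of the mass function (nonnegativity, shift invariance, the triangle inequality $m_\sigma(B)\le m_\sigma(A)+m_\sigma(C)$, additivity over direct sums) and then push them through $\limsup_n\tfrac1n\log$. Two small points are worth being aware of. First, you are right to use additivity~(d) rather than only the triangle inequality for thickness: the triangle inequality alone gives only $m_\sigma(E)\le m_\sigma(E\oplus F)+m_\sigma(F)$, which is not strong enough, whereas additivity gives $m_\sigma(E)\le m_\sigma(E\oplus F)$ directly. Second, the $\Phi$-invariance step ``$\tfrac{m}{m-1}\to 1$ so the $\limsup$ is unchanged'' is slightly glib when the sequence $\tfrac1m\log m_\sigma(\Phi^m E)$ might be unbounded; it is safe here because for a nonzero $E$ this sequence is bounded (by the support property from below and by the finiteness of $d(\sigma,\Phi\cdot\sigma)$ from above), but it is cleaner to observe that replacing $\tfrac1n$ by $\tfrac1{n+1}$ is an $O(1/n)$ perturbation inside the $\limsup$, which vanishes regardless. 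Neither issue affects the correctness of your argument.
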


We note that $\mathcal{D}_{\sigma,\lambda}(\Phi)$ in Equation \eqref{eqn mass filtration} depends on the choice of a stability condition $\sigma$.
However, $\mathcal{D}_{\sigma, \lambda}(\Phi)$ is independent of the choice of $\sigma$ in the following sense: 
The space of stability conditions $\stab(\mathcal{D})$ admits a topology which is induced by a metric.
(The definition of the metric is originally defined in \cite[Proposition 8.1]{Bridgeland07} and one can find the definition in Definition \ref{def metric}).
If two stability conditions $\sigma$ and $\tau$ lie in the same connected component of $\stab(\mathcal{D})$, Proposition \ref{prop independence of the choice of stability conditions} holds.
\begin{prop}[Proposition 2.11 of \cite{Fan-Filip-Haiden-Katzarkov-Liu21}]
	\label{prop independence of the choice of stability conditions} 
	For any autoequivalence $\Phi$ and for any $\lambda \in \mathbb{R}_{>0}$,
	\[\mathcal{D}_{\sigma,\lambda}(\Phi) = \mathcal{D}_{\tau,\lambda}(\Phi).\]
\end{prop}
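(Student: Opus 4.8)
The plan is to reduce everything to the elementary fact that, inside a single connected component of $\stab(\mathcal{D})$, the mass functions of any two stability conditions are comparable up to a multiplicative constant that is independent of the object. Recall (Definition \ref{def metric}, following \cite[Proposition 8.1]{Bridgeland07}) that Bridgeland's generalized metric on $\stab(\mathcal{D})$ is
\[
d(\sigma,\tau)=\sup_{0\neq E\in\mathcal{D}}\left\{\ \left|\phi^-_\sigma(E)-\phi^-_\tau(E)\right|,\ \ \left|\phi^+_\sigma(E)-\phi^+_\tau(E)\right|,\ \ \left|\log\tfrac{m_\sigma(E)}{m_\tau(E)}\right|\ \right\},
\]
so that built into the definition is the pointwise estimate
\[
e^{-d(\sigma,\tau)}\,m_\tau(E)\ \leq\ m_\sigma(E)\ \leq\ e^{\,d(\sigma,\tau)}\,m_\tau(E)\qquad\text{for every }0\neq E\in\mathcal{D}.
\]
Thus the only genuine input I need is that $d(\sigma,\tau)<\infty$ whenever $\sigma$ and $\tau$ belong to the same connected component of $\stab(\mathcal{D})$.

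To obtain this, I would use the local structure of $\stab(\mathcal{D})$: by Bridgeland's deformation theorem $\sigma\mapsto Z_\sigma$ is a local homeomorphism onto an open subset of $\lhom(\Gamma,\mathbb{C})$ and $d$ induces the topology of $\stab(\mathcal{D})$, so every point has a neighbourhood of finite $d$-diameter. A connected component is path-connected, a path joining $\sigma$ to $\tau$ is compact, hence covered by finitely many such neighbourhoods, and chaining them with the triangle inequality for $d$ yields $d(\sigma,\tau)<\infty$. (One may instead simply quote that $d$ is finite on each connected component, which is part of the standard theory.)

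Given the uniform constant $C:=e^{d(\sigma,\tau)}<\infty$, the rest is immediate: for a fixed nonzero $E$, applying the displayed estimate to the objects $\Phi^nE$ and taking logarithms gives $\bigl|\log m_\sigma(\Phi^nE)-\log m_\tau(\Phi^nE)\bigr|\leq\log C$ for all $n$, so dividing by $n$ and passing to $\limsup$ (the term $\tfrac1n\log C$ vanishes) yields
\[
\limsup_{n\to\infty}\tfrac1n\log m_\sigma(\Phi^nE)=\limsup_{n\to\infty}\tfrac1n\log m_\tau(\Phi^nE).
\]
Hence the defining inequality $\limsup_n\tfrac1n\log m_\bullet(\Phi^nE)\leq\log\lambda$ holds for $\sigma$ exactly when it holds for $\tau$, i.e.\ $\mathcal{D}_{\sigma,\lambda}(\Phi)=\mathcal{D}_{\tau,\lambda}(\Phi)$.

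I do not expect a hard step here; the content is organizational. The one point to handle with care is the uniformity of the comparison: the constant $C$ must serve every object at once --- in particular all of the infinitely many $\Phi^nE$ --- and this is exactly why it matters that the $\log$-ratio of masses enters $d$ through a single supremum over all objects, rather than being controlled object by object.
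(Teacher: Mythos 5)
Your proof is correct, and it is essentially the standard argument: finiteness of Bridgeland's generalized metric on a connected component gives a uniform multiplicative comparison between the two mass functions, and dividing by $n$ makes the discrepancy vanish in the $\limsup$. The paper itself does not prove this proposition (it is quoted from \cite{Fan-Filip-Haiden-Katzarkov-Liu21}), but it does prove the parallel statement for the phase filtrations in Lemma \ref{lem independence of the choice of stability conditions}, and that proof runs exactly along the lines you wrote: it invokes $d(\sigma,\tau)<C$ for $\sigma,\tau$ in the same component, reads off the uniform bound from the definition of $d$, and observes that a bounded additive error disappears after dividing by $n$. So you have reconstructed precisely the intended argument; your extra paragraph justifying finiteness of $d$ on a connected component is a correct (and welcome) expansion of the step the paper only quotes.
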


Based on Proposition \ref{prop independence of the choice of stability conditions}, we define the following:
\begin{definition}[Definitions 2.12 and 2.13 of \cite{Fan-Filip-Haiden-Katzarkov-Liu21}]
	\label{def growth filtration}
	Let $\mathcal{D}$ be a triangulated category, and let $\Phi: \mathcal{D} \to \mathcal{D}$ be an autoequivalence of $\mathcal{D}$.
	Let us assume that 
	\begin{enumerate}
		\item[(A)] $\stab(\mathcal{D}) \neq \varnothing$, and 
		\item[(B)] we fix a connected component $\stab^\dagger(\mathcal{D}) \subset \stab(\mathcal{D})$. 
	\end{enumerate}
	\begin{enumerate}
		\item The {\bf growth filtration associated to an autoequivalence $\boldsymbol{\Phi}$} is defined to be the filtration 
		\[\left\{\mathcal{D}_\lambda(\Phi):=\mathcal{D}_{\sigma, \lambda}(\Phi)\right\}_{\lambda \in \mathbb{R}_{\geq 0}},\]
		for any $\sigma \in \stab^\dagger (\mathcal{D})$.
		\item An autoequivalence $\Phi$ is said to be {\bf pseudo-Anosov} if the associated growth filtration has only one step:
		\begin{gather}
			\label{eqn stretching factor}
			0 \subset \mathcal{D}_\lambda(\Phi) = \mathcal{D},
		\end{gather}
		with $\lambda>1$. 
		\item If $\Phi$ is a pseudo-Anosov, we call the $\lambda$ in \eqref{eqn stretching factor} {\bf stretch factor of $\boldsymbol{\Phi}$}.
	\end{enumerate}
\end{definition}

\subsection{Stability conditions on $\Fuk$}
\label{subsection stability conditions on Fuk}
In the body of the paper, we give a construction of pseudo-Anosov autoequivalences on $\Fuk$ where $T$ is an arbitrary tree. 
Thus, we should discuss the two assumptions that are necessarily for defining the notion of pseudo-Anosov autoequivalence, i.e., the assumptions (A) and (B) in Definition \ref{def growth filtration}. 
In Section \ref{subsection stability conditions on Fuk}, we show that there exists a stability conditions on $\Fuk$ and we discuss which connected component of $\stab(\Fuk)$ is fixed throughout the paper. 

We remark that in this subsection, we are seeing $\Fuk$ as a triangulated category. 
Thus, the morphism space in which we are interested should be the {\em Lagrangian Floer cohomology at degree $0$}.
See Remark \ref{rmk two different categories}.
Moreover, we note that the Grothendieck group $K\left(\Fuk\right)$ is finitely generated by 
\[\left\{[S_v] | v \in V(T)\right\},\]
where $[S_v]$ denotes the equivalence class of $S_v$ in $K\left(\Fuk\right)$. 
Based on this, we let $\cl$ in Definition \ref{def stability condition} be the identity morphism of $K\left(\Fuk\right)$.

First, we discuss the assumption (A).
In order to show that $\Fuk$ satisfies the assumption (A), it is enough to construct a stability condition of $\Fuk$. 
Our tool for constructing a stability condition is the following proposition:
\begin{prop}[Proposition 5.3 of \cite{Bridgeland07}]
	\label{prop construction of a stability condition}
	To give a stability condition on a triangulated category $\mathcal{D}$ is equivalent to giving a bounded $t$-structure on $\mathcal{D}$ and a stability function on its heart with the Harder-Narasimhan property.
\end{prop}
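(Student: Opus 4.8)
The plan is to exhibit mutually inverse assignments between the two collections of data, following Bridgeland's argument in \cite{Bridgeland07} (with the support property carried along on both sides so as to match Definition \ref{def stability condition}).

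For the forward direction, given a stability condition $\sigma = (Z_\sigma, P_\sigma)$ I would set $\mathcal{A} := P_\sigma((0,1])$, the extension-closed full subcategory whose objects are those all of whose $\sigma$-semistable components have phase in $(0,1]$. First I would check that $\mathcal{A}$ is the heart of a bounded $t$-structure: closure under extensions is part of being a slicing, the vanishing axiom (iii) yields $\lhom_{\mathcal{D}}(\mathcal{A}, \mathcal{A}[n]) = 0$ for $n<0$, and both the truncation triangles and the boundedness are obtained from the Harder--Narasimhan filtrations of axiom (iv) by grouping the graded pieces according to whether their phase lies above or below the relevant integer. Next I would observe that $Z_\sigma \circ \cl$ restricted to $K(\mathcal{A})$ is a stability function, since every nonzero object of $\mathcal{A}$ has central charge in the semi-closed upper half-plane $\mathbb{H} \cup \mathbb{R}_{<0}$; the Harder--Narasimhan property of this stability function and the support property (v) are then just restrictions of the corresponding properties of $\sigma$.

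For the reverse direction, given a bounded $t$-structure with heart $\mathcal{A}$ and a stability function $Z$ on $\mathcal{A}$ with the Harder--Narasimhan and support properties, I would define for $\phi \in (0,1]$ the subcategory $P(\phi)$ to consist of the $Z$-semistable objects of $\mathcal{A}$ of phase $\phi$, and then extend to all $\phi \in \mathbb{R}$ by $P(\phi+1) := P(\phi)[1]$. The substantive point is axiom (iv). First I would use the $t$-structure to write any $0 \neq E \in \mathcal{D}$ as a finite iterated extension of shifts $H^i(E)[-i]$ of objects in $\mathcal{A}$; then I would refine each $H^i(E)$ by its in-heart Harder--Narasimhan filtration; and finally I would reorganize the combined finite tower, by repeated use of the octahedral axiom, into a filtration whose graded pieces have strictly decreasing phase, using the vanishing $\lhom_{\mathcal{D}}(A,B) = 0$ for $A \in P(\phi_1)$, $B \in P(\phi_2)$, $\phi_1 > \phi_2$ (itself a consequence of the $t$-structure vanishing together with the definition of a stability function on $\mathcal{A}$). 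Uniqueness would follow by the usual comparison of two such towers. Axioms (i)--(iii) and (v) are then routine from the construction, and the two assignments are seen to be mutually inverse by unwinding $P((0,1]) = \mathcal{A}$.

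The hard part will be precisely this last reorganization: interleaving the $t$-structure truncations with the in-heart Harder--Narasimhan filtrations and proving that the result can always be rearranged --- uniquely --- into a filtration with strictly decreasing phases. This needs a careful induction on the length of the combined tower plus repeated octahedra, and is the technical core of the proposition. By contrast, transporting the support property through both directions is immediate, since it concerns only the central charge and the map $\cl$, which the constructions leave untouched.
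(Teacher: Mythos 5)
The paper does not prove this proposition; it is cited verbatim from Bridgeland (Proposition 5.3 of \cite{Bridgeland07}), with the support property of Kontsevich--Soibelman folded into Definition \ref{def stability condition}. So there is no ``paper's own proof'' to compare against. That said, your sketch correctly reproduces Bridgeland's argument in both directions: taking $\mathcal{A}=P_\sigma((0,1])$ as the heart for the forward map, and in the reverse direction building the slicing by interleaving the cohomology filtration from the $t$-structure with the in-heart Harder--Narasimhan filtrations and then reorganizing via the octahedral axiom. You have also correctly identified the technical core (the reorganization and uniqueness of the combined tower) and correctly observed that the support property transports for free since it only constrains the central charge, which is untouched by the two constructions. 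The sketch is sound and matches the standard proof.
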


The undefined terms in Proposition \ref{prop construction of a stability condition}, for example, bounded $t$-structure, stability function, etc, are defined below.
For more details on them, we refer the reader to \cite{Bridgeland07}.

\begin{definition}
	\mbox{}
	\begin{enumerate}
		\item A {\bf stability function} on an abelian category $\mathcal{A}$ is a group homomorphism $Z: K(\mathcal{A}) \to \mathbb{C}$ such that for all nonzero object $E \in \mathcal{A}$, the complex number $Z(E)$ lies in the strict upper half-plane
		\[H:= \left\{r \exp(i\pi \phi) | r \in \mathbb{R}_{>0} \text{  and  } \phi \in (0,1]\right\}.\]
		\item Let $Z$ be a stability function on an abelian category $\mathcal{A}$.
		A nonzero object $E \in \mathcal{A}$ is said to be {\bf semistable} if every subobject $0 \neq E' \subset E$ satisfies that 
		\[\operatorname{arg}Z(E') \leq \operatorname{arg}Z(E).\]
		\item Let $Z$ be a stability function on an abelian category $\mathcal{A}$.
		For a nonzero object $E \in \mathcal{A}$, a {\bf Harder-Narasimhan filtration of $\boldsymbol{E}$ with respect to $\boldsymbol{Z}$} is a finite chain of subobjects 
		\[0=E_0 \subset E_1 \subset \dots \subset E_{m-1} \subset E_m = E,\]
		such that whose factors $F_i = E_i / E_{i-1}$ are semistable object of $\mathcal{A}$ with 
		\[\operatorname{arg}Z(F_1) > \operatorname{arg}Z(F_2) > \dots > \operatorname{arg}Z(F_m).\]
		\item A stability function $Z$ is said to have the {\bf Harder-Narasimhan property} if every nonzero object of $\mathcal{A}$ has a Harder-Narasimhan filtration with respect to $Z$. 
		\item A {\bf $\boldsymbol{t}$-structure} on a triangulated category $\mathcal{D}$ is a full subcategory $\mathcal{T} \subset \mathcal{D}$ such that 
		\begin{itemize}
			\item $\mathcal{T}[1] \subset \mathcal{T}$, and 
			\item if one defines $\mathcal{T}^\perp$ as 
			\[\mathcal{T}^\perp :=\left\{G \in \mathcal{D}| \lhom_\mathcal{D}^0\left(F,G\right) =0 \text{  for all  } F \in \mathcal{T}\right\},\]
			then for every object $E \in \mathcal{D}$, there is an exact triangle 
				\[\begin{tikzcd}
				F \ar[rr] & & E \ar[dl] \\
				& G \ar[ul, dashed] &
			\end{tikzcd},\]
			such that $F \in \mathcal{T}$ and $G \in \mathcal{T}^\perp$.
		\end{itemize}
		\item The {\bf heart} of a $t$-structure $\mathcal{T}\subset \mathcal{D}$ is the full subcategory 
		\[\mathcal{A} := \mathcal{T} \cap \mathcal{T}^\perp[1] \subset \mathcal{D}.\]
		\item A $t$-structure $\mathcal{T} \subset \mathcal{D}$ is {\bf bounded} if 
		\[\mathcal{D} = \bigcup_{i,j \in \mathbb{Z}} \mathcal{T}[i] \cap \mathcal{T}^\perp[j].\]
	\end{enumerate}
\end{definition}
It was proved in \cite{Beilinson-Bernstein-Deligne81} that the heart of a $t$-structure is an abelian category. 
Thus, the expression ``stability condition on its heart with the Harder-Narasimhan property" in Proposition \ref{prop construction of a stability condition} makes sense. 

Based on Proposition \ref{prop construction of a stability condition}, we prove that $\Fuk$ satisfies the assumption (A).
\begin{lem}
	\label{lem existence of a stability condition}
	The triangulated category $\Fuk$ satisfies the assumption (A) of Definition \ref{def growth filtration}.
	In other words, 
	\[\stab(\Fuk) \neq \varnothing.\]
\end{lem}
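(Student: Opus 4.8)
The plan is to invoke Proposition \ref{prop construction of a stability condition}: it suffices to produce a bounded $t$-structure on $\Fuk$ together with a stability function on its heart enjoying the Harder--Narasimhan property, and then to check the support property for the resulting datum. For the heart, observe that by Lemma \ref{lem grading} the generators $\{S_v\}_{v\in V(T)}$ satisfy $\shom^i_{\Fuk}(S_v,S_w)=0$ for all $i<0$ and $\shom^0_{\Fuk}(S_v,S_w)=\delta_{v,w}\,\mathbb{k}$, and they generate $\Fuk$; that is, $\{S_v\}_{v\in V(T)}$ is a simple-minded collection. It is standard that the extension closure $\mathcal{A}\subset H^0\Fuk$ of such a collection is the heart of a bounded $t$-structure on $\Fuk$, and that $\mathcal{A}$ is a finite-length abelian category whose simple objects are exactly the $S_v$ (for the category $\mathcal{D}(\Gamma_N T)$ this is treated in \cite{King-Qiu15}). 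In particular $K(\mathcal{A})=K(\Fuk)$ is free of rank $|V(T)|$ with basis $\{[S_v]\}_{v\in V(T)}$, consistently with the choice $\cl=\mathrm{id}$ made above.

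For the stability function, choose for each $v\in V(T)$ a complex number $Z([S_v])$ in the strict upper half-plane of the definition following Proposition \ref{prop construction of a stability condition} --- for concreteness one may take $Z([S_v])=\sqrt{-1}$ for every $v$ --- and extend $\mathbb{Z}$-linearly to a group homomorphism $Z\colon K(\mathcal{A})\to\mathbb{C}$. Any nonzero object of $\mathcal{A}$ has a finite Jordan--H\"older filtration whose factors lie among the $S_v$, so its image under $Z$, being a nonzero sum of elements of the strict upper half-plane, again lies there; thus $Z$ is a stability function. Because $\mathcal{A}$ is of finite length, $Z$ automatically has the Harder--Narasimhan property (see \cite{Bridgeland07}), and Proposition \ref{prop construction of a stability condition} then produces a stability condition $\sigma=(Z_\sigma,P_\sigma)$ on $\Fuk$.

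It remains to verify the support property, condition (v) of Definition \ref{def stability condition}. Every $\sigma$-semistable object is, up to shift, a semistable object $E$ of $\mathcal{A}$, and the Jordan--H\"older factors of such an $E$ all have one and the same phase; writing $[E]=\sum_v n_v[S_v]$ with $n_v\ge 0$ and the sum running over the relevant vertices, one gets $|Z_\sigma(E)|=\sum_v n_v|Z([S_v])|\ge\bigl(\min_v|Z([S_v])|\bigr)\sum_v n_v$, while $\|[E]\|\le\bigl(\max_v\|[S_v]\|\bigr)\sum_v n_v$ for any fixed norm on $K(\Fuk)\otimes_{\mathbb{Z}}\mathbb{R}$. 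Hence the support property holds with $C=\max_v\|[S_v]\|\big/\min_v|Z([S_v])|$, so $\sigma\in\stab(\Fuk)$ and $\stab(\Fuk)\neq\varnothing$. The one step demanding genuine care is the passage from the simple-minded collection $\{S_v\}$ to the bounded $t$-structure with length heart; the remaining verifications are routine.
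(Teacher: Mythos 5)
Your proof is correct, and it takes a genuinely different (and in one respect more complete) route than the paper.

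The paper proves Lemma~\ref{lem existence of a stability condition} by explicitly constructing a bounded $t$-structure: it lets $\mathcal{T}$ consist of objects equivalent to minimal twisted complexes whose shifts $d_i$ are all $\ge 0$, verifies the $t$-structure axioms by hand using Lemma~\ref{lem minimal twisted complex}, identifies the heart as the objects built from $\{S_v\}$ with no shifts, and then takes $Z(S_u)=\sqrt{-1}$ for $u\in V_+(T)$ and $Z(S_w)=-1$ for $w\in V_-(T)$ (this is the stability condition $\sigma_0$ of Definition~\ref{def fixed component of the space of stability conditions}). You instead observe that $\{S_v\}_{v\in V(T)}$ is a simple-minded collection and invoke the standard theorem that such a collection induces a bounded $t$-structure with length heart; this imports more machinery (via \cite{King-Qiu15} and the silting/simple-minded literature it relies on) in exchange for skipping the hands-on verification of the $t$-structure axioms. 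Your choice $Z([S_v])=\sqrt{-1}$ for all $v$ produces the stability condition the paper later names $\sigma_\star$ (Definition~\ref{def sigma star}) rather than $\sigma_0$, which is fine for the bare non-emptiness claim. One genuine improvement: you explicitly check the support property, condition (v) of Definition~\ref{def stability condition}. The paper invokes only Proposition~\ref{prop construction of a stability condition} (Bridgeland's Prop.~5.3), which in Bridgeland's original formulation does not deliver the support property, so the paper's proof silently elides this step; your argument via the finite-length heart with finitely many simples closes that small gap.
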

\begin{proof}
	Let $\mathcal{T}$ denote the full subcategory $\Fuk$ such that the objects of $\mathcal{T}$ consist of 
	\begin{itemize}
		\item the zero object and
		\item the objects equivalent to a good, minimal twisted complex 
		\[F =\left(\bigoplus_{i \in I} V_i \otimes S_{v_i}[d_i], (F_{j,i}=\psi_{j,i} \otimes f_{j,i}) \right)\]
		satisfying that $d_i \geq 0$ for all $i \in I$. 
	\end{itemize}
	
	Then, thanks to Lemmas \ref{lem grading} and \ref{lem minimal twisted complex}, one can observe that the objects of $\mathcal{T}^\perp$ consist of 
	\begin{itemize}
		\item the zero object, and
		\item the objects equivalent to a good, minimal twisted complex  
		\[E =\left(\bigoplus_{j \in J} V_j \otimes S_{v_j}[d_j], (G_{j,i}=\psi_{j,i} \otimes g_{j,i}) \right)\]
		satisfying $d_i < 0$ for all $j \in J$.
	\end{itemize}
	
	Now, it is easy to check the following two:
	\begin{enumerate}
		\item[(i)] $\mathcal{T}[1] \subset \mathcal{T}$ because of the definition of $\mathcal{T}$.
		\item[(ii)] For every nonzero object $E \in \mathcal{F}_T$, there exist $F \in \mathcal{T}, G \in \mathcal{T}^\perp$ such that $E$ is equivalent to a twisted complex in the form of $\left(G \oplus F, \Psi \in \lhom^1_{\Tw(\Fuk)}(G, F)\right)$.
		One can find $F$ and $G$ satisfying the above conditions from a minimal, well-ordered twisted complex equivalent to $E$.  
		As a cone, we can write $E \simeq \mathrm{Cone}(G[-1] \to F)$. 
		In other words, there exists an exact triangle
		\[\begin{tikzcd}
			F \ar[rr] & & E \ar[dl] \\
			& G \ar[ul, dashed] &
		\end{tikzcd}.\]
	\end{enumerate} 
	The above two items (i) and (ii) mean that $\mathcal{T}$ is a $t$-structure. 
	
	Moreover, one can easily observe that the heart $\mathcal{A} = \mathcal{T} \cap \mathcal{T}^\perp[1]$ consists of 
	\begin{itemize}
		\item the zero object and 
		\item the objects equivalent to a good, minimal twisted complex 
		\[E = \left(\bigoplus_{i \in I} V_i \otimes S_{v_i}, (F_{j,i} = \psi_{j,i} \otimes f_{j,i})\right).\]
	\end{itemize}
	Also, it is easy to see that 
	\[\Fuk = \cup_{i \in \mathbb{Z}} \mathcal{A}[i].\]
	It implies that $\mathcal{T}$ is a bounded $t$-structure. 
	
	Let us assume that $Z: K(\mathcal{A}) \to \mathbb{C}$ is a group homomorphism such that 
	\[Z(S_v) = \begin{cases}
		\imagin \text{  if  } v \in V_+(T), \\
		-1 \text{  if  } v \in V_-(T).
	\end{cases}\]
	Then, one can easily check that every semistable object $E$ of $Z$ with $\operatorname{arg}Z(E) = \tfrac{1}{2}$ (resp.\ $1$) satisfies that 
	\[E \simeq \bigoplus_{v_i \in V_-(T)} V_i \otimes S_{v_i} \text{  (resp.\  }\bigoplus_{v_i \in V_+(T)} V_i \otimes S_{v_i}),\]
	and there exists no other semistable object.
		
	Moreover, for any nonzero object $E \simeq \left(\bigoplus_{i \in I} V_i \otimes S_{v_i}, (F_{j,i} = \psi_{j,i} \otimes f_{j,i})\right)\in \mathcal{A}$, we have  
	\[\begin{tikzcd}
		0\ar[r] & \bigoplus_{v_i \in V_-(T)} V_i \otimes S_{v_i} \ar[r] \ar[d] & E \ar[d] \\
		&\bigoplus_{v_i \in V_-(T)} V_i \otimes S_{v_i} \ar[ul, dashed] & \bigoplus_{v_j \in V_+(T)} V_j \otimes S_{v_j} \ar[ul, dashed] 
	\end{tikzcd}. \]
	The above filtration is the Harder-Narasimhan filtration of $E$ with respect to $Z$. 
	Then, Proposition \ref{prop construction of a stability condition} completes the proof.
\end{proof}

In the proof of Lemma \ref{lem existence of a stability condition}, we constructed a stability condition. 
The fixed connected component of $\stab(\Fuk)$ which we will consider for the assumption (B) of Definition \ref{def growth filtration} is the connected component containing the above constructed stability condition. 
We give names to the constructed stability condition and the fixed connected component in Definition \ref{def fixed component of the space of stability conditions}.

\begin{definition}
	\label{def fixed component of the space of stability conditions}
	\mbox{}
	\begin{enumerate}
		\item Let $\boldsymbol{\sigma_0} = \left(Z_0, P_0\right)$ denote the stability condition on $\Fuk$ constructed in the proof of Lemma \ref{lem existence of a stability condition}. 
		\item Let $\boldsymbol{\stab^\dagger(\Fuk)}$ be the connected component of $\stab(\Fuk)$ containing $\sigma_0$.
	\end{enumerate}
\end{definition}
We note that as mentioned in the proof of Lemma \ref{lem existence of a stability condition}, every element of $P_0(\tfrac{1}{2})$ (resp.\ $P_0(1)$) is a direct sum of 
\[\{S_w| w \in V_-(T)\} \text{  (resp.\  }\{S_u | u \in V_+(T)\}),\]
with respectively, and $P_0(\phi) = 0$ if $\phi$ is neither $k$ or $k +\tfrac{1}{2}$ for some $k \in \mathbb{Z}$. 

Now, it is easy to show that $m_{\sigma_0}(X)$ is easily computed from the minimal twisted complex 
\[E =\left(\bigoplus_{i \in I} V_i \otimes S_{v_i}[d_i], (F_{j,i}=\psi_{j,i} \otimes f_{j,i}) \right)\]
such that $X \simeq E$. 
It is because one can obtain the Harder-Narasimhan filtration of $X$ by rearranging $V_i \otimes S_i[d_i]$ in $E$. 
As a result, we have 
\begin{gather}
	\label{eqn m_sigma(X)}
	m_{\sigma_0}(X) = \sum_{i\in I} \mathrm{dim} V_i.
\end{gather}

\subsection{Stability condition for the other sign convention}
\label{subsection stability condition for the other sign convention}
We recall that, as mentioned in Remark \ref{rmk sign conventions} the sign convention for 
\[V(T) = V_+(T) \sqcup V_-(T)\]
is a part of the given data.
Under the assumption, in the Sections \ref{section preparations for the proof of pseudo-Anosov theorem}--\ref{section strong pseudo-Anosov}, we will prove that $\Phi$ is a pseudo-Anosov autoequivalence with respect to $\stab^\dagger(\Fuk)$ if $\Phi$ is a product of 
\[\left\{\tau_u, \tau_w^{-1} | u \in V_+(T), w \in V_-(T)\right\}.\]

In order to show that if $\Phi^{-1}$ is a product of the above Dehn twists and inverses of Dehn twists, $\Phi$ is also a pseudo-Anosov autoequivalence, we can choose the other sign convention, as mentioned in Remark \ref{rmk sign is not important 1}.
When we apply the arguments in Section \ref{subsection stability conditions on Fuk} to the other choice, we have a stability condition $\sigma_0' = (Z_0', P_0')$, especially satisfying that 
\[Z_0'(R_v) = \begin{cases}
	Z_0'(R_v = S_v[2-N]) = \imagin \text{  if  } v \in V_-(T), \\
	Z_0'(R_v = S_v) = -1 \text{  if  } v \in V_+(T).
\end{cases}\]
We note that $\left\{R_v\right\}_{v \in V(T)}$ is defined in Remark \ref{rmk sign is not important 2}.
Thus, if one applies the arguments of the later sections, then one can prove that $\Phi$ is a pseudo-Anosov autoequivalence {\em with respect to $\sigma_0'$, not $\sigma_0$}. 

However, if one can prove that $\sigma_0$ and $\sigma_0'$ are in the same connected component $\stab^\dagger(\Fuk)$, Proposition \ref{prop independence of the choice of stability conditions} guarantees that $\Phi$ is pseudo-Anosov with respect to $\sigma_0$.
In Section \ref{subsection stability condition for the other sign convention}, we prove that $\sigma_0'$ and $\sigma_0$ are contained in the same connected component $\stab^\dagger(\Fuk)$. 
More precisely, we will give a path connecting $\sigma_0$ and $\sigma_0'$ in $\stab(\Fuk)$. 

First of all, we introduce the metric structure on the space of stability conditions, which induces a topology on the space. 
We note that Definition \ref{def metric} is defined on a general triangulated category $\mathcal{D}$.
\begin{definition}
	\label{def metric}
	\mbox{}
	\begin{enumerate}
		\item Let $\sigma_i = (Z_i, P_i)$ be a stability condition on a triangulated category $\mathcal{D}$ for $i=1,2$. 
		For a nonzero object $E \in \mathcal{D}$, let the Harder-Narasimhan filtration of $E$ with respect to $\sigma_i$ be 
		\[\begin{tikzcd}
			0 =E_0\arrow{r} & E_1 \arrow{d}\arrow{r} & E_2 \arrow{d} \ar[r] & \dots \ar[r] & E_{n-1} \ar[r] \ar[d] & E \ar[d]\\
			& A_1 \arrow[ul, dashed]         & A_2 \arrow[ul, dashed]       & \dots        & A_{n-1} \ar[ul, dashed]       & A_n \ar[ul, dashed]
		\end{tikzcd}.\]
		The {\bf maximal} (resp.\ {\bf minimal}) {\bf phase of $E$ with respect to $\sigma_i$} $\phi_{\sigma_i}^+(E)$ (resp.\ $\phi_{\sigma_i}^-(E)$) is the phase of $A_1$ (resp.\ $A_n$).
		\item The {\bf generalized metric} on $\stab(\mathcal{D})$ is defined by the following distance function:
		\[d(\sigma_1, \sigma_2) := \sup_{0 \neq E \in \mathcal{D}} \left\{|\phi^-_{\sigma_2}(E) - \phi^-_{\sigma_1}(E)|, |\phi^+_{\sigma_2}(E)- \phi^+_{\sigma_1}(E)|, |\log \frac{m_{\sigma_2}(E)}{m_{\sigma_1}(E)}|\right\}.\]
	\end{enumerate}
\end{definition}
For more details about the generalized metric structure on $\stab(\mathcal{D})$ and the topology induced from the generalized metric, we refer the reader to \cite[Section 8]{Bridgeland07}.

Now, we return to our triangulated category $\Fuk$. 
\begin{prop}
	\label{prop same connected component}
	Two stability conditions $\sigma_0$ and $\sigma_0'$ of $\Fuk$ are in the same connected component $\stab^\dagger(\Fuk)$ of $\stab(\Fuk)$.
\end{prop}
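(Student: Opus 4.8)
The plan is to exhibit a continuous path inside $\stab(\Fuk)$ joining $\sigma_0$ to $\sigma_0'$; since $\stab^\dagger(\Fuk)$ is by definition the connected component containing $\sigma_0$, such a path shows at once that $\sigma_0'\in\stab^\dagger(\Fuk)$. Both endpoints are algebraic stability conditions whose only semistable objects are, up to shift, the simples. Explicitly, $\sigma_0$ puts $S_u$ at phase $\tfrac12$ for $u\in V_+(T)$ and $S_w$ at phase $1$ for $w\in V_-(T)$, whereas $\sigma_0'$, rewritten in terms of the objects $\{S_v\}$ instead of $\{R_v\}$, puts $S_u$ at phase $1$ for $u\in V_+(T)$ and $S_w$ at phase $N-\tfrac32$ for $w\in V_-(T)$. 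So the goal is to raise the phases of the positive simples by $\tfrac12$ and those of the negative simples by $N-\tfrac52$ while remaining in $\stab(\Fuk)$ throughout.

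First I would use the $\mathbb C$-action on $\stab(\Fuk)$: a rotation by $\tfrac12$ is a continuous operation and $\mathbb C$ is connected, so it joins $\sigma_0$ to a stability condition $\sigma_1$ with $S_u$ at phase $1$ $(u\in V_+(T))$ and $S_w$ at phase $\tfrac32$ $(w\in V_-(T))$; a direct check gives $\sigma_1=\sigma_0'$ when $N=3$, so assume $N\ge 4$. It remains to connect $\sigma_1$ to $\sigma_0'$ keeping the positive simples pinned at phase $1$ while moving each negative simple from phase $\tfrac32$ up to phase $N-\tfrac32$, which I would accomplish in $N-3$ elementary steps: in the $j$-th step the phase of the negative simples runs from $\tfrac32+j$ to $\tfrac52+j$, the heart undergoes a simple tilt at the negative simples (so the negative simple lying in the standard heart changes from $S_w[-1-j]$ to $S_w[-2-j]$, ending at $S_w[2-N]=R_w$), and the central charge is the one dictated by these phases on $K(\Fuk)$. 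That each of these one-parameter families is an honest family of stability conditions follows by lifting the explicit path of central charges via Bridgeland's deformation theorem \cite[Theorem 7.1]{Bridgeland07}; the content to be verified is that no object other than the shifts of the $S_v$ becomes semistable along the path.

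For the latter, the candidates to worry about are the ``edge objects'' $\Cone(S_u\to S_w[1])$ and $\Cone(S_w\to S_u[N-1])$ for $u\sim w$, together with their iterated extensions along $T$. A short phase computation shows that $\Cone(S_u\to S_w[1])$ can be semistable only if $\phi(S_w)\le\phi(S_u)$ and $\Cone(S_w\to S_u[N-1])$ only if $\phi(S_w)\ge\phi(S_u)+N-2$, and neither inequality holds on our path, where $\phi(S_u)=1$ and $\tfrac32\le\phi(S_w)\le N-\tfrac32$; the objects $\Cone(S_v\to S_v[N])$ are similarly always unstable. Concatenating the $\mathbb C$-orbit segment $\sigma_0\rightsquigarrow\sigma_1$ with the $N-3$ tilting segments then produces the required path. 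The main obstacle, and the place where the actual work lies, is making this verification uniform for an arbitrary tree $T$ rather than just $A_2$: one needs to control the full list of potentially semistable objects of every intermediate heart, which for general $T$ is handled structurally by observing that the negative vertices are sinks of the quiver, so every object of each of these hearts has a two-term Harder--Narasimhan filtration with the negative simples on top, forcing any indecomposable that mixes the two colours to be unstable as soon as the negative phases exceed the positive ones; one must also choose the support-property constants uniformly along the compact path, for which the $\mathbb C$-reduction and the vanishing of morphisms between simples of the same colour keep the estimates under control.
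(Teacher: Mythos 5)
Your path is, up to an initial $\mathbb{C}$-rotation by $\tfrac12$, the same tilting path the paper takes: the paper also walks from $\sigma_0$ to $\sigma_0'$ by passing through the hearts $\mathcal{A}_k$ generated by $\{S_u\}_{u\in V_+(T)}\cup\{S_w[-k]\}_{w\in V_-(T)}$, $k=1,\dots,N-2$, raising the phase of the negative simple by one at each stage. (Your version in fact lands exactly on $\sigma_0'$, whereas the paper's $\sigma_{N-2}$ has the same heart as $\sigma_0'$ with the values of the central charge on $S_u$ and $S_w[2-N]$ swapped, so strictly speaking the paper also needs a final leg inside the contractible region $\stab(\mathcal{A}_{N-2})$; this is harmless but worth noting.) So the strategy is essentially the same.

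Where you diverge is in the verification, and that is where your writeup has a real gap. You propose to produce each tilting segment by ``lifting the explicit path of central charges via Bridgeland's deformation theorem'' and then arguing that only shifts of the $S_v$ are semistable along the path, controlling the support constant by examining ``edge objects'' and ``iterated extensions along $T$.'' As written this is not a complete argument: the local homeomorphism $\stab(\Fuk)\to\operatorname{Hom}(\Gamma,\mathbb{C})$ does not let you lift an arbitrary path of central charges unless you already have a uniform support bound along the whole path, and your candidate list of objects that might become semistable is not shown to be exhaustive (a priori any object could destabilize). The paper sidesteps all of this by invoking Proposition~\ref{prop construction of a stability condition}: it exhibits each $\mathcal{T}_k$ directly as a bounded $t$-structure (using the minimal, well-ordered twisted-complex description), writes down the stability function $Z_t$ on the heart $\mathcal{A}_k$, and checks the Harder--Narasimhan property by hand. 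The clean observation that makes this painless --- and which your sketch gestures at but never quite states --- is that for $1\le k\le N-3$ the hearts $\mathcal{A}_k$ are semisimple, because the degree bookkeeping of Lemma~\ref{lem grading} forces $\operatorname{Ext}^1(S_u,S_w[-k])=\lhom^{1-k}_{\Fuk}(S_u,S_w)=0$ and $\operatorname{Ext}^1(S_w[-k],S_u)=\lhom^{1+k}_{\Fuk}(S_w,S_u)=0$ in that range; semisimplicity gives the HN property for free and instantly identifies all semistable objects. If you replace your deformation-theorem lifting by this direct construction, and record the semisimplicity of the intermediate hearts explicitly, your proof closes up and coincides with the paper's.
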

\begin{proof}
	We prove Proposition \ref{prop same connected component} by giving a continuous path 
	\[\sigma(t) : [0,N-2] \to \stab(\Fuk),\]
	such that $\sigma(0) = \sigma_0$ and $\sigma(N-2)=\sigma_0'$.
	Thanks to Proposition \ref{prop construction of a stability condition}, it is enough to construct a bounded $t$-structure on $\Fuk$ and a stability function on its heart for each $t \in [0,N-2]$.
	
	First, we construct a bounded $t$-structure $\mathcal{T}_k$ for each $k = 1, \dots, N-2$. 
	Let $\mathcal{T}_k$ be the full subcategory of $\Fuk$ whose objects consist of 
	\begin{itemize}
		\item the zero object, and 
		\item the objects equivalent to a twisted complex consisting of 
		\[\left\{S_u[d_u] | u \in V_+(T), d_u \geq 0\right\} \cup \left\{S_w[d_w]| w \in V_-(T), d_w \geq -k\right\}.\]
	\end{itemize}

	In order to prove that $\mathcal{T}_k$ is a $t$-structure, one needs to show that 
	\begin{enumerate}
		\item[(i)] $\mathcal{T}_k[1] \subset \mathcal{T}_k$, and
		\item[(ii)] for any nonzero $E \in \Fuk$, there exist $F \in \mathcal{T}_k$ and $G \in \mathcal{T}_k^\perp$ such that there exists an exact triangle
		\[\begin{tikzcd}
			F \ar[rr] & & E \ar[dl] \\
			& G \ar[ul, dashed] &
		\end{tikzcd}.\]
	\end{enumerate}
	
	By the definition of $\mathcal{T}_k$, (i) is trivial. 
	To prove (ii), we define a full subcategory $\mathcal{G}_k \subset \Fuk$ consisting of 
	\begin{itemize}
		\item the zero object, and 
		\item the objects equivalent to a twisted complex consisting of 
		\[\left\{S_u[d_u] | u \in V_+(T), d_u < 0\right\} \cup \left\{S_w[d_w]| w \in V_-(T), d_w < -k\right\}.\]
	\end{itemize}
	Then, by definition, $\mathcal{G}_k \subset \mathcal{T}_k^\perp$.
	
	For any nonzero object $E \in \Fuk$, we know that $E$ is equivalent to a minimal, well-ordered twisted complex.
	Thus, without loss of generality, we may assume that
	\[E=\left(\bigoplus_{i \in I} V_i \otimes S_{v_i}[d_i], (F_{j,i}) \right),\]
	with $d_i \leq d_{i+1}$. 
	Moreover, we observe that if $i \in [0,k]$ is an integer, then for any $u \in V_+(T), w \in V_-(T)$ and for any $m \in \mathbb{Z}$, 
	\[\lhom_{\Fuk}^1(S_w[-i+m],S_u[m]) = \lhom_{\Fuk}^{i+1}(S_w,S_u) = 0.\]
	We note that one can switch the order of $V_i \otimes S_{v_i}[d_i]$ and $V_{i+1} \otimes S_{v_{i+1}}[d_{i+1}]$ if $F_{i+1,i}$ is the zero map. 
	Thus, one can reorder the index set $I$ of a given twisted complex $E$ so that after the reordering, the following holds: 
	For any integer $m \in \mathbb{Z}$, $S_w[-k+m]$ appears after $S_u[m]$ in the twisted complex $E$. 
	It proves that for any nonzero $E \in \Fuk$, there exist $F \in \mathcal{T}_k$ and $G \in \mathcal{G}_k$ such that there exists an exact triangle
	\[\begin{tikzcd}
		F \ar[rr] & & E \ar[dl] \\
		& G \ar[ul, dashed] &
	\end{tikzcd}.\]
	The above arguments also prove that $\mathcal{G}_k = \mathcal{T}_k^\perp$. 
	Moreover, it proves that $\mathcal{T}_k$ is a $t$-structure of $\Fuk$. 
		
	Since $\mathcal{T}_k$ is a $t$-structure, $\mathcal{T}_k$ gives us the heart 
	\[\mathcal{A}_k := \mathcal{T}_k \cap \mathcal{T}_k^\perp[1].\]
	From the definition, every object of $\mathcal{A}_k$ is a direct sum of 
	\[\left\{S_u, S_w[-k] | u \in V_+(T), w \in V_-(T)\right\}.\]
	Now, it is easy to observe that 
	\[\Fuk = \cup_{i \in \mathbb{Z}} \mathcal{A}_k[i].\]
	In other words, $\mathcal{T}_k$ is a bounded $t$-structure of $\Fuk$.
	
	Since $\mathcal{T}_k$ is a bounded $t$-structure, if $Z_t: K\left(\mathcal{A}_k\right) \to \mathbb{C}$ is a stability function satisfying Harder-Narasimhan property, then the pair $(\mathcal{T}_k, Z_t)$ defines a stability condition on $\Fuk$. 
	Keeping it in mind, we define $Z_t: K\left(\mathcal{A}_k\right) \to \mathbb{C}$ for any $ t \in (k-1,k]$ as follows:
	\begin{gather*}
		Z_t(S_u) = \imagin \text{  if  } u \in V_+(T),\\
		Z_t(S_w[-k]) = \cos\big((t-k+1)\pi\big) + \imagin \sin\big((t-k+1)\pi\big) \text{  if  } w \in V_-(T).
	\end{gather*}
	Then, it is easy to show that $Z_t$ satisfies the Harder-Narasimhan property since 
	\[\lhom_{\Fuk}^0(S_u,S_w[-k]) = \lhom_{\Fuk}^0(S_w[k],S_u) = 0,\]
	for any $u \in V_+(T), w \in V_-(T)$. 
	
	For any $t \in (k-1,k]$, let $\sigma_t$ denote the stability condition obtained from $\mathcal{T}_k$ and $Z_t$. 
	Then, it is easy to check that $\sigma_t$ is a continuous path on $\stab(\Fuk)$ with respect to the generalized metric defined in Definition \ref{def metric}.
	Moreover, $\sigma_{N-2}=\sigma_0'$. 
	Thus, it completes the proof.
\end{proof}

\subsection{Strong pseudo-Anosov autoequivalence}
\label{subsection strong pseudo-Anosov autoequivalence}
Let $\Phi$ be an autoequivalence of a triangulated category $\mathcal{D}$.
When a stability condition $\sigma$ is given, (or more precisely, when a connected component of $\stab(\mathcal{D})$ is given,) $\Phi$ is pseudo-Anosov with respect to $\sigma$ if the asymptotic behavior of $m_\sigma(\Phi^n E)$ is independent of the choice of nonzero $E \in \mathcal{D}$.
In this definition, the choice of stability condition $\sigma$ is necessarily because $\sigma$ defines the invariant $m_\sigma(\Phi^n E)$. 

We would like to point out that for a given $\sigma$, one can define other invariants of $\Phi^n E$. 
The other invariants are the maximal/minimal phases of $\Phi^n E$, i.e., $\phi^\pm_\sigma(\Phi^n E)$. 
In this subsection, similar to the point of view of Definition \ref{def growth filtration}, we suggest a stronger notion of pseudo-Anosov by considering the asymptotic behavior of $\phi^\pm_\sigma(\Phi^n E)$ as well as that of $m_\sigma(\Phi^n E)$.

To define the stronger version, we need to show an analogue of Proposition \ref{prop growth filtration}, i.e., Lemma \ref{lem growth filtration 2}.
\begin{lem}
	\label{lem growth filtration 2}
	Let $\sigma$ be a stability conditions on $\mathcal{D}$, and let $\Phi: \mathcal{D} \to \mathcal{D}$ be an autoequivalence on $\mathcal{D}$. 
	And, for any $\lambda \in \mathbb{R}_{>0}$, let $\mathcal{P}^+_{\sigma, \lambda}(\Phi)$ (resp.\ $\mathcal{P}^-_{\sigma, \lambda}(\Phi)$) be the collection of objects $E$ such that 
	\[\mathcal{P}^+_{\sigma, \lambda}(\Phi) := \left\{E \in \mathcal{D} \Big\vert \limsup_{n \to \infty} \tfrac{1}{n} \phi^+_\sigma (\Phi^n E) \leq \lambda \right\}, \mathcal{P}^-_{\sigma, \lambda}(\Phi) := \left\{E \in \mathcal{D} \Big\vert \liminf_{n \to \infty} \tfrac{1}{n} \phi^-_\sigma (\Phi^n E) \geq - \lambda \right\}.\] 
	Then, $\mathcal{P}^+_{\sigma, \lambda}(\Phi)$ and $\mathcal{P}^-_{\sigma, \lambda}(\Phi)$ are $\Phi$-invariant thick triangulated subcategories of $\mathcal{D}$.
\end{lem}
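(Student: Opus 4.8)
The plan is to follow the proof of Proposition \ref{prop growth filtration} (which is Lemma 2.10 of \cite{Fan-Filip-Haiden-Katzarkov-Liu21}), replacing the subadditivity of the mass function with the analogous behaviour of the maximal/minimal phase along exact triangles. Adopting the conventions $\phi^+_\sigma(0)=-\infty$ and $\phi^-_\sigma(0)=+\infty$ (so that the zero object lies in both subcategories), the heart of the argument is the following elementary fact about any stability condition $\sigma=(Z_\sigma,P_\sigma)$ on $\mathcal{D}$: for every exact triangle $A\to B\to C\to A[1]$ one has
\begin{gather*}
	\phi^+_\sigma(B)\le\max\{\phi^+_\sigma(A),\phi^+_\sigma(C)\},\qquad \phi^-_\sigma(B)\ge\min\{\phi^-_\sigma(A),\phi^-_\sigma(C)\}.
\end{gather*}
I would prove the first inequality by contradiction: if $\psi:=\max\{\phi^+_\sigma(A),\phi^+_\sigma(C)\}<\phi^+_\sigma(B)$, consider the Harder--Narasimhan truncation triangle $P_{>\psi}(B)\xrightarrow{v}B\to\tau_{\le\psi}(B)\to P_{>\psi}(B)[1]$, where $P_{>\psi}(B)\neq0$ collects the HN factors of $B$ of phase $>\psi$. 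Since every HN factor of $P_{>\psi}(B)$ has phase $>\psi\ge\phi^+_\sigma(C)$, axiom (iii) of Definition \ref{def stability condition} together with d\'evissage along the HN filtrations gives $\lhom_\mathcal{D}(P_{>\psi}(B),C)=0$, so $v$ factors through $A\to B$; but $\lhom_\mathcal{D}(P_{>\psi}(B),A)=0$ for the same reason, forcing $v=0$, hence $\tau_{\le\psi}(B)\cong B\oplus P_{>\psi}(B)[1]$, which contradicts $\phi^+_\sigma(\tau_{\le\psi}(B))\le\psi$. The inequality for $\phi^-_\sigma$ is the mirror statement, proved with the bottom HN truncation and $\lhom$ into the negative-phase part. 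Applying these to the rotations $C[-1]\to A\to B$ and $B\to C\to A[1]$ of a given triangle yields the companion estimates $\phi^+_\sigma(A)\le\max\{\phi^+_\sigma(C),\phi^+_\sigma(B)\}$ and $\phi^+_\sigma(C)\le\max\{\phi^+_\sigma(B),\phi^+_\sigma(A)+1\}$, and their analogues with $\phi^-_\sigma$, $\min$, and a $-1$.

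Granting this, $\Phi$-invariance of $\mathcal{P}^+_{\sigma,\lambda}(\Phi)$ follows from $\phi^+_\sigma(\Phi^n(\Phi^{\pm1}E))=\phi^+_\sigma(\Phi^{n\pm1}E)$ together with $\lim_{n\to\infty}\tfrac{n\pm1}{n}=1$; closure under shifts follows from $\phi^+_\sigma((\Phi^nE)[1])=\phi^+_\sigma(\Phi^nE)+1$ and $\tfrac1n\to0$. For closure under cones, given an exact triangle $A\to B\to C\to A[1]$ with two of the three objects in $\mathcal{P}^+_{\sigma,\lambda}(\Phi)$, apply the exact functor $\Phi^n$ and the relevant one of the three phase estimates above to $\Phi^nA\to\Phi^nB\to\Phi^nC$; dividing by $n$, taking $\limsup$, and using $\limsup_n\max(a_n,b_n)=\max(\limsup_n a_n,\limsup_n b_n)$ together with $\limsup_n(a_n+\tfrac1n)=\limsup_n a_n$ shows the third object also lies in $\mathcal{P}^+_{\sigma,\lambda}(\Phi)$ (the stray $\pm1$ in the rotated estimates is divided by $n$ and disappears in the limit). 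Thickness is immediate once one observes that $\phi^+_\sigma(E_1)\le\phi^+_\sigma(E_1\oplus E_2)$, since the set of phases of HN factors of a direct sum is the union of the two sets of phases; applied to $\Phi^nE_1$ as a summand of $\Phi^nE$ this shows $\mathcal{P}^+_{\sigma,\lambda}(\Phi)$ is closed under direct summands. The argument for $\mathcal{P}^-_{\sigma,\lambda}(\Phi)$ is identical with $\limsup$, $\max$, $+$ replaced throughout by $\liminf$, $\min$, $-$, using $\liminf_n\min(a_n,b_n)=\min(\liminf_n a_n,\liminf_n b_n)$.

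The main obstacle is the phase-estimate sublemma, and within it the care needed in two places: first, making precise the HN-truncation triangles and the d\'evissage that kills the relevant $\lhom$-groups (this is where axiom (iii), rather than the support property used in Proposition \ref{prop growth filtration}, enters); and second, the bookkeeping of the $+1$/$-1$ shifts produced by rotating the triangle, and the verification that after dividing by $n$ and passing to the limit they are harmless. The remaining steps — the elementary $\limsup$/$\liminf$ manipulations and the direct-summand observation — are routine.
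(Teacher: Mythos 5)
Your proposal is correct and follows the same route as the paper: establish that for an exact triangle the maximal/minimal phase of the middle term is controlled by those of the outer terms, then pass to $\limsup$/$\liminf$ after applying $\Phi^n$ and dividing by $n$. In fact you are somewhat more careful than the paper in three places: (i) the paper states the cone inequality as $\phi^+_\sigma(\Cone(E_1\to E_2))\le\max\{\phi^+_\sigma(E_1),\phi^+_\sigma(E_2)\}$, silently dropping the $+1$ that rotating the triangle produces, whereas you track it and note it dies under $\tfrac{1}{n}$; (ii) you actually prove the cone inequality via HN truncation and d\'evissage rather than labelling it ``easy to check''; and (iii) the paper's proof only addresses $\Phi$-invariance and the triangulated structure and never verifies thickness, while you supply the missing direct-summand argument via $\phi^+_\sigma(E_1)\le\phi^+_\sigma(E_1\oplus E_2)$.
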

\begin{proof}
	We note that $\mathcal{P}^\pm_{\sigma, \lambda}$ is $\Phi$-invariant by definition. 
	Thus, it is enough to show that $\mathcal{P}^\pm_{\sigma, \lambda}$ is a triangulated subcategory.
	
	By the definition of $\phi^\pm_\sigma$, it is easy to check that if $E \simeq \mathrm{Cone}(E_1 \to E_2)$, then 
	\[\phi^+_\sigma(E) \leq \max \left\{\phi^+_\sigma(E_1), \phi^+_\sigma(E_2)\right\}, \phi^-_\sigma(E) \geq \min \left\{\phi^-_\sigma(E_1), \phi^-_\sigma(E_2)\right\}.\]
	We also note that if $E = \mathrm{Cone}(E_1 \to E_2)$, then $\Phi E = \mathrm{Cone}(\Phi E_1 \to \Phi E_2)$ since $\Phi$ is an autoequivalence. 
	
	Let us assume that $E_i \in \mathcal{P}^+_{\sigma, \lambda}(\Phi)$ (resp.\ $E_i \in \mathcal{P}^-_{\sigma, \lambda}(\Phi)$) for $i =1, 2$, i.e., 
	\[\limsup_{n \to \infty} \tfrac{1}{n} \phi^+_\sigma (\Phi^n E_i) \leq \lambda, \liminf_{n \to \infty} \tfrac{1}{n} \phi^-_\sigma (\Phi^n E_i) \geq - \lambda.\]
	Then, the above arguments show that if $E \simeq \mathrm{Cone}(E_1 \to E_2)$, 
	\begin{gather*}
		\limsup_{n \to \infty} \tfrac{1}{n} \phi^+_\sigma (\Phi^n E) \leq \limsup_{n \to \infty} \max \left\{\tfrac{1}{n} \phi^+_\sigma (\Phi^n E_1), \tfrac{1}{n} \phi^+_\sigma (\Phi^n E_2)\right\} \leq \lambda, \\
		\liminf_{n \to \infty} \tfrac{1}{n} \phi^-_\sigma (\Phi^n E) \geq \limsup_{n \to \infty} \min \left\{\tfrac{1}{n} \phi^-_\sigma (\Phi^n E_1), \tfrac{1}{n} \phi^-_\sigma (\Phi^n E_2)\right\} \geq -\lambda.
	\end{gather*}
	It means that $E = \Cone(E_1 \to E_2) \in \mathcal{P}^+_{\sigma, \lambda}(\Phi)$ (resp.\ $\mathcal{P}^-_{\sigma, \lambda}(\Phi)$).
	It completes the proof.
\end{proof}
 
Moreover, we can prove an analogy of Proposition \ref{prop independence of the choice of stability conditions}.
\begin{lem}
	\label{lem independence of the choice of stability conditions}
	If two stability conditions $\sigma$ and $\tau$ lie in the same connected component of $\stab(\mathcal{D})$, for any autoequivalence $\Phi$ and for any $\lambda \in \mathbb{R}_{>0}$, 
	\[\mathcal{P}^\pm_{\sigma,\lambda}(\Phi) = \mathcal{P}^\pm_{\tau, \lambda}(\Phi).\]
\end{lem}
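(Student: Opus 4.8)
The plan is to reduce the statement to the single fact that the generalized metric of Definition~\ref{def metric} controls the difference of the maximal and minimal phase functions, in exactly the spirit of the proof of Proposition~\ref{prop independence of the choice of stability conditions}. Everything will follow from this once we also know that two stability conditions in the same connected component are at finite distance.

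First I would record the elementary inequality that for any two stability conditions $\sigma_1=(Z_1,P_1)$ and $\sigma_2=(Z_2,P_2)$ on $\mathcal{D}$ and any nonzero $E\in\mathcal{D}$,
\[
\bigl|\phi^+_{\sigma_1}(E)-\phi^+_{\sigma_2}(E)\bigr|\le d(\sigma_1,\sigma_2),\qquad
\bigl|\phi^-_{\sigma_1}(E)-\phi^-_{\sigma_2}(E)\bigr|\le d(\sigma_1,\sigma_2),
\]
which is immediate from the definition of $d$ as a supremum over all nonzero objects of $\mathcal{D}$.

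Second, I would show that $d(\sigma,\tau)<\infty$ whenever $\sigma$ and $\tau$ lie in the same connected component of $\stab(\mathcal{D})$. By Bridgeland's deformation theorem \cite{Bridgeland07}, each connected component is a manifold whose topology is precisely the one induced by the generalized metric $d$; in particular it is path-connected, so there is a continuous path joining $\sigma$ to $\tau$. Its image is compact, hence covered by finitely many open $d$-balls of radius, say, $1$; chaining these balls together and using the triangle inequality for $d$ gives $d(\sigma,\tau)<\infty$. This is exactly the point underlying Proposition~2.11 of \cite{Fan-Filip-Haiden-Katzarkov-Liu21}, so one may alternatively just invoke that argument.

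Finally I would conclude. Fix a nonzero $E\in\mathcal{D}$; since $\Phi$ is an autoequivalence, $\Phi^nE\neq 0$ for all $n$, so $\phi^\pm_\sigma(\Phi^nE)$ and $\phi^\pm_\tau(\Phi^nE)$ are all defined. Applying the first step to $\Phi^nE$ and dividing by $n$ gives
\[
\Bigl|\tfrac{1}{n}\phi^+_\sigma(\Phi^nE)-\tfrac{1}{n}\phi^+_\tau(\Phi^nE)\Bigr|\le\tfrac{1}{n}\,d(\sigma,\tau)\longrightarrow 0,
\]
and likewise with $\phi^-$ in place of $\phi^+$. Since adding a sequence tending to $0$ changes neither $\limsup$ nor $\liminf$, we obtain
\[
\limsup_{n\to\infty}\tfrac{1}{n}\phi^+_\sigma(\Phi^nE)=\limsup_{n\to\infty}\tfrac{1}{n}\phi^+_\tau(\Phi^nE),\qquad
\liminf_{n\to\infty}\tfrac{1}{n}\phi^-_\sigma(\Phi^nE)=\liminf_{n\to\infty}\tfrac{1}{n}\phi^-_\tau(\Phi^nE).
\]
Hence the conditions defining $\mathcal{P}^+_{\sigma,\lambda}(\Phi)$ and $\mathcal{P}^+_{\tau,\lambda}(\Phi)$, and those defining $\mathcal{P}^-_{\sigma,\lambda}(\Phi)$ and $\mathcal{P}^-_{\tau,\lambda}(\Phi)$, are equivalent, which is the claim. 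The only non-formal ingredient is the finiteness of $d$ on a connected component; the rest is routine bookkeeping.
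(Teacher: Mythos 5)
Your proof is correct and follows essentially the same route as the paper's: bound $\lvert\phi^\pm_\sigma(\Phi^nE)-\phi^\pm_\tau(\Phi^nE)\rvert$ by $d(\sigma,\tau)$, note this is finite when $\sigma,\tau$ lie in the same connected component, divide by $n$, and conclude that the $\limsup$/$\liminf$ coincide. The only difference is cosmetic: the paper simply asserts $d(\sigma,\tau)<\infty$, while you supply a justification via compactness of a path and a chaining argument; a slightly slicker alternative is to observe that for a generalized metric, finite-distance classes are clopen, so every connected component sits inside a single finite-distance class.
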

\begin{proof}
	If $\sigma$ and $\tau$ are in the same connected component of $\stab(\mathcal{D})$, then the distance with respect to the generalized metric is finite, i.e., 
	\[d(\sigma, \tau) := \sup_{0 \neq E \in \mathcal{D}} \left\{|\phi^-_\sigma(E) - \phi^-_\tau(E)|, |\phi^+_\sigma(E)- \phi^+_\tau(E)|, |\log \frac{m_\sigma(E)}{m_\tau(E)}|\right\} < \text{  a constant  } C.\]
	Thus, for any $n \in \mathbb{N}$ and for any $E \in \mathcal{D}$, $|\phi^\pm_\sigma(\Phi^n E) - \phi^\pm_\tau(\Phi^n E)| < C$. 
	It implies that $\mathcal{P}^\pm_{\sigma, \lambda} (\Phi) = \mathcal{P}^\pm_{\tau,\lambda}(\Phi)$. 
\end{proof}

Now, we define the notion of {\em strong pseudo-Anosov}.
\begin{definition}
	\label{def strong pesudo-Anosov}
	Let $\mathcal{D}$ be a triangulated category, and let $\Phi: \mathcal{D} \to \mathcal{D}$ be an autoequivalence of $\mathcal{D}$.
	Let us assume that 
	\begin{enumerate}
		\item[(A)] $\stab(\mathcal{D}) \neq \varnothing$, and 
		\item[(B)] we fix a connected component $\stab^\dagger(\mathcal{D}) \subset \stab(\mathcal{D})$. 
	\end{enumerate}
	\begin{enumerate}
		\item The {\bf maximal} (resp.\ {\bf minimal}) {\bf phase filtration associated to an autoequivalence $\boldsymbol{\Phi}$} is defined to be the filtration 
		\[\left\{\mathcal{P}^+_\lambda(\Phi):=\mathcal{P}^+_{\sigma, \lambda}(\Phi)\right\}_{\lambda \in \mathbb{R}_{\geq 0}} \text{  (resp.\  } \left\{\mathcal{P}^-_\lambda(\Phi):=\mathcal{P}^-_{\sigma, \lambda}(\Phi)\right\}_{\lambda \in \mathbb{R}_{\geq 0}} ),\]
		for any $\sigma \in \stab^\dagger (\mathcal{D})$.
		\item An autoequivalence $\Phi$ is said to be {\bf strong pseudo-Anosov} if the following two conditions hold:
		\begin{itemize}
			\item $\Phi$ is a pseudo-Anosov autoequivalence in the sense of Definition \ref{def growth filtration}, and
			\item the associated growth filtration and the associated maximal/minimal phase filtrations have only one step:
			\[0 \subset \mathcal{D}_\lambda(\Phi) = \mathcal{D}, 0 \subset \mathcal{P}^\pm_{\lambda_\pm}(\Phi) = \mathcal{D},\]
			with $\lambda, \lambda_\pm >1$.
		\end{itemize}
		\item For a strong pseudo-Anosov autoequivalence $\Phi$, the {\bf positive} (resp.\ {\bf negative}) {\bf phase stretching factor} is the number $\lambda_+$ (resp.\ $\lambda_-$) giving the one step filtration  
		\[0 \subset \mathcal{P}^\pm_{\lambda_\pm}(\Phi) = \mathcal{D}.\]
	\end{enumerate}
\end{definition}

\begin{remark}
	\label{rmk the shifting number 1}
	\mbox{}
	\begin{enumerate}
		\item We would like to note that the asymptotic behavior of $\phi^\pm_\sigma(\Phi^n G)$ has been studied by Fan and Filip in \cite{Fan-Filip23}, where $G$ is a generator of $\mathcal{D}$ and $\mathcal{D}$ is a saturated, finite-type triangulated category. 
		\item We did not provide an explicit rationale for introducing the concept of {\em strong pseudo-Anosov}, and we would like to remark the motivation before ending the present section. 
		Our initial motivation stemmed from an artificial example, denoted as $\Phi$, which is pseudo-Anosov but does not qualify as strong pseudo-Anosov. 
		We will present this specific example in Section \ref{subsection example of pseudo-Anosov but not strong pseudo-Anosov}.
		However, it is worth noting that this example was artificially constructed, and the underlying category in which it resides lacks a sense of naturalness. 
		Consequently, we remain uncertain whether the distinctions between pseudo-Anosov and strong pseudo-Anosov hold under reasonable and natural assumptions. 
	\end{enumerate}
\end{remark}

\section{Actions on the space of stability conditions}
\label{section actions on the space of stability conditions}
In Section \ref{section actions on the space of stability conditions}, we will discuss the actions on the space of stability conditions.
Since the contents of the current section are preparations for Section \ref{section translation of Penner type autoequivalences}, it would be not a bad idea to skip the section for the first reading, and to come back when it is needed. 

Section \ref{subsection two actions on the space of stability conditions} is a preliminary section reviewing two well-known actions on stability conditions. 
In Section \ref{subsection Dehn twist actions on stab}, we will study the actions on our setting, i.e., $\stab(\Fuk)$.
Especially, we will show that every Penner type autoequivalence induces an action preserving the connected component $\stab^\dagger(\Fuk)$.

\subsection{Two actions on the space of stability conditions}
\label{subsection two actions on the space of stability conditions}
Let $\mathcal{D}$ be an arbitrary triangulated category. 
In this subsection, we briefly review the two actions on $\stab(\mathcal{D})$, which are introduced by Bridgeland.
For more details, see \cite[Lemma 8.2]{Bridgeland07}.

The first action is the right-action of $\widetilde{GL}^+(2,\mathbb{R})$, where $\widetilde{GL}^+(2,\mathbb{R})$ denotes the universal cover of $GL^+(2,\mathbb{R})$.
We note that the group $\widetilde{GL}^+(2,\mathbb{R})$ can be seen as the set of pairs $\left(T: \mathbb{R}^2 \to \mathbb{R}^2, f:\mathbb{R} \to \mathbb{R}\right)$ such that 
\begin{itemize}
	\item $T$ is an orientation-preserving linear map, 
	\item for any $r \in \mathbb{R}$, $f(r) = f(r+1)$, and
	\item $T$ and $f$ induce the same map on $S^1 = \mathbb{R}/\mathbb{Z} = \left(\mathbb{R}^2 \setminus \{0\}\right)/\mathbb{R}_{>0}$. 
\end{itemize}
Then, an element $(T,f) \in \widetilde{GL}^+(2,\mathbb{R})$ sends a stability condition $\sigma = (Z, P) \in \stab(\mathcal{D})$ to another stability condition $\sigma' = (Z', P')$ defined as follows: 
\[Z' := T^{-1} \circ Z, P'(\phi) := P\left(f(\phi)\right) \text{  for all  } \phi \in \mathbb{R}.\]

We note that the right $\widetilde{GL}^+(2,\mathbb{R})$-action induces a right $\mathbb{C}$-action on $\stab(\mathcal{D})$ because one can see $\mathbb{C}$ as a subgroup of $\widetilde{GL}^+(2,\mathbb{R})$. 
More precisely, a complex number $z \in \mathbb{C}$ acts as follows: 
\begin{gather}
	\label{eqn C cation}
	(Z,P) \cdot z = \left(\exp(-i\pi z)\cdot Z, \left\{P\left(\phi+\operatorname{Re}(z)\right)\right\}_{\phi \in \mathbb{R}}\right).
\end{gather}

The second action is the left-action of $\Aut(\mathcal{D})$, where $\Aut(\mathcal{D})$ means the group of exact autoequivalences of $\mathcal{D}$. 
We note that an element $\Phi \in \Aut(\mathcal{D})$ induces a linear map on $K(\mathcal{D})$, which is denoted by $\Phi$ again. 
Then, the left-action of $\Aut(\mathcal{D})$ on $\stab(\mathcal{D})$ is given as follows:
\[\Phi \cdot (Z,P) = \left(Z \circ \Phi^{-1}, \left\{\Phi\left(P(\phi)\right)\right\}_{\phi \in \mathbb{R}}\right).\]

It is well-known that the above actions satisfy Proposition \ref{prop actions on stability conditions}.
\begin{prop}[Lemma 8.2 of \cite{Bridgeland07}, Lemma 2.8 of \cite{Kikuta22}]
	\label{prop actions on stability conditions}
	\mbox{}
	\begin{enumerate}
		\item The group $\Aut(\mathcal{D})$ acts on $\stab(\mathcal{D})$ isometrically.
		\item The right-action of $\widetilde{GL}^+(2,\mathbb{R})$ and the left-action $\Aut(\mathcal{D})$ on $\stab(\mathcal{D})$ commute. 
		\item The subgroup $\mathbb{C} \subset \widetilde{GL}^+(2,\mathbb{R})$ acts on $\stab(\mathcal{D})$ freely and isometrically.
	\end{enumerate}
\end{prop}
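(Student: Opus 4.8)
The plan is to read off all three statements directly from the definitions of the two actions and of the generalized metric (Definition \ref{def metric}), using that both actions transform Harder--Narasimhan data in a completely transparent way. The only part that requires more than unwinding definitions is the freeness assertion, and even there the extra input is just boundedness of the $t$-structure underlying a stability condition.

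\emph{Part (1) and Part (2).} For (1) I would first record the effect of $\Phi \in \Aut(\mathcal{D})$ on Harder--Narasimhan filtrations: for $\sigma = (Z,P)$ and a nonzero object $E$, the HN filtration of $E$ with respect to $\Phi \cdot \sigma = (Z \circ \Phi^{-1}, \{\Phi(P(\phi))\})$ is exactly the image under the exact functor $\Phi$ of the HN filtration of $\Phi^{-1}E$ with respect to $\sigma$; a semistable factor $A_i \in P_\sigma(\phi_i)$ becomes $\Phi A_i \in \Phi(P_\sigma(\phi_i))$, and since $[\Phi^{-1}\Phi A_i] = [A_i]$ in $K(\mathcal{D})$ one has $Z_{\Phi\sigma}(\Phi A_i) = Z_\sigma(A_i)$. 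Hence $m_{\Phi \cdot \sigma}(E) = m_\sigma(\Phi^{-1}E)$ and $\phi^\pm_{\Phi \cdot \sigma}(E) = \phi^\pm_\sigma(\Phi^{-1}E)$. Substituting these into the formula for $d(\Phi\cdot\sigma_1, \Phi\cdot\sigma_2)$ and reindexing the supremum over nonzero $E$ by the bijection $E \mapsto \Phi^{-1}E$ gives $d(\Phi\cdot\sigma_1, \Phi\cdot\sigma_2) = d(\sigma_1,\sigma_2)$. Part (2) is then pure bookkeeping: the $\widetilde{GL}^+(2,\mathbb{R})$-action modifies a stability condition at the ``target'' end of $Z$ (post-composition by $T^{-1}$, relabeling of phases by $f$), while $\Aut(\mathcal{D})$ acts at the ``source'' end (pre-composition of $Z$ with $\Phi^{-1}$ on $K(\mathcal{D})$, application of $\Phi$ to the subcategories $P(\phi)$); applying $(T,f)$ and then $\Phi$, or $\Phi$ and then $(T,f)$, both produce $(T^{-1}\circ Z\circ\Phi^{-1}, \{\Phi(P(f(\phi)))\})$ by associativity of composition.

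\emph{Part (3).} I would treat isometry first, then freeness. From \eqref{eqn C cation}, acting by $z \in \mathbb{C}$ shifts every phase by $-\operatorname{Re}(z)$ and multiplies every central charge --- hence the mass of every object --- by the constant $|\exp(-i\pi z)| = \exp(\pi\operatorname{Im}(z))$; so $\phi^\pm_{\sigma\cdot z}(E) = \phi^\pm_\sigma(E) - \operatorname{Re}(z)$ and $m_{\sigma\cdot z}(E) = \exp(\pi\operatorname{Im}(z))\, m_\sigma(E)$. These constants cancel in every difference of phases and every ratio of masses entering $d(\sigma_1\cdot z, \sigma_2\cdot z)$, so $d(\sigma_1\cdot z, \sigma_2\cdot z) = d(\sigma_1,\sigma_2)$. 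For freeness, suppose $\sigma\cdot z = \sigma$. Comparing central charges gives $\exp(-i\pi z)\,Z_\sigma = Z_\sigma$, and since any nonzero object has a semistable HN component of nonzero central charge (Definition \ref{def stability condition}(i)), $Z_\sigma$ is not identically zero, forcing $\exp(-i\pi z) = 1$, i.e.\ $z = 2n$ with $n \in \mathbb{Z}$. Comparing the $P$-parts gives $P(\phi + 2n) = P(\phi)$ for all $\phi$, which combined with $P(\phi+1) = P(\phi)[1]$ yields $\mathcal{A}[2n] = \mathcal{A}$ for the heart $\mathcal{A}$ of the bounded $t$-structure underlying $\sigma$. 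If $n \neq 0$ this exhibits $\mathcal{D}$ as the finite union $\bigcup_{j=0}^{2|n|-1}\mathcal{A}[j]$, contradicting boundedness: for $0 \neq E \in \mathcal{A}$, the object $E[2|n|]$ would lie in some $\mathcal{A}[j]$ with $1 \le 2|n|-j \le 2|n|$, forcing $E \in \mathcal{A} \cap \mathcal{A}[2|n|-j] = 0$. Hence $n = 0$ and $z = 0$.

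\emph{Expected main obstacle.} None of the three parts is deep; the one step requiring a genuine (though short) argument is excluding $z \in 2\mathbb{Z}\setminus\{0\}$ in the freeness claim, where one must invoke boundedness of the $t$-structure attached to $\sigma$ rather than merely the axioms of a stability condition. Everything else is a direct unwinding of Definitions \ref{def metric} and \ref{def stability condition} together with the descriptions of the two actions.
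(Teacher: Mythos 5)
The paper does not prove this proposition --- it is cited as a consolidation of Lemma 8.2 of Bridgeland and Lemma 2.8 of Kikuta --- so there is no "paper's proof" to compare against, and your blind proof serves as a self-contained justification. Your argument is correct in substance. Parts (1) and (2) are clean unwindings of the definitions; the reindexing $E \mapsto \Phi^{-1}E$ in (1) and the source/target separation in (2) are exactly the right observations. The isometry half of (3) is likewise correct.

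The one place your write-up is imprecise is the freeness step. The claim that $\mathcal{A}[2n]=\mathcal{A}$ "exhibits $\mathcal{D}$ as the finite union $\bigcup_{j=0}^{2|n|-1}\mathcal{A}[j]$" is not true for a general triangulated category with a bounded $t$-structure: objects of $\mathcal{D}$ are \emph{filtered} by shifts of the heart, but need not lie in a single shift. Fortunately, nothing in your subsequent argument uses this; what you actually invoke is the standard fact that $\mathcal{A}\cap\mathcal{A}[k]=0$ for $k\neq 0$ (which follows for any $t$-structure by considering the cohomology functors $H^j$: a nonzero $E\in\mathcal{A}$ has $H^0(E)=E\neq 0$ and $H^j(E)=0$ for $j\neq 0$, which is incompatible with $E\in\mathcal{A}[k]$ unless $k=0$). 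Combined with $\mathcal{A}[2n]=\mathcal{A}$ for $n\neq 0$, this gives $\mathcal{A}=\mathcal{A}\cap\mathcal{A}[2n]=0$, and what rules that out is not boundedness per se but the nontriviality of the heart, which follows from the HN property of the stability condition (every nonzero object has nonzero semistable factors living in shifts of $\mathcal{A}$). So the conclusion $z=0$ is correct; just drop the "finite union" sentence and attribute the contradiction to $\mathcal{A}\cap\mathcal{A}[k]=0$ together with $\mathcal{A}\neq 0$ rather than to boundedness.
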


\subsection{Dehn twist actions on $\stab^\dagger(\Fuk)$}
\label{subsection Dehn twist actions on stab}
The goal of Section \ref{section translation of Penner type autoequivalences} is to show that an equivalence $\Phi$ of Penner type induces an action on $\stab^\dagger(\Fuk)$ having positive translation length.
We note that the left-action of $\Aut(\Fuk)$ implies that $\Phi$ acts on $\stab(\Fuk)$, not $\stab^\dagger(\Fuk)$. 
In this subsection, we show that the action of $\Phi$ preserves the connected component $\stab^\dagger(\Fuk)$, i.e., $\Phi$ acts on $\stab^\dagger(\Fuk)$.

Our strategy is to prove Lemma \ref{lem dehn twist action}.
\begin{lem}
	\label{lem dehn twist action}
	For any $v \in V(T)$, the action of $\tau_v$ on $\stab(\Fuk)$ preserves the connected component $\stab^\dagger(\Fuk)$.   	
\end{lem}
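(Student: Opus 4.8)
The plan is to reduce the statement to a path‑connectedness assertion and then to establish it by an explicit deformation, in the spirit of the proof of Proposition~\ref{prop same connected component}.

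First I would invoke Proposition~\ref{prop actions on stability conditions}(1): $\tau_v$ acts on $\stab(\Fuk)$ as an isometry, hence as a homeomorphism, so it permutes the connected components of $\stab(\Fuk)$; in particular $\tau_v\bigl(\stab^\dagger(\Fuk)\bigr)$ is again a connected component. Since $\tau_v\cdot\sigma_0$ lies in $\tau_v\bigl(\stab^\dagger(\Fuk)\bigr)$, it suffices to prove $\tau_v\cdot\sigma_0\in\stab^\dagger(\Fuk)$: the two components then share a point, hence coincide. So the task becomes to construct a continuous path in $\stab(\Fuk)$ from $\sigma_0$ to $\tau_v\cdot\sigma_0$. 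By Remark~\ref{rmk sign conventions} --- passing, if necessary, to the other sign convention, under which the distinguished component is unchanged by Proposition~\ref{prop same connected component} --- we may assume $v\in V_+(T)$.

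Next I would identify the target. By the definition of the left $\Aut(\Fuk)$-action, $\tau_v\cdot\sigma_0=\bigl(Z_0\circ\tau_v^{-1},\ \{\tau_v(P_0(\phi))\}_{\phi}\bigr)$, so its heart is $\tau_v(\mathcal{A})$, where $\mathcal{A}$ is the heart of $\sigma_0$ constructed in Lemma~\ref{lem existence of a stability condition}. Feeding the generators into the defining triangle $\shom^*_{\Fuk}(S_v,E)\otimes S_v\to E\to\tau_v(E)$ of the spherical twist and using the degree data of Lemma~\ref{lem grading}, one computes
\[
\tau_v(S_v)\simeq S_v[1-N],\qquad \tau_v(S_{v'})\simeq\Bigl(S_v\xrightarrow{\,x_{v,v'}\,}S_{v'}\Bigr)\ \ (v'\sim v),\qquad \tau_v(S_{v''})\simeq S_{v''}\ \ (v''\ne v,\ v''\not\sim v),
\]
the middle object being the two-term good twisted complex with both summands in degree $0$. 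Hence $\tau_v(\mathcal{A})$ is the extension closure of $\{S_v[1-N]\}\cup\{S_u:u\in V_+(T)\setminus\{v\}\}\cup\{\tau_v(S_w):w\in V_-(T)\}$, and $Z_0\circ\tau_v^{-1}$ assigns to these three families of simple objects the values $\imagin$, $\imagin$ and $-1$, respectively.

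To reach $\tau_v\cdot\sigma_0$ from $\sigma_0$ I would rotate the phase of $S_v$ monotonically upward by $N-1$ (relative to the phases of the remaining generators, which are kept fixed), tilting the heart at each of the finitely many walls the rotating phase encounters. Concretely, I would introduce a chain of full subcategories $\mathcal{T}^v_1,\dots,\mathcal{T}^v_{N-1}\subset\Fuk$ --- the analogues of the $\mathcal{T}_k$ of Proposition~\ref{prop same connected component}, but moving only the vertex $v$ downward and correspondingly exchanging its neighbours $S_w$ ($w\sim v$) for their partial twists --- and verify, exactly as in Lemma~\ref{lem existence of a stability condition} and Proposition~\ref{prop same connected component}, that each is a bounded $t$-structure, the heart of $\mathcal{T}^v_{N-1}$ being $\tau_v(\mathcal{A})$. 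The reordering of a minimal well-ordered twisted complex needed to split it along $\mathcal{T}^v_k$ rests on the vanishing $\shom^1_{\Fuk}\bigl(S_w[-i+m],\,S_v[m]\bigr)=\shom^{i+1}_{\Fuk}(S_w,S_v)=0$ for $0\le i\le N-2$ and all $m\in\mathbb{Z}$, which holds by Lemma~\ref{lem grading} precisely because $N\ge3$. On the heart of each $\mathcal{T}^v_k$ I would then take a one-parameter family of stability functions rotating the appropriate simple along the unit circle while the others stay at $\imagin$ or $-1$; the Harder--Narasimhan property holds because the relevant degree-$0$ Floer cohomologies between the simples vanish, again by Lemma~\ref{lem grading}. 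Concatenating the pieces (appealing to Proposition~\ref{prop construction of a stability condition} at each stage) produces a continuous path $\sigma(t)$ with $\sigma(0)=\sigma_0$ and $\sigma(N-1)=\tau_v\cdot\sigma_0$, as required.

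I expect the main obstacle to be the bookkeeping at the intermediate stages. In Proposition~\ref{prop same connected component} every $V_-(T)$-vertex is moved simultaneously and all generators remain simple throughout, whereas here moving $S_v$ alone forces each neighbour $S_w$ ($w\sim v$) to be exchanged for a partially twisted object exactly when the rotating phase of $S_v$ passes the phase of $S_w$, and one must check that at every parameter value the nominated pair (heart, stability function) really does satisfy the $t$-structure axioms and the Harder--Narasimhan property. This is exactly where $N\ge3$ and the precise degrees of Lemma~\ref{lem grading} are used: they force every morphism among the relevant shifted generators that could obstruct a reordering, or a semistable decomposition, to sit in a degree in which the Floer cohomology vanishes.
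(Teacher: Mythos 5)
Your reduction --- it suffices to show $\tau_v\cdot\sigma_0\in\stab^\dagger(\Fuk)$ --- and your identification of the heart of $\tau_v\cdot\sigma_0$ as $\tau_v(\mathcal{A})$ both agree with the paper's setup. Beyond that the route you propose is genuinely different: the paper does not build an explicit path by writing down $t$-structures and stability functions. It produces the chain of hearts $\mathcal{A}_k=\bigl(\mathcal{A}_{k-1}\bigr)^\flat_{S_v[-(k-1)]}$ by iterated backward tilting (Proposition~\ref{prop forward/backward tilting 1}), notes that each $\mathcal{A}_k$ is finite so $\stab(\mathcal{A}_k)\cong H^{|V(T)|}$ is connected (Proposition~\ref{prop fixed heart}), and glues consecutive regions $\stab(\mathcal{A}_{k-1})$, $\stab(\mathcal{A}_k)$ by the Bridgeland--Smith wall-crossing statement (Proposition~\ref{prop forward/backward tilting 2}). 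The only computation needed there is $\lhom^1_{\Fuk}(S_v[-i],\tau_v(S_w))=\lhom^{2+i-N}_{\Fuk}(S_v,S_w)=0$ for $1\le i\le N-2$, and the argument works uniformly for $v$ of either sign without the reduction you perform.

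Your explicit-path strategy, as written, has a gap. The naive analogue $\mathcal{T}^v_k$ of $\mathcal{T}_k$ from Proposition~\ref{prop same connected component} --- generated by $\{S_v[d]:d\ge-k\}\cup\{S_{v'}[d']:v'\ne v,\ d'\ge0\}$ --- is \emph{not} a $t$-structure when $v\in V_+(T)$. The well-ordering convention of Definition~\ref{def well-ordered} places a $V_+$-piece before a $V_-$-piece of equal shift, so at a common shift $m\in\{-k,\dots,-1\}$ a well-ordered complex has $S_v[m]$ (which must land in the $\mathcal{T}^v_k$-part $F$) \emph{before} $S_w[m]$ with $w\sim v$, $w\in V_-(T)$ (which must land in $G\in(\mathcal{T}^v_k)^\perp$); the required swap is blocked by the $x$-morphism $\lhom^1_{\Fuk}(S_v,S_w)=\mathbb{k}\langle x_{v,w}\rangle\ne0$, and when the corresponding arrow is present the complex simply has a $\tau_v(S_w)[m]$-factor, which belongs to neither $\mathcal{T}^v_k$ nor $(\mathcal{T}^v_k)^\perp$. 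Your stated vanishing ``$\shom^{i+1}_{\Fuk}(S_w,S_v)=0$ for $0\le i\le N-2$'' is for the wrong direction of morphism, and it is in any case false at $i=N-2$, where it is the $y$-morphism in degree $N-1$. This is precisely what makes moving a single positive vertex harder than Proposition~\ref{prop same connected component}: moving all of $V_-(T)$ down simultaneously is compatible with the well-ordering (equal-shift positions never need a swap), and the only swaps required are across strictly unequal shifts, where the relevant $\lhom$ lives in a degree strictly between $1$ and $N-1$ and vanishes. Your remark about ``exchanging neighbours for their partial twists'' is the correct fix and is exactly what the backward tilt $\mathcal{A}^\flat_{S_v}$ accomplishes, but carrying it out by hand would amount to re-deriving the content of Propositions~\ref{prop forward/backward tilting 1} and \ref{prop forward/backward tilting 2}, which the paper avoids by citing them.
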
  
Since $\Phi$ is a product of Dehn twists and inverses of Dehn twists, Lemma \ref{lem dehn twist action} is enough to prove that $\Phi$ acts on $\stab^\dagger(\Fuk)$. 

To prove Lemma \ref{lem dehn twist action}, we apply \cite[Lemma 5.2]{Bridgeland09} and \cite[Proposition 5.2]{King-Qiu15}. 
Before applying the known facts, we need to define some terminologies first. 

\begin{definition}
	\label{def heart of a stability conditions}
	Let $\mathcal{D}$ be a triangulated category. 
	\begin{enumerate}
		\item According to Proposition \ref{prop construction of a stability condition}, giving a stability condition $\sigma$ on $\mathcal{D}$ is equivalent to giving a bounded $t$-structure on $\mathcal{D}$ and a stability function on the heart $\mathcal{H}$ of the bounded $t$-structure. 
		We say that $\mathcal{H}$ is the {\bf heart of a stability condition $\boldsymbol{\sigma}$}.
		\item For a heart $\mathcal{H}$ of a bounded $t$-structure of $\Fuk$, let $\boldsymbol{\stab(\mathcal{H})}$ denote the set of stability conditions whose heart is $\mathcal{H}$.
	\end{enumerate}
\end{definition}

It is proven in \cite{Beilinson-Bernstein-Deligne81} that if $\mathcal{H}$ is a heart of a bounded $t$-structure of a triangulated category $\mathcal{D}$, then $\mathcal{H}$ is an abelian category. 
For an object of $\mathcal{H}$, we can consider the following terminologies:
\begin{definition}
	\label{def simple rigid}
	\mbox{}
	\begin{enumerate}
		\item An object of $\mathcal{H}$ is {\bf simple} if it is not the middle term of any (non-trivial) short exact sequence. 
		\item The set of simple objects of $\mathcal{H}$ is denoted by $\boldsymbol{\Sim(\mathcal{H})}$.
		\item A heart $\mathcal{H}$ of a bounded $t$-structure on $\mathcal{D}$ is {\bf finite} if $\mathcal{H}$ has only finitely many simple objects and any object of $\mathcal{H}$ is generated by finitely many extensions of simple objects. 
		\item An object $M$ of $\mathcal{H}$ is {\bf rigid} if $\operatorname{Ext}^1_\mathcal{D}(M,M)=0$. 
	\end{enumerate}
\end{definition}

We can state \cite[Lemma 5.2]{Bridgeland09}.
\begin{prop}[Lemma 5.2 of \cite{Bridgeland09}]
	\label{prop fixed heart}
	Let $\mathcal{H}$ be the heart of a bounded $t$-structure on $\mathcal{D}$. 
	If $\mathcal{H}$ is finite with $m$-many simple objects, then $\stab(\mathcal{H}) \subset \stab(\Fuk)$ is isomorphic to $H^m$, where 
	\[H:= \left\{r \exp(i\pi \phi) | r \in \mathbb{R}_{>0} \text{  and  } \phi \in (0,1]\right\}.\]
\end{prop}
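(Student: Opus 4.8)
The plan is to turn the abstract statement $\stab(\mathcal{H}) \cong H^m$ into an explicit parametrization. By Proposition~\ref{prop construction of a stability condition}, a point of $\stab(\mathcal{H})$ is the same data as a stability function on the abelian category $\mathcal{H}$ with the Harder--Narasimhan property, and because $\mathcal{H}$ is finite such a stability function is both determined by and freely determines an assignment of a value in $H$ to each of the $m$ simple objects. So the proof splits into a bijective bookkeeping step and a check that the resulting bijection is a homeomorphism for the topology of $\stab$.

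For the bookkeeping step, I would first recall that the inclusion of the heart of a bounded $t$-structure induces an isomorphism $K(\mathcal{H}) \xrightarrow{\sim} K(\mathcal{D})$, and that, $\mathcal{H}$ being finite, every object of $\mathcal{H}$ has a finite composition series with simple factors, so the classes of $\Sim(\mathcal{H}) = \{S_1,\dots,S_m\}$ form a $\mathbb{Z}$-basis of $K(\mathcal{H})$. Thus a group homomorphism $Z\colon K(\mathcal{H})\to\mathbb{C}$ is exactly a choice of $Z([S_1]),\dots,Z([S_m])\in\mathbb{C}$. Next I would check that the stability-function condition ``$Z(E)\in H$ for every nonzero $E\in\mathcal{H}$'' is equivalent to ``$Z([S_i])\in H$ for each $i$'': if all $Z([S_i])\in H$ and $E\neq 0$ has composition factors $S_i$ occurring with multiplicities $n_i\geq 0$, not all zero, then $Z(E)=\sum_i n_i Z([S_i])\in H$ because $H$ is the image of a convex cone and so is closed under addition and under multiplication by positive integers, and the reverse implication is immediate. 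The Harder--Narasimhan property is then automatic, since in a finite-length abelian category any composition series refines to an HN filtration, and the support property of Definition~\ref{def stability condition}(v) holds for each such $\sigma$ because only the finitely many phases $\arg Z([S_i])$ occur among semistable objects and there is no cancellation on the corresponding cones, so $C=\max_i\|[S_i]\|/\min_i|Z([S_i])|$ works. This produces a bijection $\Theta\colon \stab(\mathcal{H})\to H^m$, $\sigma\mapsto (Z_\sigma([S_1]),\dots,Z_\sigma([S_m]))$.

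Finally I would promote $\Theta$ to a homeomorphism onto $H^m$ (with $H^m\subset\mathbb{C}^m$ given the subspace topology). The map $\Theta$ is the restriction to $\stab(\mathcal{H})$ of the forgetful map $\stab(\mathcal{D})\to\lhom(K(\mathcal{D}),\mathbb{C})$, $\sigma\mapsto Z_\sigma$, which is a local homeomorphism by Bridgeland's deformation theorem; so it is enough to observe that near any $\sigma\in\stab(\mathcal{H})$ a small perturbation of the central charge keeping it inside $H^m$ does not change the heart---here one uses the finiteness of $\mathcal{H}$ together with the fact that the simple objects remain stable with phases in $(0,1]$---whence the local inverse of the forgetful map carries a neighbourhood of $Z_\sigma$ in $H^m$ into $\stab(\mathcal{H})$; bicontinuity of $\Theta$ can alternatively be read off directly from the generalized metric of Definition~\ref{def metric}. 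I expect this last point to be the main obstacle: the set-theoretic bijection is essentially routine, whereas recognizing $\stab(\mathcal{H})$ as a single ``sheet'' lying homeomorphically over $H^m$ is what genuinely requires the local-homeomorphism property and the finiteness hypothesis.
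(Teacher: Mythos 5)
The paper does not prove this proposition; it is cited from Bridgeland \cite{Bridgeland09} and used as a black box, so there is no in-paper argument to compare against. Your outline --- parametrize $\stab(\mathcal{H})$ by the $m$-tuple $(Z([S_1]),\dots,Z([S_m]))\in H^m$ and upgrade the resulting bijection to a homeomorphism via the deformation theorem --- is the standard route and is structurally sound.

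There is, however, a genuine error in the support-property step. Your constant $C=\max_i\|[S_i]\|/\min_i|Z([S_i])|$ does not work, and the reason is that cancellation in $|Z(E)|$ is possible even though every $Z([S_i])$ lies in $H$; your claim that only the phases $\arg Z([S_i])$ occur among semistables is likewise false once the simples admit extensions. Concretely, in $\Fuk$ with $T$ the $A_2$-quiver take $Z([S_1])=-1+\epsilon i$ and $Z([S_2])=1+\epsilon i$ for small $\epsilon>0$: both lie in $H$ with $\min_i|Z([S_i])|\approx 1$, yet the nonsplit extension $E$ of $S_1$ by $S_2$ (which exists since $\lhom^1_{\Fuk}(S_1,S_2)\neq 0$) is stable of phase $\tfrac{1}{2}$ with $|Z(E)|=2\epsilon$, while $\|[E]\|$ is independent of $\epsilon$; so $\|[E]\|/|Z(E)|\to\infty$ and your $C$ fails. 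The correct argument is a compactness one: the convex hull of $\{Z([S_1]),\dots,Z([S_m])\}$ is a compact subset of $H$, hence bounded away from $0$, giving $|Z(E)|\geq c\sum_i n_i$ whenever $[E]=\sum_i n_i[S_i]$ with $n_i\geq 0$ not all zero, and since any norm on $K(\mathcal{D})\otimes\mathbb{R}$ is comparable to $\sum_i n_i$ on the positive orthant the support property follows (with a $C$ depending on $\sigma$). With this repair the rest of your outline, including the homeomorphism step, goes through.
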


Let $\mathcal{H}$ is a finite heart, and let $S$ be a rigid simple object of $\mathcal{H}$. 
Then, \cite[Proposition 5.4]{King-Qiu15} gives us another heart of a bounded $t$-structure of $\Fuk$ from $\mathcal{H}$.
\begin{prop}[Proposition 5.4 of \cite{King-Qiu15}]
	\label{prop forward/backward tilting 1}
	Let $\mathcal{D}$ be a triangulated category, and let $\mathcal{H}$ be a finite heart of a bounded $t$-structure of $\mathcal{D}$. 
	If $S$ is a rigid simple object of $\mathcal{H}$, then there exist two $t$-structures of $\mathcal{D}$ whose hearts $\mathcal{H}_S^\#, \mathcal{H}_S^\flat$ satisfy that 
	\begin{gather*}
		\Sim(\mathcal{H}^\#_S) = \left\{S[1]\right\} \cup \left\{\Cone\left(X \to S[1]\otimes \lhom_{\mathcal{D}}^1(X,S)\right) | X \in \Sim(\mathcal{H}), X \neq S\right\}, \\
		\Sim(\mathcal{H}^\flat_S) = \left\{S[-1]\right\} \cup \left\{\Cone\left(S[-1]\otimes \lhom_{\mathcal{D}}^1(S,X) \to X\right) | X \in \Sim(\mathcal{H}), X \neq S\right\}.
	\end{gather*}
\end{prop}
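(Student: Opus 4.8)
\emph{Proof proposal.} The plan is to realise $\mathcal{H}^\#_S$ and $\mathcal{H}^\flat_S$ as Happel--Reiten--Smal\o{} tilts of $\mathcal{H}$ at two torsion pairs built from $S$, and then to read off their simple objects. The crucial consequence of rigidity, used repeatedly, is that inside the finite-length abelian category $\mathcal{H}$ the extension closure $\langle S\rangle$ equals the additive closure $\mathrm{add}(S)=\{S^{\oplus n}:n\ge 0\}$, since $\operatorname{Ext}^1_\mathcal{D}(S,S)=0$ forces every self-extension of copies of $S$ to split. Writing $\mathcal{T}_S:=\{X\in\mathcal{H}:\operatorname{Hom}_\mathcal{H}(X,S)=0\}$ and $\mathcal{F}'_S:=\{Y\in\mathcal{H}:\operatorname{Hom}_\mathcal{H}(S,Y)=0\}$, I would first check that $(\mathcal{T}_S,\mathrm{add}(S))$ and $(\mathrm{add}(S),\mathcal{F}'_S)$ are torsion pairs on $\mathcal{H}$: closure of $\mathcal{T}_S$ and $\mathcal{F}'_S$ under the relevant operations is formal, the largest sub/quotient of a given object lying in $\mathrm{add}(S)$ exists because $\mathcal{H}$ is of finite length, and the complementary piece lies in $\mathcal{T}_S$ (resp.\ $\mathcal{F}'_S$) precisely because $\operatorname{Ext}^1_\mathcal{D}(S,S)=0$ annihilates the obstruction to extending (resp.\ lifting) maps to or from $S$.

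Applying the HRS construction (and a harmless overall shift in the $\flat$ case) I obtain
\[\mathcal{H}^\#_S=\{E\in\mathcal{D}:H^{-1}_{\mathcal{H}}(E)\in\mathrm{add}(S),\ H^0_{\mathcal{H}}(E)\in\mathcal{T}_S,\ H^i_{\mathcal{H}}(E)=0\text{ for }i\neq -1,0\},\]
\[\mathcal{H}^\flat_S=\{E\in\mathcal{D}:H^0_{\mathcal{H}}(E)\in\mathcal{F}'_S,\ H^1_{\mathcal{H}}(E)\in\mathrm{add}(S),\ H^i_{\mathcal{H}}(E)=0\text{ for }i\neq 0,1\},\]
which are hearts of bounded $t$-structures on $\mathcal{D}$; boundedness is automatic because $\mathcal{H}$ is bounded and each tilt changes cohomological amplitude by at most one. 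Each tilted heart moreover carries an induced torsion pair, namely $(\mathrm{add}(S)[1],\mathcal{T}_S)$ for $\mathcal{H}^\#_S$ and $(\mathcal{F}'_S,\mathrm{add}(S)[-1])$ for $\mathcal{H}^\flat_S$, which I would use as the main tool for locating subobjects.

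It remains to identify the simples; I treat $\mathcal{H}^\#_S$, the $\flat$ case being dual. First, $S[1]$ is simple: a subobject $A\hookrightarrow S[1]$ in $\mathcal{H}^\#_S$ has $H^{-1}_{\mathcal{H}}(A)\hookrightarrow S$, hence $H^{-1}_{\mathcal{H}}(A)\in\{0,S\}$, and chasing the long exact $\mathcal{H}$-cohomology sequence together with $\mathrm{add}(S)\cap\mathcal{T}_S=0$ forces $A\in\{0,S[1]\}$. Next, for $X\in\Sim(\mathcal{H})$ with $X\neq S$ one has $\operatorname{Hom}_\mathcal{H}(X,S)=0$ automatically, so $X\in\mathcal{T}_S\subset\mathcal{H}^\#_S$; there $X$ is generally not simple, its proper nonzero subobjects being exactly the non-split extensions $0\to S^{\oplus k}\to A\to X\to 0$ that remain in $\mathcal{T}_S$, and the largest of these is the universal extension of $X$ by $S$ --- the object that the statement writes as $\Cone\!\big(X\to S[1]\otimes\lhom^1_\mathcal{D}(X,S)\big)$ (up to the shift convention for $\Cone$). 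Rigidity enters once more: because $\operatorname{Ext}^1_\mathcal{D}(S,S)=0$, this universal extension has no further extension by $S$, hence is genuinely simple in $\mathcal{H}^\#_S$. Finally I would show that these $n=|\Sim(\mathcal{H})|$ objects exhaust the simples and that every object of $\mathcal{H}^\#_S$ is a finite iterated extension of them, by peeling off the $\mathrm{add}(S)[1]$-part and inducting on the $\mathcal{H}$-composition length of $H^0_{\mathcal{H}}(E)$; this also shows $\mathcal{H}^\#_S$ is finite.

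The step I expect to be the real work is this last one: establishing that the explicitly written cones lie in the tilted heart and are simple there, that no other simples occur, and that finiteness is inherited. Concretely, one must describe the $\mathcal{H}^\#_S$-subobject lattice of each $X\neq S$ (showing it is exactly the poset of non-split $S$-extensions) and then run the d\'evissage cleanly --- and it is again the hypothesis $\operatorname{Ext}^1_\mathcal{D}(S,S)=0$ that makes both the torsion-pair axioms and this d\'evissage go through. Everything upstream of that is formal torsion theory; the details are carried out in \cite[Section 5]{King-Qiu15}.
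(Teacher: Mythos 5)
The paper does not prove this statement itself; it only cites King--Qiu, so your reconstruction is the only proof under review. Your route --- HRS tilting at the torsion pairs $({}^\perp S,\mathrm{add}(S))$ and $(\mathrm{add}(S),S^\perp)$, with rigidity used to collapse the extension closure $\langle S\rangle$ to $\mathrm{add}(S)$ --- is exactly King--Qiu's, and your identification of the simples as $S[\pm1]$ together with the degree-zero universal extensions is correct.

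However, the hedge ``up to the shift convention for $\Cone$'' is doing real work and should not be left implicit. With $\Cone$ in the usual sense (third vertex of $A\to B\to\Cone\to A[1]$), the object $\Cone\bigl(X\to S[1]\otimes\lhom^1_\mathcal{D}(X,S)\bigr)$ is $E_X[1]$, where $E_X$ is the universal extension $0\to S\otimes\lhom^1_\mathcal{D}(X,S)\to E_X\to X\to 0$ in $\mathcal{H}$: rotating the defining triangle gives $S\otimes\lhom^1_\mathcal{D}(X,S)\to C[-1]\to X$, so $C[-1]=E_X$. But $E_X[1]$ is concentrated in cohomological degree $-1$ with $H^{-1}=E_X\notin\mathrm{add}(S)$, so $E_X[1]$ does not lie in the tilted heart $\{E:H^{-1}(E)\in\mathrm{add}(S),\,H^0(E)\in{}^\perp S\}$ that you (correctly) write down. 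The genuine simple is $E_X=\Cone\bigl(X\to S[1]\otimes\lhom^1_\mathcal{D}(X,S)\bigr)[-1]$; as written, the $\#$ formula in the statement is off by this shift, whereas the $\flat$ formula (which is the one the paper actually uses, via $\tau_v(S_w)$ in Lemma~\ref{lem dehn twist action}) is consistent. One can check this concretely on $D^b(\mathrm{rep}\,A_2)$, where $\Cone\bigl(S_1\to S_2[1]\bigr)=P_1[1]$ lands outside $\mathcal{H}^\#_{S_2}$ while $P_1$ itself is the simple. Finally, ``no further extension by $S$, hence simple'' is too quick: it disposes of $\mathcal{H}^\#_S$-subobjects of $E_X$ arising from $\operatorname{Ext}^1(E_X,S)$, but you must also rule out those whose $\mathcal{H}$-image is a proper $\mathcal{H}$-submodule $M\subsetneq E_X$; this follows because $E_X/M$ then has a copy of $S$ in its top, hence admits a nonzero map to $S$ and so cannot lie in ${}^\perp S$, and rigidity is again what lets the $S$-layers split cleanly.
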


\begin{remark}
	The new hearts $\mathcal{H}^\#_S, \mathcal{H}^\flat_S$ in Proposition \ref{prop forward/backward tilting 1} are called the forward/backward tilts of $\mathcal{H}$ in \cite{King-Qiu15}. 
\end{remark}

Moreover, it is known that if $\mathcal{H}, \mathcal{H}_S^\#, \mathcal{H}_S^\flat$ are the hearts given in Proposition \ref{prop forward/backward tilting 1}, then, the closure of $\stab(\mathcal{H})$ intersects the closures of $\stab(\mathcal{H}^\#_S), \stab(\mathcal{H}^\flat_S)$. 
It is proven in \cite[Lemma 7.9]{Bridgeland-Smith15}.
\begin{prop}[Lemma 7.9 of \cite{Bridgeland-Smith15}]
	\label{prop forward/backward tilting 2}
	Let $\mathcal{D}$ be a triangulated category, and let $\mathcal{H} \subset \mathcal{D}$ be a finite heart. 
	Let us assume that 
	\begin{itemize}
		\item a stability condition $\sigma=(Z,P) \in \stab(\mathcal{D})$ lies on a unique boundary component of the region $\stab(\mathcal{H})$, so that $\operatorname{Im} Z(S) =0$ for a unique simple object $S$, and 
		\item $\mathcal{H}_S^\#, \mathcal{H}_S^\flat$ are also finite. 
	\end{itemize}
	Then, there exists a neighborhood $\sigma \in U \subset \stab(\mathcal{D})$ such that one of the following holds
	\begin{enumerate}
		\item[(i)] $Z(S) \in \mathbb{R}_{<0}$, and $U \subset \stab(\mathcal{H}) \sqcup \stab(\mathcal{H}_S^\flat)$,
		\item[(ii)] $Z(S) \in \mathbb{R}_{>0}$, and $U \subset \stab(\mathcal{H}) \sqcup \stab(\mathcal{H}_S^\#)$.
	\end{enumerate}
\end{prop}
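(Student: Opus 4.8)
The plan is to deduce this from Bridgeland's deformation theorem together with the cell structure of $\stab(\mathcal{H})$ recorded in Proposition \ref{prop fixed heart}; concretely this is the argument of \cite[Lemma 7.9]{Bridgeland-Smith15}, whose steps I now outline. Write $\Sim(\mathcal{H}) = \{S = S_1, S_2, \dots, S_m\}$. Since $\mathcal{H}$ is finite, Proposition \ref{prop fixed heart} identifies $\stab(\mathcal{H})$ with $H^m$ via $\sigma' = (Z', P') \mapsto (Z'(S_1), \dots, Z'(S_m))$. By Bridgeland's deformation theorem \cite{Bridgeland07} there is an open neighborhood $\sigma \in U \subset \stab(\mathcal{D})$ on which $\sigma'$ is determined by, and varies homeomorphically with, its central charge, and on which the tuple $(Z'(S_1), \dots, Z'(S_m))$ is a complete system of local coordinates; after shrinking $U$ we may assume $Z'(S_i)$ stays in the open upper half-plane for every $i \neq 1$. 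This last shrinking is where the hypothesis that $\sigma$ lies on a \emph{unique} boundary component of $\stab(\mathcal{H})$ enters, so that only $S = S_1$ has real central charge at $\sigma$.

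First I would analyze $U^+ := \{\sigma' \in U : \operatorname{Im} Z'(S) > 0\}$. For $\sigma' \in U^+$ all of $Z'(S_1), \dots, Z'(S_m)$ lie in the open upper half-plane, and for $U$ small their phases lie in $(0,1)$; hence $S_1, \dots, S_m$ are the simples of the heart of $\sigma'$, so that heart is $\mathcal{H}$ and $\sigma' \in \stab(\mathcal{H})$. Next consider $U^- := \{\sigma' \in U : \operatorname{Im} Z'(S) < 0\}$. Now $S$ itself is not in the heart of $\sigma'$, but a single shift of it is, and which shift is forced by the sign of $Z(S)$ at the wall: if $Z(S) \in \mathbb{R}_{<0}$ the period of $S$ rotates upward across the negative real axis, so $S[-1]$ acquires small positive phase; if $Z(S) \in \mathbb{R}_{>0}$ the period of $S$ rotates downward across the positive real axis, so $S[1]$ acquires phase just below $1$. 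In either case the $S_i$ with $i \neq 1$ keep phase in $(0,1)$, and a direct computation with the Harder--Narasimhan filtrations — exactly the one underlying Proposition \ref{prop forward/backward tilting 1} — shows that the heart of $\sigma'$ is the backward tilt $\mathcal{H}_S^\flat$ in the first case and the forward tilt $\mathcal{H}_S^\#$ in the second. Here the hypothesis that $\mathcal{H}_S^\flat$ (resp.\ $\mathcal{H}_S^\#$) is finite is used so that, by Proposition \ref{prop fixed heart} again, its cell $\stab(\mathcal{H}_S^\flat)$ (resp.\ $\stab(\mathcal{H}_S^\#)$) is genuinely an $H^m$-chart and $\sigma' \in U^-$ actually lies in it.

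Assembling these, when $Z(S) \in \mathbb{R}_{<0}$ the wall $\{\operatorname{Im} Z'(S) = 0\}$ has $Z'(S) \in \mathbb{R}_{<0}$, so $S$ has phase $1 \in (0,1]$ and the wall belongs to $\stab(\mathcal{H})$; thus $U = (U^+ \cup \text{wall}) \sqcup U^-$ with the first part in $\stab(\mathcal{H})$ and $U^- \subset \stab(\mathcal{H}_S^\flat)$, which is (i). When $Z(S) \in \mathbb{R}_{>0}$ the wall has $Z'(S) \in \mathbb{R}_{>0}$, so $S[1]$ has phase $1$ and the wall belongs to $\stab(\mathcal{H}_S^\#)$; thus $U^+ \subset \stab(\mathcal{H})$ and $(\text{wall} \cup U^-) \subset \stab(\mathcal{H}_S^\#)$, which is (ii).

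The genuinely delicate step is the heart classification on $U^-$: one must exclude the appearance of any heart other than the single relevant elementary tilt, and see that the passage across $\{\operatorname{Im} Z'(S)=0\}$ is clean rather than traversing infinitely many intermediate cells. This is handled by combining the support property (Definition \ref{def stability condition}(v)), which bounds uniformly the masses and phases of all $\sigma'$-semistable objects as $\sigma' \to \sigma$, with the finiteness of $\mathcal{H}$, $\mathcal{H}_S^\#$ and $\mathcal{H}_S^\flat$; the remaining bookkeeping is carried out in \cite[Lemma 7.9]{Bridgeland-Smith15} (see also \cite[Lemma 5.2]{Bridgeland09}), to which I would refer for the details.
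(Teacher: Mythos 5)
The paper does not actually prove this proposition — it is imported verbatim as Lemma 7.9 of \cite{Bridgeland-Smith15} and used as a black box — and your sketch reproduces the standard argument of that lemma (Bridgeland's deformation theorem with the charges of the simples as local coordinates, openness of stability for the simples on $U^+$, and the wall-crossing analysis of which shift of $S$ enters the heart), with the orientation of cases (i) and (ii), including which side of the wall $\{\operatorname{Im}Z'(S)=0\}$ lies in which cell, handled correctly. Since you explicitly defer the one genuinely delicate step — excluding any heart other than the single elementary tilt on the far side of the wall, via the support property and finiteness of $\mathcal{H}$, $\mathcal{H}_S^{\#}$, $\mathcal{H}_S^{\flat}$ — to the same citation, your treatment is in substance the same as the paper's, and I see no error in it.
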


Now, we are ready to prove Lemma \ref{lem dehn twist action}.
\begin{proof}[Proof of Lemma \ref{lem dehn twist action}]
	We recall that there exists a fixed stability condition $\sigma_0 \in \stab^\dagger(\Fuk)$ (Definition \ref{def fixed component of the space of stability conditions}).
	Since $\Aut(\Fuk)$ acts on $\stab(\Fuk)$ isometrically, for any $v \in V(T)$, the $\tau_v$-action gives a continuous map on $\stab(\Fuk)$. 
	Thus, it is enough to show that $\tau_v\cdot \sigma_0 \in \stab^\dagger(\Fuk)$.
	
	Before starting the proof, let us remind the notation. 
	In the proof of Lemma \ref{lem existence of a stability condition}, we constructed a bounded $t$-structure $\mathcal{T}$ of $\Fuk$. 
	The heart $\mathcal{A}$ of $\mathcal{T}$ consists of 
	\begin{itemize}
		\item the zero object and 
		\item the objects equivalent to a good, minimal twisted complex 
		\[E = \left(\bigoplus_{i \in I} V_i \otimes S_{v_i}, (F_{j,i} = \psi_{j,i} \otimes f_{j,i})\right).\]
	\end{itemize}
	Because every nonzero object of $\mathcal{A}$ is a direct sum of $\{S_v|v \in V(T)\}$, it is not hard to observe that $\mathcal{A}$ is a finite abelian category and
	\[\Sim(\mathcal{A}) = \left\{S_v | v \in V(T)\right\}.\]
	Moreover, for any $v \in V(T)$, $S_v$ is rigid in $\mathcal{A}$ because $N \geq 3$. 
	See Lemma \ref{lem grading}.
	
	We would like to point out that $\stab(\mathcal{A})$ is connected because $\stab(\mathcal{A})$ is isomorphic to $H^{|V(T)|}$ by Proposition \ref{prop fixed heart}.
	We also point out that $\sigma_0 \in \stab(\mathcal{A})$.
	Thus, $\stab(\mathcal{A}) \subset \stab^\dagger(\Fuk)$. 
	
	We would like to find a heart $\mathcal{A}_k$ for $k =1, \dots, N-1$ such that $\Sim(\mathcal{A}_k) \subset \stab^\dagger(\Fuk)$ and
	\begin{gather}
		\label{eqn induction hypothesis}
		\Sim(\mathcal{A}_k) = \left\{S_v[-k]\right\} \cup \left\{\tau_v(S_w) | w \in V(T), w \neq v\right\}.
	\end{gather}
	
	In order to construct such $\mathcal{A}_k$ for $k=1, \dots, N-1$ inductively, we first apply Proposition \ref{prop forward/backward tilting 1} to $\mathcal{A}$ and we can obtain another heart $\mathcal{A}_1$ of $\Fuk$ as follows:
	\[\mathcal{A}_1:= \mathcal{A}_{S_v}^\flat.\]
	Moreover, it is easy to observe that 
	\[\Sim(\mathcal{A}_1) = \left\{S_v[-1]\right\} \cup \left\{\tau_v(S_w) | w \in V(T), w \neq v\right\}.\]
	
	Now, in order to prove the inductive step, for an integer $1 \leq i \leq N-2$, let us assume that there exists a heart $\mathcal{A}_i$ satisfying the induction hypotheses. 
	Then, by applying Proposition \ref{prop forward/backward tilting 1}, we can set $\mathcal{A}_{i+1} := \left(\mathcal{A}_i\right)_{S_v[-i]}^\flat$. 
	Then, 
	\[\Sim(\mathcal{A}_{i+1}) = \left\{S_v[-(i+1)]\right\} \cup \left\{\Cone\left(S_v[-(i+1)]\otimes \lhom_{\Fuk}^1(S_v[-i],\tau_v(S_w)) \to \tau_v(S_w)\right) | w \in V(T), w \neq v\right\}.\]
	Moreover, we observe that 
	\begin{align*}
		\lhom_{\Fuk}^1\left(S_v[-i],\tau_v(S_w)\right) &\simeq \lhom_{\Fuk}^1(S_v[-i+N-1], S_w) \\
		&= \lhom_{\Fuk}^{2 +i -N}(S_v, S_w) \\
		&= 0.
	\end{align*}
	The last equality holds because $2 +i -N \leq 0$ and $w \neq v$.
	Thus, $\mathcal{A}_{i+1}$ satisfies Equation \eqref{eqn induction hypothesis}.
	
	Now, we would like to point out the following:
	\begin{itemize}
		\item For any $k = 1, \dots, N-1$, $\mathcal{A}_k$ is a finite abelian category. 
		In order to show this, one can employ the same logic we used to prove that $\mathcal{A}$ is finite. 
		Then, Proposition \ref{prop fixed heart} implies that $\stab(\mathcal{A}_k)$ is connected for all $k =1, \dots, N-1$.
		\item The closures of $\stab(\mathcal{A}_{k-1})$ and $\stab(\mathcal{A}_k)$ intersect for any $k =1, \dots, M-1$. 
		It means that for any $k=1, \dots, N-1$, $\mathcal{A}_k$ is contained in the same connected component. 
		Thus, $\stab(\mathcal{A}_k) \subset \stab^\dagger(\Fuk)$ for all $k$.
	\end{itemize}
	Thus, the inductive construction works. 
	
	At the final step of the inductive construction, we have $\mathcal{A}_{N-1} \subset \stab^\dagger(\Fuk)$ such that 
	\[\Sim(\mathcal{A}_{N-1}) = \left\{S_v[-(N-1)]=\tau_v(S_v)\right\} \cup \left\{\tau_v(S_w) | w \in V(T), w \neq v\right\}.\]
	Finally, we would like to show that $\tau_v \cdot \sigma_0 \in \stab(\mathcal{A}_{N-1})$. 
	In other words, we would like to show that the heart of $\tau_v \cdot \sigma_0$ is the abelian category $\mathcal{A}_{N-1}$.
	
	As mentioned in the proof of \cite[Lemma 5.3]{Bridgeland07}, the heart of $\tau_v \cdot \sigma_0 = \left(Z_0 \circ \tau_v^{-1}, \left\{\tau_v\left(P_0(\phi)\right)\right\}_{\phi \in \mathbb{R}}\right)$ is given as the subcategory generated by 
	\[\left\{\tau_v\left(P_0(\phi)\right)\right\}_{\phi \in (0,1]}.\]
	Thus, it implies that the heart of $\tau_v \cdot \sigma_0$ is $\mathcal{A}_{N-1}$, i.e., $\tau_v \cdot \sigma_0 \in \stab(\mathcal{A}_{N-1}) \subset \stab^\dagger(\Fuk)$. 
	It completes the proof.
\end{proof}

\section{Preparations for the proof of Theorem \ref{thm pseudo-Anosov intro}} 
\label{section preparations for the proof of pseudo-Anosov theorem} 
The main goal of the current article is to introduce a construction of pseudo-Anosov and strong pseudo-Anosov autoequivalences, i.e., to prove Theorem \ref{thm pseudo-Anosov intro}. 
In Sections \ref{section preparations for the proof of pseudo-Anosov theorem}--\ref{section proof of thm pseudo-Anosov}, we prove the first half of Theorem \ref{thm pseudo-Anosov intro}.
In other words, we prove Theorem \ref{thm pseudo-Anosov}. 
The second half of Theorem \ref{thm pseudo-Anosov intro} will be proven in Section \ref{section strong pseudo-Anosov} by employing the techniques introduced in Sections \ref{section preparations for the proof of pseudo-Anosov theorem}--\ref{section proof of thm pseudo-Anosov}.

\begin{thm}[= Theorem \ref{thm pseudo-Anosov intro} (1)]
	\label{thm pseudo-Anosov}
	Let $T$ be a tree, and let $\Phi: \Fuk \to \Fuk$ be an autoequivalence of Penner type. 
	Then, $\Phi$ is pseudo-Anosov with respect to any $\sigma \in \stab^\dagger(\Fuk)$.
	In other words, there exists a real number $\lambda_\Phi >1$ such that for any nonzero $E \in \Fuk$ and any $\sigma \in \stab^\dagger(\Fuk)$, 
	\begin{gather}
		\label{eqn pseduo-Anosov}
		\lim_{n \to \infty} \tfrac{1}{n} m_\sigma(\Phi^nE) = \log \lambda_\Phi.
	\end{gather}
\end{thm}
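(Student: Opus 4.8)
The plan is to reduce everything to the explicit stability condition $\sigma_0$ of Definition \ref{def fixed component of the space of stability conditions} and then to a problem in linear algebra over a non-negative matrix. By Proposition \ref{prop independence of the choice of stability conditions} the subcategory $\mathcal{D}_{\sigma,\lambda}(\Phi)$ is independent of the choice of $\sigma\in\stab^\dagger(\Fuk)$, so it suffices to prove the statement for $\sigma=\sigma_0$. By formula \eqref{eqn m_sigma(X)}, if $X$ is represented by a minimal twisted complex $E=(\bigoplus_{i\in I}V_i\otimes S_{v_i}[d_i],\dots)$ then $m_{\sigma_0}(X)=\sum_{i\in I}\dim V_i$, which by Lemma \ref{lem zero differential for minimal} also equals $\sum_{v\in V(T)}\dim\lhom^*_{\Tw(\wrapped)}(E,L_v)$; writing $\vect(X)=(a_v)_{v\in V(T)}$ with $a_v=\sum_{v_i=v}\dim V_i$, we get $m_{\sigma_0}(X)=\|\vect(X)\|_1$. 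Thus Theorem \ref{thm pseudo-Anosov} is equivalent to the assertion that $\|\vect(\Phi^nE)\|_1^{1/n}\to\lambda_\Phi$ for every nonzero $E$, with $\lambda_\Phi>1$ independent of $E$, and the ``algorithmic computation'' claimed in Theorem \ref{thm pseudo-Anosov intro} will be the computation of the Perron eigenvalue of an explicit matrix.

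Next I would set up this matrix $M_\Phi$. Using Lemma \ref{lem induced functor} together with the minimalisation procedure of Lemma \ref{lem minimal twisted complex}, one checks that for an object $E$ that is \emph{categorically carried-by} in the sense of Definition \ref{def categorically carried by} — built from $\{S_v\}$ using only direct sums, shifts, and cones of $x$- and $z$-morphisms — applying a single generating Dehn twist $\tau_u$ ($u\in V_+(T)$) or $\tau_w^{-1}$ ($w\in V_-(T)$) again yields a categorically carried-by object, and on vectors this is implemented by a fixed non-negative integral ``add the neighbours at $v$'' matrix. Composing these over the word defining $\Phi$ produces $M_\Phi$ with $\vect(\Phi E)=M_\Phi\cdot\vect(E)$ whenever $E$ is carried-by; this is exactly condition $(\square')$ of Section \ref{subsection the main idea}. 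Since $T$ is a connected tree and, by Definition \ref{def Penner type}(II), the twist (or inverse twist) of every vertex appears in the word, $M_\Phi$ is primitive, so Perron–Frobenius gives a simple dominant eigenvalue $\lambda_\Phi$ with $M_\Phi^n$ growing entrywise like $\lambda_\Phi^n$. The strict inequality $\lambda_\Phi>1$ is the categorical counterpart of Penner's inequality: it follows from the fact that $\Phi$ mixes at least one positive twist and at least one inverse of a negative twist, which forces $M_\Phi$ to strictly expand the positive cone $\mathbb{R}^{V(T)}_{\geq 0}$. Consequently, for every carried-by $E$ with $\vect(E)\neq 0$ we get $\|\vect(\Phi^nE)\|_1=\|M_\Phi^n\vect(E)\|_1\sim c_E\,\lambda_\Phi^{\,n}$, i.e. \eqref{eqn pseduo-Anosov} for such $E$; in particular this already settles all generators $S_v$, since each $S_v$ is carried-by and remains so under $\Phi$.

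It then remains to remove the carried-by hypothesis. For the upper bound this is automatic: $\mathcal{D}_{\sigma_0,\lambda_\Phi}(\Phi)$ is a thick triangulated subcategory by Proposition \ref{prop growth filtration} and contains every $S_v$, hence equals $\Fuk$, so $\limsup_n\frac1n\log m_{\sigma_0}(\Phi^nE)\le\log\lambda_\Phi$ for all nonzero $E$. For the matching lower bound I would argue as sketched in Section \ref{subsection the main idea}: passing to a minimal, well-ordered representative of $\Phi^nE$ and tracking the number of cones of $y$-morphisms, one shows this ``$y$-defect'' is non-increasing under $\Phi$ and that for $n$ large $\Phi^nE$ admits a two-step presentation whose ``$y$-part'' has uniformly bounded total dimension and whose complementary part is categorically carried-by with nonzero vector — this is Lemma \ref{lem partially carried-by}. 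Applying the second paragraph to the carried-by part and the triangle inequality for $m_{\sigma_0}$ to the $y$-part gives $m_{\sigma_0}(\Phi^nE)=\|M_\Phi^{\,n-n_0}\vect(\text{carried part})\|_1+O(1)\sim c\,\lambda_\Phi^{\,n}$, hence the full limit \eqref{eqn pseduo-Anosov} for every nonzero $E$.

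I expect the main obstacle to be the rigorous verification of $(\square')$ together with Lemma \ref{lem partially carried-by}: a priori, applying a spherical twist to a minimal twisted complex and then re-minimalising (Lemma \ref{lem minimal twisted complex}) could create new $y$-arrows or produce cancellations that destroy the clean linear behaviour of $\vect$. Showing instead that $y$-arrows only appear in a controlled way — that they get ``trapped'' in a bounded sub-twisted-complex rather than proliferating, and that the carried-by part transforms exactly by $M_\Phi$ — is the combinatorial translation of the Lagrangian branched submanifold picture from \cite{Lee19}, and is where the bulk of Sections \ref{section preparations for the proof of pseudo-Anosov theorem}--\ref{section proof of thm pseudo-Anosov} will be spent; the Perron–Frobenius input is comparatively routine once $M_\Phi$ and its primitivity are in hand.
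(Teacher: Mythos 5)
Your proposal follows essentially the same route as the paper: reduce to the explicit stability condition $\sigma_0$, encode the mass as $\|\vect(\cdot)\|_1$, show that the categorically carried-by objects transform linearly by a non-negative matrix $M_\Phi$ whose primitivity comes from Definition~\ref{def Penner type}(II), apply Perron--Frobenius, and then handle general objects by passing to the carried-by part $B$ of a partially carried-by decomposition.

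Two small remarks. First, your upper bound via thickness of $\mathcal{D}_{\sigma_0,\lambda_\Phi}(\Phi)$ (it contains every $S_v$, hence equals $\Fuk$ by Proposition~\ref{prop growth filtration}) is a slightly cleaner shortcut than the paper's Lemma~\ref{lem maximality}, which instead proves directly that $\|\vect(\Phi X)\|_1\le\|M_\Phi\vect(X)\|_1$ for \emph{every} $X$ by comparing $\lhom_{\wrapped}^*$ with $\shom_{\wrapped}^*$. Both are correct; yours uses the structure of the growth filtration, the paper's gives a pointwise estimate. Second, your closing line $m_{\sigma_0}(\Phi^nE)=\|M_\Phi^{\,n-n_0}\vect(\text{carried part})\|_1+O(1)$ overclaims: the paper does not show the ``$y$-part'' $A_n$ has bounded mass, and in general it will not. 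What the paper does show (and what you need) is only the one-sided bound $m_{\sigma_0}(\Phi^nE)\ge m_{\sigma_0}(\Phi^nB)$, coming from the additivity of $m_{\sigma_0}$ over factors of a minimal twisted complex $(A_n\oplus B_n,F_n)$; combining this $\liminf$ bound with the $\limsup$ bound you already have from thickness closes the argument without any claim on $A_n$. With that adjustment, your proof outline matches the paper's proof in all essentials.
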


\begin{remark}
	Rigorously speaking, Equation \eqref{eqn pseduo-Anosov} is slightly stronger than that $\Phi$ is pseudo-Anosov. 
	In order to prove that $\Phi$ is pseudo-Anosov, it is enough to show that 
	\[\limsup_{n \to \infty} \tfrac{1}{n} m_\sigma(\Phi^n E) = \log \lambda_\Phi.\]
\end{remark}

The proof of Theorem \ref{thm pseudo-Anosov} will be given through Sections \ref{section preparations for the proof of pseudo-Anosov theorem}-\ref{section proof of thm pseudo-Anosov}.
In the current section, we will sketch the proof of Theorem \ref{thm pseudo-Anosov} and prove Lemmas \ref{lem x-morphism}--\ref{lem simple computation} to prepare Sections \ref{section the case of categorically carried-by} and \ref{section proof of thm pseudo-Anosov}.

\subsection{Sketch of the proof of Theorem \ref{thm pseudo-Anosov}}
\label{subsection sketch of the proof}
First of all, we would like to point out that we will prove Theorem \ref{thm pseudo-Anosov} in Sections \ref{section preparations for the proof of pseudo-Anosov theorem}-\ref{section proof of thm pseudo-Anosov} only if $\Phi$ is a product of 
\[\left\{\tau_u, \tau_w^{-1} | u \in V_+(T), w \in V_-(T)\right\}.\]
For the other case, i.e., the case that $\Phi^{-1}$ is a product of the above autoequivalences, we can prove Theorem \ref{thm pseudo-Anosov} by the same arguments after choosing the other sign convention. 
See Remark \ref{rmk sign is not important 1}.

Let us recall that, as mentioned in Section \ref{subsection the main idea}, the main idea is motivated by the notion of train track and the result of \cite{Lee19}.
To use the main idea, we defined the notion of {\em categorically carried-by}.
For the readers' convenience, we recall the notion. 
\begin{definition}[= Definition \ref{def categorically carried by}]
	\label{def categorically carried by 2}
	Let $T$ be a tree, and let $E$ be a nonzero object of $\Fuk$. 
	We say that $E$ is {\bf categorically carried-by} if $E$ is generated from $\{S_v\}_{v \in V(T)}$ by taking the direct sums, shifts, and cones of $x$- and $z$-morphisms. 
	More precisely, $E$ is categorically carried-by if $E$ is equivalent to a minimal good twisted complex	$\left(\bigoplus_{i \in I} V_i \otimes S_{v_i}[d_i], (F_{j,i}=\psi_{j,i} \otimes f_{j,i}) \right)$ such that $f_{j,i}$ is one of the zero morphism, $x$-morphism, or $z$-morphism.  
	By Lemma \ref{lem minimal twisted complex}, it is equivalent to saying that one can generate $E$ from $\{S_v\}_{v \in V(T)}$ without taking a cone of $y$-morphism.	
\end{definition}

We first show that if $E$ is categorically carried-by, then, for any $\Phi$ of Penner type, there exists a real number $\lambda_\Phi>1$ satisfying Equation \eqref{eqn pseduo-Anosov}.
It will be proven in Section \ref{section the case of categorically carried-by} through the following tree steps: 
\begin{itemize}
	\item Step 1 (= Section \ref{subsection step 1 of the sketch of proof}): We first show that if $E$ is categorically carried-by, then so is $\Phi E$. 
	\item Step 2 (= Section \ref{subsection step 2 of the sketch of proof}): After the first step, we will assign a vector $\vect(E) \in \mathbb{Z}_{\geq 0}^{|V(T)|}$ for any object $E \in \Fuk$. Then, we will prove that for any object $E$, the following holds: 
	\[\parallel \vect(E) \parallel_1 = m_{\sigma_0}(E).\] 
	Moreover, we will show that there exists a matrix $M_\Phi : \mathbb{R}^{|V(T)|} \to \mathbb{R}^{|V(T)|}$ satisfying that 
	\[\vect(\Phi E) = M_\Phi \cdot \vect(E),\]
	if $E$ is categorically carried-by.
	\item Step 3 (= Section \ref{subsection step 3 of the sketch of proof}): Finally, we will show that the spectral radius of $M_\Phi$ is the exponential growth of $m_{\sigma_0}(\Phi^n E)$ as $n \to \infty$ if $E$ is categorically carried-by. 
	Moreover, we also show that the spectral radius of $M_\Phi$ is bigger than $1$. 
	It completes the proof of Theorem \ref{thm pseudo-Anosov} for a categorically carried-by $E$. 
\end{itemize}
\begin{remark}
	\label{rmk hidden step}
	In step 3, we will also prove that for any nonzero $E \in \Fuk$,
	\[\parallel \vect(E) \parallel_1 \leq \parallel M_\Phi \cdot \vect(E) \parallel_1.\]
	Thus, one can observe that for any $E$, 
	\[\lim_{n \to \infty} \tfrac{1}{n} \log m_{\sigma_0}(\Phi^n E) \leq \log \lambda_\Phi.\]
\end{remark}

We note that not every nonzero object $E \in \Fuk$ is categorically carried-by. 
To discuss the case, we define the notion of {\em partially carried-by}.
Since the notion is complicated, we omit stating the formal definition of ``partially carried-by", but roughly $E$ is partially carried-by if 
$E$ is equivalent to the cone of a morphism $f: A \to B$ such that the triple $(A, B, f)$ satisfying {\em some conditions}. 
One of the conditions which the above triple $(A,B,f)$ should satisfy is that $B$ is categorically carried-by.
It is the reason why we use the term ``partially carried-by."

Then, we prove the following steps in Section \ref{section proof of thm pseudo-Anosov}:
\begin{itemize}
	\item Step 4: We note that if $E \simeq \Cone\left(A \stackrel{f}{\to} B\right)$, then $\Phi E = \Cone \left(\Phi A \stackrel{g}{\to} \Phi B\right)$ for some $g$. 
	We prove that if the triple $(A,B,f)$ satisfies some conditions, then $(\Phi A, \Phi B, g)$ also satisfies the same conditions. 
	If the triple $(A, B, f)$ satisfies the conditions, we will say that $E$ is {\em partially carried-by with a triple $(A,B,f)$}. 
	See Definition \ref{definition partially carried-by 2}. 
	\item Step 5: We will show that for any nonzero $E \in \Fuk$, there exists a positive integer $K$ such that $\Phi^KE$ is partially carried-by. 
	\item Step 6: We will show that if $E$ is partially carried-by with a triple $(A,B,f)$, then $m_{\sigma_0}(E) \geq m_{\sigma_0}(B)$. 
	Thus, by steps 4 and 5, for any nonzero object $E$, we have 
	\[\lim_{n \to \infty} \tfrac{1}{n} \log m_{\sigma_0}\left(\Phi^{n-k}(\Phi^K E)\right)  \geq \lim_{n \to \infty} \tfrac{1}{n} \log m_{\sigma_0}(\Phi^n B) = \log \lambda_\Phi,\]
	for an object $B$ such that $B$ is categorically carried-by.
	The last equality holds since $B$ is categorically carried-by. 
	Moreover, by Remark \ref{rmk hidden step}, we have 
	\[\lim_{n \to \infty} \tfrac{1}{n} \log m_{\sigma_0}(\Phi^n E) = \log \lambda_\Phi.\]
	It completes the proof of Theorem \ref{thm pseudo-Anosov}.
\end{itemize}

As mentioned above, Section \ref{section the case of categorically carried-by} will prove steps 1-3, and Section \ref{section proof of thm pseudo-Anosov} will prove steps 4-6. 
The rest of Section \ref{section preparations for the proof of pseudo-Anosov theorem} is devoted to proving lemmas in order to prepare Sections \ref{section the case of categorically carried-by} and \ref{section proof of thm pseudo-Anosov}.

\subsection{Setting for Lemmas} 
\label{subsection setting for lemmas} 
As we did in the previous sections, let $T$ be a fixed tree, let $N \geq 3$ be a fixed integer, and let $\Fuk, \wrapped$ denote the $\mathcal{D}\left(\Gamma_N T\right)$ and the perfect derived category of $\Gamma_NT$, respectively.
Or equivalently, $\Fuk$ and $\wrapped$ can be seen as the compact and wrapped Fukaya categories of $P_N(T)$, as mentioned in Table \ref{table category=symplectic}.
Also, as mentioned in Sections \ref{section settings and the main idea} and \ref{section twisted complex}, we know that $\Fuk$ is equivalent to the category of twisted complexes generated by $\{S_v\}_{v \in V}$. 

Throughout Section \ref{section preparations for the proof of pseudo-Anosov theorem}, we let 
\[E = \left(\bigoplus_{i} (X_i:= V_i \otimes S_{v_i}[d_i]), (F_{j,i} = \psi_{j,i} \otimes f_{j,i}) \ \right)\]
be a {\em minimal, well-ordered} twisted complex, or equivalently, 
\begin{itemize}
	\item $V_i$ is a finite dimensional $\mathbb{k}$-vector space for all $i$, 
	\item $v_i \in V(T)$ and $d_i \in \mathbb{Z}$ for all $i$, 
	\item $\psi_{j,i}: V_i \to V_j$ is a linear map for all $i < j$, 
	\item $f_{j,i} \in \shom_{\Fuk}(S_{v_i}[d_i],S_{v_j}[d_j])$ is one of $x, y, z$-morphisms or the zero morphism. 
\end{itemize}

We are interested in $\tau(E)$ for 
\[\tau = \begin{cases}
	\tau_u \text{  for some  } u \in V_+, \\
	\tau_w^{-1} \text{  for some  } w \in V_-.
\end{cases}\]

By Lemma \ref{lem induced functor}, $\tau(E)$ is equivalent to the following twisted complex:
\[\left(\bigoplus_i V_i \otimes \tau(S_{v_i}[d_i]), G_{j,i} : \tau(X_i) \to \tau(X_j) \right).\]
We note that $G_{j,i}$ is determined by the functor $\tau$ and the nonzero chains of arrows from $X_i$ to $X_j$, see Remark \ref{rmk nonzero chain of arrows}.
Moreover, since $\tau(X_i)$ and $\tau(X_j)$ can be seen as good twisted complexes (See Lemma \ref{lem simple computation}), $G_{j,i}$ can be seen as a combination of the $x, y, z$-morphisms, identity morphisms, and zero morphisms.

The main goal of Section \ref{section preparations for the proof of pseudo-Anosov theorem} is to prove the following Lemmas \ref{lem x-morphism}--\ref{lem the zero morphism}.
\begin{lem}
	\label{lem x-morphism}
	If $f_{j,i}$ is an $x$-morphism, then $G_{j,i}$ does not contain neither the identity morphism $e_v$ for any $v \in V$ nor a $y$-morphism.  
\end{lem}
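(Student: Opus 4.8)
The plan is to analyze the formula \eqref{eqn induced functor} for $G_{j,i}$ directly, using the fact that $\tau = \tau_u$ (with $u \in V_+$) or $\tau = \tau_w^{-1}$ (with $w \in V_-$) acts on the generators $S_v$ in a controlled way, computable from Lemma \ref{lem grading} and the cone description of $\tau_v$. The key point is a degree bookkeeping argument: each summand of $G_{j,i}$ arises from a nonzero chain of arrows $f_{i_k,i_{k-1}},\dots,f_{i_1,i_0}$ in $E$ with $i_0 = i$, $i_k = j$, and we must show that if the single arrow $f_{j,i}$ (the length-one chain, $k=1$) is an $x$-morphism, then the resulting contribution to $G_{j,i}$ — and also the contributions of all longer chains that could produce a morphism between the same pair $\tau(X_i), \tau(X_j)$ — is never $e_v$ and never a $y$-morphism.

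First I would record exactly what $\tau(S_v[d])$ is as a good (in fact minimal) twisted complex, for each of the two cases of $\tau$ and each vertex $v$, distinguishing the subcases $v \sim$ (the twisting vertex), $v = $ (the twisting vertex), and $v$ unrelated; this is the content of the forthcoming Lemma \ref{lem simple computation}, which I am allowed to assume. In particular, since $|x_{u,w}| = 1$, the hypothesis that $f_{j,i}$ is an $x$-morphism forces $v_i \in V_+$, $v_j \in V_-$, $v_i \sim v_j$, and $d_j = d_i - 1$ (from the strict-lower-triangularity degree constraint $|f_{j,i}| = 1$, so $|x_{v_i,v_j}[{\rm shift}]| = 1$ gives $d_j = d_i + |x_{v_i,v_j}| - 1 = d_i$... wait, careful: $f_{j,i} \in \shom^1_{\Fuk}(S_{v_i}[d_i], S_{v_j}[d_j]) = \shom^{1 + d_j - d_i}_{\Fuk}(S_{v_i}, S_{v_j})$, and $|x_{v_i,v_j}| = 1$ forces $d_j = d_i$). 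So $\tau(X_i)$ and $\tau(X_j)$ are built from the \emph{same} shift level, and I would then compute $\hom^1$ between the relevant summands of $\tau(X_i)$ and $\tau(X_j)$ and check that the morphism $\tau^1(x_{v_i,v_j})$ — together with any correction terms from $\tau^k$ applied to longer chains passing through intermediate objects — lands only in the span of $x, y, z$-morphisms and the zero morphism, and moreover that the coefficient of $e_v$ and of $y$ is zero.

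The main obstacle I anticipate is the bookkeeping for the \emph{longer} chains: the sum in \eqref{eqn induced functor} runs over all $i = i_0 < i_1 < \dots < i_k = j$, and a priori a chain of length $\geq 2$ whose composition of linear parts is nonzero could contribute a $y$-morphism or an $e_v$ to $G_{j,i}$ even when the length-one term is an $x$. To handle this I would use the structure of $E$ being \emph{minimal} and \emph{well-ordered} together with the grading constraints of Lemma \ref{lem grading}: any nonzero chain from $X_i$ to $X_j$ consists of arrows each of intrinsic degree forcing particular shift jumps, and I would argue by a telescoping-degree count (analogous to the one in the proof of Lemma \ref{lem zero differential for minimal}, where the combination $N \geq 3$, $m \geq 2$ gives a contradiction) that the total degree/shift shift realized by $\tau^k$ on such a chain cannot match that of an $e_v$ or a $y$-morphism between $\tau(X_i)$ and $\tau(X_j)$ when the first arrow is an $x$. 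Concretely: an $e_v$ in $G_{j,i}$ would require $v_j = v_i$ and $d_j = d_i - 1$ (contradicting $d_j = d_i$ forced above, for the length-one contribution; for longer chains one exploits that each arrow raises or fixes the vertex-type parity incompatibly), and a $y$-morphism requires $v_i \in V_-, v_j \in V_+$, contradicting $v_i \in V_+$. The remaining work is to verify the $\tau^k$ correction terms respect these constraints, which is where the explicit form of $\tau$ on twisted complexes (from Lemma \ref{lem simple computation}) is essential.

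Finally, I would assemble these observations: the hypothesis ``$f_{j,i}$ is an $x$-morphism'' pins down the vertex-types and relative shift of $X_i, X_j$; the explicit description of $\tau$ shows that $\tau(X_i) \to \tau(X_j)$ can only receive $x$, $z$, or zero morphisms in degree $1$ among its good-complex components (never $e_v$, never $y$); and the degree-count rules out any longer-chain contribution from \eqref{eqn induced functor} altering this conclusion. Hence $G_{j,i}$ contains neither $e_v$ for any $v \in V$ nor a $y$-morphism, which is exactly the claim of Lemma \ref{lem x-morphism}.
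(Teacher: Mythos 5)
Your core observation is correct and matches the paper's proof: use Lemma \ref{lem simple computation} to compute $\tau(X_i)$ and $\tau(X_j)$ explicitly in each of the six cases ($\tau$ equal to or adjacent to or unrelated to $v_i$, $v_j$), and check that the degree-$1$ morphism spaces between the resulting components never contain an identity or a $y$-morphism. That case analysis is the entire proof.

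Where you go astray is in treating the ``longer chains'' in \eqref{eqn induced functor} as a genuine obstacle requiring a separate telescoping degree count, well-orderedness, and an argument analogous to Lemma \ref{lem zero differential for minimal}. None of that is needed. Lemma \ref{lem induced functor} already tells you that $G_{j,i}$, whatever it is, lies in $\shom^1_{\Tw(\Fuk)}\bigl(\tau(X_i),\tau(X_j)\bigr)$. Every chain, of any length, produces a contribution in that \emph{same} fixed vector space; there is no way for a longer chain to land somewhere else. So once you have determined (in each of the six cases) that this $\hom^1$ space, written out component-by-component for the good twisted complexes $\tau(X_i)$ and $\tau(X_j)$, contains only $x$- and $z$-morphisms and zero, you are done --- there is nothing for the chain length to ``alter.'' Also, your parenthetical claim that an identity arrow would ``require $v_j=v_i$ and $d_j=d_i-1$'' confuses constraints on $X_i,X_j$ with constraints on the components of $\tau(X_i),\tau(X_j)$: these components may have different vertices and shifts than $X_i,X_j$ (e.g.\ $\tau_u(S_w)$ acquires an $S_u$ summand when $u\sim w$), so the constraint must be checked against the actual output of Lemma \ref{lem simple computation}, which is exactly what the paper's six cases do.
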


\begin{lem}
	\label{lem z-morphism}
	If $f_{j,i}$ is a $z$-morphism, then $G_{j,i}$ does not contain neither the identity morphism $e_v$ for any $v \in V$ nor a $y$-morphism.  
\end{lem}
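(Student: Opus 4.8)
The plan is to compute $G_{j,i}$ directly from the definition of the spherical twist and then to observe that, for purely degree-theoretic reasons, every entry of the matrix $G_{j,i}$ is forced to be a multiple of a $z$-morphism or zero. Note first that, since $f_{j,i}$ is a $z$-morphism, $v_i = v_j =: v$, and since $F_{j,i}$ is a degree-one arrow of the twisted complex, $z_v \in \shom^1_{\Fuk}(S_v[d_i], S_v[d_j]) = \shom^{1+d_j-d_i}_{\Fuk}(S_v, S_v)$; by Lemma \ref{lem grading} this forces $d_j - d_i = N - 1$. It is enough to treat $\tau = \tau_u$ with $u \in V_+(T)$, the case $\tau = \tau_w^{-1}$ being entirely analogous with the roles of $V_+(T)$ and $V_-(T)$ exchanged (cf.\ Remark \ref{rmk sign is not important 1}).

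Next I would record the twisted-complex form of $\tau_u$ on a shift of a generator, which follows from the Seidel--Thomas triangle $\shom^*_{\Fuk}(S_u, Y) \otimes S_u \to Y \to \tau_u(Y) \to$ together with Lemma \ref{lem grading}: one has $\tau_u(S_u[d]) \simeq S_u[d+1-N]$; for $v$ with $v \sim u$ (so $v \in V_-(T)$), $\tau_u(S_v[d]) \simeq \Cone\!\big(S_u[d-1] \xrightarrow{x_{u,v}} S_v[d]\big)$, a good twisted complex whose two pieces are $S_u[d]$ and $S_v[d]$; and for $v \neq u$ with $v \not\sim u$, $\tau_u(S_v[d]) \simeq S_v[d]$. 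Hence the pieces of $\tau_u(X_i)$ lie in $\{S_u[d_i], S_v[d_i]\}$ and those of $\tau_u(X_j)$ in $\{S_u[d_j], S_v[d_j]\}$, with $d_j = d_i + N - 1$.

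The core of the argument is then a one-line degree count. Each entry of the matrix $G_{j,i}$ is a degree-one arrow between a piece of $\tau_u(X_i)$ and a piece of $\tau_u(X_j)$, hence lies in some space $\shom^1_{\Fuk}(S_a[d_i], S_b[d_j]) = \shom^{1+d_j-d_i}_{\Fuk}(S_a, S_b) = \shom^N_{\Fuk}(S_a, S_b)$ with $a, b \in \{u, v\}$. By Lemma \ref{lem grading}, $\shom^N_{\Fuk}(S_u, S_u) = \mathbb{k}\langle z_u\rangle$ and $\shom^N_{\Fuk}(S_v, S_v) = \mathbb{k}\langle z_v\rangle$, whereas $\shom^N_{\Fuk}(S_u, S_v) = \shom^N_{\Fuk}(S_v, S_u) = 0$, since for adjacent $u \in V_+(T)$, $v \in V_-(T)$ these Hom-spaces are concentrated in degrees $1$ and $N-1$ respectively, and $N \geq 3$. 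Therefore every nonzero entry of $G_{j,i}$ is a multiple of $z_u$ or of $z_v$, i.e.\ a $z$-morphism; in particular none is a $y$-morphism, and none is an identity morphism $e_a$, which has degree $0 \neq N$. This is precisely the assertion.

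I expect the only real work to be bookkeeping: fixing the shifts in the twisted-complex presentations of $\tau_u(S_v[d])$ above, and checking that the constraint used in the degree count is legitimate --- namely that, however $G_{j,i}$ is assembled from the nonzero chains of arrows of $E$ and the higher components of the functor $\tau_u$ (cf.\ Remark \ref{rmk nonzero chain of arrows}), each of its matrix entries is still a degree-one morphism between the \emph{same} shifted generators, so it still lands in the spaces $\shom^N_{\Fuk}(S_a, S_b)$ computed here. Once this is in place the degree argument applies verbatim, and the same template will handle Lemma \ref{lem x-morphism} (where instead $d_j = d_i$, so the relevant spaces are $\shom^1_{\Fuk}(S_a, S_b)$, which vanish unless $a \in V_+(T)$, $b \in V_-(T)$ are adjacent, in which case they are spanned by an $x$-morphism).
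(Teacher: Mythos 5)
Your proposal is correct and essentially coincides with the paper's argument: both proceed by computing $\tau(X_i)$ and $\tau(X_j)$ term-by-term via Lemma \ref{lem simple computation} and then invoking Lemma \ref{lem grading} for the degree count. The paper presents this as an explicit case analysis (four cases for $\tau$, restricted to $v_i = v_j \in V_+(T)$ with a remark that the $V_-$ case is symmetric), whereas you compress it into a single degree-theoretic observation, which is cleaner. One small imprecision to flag: you assert that the pieces of $\tau_u(X_i)$ lie in $\{S_u[d_i], S_v[d_i]\}$, but when $v_i = v_j = u$ the twist shifts by $1-N$, so the piece is actually $S_u[d_i + 1 - N]$. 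The conclusion nevertheless survives, because $v_i = v_j$ forces $\tau_u(X_i)$ and $\tau_u(X_j)$ to receive the \emph{same} extra shift, so the degree computed in $\shom^\bullet_{\Fuk}(S_a, S_b)$ is still $1 + (d_j - d_i) = N$; it would be worth making this uniformity explicit (or, as the paper does, treating that subcase separately). Similarly, the remark that $\tau_w^{-1}$ is ``entirely analogous'' is fine but relies on the same observation, since $\tau_w^{-1}$ shifts by $N-1$ rather than $1-N$.
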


\begin{lem}
	\label{lem the zero morphism}
	If $f_{j,i}$ is the zero morphism, then $G_{j,i}$ does not contain the identity morphism $e_v$ for any $v \in V$.  
\end{lem}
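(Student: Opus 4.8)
The plan is to prove Lemma \ref{lem the zero morphism} together with Lemmas \ref{lem x-morphism} and \ref{lem z-morphism}, since all three are corollaries of one computation. First I would record, from Lemma \ref{lem simple computation}, the explicit form of $\tau(S_v)$ as a good twisted complex for $\tau\in\{\tau_u\mid u\in V_+(T)\}\cup\{\tau_w^{-1}\mid w\in V_-(T)\}$ with twist vertex $t$: namely $\tau(S_t)=S_t[\pm(N-1)]$ (one summand), $\tau(S_v)$ is a two-term cone over an $x$-morphism $S_t\to S_v$ when $v\sim t$, and $\tau(S_v)=S_v$ otherwise, together with the action of the functor maps $\tau^k$ on the $x,y,z$-morphisms. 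Feeding this and the formula of Remark \ref{rmk nonzero chain of arrows} for $G_{j,i}\colon\tau(X_i)\to\tau(X_j)$ into the good-twisted-complex picture, $G_{j,i}$ decomposes into components running between the (at most two) $S$-summands of $\tau(X_i)=V_i\otimes\tau(S_{v_i})[d_i]$ and those of $\tau(X_j)=V_j\otimes\tau(S_{v_j})[d_j]$, each component being one of $e_v$, an $x$-, $y$- or $z$-morphism, or zero; the task is to rule out $e_v$ (and, for the first two lemmas, also $y$).

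For Lemma \ref{lem the zero morphism} the hypothesis $f_{j,i}=0$ kills the length-one term $\psi_{j,i}\otimes\tau^1(f_{j,i})$, so $G_{j,i}$ only receives contributions from nonzero chains of arrows of $E$ from $X_i$ to $X_j$ of length $k\ge 2$; if there is no such chain then $G_{j,i}=0$ and there is nothing to prove. Otherwise, suppose a component of $G_{j,i}$ were $e_v$. As a degree-one arrow it runs from a summand $S_v[a]$ of $\tau(X_i)$ to a summand $S_v[a-1]$ of $\tau(X_j)$, where $a$ equals $d_i$ unless $v_i$ is the twist vertex (in which case $a=d_i\mp(N-1)$), and similarly $a-1$ equals $d_j$ unless $v_j$ is the twist vertex. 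Since $E$ is minimal and well-ordered we have $d_i\le d_j$, so running through these finitely many configurations eliminates all of them (each forces either $d_j<d_i$, contradicting well-orderedness, or $N=1$, contradicting $N\ge 3$) except for the ones where exactly one of $v_i,v_j$ equals the twist vertex $t$, the other is a neighbour of $t$, and $|d_i-d_j|=N-2$. Such a configuration forces a nonzero chain $v_i\to\cdots\to v_j$ in $E$ whose steps, being $x$-, $y$- or $z$-morphisms, raise the shift by $0$, $N-2$ or $N-1$ respectively; since the total raise is $N-2$ and $N\ge 3$, there must be exactly one $y$-step and no $z$-step. But whichever of $v_i,v_j$ lies in $V_-(T)$, the step out of it cannot be an $x$-step (those go $V_+(T)\to V_-(T)$) nor a $z$-step, so it is the unique $y$-step, landing in $V_+(T)$; and thereafter the chain can never reach a positive vertex again, because from a positive vertex the only outgoing steps are $x$-steps into $V_-(T)$ and $z$-loops, the latter being excluded. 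Hence no such chain exists and $G_{j,i}$ carries no $e_v$.

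Lemmas \ref{lem x-morphism} and \ref{lem z-morphism} then follow from the same enumeration: the $k\ge 2$ contributions are controlled exactly as above (no $e_v$ by the argument just given, and no $y$ by a parallel shift-budget-plus-orientation argument, a $y$-component requiring source in $V_-(T)$, target an adjacent vertex in $V_+(T)$ and shift gap $N-2$), while the length-one term is handled by the explicit description above, which shows $\tau^1$ of an $x$-morphism is an $x$-, $z$- or zero morphism and $\tau^1$ of a $z$-morphism is a $z$- or zero morphism — in particular never $e_v$ or a $y$-morphism. The step I expect to be the real obstacle is the bookkeeping: verifying that, after flattening the nested twisted complex $\tau(E)$ and passing through the higher functor maps $\tau^k$, the list of $S$-summands of $\tau(X_i),\tau(X_j)$ and their shifts is exactly as claimed, so that the grading inequalities above are genuinely the only ones available. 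Beyond that, everything reduces to a finite case check dictated by the two inputs $N\ge 3$ and the bipartite orientation $V(T)=V_+(T)\sqcup V_-(T)$, in the same spirit as the grading computation in the proof of Lemma \ref{lem zero differential for minimal}.
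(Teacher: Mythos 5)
Your argument is correct and follows essentially the same approach as the paper's proof: a case-by-case degree computation using Lemma \ref{lem simple computation}, reducing to the single nontrivial configuration where the identity could appear (the paper's Case (iii)-a) with $\tau=\tau_{u_1}$, $v_i\sim u_1=v_j$, $d_j-d_i=N-2$), which is then eliminated by combining the non-negative shift increments of $x,y,z$-arrows with the bipartite orientation of the arrows. The only cosmetic difference is that in the hard case the paper examines the last arrow $F_{j,\ell}$ of the putative nonzero chain and applies well-orderedness to the sub-chain $X_i\to X_\ell$, whereas you count $y$-steps in the full chain $X_i\to X_j$ and use the fact that, since $f_{j,i}=0$, the chain has length at least two; both are the same degree-plus-orientation argument.
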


In Sections \ref{subsection proof of lemma 1}--\ref{subsection proof of lemma 3}, we prove the above Lemmas \ref{lem x-morphism}--\ref{lem the zero morphism}.
In the proof, we will repeatedly use Lemma \ref{lem simple computation} that is a consequence of simple computation. 

\begin{lem}
	\label{lem simple computation}
	Let $u, u' \in V_+, w, w' \in V_-$.
	The following equations hold.
	\begin{gather}
		\label{eqn 1}
		\tau_u(S_{u'}) = \begin{cases}
			S_u[1-N] \text{  if  } u= u', \\
			S_{u'} \text{  if  } u \neq u'.
		\end{cases} \\
		\label{eqn 2}
		\tau_u(S_w) = \begin{cases}
			\left(S_u \oplus S_w, x_{u,w} \in \shom_{\Fuk}^1(S_u, S_w)\right) \text{  if  } u \sim w, \\
			S_w \text{  if  } u \nsim w.
		\end{cases} \\
		\label{eqn 3}
		\tau_w^{-1}(S_u) = \begin{cases}
			\left(S_u \oplus S_w, x_{u,w} \in \shom_{\Fuk}^1(S_u, S_w)\right) \text{  if  } w \sim u,  \\
			S_u \text{  if  } w \nsim u.
		\end{cases} \\
		\label{eqn 4}
		\tau_w^{-1}(S_{w'}) = \begin{cases}
			S_w[N-1] \text{  if  } w = w', \\
			S_{w'} \text{  if  } w \neq w'.
		\end{cases}
	\end{gather} 
\end{lem}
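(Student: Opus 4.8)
\textbf{Proof proposal for Lemma \ref{lem simple computation}.}

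The plan is to compute each of the four formulas directly from the definition of the Seidel--Thomas spherical twist $\tau_v$ along $S_v$, using the grading conventions recorded in Lemma \ref{lem grading} (equivalently, the explicit $\shom$-spaces listed just after it). Recall that for a spherical object $S$ the twist acts on an object $X$ by the exact triangle
\[
\shom^*_{\Fuk}(S,X)\otimes S \longrightarrow X \longrightarrow \tau_S(X)\xrightarrow{\ [1]\ },
\]
and the inverse twist $\tau_S^{-1}(X)$ sits in the dual triangle $\tau_S^{-1}(X)\to X\to \shom^*_{\Fuk}(X,S)^\vee\otimes S[\ast]$. So each case reduces to reading off the relevant morphism space from Lemma \ref{lem grading} and identifying the resulting cone.

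First I would treat Equation \eqref{eqn 1}. When $u=u'$, we have $\shom^*_{\Fuk}(S_u,S_u)=\mathbb{k}\langle e_u\rangle\oplus\mathbb{k}\langle z_u\rangle$ with $|e_u|=0$, $|z_u|=N$; the evaluation map $\shom^*_{\Fuk}(S_u,S_u)\otimes S_u\to S_u$ is the obvious one, whose cone is $S_u[1-N]$ — this is the standard fact that a spherical twist along $S$ sends $S$ to $S[1-\dim]$, here with $\dim=N$. When $u\neq u'$, both $u,u'$ are positive so they are not joined by an edge of $T$ (edges go between $V_+$ and $V_-$); hence $\shom^*_{\Fuk}(S_u,S_{u'})=0$ by the third bullet of Lemma \ref{lem grading}, and the triangle forces $\tau_u(S_{u'})=S_{u'}$. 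Equation \eqref{eqn 4} is the mirror statement for $\tau_w^{-1}$ on negative vertices and is handled identically, using that $\tau_w^{-1}(S_w)=S_w[N-1]$ and that two distinct negative vertices have vanishing $\shom$.

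Next, Equations \eqref{eqn 2} and \eqref{eqn 3}, which are the only substantive cases. For \eqref{eqn 2} with $u\sim w$: Lemma \ref{lem grading} gives $\shom^*_{\Fuk}(S_u,S_w)=\mathbb{k}\langle x_{u,w}\rangle$ concentrated in degree $1$, so the defining triangle is
\[
S_u[-1]\xrightarrow{\ x_{u,w}\ } S_w \longrightarrow \tau_u(S_w)\xrightarrow{\ [1]\ },
\]
whence $\tau_u(S_w)\simeq \Cone\!\big(S_u[-1]\xrightarrow{x_{u,w}}S_w\big)=\big(S_u\oplus S_w,\ x_{u,w}\in\shom^1_{\Fuk}(S_u,S_w)\big)$ as a twisted complex, exactly as claimed. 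If $u\nsim w$ then $\shom^*_{\Fuk}(S_u,S_w)=0$ and the triangle yields $\tau_u(S_w)=S_w$. For \eqref{eqn 3} one uses the inverse twist: $\shom^*_{\Fuk}(S_u,S_w)=\mathbb{k}\langle x_{u,w}\rangle$ in degree $1$ again (note the roles of source and target in the dual triangle), and a short degree bookkeeping shows $\tau_w^{-1}(S_u)\simeq\big(S_u\oplus S_w,\ x_{u,w}\big)$, matching the twisted complex that also represents $\tau_u(S_w)$ — this is the identity $\tau_u(S_w)=\tau_w^{-1}(S_u)$ emphasized in Section \ref{subsection the main idea}. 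When $w\nsim u$, $\shom^*_{\Fuk}(S_w,S_u)=0$ forces $\tau_w^{-1}(S_u)=S_u$.

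The only point requiring care — the mild ``obstacle'' — is the sign/shift bookkeeping in \eqref{eqn 3}: one must confirm that the cotwist appearing in the inverse-twist triangle, combined with the grading $|y_{w,u}|=N-1$ of the dual generator, really produces the twisted complex $\big(S_u\oplus S_w,\ x_{u,w}\in\shom^1\big)$ with the arrow in degree $1$ (and not some shift thereof), so that it genuinely coincides with $\tau_u(S_w)$. This is purely a matter of tracking shifts through Lemma \ref{lem grading} and the definitions in Definition \ref{def xyz maps}, and presents no real difficulty once the conventions are fixed. With the sign convention fixed as in Remark \ref{rmk sign conventions}, all four cases follow, completing the proof.
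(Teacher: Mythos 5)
Your proposal is correct and does exactly what the paper intends: the paper does not actually write out a proof (it merely asserts that Lemma \ref{lem simple computation} ``is a consequence of simple computation''), and your argument supplies that computation via the defining exact triangle of the Seidel--Thomas twist together with the grading conventions of Lemma \ref{lem grading}. The four cases are handled as the paper implicitly expects: trivial vanishing of $\shom^*$ for non-adjacent or same-sign distinct vertices, the standard formula $\tau_S(S)=S[1-N]$ (hence $\tau_S^{-1}(S)=S[N-1]$), and the identification of the cone as the two-term twisted complex $(S_u\oplus S_w,\,x_{u,w})$ when $u\sim w$. You correctly flag the only real bookkeeping point, the shift in the inverse-twist triangle for \eqref{eqn 3}; checking it: $\lhom^\bullet(S_u,S_w)^\vee\otimes S_w = S_w[1]$, so $\tau_w^{-1}(S_u)=\Cone\big(S_u\to S_w[1]\big)[-1]$, whose underlying graded object is $S_u\oplus S_w$ with the degree-$1$ arrow $x_{u,w}$, confirming the coincidence $\tau_w^{-1}(S_u)=\tau_u(S_w)$ that the paper uses later. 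No gap.
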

We recall that two vertices $u, w \in V(T)$ are said to be $u \sim w$ if $u$ and $w$ are connected by an edge in $T$. 

\subsection{Proof of Lemma \ref{lem x-morphism}}
\label{subsection proof of lemma 1}
We note that Lemma \ref{lem x-morphism} is a simple corollary of Lemma \ref{lem simple computation}.
We write details below for an eager reader, but the basic idea is to prove by cases and to apply Lemma \ref{lem simple computation} for each case. 
 
In Lemma \ref{lem x-morphism}, $f_{j,i}$ is an $x$-morphism, thus, there exists a positive vertex $u_1 \in V_+$ and a negative vertex $w_1 \in V_-$ such that 
\[X_i = V_i \otimes S_{u_1}[d_i], X_j= V_j \otimes S_{w_1}[d_j], \text{  and  } u_1 \sim w_1.\]
Moreover, since the grading of an $x$-morphism is always $1$, we know that $d_i = d_j$. 
By shifting $E$ by $d_i$, we can assume that $d_i= d_j =0$. 

There are following six cases:
\begin{enumerate}
	\item[(i)] $\tau = \tau_{u_1}$.
	\item[(ii)] $\tau = \tau_{u_2}$ such that $u_1 \neq u_2 \sim w_1$.
	\item[(iii)] $\tau = \tau_{u_3}$ such that $u_1 \neq u_3 \nsim w_1$. 
	\item[(iv)] $\tau = \tau_{w_1}^{-1}$.
	\item[(v)] $\tau = \tau_{w_2}^{-1}$ such that $u_1 \sim w_2 \neq w_1$.
	\item[(vi)] $\tau = \tau_{w_3}^{-1}$ such that $u_1 \nsim w_3 \neq w_1$.
\end{enumerate}
\vskip0.2in

\noindent {\em Case (i).\ }$\tau = \tau_{u_1}$ : By Equations \eqref{eqn 1} and \eqref{eqn 2} in Lemma \ref{lem simple computation}, 
\[\tau(S_{u_1}) = S_{u_1}[1-N], \tau(S_{w_1}) = \left( S_{u_1} \oplus S_{w_1}, x_{u_1,w_1} \in \shom_{\Fuk}^1(S_{u_1}, S_{w_1}) \right).\]
Thus, $G_{j,i} : V_i \otimes \tau(X_i = S_{u_1}) \to V_j \otimes \tau(S_{w_1})$ should consist of a linear map from $V_i$ to $V_j$ and morphism in
\[\shom^1_{\Fuk}(S_{u_1}[1-N],S_{u_1}) \text{  and  } \shom^1_{\Fuk}(S_{u_1}[1-N], S_{w_1}).\]
We note that $\shom^1_{\Fuk}(S_{u_1}[1-N],S_{u_1})$ is generated by $z_{u_1}$ and $\shom^1_{\Fuk}(S_{u_1}[1-N], S_{w_1}) = 0$.
Thus, $G_{j,i}$ cannot contain neither an identity morphism or a $y$-morphism. 
\vskip0.2in

\noindent {\em Case (ii).\ }$\tau = \tau_{u_2}$ such that $u_2 \sim w_1$ : Similar to the case (i), by Equations in Lemma \ref{lem simple computation},
\[\tau(S_{u_1}) = S_{u_1}, \tau(S_{w_1}) = \left( S_{u_2} \oplus S_{w_1}, x_{u_1,w_1} \in \shom_{\Fuk}^1(S_{u_2}, S_{w_1}) \right).\]
Thus, $G_{j,i}$ consists of a linear map from $V_i$ to $V_j$ and morphisms in
\[\shom^1_{\Fuk}(S_{u_1},S_{u_2}) \text{  and  } \shom^1_{\Fuk}(S_{u_1}, S_{w_1}).\]
Since $\shom^1_{\Fuk}(S_{u_1},S_{u_2}) =0$ and $\shom^1_{\Fuk}(S_{u_1}, S_{w_1})$ is generated by $x_{u_1,w_1}$, $G_{j,i}$ cannot contain neither an identity morphism or a $y$-morphism. 
\vskip0.2in

\noindent {\em Case (iii).\ }$\tau = \tau_{u_3}$ such that $u_3 \nsim w_1$ :
As we did before, Lemma \ref{lem simple computation} implies that
\[\tau(S_{u_1}) = S_{u_1}, \tau(S_{w_1})= S_{w_1}.\]
Thus, $G_{j,i}$ consists of a linear map from $V_i$ to $V_j$ and a morphism in $\shom_{\Fuk}^1(S_{u_1},S_{w_1})$ that is generated by $x_{u_1,w_1}$. 
It proves that $G_{j,i}$ cannot contain neither an identity morphism or a $y$-morphism. 
\vskip0.2in

{\em Cases (iv, v, vi).\ }: The proof of cases (i), (ii), and (iii) also prove the cases (iv), (v), and (vi), respectively.
\qed

\subsection{Proof of Lemma \ref{lem z-morphism}}
\label{subsection proof of lemma 2}
Similar to the proof of Lemma \ref{lem x-morphism}, a proof by cases proves Lemma \ref{lem z-morphism}.
And, the proof for each case below is a corollary of Lemma \ref{lem simple computation}.

In Lemma \ref{lem z-morphism}, $f_{j,i} \in \shom_{\Fuk}^1(S_{v_i}[d_i], S_{v_j}[d_j])$ is a $z$-morphism.
Thus, $v_i = v_j$ and $d_j = d_i +N -1$. 
We note that $v_i = v_j \in V_+(T)$ or $V_-(T)$.
In Section \ref{subsection proof of lemma 2}, we prove Lemma \ref{lem z-morphism} only for the case of $v_i = v_j \in V_+$.
The same logic works for the case of $v_i= v_j \in V_-$.

Let $u_1 \in V_+$ be the positive vertex such that $u_1 = v_i = v_j$. 
Without loss of generality, we assume that $d_i= 0$, or equivalently, $d_j = N-1$.  

There exist four possible cases.
\begin{enumerate}
	\item[(i)] $\tau = \tau_{u_1}$.
	\item[(ii)] $\tau = \tau_{w_1}^{-1}$ such that $w_1 \sim u_1$.
	\item[(iii)] $\tau = \tau_{u_2}$ such that $u_2 \neq u_1$.
	\item[(iv)] $\tau = \tau_{w_2}^{-1}$ such that $w_2 \nsim u_1$.
\end{enumerate}
\vskip0.2in

\noindent {\em Case (i).\ }$\tau = \tau_{u_1}$: Lemma \ref{lem simple computation} induces that
\[\tau(S_{u_1}) = S_{u_1}[1-N], \tau(S_{u_1}[N-1]) = S_{u_1}.\]
Thus, $G_{j,i} : V_i \otimes \tau(X_i = S_{u_1}) \to V_j \otimes \tau(X_j = S_{u_1}[N-1])$ should consist of a linear map from $V_i$ to $V_j$ and a morphism in
\[\shom^1_{\Fuk}(S_{u_1}[1-N],S_{u_1}).\]
We note that $\shom^1_{\Fuk}(S_{u_1}[1-N],S_{u_1})$ is generated by $z_{u_1}$.
Thus, $G_{j,i}$ cannot contain neither an identity morphism or a $y$-morphism. 
\vskip0.2in

\noindent {\em Case (ii).\ }$\tau = \tau_{w_1}^{-1}$ such that $w_1 \sim u_1$: Again, by Lemma \ref{lem simple computation}, we have  
\[\tau(X_i = S_{u_1}) =  \left( S_{u_1} \oplus S_{w_1}, x_{u_1,w_1} \in \shom^1_{\Fuk}(S_{u_1}, S_{w_1}) \right), \text{  and  } \tau(X_j = S_{u_1}[N-1]) = \tau(S_{u_1})[N-1].\]
Thus, $G_{j,i}$ consists of a linear map from $V_i$ to $V_j$ and morphisms in
\[\shom^1_{\Fuk}(S_{u_1},S_{u_1}[N-1]), \shom^1_{\Fuk}(S_{u_1},S_{w_1}[N-1]), \shom^1_{\Fuk}(S_{w_1},S_{u_1}[N-1]), \text{  and  } \shom^1_{\Fuk}(S_{w_1},S_{w_1}[N-1]).\]
We note that $\shom^1_{\Fuk}(S_{u_1},S_{u_1}[N-1])$ is generated by $z_{u_1}$, $\shom^1_{\Fuk}(S_{u_1},S_{w_1}[N-1]) = \shom^1_{\Fuk}(S_{w_1},S_{u_1}[N-1]) = 0$, and $\shom^1_{\Fuk}(S_{w_1},S_{w_1}[N-1])$ is generated by $z_{w_1}$. 
Thus, $G_{j,i}$ cannot contain neither an identity morphism or a $y$-morphism. 
\vskip0.2in

\noindent {\em Cases (iii, iv).\ }$\tau= \tau_{u_2}$ or $\tau_{w_2}^{-1}$ such that $u_2 \neq u_1, w_1 \nsim u_1$: It is easy to observe from Lemma \ref{lem simple computation} that 
\[\tau(S_{u_1}) = S_{u_1}.\]
Thus, $G_{j,i}$ consists of a linear map and a morphism in
\[\shom_{\Fuk}^1(S_{u_1}, S_{u_1}[N-1]),\]
which is generated by $z_{u_1}$. 
It completes the proof. 
\qed

\subsection{Proof of Lemma \ref{lem the zero morphism}}
\label{subsection proof of lemma 3}
We note that $\tau$ is either $\tau_{u_1}$ for some $u_1 \in V_+$ or $\tau_{w_1}^{-1}$ for some $w_1 \in V_-$. 
We prove Lemma \ref{lem the zero morphism} for the case of $\tau = \tau_{u_1}$ with $u_1 \in V_+$ in the current subsection, and we skip the other case because the same proof works for the other case. 
And, as similar to Sections \ref{subsection proof of lemma 1} and \ref{subsection proof of lemma 2}, we prove Lemma \ref{lem the zero morphism} by a case-by-case method.

Since $f_{j,i} \in \shom_{\Fuk}^1\left(X_i=S_{v_i}[d_i], X_j=S_{v_j}[d_j]\right)$ is the zero morphism, we could not get any information about $v_i$ and $v_j$, differently from the proofs of Lemmas \ref{lem x-morphism} and \ref{lem z-morphism}.
Thus, we should consider the following four cases under the assumption that $d_i=0$:
\begin{enumerate}
	\item[(i)] $X_i = V_i \otimes S_{u_2}$ and $X_j = V_j \otimes S_{u_3}[k]$ with $u_2, u_3 \in V_+$. 
	\item[(ii)] $X_i = V_i \otimes S_{u_2}$ and $X_j = V_j \otimes S_{w_1}[k]$ with $u_2 \in V_+, w_1 \in V_-$. 
	\item[(iii)] $X_i = V_i \otimes S_{w_1}$ and $X_j = V_j \otimes S_{u_2}[k]$ with $u_2 \in V_+, w_1 \in V_-$. 
	\item[(iv)] $X_i = V_i \otimes S_{w_1}$ and $X_j = V_j \otimes S_{w_2}[k]$ with $w_1, w_2 \in V_-$.
\end{enumerate}

\vskip0.2in

\noindent {\em Case (i).\ }$X_i = V_i \otimes S_{u_2}$ and $X_j = V_j \otimes S_{u_3}[k]$ with $u_2, u_3 \in V_+$ : Case (i) can be divided into the following four sub-cases given in Table \ref{tab case 1}:
\begin{table}[h!]
	\caption{Sub-cases of case (i).}
	\label{tab case 1}
	\begin{tabular}{| l | c | c |}
		\hline
		& $u_2 = u_1$  & $u_2 \neq u_1$ \\ \hline
		$u_3 = u_1$  & a) & c) \\ \hline
		$u_3 \neq u_1$ & b) & d) \\ \hline
	\end{tabular}
\end{table}

Case (i)-a): As we did in the proofs of Lemmas \ref{lem x-morphism} and \ref{lem z-morphism}, we use Lemma \ref{lem simple computation}.
Then, we have 
\[G_{j,i} : V_i \otimes \tau(X_i = S_{u_2=u_1}) = V_i \otimes S_{u_1}[1-N] \to V_j \otimes \tau(X_j=S_{u_3 = u_1}[k]) = V_j \otimes S_{u_1}[k+1 -N].\]
Thus, if $G_{j,i}$ contains an identity morphism $e_v$, then $v$ should be $u_1$ and $k$ should be $-1$. 
However, $k \geq 0$ since $E$ is well-ordered. 
It means that $G_{j,i}$ cannot contain an identity. 

Case (i)-b): Similar to the above sub-case, we observe that 
\[G_{j,i} : V_i \otimes \tau(X_i = S_{u_2=u_1}) = V_i \otimes S_{u_1}[1-N] \to V_j \otimes \tau(X_j=S_{u_3}[k]) = V_j \otimes S_{u_3}[k].\]
Since $u_2 = u_1 \neq u_3$, $G_{j,i}$ cannot contain an identity morphism. 

Case (i)-c): Again, we observe that
\[G_{j,i} : V_i \otimes \tau(X_i = S_{u_2}) = V_i \otimes S_{u_2} \to V_j \otimes \tau(X_j=S_{u_3=u_1}[k]) = V_j \otimes S_{u_1}[k+1-N].\]
Since $u_1 = u_3 \neq u_2$, $G_{j,i}$ cannot contain an identity morphism.

Case (i)-d): We can see that 
\[G_{j,i} : V_i \otimes \tau(X_i = S_{u_2}) = V_i \otimes S_{u_2} \to V_j \otimes \tau(X_j=S_{u_3}[k]) = V_j \otimes S_{u_3}[k].\]
If $G_{j,i}$ contains an identity morphism, then $u_2 =u_3$ and $k =-1$. 
However, $k \geq 0$ since $E$ is well-ordered. 
It implies that $G_{j,i}$ does not contain an identity.
\vskip0.2in

\noindent {\em Case (ii).\ }$X_i = V_i \otimes S_{u_2}$ and $X_j = V_j \otimes S_{w_1}[k]$ with $u_2 \in V_+, w_1 \in V_-$ : We will consider sub-cases given in Table \ref{tab case 2}. 
\begin{table}[h!]
	\caption{Sub-cases of case (ii).}
	\label{tab case 2}
	\begin{tabular}{| l | c | c |}
		\hline
		& $u_2 = u_1$  & $u_2 \neq u_1$ \\ \hline
		$w_1 \sim u_1$  & a) & c) \\ \hline
		$w_1 \nsim u_1$ & b) & d) \\ \hline
	\end{tabular}
\end{table}

Case (ii)-a): Thanks to Lemma \ref{lem simple computation},
\[G_{j,i} : V_i \otimes \tau(X_i = S_{u_2=u_1}) = V_i \otimes S_{u_1}[1-N] \to V_j \otimes \tau(X_j=S_{w_1}[k]) = V_j \otimes \left(S_{u_1}[k] \oplus S_{w_1}[k], x_{u_1,w_1}\right).\]
Thus, $G_{j,i}$ consists of a linear map from $V_i$ to $V_j$, and morphisms in 
\[\shom_{\Fuk}^1(S_{u_1},S_{u_1}[k]) \text{  and  } \shom_{\Fuk}^1(S_{u_1}, S_{w_1}[k]).\]
If $G_{j,i}$ contains an identity morphism $e_v$, then $v = u_1$ and $k=-1$. 
As similar to the cases (i)-a), d), it contradicts to that $E$ is well-ordered. 
Thus, $G_{j,i}$ does not have an identity morphism.

Case (ii)-b): We observe that 
\[G_{j,i} : V_i \otimes \tau(X_i = S_{u_2=u_1}) = V_i \otimes S_{u_1}[1-N] \to V_j \otimes \tau(X_j=S_{w_1}[k]) = V_j \otimes S_{w_1}[k].\]
Since $u_1 \neq w_1$, there is no identity morphism in $G_{j,i}$. 

Case (ii)-c): We use Lemma \ref{lem simple computation} again, then we know
\[G_{j,i} : V_i \otimes \tau(X_i = S_{u_2}) = V_i \otimes S_{u_2} \to V_j \otimes \tau(X_j=S_{w_1}[k]) = V_j \otimes \left(S_{u_1}[k] \oplus S_{w_1}[k], x_{u_1,w_1}\right).\]
Since $u_2 \neq u_1, u_2 \neq w_1$, $G_{j,i}$ cannot have an identity morphism. 

Case (ii)-d): As usual, from Lemma \ref{lem simple computation},
\[G_{j,i} : V_i \otimes \tau(X_i = S_{u_2}) = V_i \otimes S_{u_2} \to V_j \otimes \tau(X_j=S_{w_1}[k]) = V_j \otimes S_{w_1}[k].\]
Since $u_2 \neq w_1$, $G_{j,i}$ do not contain an identity morphism. 
\vskip0.2in

\noindent {\em Case (iii).\ }$X_i = V_i \otimes S_{w_1}$ and $X_j = V_j \otimes S_{u_2}[k]$ with $u_2 \in V_+, w_1 \in V_-$ : Case (iii) is also divided into four sub-cases in Table \ref{tab case 3}.
\begin{table}[h!]
	\caption{Sub-cases of case (iii).}
	\label{tab case 3}
	\begin{tabular}{| l | c | c |}
		\hline
		& $w_1 \sim u_1$  & $w_1 \nsim u_1$ \\ \hline
		$u_2 = u_1$  & a) & c) \\ \hline
		$u_2 \neq u_1$ & b) & d) \\ \hline
	\end{tabular}
\end{table}

Case (iii)-a) : We have 
\[G_{j,i} : V_i \otimes \tau(X_i = S_{w_1}) = V_i \otimes \left(S_{u_1} \oplus S_{w_1}, x_{u_1,w_1}\right) \to V_j \otimes \tau(X_j=S_{u_2 = u_1}[k]) = V_j \otimes S_{u_1}[k+1 -N].\]
If $G_{j,i}$ contains an identity morphism $e_v$, $v$ should be $u_1$ and $k$ should be $N-2$.

We remark that, if $G_{j,i}$ is not a zero map, then there exists at least one nonzero chain of arrows in $E$ from $X_i$ to $X_j$. 
For a nonzero chain of arrows from $X_i$ to $X_j$, let $X_\ell$ be the last object of the chain before $X_j$.
In particular, we know that 
\[F_{j,\ell} = \psi_{j,\ell} \otimes f_{j,\ell}: X_\ell = V_\ell \otimes S_{v_\ell}[d_\ell] \to X_j = V_j \otimes S_{u_1}[N-2]\] 
is not a zero map. 
It implies that $f_{j,\ell}$ is either a $y$-morphism or a $z$-morphism. 

If $f_{j,\ell}$ is a $z$-morphism, then $d_\ell = -1 < d_i =0$. 
Since $E$ is well-ordered, $\ell < i$. 
It contradicts to that $X_\ell$ is a part of a nonzero chain of arrows from $X_i$ to $X_j$. 
Thus, $f_{j,i}$ should be a $y$-morphism.

If $f_{j,\ell}$ is a $y$-morphism, then $v_\ell \in V_-$ and $d_\ell = 0$. 
It implies that every chain of arrows from $X_i=V_i \otimes S_{v_i}$ to $X_\ell=V_\ell \otimes S_{v_\ell}$ cannot be a nonzero chain, since $v_i, v_\ell \in V_-(T)$.
It is a contradiction to the existence of nonzero chain of arrows from $X_i$ to $X_\ell$.
Thus, $G_{j,i}$ cannot contain an identity morphism. 

Case (iii)-b) : By applying Lemma \ref{lem simple computation},
\[G_{j,i} : V_i \otimes \tau(X_i = S_{w_1}) = V_i \otimes \left(S_{u_1} \oplus S_{w_1}, x_{u_1,w_1}\right) \to V_j \otimes \tau(X_j=S_{u_2}[k]) = V_j \otimes S_{u_2}[k],\]
and $u_2 \neq u_1$. 
Thus, $G_{j,i}$ cannot contain an identity morphism. 

Case (iii)-c) : Lemma \ref{lem simple computation} says that 
\[G_{j,i} : V_i \otimes \tau(X_i = S_{w_1}) = V_i \otimes S_{w_1} \to V_j \otimes \tau(X_j=S_{u_1}[k]) = V_j \otimes S_{u_1}[k+1 -N].\]
Thus, $G_{j,i}$ cannot have an identity morphism since $u_1 \neq w_1$.  

Case (iii)-d) : Again, 
\[G_{j,i} : V_i \otimes \tau(X_i = S_{w_1}) = V_i \otimes S_{w_1} \to V_j \otimes \tau(X_j=S_{u_2}[k]) = V_j \otimes S_{u_2}[k].\]
It proves that $G_{j,i}$ cannot have an identity morphism. 
\vskip0.2in

\noindent {\em Case (iv).\ }$X_i = V_i \otimes S_{w_1}$ and $X_j = V_j \otimes S_{w_2}[k]$ with $w_1, w_2 \in V_-$ : 
\begin{table}[h!]
	\caption{Sub-cases of case (iv).}
	\label{tab case 4}
	\begin{tabular}{| l | c | c |}
		\hline
		& $w_1 \sim u_1$  & $w_1 \nsim u_1$ \\ \hline
		$w_2 \sim u_1$  & a) & c) \\ \hline
		$u_2 \nsim u_1$ & b) & d) \\ \hline
	\end{tabular}
\end{table}

Case (iv)-a) : As we did before, we observe that 
\[G_{j,i} : V_i \otimes \tau(S_{w_1}) = V_i \otimes \left(S_{u_1} \oplus S_{w_1}, x_{u_1,w_1}\right) \to V_j \otimes \tau(S_{w_2}[k]) = V_j \otimes \left(S_{u_1} \oplus S_{w_2}, x_{u_1,w_2}\right).\]
If $G_{j,i}$ contains an identity $e_v$, then $v$ should be either $u_1$ or $w_1 =w_2$. 
Moreover, for both cases, $k=-1$, and it contradicts to that $E$ is well-ordered. 
 
Case (iv)-b) : Again, we apply Lemma \ref{lem simple computation}, and
\[G_{j,i} : V_i \otimes \tau(S_{w_1}) = V_i \otimes \left(S_{u_1} \oplus S_{w_1}, x_{u_1,w_1}\right) \to V_j \otimes \tau(S_{w_2}[k]) = V_j \otimes S_{w_2}[k].\]
If $G_{j,i}$ has an identity morphism, then the only possible case is that $w_1 =w_2$.
It is not possible since $w_1 \sim u_1 \nsim w_2$. 

Case (iv)-c) : As usual, we observe that
\[G_{j,i} : V_i \otimes \tau(S_{w_1}) = V_i \otimes S_{w_1} \to V_j \otimes \tau(S_{w_2}[k]) = V_j \otimes \left(S_{u_1} \oplus S_{w_2}, x_{u_1,w_2}\right).\]
If $G_{j,i}$ contains an identity morphism, it implies that $w_1 = w_2$, but it is not possible since $w_1 \nsim u_1 \sim w_2$. 

Case (iv)-d) : We again use Lemma \ref{lem simple computation} for the final sub-case, and 
\[G_{j,i} : V_i \otimes \tau(S_{w_1}) = V_i \otimes S_{w_1} \to V_j \otimes \tau(S_{w_2}[k]) = V_j \otimes S_{w_2}[k].\]
If we assume that $G_{j,i}$ contains an identity morphism, $w_1$ and $w_2$ should be the same and $k$ should be $-1$. 
It contradicts to that $E$ is well-ordered.
\qed

\section{The case of categorically carried-by}
\label{section the case of categorically carried-by}
As mentioned in the introduction section, if $\Phi: P_N(T) \to P_N(T)$ is of Penner type, then \cite{Lee19} showed that $\Phi$ satisfies a geometric stability. 
Roughly speaking, the stability means that if a closed Lagrangian $L$ satisfies a condition, then $\Phi^n(L)$ converges to a closed subset as $n \to \infty$, which is depending only on $\Phi$ and independent of $L$. 
We note that the condition is not stated in the current paper, but if $L$ satisfies the condition, then $L$ is categorically carried-by as an object of $\Fuk$.

From the result of \cite{Lee19}, it would be natural to expect that if $E \in \Fuk$ is categorically carried-by, then the asymptotic behavior of $\Phi^n(E)$ is independent of the choice of $E$. 
Motivated from this, we prove Theorem \ref{thm without y morphism} in Section \ref{section the case of categorically carried-by}, or equivalently, we prove steps $1$--$3$ of the sketch of proof of Theorem \ref{thm pseudo-Anosov}, given in Section \ref{subsection sketch of the proof}. 
\begin{thm}
	\label{thm without y morphism}
	Let $T$ be a given tree. 
	Let $\sigma$ be a stability condition in $\stab^\dagger(\Fuk)$. 
	If an autoequivalence $\Phi: \mathcal{D} \to \mathcal{D}$ is of Penner type and if a nonzero object $E \in \Fuk$ is categorically carried-by, then there exists a real number $\lambda_\Phi > 1$ such that 
	\[\lim_{n \to \infty} \tfrac{1}{n} m_\sigma(\Phi^n E) = \lambda_\Phi.\]
	Moreover, $\lambda_\Phi$ depends only on $\Phi$.
\end{thm}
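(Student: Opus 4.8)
The plan is to carry out the three-step program of Section~\ref{subsection sketch of the proof}. By Remark~\ref{rmk sign is not important 1} we may assume $\Phi=\tau_{v_m}^{\epsilon_m}\circ\cdots\circ\tau_{v_1}^{\epsilon_1}$ is a product of positive Dehn twists $\tau_u$ $(u\in V_+(T))$ and inverse negative twists $\tau_w^{-1}$ $(w\in V_-(T))$. It also suffices to treat the reference stability condition $\sigma=\sigma_0$ of Definition~\ref{def fixed component of the space of stability conditions}: for any other $\sigma\in\stab^\dagger(\Fuk)$ one has $d(\sigma,\sigma_0)<\infty$, so $\bigl|\log\bigl(m_\sigma(\Phi^nE)/m_{\sigma_0}(\Phi^nE)\bigr)\bigr|$ is bounded uniformly in $n$ (equivalently, invoke Proposition~\ref{prop independence of the choice of stability conditions}), and the two exponential growth rates coincide.

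\emph{Step 1: the class ``categorically carried-by'' is $\Phi$-invariant.} Let $E$ be categorically carried-by, represented by a minimal, well-ordered good twisted complex whose arrows $f_{j,i}$ are $x$-, $z$-, or zero morphisms (Lemma~\ref{lem minimal twisted complex}, Definition~\ref{def categorically carried by 2}). Apply a single factor $\tau$. By Lemma~\ref{lem induced functor}, $\tau(E)$ is assembled from the objects $\tau(X_i)$ — each of which, by Lemma~\ref{lem simple computation}, is either a shift of an $S_v$ or a two-term complex $\bigl(S_u\oplus S_w,\,x_{u,w}\bigr)$, hence good with only $x$-morphism internal arrows — glued by arrows $G_{j,i}$. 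Since every $f_{j,i}$ is an $x$-, $z$-, or zero morphism, Lemmas~\ref{lem x-morphism}, \ref{lem z-morphism} and~\ref{lem the zero morphism} guarantee that no $G_{j,i}$ contains an identity morphism $e_v$ or a $y$-morphism. Thus $\tau(E)$ is again a minimal good twisted complex with $x$-, $z$-, or zero arrows, i.e.\ categorically carried-by; reordering it into well-ordered form by Lemma~\ref{lem minimal twisted complex}(i) alters neither the multiset of objects nor the nonzero arrows. Iterating over the factors, $\Phi^n E$ is categorically carried-by for all $n\ge 0$.

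\emph{Step 2: linearization.} For a categorically carried-by object $F$ set $\vect(F)=(a_v)_{v\in V(T)}$, where $a_v$ is the number of copies of $S_v$ (counted with multiplicity $\dim V_i$) in a minimal good twisted complex for $F$; by Lemma~\ref{lem zero differential for minimal} this is well defined, equal to $\dim_{\mathbb k}\lhom^*_{\Tw(\wrapped)}(F,L_v)$, and by \eqref{eqn m_sigma(X)} one has $\|\vect(F)\|_1=m_{\sigma_0}(F)$. From Lemma~\ref{lem simple computation}, applying $\tau_u$ replaces $a_u$ by $a_u+\sum_{w\sim u}a_w$ and fixes all other coordinates, and since Step~1 keeps the complex minimal these multiplicities are not reduced; hence $\vect(\tau_u F)=(I+N_u)\vect(F)$ where $(N_u)_{u,w}=1$ for $w\sim u$ and $0$ otherwise, and likewise $\vect(\tau_w^{-1}F)=(I+N_w)\vect(F)$. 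Composing the factors of $\Phi$ yields a nonnegative integer matrix $M_\Phi$ with $\vect(\Phi^nE)=M_\Phi^n\vect(E)$, so $m_{\sigma_0}(\Phi^nE)=\|M_\Phi^n\vect(E)\|_1$ for every categorically carried-by $E$. Taking $E=S_v$ (so $\vect(S_v)=e_v$) shows $M_\Phi e_v=\vect(\Phi S_v)$, so $M_\Phi$, and hence $\lambda_\Phi:=\rho(M_\Phi)$, depends only on $\Phi$.

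\emph{Step 3: the spectral radius and conclusion.} Because $T$ is connected its adjacency matrix $A$ is irreducible. Each $I+N_v\ge I$ entrywise, and by condition~(II) of Definition~\ref{def Penner type} every vertex $v$ is twisted at least once, so $M_\Phi\ge I+A$ entrywise; therefore $M_\Phi$ is primitive and $\lambda_\Phi=\rho(M_\Phi)\ge\rho(I+A)=1+\rho(A)>1$, using that $T$ has an edge. By Perron--Frobenius, for the nonzero nonnegative vector $\vect(E)$ we get $\|M_\Phi^n\vect(E)\|_1^{1/n}\to\rho(M_\Phi)=\lambda_\Phi$, i.e.\ $m_{\sigma_0}(\Phi^nE)^{1/n}\to\lambda_\Phi$; transporting to a general $\sigma$ as above finishes the proof. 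The crux is Step~1: one must be certain that no twist ever produces an identity arrow, since even one such arrow would force the reduction algorithm of Lemma~\ref{lem minimal twisted complex} to shrink the complex and break the exact identity $\vect(\Phi F)=M_\Phi\vect(F)$ — this is precisely what Lemmas~\ref{lem x-morphism}--\ref{lem the zero morphism} provide.
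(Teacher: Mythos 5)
Your overall three-step architecture (invariance of the carried-by condition, linearization via a matrix $M_\Phi$, Perron--Frobenius) matches the paper's proof, and your Step~3 is in fact a bit cleaner than the paper's (you establish primitivity of $M_\Phi$ directly from $M_\Phi\geq I+A$ and invoke Perron--Frobenius for primitive matrices, rather than passing through a high power $M_\Phi^K$ with all positive entries and a separate eigenvalue argument). However, Step~1 contains a genuine gap.

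You claim that ``Lemmas~\ref{lem x-morphism}, \ref{lem z-morphism} and~\ref{lem the zero morphism} guarantee that no $G_{j,i}$ contains an identity morphism $e_v$ or a $y$-morphism,'' and again at the end that ruling out identities ``is precisely what Lemmas~\ref{lem x-morphism}--\ref{lem the zero morphism} provide.'' But look at the statements: Lemmas~\ref{lem x-morphism} and~\ref{lem z-morphism} exclude both identity and $y$-morphisms when $f_{j,i}$ is an $x$- or $z$-morphism, whereas Lemma~\ref{lem the zero morphism} only excludes the identity morphism when $f_{j,i}=0$; it says nothing about $y$-morphisms. That is exactly the dangerous case: a Penner-type twist applied to a long zero arrow $F_{j,i}=0$ between two objects whose underlying spheres become adjacent after the twist can \emph{a priori} produce a nonzero $G_{j,i}$ with $g_{j,i}$ a $y$-morphism. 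This is the content of the paper's Lemma~\ref{lem y-morphism}, whose proof is a page-long case analysis (three cases for $v_j$, subcomplex extractions, degree constraints, and an appeal to Lemma~\ref{lem zero differential for minimal} and the well-ordering) and cannot be replaced by citing Lemma~\ref{lem the zero morphism}. Without ruling out this case you cannot conclude that $\tau(E)$ is again categorically carried-by, and the induction in Step~1 — on which the exact linearization $\vect(\Phi^nE)=M_\Phi^n\vect(E)$ of Step~2 depends — does not go through. To close the gap you would need to prove the analogue of Lemma~\ref{lem y-morphism}: that for a categorically-carried-by minimal well-ordered $E$, no $y$-arrow arises from any zero arrow $f_{j,i}$ after applying one factor $\tau_u$ or $\tau_w^{-1}$.
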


\subsection{Step 1 of the sketch of proof of Theorem \ref{thm pseudo-Anosov}}
\label{subsection step 1 of the sketch of proof} 
We start the proof of Theorem \ref{thm without y morphism} by showing that $\Phi^n E$ is categorically carried-by for all $n \in \mathbb{N}$ if $E$ is categorically carried-by.
Our strategy is to show that if $E$ is categorically carried-by, then so is $\tau(E)$ where $\tau$ is an element of $\left\{\tau_u, \tau_w^{-1} | u \in V_+(T), w \in V_-(T)\right\}$. 
We recall our assumption that $\Phi$ is a product of the above Dehn twists and inverses of Dehn twists. 

The key point of proving that $\tau E$ is categorically carried-by is to show that as a twisted complex, $\tau(E)$ does not have a $y$-morphism., i.e., to prove Lemma \ref{lem y-morphism}.
\begin{lem}
	\label{lem y-morphism}
	Let $E$ be a categorically carried-by twisted complex, i.e.,
	\[E= \left(\bigoplus_i X_i = V_i \otimes S_{v_i}[d_i], F_{j,i} = \psi_{j,i} \otimes f_{j,i}\right)\]
	be a minimal, well-ordered twisted complex such that for all ${j,i}$, $f_{j,i}$ is not a $y$-morphism.
	Let $\tau$ be an element of $\left\{\tau_u, \tau_w^{-1} | u \in V_+(T), w \in V_-(T)\right\}$.
	Then, there exists morphisms $G_{j,i} : \tau(X_i) \to \tau(X_j)$ such that 
	\begin{itemize}
		\item[(A)] $\tau(E) \simeq \left(\bigoplus_i \tau(X_i), G_{j,i}\right)$, and
		\item[(B)] $G_{j,i}$ does not contain a $y$-morphism for all $i < j$.
	\end{itemize}
\end{lem}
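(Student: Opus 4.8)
The plan is to reduce Lemma~\ref{lem y-morphism} to the three computational lemmas already established, namely Lemmas~\ref{lem x-morphism}, \ref{lem z-morphism}, and \ref{lem the zero morphism}. First I would invoke Lemma~\ref{lem induced functor} to write $\tau(E) \simeq \left(\bigoplus_i \tau(X_i), G_{j,i}\right)$ for suitable morphisms $G_{j,i}$, which immediately gives conclusion (A). The content is therefore entirely in (B): showing no $G_{j,i}$ contains a $y$-morphism. Here I would use the formula \eqref{eqn induced functor} from Remark~\ref{rmk nonzero chain of arrows}: $G_{j,i}$ is a sum, over nonzero chains of arrows $X_i = X_{i_0} \to X_{i_1} \to \cdots \to X_{i_k} = X_j$ in $E$, of terms built from the $\mathcal{A}_\infty$-images $\tau^k(f_{i_k,i_{k-1}}, \dots, f_{i_1,i_0})$. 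Since $E$ is categorically carried-by, every arrow $f_{i_{s+1},i_s}$ appearing is an $x$-morphism, a $z$-morphism, the identity $e_v$, or the zero morphism; but $E$ is minimal, so in fact no $f_{j,i}$ equals $e_v$. Hence every arrow in every chain is an $x$-morphism, a $z$-morphism, or zero.

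The key point is that the length-one pieces ($k=1$) are controlled directly by the three lemmas: if $f_{j,i}$ is an $x$-morphism then Lemma~\ref{lem x-morphism} says $G_{j,i}$ contains no $y$-morphism and no identity; likewise Lemma~\ref{lem z-morphism} handles the $z$-morphism case, and Lemma~\ref{lem the zero morphism} handles the zero-morphism case (the latter guarantees no $e_v$ appears, though a $y$-morphism could a priori appear via longer chains—this is where care is needed). The remaining task is to deal with higher compositions $k \geq 2$. For these I would argue that each $\tau^k$ term factors through the degree-$k$ $\mathcal{A}_\infty$-operations of $\tau$ applied to a composable string of $x$- and $z$-morphisms among the $S_v$; by the grading constraints of Lemma~\ref{lem grading} (morphism spaces $\shom^*_{\Fuk}(S_u,S_w)$ are at most one-dimensional, concentrated in a single degree), one checks that the output of such an operation, expressed as a morphism between the good twisted complexes $\tau(X_i)$ and $\tau(X_j)$ (whose summands are computed explicitly in Lemma~\ref{lem simple computation}), can only be a combination of $x$- and $z$-morphisms, identities, and zero—never a $y$-morphism. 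I expect this step to go through by the same six-case / four-case bookkeeping as in Sections~\ref{subsection proof of lemma 1}--\ref{subsection proof of lemma 3}, now applied to the composite rather than a single arrow.

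The main obstacle is precisely the higher-composition analysis: the single-arrow lemmas only bound $G_{j,i}$ when there is a single nonzero arrow from $X_i$ to $X_j$, but \eqref{eqn induced functor} mixes in contributions from all intermediate objects, and one must rule out a $y$-morphism arising as a "boundary effect" of a long chain even though no individual arrow is a $y$-morphism. I would handle this by tracking the vertex labels: a $y$-morphism lives in $\shom^{N-1}_{\Fuk}(S_w, S_u)$ with $w \in V_-(T)$, $u \in V_+(T)$, $w \sim u$, so its appearance in $G_{j,i}$ would force, after unwinding the twisted-complex structure of $\tau(X_i)$ and $\tau(X_j)$ via Lemma~\ref{lem simple computation}, a chain of composable $x$- and $z$-morphisms among the original $\{S_v\}$ whose total degree and source/target vertices are incompatible—exactly the kind of contradiction derived in Case~(iii)-a) of Section~\ref{subsection proof of lemma 3}, where a would-be identity forces a nonzero chain running "backwards" through negative vertices. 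Once this degree/vertex incompatibility is verified in each case, conclusion (B) follows and the lemma is proved; combined with minimality (which Lemma~\ref{lem minimal twisted complex} lets us restore), this shows $\tau(E)$—hence inductively $\Phi^n(E)$—is again categorically carried-by, completing Step~1.
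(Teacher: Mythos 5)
Your proposal is essentially correct and takes a genuinely different (and in fact shorter) route than the paper in the hard case, namely when $f_{j,i}$ is the zero morphism. You correctly identify the bottleneck: Lemma~\ref{lem the zero morphism} excludes only identity components of $G_{j,i}$, so $y$-components from higher $\mathcal{A}_\infty$-terms must be ruled out separately. When your sketched idea is spelled out for $\tau = \tau_{u_1}$, it works as follows: for $\tau(X_i)$ to have an $S_{w_1}$-summand with $w_1 \in V_-(T)$ (the only possible source of a $y$-component), Lemma~\ref{lem simple computation} forces $v_i = w_1 \in V_-(T)$; the only nonzero degree-one morphism out of $S_{w_1}[d]$ that is neither a $y$- nor an identity-morphism is $z_{w_1}$, so every nonzero chain in $E$ from $X_i$ stays at vertex $w_1$ and visits shifts $d_i + m(N-1)$ with $m \geq 1$; but the vertex and shift of $X_j$ forced by the alleged $y$-component (the paper's Cases (i)--(iii)) are either a different vertex or the shift $d_i + N - 2$, which is never of the form $d_i + m(N-1)$. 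Hence no nonzero chain from $X_i$ to $X_j$, and by formula \eqref{eqn induced functor} one gets $G_{j,i} = 0$, which is even stronger than the claimed absence of a $y$-component. The paper instead, for Cases (i) and (ii), invokes the Maurer--Cartan constraint via Lemma~\ref{lem zero differential for minimal} to force auxiliary arrows $\alpha, \beta$ inside $\tau(E)$ with $\beta_2 \circ \alpha_2 = z_{u_1}$, then rules them out by well-orderedness; and for Case (iii) it decomposes $\bigoplus_{k=i}^j X_k$ as a direct sum. Your route buys a shorter argument and a stronger conclusion, at the cost of not exhibiting any structure inside $\tau(E)$. Two small inaccuracies in your write-up: Lemmas~\ref{lem x-morphism}--\ref{lem the zero morphism} do not bound only the ``length-one pieces'' of \eqref{eqn induced functor}; they constrain the whole morphism $G_{j,i}$ directly by enumerating the degree-one morphism spaces between $\tau(X_i)$ and $\tau(X_j)$. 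And the crucial mechanism here is chain-structural (outgoing arrows at a $V_-(T)$ vertex must be $z$-arrows, which preserve the vertex), not a pure degree incompatibility; the analogy to Case~(iii)-a) of Lemma~\ref{lem the zero morphism} is apt in spirit but the details of the forced-chain contradiction differ.
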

\begin{proof}
	In this proof, we consider only the case that $\tau = \tau_{u_1}$ for some $u_1 \in V_+$.
	The other case, i.e., $\tau=\tau_w^{-1}$ for some $w \in V_-$, can be proven by the same proof. 
	
	The existence of $G_{j,i}$ satisfying (A) is guaranteed by Lemma \ref{lem induced functor}.
	Let us assume that there exists a pair $i<j$ such that $G_{j,i}$ contains a $y$-morphism. 
	We note that 
	\[G_{j,i}: \tau_{u_1}(V_i \otimes S_{v_i}[d_i]) = V_i \otimes \tau_{u_1}(S_{v_i}[d_i]) \to \tau_{u_1}(V_j \otimes S_{v_j}[d_j]) = V_j \otimes \tau_{u_1}(S_{v_j}[d_j]).\]
	Thus, $G_{j,i}$ consists of a nonzero linear map from $V_i$ to $V_j$, a $y$-morphism, and possibly other nonzero morphisms. 
	Thanks to Lemmas \ref{lem x-morphism} and \ref{lem z-morphism}, $f_{j,i}$ is not neither a $x$-morphism nor a $z$-morphism.
	Thus, $f_{j,i}$ should be the zero morphism. 

	For simplicity, let us assume that $d_i =0$. 
	Since 
	\[G_{j,i} : V_i \otimes \tau_{u_1}(S_{v_i}) \to V_j \otimes \tau_{u_1}(S_{v_j}[d_j])\]
	contains a $y$-morphism, there exists a negative vertex $w_1 \in V_(T)$ such that $\tau_{u_1}(S_{v_i})$ contains $S_w$. 
	Thus, $v_i = w_1 \in V_-(T)$. 
	Moreover, there exists a positive vertex $u_2 \in V_+(T)$ such that $\tau_{u_1}(S_{v_j}[d_j])$ contains $S_{u_2}[N-2]$ and the $G_{j,i}$ contains a $y_{w_1,u_2}$-morphism.
	We note that the existence of $y_{w_1,u_2}$-morphism implies that $w_1 \sim u_2$, i.e., $w_1$ and $u_2$ are connected by an edge in $T$. 
	
	From the above arguments, one can find the following three possible cases for $v_j$:
	\begin{enumerate}
		\item[(i)] $S_{v_j}[d_j] = S_{w_2}[N-2]$ for a $w_2 \in V_-$ such that $w_2 \sim u_1$. 
		\item[(ii)] $S_{v_j}[d_j] = S_{u_1}[2N-3]$.
		\item[(iii)] $S_{v_j}[d_j] = S_{u_2}[N-2]$ for some $u_2 \in V_+$ such that $u_2 \neq u_1$. 
	\end{enumerate} 
	In the rest of the proof, we analyze each case.
	\vskip0.2in
	
	\noindent {\em Case (i).\ }$S_{v_j}[d_j] = S_{w_2}[N-2]$ for a $w_2 \in V_-$ such that $w_2 \sim u_1$: 
	By Lemma \ref{lem simple computation}, 
	\[\tau_{u_1}(S_{w_2}[N-2]) = \left(S_{u_1}[N-2] \oplus S_{w_2}[N-2], x_{u_1,w_2}\right).\]
	Thus, if $G_{j,i}: V_i \otimes \tau_{u_1}(S_{w_1}) \to V_j \otimes \tau_{u_1}(S_{w_2}[N-2])$ has a $y$-morphism, the $y$-morphism should be $y_{w_1,u_1}$.
	It means that $w_1 \sim u_1$ and 
	\[\tau_{u_1}(S_{w_1}) = \left(S_{u_1} \oplus S_{w_1}, x_{u_1,w_2}\right).\]
	
	Let $E_0$ be the subcomplex 
	\[E_0 := \left(\bigoplus_{k=i+1}^{j-1} X_k, F_{m,\ell}\right)\]
	of $E$. 
	Then, the following is a part of a twisted complex equivalent to $\tau_{u_1}(E)$:
	\begin{equation}
	\label{eqn diagram 1}
	\begin{tikzcd}
		\big( V_i \otimes S_{u_1} \arrow[r, red, "Id_{V_i} \otimes x_{u_1,w_1}"] &[3em] V_i \otimes S_{w_1} \big) \ar[r] \arrow[rr, blue, bend left, "\psi \otimes y_{w_1,u_1}"] & \tau(E_0) \ar[r] & \big( V_j \otimes S_{u_1}[N-2] \arrow[red]{r}{Id_{V_j} \otimes x_{u_1,w_2}} &[3em] V_j \otimes S_{w_2}[N-2] \big).
	\end{tikzcd}
	\end{equation}
	We note that Lemma \ref{lem simple computation} gives the red arrows in \eqref{eqn diagram 1}. 
	Moreover, since we are assuming that there is a $y$-morphism, the blue arrow in \eqref{eqn diagram 1} is a tensor product of a linear map $\psi$ and the $y$-morphism. 
	
	In \eqref{eqn diagram 1}, we focus on the first red arrow and the blue arrow.
	Then, the chain of two arrows is nonzero since their composition is $\psi_{j,i} \otimes z_{u_1}$.
	We note that the sum of all chains of arrows from $V_i \otimes S_{u_1}$ to $V_j \otimes S_{u_1}[N-2]$ is the zero morphism since the differential map of a minimal twisted complex is the zero map.
	See Lemma \ref{lem zero differential for minimal}. 
	Thus, there exist morphisms $\alpha$ and $\beta$ (see red arrows in \eqref{eqn diagram 2}) satisfying the following:
	\begin{equation}
		\label{eqn diagram 2}
		\begin{tikzcd}
			\big( V_i \otimes S_{u_1} \arrow[r, "Id_{V_i} \otimes x_{u_1,w_1}"] \ar[rr, red, bend right, "\alpha= \alpha_1\otimes \alpha_2"] &[3em] V_i \otimes S_{w_1} \big) \ar[r] \arrow[rr, bend left, "\psi \otimes y_{w_1,u_1}"] & \tau(E_0) \ar[r, red, "\beta= \beta_1\otimes \beta_2"] &[3em] \big( V_j \otimes S_{u_1}[N-2] \arrow[r, "Id_{V_j} \otimes x_{u_1,w_2}"] &[3em] V_j \otimes S_{w_2}[N-2] \big).
		\end{tikzcd}
	\end{equation}	
	Let $\alpha_1$ and $\beta_1$ be the linear map parts of $\alpha$ and $\beta$, and let $\alpha_2$ and $\beta_2$ be one of $x, y, z$ or the identity morphisms. 
	Then, 
	\[(\alpha_2, \beta_2) = (z_{u_1}, e_{u_1}), (e_{u_1}, z_{u_1}), \text{  or  } (x_{u_1, w_3}, y_{w_3, u_1}) \text{  for a  } w_3 \sim u_1.\]
	It is because the composition of $\alpha_2$ and $\beta_2$ should be $z_{u_1}$. 
	
	We recall that $E$ has no identity morphisms and no $y$-morphisms. 
	Then, thanks to Lemmas \ref{lem x-morphism}--\ref{lem the zero morphism}, $\alpha_2$ and $\beta_2$ cannot be an identity morphism. 
	Thus, $(\alpha_2, \beta_2) =  (x_{u_1, w_3}, y_{w_3, u_1})$ for a $w_3 \sim u_1$.
	It implies that there exists $\ell \in (i,j)$ such that $X_\ell = V_\ell \otimes S_{w_3}$ and $\alpha$ is a part of $G_{\ell,i}$. 
	
	We recall that $E$ is well-ordered, $X_i = V_i \otimes S_{w_1}$, $X_\ell = V_\ell \otimes S_{w_3}$, and $w_1, w_3 \in V_-$. 
	Thus, for any $k \in [i, \ell]$, $X_k = V_k \otimes S_{v_k}[d_k]$ should satisfy that $v_k \in V_-, d_k =0$. 
	It implies that every arrow between $X_{\overline{K}}$ and $X_{k_2}$ with $\overline{K}, k_2 \in [i, \ell]$ is the zero morphism. 
	In particular, there exists no nonzero chain of arrows from $X_i$ to $X_\ell$. 
	
	It follows that $G_{\ell,i}: \tau_{u_1}(X_i) \to \tau_{u_1}(X_\ell)$ is the zero morphism. 
	Thus, $\alpha$ should be the zero morphism.
	It contradicts to that $\alpha_2 = x_{u_1,w_3}$. 
	Thus, Case (i) cannot happens. 
	\vskip0.2in
	
	\noindent {\em Case (ii).\ }$S_{v_j}[d_j] = S_{u_1}[2N-3]$:
	The arguments for the case (i) still work for the case (ii), and the case (ii) also cannot happen. 
	Since the argument is almost the same to the case (i), we skip the case (ii).
	\vskip0.2in
	
	\noindent {\em Case (iii).\ } $S_{v_j}[d_j] = S_{u_2}[N-2]$ for some $u_2 \in V_+$ such that $u_2 \neq u_1$: 
	For this case, we consider a subcomplex $E_0$ of $E$ given as 
	\[E_0 :=\left(\bigoplus_{k=i}^j X_k, F_{m,\ell} \right).\]
	It is trivial that $E_0$ is also minimal and well-ordered. 
	
	Let $\ell$ be the maximum integer in $[i,j]$ such that $X_\ell = V_\ell \otimes S_w$ for some $w \in V_-$. 
	We point out that since $X_i = V_i \otimes S_{w_1}$, $\ell \geq i$. 
	Then, because $E_0$ is well-ordered, it is easy to check that 
	\[E_0 \simeq \Big(
	\begin{tikzcd}[column sep=small]
		X_i \oplus \cdots \oplus X_\ell \ar[r] & X_{\ell+1} \ar[r] & \cdots \ar[r] & X_j
	\end{tikzcd} \Big).\]
	We omit all long arrows for convenience. 
	
	We define another subcomplex $E_1$ as follows: 
	Let 
	\[E_1 := \left(\bigoplus_{k=\ell +1}^{j-1} X_k, F_{m,\ell} \right).\]
	Then, up to equivalence, we can write $E_0$ as the following twisted complex with three terms:
	\[E_0 \simeq \Big(
	\begin{tikzcd}
		X_i \oplus \cdots \oplus X_\ell \ar[r, "\alpha"] \ar[rr, bend left, "\gamma"] & E_1 \ar[r, "\beta"] & X_j
	\end{tikzcd} \Big).\]
	In the rest of the proof, we will show that $\alpha, \beta$ and $\gamma$ should be the zero maps.
	
	First, $\alpha$ should be the zero map because 
	\begin{itemize}
		\item if $k \in [i, \ell]$, $X_k = V_k \otimes S_w$ for some $w \in V_-$, and
		\item if $k \in [\ell+1, j-1]$, then $X_k = V_k \otimes S_{v_k}[d_k]$ with $d_k \in [1,N-2]$. 
	\end{itemize}
	The second item holds since $E_0$ is well-ordered. 
	To be more precise, we note that $\alpha$ is determined by the arrows from $X_{\overline{K}}$ to $X_{k_2}$ with $\overline{K} \in [i, \ell], k_2 \in [\ell+1, j-1]$. 
	The arrow from $\overline{K}$ to $k_2$ is an element of 
	\[\mathrm{Mor}\left(V_{\overline{K}}, V_{k_2}\right) \otimes \shom_{\Fuk}^1\left(S_w, S_{v_{k_2}}[d_{k_2}]\right).\]
	Since 
	\[\shom_{\Fuk}^1\left(S_w, S_{v_{k_2}}[d_{k_2}]\right) = \shom_{\Fuk}^{1+d_{k_2}}\left(S_w, S_{v_{k_2}}\right) \text{  and  } 1+d_{k_2} \in [2, N-1],\]
	the nonzero morphism should be a power of a $y$-morphism. 
	Since $E_0$ does not have any $y$-morphism, there exists no nonzero arrow, and it concludes that $\alpha$ is the zero map.
	
	Similarly, one can observe that $\beta$ is the zero map. 
	To show that, we would like to point out that, for all $\ell < k < j$, $F_{j,k} =0$ because $X_k$ is shifted at least by $1$ and $X_j = V_j \otimes S_{u_2}[N-2]$ with $u_2 \in V_+$. 
	Note that if a morphism whose target is $S_{u_2}$ is not the zero, identity, or $y$-morphism, then it should be a $z$-morphism and graded by $N$. 
	Then, similar to $\alpha$, every map from $X_k$ with $k \in [\ell+1, j-1]$ to $X_j$ is the zero map, and it induces that $\beta =0$. 

	Finally, we can consider the arrow from $X_i \oplus \cdots \oplus X_\ell$ to $X_j$. 
	The degree arguments we used for $\alpha$ and $\beta$ also show that the arrow should be the zero morphism.
	Thus, 
	\begin{equation}
		\label{eqn 5}
		E_0 \simeq \Big(
		\begin{tikzcd}
			X_i \oplus \cdots \oplus X_\ell \ar[r, "0"] \ar[rr, bend left, "0"] & E_1 \ar[r, "0"] & X_j
		\end{tikzcd} \Big) = X_i \oplus \cdots \oplus X_\ell \oplus E_1 \oplus X_j.
	\end{equation}

	Now, when we apply Lemma \ref{lem induced functor} to the both sides of Equation \eqref{eqn 5}, we have
	\[\tau_{u_1}(E_0) \simeq \tau_{u_1}(X_i)\oplus \cdots \oplus \tau_{u_1}(X_\ell) \oplus \tau_{u_1}(E_1) \oplus \tau_{u_1}(X_j).\]
	On the right-handed side, $G_{j,i}$ should be the zero map.
	Thus, there exists $G_{j,i}$ satisfying the condition (B) of Lemma \ref{lem y-morphism}.
\end{proof}

Now, we recall that the twisted complex in Lemma \ref{lem y-morphism} (A), i.e., $tau E = \left(\bigoplus_{i \in I} \tau(X_i), (G_{j,i}) \right)$, is not a good twisted complex. 
In other words, the given twisted complex is not built from $\{S_v|v \in V(T)\}$, but it is built from $\{\tau(S_v)|v \in V(T)\}$.
However, it is easily converted to a good twisted complex since 
\begin{equation}
	\label{eqn PhiE}
	\begin{split}
		\tau(X_i) &= \tau(V_i \otimes S_{v_i}[d_i]) = V_i \otimes \tau(S_{v_i}[d_i]), \\
		\tau(S_{v_i}[d_i]) &= 
		\begin{cases}
			S_{v_i}[d_i], \\ S_u[d_i] \xrightarrow{x_{u,w}} S_w[d_i] \text{  for some  } u, w \in V(T). 
		\end{cases}
	\end{split}
\end{equation}
More precisely, one can get a good twisted complex by replacing $\tau(X_i)$ with 
\[V_i \otimes S_{v_i}[d_i] \text{  or  } \left(V_i \otimes S_u[d_i] \bigoplus V_i \otimes S_w[d_i], Id_{V_i} \otimes x_{u,w}\right).\] 
Moreover, by Lemmas \ref{lem x-morphism}-\ref{lem the zero morphism}, \ref{lem y-morphism} and Equation \eqref{eqn PhiE}, the good twisted complex equivalent to $\tau(E)$ has no identity morphisms and $y$-morphism as an arrow. 
Thus, it proves Lemma \ref{lem PhiE}.

\begin{lem}
	\label{lem PhiE}
	\mbox{}
	\begin{enumerate}
		\item If \[E =\left(\bigoplus_{i \in I} V_i \otimes S_{v_i}[d_i], (F_{j,i}=\psi_{j,i} \otimes f_{j,i}) \right)\]
		is a minimal twisted complex such that $f_{j,i}$ is not a $y$-morphism for all $i, j \in I$, then 
		\[\Phi E = \left(\bigoplus_{i \in I} V_i \otimes \Phi(S_{v_i}[d_i]), (G_{j,i}) \right)\]
		also has no identity morphisms and $y$-morphisms as arrows. 
		\item Let a nonzero object $E \in \Fuk$ be categorically carried-by, and let $\Phi$ be an autoequivalence of Penner type such that $\Phi$ is a product of 
		\[\left\{\tau_u, \tau_w^{-1} | u \in V_+(T), w \in V_-(T)\right\}.\]
		Then, $\Phi E$ is also categorically carried-by.
	\end{enumerate} 
\end{lem}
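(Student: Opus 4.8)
The two assertions are really one statement: by Definition \ref{def categorically carried by 2} an object is categorically carried-by exactly when it is equivalent to a \emph{minimal} good twisted complex none of whose arrows is a $y$-morphism. So the plan is to prove (1) carefully enough that the twisted complex it produces can be rewritten as a good twisted complex with the same "no identity, no $y$'' property, and then (2) drops out. The whole argument is an assembly of the work already done in Lemmas \ref{lem x-morphism}--\ref{lem the zero morphism} and Lemma \ref{lem y-morphism}; no new computation is needed.

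For (1), I would first reduce to a single Dehn twist and induct on the length of the product $\Phi = \tau^{(1)}\circ\cdots\circ\tau^{(k)}$, each $\tau^{(r)}\in\{\tau_u,\tau_w^{-1}\mid u\in V_+(T),\,w\in V_-(T)\}$. For the base case $k=1$, set $\tau=\tau^{(1)}$. By Lemma \ref{lem minimal twisted complex}(i) I may assume $E$ is in addition well-ordered, since reordering does not alter the nonzero arrows and hence preserves the hypothesis that $E$ has no $y$-arrow. Because $E$ is minimal and has no $y$-arrow, every nonzero arrow $f_{j,i}$ of $E$ is an $x$-, a $z$-, or the zero morphism; applying Lemma \ref{lem x-morphism}, Lemma \ref{lem z-morphism} and Lemma \ref{lem the zero morphism} respectively shows that in $\tau(E)=\bigl(\bigoplus_i \tau(X_i),(G_{j,i})\bigr)$ (Lemma \ref{lem induced functor}) no $G_{j,i}$ contains an identity morphism, while Lemma \ref{lem y-morphism} shows that no $G_{j,i}$ contains a $y$-morphism either. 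Then, using Equation \eqref{eqn PhiE}, I would replace each $\tau(X_i)$ by its good form, either $V_i\otimes S_{v_i}[d_i]$ or $\bigl(V_i\otimes S_u[d_i]\oplus V_i\otimes S_w[d_i],\,Id_{V_i}\otimes x_{u,w}\bigr)$: the only arrows newly created this way are the $x$-morphisms $Id_{V_i}\otimes x_{u,w}$, and splitting $G_{j,i}$ into its matrix components relative to these direct sums yields arrows each of which is one of the component morphisms of $G_{j,i}$, hence an $x$-, $z$-, or zero morphism. Thus the good twisted complex equivalent to $\tau(E)$ has no identity arrow — so it is automatically minimal — and no $y$-arrow. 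For the inductive step, write $\Phi=\tau^{(1)}\circ\Psi$; by induction $\Psi E$ is equivalent to a minimal good twisted complex with no $y$-arrow, which by Lemma \ref{lem minimal twisted complex}(i) may be taken well-ordered, and the $k=1$ case applied to $\tau^{(1)}$ concludes.

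Finally, (2) is immediate: given $E$ categorically carried-by, choose a minimal good twisted complex representing it with no $y$-arrow; part (1) gives a good twisted complex equivalent to $\Phi E$ with no identity and no $y$-arrow, which (being minimal) exhibits $\Phi E$ as categorically carried-by. The only genuine point to watch — and the place where all the real content has been pushed — is that the passage from $\bigl(\bigoplus_i\tau(X_i),(G_{j,i})\bigr)$ to an honest good twisted complex, followed by the reordering needed to keep the induction running, never secretly introduces an identity or $y$-arrow; this is controlled by the component analysis above together with the "same nonzero arrows'' clause of Lemma \ref{lem minimal twisted complex}(i). Everything else is bookkeeping.
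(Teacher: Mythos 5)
Your proof is correct and follows essentially the same route as the paper: for a single twist $\tau$, combine Lemma \ref{lem y-morphism} (no $y$-arrows in $G_{j,i}$) with Lemmas \ref{lem x-morphism}--\ref{lem the zero morphism} (no identity arrows), convert $\tau(E)$ to a good twisted complex via Equation \eqref{eqn PhiE} noting that the only arrows introduced are of the form $\mathrm{Id}_{V_i}\otimes x_{u,w}$, and then iterate over the factors of $\Phi$. The paper leaves that iteration implicit; you make it an explicit induction on the length of the word for $\Phi$, re-minimalizing and re-ordering after each step via Lemma \ref{lem minimal twisted complex}(i), which is a clean way to handle the flattening that occurs when $\Phi(S_v)$ is itself a nontrivial twisted complex.
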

When one applies Lemma \ref{lem PhiE} for a categorically carried-by $E$, one can prove the step 1 of the sketch of proof of Theorem \ref{thm pseudo-Anosov}, described in Section \ref{subsection sketch of the proof}.

\subsection{Step 2 of the sketch of proof of Theorem \ref{thm pseudo-Anosov}}
\label{subsection step 2 of the sketch of proof}
Let $E$ be a nonzero object of $\Fuk$ such that $E$ is categorically carried-by.
Lemma \ref{lem PhiE} guarantees that $\Phi^n E$ also categorically carried-by if $\Phi$ is of Penner type. 

Let us recall that we fixed a stability condition $\sigma_0 \in \stab^\dagger(\Fuk)$ in Section \ref{subsection stability conditions on Fuk}.
In Section \ref{subsection step 2 of the sketch of proof}, we introduce a tool that helps us to keep track of $m_{\sigma_0}(\Phi^n E)$ as $n$ increases where $E$ is categorically carried-by.

First, we convert an arbitrary object $X \in \Fuk$ as a vector.
\begin{definition}
	\label{def vec}
	Let $X \in \Fuk$. 
	Then, {\bf $\boldsymbol{\vect(X)}$} means a real-valued vector with $|V(T)|$-many components, i.e., $\vect(X) \in \mathbb{R}^{|V(T)|}$, given as 
	\[\vect(X) = \left(\dim \lhom^*_{\wrapped}(L_v,X)\right)_{v \in V(T)}.\]
\end{definition}

We recall that $\sigma_0 = \left(Z_0, P_0\right)$ satisfies that
\begin{itemize}
	\item every semistable object of phase $\tfrac{1}{2}$ (resp.\ $1$) is a direct sum of $\left\{S_u | u \in V_+(T)\right\}$ (resp.\ $\left\{S_w | w \in V_-(T)\right\}$), and
	\item for every $v \in V(T)$, $|Z_0(S_v)| =1$. 
\end{itemize}

\begin{remark}
	\label{rmk another vector}
	Before going further, let us assign another vector $\vect_0(E)$ to a twisted complex $E$ as follows: 
	\[\vect_0(E) := \left(\dim \shom_{\wrapped}^*\left(L_v, E\right)\right)_{v \in V(T)}.\]
	We compare two vectors $\vect(E)$ and $\vect_0(E)$ in the remark, and the comparison will elucidate why we care the notion of categorically carried-by. 
	The biggest difference is that $\vect(E)$ is defined from $\lhom_{\wrapped}^*$, but $\vect_0(E)$ is defined from $\shom_{\wrapped}^*$.
	Since the first is an invariant of quasi-equivalence classes, $\vect(E)$ is an invariant of equivalence classes, but $\vect_0(E)$ is not. 
	On the other hand, $\vect_0(E)$ has the following advantage compared to $\vect(E)$: 
	For a twisted complex $E$, let $\Phi E$ denote the twisted complex obtained by applying Lemma \ref{lem induced functor} to $E$. 
	Then, one can easily find a matrix $M_\Phi$ such that 
	\begin{gather}
		\label{eqn advantage}
		\vect_0(\Phi E) = M_\Phi \vect_0(E).
	\end{gather}
	Thus, one can keep track of $\vect_0(\Phi^n E) = M_\Phi^n \vect_0(E)$ through linear algebra. 
	In the later part of this subsection, we will show that if $E$ is categorically carried-by, then $\vect(E) = \vect_0(E)$.
\end{remark}

If
\[E =\left(\bigoplus_{i \in I} V_i \otimes S_{v_i}[d_i], (F_{j,i}=\psi_{j,i} \otimes f_{j,i}) \right)\]
is a minimal twisted complex, Equation \eqref{eqn m_sigma(X)} implies that 
\begin{align}
	\label{eqn vector for minimal}
	\begin{split}
		m_{\sigma_0}(E) &= \sum_{i \in I} \dim V_i \\
		&= \sum_{v \in V(T)} \dim \shom^*_{\wrapped}(E,L_v) \\
		&= \sum_{v \in V(T)} \dim \lhom^*_{\wrapped}(E,L_v) (\because \text{  Lemma \ref{lem zero differential for minimal}}) \\
		&= \parallel \vect(E) \parallel_1,
	\end{split}
\end{align}
where $\parallel \cdot \parallel_1$ means the $L_1$-norm of a vector, i.e., the sum of the absolute values of all components. 

We note that by Lemma \ref{lem minimal twisted complex}, for any $E \in \Fuk$, there exists a minimal twisted complex equivalent to $E$. 
This, one can conclude that $m_{\sigma_0}(E) = \parallel \vect(E) \parallel_1$ for any $E \in \Fuk$. 
Thus, in order to keep track of $m_{\sigma_0}(\Phi^n E)$, it is enough to keep track of the changes on vectors $\vect(\Phi^n E)$. 

Now, we prove that $\vect\left(\tau(E)\right)$ can be obtained by multiplying a matrix to $\vect(E)$, if $E$ is categorically carried-by. 
To prove that, we define the following:
\begin{definition}
	\label{def matrix}
	For all $v \in V(T)$, let {\bf $\boldsymbol{M_v}$} be the $|V(T)|$-by-$|V(T)|$ matrix given by 
	\[M_v = Id + \overline{M}_v,\]
	where the $(v_1, v_2)$ component $m_{v_1,v_2}$ of $\overline{M}_v := \left(m_{v_1,v_2}\right)_{v_i \in V(T)}$ is given by 
	\[m_{v_1,v_2} = \begin{cases}
		1 \text{  if  } v_1 = v, v_2 \sim v, \\
		0 \text{  otherwise}.
	\end{cases}\] 
\end{definition} 

\begin{remark}
	\label{rmk determinant}
	From Definition \ref{def matrix}, one can easily check that the determinant of $M_v$ is $1$ for all $v \in V(T)$. 
\end{remark}

Let $E$ be a categorically carried-by object of $\Fuk$. 
Then, by definition, $E$ is equivalent to a minimal, well-ordered twisted complex such that there exists no $y$-morphism as an arrow.
For convenience, let $E = \left(\bigoplus_i X_i = V_i \otimes S_{v_i}[d_i], F_{j,i} = \psi_{j,i} \otimes f_{j,i}\right)$ denote such a twisted complex. 

Now, for any $\tau \in \left\{\tau_u, \tau_w^{-1} | u \in V_+(T), w \in V_-(T)\right\}$, by Lemma \ref{lem y-morphism}, $\tau(E)$ is equivalent to a twisted complex 
\[\tau(E) \simeq \left(\bigoplus_i \tau(X_i), G_{j,i}\right),\]
such that $G_{j,i}$ does not contain any identity or $y$-morphisms.
Thus, one can obtain a minimal, well-ordered twisted equivalent to $\tau(E)$ and having no $y$-morphism as an arrow, by rearranging the above twisted complex. 
For convenience, let $\tau(E)$ denote the twisted complex we obtain after the rearrangement. 

As one can see in Equation \eqref{eqn vector for minimal}, one can easily see that 
\begin{gather}
	\label{eqn vector for minimal 2}
	\vect\left(E\right) = \left(\dim\hom^*_{\wrapped}\left(E, L_v\right)\right)_{v \in V(T)}, \vect\left(\tau(E)\right) = \left(\dim\hom^*_{\wrapped}\left(\tau(E), L_v\right)\right)_{v \in V(T)},
\end{gather}
from the minimality of $E$ and $\tau(E)$. 
Moreover, from Lemma \ref{lem y-morphism}, we see that every component of the twisted complex $\tau(E)$ comes from $\tau(X_i)$ where $E =\left(\oplus_{i \in I} X_i, F_{j,i}\right)$. 
Thus, we have
\begin{gather}
	\label{eqn linear algebra for small hom}
	\left(\dim \hom^*_{\wrapped}\left(\tau(E), L_v\right)\right)_{v \in V(T)}  = \begin{cases} M_u \cdot \left(\dim\hom^*_{\wrapped}\left(E, L_v\right)\right)_{v \in V(T)} \text{  if  } \tau = \tau_u \text{  for some  } u \in V_+(T), \\
		M_w \cdot \left(\dim\hom^*_{\wrapped}\left(E, L_v\right)\right)_{v \in V(T)} \text{  if  } \tau = \tau^{-1}_w \text{  for some  } w \in V_-(T).
	\end{cases}
\end{gather} 

When one combines Equations \eqref{eqn vector for minimal 2} and \eqref{eqn linear algebra for small hom}, one can conclude that, if $E$ is categorically carried-by, then 
\[\vect(\tau(E)) =
	\begin{cases} M_u \cdot \vect(E) \text{  if  } \tau = \tau_u \text{  for some  } u \in V_+(T), \\
		M_w \cdot \vect(E) \text{  if  } \tau = \tau^{-1}_w \text{  for some  } w \in V_-(T).
\end{cases}\]

Similarly, the following theorem holds:
\begin{lem}
	\label{lem linear algebra}
	Let $\Phi: \Fuk \to \Fuk$ be of Penner type.
	Then, there exists a matrix $M_{\Phi}$ depending only on $\Phi$ such that, if an object $E \in \Fuk$ is a categorically carried-by, then
	\begin{gather}
		\label{eqn linear algebra}
		\vect(\Phi E) = M_\Phi \cdot \vect(E).
	\end{gather}
	In particular, 
	\[m_{\sigma_0}(\Phi E) = \parallel \vect(\Phi E)\parallel_1 = \parallel M_\Phi \cdot \vect(E) \parallel_1 = m_{\sigma_0}(E).\]
\end{lem}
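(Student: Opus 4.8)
The plan is to argue by induction on the length of the word presenting $\Phi$. Since $\Phi$ is of Penner type, write $\Phi = \tau_{c_k} \circ \tau_{c_{k-1}} \circ \cdots \circ \tau_{c_1}$, where each $\tau_{c_i}$ belongs to $\left\{\tau_u, \tau_w^{-1} \mid u \in V_+(T), w \in V_-(T)\right\}$, and to the letter $\tau_{c_i}$ associate the matrix $M_{c_i}$ of Definition \ref{def matrix} (namely $M_u$ if $\tau_{c_i} = \tau_u$ and $M_w$ if $\tau_{c_i} = \tau_w^{-1}$). Then I would set
\[
M_\Phi := M_{c_k} M_{c_{k-1}} \cdots M_{c_1},
\]
which depends only on $\Phi$ (equivalently, on the fixed word presenting it).

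First I would dispose of the base case $k = 1$: this is precisely the computation carried out in the paragraphs immediately preceding the lemma, where combining \eqref{eqn vector for minimal 2} with \eqref{eqn linear algebra for small hom} shows that for any categorically carried-by $E$ and any single twist $\tau_c$ one has $\vect(\tau_c(E)) = M_c \cdot \vect(E)$. For the inductive step, put $\Phi' := \tau_{c_{k-1}} \circ \cdots \circ \tau_{c_1}$, which is again a product of the allowed twists, and assume $\vect(\Phi' E) = M_{\Phi'} \cdot \vect(E)$ for every categorically carried-by $E$, with $M_{\Phi'} = M_{c_{k-1}} \cdots M_{c_1}$. Given a categorically carried-by $E$, Lemma \ref{lem PhiE}(2) applied to $\Phi'$ guarantees that $\Phi' E$ is still categorically carried-by; hence the base case applies to the single twist $\tau_{c_k}$ and the object $\Phi' E$, yielding
\[
\vect(\Phi E) = \vect\bigl(\tau_{c_k}(\Phi' E)\bigr) = M_{c_k} \cdot \vect(\Phi' E) = M_{c_k} M_{\Phi'} \cdot \vect(E) = M_\Phi \cdot \vect(E),
\]
which completes the induction and proves the first assertion.

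For the concluding identity, I would invoke the mass formula $m_{\sigma_0}(X) = \parallel \vect(X) \parallel_1$, valid for every object $X \in \Fuk$: this follows from \eqref{eqn vector for minimal} (which handles minimal twisted complexes, using Lemma \ref{lem zero differential for minimal} to pass between $\hom_{\wrapped}$ and $\lhom_{\wrapped}$), together with Lemma \ref{lem minimal twisted complex}, which says every object is equivalent to a minimal one, and the fact that $\vect$, being defined through $\lhom_{\wrapped}$, is an invariant of the equivalence class. Applying this to $X = \Phi E$ and substituting the first assertion gives $m_{\sigma_0}(\Phi E) = \parallel \vect(\Phi E) \parallel_1 = \parallel M_\Phi \cdot \vect(E) \parallel_1$.

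The only point requiring genuine care is the bookkeeping in the inductive step: one must know that each intermediate object $\tau_{c_j} \circ \cdots \circ \tau_{c_1}(E)$ remains categorically carried-by, so that the single-step matrix formula is legitimately applicable at every stage, and one must compose the elementary matrices in the order matching the composition of twists. Both are handled — the first by Lemma \ref{lem PhiE}(2) (which applies to any sub-product of the allowed twists), the second by writing out the composition explicitly — so no new difficulty arises here; the substantive work was already carried out in Lemmas \ref{lem x-morphism}--\ref{lem the zero morphism}, Lemma \ref{lem y-morphism}, and the single-twist computation preceding the present lemma.
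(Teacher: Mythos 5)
Your proposal is correct and takes essentially the same approach as the paper's, which compresses the whole argument into a single sentence ("when we apply the above argument in an iterative manner"); you simply make the iteration explicit via induction on the word length, which is exactly the right way to spell it out. The one point you rightly flag in passing — that Lemma \ref{lem PhiE}(2) is stated for Penner-type $\Phi$ but is needed for sub-products $\Phi'$ that may fail condition (II) of Definition \ref{def Penner type} — is harmless, since the preservation of the categorically-carried-by property really rests on Lemma \ref{lem y-morphism}, which holds for each individual twist and hence for any finite product of the allowed twists.
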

\begin{proof}
	We note that $\Phi$ is a product of 
	\[\left\{\tau_u, \tau_w^{-1} | u \in V_+(T), w \in V_-(T)\right\}.\]
	Thus, when we apply the above argument in an iterative manner, we have a matrix $M_\Phi$ satisfying Equation \eqref{eqn linear algebra}, as a product of $\{M_v | v \in V(T)\}$.
\end{proof}

\begin{remark}
	\label{rmk computation of the matrix}
	\mbox{}
	\begin{enumerate}
		\item From the construction of $\Phi$, one can easily see that $(v_1,v_2)$-component of $M_\Phi$ is computed as 
		\[\dim \lhom_{\wrapped}^*\left(L_{v_2},\Phi S_{v_1}\right).\]
		Especially, every component of $M_\Phi$ is a nonnegative integer. 
		\item We would like to point out that $\vect(E)$ and $M_\Phi$ are dependent on the choice of sign conventions since their definition are depending on the fixed object $S_v$ or $L_v$ for $v \in V(T)$. 
		We also point out that the Penner type autoequivalence $\Phi$ is a product of $\{\tau_u, \tau_w^{-1} | u \in V_+(T), w \in V_-(T)\}$. 
		Thus, if we would like to study $\Phi^{-1}$, we need to choose the other sign convention and it changes the definitions of $\vect$ and $M_{\Phi^{-1}}$. 
	\end{enumerate}
\end{remark}

\subsection{Step 3 of the sketch of proof of Theorem \ref{thm pseudo-Anosov}}
\label{subsection step 3 of the sketch of proof}
In the subsection, we prove the step 3 of the sketch of proof of Theorem \ref{thm pseudo-Anosov}, given in Section \ref{subsection sketch of the proof}.
In other words, we prove Theorem \ref{thm without y morphism}.
The proof of Theorem \ref{thm without y morphism} consists of two parts.
In the first part, we show that there exists a real number $\lambda_\Phi$ depending only on $\Phi$ such that if $E \in \Fuk$ is categorically carried-by, then
\[\lim_{n \to \infty} \tfrac{1}{n}\log m_{\sigma_0}(\Phi^n E) = \lambda_\Phi.\]
The second part is to prove that $\lambda_\Phi$ is strictly bigger than $1$. 

\begin{proof}[Proof of Theorem \ref{thm without y morphism}, the first part]
	First, we observe that 
	\begin{gather}
		\label{eqn base} 
		\lim_{n \to \infty} \tfrac{1}{n}\log m_{\sigma_0}(\Phi^n E) = \lim_{n \to \infty} \tfrac{n}{n+K} \tfrac{1}{n}\log m_{\sigma_0}\left(\Phi^n(\Phi^K E)\right) = \lim_{n \to \infty} \tfrac{1}{n}\log m_{\sigma_0}\left(\Phi^n(\Phi^K E)\right),
	\end{gather}
	for any $K \in \mathbb{N}$.
	
	We also note that if $K \geq |V(T)|$, then 
	 \begin{enumerate}
		\item[(i)] Definition \ref{def Penner type}, (II) and Lemma \ref{lem simple computation} imply that, for any $v \in V(T)$, 
		\[\dim \lhom^*_{\wrapped}\left(L_v, \Phi^K E\right) \geq 1.\]
		In other words, every component of $\vect(\Phi^K E)$ is a positive integer. 
	\end{enumerate}	
	Now, we would like to replace $E$ with $\Phi^K E$ based on Equation \eqref{eqn base} if we needed, thus we can assume that every component of $\vect(E)$ is a positive integer. 
	
	Without loss of generality, let $E = \left(\bigoplus_{i \in I} X_i = V_i \otimes S_{v_i}[d_i], F_{j,i} = \psi_{j,i} \otimes f_{j,i}\right)$ be a minimal twisted complex such that $E$ does not have any $y$-morphism as an arrow. 
	Then, it is easy to check the following facts:
	\begin{enumerate}
		\item[(ii)] For any $i \in I$, $X_i = V_i \otimes S_{v_i}[d_i]$ can be seen as a twisted complex with only one component.
		Thus, $X_i$ is an object of $\Fuk$, which is categorically carried-by.
		By applying Lemmas \ref{lem PhiE} and \ref{lem linear algebra} iteratively, one can observe that for any $n \in \mathbb{N}$, $\Phi^n X_i$ is categorically carried-by and \[\vect(\Phi^n X_i) = M_\Phi^n \cdot \vect(X_i).\]
	 	Thus, one can apply Lemmas \ref{lem PhiE} and \ref{lem linear algebra} for all $X_i$. 
	 	\item[(iii)] Since $E$ is categorically carried-by, so does $\Phi^n E$ for any $n \in \mathbb{N}$.  
	 	Thus, by applying Lemma \ref{lem PhiE} iteratively, one has $\vect(\Phi^n E) = M_\Phi^n \cdot \vect(E)$. 
	 	Moreover, since $E$ is a minimal twisted complex, we have
	 	$\vect(E) = \sum_{i \in I} \vect(X_i)$. 
	 	Thus, for any $n \in \mathbb{N}$,
	 	\[\vect(\Phi^n E) = M_\Phi^n \cdot \vect(E) = \sum_{i \in I} M_\Phi^n \cdot \vect(X_i) = \sum_{i \in I}\vect(\Phi^m X_i).\]
	  	Moreover, $\parallel\vect(\Phi^n E) \parallel_1= \parallel\sum_{i \in I} \vect(\Phi^n X_i)\parallel_1$ because every component of $\vect(\Phi^n X_i) = M_\Phi^n \cdot \vect(X_i)$ is non-negative.
	 	See (i) and Remark \ref{rmk computation of the matrix}.
	 	\item[(iv)] From (iii), for any $i_0 \in I$, 
	 	\[\parallel\vect(\Phi^nX_{i_0}) \parallel_1 \leq \parallel\vect(\Phi^nE) \parallel_1= \parallel\sum_{i \in I} \vect(\Phi^nX_i)\parallel_1.\]
	 	By taking $\lim_{n \to \infty} \tfrac{1}{n} \log$, one has 
	 	\[\lim_{n \to \infty}  \tfrac{1}{n}\log\parallel\vect(\Phi^nX_{i_0}) \parallel_1 \leq \lim_{n \to \infty} \tfrac{1}{n} \log \parallel\vect(\Phi^nE) \parallel_1= \lim_{n \to \infty} \tfrac{1}{n} \log \parallel\sum_{i \in I} \vect(\Phi^nX_i)\parallel_1.\]
	 	\item[(v)] Finally, by (i), (iv),
	 	\begin{gather}
	 		\label{eqn equality}
	 		\lim_{n \to \infty} \tfrac{1}{n}\log \parallel\vect(\Phi^nE)\parallel_1 = \max \left\{\lim_{n \to \infty} \tfrac{1}{n}\log \parallel M_\Phi^n \cdot \vec{e}_v \parallel_1 \Bigg\vert v \in V(T)\right\}=:\lambda_\Phi,
	 	\end{gather}
		where $\vec{e}_v$ is a vector in $\mathbb{R}^{|V(T)|}$ such that for any $w \in V(T)$, $w$-component of $\vec{e}_v$ is $1$ if $w =v$ and $0$ if $w \neq v$.
	 \end{enumerate}
 
 	The constant $\lambda_\Phi$ in Equation \eqref{eqn equality} does not depend on the choice of $E_0$ and only depends on $\Phi$.
 	Thus, Equation \eqref{eqn equality} completes the first part of the proof of Theorem \ref{thm without y morphism}.
\end{proof}

In order to prove the second part of the proof, we state the {\em Perron--Frobenius} Theorem in matrix theory.
\begin{thm}[Perron-Frobenius Theorem \cite{Perron1907, Frobenius1912}]
	\label{thm Perron Frobenius}
	Let $M$ be a real square matrix with positive entries. 
	Then, $M$ has a simple largest real eigenvalue, i.e., the {\em spectral radius}, or also called the {\em Perron root} or the {\em Perron-Frobenius eigenvalue}.
	Moreover, the eigenvector corresponding to the unique largest eigenvalue can be chosen to have positive real components.
\end{thm}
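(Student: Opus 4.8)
The statement is the classical Perron theorem for strictly positive matrices, so the plan is to run the standard four-stage argument. Write $n$ for the size of $M$, let $\Delta=\{x\in\mathbb{R}^n: x_i\ge 0,\ \sum_i x_i=1\}$ be the standard simplex, and write $x>0$ (resp.\ $x\ge 0$) for the componentwise order on vectors. First I would produce a strictly positive eigenvector. Since every entry of $M$ is positive, for $x\in\Delta$ the vector $Mx$ has all entries positive, so the map $T\colon\Delta\to\Delta$, $T(x)=Mx/(\mathbf{1}^\top Mx)$ with $\mathbf{1}$ the all-ones vector, is well defined and continuous; as $\Delta$ is compact and convex, Brouwer's fixed-point theorem supplies $p\in\Delta$ with $Mp=\lambda p$ and $\lambda:=\mathbf{1}^\top Mp>0$, and then $p=\lambda^{-1}Mp>0$. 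Running the same argument for $M^\top$ gives a strictly positive left eigenvector $w>0$, $w^\top M=\lambda'w^\top$ with $\lambda'>0$; evaluating $w^\top Mp$ in two ways and using $w^\top p>0$ forces $\lambda'=\lambda$.

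Second, I would identify $\lambda$ with the spectral radius. If $Mv=\mu v$ with $v\ne 0$, then $|\mu|\,|v_i|=\bigl|\sum_j M_{ij}v_j\bigr|\le\sum_j M_{ij}|v_j|=(M|v|)_i$, i.e.\ $|\mu|\,|v|\le M|v|$ componentwise; pairing with $w>0$ gives $|\mu|\,w^\top|v|\le w^\top M|v|=\lambda\,w^\top|v|$, and since $w^\top|v|>0$ we get $|\mu|\le\lambda$. As $\lambda$ is itself an eigenvalue, $\lambda=\rho(M)$. Tracking the equality case of the triangle inequality --- for each $i$ all the numbers $M_{ij}v_j$ must share a common complex phase, and since $M_{ij}>0$ this forces $v$ to be a unimodular scalar times a nonnegative vector --- one further sees that $|\mu|=\lambda$ already forces $\mu=\lambda$, so $\lambda$ is the unique eigenvalue of maximal modulus.

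Third I would prove simplicity. For geometric simplicity: if $Mq=\lambda q$ with $q$ real (reducing to this case by taking real and imaginary parts), choose the largest $t$ with $p-tq\ge 0$; then $p-tq$ has a vanishing component yet satisfies $M(p-tq)=\lambda(p-tq)$, and if it were nonzero it would equal $\lambda^{-1}M(p-tq)>0$, a contradiction, so $q$ is a scalar multiple of $p$ and the $\lambda$-eigenspace is one-dimensional. For algebraic simplicity: if the Jordan block at $\lambda$ were larger than $1\times 1$ there would be a real vector $u$ with $(M-\lambda I)u=p$; multiplying on the left by $w^\top$ and using $w^\top(M-\lambda I)=0$ yields $0=w^\top p$, contradicting $w,p>0$. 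Hence $\lambda=\rho(M)$ is a simple eigenvalue with a strictly positive eigenvector, which is the assertion. I expect the algebraic-simplicity step to be the one requiring the most care: ruling out a nontrivial Jordan block is exactly the place where having the strictly positive \emph{left} eigenvector $w$ for the \emph{same} eigenvalue is indispensable, which is why Stage~1 is carried out for both $M$ and $M^\top$.
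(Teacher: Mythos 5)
The paper does not prove Theorem~\ref{thm Perron Frobenius}: it is stated as a black box and attributed to the classical sources \cite{Perron1907, Frobenius1912}, so there is no ``paper's own proof'' to compare against. Your argument is a correct and entirely standard proof of Perron's theorem for strictly positive matrices, running the usual chain: Brouwer on the normalized simplex map $T(x)=Mx/\mathbf{1}^\top Mx$ to manufacture a positive right eigenvector $p$ and a positive left eigenvector $w$ with common eigenvalue $\lambda>0$; the componentwise estimate $|\mu|\,|v|\le M|v|$ paired against $w$ to force $|\mu|\le\lambda$ and to extract, from the equality case of the triangle inequality, that $\lambda$ is the only eigenvalue on the spectral circle; the ``slide $p-tq$ until it touches the boundary'' trick for geometric simplicity; and the observation $w^\top(M-\lambda I)=0$ together with $w^\top p>0$ to kill a putative Jordan block of size $\ge 2$. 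Two small points worth tightening if you wanted a publication-ready write-up: in the geometric-simplicity step you should explicitly normalize to the case where $q$ has at least one strictly positive component (replacing $q$ by $-q$ if necessary, and disposing of $q=0$ trivially) so that the supremum of admissible $t$ is finite and attained; and in the equality-case analysis you should spell out that $|\mu|=\lambda$ forces $M|v|=|\mu|\,|v|$ componentwise (because $w>0$ and $M|v|-|\mu|\,|v|\ge 0$), hence $|v|>0$, before invoking the common-phase argument to conclude $v$ is a unimodular multiple of a positive vector. Neither is a gap in the idea; both are just details a referee would want stated. One could alternatively cite Gershgorin-free proofs via the Collatz--Wielandt characterization $\lambda=\max_{x>0}\min_i (Mx)_i/x_i$, which avoids Brouwer and is closer in spirit to how $M_\Phi$ is actually used in the paper (trace estimates, spectral radius bounds), but your route is the more classical one and is perfectly fine.
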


\begin{proof}[Proof of Theorem \ref{thm without y morphism}, the second part]
	We would like to show that $\lambda_\Phi >1$ where $\lambda_\Phi$ is the number defined in Equation \eqref{eqn equality}.
	
	We note that for any $K \in \mathbb{N}$, $M_\Phi$ and $M_\Phi^K$ have the same eigenvectors. 
	And if $\lambda$ is an eigenvalue of $M_\Phi$, then $\lambda^K$ is an eigenvalue of $M_\Phi^K$. 
	Based on this, we will consider the eigenvectors and eigenvalues of $M_\phi^K$ instead of those of $M_\Phi$, for sufficiently large $K$.
	
	We also note that as mentioned in Remark \ref{rmk computation of the matrix}, the $(v_1,v_2)$-component of $M_\Phi^K$ is 
	\[\dim \lhom^*_{\wrapped}\left(L_{v_2}, \Phi^K S_{v_1}\right).\]
	Because of Definition \ref{def Penner type} (II), we know that every entries of $M_\Phi^K$ is a positive integer if $K \geq |V(T)|$.  
	Thus, one can apply Theorem \ref{thm Perron Frobenius} to $M_\Phi^K$.
	Let the unique largest real eigenvalue of $M_\Phi^N$ be called $\mu$, and let $\vec{v}_0$ denote an eigenvector of $M_\Phi^K$ corresponding to $\mu$ such that every entry of $\vec{v}_0$ is a positive real number. 
	
	Now, we observe the following facts:
	\begin{enumerate}
		\item[(i)] Let $\lambda_1, \dots, \lambda_{|V(T)|} \in \mathbb{C}$ be the eigenvalues of $M_\Phi$ such that 
		\[|\lambda_1| \geq |\lambda_2| \geq \dots \geq |\lambda_{|V(T)|}|.\]
		Then, $\lambda_i^K$ is an eigenvalue of $M_\Phi^K$ and $\lambda_1^K=\mu$.
		\item[(ii)] It is easy to check that the determinant of $M_\Phi$ is $1$ since that of $M_v$ is $1$ for all $v \in V(T)$. 
		See Remark \ref{rmk determinant}
		It implies that every column/row of $M_\Phi$ is not the zero vector.
		\item[(iii)] We note that $\vec{v}_0 \in \left(\mathbb{R}_{>0}\right)^{|V(T)|}$. 
		Because $M_\Phi$ is a real matrix with nonnegative entry and because of (ii), 
		\[\lambda_1^i \vec{v}_0 = M_\Phi^i \cdot \vec{v}_0 \in \left(\mathbb{R}_{>0}\right)^{|V(T)|} \text{  for all  } i \in \mathbb{N}.\]
		\item[(iv)] As mentioned (i), $\lambda_1$ is a $K^{\text{th}}$ root of $\mu$. 
		If $\lambda_1$ is not a real number, then (iii) cannot hold. 
		Thus, $\lambda_1$ is a positive real number.
		\item[(v)] From (ii), $\prod_{i=1}^{|V(T)|} \lambda_i =1$, 
		Thus, $\lambda_1 \leq 1$, then $|\lambda_i|=1$ for all $i$. 
		It is contradict to the uniqueness of $\mu$. 
		Thus, $\lambda_1 >1$.
	\end{enumerate}

	We note that the largest eigenvalue of $M_\Phi$ is a positive real number $\lambda_1$. 
	Since $\left\{\vec{e}_v | v \in V(T) \right\}$ is the standard basis for $\mathbb{R}^{V(T)}$, $\lambda_\Phi$ defined in Equation \eqref{eqn equality} equals to $\lambda_1$, i.e., $\lambda_\Phi = \lambda_1 >1$.
\end{proof}

\subsection{Lemma \ref{lem maximality}}
\label{subsection lemma maximality}
In this subsection, we prove Lemma \ref{lem maximality} which is necessarily in Section \ref{section proof of thm pseudo-Anosov}.
\begin{lem}
	\label{lem maximality}
	Let $T$ be a given tree, and let $\Phi$ be an autoequivalence of Penner type. 
	Then, for any nonzero object $E \in \Fuk$, 
	\[\lim_{n \to \infty} \tfrac{1}{n} \log m_{\sigma_0}(\Phi^n E) \leq \lambda_\Phi.\]
\end{lem}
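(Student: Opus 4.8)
The plan is to reduce the general case to the categorically carried-by case that was already handled in Theorem~\ref{thm without y morphism}, using the sub-twisted complex structure together with Lemmas~\ref{lem x-morphism}--\ref{lem the zero morphism}. Concretely, let $E\in\Fuk$ be nonzero; by Lemma~\ref{lem minimal twisted complex} we may assume $E=\left(\bigoplus_{i\in I}V_i\otimes S_{v_i}[d_i],(F_{j,i}=\psi_{j,i}\otimes f_{j,i})\right)$ is a minimal, well-ordered twisted complex. Each one-term piece $X_i=V_i\otimes S_{v_i}[d_i]$ is trivially categorically carried-by, so Theorem~\ref{thm without y morphism} (equivalently Lemma~\ref{lem linear algebra}) applies to it and gives $\lim_{n\to\infty}\tfrac1n\log m_{\sigma_0}(\Phi^n X_i)\le \log\lambda_\Phi$, with equality as soon as the corresponding vectors are eventually positive. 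The point I would then establish is a triangle-inequality-type bound: for any $n$,
\[
m_{\sigma_0}(\Phi^n E)\ \le\ \sum_{i\in I} m_{\sigma_0}(\Phi^n X_i).
\]
Granting this, taking $\lim_{n\to\infty}\tfrac1n\log(\cdot)$ and using that the limit of a finite sum of nonnegative sequences has exponential growth equal to the maximum of the individual growth rates yields $\lim_{n\to\infty}\tfrac1n\log m_{\sigma_0}(\Phi^n E)\le\log\lambda_\Phi$, which is exactly the claim (note $\lambda_\Phi$ here should read $\log\lambda_\Phi$, matching the convention used throughout Section~\ref{section the case of categorically carried-by}).

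First I would prove the displayed subadditivity. Applying Lemma~\ref{lem induced functor}, $\Phi^n E\simeq\left(\bigoplus_{i\in I}\Phi^n(X_i),(G^{(n)}_{j,i})\right)$ for suitable arrows $G^{(n)}_{j,i}$; in particular $\Phi^n E$ is obtained as an iterated cone of the objects $\Phi^n(X_i)$. Since mass is subadditive under cones --- if $E'\simeq\Cone(A\to B)$ then $m_{\sigma_0}(E')\le m_{\sigma_0}(A)+m_{\sigma_0}(B)$, which follows directly from the Harder--Narasimhan filtration and the definition of $m_{\sigma_0}$ --- an induction on $|I|$ using the sub-twisted complex filtration of $\Phi^n E$ (Definition~\ref{def subcomplex} and Remark~\ref{rmk definition of subtwisted complex}) gives $m_{\sigma_0}(\Phi^n E)\le\sum_{i}m_{\sigma_0}(\Phi^n X_i)$. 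Alternatively, and perhaps more cleanly, I can argue directly on vectors: passing to a minimal twisted complex $E'_n\simeq\Phi^n E$ and using Equation~\eqref{eqn vector for minimal}, $m_{\sigma_0}(\Phi^n E)=\parallel\vect(\Phi^n E)\parallel_1$, while $\vect$ is additive on direct sums and each $\Phi^n X_i$ contributes a nonnegative vector, so $\parallel\vect(\Phi^n E)\parallel_1\le\sum_i\parallel\vect(\Phi^n X_i)\parallel_1=\sum_i m_{\sigma_0}(\Phi^n X_i)$; here one must be slightly careful that cancellation in the minimalization process only decreases $\parallel\vect\parallel_1$, which is where minimality of the pieces and the non-negativity observed in step~(iii) of the proof of Theorem~\ref{thm without y morphism} are used.

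The second ingredient is the asymptotic bound for each piece. For a fixed $X_i$, by Definition~\ref{def Penner type}(II) and Lemma~\ref{lem simple computation}, after applying $\Phi^K$ with $K\ge|V(T)|$ every component of $\vect(\Phi^K X_i)$ is a positive integer (this is exactly observation~(i) in Section~\ref{subsection step 3 of the sketch of proof}), so $\vect(\Phi^{n}X_i)=M_\Phi^{\,n-K}\vect(\Phi^K X_i)$ for $n\ge K$, and hence $\tfrac1n\log\parallel\vect(\Phi^n X_i)\parallel_1\to\log\lambda_\Phi$ by the Perron--Frobenius argument already carried out. Combining with the subadditivity and the elementary fact that for nonnegative sequences $a^{(i)}_n$ one has $\limsup_n\tfrac1n\log\big(\sum_i a^{(i)}_n\big)=\max_i\limsup_n\tfrac1n\log a^{(i)}_n$, the bound $\limsup_{n\to\infty}\tfrac1n\log m_{\sigma_0}(\Phi^n E)\le\log\lambda_\Phi$ follows, and since the right-hand quantity is independent of $E$ this is the assertion of Lemma~\ref{lem maximality}.

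I expect the main obstacle to be the bookkeeping in the subadditivity step rather than any conceptual difficulty: one has to make sure that minimalizing $\Phi^n E$ (via Lemma~\ref{lem minimal twisted complex}(ii)) never increases the $L_1$-norm of $\vect$ and that the decomposition of $\Phi^n E$ into the pieces $\Phi^n(X_i)$ genuinely respects the filtration by sub-twisted complexes, so that the cone-wise subadditivity of $m_{\sigma_0}$ can be applied inductively. Once that is set up the rest is the Perron--Frobenius estimate already in hand.
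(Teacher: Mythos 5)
Your proof is correct, but your main route is genuinely different from the one in the paper. The paper does not decompose $E$ into its one-term pieces $X_i$ at all: it works directly with the vector $\vect(\Phi^n X)$, using the single observation that $\dim\lhom^{*}_{\wrapped}(L_v,-)\le\dim\shom^{*}_{\wrapped}(L_v,-)$ componentwise, together with the exact chain-level identity $\left(\dim\shom^{*}_{\wrapped}(L_v,\Phi E)\right)_v = M_\Phi\cdot\vect(E)$ for a minimal $E$. Iterating the resulting componentwise inequality $\vect(\Phi X)\le M_\Phi\cdot\vect(X)$ (nonnegativity of $M_\Phi$ is needed here) yields $\parallel\vect(\Phi^n X)\parallel_1\le\parallel M_\Phi^n\cdot\vect(X)\parallel_1$ directly, and Perron--Frobenius finishes it. Your main argument instead passes through subadditivity of $m_{\sigma_0}$ under exact triangles, applied inductively to the filtration of $\Phi^nE$ by the sub-twisted complexes built from the $\Phi^n X_i$, then invokes Theorem~\ref{thm without y morphism} on each one-term piece and the elementary growth-rate bound for a finite sum. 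Both routes land on the same inequality $m_{\sigma_0}(\Phi^n E)\le\parallel M_\Phi^n\cdot\vect(E)\parallel_1$, and your ``alternative, directly on vectors'' sketch is in fact essentially the paper's argument but stated more loosely (the phrase ``cancellation in the minimalization process only decreases $\parallel\vect\parallel_1$'' is precisely the $\lhom\le\shom$ comparison). What the paper's argument buys is economy: no decomposition, no appeal to mass subadditivity, and no finite-max-commutes-with-$\limsup$ lemma; what your approach buys is that it runs only on ingredients already stated as black boxes (Theorem~\ref{thm without y morphism}, subadditivity of mass) and so might generalize more readily to settings where the explicit $\lhom$ versus $\shom$ comparison is unavailable. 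One small point: you are right that the right-hand side of the displayed inequality in the statement should read $\log\lambda_\Phi$, and also note that, strictly speaking, your argument establishes $\limsup_{n}\tfrac1n\log m_{\sigma_0}(\Phi^nE)\le\log\lambda_\Phi$; the full limit only exists once the matching lower bound is proved in Section~\ref{section proof of thm pseudo-Anosov}, which is also what the paper implicitly relies on.
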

\begin{proof}
	We recall that as mentioned in Equation \eqref{eqn advantage}, for any twisted complex $E \in \Fuk$, 
	\[ \left( \dim \shom^*_{\wrapped}(L_v, \Phi E) \right)_{v \in V(T)} = M_\Phi\cdot \left( \dim \shom^*_{\wrapped}(L_v, E) \right)_{v \in V(T)}, \]
	where $\Phi E$ means the twisted complex we obtained by applying Lemma \ref{lem induced functor} to $E$. 
	Moreover, if we assume that $E$ is minimal twisted complex, then as we saw in Equation \eqref{eqn vector for minimal}, 
	\[\left( \dim \shom^*_{\wrapped}(L_v, E) \right)_{v \in V(T)} = \vect(E).\]
	
	Now, we recall that every nonzero object $X \in \Fuk$ is equivalent to a minimal twisted complex $E$. 
	Thus, we have 
	\begin{align}
		\label{eqn spectral radius}
		\begin{split}
			\parallel \vect(\Phi X) \parallel_1 &= \parallel\left( \dim \lhom^*_{\wrapped}(\Phi X, L_v) \right)_{v \in V(T)}\parallel_1 \\
			&\leq \parallel\left( \dim \shom^*_{\wrapped}(\Phi E, L_v) \right)_{v \in V(T)}\parallel_1 \\
			&=\parallel M_\Phi\cdot \vect(E)\parallel_1 = \parallel M_\Phi\cdot \vect(X)\parallel_1.
		\end{split}
	\end{align}
	Thus, for any nonzero $X \in \Fuk$,
	\[\lim_{n \to \infty} \tfrac{1}{n} \log m_{\sigma_0}(\Phi^n X) = \lim_{n \to \infty} \tfrac{1}{n} \log \parallel \vect(\Phi^n X) \parallel_1 \leq \lim_{n \to \infty} \tfrac{1}{n} \log\parallel M^n_\Phi\cdot \vect(E)\parallel_1. \]
	
	Since the spectral radius of $M_\Phi$, i.e., $\lambda_\Phi$, is larger or equal to $\lim_{n \to \infty} \tfrac{1}{n} \log\parallel M^n_\Phi\cdot \vect(E)\parallel_1$, Lemma \ref{lem maximality} holds. 	
\end{proof}

\section{Proof of Theorem \ref{thm pseudo-Anosov}}
\label{section proof of thm pseudo-Anosov}
Let $\Phi$ be an autoequivalence of Penner type on $\Fuk$. 
Moreover, we will assume that $\Phi$ is a product of $\left\{\tau_u, \tau_w^{-1} | u \in V_+(T), w \in V_-(T)\right\}$ as mentioned in Remark \ref{rmk sign is not important 1}. 
The goal of this section is to complete the proof of Theorem \ref{thm pseudo-Anosov}. 

As a partial proof of Theorem \ref{thm pseudo-Anosov}, we have already proven Theorem \ref{thm without y morphism} and Lemma \ref{lem maximality}, i.e.,  there exists a nonzero real number $\lambda_\Phi>1$ such that, 
\begin{itemize}
	\item for any nonzero object $E \in \Fuk$, the exponential mass growth of $\Phi^n E$ as $n \to \infty$ is smaller than or equal to $\log \lambda_\Phi$, and 
	\item if $E$ is categorically carried-by, then the exponential mass growth is exactly $\log \lambda_\Phi$.
\end{itemize}
We note that Theorem \ref{thm without y morphism} and Lemma \ref{lem maximality} are not enough to prove Theorem \ref{thm pseudo-Anosov}.
To complete the proof, we should prove that even if $E$ is not categorically carried-by, the mass growth of $\Phi^n E$ is equal to $\lambda_\Phi$. 
In Section \ref{section proof of thm pseudo-Anosov}, we study the nonzero objects $E$ that are not categorically carried-by. 

Before starting the main part of this subsection, let us roughly describe the story-line of Section \ref{section proof of thm pseudo-Anosov} that corresponds to the steps $4$--$6$ of the sketch of proof of Theorem \ref{thm pseudo-Anosov}, given in Section \ref{subsection sketch of the proof}. 
Let us recall that in Section \ref{subsection the main idea}, we described our strategy for objects $E$ that are not categorically carried-by.
Our strategy is to show that, for sufficiently large $n \in \mathbb{N}$, $\Phi^n E$ is divided into two parts, one containing all $y$-morphisms and the other which has no {\em effect} of $y$-morphisms. 
See the right above of Remark \ref{rmk weak pseudo-Anosov}.
We give a precise condition that $\Phi^n E$ should satisfy.

\begin{definition}
	\label{definition partially carried-by}
	\mbox{}
	\begin{enumerate}
		\item For a good twisted complex $E= \left(\bigoplus_{i \in I} (X_i=V_i \otimes S_{v_i}[d_i]), F_{j,i} \right) \in \Fuk$, a {\bf $\boldsymbol{x}$-arrow} will refer to a morphism $F_{j,i} = \psi_{j,i}\otimes f_{j,i}$ from $X_i$ to $X_j$ for some $i,j \in I$ such that $f_{j,i}$ is a $x$-morphism. 
		Similarly, one can define a {\bf $\boldsymbol{y}$-arrow}, a {\bf $\boldsymbol{z}$-arrow}, and an {\bf identity-arrow}.
		We note that since $E$ is a good twisted complex, $f_{j,i}$ is one of $x, y, z$-morphisms, the identity morphism, or the zero morphism. 
		See Proposition \ref{prop good twisted complex}.
		\item A nonzero object $E \in \Fuk$ is {\bf partially carried-by} if 
		$E$ is equivalent to a minimal twisted complex 
		\begin{gather}
			\label{eqn partially carried-by}
			E \simeq \left(\bigoplus_{i=1}^n \left(X_i = V_i \otimes S_{v_i}[d_i]\right), F_{j,i} =\psi_{j,i} \otimes f_{j,i}\right),
		\end{gather}
		and there exists an integer $K \in \{1, \dots, n-1\}$ satisfying the following conditions:
		\begin{enumerate}[label=(\alph*)]
			\item If $j > K$, then $F_{j,i}$ is not a $y$-arrow for any $i < j$, or equivalently, if $F_{j,i}$ is a $y$-arrow, then, $j \leq K$. 
			\item If $E = \left(\bigoplus_{i=1}^n X_i, F_{j,i}\right)$ has a chain of arrows
			\begin{equation}
				\label{eqn partially carried-by condition 1}
				X_{i_0} \xrightarrow{\psi_{i_1,i_0}\otimes y_{v_{i_0}, v_{i_1}}} X_{i_1} \xrightarrow{\psi_{i_2,i_1} \otimes z_{v_{i_1}}} X_{i_2} \xrightarrow{\psi_{i_3,i_2} \otimes z_{v_{i_2}}} \dots \xrightarrow{\psi_{i_k,i_{k-1}} \otimes z_{v_{i_{k-1}} }} X_{i_{k}},
			\end{equation}
			such that $\psi_{i_k,i_{k-1}} \circ \dots \circ \psi_{i_1,i_0} \neq 0$, i.e., a nonzero chain of arrows from $X_{i_0}$ to $X_{i_k}$ of the above form, then, $i_k \leq K$. 
			\item If $E = \left(\bigoplus_{i=1}^n X_i, F_{j,i}\right)$ has a chain of arrows
			\begin{equation}
				\label{eqn partially carried-by condition 2}
				X_{i_0} \xrightarrow{\psi_{i_1,i_0}\otimes y_{v_{i_0}, v_{i_1}}} X_{i_1} \xrightarrow{\psi_{i_2,i_1} \otimes z_{v_{i_1}}} X_{i_2} \xrightarrow{\psi_{i_3,i_2} \otimes z_{v_{i_2}}} \dots \xrightarrow{\psi_{i_k,i_{k-1}} \otimes z_{v_{i_{k-1}} }} X_{i_{k}} \xrightarrow{\psi_{i_{k+1},i_k} \otimes x_{v_{i_k},v_{i_{k+1}}}} X_{i_{k+1}},
			\end{equation}
			such that $\psi_{i_{k+1},i_k} \circ \dots \circ \psi_{i_1,i_0} \neq 0$, i.e., a nonzero chain of arrows from $X_{i_0}$ to $X_{i_{k+1}}$ of the above form, then, $i_{k+1} \leq K$. 
		\end{enumerate} 
	\end{enumerate}
\end{definition}
In the later part of Section \ref{section proof of thm pseudo-Anosov}, if a nonzero object $E$ is partially carried-by, then we will assume that $E$ is a minimal twisted complex given in Equation \eqref{eqn partially carried-by}, and $K$ denotes the integer satisfying all conditions of Definition \ref{definition partially carried-by}, (2).

\begin{remark}
	\label{rmk vertices}
	We also note that in \eqref{eqn partially carried-by condition 1}, $y_{v_{i_0},v_{i_1}}$ exists only if $v_{i_0} \in V_-(T)$ and $v_{i_1} \in V_+(T)$. 
	Moreover, since there exists $z_{v_{i_k}}: S_{v_{i_j}} \to S_{v_{i_{j+1}}}$, $v_{i_j} = v_{i_{j+1}}$ for all $j \geq 1$.
	Thus, 
	\[v_{i_0} \in V_-(T), v_{i_1} = \dots = v_{i_k} \in V_+(T).\] 
	Similarly, in \eqref{eqn partially carried-by condition 2}, 
	\[v_{i_0} \in V_-(T), v_{i_1} = \dots = v_{i_k} \in V_+(T), v_{i_{k+1}} \in V_-(T).\] 
\end{remark}

Let us briefly explain why Definition \ref{definition partially carried-by} (2) is called partially carried-by. 
If $E = \left(\bigoplus_{i=1}^n X_i, F_{j,i}\right)$ is partially carried-by, then one can write $E$ as a cone of a morphism $F$ between two minimal twisted complexes 
\[A[-1] = \left( \bigoplus_{i=1}^K X_i[-1], F_{j,i}[-1]\right) \ \mathrm{and}\ B=\left(\bigoplus_{i=K+1}^n X_i, F_{j,i}\right).\]
Then, one can observe that $B$, a part of $E$, is categorically carried-by, and it is a reason why $E$ is called partially carried-by. Moreover, by the conditions (b) and (c), when one applies a Penner type autoequivalence to $E$, the $y$-arrows in $A$ do not affect on $\Phi(B)$. 

From the above argument, one can have an alternative definition of the notion of partially carried-by.
\begin{definition}
	\label{definition partially carried-by 2} 
	A nonzero object $E \in \Fuk$ is {\bf partially carried-by with a triple $\boldsymbol{(A,B,F)}$} if there exist two minimal twisted complexes
	\[A = \left( \bigoplus_{i=1}^K X_i, F_{j,i}\right), B=\left(\bigoplus_{i=K+1}^n X_i, F_{j,i}\right) \neq 0,\]
	and a closed morphism from $A$ to $B$
	\[F=\left(F_{j,i}\right)_{j=K+1, \dots, n}^{i=1, \dots, K} \in \shom_{\Fuk}^1\left(A= \bigoplus_{i=1}^K X_i, B=\bigoplus_{j=K+1}^n X_j\right),\]
	such that 
	\begin{itemize}
		\item $E = \left( \bigoplus_{i=1}^n X_i, F_{j,i}\right)$ is a minimal twisted complex, and 
		\item $E$ and $K$ satisfy the conditions (a)--(c) of Definition \ref{definition partially carried-by} (2).
	\end{itemize} 
\end{definition} 
We note that the alternative definition, Definition \ref{definition partially carried-by 2}, specifies which part of $E$ is categorically carried-by if $E$ is partially carried-by.
Because of the advantage, we will use Definition \ref{definition partially carried-by 2}, not Definition \ref{definition partially carried-by}, as definition of partially carried-by, in most of Section \ref{section proof of thm pseudo-Anosov}. 

In the first and second subsections of Section \ref{section proof of thm pseudo-Anosov}, we will show that if $E$ is partially carried-by with a triple $(A,B,F)$, then $\Phi E$ is partially carried-by with a triple $(\overline{A}, \overline{B}, G)$, and moreover, $\Phi B = \overline{B}$, i.e., the step 4 of the sketch of proof of Theorem \ref{thm pseudo-Anosov}.
We note that by definition, $B$ is categorically carried-by, and because of Lemma \ref{lem PhiE}, $\overline{B}=\Phi B$ is also categorically carried-by.
In Section \ref{subsection step 5}, we will prove that, for any nonzero object $E \in \Fuk$, there exists $K \in \mathbb{N}$ such that $\Phi^K E$ is partially carried-by, i.e., the step 5 of the sketch.
From those two facts, we will complete the proof of Theorem \ref{thm pseudo-Anosov} in Section \ref{subsection step 6}.

\subsection{Step 4 of the sketch of proof of Theorem \ref{thm pseudo-Anosov}}
\label{subsection step 4}
As mentioned above, the goal of the subsection is to prove that if $E$ is partially carried-by with a triple $(A,B,F)$, then $\Phi E$ is also partially carried by with a triple $(\overline{A}, \overline{B}, G)$ such that $\overline{B} = \Phi B$. 
To prove this, it is enough to prove Lemma \ref{lem partially carried-by preserved}, since $\Phi$ is a product of 
\[\tau \in \left\{\tau_u, \tau_w^{-1} | u \in V_+(T), w \in V_-(T)\right\}.\]
\begin{lem}
	\label{lem partially carried-by preserved}
	Let $\tau \in \left\{\tau_u, \tau_w^{-1} | u \in V_+(T), w \in V_-(T)\right\}$. 
	Let us assume that $E$ is partially carried-by with a triple $(A,B,F)$.
	Then, there exist minimal twisted complexes $\overline{A}, \overline{B}$, and  $G:\overline{A} \to \overline{B}$ such that 
	\begin{itemize}
		\item $\tau(A) \simeq \overline{A}, \tau(B) \simeq \overline{B}, \tau(E) \simeq \left(\overline{A} \oplus \overline{B}, G\right)$, and
		\item $\tau(E)$ is partially carried-by with a triple $(\overline{A}, \overline{B}, G)$.
	\end{itemize}
\end{lem}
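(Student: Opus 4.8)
The plan is to reduce Lemma~\ref{lem partially carried-by preserved} to the structural facts already established in Sections~\ref{section preparations for the proof of pseudo-Anosov theorem} and~\ref{section the case of categorically carried-by}, handling the two halves of the triple separately. Since $\Phi$ is a product of single Dehn twists and their inverses, and since being partially carried-by with a triple is a property of a single minimal twisted complex together with a splitting index $K$, it suffices to treat one $\tau \in \{\tau_u, \tau_w^{-1}\}$. First I would fix a minimal, well-ordered twisted complex representative $E \simeq \left(\bigoplus_{i=1}^n X_i, F_{j,i}\right)$ with splitting index $K$ as in Definition~\ref{definition partially carried-by 2}, so that $A = \left(\bigoplus_{i=1}^K X_i, F_{j,i}\right)$, $B = \left(\bigoplus_{i=K+1}^n X_i, F_{j,i}\right)$, and $F$ is the off-block-triangular part. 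Applying Lemma~\ref{lem induced functor} gives $\tau(E) \simeq \left(\bigoplus_i \tau(X_i), G_{j,i}\right)$, and since $A$ and $B$ are sub-twisted-complexes of $E$ (Definition~\ref{def subcomplex}), Remark~\ref{rmk definition of subtwisted complex} shows $\tau(A)$ and $\tau(B)$ are sub-twisted-complexes of $\tau(E)$; after converting each $\tau(X_i)$ into its good form via Equation~\eqref{eqn PhiE} and rearranging to well-ordered position (Lemma~\ref{lem minimal twisted complex}) without touching nonzero arrows, we obtain candidate $\overline{A}, \overline{B}$ and $G$ with $\tau(A) \simeq \overline{A}$, $\tau(B) \simeq \overline{B}$, $\tau(E) \simeq (\overline{A} \oplus \overline{B}, G)$. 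Because $B$ is categorically carried-by (condition (a) forces all $y$-arrows of $E$ into the first $K$ terms, and within $B$ itself there are no $y$-arrows), Lemma~\ref{lem PhiE}(2) guarantees $\overline{B} = \tau(B)$ is categorically carried-by; in particular the new splitting index $\overline{K}$, defined as the number of objects appearing in $\overline{A}$, makes condition (a) of Definition~\ref{definition partially carried-by}(2) automatic for $\tau(E)$.

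The substantive work is verifying conditions (b) and (c) of Definition~\ref{definition partially carried-by}(2) for $\tau(E)$ with respect to $\overline{K}$. Here I would argue contrapositively: suppose $\tau(E)$, in its good form, contains a nonzero chain of arrows of shape~\eqref{eqn partially carried-by condition 1} (a $y$-arrow followed by a string of $z$-arrows) or shape~\eqref{eqn partially carried-by condition 2} (the same, then one $x$-arrow) whose terminal object lies in $\overline{B}$. Each arrow $G_{j,i}$ of $\tau(E)$ is computed by the formula~\eqref{eqn induced functor} from the nonzero chains of arrows of $E$ between the corresponding components, and by Lemmas~\ref{lem x-morphism}, \ref{lem z-morphism}, \ref{lem the zero morphism} the only way a $y$-arrow can appear in some $G_{j,i}$ is if the underlying arrow $f_{j,i}$ of $E$ is the zero morphism (a $y$-arrow of $\tau(E)$ cannot be produced from an $x$-, $z$-, or identity-arrow of $E$). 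I would then trace the offending chain of $\tau(E)$ back through~\eqref{eqn induced functor} to an actual nonzero chain of arrows in $E$ whose composite is nonzero and which, after applying $\tau$, realizes the $y$-then-$z$(-then-$x$) pattern. The point is that such a chain in $E$ must itself be one of the forbidden shapes~\eqref{eqn partially carried-by condition 1} or~\eqref{eqn partially carried-by condition 2} (possibly after inserting the extra pieces of $\tau(X_i)$, which only contribute $x$-arrows and $z$-arrows in the controlled way of Lemma~\ref{lem simple computation}), and since $E$ satisfies (b) and (c) with index $K$, its terminus lies in $A$, i.e.\ among the first $K$ components; tracking how $\tau$ and the reordering move components, this forces the terminus of the $\tau(E)$-chain into $\overline{A}$, a contradiction.

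The main obstacle I anticipate is the bookkeeping in this last step: making precise how a chain of arrows in $\tau(E)$'s good form corresponds to a chain in $E$ after the decomposition $\tau(X_i) = \bigl(S_u[d_i] \xrightarrow{x_{u,w}} S_w[d_i]\bigr)$ is unpacked, and checking that the internal $x$-arrow of such a decomposed vertex, together with the $z$-arrows and the composites coming from~\eqref{eqn induced functor}, still assembles into exactly the shape~\eqref{eqn partially carried-by condition 1} or~\eqref{eqn partially carried-by condition 2} rather than some genuinely new pattern. This is where a careful case analysis on $\tau \in \{\tau_u, \tau_w^{-1}\}$ and on whether the relevant vertices are positive or negative (cf.\ Remark~\ref{rmk vertices}) will be needed, essentially re-running the case bookkeeping of Sections~\ref{subsection proof of lemma 1}--\ref{subsection proof of lemma 3} and of the proof of Lemma~\ref{lem y-morphism}, but now keeping the $y$-arrows instead of showing they are absent. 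Once the correspondence is set up cleanly, conditions (b) and (c) transfer mechanically, and combined with condition (a) from Lemma~\ref{lem PhiE}(2) and the identification $\overline{B} = \tau(B)$, the lemma follows.
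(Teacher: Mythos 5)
Your general strategy — construct $\overline{A},\overline{B}$ by unpacking $\tau(X_i)$ into good form, then trace offending chains of $\tau(E)$ back to $E$ via formula~\eqref{eqn induced functor} — is in the right spirit, but two concrete pieces are wrong or missing.

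First, your argument for condition~(a) does not work. You claim that because $\overline{B}\simeq\tau(B)$ is categorically carried-by, condition~(a) is ``automatic.'' But categorically carried-by only guarantees that $\overline{B}$ has no \emph{internal} $y$-arrows; condition~(a) also requires that there is no $y$-arrow $G_{j_2,j_1}$ with source $Y_{j_1}$ in $\overline{A}$ and target $Y_{j_2}$ in $\overline{B}$, and your proposal says nothing about those cross-arrows. The paper handles this through Lemma~\ref{lem y-arrow2}, which gives a precise ``inverse image'' statement: any $y$-arrow of $\tau(E)$ forces a nonzero chain in $E$ of one of six specific shapes whose terminus is the source of a $y$-arrow or the end of a $y$-then-$z$ (or $y$-then-$x$) chain; combined with conditions~(a)--(c) of $E$, this pins the target into $A$, hence into $\overline{A}$. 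You cite Lemmas~\ref{lem x-morphism}--\ref{lem the zero morphism} (which say a $y$-arrow of $\tau(E)$ can only sit over a zero arrow of $E$), but that observation alone does not localize the target of the $y$-arrow; the content of Lemma~\ref{lem y-arrow2} is exactly that localization.

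Second, you omit the $z_u$-cancellation step of Lemma~\ref{lem z} in the construction of $\overline{A},\overline{B}$. The paper does not simply unpack $\tau(X_i)$ and cancel identity-arrows: it first cancels as many $z_u$-arrows ($u\in V_+(T)$) as possible, adding compensating $z_w$-arrows ($w\in V_-(T)$). This is not cosmetic --- see Remark~\ref{rmk canceling z arrow}. The chains of shapes~\eqref{eqn partially carried-by condition 1} and~\eqref{eqn partially carried-by condition 2} that conditions~(b) and~(c) quantify over are precisely $y$-arrows followed by $z_u$-arrows, and unpacking $\tau(X_i)=\bigl(S_u[d_i]\to S_w[d_i]\bigr)$ introduces new $z_u$-arrows between consecutive decomposed vertices (via Lemma~\ref{lem simple computation} and formula~\eqref{eqn induced functor}) that do \emph{not} trace back to a single chain of $E$ of the forbidden shape. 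The paper's case analysis for~(b) (the sub-cases $\textcircled{3}$, $\textcircled{4}$) relies on these $z_u$-arrows having been canceled so that the remaining chains are tractable; without Lemma~\ref{lem z}, the ``bookkeeping'' you defer to the end is not merely tedious but actually breaks, because you would be forced to consider chains in $\tau(E)$ that pass through both halves of a decomposed vertex. Concretely: with $N=3$, the shape~\eqref{eqn partially carried-by condition 1} in $\tau(E)$ can arise from a chain of $E$ containing both $y$- and $x$-arrows (the paper's Types~1--4 in the proof of~(b)), and ruling these out needs both Lemma~\ref{lem noxyyx} and the $z_u$-cancellation; your proposal cites neither Lemma~\ref{lem z} nor Lemma~\ref{lem noxyyx}, which are the two ingredients that make the case analysis close.
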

Since we would like to prove Lemma \ref{lem partially carried-by preserved} in Sections \ref{subsection step 4} and \ref{subsection proof of Lemma partially carried-by preserved}, we let $\tau$ denote an autoequivalence in $\left\{\tau_u, \tau_w^{-1} | u \in V_+(T), w \in V_-(T)\right\}$ in the subsections. 

To prove Lemma \ref{lem partially carried-by preserved}, we need some preparations, for example, Definition \ref{definition simplified twisted complex}, Lemmas \ref{lem z}, \ref{lem noxyyx}, and \ref{lem y-arrow2}.
The preparations will be given in the present subsection, and the proof of Lemma \ref{lem partially carried-by preserved} will be given in the next subsection. 

\begin{definition}\label{definition simplified twisted complex}
	Let $E = \left(\bigoplus_{i \in I} \left(X_i= V_i \otimes S_{v_i}[d_i]\right), F_{j,i}\right)$ be a minimal twisted complex. 
	\begin{enumerate}
		\item For each $v\in V(T)$ and $d\in \Z$, we define the subset $\boldsymbol{I(E;v,d)} \subset I$, where $I$ is the index set of the twisted complex $E$, as
		\[ I(E;v,d) =  \{ i \in I | v_i = v, d_i =d\}.\]
		\item We say that $E$ is {\bf simplified} if, for every pair $(v, d) \in V(T) \times \mathbb{Z}$, 
		\[|I(E;v,d)| = 0 \text{  or  } 1.\] 
	\end{enumerate}
\end{definition}

We would like to point out that it is always possible to turn a minimal twisted complex into a simplified one without canceling any arrows, because of Lemma \ref{lem grading}. 
\begin{remark}
	We note that we could define the notion of simplified twisted complex in Section \ref{subsection twisted complexes in the Fukaya category of a plumbing space}. Then we could consider only simplified twisted complexes in earlier sections.
	It could make the arguments in previous sections simpler, but we did not do so to argue with more general twisted complexes.
\end{remark}

Let us assume that $E$ is partially carried-by with a triple $(A,B,F)$.
Then, one can turn the minimal twisted complexes $A$ and $B$ into simplified ones. 
We note that the notion of partially carried-by cares chains of arrows of specific forms. 
By assuming that $A$ and $B$ are simplified, one can minimize the number of chains of arrows in the specific forms.
Thus, without loss of generality, we assume that $A$ and $B$ are simplified, and we will prove Lemma \ref{lem partially carried-by preserved} under the assumption. 

Under the assumption, we will use the following notation: 

\begin{notation}
	\label{notation 1}
	\mbox{}
	\begin{itemize}
		\item In the present and the next subsection, $E$ denotes a {\em partially carried-by} twisted complex. 
		Similarly, $E_0$ denotes a good twisted complex which is not necessarily partially carried-by.
		\item A partially carried-by twisted complex $E$ has an index set $I=\{1, \dots, n\}$, and a good twisted complex $E_0$ has an index set $I_0 = \{1, \dots, n_0\}$. 
		\item As a twisted complex, $E$ is written as 
		\[E = \left(\bigoplus_{i \in I} (X_i = V_i \otimes S_{v_i}[d_i]), F_{j,i}\right).\]
		\item Moreover, $E$ is partially carried-by with a triple
		\[\left(A = \left(\bigoplus_{i=1}^K (X_i = V_i \otimes S_{v_i}[d_i]), F_{j,i}\right), B = \left(\bigoplus_{i=K+1}^n (X_i= S_{v_i}[d_i], F_{j,i})\right), F = (F_{j,i})_{i=1, \dots, K}^{j=K+1, \dots, n} \in \shom_{\Fuk}^1(A, B)\right).\]
		\item We also assume that $A$ and $B$ are simplified twisted complexes.
	\end{itemize}
\end{notation}

In order to prove Lemma \ref{lem partially carried-by preserved}, we need to construct minimal twisted complexes $\overline{A}$ and $\overline{B}$ satisfying some conditions. 
First, let us construct two minimal twisted complexes $\overline{A}$ and $\overline{B}$ in this subsection. 
Their properties corresponding to the conditions (a)--(c) of Definition \ref{definition partially carried-by} will be proven in Section \ref{subsection proof of Lemma partially carried-by preserved}.
\vskip0.2in

\noindent{Construction of $\overline{A}$ and $\overline{B}$:}
We recall that by Lemma \ref{lem induced functor}, if $E_0 = \left(\bigoplus_{i \in I_0}(X_i=V_i \otimes S_{v_i}[d_i]), F_{j,i}\right)$, $\tau(E_0)$ is quasi-isomorphic to
\begin{equation}\label{eq quasiiso1}
	\tau(E_0) \cong \left(\bigoplus_{i \in I_0}\tau(X_i)= \tau(V_i \otimes S_{v_i}[d_i]), G_{j,i} \right)
\end{equation}
for a proper $G_{j,i} \in \hom^1_{\Fuk}\left(\tau(X_i),\tau(X_j)\right)$.
Moreover, for each $i\in I_0$, as computed in Lemma \ref{lem simple computation}, $\tau(X_i)$ should be one of the following four:
\begin{equation}\label{eq quasiiso2}
	\tau(V_i \otimes S_{v_i}[d_i]) \simeq \begin{cases}
		V_i \otimes S_{v_i}[d_i], \\
		V_i \otimes S_v[d_i+1-N] \text{  for some  } v \in V_+(T), \\
		V_i \otimes S_v[d_i+N-1] \text{  for some  } v \in V_-(T), \\
		V_i \otimes S_u[d_i] \xrightarrow{id_{V_i} \otimes x_{u,w}} V_i \otimes  S_w[d_i] \text{  for some  } u \in V_+(T), w \in V_-(T).\\
	\end{cases}
\end{equation}

Combining Equations \eqref{eq quasiiso1} and \eqref{eq quasiiso2}, one can obtain a good twisted complex equivalent to $\tau(E_0)$, 
\begin{equation}\label{eq quasiiso3}
	\tau(E_0) \simeq  \left( \bigoplus_{j\in J_0} (Y_j = \overline{V}_j \otimes S_{\overline{v}_j}[\overline{d}_j]), G_{j,i} = \overline{\psi}_{j,i} \otimes g_{j,i}\right)
\end{equation}
such that for all $j \in J_0$, $\overline{V}_j = V_i$ for a proper $i \in I_0$, and for all $i, j$, $\overline{\psi}_{j,i}\in \mathrm{Mor}(\overline{V}_i,\overline{V}_j), g_{j,i} \in \shom_{\Fuk}^1\left(S_{\overline{v}_i}[\overline{d}_i],S_{\overline{v}_j}[\overline{d}_j]\right)$.
We note that $J_0$ is the index set of the good twisted complex given in Equation \eqref{eq quasiiso3} and $I_0$ is the index set of the twisted complex $E_0$, or the index set of the twisted complex $\tau(E_0)$ given in Equation \eqref{eq quasiiso1}.

We note that the twisted complex $\left(\oplus_{j \in J_0} Y_j , G_{j,i}\right)$ in Equation \eqref{eq quasiiso3} is not minimal. 
Thus, we should cancel the identity-arrows from $\tau(E_0)$ in order to make $\tau(E_0)$ minimal. 
However, before that, we first cancel $z_u$-arrows for $u \in V_+(T)$ by utilizing Lemma \ref{lem z}. 
We will explain the reason why we cancel $z_u$-arrows in Remark \ref{rmk canceling z arrow}. 

\begin{lem}
	\label{lem z}
	Suppose that a given good twisted complex
	\begin{equation*}
		E_0= \left(\bigoplus_{i =1}^{n_0}\left(X_i = V_i \otimes S_{v_i} [d_i]\right), F_{j,i}\right)
	\end{equation*}
	satisfies that there exists $1 \leq i_1< j_1<i_2<j_2 \leq n_0$ such that 
	\begin{itemize}
		\item $v_{i_1} = v_{i_2} \in U_+(T)$ and $v_{j_1} = v_{j_2} \in V_-(T)$, or equivalently, there exist $u \in V_+(T), w \in V_-(T)$ so that $u = v_{i_1}=v_{i_2}, w = v_{j_1}= v_{j_2}$, 
		\item $u \sim w$ so that there exists $x_{u,w} \in \shom_{\Fuk}^1\left(S_u,S_w\right)$, 
		\item for $k=1,2$, $V_{i_k} = V_{j_k}$,
		\item for $k=1,2$, $F_{j_k,i_k} = \mathrm{Id}_{V_k}\otimes x_{u,w}$, and
		\item $F_{i_2,i_1} = \psi_1 \otimes z_u, F_{j_2,j_1}=\psi_2 \otimes z_w$.  
	\end{itemize}
	In other words, $E_0$ contains the following:
	\[
	\begin{tikzcd}
			V_{i_1} \otimes S_u[d_{i_1}] \arrow[swap]{r}{\mathrm{Id}_{V_{i_1}}\otimes x_{u,w}}  \arrow[bend left= 15]{rr}{\psi_1 \otimes z_u}  & V_{j_1} \otimes S_w[d_{j_1}=d_{i_1}] \arrow{r} \arrow[swap, bend right= 15]{rr}{\psi_2 \otimes z_w} & V_{i_2} \otimes S_u[d_{i_2} = d_{i_1}+N-1] \arrow{r}{\mathrm{Id}_{V_{i_2}} \otimes x_{u,w}} & V_{j_2} \otimes S_w[d_{j_2}=d_{i_1}+N-1] 
	\end{tikzcd}.\]
	Now, let $G$ be a collection of morphisms given as 
	\[G = \left(G_{j,i} \in \shom_{\Fuk}^1(V_i \otimes S_{v_i}[d_i], V_j \otimes S_{v_j}[d_j]\right)_{1 \leq i \leq j \leq n_0}, G_{j,i} = \begin{cases}
		a \psi \otimes z_u \text{  if  } (j,i) = (i_2,i_1), \\
		-a \psi \otimes z_w \text{  if  } (j,i) = (j_2, i_2),  \\
		0 \text{  otherwise},  
	\end{cases}\]
	for a linear map $\psi : V_{i_1} = V_{j_1} \to V_{i_2}=V_{j_2}$ and for an $a \in \mathbb{k}$.
	Then, the following two twisted complexes are equivalent:
	\[\left(\bigoplus_{i =1}^{n_0} V_i \otimes S_{v_i} [d_i], F_{j,i}\right) \simeq \left(\bigoplus_{i =1}^{n_0} V_i \otimes S_{v_i} [d_i], F_{j,i}+G_{j,i}\right).\]
\end{lem}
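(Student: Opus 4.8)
The plan is to realise the passage from $\big(\bigoplus_i V_i\otimes S_{v_i}[d_i],(F_{j,i})\big)$ to $\big(\bigoplus_i V_i\otimes S_{v_i}[d_i],(F_{j,i}+G_{j,i})\big)$ as an \emph{explicit isomorphism of twisted complexes}. Since the two twisted complexes have the same underlying direct sum of objects, it suffices, following \cite[Chapter (3m)]{Seidel08}, to produce a closed degree-$0$ morphism $\Psi\in\shom^0_{\Tw(\Fuk)}$ from the first to the second which is invertible; concretely I will take $\Psi=\mathrm{Id}+\Theta$ for a single ``correction arrow'' $\Theta$, so that $G$ appears as the gauge transformation of the Maurer--Cartan element $F$ along $\Theta$. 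This is the twisted-complex analogue of the elementary row/column move ``add a multiple of the $z_u$-arrow to a parallel $z_w$-arrow.''

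First I would observe that, because $u\sim w$ and $d_{i_2}=d_{i_1}+N-1=d_{j_2}$, Lemma \ref{lem grading} gives
\[\shom^0_{\Fuk}\!\big(S_w[d_{i_1}],\,S_u[d_{i_1}+N-1]\big)\;=\;\shom^{N-1}_{\Fuk}(S_w,S_u)\;=\;\mathbb{k}\langle y_{w,u}\rangle,\]
and that $V_{j_1}=V_{i_1}$, $V_{i_2}=V_{j_2}$, so the linear map $\psi$ of the statement is precisely a map $V_{j_1}\to V_{i_2}$. I then let $\Theta\in\shom^0_{\Tw(\Fuk)}$ be the morphism all of whose components vanish except
\[\Theta_{i_2,j_1}\;=\;c\,\psi\otimes y_{w,u}\;:\;V_{j_1}\otimes S_w[d_{i_1}]\;\longrightarrow\;V_{i_2}\otimes S_u[d_{i_1}+N-1],\]
for a scalar $c\in\mathbb{k}$ to be pinned down. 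Since $j_1<i_2$, the entry $\Theta_{i_2,j_1}$ is strictly lower-triangular, so $\Psi=\mathrm{Id}+\Theta$ is automatically invertible; moreover $i_2\neq j_1$ forces $\mu^2_{\Tw(\Fuk)}(\Theta,\Theta)=0$, so the inverse is simply $\mathrm{Id}-\Theta$ and all quadratic-and-higher terms of the gauge action of $\Theta$ collapse.

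It remains to choose $c$ so that $\Psi$ is closed, which (using $\mu^1_{\Tw(\Fuk)}(\mathrm{Id})=0$ and that $F+G$ is the differential on the target) amounts to $\mu^1_{\Tw(\Fuk)}(\Theta)=G$, the left side computed with respect to $F$. Expanding this by the formula of \cite[Chapter (3m)]{Seidel08}, the key point is that the only products of $\Fuk$ that can enter are $\mu^2_{\Fuk}(y_{w,u},x_{u,w})$ and $\mu^2_{\Fuk}(x_{u,w},y_{w,u})$, which are nonzero multiples of $z_u$ and $z_w$ respectively, paired against the prescribed arrows $F_{j_1,i_1}=\mathrm{Id}_{V_{i_1}}\otimes x_{u,w}$ and $F_{j_2,i_2}=\mathrm{Id}_{V_{i_2}}\otimes x_{u,w}$; matching the $(i_2,i_1)$-entry reads $c\cdot\mu^2_{\Fuk}(y_{w,u},x_{u,w})=\pm a\,z_u$ and fixes $c$, after which one checks that the $(j_2,j_1)$-entry of $\mu^1_{\Tw(\Fuk)}(\Theta)$ comes out to be exactly $-a\,\psi\otimes z_w$, the remaining component of $G$. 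The main obstacle is twofold: (i) verifying that $\mu^1_{\Tw(\Fuk)}(\Theta)$ produces \emph{no spurious components} beyond these two slots — this uses that the morphism spaces among the $S_v$ live only in degrees $0,1,N-1,N$ with $N\geq3$, that the relevant twisted complexes carry no identity-arrows and (after reducing to simplified twisted complexes as in Notation \ref{notation 1}) no repeated objects, and that the higher products $\mu^{\geq3}_{\Fuk}$ contributing here vanish; and (ii) the sign bookkeeping that a \emph{single} scalar $c$ simultaneously yields the $z_u$- and the $z_w$-component with the signs prescribed by $G$. Point (ii) rests on the $A_\infty$-conventions together with the normalisation relating $\mu^2_{\Fuk}(y_{w,u},x_{u,w})$ and $\mu^2_{\Fuk}(x_{u,w},y_{w,u})$ — equivalently on the fact that $\Cone(x_{u,w})=\tau_u(S_w)$ is a spherical object whose degree-$N$ endomorphism space is one-dimensional, which forces the $z_u$- and $z_w$-parts of any exact degree-$N$ endomorphism to be proportional with a fixed ratio. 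Once $c$ is chosen, $\Psi=\mathrm{Id}+\Theta$ is the desired closed isomorphism and the two twisted complexes are equivalent.
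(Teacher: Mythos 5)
Your proposal and the paper's proof are the same argument in different notation: the paper's entire content is that the degree-$1$ correction $\left(\begin{smallmatrix} z_u & 0 \\ 0 & -z_w \end{smallmatrix}\right)$ between the two cones of $\mathrm{Id}\otimes x_{u,w}$ is $\mu^1_{\Fuk}$-exact with primitive $\left(\begin{smallmatrix} 0 & y_{w,u} \\ 0 & 0 \end{smallmatrix}\right)$, and your $\Theta_{i_2,j_1}=c\psi\otimes y_{w,u}$ is exactly this primitive tensored with the linear map $\psi$, with $\Psi=\mathrm{Id}+\Theta$ the resulting gauge transformation. The caveats you flag about spurious components from other arrows of $E_0$ and about sign normalisation are real, but they are also left implicit in the paper's two-line proof, so your treatment is at the same level of rigor.
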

\begin{proof}
	To prove Lemma \ref{lem z}, it is sufficient to show that a morphism
	\[\begin{pmatrix} z_u & 0 \\ 0 & -z_w \end{pmatrix} 
	\in \shom_{\Fuk}^1 \left( C_1 \otimes S_u[d] \xrightarrow{\mathrm{Id_{C_1}} \otimes x_{u,w}} C_1\otimes S_w[d], C_2 \otimes S_u[d+N-1] \xrightarrow{\mathrm{Id}_{C_2} \otimes x_{u,w}} C_2 \otimes S_w[d+N-1]\right)\]
	is $\mu^1_{\Fuk}$-exact. 
	And, it is easy to observe that 
	\[
	\mu^1_{\Fuk} \begin{pmatrix}
			0 & y_{w,u} \\
			0 & 0
		\end{pmatrix} = \begin{pmatrix} z_u & 0 \\ 0 & -z_w \end{pmatrix}.
	\]
\end{proof}

\begin{remark}
	\label{rmk canceling z arrow}
	\mbox{}
	\begin{enumerate}
		\item Let us recall that the condition (b) of Definition \ref{definition partially carried-by} (2) is related to chains of arrows $(F_{i_2,i_1}, \dots, F_{i_k,i_{k-1}})$ such that $F_{i_2,i_1}$ is an $y$-arrow and $F_{i_m,i_{m-1}}$ is an $z_u$-arrow for some $u \in V_+(T)$, i.e., starting with a $y$-arrow, and all other arrows are $z_u$-arrows.
		Similarly, the condition (c) of Definition \ref{definition partially carried-by} (2) is related to chains of arrows starting with a $y$-arrow, ending with a $x$-arrow, and all other arrows are $z_u$-arrow. 
		Thus, if we can {\em cancel} $z_u$-arrows, then we can reduce the number of chains of arrows which we need to consider to prove Lemma \ref{lem partially carried-by preserved}. 
		It is the reason why we want to cancel $z_u$-arrows, and Lemma \ref{lem z} provides a way to cancel $z_u$-arrows by adding extra $z_w$-arrows. 
		\item Let us assume that a good twisted complex $E_0 = \left(\bigoplus_{i \in I_0} X_i, F_{j,i}\right)$ has two indices $i,j \in I$ satisfying
		\[X_i = V_i \otimes S_w[d] \ \mathrm{and}\ X_j = V_j \otimes S_w[d+N-1], \text{  and  } w \in V_-(T).\]
		If $u \sim w$, one can observe that 
		\[\tau_u(X_i) = V_i \otimes S_u[d] \xrightarrow{\mathrm{Id}_{V_1} \otimes x_{u,w}} V_i \otimes S_w[d] \ \mathrm{and}\ \tau_u(X_j) = V_j \otimes S_u[d+N-1] \xrightarrow{\mathrm{Id}_{V_2} \otimes x_{u,w}} V_i \otimes S_w[d+N-1].\]
		Thus, $\tau_u(E_0)$ satisfies the conditions of Lemma \ref{lem z}. 
		One can also easily observe that if
		\[X_i = V_i \otimes S_u[d], X_j = V_j \otimes S_u[d+N-1], u \in V_+(T),\]
		and if $w \in V_-(T)$ satisfies $u \sim w$, then $\tau_w^{-1}(E_0)$ also satisfies the conditions of Lemma \ref{lem z}.
	\end{enumerate}
\end{remark}

For convenience, by abusing notation, we let $\tau(E_0) \simeq \left(\bigoplus_{j \in J_0} Y_j, G_{j,i}\right)$ denote the twisted complex obtained by canceling as many $z_u$-arrows from $\tau(E_0)$ in Equation \eqref{eq quasiiso3} as possible. 
Now, we construct a {\em minimal} twisted complex equivalent to $\tau(E_0)$ from the twisted complex $\left(\bigoplus_{j \in J_0} Y_j, G_{j,i}\right)$ by applying the argument given in the proof of Lemma \ref{lem minimal twisted complex}, or equivalently, by {\em canceling the identity-arrows}. 
More precisely, for every pair $i < j \in J$ such that $g_{j,i}$ is the identity morphism, one can replace $\overline{V}_i$ with $\overline{V}_i / \mathrm{Ker} \overline{\psi}_{j,i}$ and $\overline{V}_j$ with $\overline{V}_j / \mathrm{Im} \overline{\psi}_{j,i}$. 
Then, one obtains a minimal twisted complex equivalent to $\tau(E_0)$. 

Now, let us consider a nonzero object $E$ which is partially carried-by with a triple $(A,B,F)$. 
As mentioned in Notation \ref{notation 1}, we use the following notations:
\begin{gather*}
	E= \left(\bigoplus_{i=1}^n (X_i = V_i \otimes S_{v_i}[d_i]), (F_{j,i}:X_i \to X_j)_{1 \leq i < j \leq n}\right), \\
	\left(A= \left(\bigoplus_{i=1}^K X_i, (F_{j,i})_{1 \leq i < j \leq K}\right), B =\left(\bigoplus_{i=K+1}^n X_i, (F_{j,i})_{K+1 \leq i < j \leq n}\right), F= \left(F_{j,i}\right)_{i=1, \dots, K}^{j = K+1, \dots, n}\right).
\end{gather*} 

Let $\overline{A}$ and $\overline{B}$ denote minimal twisted complexes equivalent to $\tau(A)$ and $\tau(B)$, which are obtained by applying the above arguments to $A$ and $B$ respectively. 
In other words, we first apply Lemma \ref{lem induced functor} to $\tau(A)$ and $\tau(B)$, then cancel as many $z_u$-arrows as possible, and then cancel the identity arrows by applying Lemma \ref{lem minimal twisted complex}. 
For convenience, we set 
\[\overline{A}= \left(\bigoplus_{i=1}^{\overline{K}} (Y_i = \overline{V}_i \otimes S_{\overline{v}_i}[\overline{d}_i]), (G_{j,i})_{1 \leq i < j \leq \overline{K}}\right), \overline{B} =\left(\bigoplus_{i=\overline{K}+1}^{\overline{n}} (Y_i = \overline{V}_i \otimes S_{\overline{v}_i}[\overline{d}_i]), (G_{j,i})_{\overline{K}+1 \leq i < j \leq \overline{n}}\right).\]

Since $E \simeq \left(A \oplus B, F\right)$, there exists a collection of morphisms 
\[G = \left(G_{j,i}\right)_{i=1, \dots, \overline{K}}^{j= \overline{K}+1, \dots, \overline{n}},\]
satisfying that $\tau(E)  \simeq \left(\overline{A} \oplus \overline{B}, G\right)$.
Or equivalently, $\tau(E)$ is equivalent to a twisted complex 
\begin{equation*}
	\tau(E) \simeq \left(\bigoplus_{k =1}^{\overline{n}}( Y_k = \overline{V}_k \otimes S_{\overline{v}_k}[\overline{d}_k]), \left(G_{j,i}\right)_{1 \leq i < j \leq \overline{n}}\right).
\end{equation*}

From the above construction, one can check that for any $i \leq \overline{K} <j$, $G_{j,i}$ cannot be an identity arrow. 
It is because, for any $j \geq K$, $F_{j,i}$ cannot be a $y$-arrow. 
Then, applying Lemmas \ref{lem x-morphism}--\ref{lem the zero morphism}, there exists no identity arrows from $\tau(X_i)$ to $\tau(X_j)$ for $j \geq K$. 
Then, when one constructs minimal twisted complexes $\overline{A}$ and $\overline{B}$ by canceling identity-arrows from $\tau(E) = \left(\bigoplus_{i\in I}X_i, F_{j,i}\right)$ (after canceling $z_u$-arrows), the procedure does not generate an extra identity-arrow.
Thus, we have a minimal twisted complex equivalent to $\tau(E_0)$, denoted as follows by abusing notations:
\begin{gather}
	\label{eqn final one} 
	\tau(E) \simeq \left(\bigoplus_{k =1}^{\overline{n}}( Y_k = \overline{V}_k \otimes S_{\overline{v}_k}[\overline{d}_k]), \left(G_{j,i}\right)_{1 \leq i < j \leq \overline{n}}\right).
\end{gather}

The above minimal twisted complex in Equation \eqref{eqn final one} is a minimal twisted complex equivalent to $\tau(E)$ with a triple 
\[\left(\overline{A}, \overline{B}, \left\{G_{j,i}\right\}_{1 \leq i \leq \overline{K}}^{\overline{K}+1 \leq j \leq \overline{n}}\right).\]
Thus, for proving Lemma \ref{lem partially carried-by preserved}, it is enough to show that the twisted complex in Equation \eqref{eqn final one} and $\overline{K}$ satisfy the conditions (a)--(c) of Definition \ref{definition partially carried-by} (2). 

Since the above construction of $\overline{A}$, $\overline{B}$, and the twisted complex in Equation \eqref{eqn final one}, we summarize the notations before going further. 
\begin{notation}
	\label{notation 2}
	\mbox{}
	\begin{itemize}
		\item We will use $\overline{A}$ to indicate the minimal twisted complex such that $\overline{A} \simeq \tau(A)$, which is obtained by canceling $z_u$-arrows and identity-arrows. 
		Similarly, $\overline{B}$ will denote the minimal twisted complex equivalent to $\tau(B)$ obtained by the same way, and $G: \overline{A} \to \overline{B}$ will denote the morphism such that 
		\[\tau(E) \simeq \left(\overline{A} \oplus \overline{B}, G\right).\]
		\item The minimal twisted complexes $\overline{A}$ and $\overline{B}$ will be written as 
		\[\overline{A}= \left(\bigoplus_{i=1}^{\overline{K}} (Y_i = \overline{V}_i \otimes S_{\overline{v}_k}[\overline{d}_k]), (G_{j,i})_{1 \leq i < j \leq \overline{K}}\right), \overline{B}= \left(\bigoplus_{i=\overline{K} +1}^{\overline{n}} (Y_i = \overline{V}_i \otimes S_{\overline{v}_k}[\overline{d}_k]), (G_{j,i})_{\overline{K}+1 \leq i < j \leq \overline{n}}\right).\]
		\item Moreover, $\tau(E)$ is equivalent to the following twisted complex:
		\[\tau(E) = \left(\bigoplus_{j \in J} (Y_j = \overline{V}_j \otimes S_{\overline{v}_j}[\overline{d}_j]), (G_{j,i} = \overline{\psi}_{j,i} \otimes g_{j,i})_{1 \leq i < j \leq \overline{n}}\right).\]
		Especially, we note that $J= \{1, \dots, \overline{n}\}$ is the index set of the above twisted complex. 
	\end{itemize}
\end{notation}

\begin{remark}
	\label{rmk originate}
	We note that from the above argument, one can observe that $Y_j$ {\em originates} from $X_i$, in the sense that $Y_j$ is a part or a quotient of a part of $\tau(X_i)$. 
	In the rest of this section, we will use a word ``{\em originated}" in this viewpoint. 
\end{remark}

Again, we note that, in order to prove Lemma \ref{lem partially carried-by preserved}, it is enough to show that $\tau(E)$ is partially carried-by with a triple $(\overline{A}, \overline{B}, G)$. 
In the rest of Section \ref{subsection step 4}, we state and prove Lemmas \ref{lem noxyyx} and \ref{lem y-arrow2} that we need to prove Lemma \ref{lem partially carried-by preserved}. 
The proof of Lemma \ref{lem partially carried-by preserved} will be given in the next subsection.

\begin{lem}
	\label{lem noxyyx}
	Let $E_0=\left(\bigoplus_{i =1}^{n_0} X_i, F_{j,i}\right)$ be a simplified twisted complex.
	\begin{enumerate}
		\item If there exist $i<j<k \in \{1, \dots, n_0\}$ satisfying that 
		\begin{gather}
			\label{eq (a)}
			X_i = V_i \otimes S_u[d_i] \xrightarrow{F_{j,i} = \psi_{j,i} \otimes x_{u,w}} X_j = V_j \otimes S_w[d_i] \xrightarrow{F_{k,j} = \psi_{k,j} \otimes y_{w,u}} X_k  = V_k \otimes S_u[d_i+N-2],
		\end{gather}
		then the composition of two linear maps $\psi_{k,j} \circ \psi_{j,i}$ should be the zero map.
		\item If there exist $i<j<k \in \{1, \dots, n_0\}$ satisfying that 
		\begin{gather}
			\label{eq (b)}
			X_i = V_i \otimes S_w[d_i] \xrightarrow{F_{j,i} = \psi_{j,i} \otimes y_{w,u}} X_j = V_j \otimes S_u[d_i+N-2] \xrightarrow{F_{k,j} = \psi_{k,j} \otimes x_{u,w}} X_k  = V_k \otimes S_w[d_i+N-2],
		\end{gather}
		then the composition of two linear maps $\psi_{k,j} \circ \psi_{j,i}$ should be the zero map.
	\end{enumerate}
\end{lem}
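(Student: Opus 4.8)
\textbf{Proof proposal for Lemma \ref{lem noxyyx}.}

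The plan is to exploit the $A_\infty$-relations on $\Fuk$, specifically the identity $\mu^1_{\Fuk} \circ \mu^1_{\Fuk} = 0$ encoded in the generalized Maurer--Cartan equation for $E_0$, together with a degree count that forces the only surviving contributions to be the single composites in \eqref{eq (a)} and \eqref{eq (b)}. First I would recall that because $E_0$ is a twisted complex, the component $G^{(2)}_{k,i}$ of $\mu^1_{\Tw(\Fuk)}$ applied in degree reasons must vanish; more precisely, the coefficient of the arrow from $X_i$ to $X_k$ in the Maurer--Cartan equation collects the terms $\mu^2_{\Fuk}(f_{k,j}, f_{j,i})$ over all intermediate $j$, plus the higher $\mu^m_{\Fuk}$ terms over longer chains, plus $\mu^1_{\Fuk}(f_{k,i})$, and this total is zero. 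Since $E_0$ is a good, in fact simplified, twisted complex, Lemma \ref{lem grading} severely restricts which morphism spaces $\hom_{\Fuk}^1(X_\ell, X_{\ell'})$ are nonzero, and in the situation of \eqref{eq (a)} the target $X_k = V_k \otimes S_u[d_i+N-2]$ receives a degree-$1$ morphism from $S_u[d_i]$ only through the $z$-morphism (grading $N$, landing in the right shift since $N - (N-2) = 2 \neq 1$ — so actually one must check the shift arithmetic carefully here), and from $S_w[d_i]$ only through the $y$-morphism $y_{w,u}$. The key point is that in the simplified complex there is at most one index carrying each pair $(v,d)$, so the sum over intermediate objects collapses and there is genuinely nothing else that can cancel the composite $\psi_{k,j}\otimes y_{w,u} \circ \psi_{j,i}\otimes x_{u,w}$.

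Concretely, I would argue as follows. In case (1), consider the Maurer--Cartan component of $E_0$ indexed by the pair $(i,k)$. Every chain of arrows from $X_i$ to $X_k$ contributing to this component must be a chain of degree-$1$ morphisms among shifts of generators, and by Lemma \ref{lem grading} the cumulative grading along a chain of length $m$ from $S_u$ to $S_u$ is constrained; using $|x|=1$, $|y|=N-1$, $|z|=N$, $|e|=0$, the only length-$2$ chain landing in $\hom^1_{\Fuk}(S_u[d_i], S_u[d_i+N-2])$ that passes through $S_w[d_i]$ is exactly $x_{u,w}$ followed by $y_{w,u}$, whose underlying Floer-level composite $\mu^2_{\Fuk}(y_{w,u}, x_{u,w})$ is, by a direct computation in the plumbing (or equivalently in $\Gamma_N T$), a nonzero multiple of the $z$-morphism $z_u$ — here I'd invoke that $S_u$ is spherical and the composition $S_u \to S_w \to S_u$ realizes a generator of $\hom^N_{\Fuk}(S_u,S_u) = \mathbb{k}\langle z_u\rangle$. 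Since $E_0$ is \emph{minimal}, it has no identity-arrows, and since it is simplified there is no second index $(u, d_i+N-1)$ through which a compensating $z_u$-arrow chain could arise to cancel this term; hence the Maurer--Cartan equation forces the linear coefficient $\psi_{k,j}\circ\psi_{j,i}$ of this composite to vanish. Case (2) is entirely parallel, reading the chain $y_{w,u}$ then $x_{u,w}$ as a composite $S_w \to S_u \to S_w$ landing in $\hom^N_{\Fuk}(S_w,S_w)=\mathbb{k}\langle z_w\rangle$, again a nonzero multiple of $z_w$, and the same minimality/simplified argument applies.

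The main obstacle I anticipate is bookkeeping the higher products $\mu^m_{\Fuk}$ for $m \geq 3$ and the signs: a priori the Maurer--Cartan equation for the $(i,k)$-component is not just $\mu^2_{\Fuk}(f_{k,j}, f_{j,i}) = 0$ but includes contributions from longer chains $X_i \to \cdots \to X_k$ and from $\mu^1_{\Fuk}(f_{k,i})$. I would handle $\mu^1_{\Fuk}$ by noting it is zero on all the relevant $\hom^*_{\Fuk}(S_v,S_w)$ by Lemma \ref{lem grading}(2), and handle the longer chains by the same grading constraint: a chain of length $m$ from $S_u[d_i]$ back to $S_u[d_i+N-2]$ through intermediate generators, with all arrows of degree $1$, is forced by the shift-arithmetic to have a very constrained sequence of generator-types, and combined with the simplified hypothesis (at most one object per $(v,d)$) one checks these either do not exist or their Floer composites vanish for degree reasons. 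The delicate check is purely local to the tree $T$ around the edge $u \sim w$, so it reduces to a finite case analysis in the manner of Lemmas \ref{lem x-morphism}--\ref{lem the zero morphism}, and I'd present it by isolating the relevant two-term sub-twisted-complex and invoking the vanishing of $\mu^1_{\Tw(\Fuk)}\circ\mu^1_{\Tw(\Fuk)}$ there, exactly as in the proof of Lemma \ref{lem zero differential for minimal}.
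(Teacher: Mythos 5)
Your proposal correctly identifies that the generalized Maurer--Cartan equation on the $(i,k)$-component is the engine, and you set up the degree bookkeeping properly: for a length-two chain $S_u[d_i] \to S_{w'}[d_i] \to S_u[d_i+N-2]$, the composite $\mu^2(y_{w',u},x_{u,w'})$ lands in $\hom^N(S_u,S_u)=\mathbb{k}\langle z_u\rangle$, and minimality and degree reasons kill the $\mu^1(F_{k,i})$ term and the $z_u$–$e_u$ chains. That much agrees with the paper.

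The gap is in the claim that the sum over intermediate objects \emph{collapses to a single term}. The simplified hypothesis guarantees at most one index per pair $(v,d)\in V(T)\times\mathbb{Z}$, but the Maurer--Cartan equation for the $(i,k)$-component receives a contribution $\pm(\psi_{k,j'}\circ\psi_{j',i})\otimes z_u$ from \emph{every} negative vertex $w'\sim u$ for which $X_{j'}=V_{j'}\otimes S_{w'}[d_i]$ appears in $E_0$ --- and $u$ may have many neighbours $w'$. All these chains land in the same one-dimensional space $\mathbb{k}\langle z_u\rangle$, so the Maurer--Cartan equation gives only $\sum_{w'\sim u}\pm(\psi_{k,j(w')}\circ\psi_{j(w'),i})=0$, which does not yield the individual vanishing $\psi_{k,j}\circ\psi_{j,i}=0$ for the particular neighbour $w$. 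You briefly consider "compensating $z_u$-arrow chains" but never confront these parallel $x_{u,w'}$-then-$y_{w',u}$ chains, which are the real obstruction.

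The paper's proof bridges exactly this gap with an additional step you are missing: after rearranging so that $j=i+1$ and extracting the sub-twisted complex $E_1$ spanned by $X_i,X_{i+1},\dots,X_{i+A},X_k$, it applies the Dehn twist $\tau_w$ to $E_1$. Because $\tau_w$ replaces $X_j=V_j\otimes S_w[d_i]$ by $V_j\otimes S_w[d_i-N+1]$ (a single factor, not a cone), the corresponding intermediate no longer sits at the right degree to contribute, and the Maurer--Cartan equation for $\tau_w(E_1)$ produces a second identity $\sum_{j'\geq i+2}\pm(\psi_{k,j'}\circ\psi_{j',i})=0$. Subtracting this from the original equation isolates the $w$-term and gives $\psi_{k,j}\circ\psi_{j,i}=0$. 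Without some analogue of this isolation argument, your proof does not establish the statement.
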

\begin{proof}
	We recall that, in \eqref{eq (a)}, since $F_{j,i}$ is an $x$-arrow, thus, $v_i = u \in V_+(T)$ and $v_j =w \in V_-(T)$.
	Moreover, since the degree of $x_{u,w}$ is $1$, $d_j - d_i=0$. 
	Thus, for simplicity, we just write that $d_i = d$ and $d_j =d$ in the quiver given in \eqref{eq (a)}. 
	The quiver in \eqref{eq (b)} is also obtained in the same way.
	
	We prove Lemma \ref{lem noxyyx} (1) only, and (2) can be proven by the same proof. 
	To prove (1), we first recall that the definition of {\em simplified} twisted complex requires the twisted complex to be minimal. 
	Then, by Lemma \ref{lem minimal twisted complex} (1), we can assume without loss of generality that the twisted complex is well-ordered.
	In other words, for a given twisted complex $E_0=\left(\bigoplus_{i =1}^{n_0} X_i = V_i \otimes S_{v_i}[d_i], F_{j,i}\right)$, we assume that 
	\begin{itemize}
		\item $d_i \leq d_{i+1}$ for all $i$, and 
		\item if $d_i = d_{i+1}$ and $v_i \in V_-(T)$, then $v_{i+1} \in V_-(T)$.
	\end{itemize}

	Let $i < j < k$ be a triple satisfying \eqref{eq (a)}. 
	For convenience, we will assume that $d_i=0$ in the rest of the proof. 
	Moreover, we rearrange the index set $I_0 = \{1, \dots, n_0\}$ of $E_0$ without changing the arrows $F_{j,i}$ so that after the rearrangement, the ordered index set satisfies that 
	\begin{itemize}
		\item if $i'$ is the index such that $X_{i'} = V_{i'} \otimes S_{u'}[d_{i'}]$ with $u' \in V_+(T)$ and $d_{i'} \leq 0$, then $i' \leq i$, 
		\item if $k'$ is the index such that $X_{k'} = V_{k'} \otimes S_{u'}[d_{k'}]$ with $u' \in V_+(T)$ and $d_{k'} \geq N-2$, then $k' \geq k$, 
		\item there exists a natural number $A \in \mathbb{N}$ such that the arrow $F_{j',i}$ is a $x$-arrow if and only if $0 < j'-i  \leq A$, and
		\item especially, $j= i+1$.
	\end{itemize}

	After rearranging the index set, one can observe the following:
	\begin{enumerate}
		\item[(i)] For any $k'$ such that $i+A < k' \leq k$, $F_{k',i}$ should be the zero arrow because of the degree reason. 
		\item[(ii)] For any pair $(j',k')$ such that  $i+q \leq j' \leq i+b < k' <k$, $F_{k',j'}$ should be the zero arrow because of the degree reason. 
	\end{enumerate}
	From (i) and (ii), one can ensure that every nonzero chain of arrows from $X_i$ to $X_k$ is the form of 
	\[X_i = V_i \otimes S_u \xrightarrow{F_{j',i} = \psi_{j',i} \otimes x_{u,w'}} X_{j'} = V_{j'} \otimes S_{w'} \xrightarrow{F_{k,j'} = \psi_{k,j'} \otimes y_{w',u}} X_k  = V_k \otimes S_u[N-2],\]
	with $i+1 \leq j' \leq i +A$.
	Moreover, from the generalized Maurer-Cartan equation, we have 
	\begin{gather}
		\label{eqn generalized Maurer-Cartan 1}
		\sum_{j' = i+1}^{i+A} (-1)^{*_{j'}} \left(\psi_{k,j'} \circ \psi_{j',i}\right) \otimes \mu_{\Fuk}^2\left(y_{w',u},x_{u,w'}\right) =  \sum_{j' = i+1}^{i+A} (-1)^{*_{j'}}\left(\psi_{k,j'} \circ \psi_{j',i}\right) \otimes z_u = dF_{k,i} = 0.
	\end{gather}
	We note that for simplicity, we omit the details on the sign $(-1)^{*_{j'}}$.
	
	From Equation \eqref{eqn generalized Maurer-Cartan 1}, we have 
	\[\sum_{j' = i+1}^{i+A} (-1)^{*_{j'}} \left(\psi_{k,j'} \circ \psi_{j',i}\right) =0.\]
	In order to prove (1), it is enough to prove that 
	\begin{gather}
		\label{eqn generalized Maurer-Cartan 2}
		\sum_{j' = i+2}^{i+A} (-1)^{*_{j'}} \left(\psi_{k,j'} \circ \psi_{j',i}\right) =0,
	\end{gather}
	since we are assuming that $j=i+1$.
	To show the above equality, we consider a twisted complex $E_1$ with an index set 
	\[I_1 := \left\{i < i+1 < \dots < i+A < k\right\},\]
	such that 
	\[E_1 = \left(\bigoplus_{a \in I_1} X_a, F_{b,a}\right).\]
	It is easy to check that $E_1$ is a twisted complex, i.e., $E_1$ satisfies the generalized Maurer-Cartan equation. 
	
	Now, we apply the autoequivalence $\tau_w$ to the new twisted complex $E_1$. 
	Then, Lemma \ref{lem induced functor} gives us 
	\[\tau_w(E_1) \simeq \left(\bigoplus_{a \in I_1} \tau_w(X_a), G_{b,a}\right).\]
	And, it is easy to check that the following hold:
\begin{itemize}
	\item $\tau_w(X_i = V_i \otimes S_u) = V_i \otimes S_w [-N+2] \xrightarrow{id_{V_i} \otimes y_{w,u}} V_i \otimes S_u$.
	\item $\tau_w(X_j = X_{i+1}) = V_j \otimes S_w [-N+1]$.
	\item For any $a$ such that $i+2 \leq a \leq i+A$, $\tau_w(X_a = V_a \otimes S_{w'}) = V_a \otimes S_{w'}$. We note that since $E_0$ is simplified, $w' \neq w$. 
	\item $\tau_w(X_k = V_k \otimes S_u[N-2]) = V_k \otimes S_w  \xrightarrow{id_{V_k} \otimes y_{w,u}} V_k \otimes S_u[N-2]$.
\end{itemize}

In the rest of proof, we would like to analyze the nonzero chains of arrows starting at $V_i \otimes S_u$ contained in $\tau_w(X_i)$ and ending at $V_i \otimes S_u[N-1]$ contained in $\tau_w(X_k)$. 
It is easy to observe that, by degree reason, the all possible nonzero chains of arrows are of the following form:
\[V_i \otimes S_u \xrightarrow{G_{j',i} = \psi_{j',i} \otimes x_{u,w'}} V_{j'} \otimes S_{w'} \xrightarrow{G_{k,j'} = \psi_{k,j'} \otimes y_{w',u}} V_k \otimes S_u[N-2],\]
with $i+2 \leq j' \leq i+A$.
We note that one can compute $G_{j',i}$ directly by using the argument in \cite[Section (3m)]{Seidel08}, since the only nonzero chain of arrows from $X_i$ to $X_{j'}$ in $E_1$ is the arrow $F_{j',i}$.

Now, since $\tau_w(E_1)$ should satisfy the Maurer-Cartan equation, we have 
\[\sum_{j' = i+2}^{i+A} (-1)^{*_{j'}} \left(\psi_{k,j'} \circ \psi_{j',i}\right) \otimes \mu_{\Fuk}^2\left(y_{w',u},x_{u,w'}\right) =  \sum_{j' = i+2}^{i+A} (-1)^{*_{j'}}\left(\psi_{k,j'} \circ \psi_{j',i}\right) \otimes z_u = 0.\]
	In other words, Equation \eqref{eqn generalized Maurer-Cartan 2} holds. 
	It completes the proof.
\end{proof}

\begin{lem}
	\label{lem y-arrow2}
	Let $E_0= (\bigoplus_{i =1}^{n_0} X_i, F_{j,i})$ be a simplified twisted complex, and let $\tau$ be either $\tau_u$ for some $u\in V_+(T)$ or $\tau_w^{-1}$ for some $w\in V_-(T)$.
	Moreover, let 
	\[\left(\bigoplus_{j \in J} (Y_j = \overline{V}_j \otimes S_{\overline{v}_j}[\overline{d}_j]),G_{j,i}\right)\]
	be the minimal twisted complex obtained by canceling $z_u$-arrows and the identity-arrows of $\tau(E_0)$.
	If there exists a $y$-arrow $G_{j_2,j_1}$ of $\left(\bigoplus_{j\in J} Y_j, G_{j,i}\right)$, i.e., there exist $u \in V_+(T), w \in V_-(T)$ such that 
	\[Y_{j_1} = \overline{V}_{j_1} \otimes S_w[\overline{d}_{j_1}] \xrightarrow{G_{j_2, j_1} = \overline{\psi}_{j_2,j_1} \otimes y_{w,u}} Y_{j_2} = \overline{V}_{j_2} \otimes S_u[\overline{d}_{j_2} = \overline{d}_{j_1}+N-2],\]
	then, at least one of the following six items must hold:
	\begin{enumerate}
		\item $\tau = \tau_{u}$, and there exist $i_1<i_{1.5}<i_2 \in \{1, \dots, n_0\}$ such that 
		\begin{itemize}
			\item $Y_{j_1}$ (resp.\ $Y_{j_2}$) originates from $X_{i_1}$ (resp.\ $X_{i_2}$) in the sense of Remark \ref{rmk originate}, and
			\item there exists a nonzero chain of arrows 
			\[X_{i_1} =V_{i_1} \otimes S_w[\overline{d}_{j_1}] \xrightarrow{\psi_{i_{1.5},i_1} \otimes y_{w,u}} X_{i_{1.5}} =V_{i_{1.5}} \otimes S_u[\overline{d}_{j_1}+N-2] \xrightarrow{\psi_{i_2,i_{1.5}} \otimes z_u} X_{i_2} = V_{i_2} \otimes S_u[\overline{d}_{j_1} + 2N-3].\] 
		\end{itemize}
		\item  $\tau = \tau_{u}$, and there exist $i_1<i_{1.5}<i_2 \in \{1, \dots, n_0\}$ and $w' \in V_-(T)\setminus \{w\}$ such that 
		\begin{itemize}
			\item $Y_{j_1}$ (resp.\ $Y_{j_2}$) originates from $X_{i_1}$ (resp.\ $X_{i_2}$) in the sense of Remark \ref{rmk originate}, and
			\item there exists a nonzero chain of arrows 
			\[X_{i_1} =V_{i_1} \otimes S_w[\overline{d}_{j_1}] \xrightarrow{\psi_{i_{1.5},i_1} \otimes y_{w,u}} X_{i_{1.5}} =V_{i_{1.5}} \otimes S_u[\overline{d}_{j_1}+N-2] \xrightarrow{\psi_{i_2,i_{1.5}} \otimes x_{u,w'}} X_{i_2} = V_{i_2} \otimes S_{w'}[\overline{d}_{j_1} + N-2].\] 
		\end{itemize}
		\item $\tau = \tau_{u'}$ for a $u' \neq V_+(T) \setminus \{u\}$, and there exist $i_1<i_2 \in \{1, \dots, n_0\}$ such that 
		\begin{itemize}
			\item $Y_{j_1}$ (resp.\ $Y_{j_2}$) originates from $X_{i_1}$ (resp.\ $X_{i_2}$) in the sense of Remark \ref{rmk originate}, and
			\item there exists a nonzero chain of arrows 
			\[X_{i_1} =V_{i_1} \otimes S_w[\overline{d}_{j_1}] \xrightarrow{\psi_{i_2,i_1} \otimes y_{w,u}} X_{i_2} =V_{i_2} \otimes S_u[\overline{d}_{j_1}+N-2].\] 
		\end{itemize}
		\item $\tau = \tau_w^{-1}$, and there exist $i_1<i_{1.5}<i_2 \in \{1, \dots, n_0\}$ such that 
		\begin{itemize}
			\item $Y_{j_1}$ (resp.\ $Y_{j_2}$) originates from $X_{i_1}$ (resp.\ $X_{i_2}$) in the sense of Remark \ref{rmk originate}, and
			\item there exists a nonzero chain of arrows 
			\[X_{i_1} =V_{i_1} \otimes S_w[\overline{d}_{j_1}-(N-1)] \xrightarrow{\psi_{i_{1.5},i_1} \otimes z_w} X_{i_{1.5}} =V_{i_{1.5}} \otimes S_w[\overline{d}_{j_1}] \xrightarrow{\psi_{i_2,i_{1.5}} \otimes y_{w,u}} X_{i_2} = V_{i_2} \otimes S_u[\overline{d}_{j_1}+N-2].\] 
		\end{itemize}
		\item $\tau = \tau_{w^{-1}}$, and there exist $i_1<i_{1.5}<i_2 \in \{1, \dots, n_0\}$ and $u' \in V_+(T) \setminus \{u\}$ such that 
		\begin{itemize}
			\item $Y_{j_1}$ (resp.\ $Y_{j_2}$) originates from $X_{i_1}$ (resp.\ $X_{i_2}$) in the sense of Remark \ref{rmk originate}, and
			\item there exists a nonzero chain of arrows 
			\[X_{i_1} =V_{i_1} \otimes S_{u'}[\overline{d}_{j_1}] \xrightarrow{\psi_{i_{1.5},i_1} \otimes x_{u',w}} X_{i_{1.5}} =V_{i_{1.5}} \otimes S_w[\overline{d}_{j_1}] \xrightarrow{\psi_{i_2,i_{1.5}} \otimes y_{w,u}} X_{i_2} = V_{i_2} \otimes S_u[\overline{d}_{j_1} + N-2].\] 
		\end{itemize}		
		\item $\tau = \tau_{w'}$ for a $w' \in V_-(T) \setminus \{w\}$, and there exist $i_1<i_2 \in \{1, \dots, n_0\}$ such that 
		\begin{itemize}
			\item $Y_{j_1}$ (resp.\ $Y_{j_2}$) originates from $X_{i_1}$ (resp.\ $X_{i_2}$) in the sense of Remark \ref{rmk originate}, and
			\item there exists a nonzero chain of arrows 
			\[X_{i_1} =V_{i_1} \otimes S_w[\overline{d}_{j_1}] \xrightarrow{\psi_{i_2,i_1} \otimes y_{w,u}} X_{i_2} =V_{i_2} \otimes S_u[\overline{d}_{j_1}+N-2].\] 
		\end{itemize}
	\end{enumerate}
\end{lem}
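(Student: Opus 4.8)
The statement of Lemma \ref{lem y-arrow2} is a ``bookkeeping'' lemma: it asserts that whenever a $y$-arrow survives in the minimal twisted complex $\left(\bigoplus_{j\in J} Y_j, G_{j,i}\right)$ obtained from $\tau(E_0)$, this $y$-arrow must be ``witnessed'' by a very specific kind of nonzero chain of arrows already present in $E_0$. So the plan is to work backwards from the $y$-arrow $G_{j_2,j_1}$ via the explicit formula \eqref{eqn induced functor} for the induced functor on twisted complexes, together with the explicit computation of $\tau$ on generators from Lemma \ref{lem simple computation}, and to enumerate the finitely many possibilities.

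\textbf{Step 1: locate the source generators.} Since $Y_{j_1} = \overline V_{j_1}\otimes S_w[\overline d_{j_1}]$ and $Y_{j_2} = \overline V_{j_2}\otimes S_u[\overline d_{j_2}]$ with $\overline d_{j_2} = \overline d_{j_1}+N-2$, and every $Y_j$ originates (in the sense of Remark \ref{rmk originate}) from some $X_i$ in $E_0$, I would first determine, case by case on whether $\tau = \tau_{u''}$ or $\tau = \tau_{w''}^{-1}$, which generators $S_{v_i}[d_i]$ of $E_0$ can give rise, under $\tau$ (followed by the cancellation of $z_u$-arrows and identity-arrows), to a summand $S_w[\overline d_{j_1}]$ and a summand $S_u[\overline d_{j_1}+N-2]$, respectively. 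Lemma \ref{lem simple computation}, Equations \eqref{eqn 1}--\eqref{eqn 4}, tells us exactly how $\tau$ acts: $\tau_{u}(S_u) = S_u[1-N]$, $\tau_u(S_w) = (S_u\oplus S_w, x_{u,w})$ if $u\sim w$, etc. This immediately narrows the possible $X_{i_1}$ and $X_{i_2}$ to: (i) $S_w$ itself (when $\tau$ does not involve $w$ or involves $u$ but not in a way changing $S_w$); (ii) $S_{w'}[\text{shift}]$ with $w'\sim\tau$-vertex; (iii) $S_{u'}[\text{shift}]$ with $u'\sim w$; or $S_w$ shifted by $\pm(N-1)$ (when $\tau=\tau_w^{\pm1}$). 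This is precisely the branching that produces the six items.

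\textbf{Step 2: track how the $y$-arrow is produced.} For each pair $(X_{i_1}, X_{i_2})$ surviving Step 1, I would use formula \eqref{eqn induced functor} to see that $G_{j_2,j_1}$ (before cancellations) is a sum over nonzero chains of arrows $X_{i_1}\to X_{i_2}$ in $E_0$ of terms $\pm(\psi\text{-composite})\,\tau^k(f_{\cdot},\dots,f_{\cdot})$. Since $G_{j_2,j_1}$ contains a $y$-morphism, at least one such chain must contribute a $y$-morphism after applying $\tau^k$ and after the $z_u$-arrow and identity-arrow cancellations. Here Lemmas \ref{lem x-morphism}, \ref{lem z-morphism}, \ref{lem the zero morphism}, \ref{lem noxyyx}, and \ref{lem z} are the workhorses: they restrict which composites of $x,y,z$-morphisms can occur and which cancel. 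In particular, Lemma \ref{lem noxyyx} kills the compositions $x\circ y$ and $y\circ x$ (as linear maps), so the only chains of arrows in $E_0$ that can survive to give a $y$-arrow in the minimal complex are those listed — e.g. for $\tau=\tau_u$, a $y$-arrow $S_w\to S_u$ already present, possibly followed by a $z_u$-arrow (item 1) or an $x$-arrow to some $S_{w'}$ (item 2); for $\tau=\tau_w^{-1}$, a $y$-arrow preceded by a $z_w$-arrow (item 4) or an $x$-arrow (item 5); and for a $\tau$ whose vertex is unrelated to the $y$-arrow, simply the bare $y$-arrow $X_{i_1}\to X_{i_2}$ (items 3 and 6). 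The degree bookkeeping (shifts by $N-1$ for $z$, by $1$ for $x$, by $N-2$ for $y$) forces the precise shifts appearing in the statement.

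\textbf{Main obstacle.} The real work, and the main obstacle, is the careful combinatorial verification in Step 2 that \emph{no other} configuration survives — i.e., that after the $z_u$-arrow cancellation (Lemma \ref{lem z}, Remark \ref{rmk canceling z arrow}) and identity-arrow cancellation, a $y$-arrow in the output can only come from one of these six configurations, and that in each case the relevant composite of linear maps $\psi$ is genuinely nonzero (so that the chain in $E_0$ is nonzero). This requires handling, for each of the (essentially four) types of $\tau$, the various relative positions of the source vertex of $\tau$ with respect to $u$ and $w$ and with respect to the edge $u\sim w$, and in each sub-case running the Maurer--Cartan / $A_\infty$-functor formula to confirm which terms cancel. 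The argument is long but mechanical: it parallels the case analyses already carried out in the proofs of Lemmas \ref{lem x-morphism}--\ref{lem the zero morphism} and of Lemma \ref{lem y-morphism}, and I would organize it the same way — fix $\tau$, split into the possible $(X_{i_1},X_{i_2})$ pairs using Lemma \ref{lem simple computation}, and in each pair identify the unique surviving chain shape, checking nonvanishing of the linear part via the generalized Maurer--Cartan equation exactly as in the proof of Lemma \ref{lem noxyyx}.
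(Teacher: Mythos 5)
Your plan is the same as the paper's: for a fixed $\tau$, use Lemma~\ref{lem simple computation} to pin down the possible source terms $X_{i_1},X_{i_2}$ of $Y_{j_1},Y_{j_2}$, then use degree bookkeeping plus Lemma~\ref{lem noxyyx} to constrain the nonzero chains of arrows from $X_{i_1}$ to $X_{i_2}$, and read off the six items. That is the correct skeleton, and the workhorses you name are the right ones.

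However, there is a genuine gap in the claim that Lemma~\ref{lem noxyyx} plus degree counting ``kills'' everything except the listed configurations. In the case $\tau = \tau_u$ with $X_{i_1} = V_{i_1}\otimes S_w[\overline d_{j_1}]$ and $X_{i_2} = V_{i_2}\otimes S_u[\overline d_{j_1}+2N-3]$, the degree argument leaves \emph{two} candidate chain shapes of length two: a $y_{w,u}$-arrow followed by a $z_u$-arrow (item~(1)), and a $z_w$-arrow followed by a $y_{w,u}$-arrow. The latter involves neither $x\circ y$ nor $y\circ x$, so Lemma~\ref{lem noxyyx} does not rule it out, and degree considerations do not distinguish the two. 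The paper excludes the $z_w$-then-$y$ chain by a different mechanism: if only that chain exists, applying $\tau_u$ to it produces an \emph{identity}-arrow $\psi_{i_2,i_{1.5}}\otimes e_u$ from (the $S_u$-summand of) $\tau_u(X_{i_{1.5}})$ to $\tau_u(X_{i_2})$, and canceling that identity arrow replaces $V_{i_2}$ by the quotient $V_{i_2}/\operatorname{Im}(\psi_{i_2,i_{1.5}})$, so the induced linear map on the surviving $\overline V_{j_2}$ is zero and no $y$-arrow $G_{j_2,j_1}$ can arise from this path. Without this identity-cancellation argument (or a comparable one), your Step~2 does not actually reduce the $\tau=\tau_u$ case to items (1) and (2). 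You acknowledge identity-arrow cancellation as a general tool, but you attribute the exclusion of bad chains entirely to Lemma~\ref{lem noxyyx}, which is where the proposal, as written, would fail.
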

\begin{proof}
	We will show that if $\tau = \tau_u$, then either the first item (1) or the second item (2) holds. 
	And, we also show that if $\tau = \tau_{u'}$ for a $u' \in V_+(T)\setminus \{u\}$, then the third item (3) holds.
	We omit the other cases, i.e., the cases of $\tau = \tau_w^{-1}$ or $\tau = \tau_{w'}^{-1}$ for a $w' \in V_-(T) \setminus \{w\}$, because the same logic works, and it will show that (4)--(6) hold for those cases. 
	
	Assume $\tau = \tau_u$. Since $\tau_u(X_{i_1})$ is assumed to contain $Y_{j_1} = \overline{V}_{j_1} \otimes S_w [\overline{d}_{j_1}]$, Lemma \ref{lem simple computation} says that $X_{i_1}$ is necessarily given by $V_{i_1} \otimes S_w[\overline{d}_{j_1}]$.
	Moreover, since $\tau_u(X_{i_2})$ contains $Y_{j_2} = \overline{V}_{j_2} \otimes S_u [\overline{d}_{j_1}+N-2]$, 
	\[X_{i_2} = \begin{cases}
		V_{i_2} \otimes S_u[\overline{d}_{j_1}+2N-3], \\
		V_{i_2} \otimes S_{w_0}[\overline{d}_{j_1} +N-2] \text{  with  } w_0 \sim u.
	\end{cases}\]
	
	Let us first assume that $X_{i_2} = V_{i_2} \otimes S_u[\overline{d}_{j_1}+2N-3]$.
	Since the arrow $G_{j_2,j_1}$ is an $y$-arrow, there exists at least one nonzero chain of arrow from $X_{i_1}$ to $X_{i_2}$.
	When one considers the degree, there exist the following possible nonzero chains of arrows:
	\begin{enumerate}
		\item[\textcircled{1}] For any $N \geq 3$, 
		\[X_{i_1} =V_{i_1} \otimes S_w[\overline{d}_{j_1}] \xrightarrow{\psi_{i_{1.5},i_1} \otimes y_{w,u}} X_{i_{1.5}} =V_{i_{1.5}} \otimes S_u[\overline{d}_{j_1}+N-2] \xrightarrow{\psi_{i_2,i_{1.5}} \otimes z_u} X_{i_2} = V_{i_2} \otimes S_u[\overline{d}_{j_1} + 2N-3].\] 
		\item[\textcircled{2}] For any $N \geq 3$, 
		\[X_{i_1} =V_{i_1} \otimes S_w[\overline{d}_{j_1}] \xrightarrow{\psi_{i_{1.5},i_1} \otimes z_w} X_{i_{1.5}} =V_{i_{1.5}} \otimes S_w[\overline{d}_{j_1}+N-1] \xrightarrow{\psi_{i_2,i_{1.5}} \otimes y_{w,u}} X_{i_2} = V_{i_2} \otimes S_u[\overline{d}_{j_1} + 2N-3].\]
		\item[\textcircled{3}] For $N=3$, the chain of arrow consisting of three $y$-arrows and two $x$-arrows.  
	\end{enumerate}
	To prove Lemma \ref{lem y-arrow2}, we need to show that \textcircled{1} must happen.
	
	We note that \textcircled{3} cannot happen.
	If \textcircled{3} happens, then it contradicts to Lemma \ref{lem noxyyx}. 
	
	Now, let us assume that \textcircled{2} happens, but \textcircled{1} does not happen. 
	Then, we can observe that the second arrow in \textcircled{2}, i.e., 
	\[X_{i_{1.5}} =V_{i_{1.5}} \otimes S_w[\overline{d}_{j_1}+N-1] \xrightarrow{\psi_{i_2,i_{1.5}} \otimes y_{w,u}} X_{i_2} = V_{i_2} \otimes S_u[\overline{d}_{j_1} + 2N-3].\]
	is the only nonzero chain of arrows from $X_{i_{1.5}}$ to $X_{i_2}$ since $E_0$ is simplified and because of Lemma \ref{lem noxyyx}.
	Then, when one applies Lemma \ref{lem induced functor} for computing $\tau(E_0) = \tau_u(E_0)$, the arrows between $\tau(X_{i_{1.5}})$ to $\tau(X_{i_2})$ are determined from the unique nonzero arrows and $\tau_u$.
	As the result, we have 
	\[\begin{tikzcd}
		\big(V_{i_{1.5}} \otimes S_u[\overline{d}_{j_1} +N-1] \arrow[r, "id_{V_{i_{1.5}}} \otimes x_{u,w}"'] \arrow[rr, bend left = 15, "\psi_{i_2,i_{1.5}} \otimes e_u"]  & V_{i_{1.5}} \otimes S_w [\overline{d}_{j_1} + N-1]\big)  \arrow[r, "0\text{-arrow}"'] & V_{i_2} \otimes S_u [\overline{d}_{j_1}+N-2].
	\end{tikzcd}\]
	We note that $\tau_u(X_{i_{1.5}})$ is given in the inside of parentheses.
	Thus, by canceling the identity arrow $\psi_{i_2,i_{1.5}} \otimes e_u$, one observes that $\overline{V}_{j_2} = V_{i_2} / \operatorname{Im}(\psi_{i_2, i_{1.5}})$.
	Moreover, the nonzero chain of arrows given in \textcircled{2} cannot induce a $y$-arrow from $Y_{j_1}$ to $Y_{j_2}$. 
	It contradicts to the existence of $y$-arrow from $Y_{j_1}$ to $Y_{j_2}$.
	Thus, \textcircled{1} must happen. 
	
	Now, let us assume the second possible case for $\tau = \tau_u$, i.e., 
	\[X_{i_2} = V_{i_2} \otimes S_{w_0}[\overline{d}_{j_1} +N-2] \text{  with  } w_0 \sim u.\]
	Because of the degree reason, the only possible nonzero chain of arrows from $X_{i_1}$ to $X_{i_2}$ is the one given in the second item (2) of Lemma \ref{lem y-arrow2}. 
	Thus, (2) holds for the second case.
	
	Finally, if $\tau = \tau_{u'}$ with $u' \in V_+(T) \setminus \{u\}$, then we have 
	\[X_{i_1} = V_{i_1} \otimes S_w[\overline{d}_{j_1}], X_{i_2} = V_{i_2} \otimes S_u[\overline{d}_{j_2} +N-2].\]
	It is because $\tau(X_{i_1})$ contains $Y_{j_1} = \overline{V}_{j_1} \otimes S_w[\overline{d}_{j_1}]$ and $\tau(X_{i_2})$ contains $Y_{j_2} = \overline{V}_{j_2} \otimes S_u[\overline{d}+N-2]$. 
	And, the only possible nonzero chain of arrows from $X_{i_1}$ to $X_{i_2}$ is the one given in Lemma \ref{lem y-arrow2}, (3). 
	Thus, (3) holds if $\tau = \tau_{u'}$ with $u' \in V_+(T) \setminus \{u\}$.
\end{proof}

Before proving Lemma \ref{lem partially carried-by preserved}, let us remark the meaning of Lemma \ref{lem y-arrow2}.
\begin{remark}
	\label{rmk meaning of Lemma}
	Similar to Lemma \ref{lem y-arrow2}, let $E_0= (\bigoplus_{i =1}^{n_0} X_i, F_{j,i})$ be a simplified twisted complex, $\tau$ be either $\tau_u$ for $u \in V_+(T)$ or $\tau_w^{-1}$ for $w \in V_-(T)$, and $\tau(E_0)$ denote the minimal twisted complex 
	\[\left(\bigoplus_{j \in J} (Y_j = \overline{V}_j \otimes S_{\overline{v}_j}[\overline{d}_j]),G_{j,i}\right),\]
	which is obtained by canceling $z_u$ and identity-arrows. 
	From Lemma \ref{lem y-arrow2}, one can conclude that the number of $y$-arrows we need to generate $\Phi E_0$ is smaller or equal to the number of $y$-arrow we need to generate $E_0$. 
	See the main idea section, Section \ref{subsection the main idea}, especially, the arguments right above of Remark \ref{rmk weak pseudo-Anosov}. 
\end{remark}

In order to state Remark \ref{rmk meaning of Lemma} formally, we define the following:
\begin{definition}
	\label{def y-rank}
	Let $E$ be an arbitrary object of $\Fuk$ and, let $E_0 = \left(\bigoplus_{i=1}^{n_0} X_i = V_i \otimes S_{v_i}[d_i], F_{j,i}=\psi_{j,i} \otimes f_{j,i}\right)$ be a minimal twisted complex equivalent to $E$. 
	The {\bf $\boldsymbol{y}$-rank of $\boldsymbol{E}$} denote the nonnegative integer defined as 
	\[r(E) := \sum_{j,i \in I \text{  such that  } F_{j,i} \text{  is an $y$-arrow}} \mathrm{dim} \operatorname{Im} \psi_{j,i}.\]
\end{definition}

\begin{lem}
	\label{lem y-rank}
	In the same setting of Remark \ref{rmk meaning of Lemma}, 
	\[r(E_0) \geq r(\Phi E_0).\]
\end{lem}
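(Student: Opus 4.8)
\textbf{Proof proposal for Lemma \ref{lem y-rank}.}

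The plan is to compare the $y$-rank before and after applying a single generator $\tau \in \{\tau_u, \tau_w^{-1}\}$, since $\Phi$ is a product of such generators and the inequality, once established for each $\tau$, chains up immediately. First I would fix a simplified twisted complex $E_0 = \left(\bigoplus_{i=1}^{n_0} X_i, F_{j,i} = \psi_{j,i} \otimes f_{j,i}\right)$ representing $E$, and let $\tau(E_0) = \left(\bigoplus_{j \in J} Y_j, G_{j,i} = \overline{\psi}_{j,i} \otimes g_{j,i}\right)$ denote the minimal twisted complex obtained by applying Lemma \ref{lem induced functor}, then canceling $z_u$-arrows via Lemma \ref{lem z}, then canceling identity-arrows via Lemma \ref{lem minimal twisted complex}. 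The key object is a map from the $y$-arrows of $\tau(E_0)$ to the $y$-arrows of $E_0$: given a $y$-arrow $G_{j_2,j_1}: Y_{j_1} \to Y_{j_2}$, Lemma \ref{lem y-arrow2} tells us that $Y_{j_1}$ originates from some $X_{i_1}$ and $Y_{j_2}$ originates from some $X_{i_2}$, and that there is a nonzero chain of arrows in $E_0$ between $X_{i_1}$ and $X_{i_2}$ (or between $X_{i_1}, X_{i_{1.5}}, X_{i_2}$) that \emph{contains exactly one $y$-arrow of $E_0$}, say $F_{a,b} = \psi_{a,b} \otimes y_{v_b, v_a}$ with $b = i_1, a = i_{1.5}$ or $a = i_2$ depending on the case. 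This assignment $(j_1, j_2) \mapsto (a,b)$ is the heart of the argument.

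Next I would verify that this assignment is injective (or, more precisely, that it does not collapse too much) and that the dimensions of the images of the linear parts are controlled: $\dim \operatorname{Im} \overline{\psi}_{j_2, j_1} \leq \dim \operatorname{Im} \psi_{a,b}$. The point is that the linear part $\overline{\psi}_{j_2,j_1}$ of the $y$-arrow in $\tau(E_0)$ is built, via the formula \eqref{eqn induced functor} for the induced functor together with the cancellation procedures, as a composition of linear parts of the chain of arrows passing through the distinguished $y$-arrow $F_{a,b}$, possibly followed by passage to a subspace (kernel) or quotient (cokernel) coming from the identity-arrow cancellations in Lemma \ref{lem minimal twisted complex}. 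Since composing with linear maps and passing to sub/quotient spaces can only decrease the rank, each summand contributing to $r(\tau E_0)$ is bounded by the corresponding summand of $r(E_0)$ indexed by $(a,b)$. Moreover, because $E_0$ is simplified, each pair $(v,d)$ has at most one index, which limits how many distinct $y$-arrows $G_{j_2,j_1}$ of $\tau(E_0)$ can be assigned to the same $y$-arrow $F_{a,b}$ of $E_0$; I would check case-by-case (using the six cases of Lemma \ref{lem y-arrow2}, and in particular Remark \ref{rmk vertices} pinning down the vertex labels) that the relevant $Y_{j_1}, Y_{j_2}$ are uniquely determined by $X_{i_1}, X_{i_2}$ so the assignment is in fact injective, giving
\[
r(\tau E_0) = \sum_{G_{j_2,j_1} \text{ is a } y\text{-arrow}} \dim \operatorname{Im} \overline{\psi}_{j_2,j_1} \leq \sum_{F_{a,b} \text{ is a } y\text{-arrow}} \dim \operatorname{Im} \psi_{a,b} = r(E_0).
\]

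The main obstacle I anticipate is the careful bookkeeping in Case (1) and Case (4) of Lemma \ref{lem y-arrow2}, where the chain of arrows in $E_0$ has length two (a $y$-arrow followed or preceded by a $z$-arrow), and where the $z$-arrow is precisely the kind that gets canceled by Lemma \ref{lem z} — one must check that the cancellation introduces the $y$-arrow in $\tau(E_0)$ with linear part exactly the relevant composition (or a sub/quotient thereof) and does not somehow inflate the rank through the correction terms $G_{j,i}$ of Lemma \ref{lem z}. A secondary subtlety is ensuring that the identity-arrow cancellation, which replaces $\overline{V}_i$ by $\operatorname{Ker}$ and $\overline{V}_j$ by a quotient, is applied to spaces that do not carry $y$-arrows (which is exactly what the partially-carried-by structure and Lemmas \ref{lem x-morphism}--\ref{lem the zero morphism} guarantee when the surrounding context is partially carried-by, but here $E_0$ is a general simplified complex, so I would instead observe directly that cancellations only decrease ranks). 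Once these two points are checked, summing over the generators of $\Phi$ completes the proof: $r(E_0) \geq r(\tau_1 E_0) \geq r(\tau_2 \tau_1 E_0) \geq \cdots \geq r(\Phi E_0)$.
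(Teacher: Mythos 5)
Your proposal follows essentially the same route as the paper's proof: reduce to a single generator $\tau$, use Lemma \ref{lem y-arrow2} to assign to each $y$-arrow of $\tau(E_0)$ a $y$-arrow of $E_0$ with controlled image dimension, invoke the simplified property of $E_0$ for injectivity of this assignment, and sum. If anything, your write-up is slightly more explicit than the paper on two points that the paper handles tersely: you state the chaining step $r(E_0) \geq r(\tau_1 E_0) \geq \cdots \geq r(\Phi E_0)$ outright (the paper's proof argues for one $\tau$ and writes $r(\Phi E_0)$ in the conclusion with the chaining left implicit), and you flag the need to check that the $z_u$-cancellation of Lemma \ref{lem z} and the identity-arrow cancellation cannot inflate the rank of $\overline{\psi}_{j_2,j_1}$ (the paper simply asserts $\dim\operatorname{Im}\psi_{j_2',j_1'} \geq \dim\operatorname{Im}\overline{\psi}_{j_2,j_1}$ without elaborating). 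Your stated concerns do resolve the way you expect — the Lemma \ref{lem z} correction terms are $z$-arrows and so do not create new $y$-arrows, and the identity cancellations pass to sub/quotient spaces, which only decrease rank — so the argument goes through as in the paper.
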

\begin{proof}
	If there exists a $y$-arrow 
	\[Y_{j_1} = \overline{V}_{j_1} \otimes S_w[\overline{d}_{j_1}] \xrightarrow{G_{j_2, j_1} = \overline{\psi}_{j_2,j_1} \otimes y_{w,u}} Y_{j_2} = \overline{V}_{j_2} \otimes S_x[\overline{d}_{j_2} = \overline{d}_{j_1}+N-2],\]
	then by Lemma \ref{lem y-arrow2}, there exists a pair $j_1', j_2'$ such that 
	\[X_{j_1'} =V_{j_1'} \otimes S_w[\overline{d}_{j_1}] \xrightarrow{F_{j_2',j_1'} = \psi_{j_2',j_1'} \otimes y_{w,u}} X_{j_2'} = V_{j_2'} \otimes S_u[\overline{d}_{j_1}+N-2],\]
	such that 
	\[\dim \operatorname{Im} \psi_{j_2',j_1'} \geq \dim \operatorname{Im} \overline{\psi}_{j_2,j_1}.\]
	
	Moreover, if $G_{j_2,j_1}$ and $G_{j_4,j_3}$ are two different $y$-arrows, i.e., two pairs $(j_1,j_2)$ and $(j_3,j_4)$ are different, then $(j_1',j_2')$ and $(j_3',j_4')$ are different since $E_0$ is simplified.
	Thus, we have 
	\[r(E_0)= \sum_{j,i \in I \text{  such that  } F_{j,i} \text{  is an $y$-arrow}} \mathrm{dim} \operatorname{Im} \psi_{j,i} \geq r(\Phi E_0) := \sum_{j,i \in I \text{  such that  } G_{j,i} \text{  is an $y$-arrow}} \mathrm{dim} \operatorname{Im} \overline{\psi}_{j,i}.\]
\end{proof}

\subsection{Proof of Lemma \ref{lem partially carried-by preserved}}
\label{subsection proof of Lemma partially carried-by preserved}

Now we are ready to prove Lemma \ref{lem partially carried-by preserved}.
\begin{proof}[Proof of Lemma \ref{lem partially carried-by preserved}]
	We will prove Lemma \ref{lem partially carried-by preserved} for the case of $\tau = \tau_{u_0}$ for a $u_0 \in V_+(T)$. 
	Since the other case, i.e., the case of $\tau= \tau_{w_0}^{-1}$ for a $w_0 \in V_-(T)$, can be proven in a similar way, we omit the case. 
	
	We also note that in this proof, we will use the notations given in Notations \ref{notation 1} and \ref{notation 2}.
	In other words, if a nonzero object $E \in \Fuk$ is partially carried-by with a triple $(A,B,F)$, then we write them as follows:
	\begin{gather*}
		A=\left(\bigoplus_{i=1}^{K} (X_i=V_i \otimes S_{v_i}[d_i]), F_{j,i}\right), B=\left(\bigoplus_{i=K+1}^n(X_i=V_i \otimes S_{v_i}[d_i]),F_{j,i}\right),\\
		E=\left(A \oplus B, F\right) = \left(\bigoplus_{i=1}^n X_i,F_{j,i}\right).
	\end{gather*}
	In the previous subsection, we constructed two minimal twisted complexes $\overline{A} \simeq \tau(A)$ and $\overline{B} \simeq \tau(B)$, which we write as follows:
	\begin{gather*}
		\overline{A} =\left(\bigoplus_{j=1}^{\overline{K}} (Y_j=\overline{V}_j \otimes S_{\overline{v}_j}[\overline{d}_j]), G_{j,i}\right), \overline{B} =\left(\bigoplus_{j=\overline{K} +1}^{\overline{n}} (Y_j=\overline{V}_j \otimes S_{\overline{v}_j}[\overline{d}_j]), G_{j,i}\right).
	\end{gather*}
	Moreover, we obtained a minimal twisted complex equivalent to $\tau(E)$,
	\[\tau(E) \simeq \left(\overline{A} \oplus \overline{B}, G\right) = \left(\bigoplus_{j=1}^{\overline{n}}Y_j, G_{j,i}\right).\]
	
	In the proof, we prove that $\tau(E)$ is partially carried-by with the triple $(\overline{A}, \overline{B}, G)$. 
	In other words, we need to prove the following three claims corresponding to the conditions (a)--(c) of Definition \ref{definition partially carried-by}:
	\begin{enumerate}
		\item[(a)] If $j_2 > \overline{K}$, then $G_{j_2,j_1}$ cannot be an $y$-arrow. Or equivalently, if there exists an $y$-arrow $G_{j_2,j_1}$, then $j_2 \leq \overline{K}$. 
		\item[(b)] If there exists a nonzero chain of arrows $\left(G_{j_1,j_0}, \dots, G_{j_k,j_{k-1}}\right)$ such that
		\begin{itemize}
			\item $G_{j_1,j_0}$ is a $y$-arrow, 
			\item $\overline{\psi}_{j_k,j_{k-1}} \circ \dots \circ \overline{\psi}_{j_1,j_0} \neq 0$, and
			\item for all $1 \leq m <k$,  $G_{j_{m+1},j_m}$ is a $z$-arrow,
		\end{itemize}
		then $j_k \leq \overline{K}$, or equivalently, $Y_{j_k}$ is a part of $\overline{A}$. 
		\item[(c)] If there exists a nonzero chain of arrows $\left(G_{j_1,j_0}, \dots, G_{j_{k+1},j_k}\right)$ such that 
		\begin{itemize}
			\item $G_{j_1,j_0}$ is a $y$-arrow,
			\item for all $1 \leq m <k$, $G_{j_{m+1},j_m}$ is a $z$-arrow, 
			\item $\overline{\psi}_{j_{k+1},j_k} \circ \dots \circ \overline{\psi}_{j_1,j_0} \neq 0$, and
			\item $G_{j_{k+1},j_k}$ is an $x$-arrow,
		\end{itemize}   
		then $j_{k+1} \leq \overline{K}$, or equivalently, $Y_{j_{k+1}}$ is a part of $\overline{A}$. 
	\end{enumerate}	
	\vskip0.2in 
	
	\noindent{\em Proof of (a)}: 
	Let us assume that there exists a $y$-arrow $G_{j_2,j_1}$. 
	We note that because of the construction of $\overline{A}$ and $\overline{B}$, there exist $i_1, i_2 \in \{1, \dots, n\}$ such that $Y_{j_1}$ and $Y_{j_2}$ originate from $X_{i_1}$ and $X_{i_2}$ respectively, in the sense of Remark \ref{rmk originate}.
	Then, thanks to Lemma \ref{lem y-arrow2} and Definition \ref{definition partially carried-by}, one can observe that $i_2 \leq K$, or equivalently, $X_{i_2}$ is a part of $A$. 
	Since $Y_{j_2}$ originates from $X_{i_2}$, it means that $Y_{j_2}$ is a part of $\overline{A}$, i.e., $j_2 \leq \overline{K}$.  
	It completes the proof of (a).
	\vskip0.2in
	
	\noindent{\em Proof of (b)}:
	Let us assume that there exists a nonzero chain of arrows satisfying the conditions of (b), i.e., 
	\[Y_{j_0} =\overline{V}_{j_0} \otimes S_w [\overline{d}_{j_0}] \xrightarrow{\overline{\psi}_{j_1,j_0} \otimes y_{w,u}} Y_{j_1} =\overline{V}_{j_1} \otimes S_u [\overline{d}_{j_1}] \xrightarrow{\overline{\psi}_{j_2,j_1} \otimes z_u} \dots \xrightarrow{\overline{\psi}_{j_k,j_{k-1}} \otimes z_u} Y_{j_k} =\overline{V}_{j_k} \otimes S_u [\overline{d}_{j_k}].\]
	We note that $w \in V_-(T)$ and $u \in V_+(T)$ as mentioned in Remark \ref{rmk vertices}.
	For simplicity, we will assume that $\overline{d}_{j_0}=0$. 
	Then, one can easily see that $\overline{d}_{j_m} = m(N-1)-1$ because the degrees of $y$- and $z$-morphisms are $N-1$ and $N$, respectively. 
	
	Now, we prove (b) by an induction on the length of the above chain of arrows, i.e., $k$. 
	For the base step, let us assume that $k=1$, i.e., we have a chain of arrows
	\[Y_{j_0} =\overline{V}_{j_0} \otimes S_w  \xrightarrow{\overline{\psi}_{j_1,j_0} \otimes y_{w,u}} Y_{j_1} =\overline{V}_{j_1} \otimes S_u [N-2].\]
	If $Y_{j_0}$ (resp.\ $Y_{j_1}$) originates from $X_{i_0}$ (resp.\ $X_{i_1}$) in the sense of Remark \ref{rmk originate}, one can easily see that 
	\[X_{i_0} = V_{i_0} \otimes S_w \ \mathrm{and}\
	X_{i_1} = \begin{cases} 
		V_{i_1} \otimes S_u[2N-3] \text{  with  } u_0 =u, \\
		V_{i_1} \otimes S_{w_1} [N-2] \text{  with  } u_0 = u, w_1 \sim u,\\
		V_{i_1} \otimes S_u[N-2] \text{  if  } u_0 \neq u.
	\end{cases}\] 
	
	For each of the three possible $X_{i_1}$, there must exist at least one nonzero chain of arrows from $X_{i_0}$ to $X_{i_1}$. 
	One can easily see that for any cases, $i_1 \leq K$, or equivalently, $X_{i_1}$ is a part of $A$. 
	Since $A$ is simplified, one can apply Lemma \ref{lem y-arrow2} to the $y$-arrow from $Y_{j_0}$ to $Y_{j_1}$ in $\overline{A}$. 
	Then, Lemma \ref{lem y-arrow2} completes the proof of (b) for the case of $k=1$. 
	It completes the proof of the base step. 
	
	We assume the induction hypothesis. 
	Moreover, we also assume that there exists no $j > j_0$ such that for some $2 \leq m \leq k$, there exits a nonzero chain of arrows 
	\begin{gather}
		\label{eq assumption}
		Y_j =\overline{V}_j \otimes S_{w'} [\overline{d}_j] \xrightarrow{\overline{\psi}_{j,j_m} \otimes y_{w',u}} Y_{j_m} =\overline{V}_{j_m} \otimes S_u [\overline{d}_{j_m}] \xrightarrow{\overline{\psi}_{j_{m+1},j_m} \otimes z_u} \dots \xrightarrow{\overline{\psi}_{j_k,j_{k-1}} \otimes z_u} Y_{j_k} =\overline{V}_{j_k} \otimes S_u [\overline{d}_{j_k}].
	\end{gather}
	If such a $j$ exits, then we can replace the original chain of arrows with the above shorter one.
	Then, the induction hypothesis proves that $Y_{j_k}$ is a part of $\overline{A}$. 
	Thus, without loss of generality, we assume that there exists no $j >j_0$ satisfying the above assumption. 
	
	We recall that
	\begin{gather}
		\label{eq Y_j} 
		Y_{j_0} = \overline{V}_{j_0} \otimes S_w, Y_{j_m} = \overline{V}_{j_m} \otimes S_u [m(N-1)-1] \text{  for  all  } m = 1, \dots, k.
	\end{gather}
	See Remark \ref{rmk vertices}.
	Let $Y_{j_m}$ originate from $X_{i_m}$, i.e., $\tau(X_{i_m})$ contains $Y_{j_m}$ even after canceling identity-arrows.
	Since $\tau= \tau_{u_0}$ for some $u_0 \in V_+(T)$ by our assumption, Equations in \eqref{eq Y_j} imply that there exist following two possibilities:
	\begin{enumerate}
		\item[Case (i).] $u_0 =u, X_{i_0} = V_{i_0} \otimes S_w$, and for all $m =1, \dots, k$,
		\[X_{i_m} = \begin{cases}
			V_{i_m} \otimes S_u[(m+1)(N-1)-1], \\
			V_{i_m} \otimes S_{w_m}[m(N-1)-1] \text{  for  a  } w_m \in V_-(T) \text{  such that  } u \sim w_m.
		\end{cases} \]
		\item[Case (ii).] $u_0 \neq u, X_{i_0} = V_{i_0} \otimes S_w$, and $X_{i_m} = V_{i_m} \otimes S_u[m(N-1)-1]$ for all $m =1, \dots, k$. 
	\end{enumerate}
	
	Let us consider the case (i) first.
	For $m \geq 1$, there exist four possible sub-cases given in Table \ref{tab subcases}.
	\begin{table}[h!]
		\caption{Sub-cases of case (i).}
		\label{tab subcases}
		\begin{tabular}{| l | c | c |}
			\hline
			& $X_{i_m} = V_{i_m} \otimes S_u[(m+1)(N-1)-1]$  & $X_{i_m} = V_{i_m} \otimes S_{w_m}[m(N-1)-1]$ \\ \hline
			$X_{i_{m+1}} = V_{i_{m+1}} \otimes S_u[(m+2)(N-1)-1)]$  & \textcircled{1} & \textcircled{3} \\ \hline
			$X_{i_{m+1}} = V_{i_{m+1}} \otimes S_{w_{m+1}}[(m+1)(N-1)-1)]$ & \textcircled{2} & \textcircled{4} \\ \hline
		\end{tabular}
	\end{table}
	
	We prove the case (i) by contradiction. 
	Thus, we assume that $Y_{j_k}$ is not a part of $\overline{A}$.
	Then, we would like to point out that sub-cases \textcircled{3} or \textcircled{4} must happen. 
	If not, one can observe that 
	\begin{gather*}
		X_{i_m} = V_{i_m} \otimes S_u[(m+1)(N-1)-1] \text{  for all  } m = 1, \dots, k-1, \\
		X_{i_k} = V_{i_k} \otimes S_u[(k+1)(N-1)-1] \text{  or  } V_{i_k} \otimes S_{w_k}[k(N-1)-1].
	\end{gather*}
	Moreover, because of the degree reason, the only possible nonzero chain of arrows from $X_{i_m}$ to $X_{i_{m+1}}$ for $m=1, \dots, k-1$ is 
	\begin{itemize}
		\item a chain consisting of one $z_u$-arrow for $m =1, \dots, k-2$, 
		\item a chain consisting of one $z_u$-arrow if $m=k-1$ and $X_{i_{m+1}} = X_{i_k} = V_{i_k} \otimes S_u[(k+1)(N-1)-1]$, and 
		\item a chain consisting of one $x_{u,w_k}$-arrow if $m=k-1$ and $X_{i_{m+1}} = x_{i_k} = V_{i_k} \otimes S_{w_k}[k(N-1)-1]$.
	\end{itemize}
	Similarly, the only possible nonzero chain of arrows from $X_{i_0}$ to $X_{i_1}$ is a $y_{w,u}$-arrow. 
	
	We point out that the nonzero arrow $G_{j_{m+1},j_m}$ forces to have a nonzero chain of arrows from $X_{i_m}$ to $X_{i_{m+1}}$. 
	Thus, we obtain a chain of arrows 
	\begin{gather}
		\label{eq nonzero chain}
		X_{i_0} \xrightarrow{\psi_{i_1,i_0} \otimes y_{w,u}} X_{i_1} \xrightarrow{\psi_{i_2,i_1} \otimes z_u} \dots \xrightarrow{\psi_{i_k,i_{k-1}} \otimes \left(z_u \text{  or  } x_{u,w_k}\right)} X_{i_k}.
	\end{gather}
	
	Finally, one can see that $\overline{V}_{j_m}$ is a quotient of $V_{i_m}$ and $\psi_{i_{m+1},i_m}:V_{i_m} \to V_{i_{m+1}}$ induces a linear map $\overline{\psi}_{j_{m+1},j_m}:\overline{V}_{j_m} \to \overline{V}_{j_{m+1}}$ for all $m =0, \dots, k-1$. 
	Since $\left(G_{j_1,j_0}, \dots, G_{j_k,j_{k-1}}\right)$ satisfies that $\overline{\psi}_{j_1,j_0} \circ \dots \circ \overline{\psi}_{j_k,j_{k-1}}$ is not zero, one can conclude that $\psi_{i_1,i_0} \circ \dots \circ \psi_{i_k,i_{k-1}}$ is also nonzero.
	It implies that $i_k \leq K$, or equivalently, $X_{i_k}$ is a part of $A$, because $E$ is partially carried-by with a triple $(A,B,F)$. 
	Finally, we would like to point out that $Y_{j_k}$ is a part of $\overline{A}$ since $X_{i_k}$ is a part of $A$. 
	In other words, $j_k \leq \overline{K}$, and it gives us a contradiction. 
	
	Let us assume that there exists $m$ such that \textcircled{4} happens. 
	Since $G_{j_{m+1},j_m}$ is not a nonzero arrow, there exists a nonzero chain of arrows from $X_{i_m} = V_{i_m} \otimes S_{w_m}[m(N-1)-1]$ to $X_{i_{m+1}} = V_{i_{m+1}} \otimes S_{w_{m+1}}[(m+1)(N-1)-1)]$.
	We point out that $w_m, w_{m+1} \in V_-(T)$.
	Then, every nonzero arrow from $X_{i_m}$ to $X_{i_{m+1}}$ should either consist of 
	\begin{itemize}
		\item two $y$-arrows and one $x$-arrow with $N=3$, or
		\item one $z$-arrow with $N \geq 3$.
	\end{itemize}
	The first possibility, a chain consisting of two $y$-arrows and one $x$-arrow, is not possible because of Lemma \ref{lem noxyyx}. 
	More precisely, if the first one happens, then there should exist $u' \in V_+(T), w' \in V_-(T)$ such that two consecutive arrows in the nonzero chain of arrows are the form of 
	\[X_a \xrightarrow{x_{u',w'}\text{-arrow}} X_b \xrightarrow{y_{w',u'}\text{-arrow}} X_c, \text{  or  } X_a \xrightarrow{y_{w',u'}\text{-arrow}} X_b \xrightarrow{x_{u',w'}\text{-arrow}} X_c.\]
	We note that because of $y_{w',u'}$ and because $E$ is partially carried-by, $a <b <c \leq K$. 
	Thus, $X_a, X_b$, and $X_c$ are parts of a {\em simplified} twisted complex $A$. 
	Then, it contradicts to Lemma \ref{lem noxyyx}. 
	Thus, $w_m$ and $w_{m+1}$ should agree with each other, and the nonzero arrow from $X_{i_m}$ to $X_{i_{m+1}}$ should be one $z_{w_m=w_{m+1}}$-arrow. 

	Now, we recall that
	\begin{gather*}
		\tau(X_{i_m}) = V_{i_m} \otimes S_u [m(N-1)-1] \to V_{i_m} \otimes S_{w_m}[m(N-1)-1], \\
		\tau(X_{i_{m+1}}) = V_{i_{m+1}} \otimes S_u [(m+1)(N-1)-1] \to V_{i_{m+1}} \otimes S_{w_m}[(m+1)(N-1)-1].
	\end{gather*}
	The $z_u$-arrow $G_{j_{m+1},j_m}$ is obtained from the $V_{i_m} \otimes S_u[m(N-1)-1]$ part of $\tau(X_{i_m})$ to the $V_{i_{m+1}} \otimes S_u[(m+1)(N-1)-1]$ part of $\tau(X_{i_{m+1}})$. 
	
	In order to analyze how to obtain $G_{j_{m+1},j_m}$ from the above two parts, we review the construction of $\tau(E) \simeq \left(\overline{A} \oplus \overline{B}, G\right)$.
	At the first, we use Lemma \ref{lem induced functor} for computing $\tau(E)$, then we cancel as many $z_u$-arrows as possible. 
	Thanks to Lemma \ref{lem z}, the price of canceling $z_u$-arrows is to add $z_w$-arrows. 
	Thus, there exists no $z_u$-arrow from the $(V_{i_m} \otimes S_u [m(N-1)-1])$ part of $\tau(X_{i_m})$ to the $(V_{i_{m+1}} \otimes S_u [(m+1)(N-1)-1])$ part of $\tau(X_{i_{m+1}})$. 
	It is contradicts to that $G_{j_{m+1},j_m}$ is a $z_u$-arrow. 
	Thus, the sub-case \textcircled{4} could not happen.
	
	Now, let us assume that there exists $m \geq 1$ such that the sub-case \textcircled{3} happens, i.e., 
	\[X_{i_m} = V_{i_m} \otimes S_{w_m}[m(N-1)-1] \text{  with  } w_m \sim u, X_{i_{m+1}}= V_{i_{m+1}} \otimes S_u[(m+2)(N-1)-1)].\]
	Similar to the above cases, the existence of a nonzero arrow $G_{j_{m+1},j_m}$ guarantees the existence of nonzero chain of arrows from $X_{i_m}$ to $X_{i_{m+1}}$. 
	Considering the degrees, every nonzero chain of arrows consists of either  
	\begin{itemize}
		\item three $y$-arrows and two $x$-arrows where $N=4$,
		\item four $y$-arrows and three $x$-arrows where $N=3$, or
		\item one $z$-arrow, two $y$-arrows, and one $x$-arrow where $N=3$. 
	\end{itemize}
	One can easily check that the first two items do not happen because of Lemma \ref{lem noxyyx}, as similar to the sub-case \textcircled{4}.
	
	To complete the proof, let us assume that $N =3$ and there exists a nonzero chain of arrows from $X_{i_m}$ to $X_{i_{m+1}}$ consisting of one $x$-arrow, two $y$-arrow, and one $z$-arrow. 
	Every such nonzero chain of arrows is one of the following four types:
	\begin{gather*}
		\text{Type 1:  } X_{i_m}\xrightarrow{\psi_a \otimes z_{w_m}} X_a  \xrightarrow{\psi_b \otimes y_{w_m,u'}} X_b  \xrightarrow{\psi_c \otimes x_{u',w'}} X_c \xrightarrow{\psi_d \otimes y_{w', u}} X_{i_{m+1}}, \\
		\text{Type 2:  } X_{i_m}\xrightarrow{\psi_a \otimes y_{w_m,u'}} X_a  \xrightarrow{\psi_b \otimes z_{u'}} X_b  \xrightarrow{\psi_c \otimes x_{u',w'}} X_c \xrightarrow{\psi_d \otimes y_{w', u}} X_{i_{m+1}}, \\
		\text{Type 3:  } X_{i_m}\xrightarrow{\psi_a \otimes y_{w_m,u'}} X_a  \xrightarrow{\psi_b \otimes x_{u',w'}} X_b  \xrightarrow{\psi_c \otimes z_{w'}} X_c \xrightarrow{\psi_d \otimes y_{w', u}} X_{i_{m+1}}, \\
		\text{Type 4:  } X_{i_m}\xrightarrow{\psi_a \otimes y_{w_m,u'}} X_a  \xrightarrow{\psi_b \otimes x_{u',w'}} X_b  \xrightarrow{\psi_c \otimes y_{w',u}} X_c \xrightarrow{\psi_d \otimes z_u} X_{i_{m+1}}.
	\end{gather*}
	
	We note that chains of arrows types $1$ and $4$ cannot be nonzero, because they contain three consecutive $y_{w_m,u'}, x_{u',w'}, y_{w',u}$-arrows.
	To be more precise, let us note that since $w_m \sim u$ and since $T$ is a tree, either $u' =u$ or $w_m = w'$.
	Then, it contradicts to Lemma \ref{lem noxyyx}. 
	
	We also note that any nonzero chain of arrows of type $3$ cannot {\em contribute} to the chain of arrows 
	\begin{gather}
		\label{eq target chain of arrows}
		Y_{j_{m-1}} \xrightarrow{\overline{\psi}_{j_m,j_{m-1}} \otimes \left(z_u\text{ or  } y_{w,u}\right)} Y_{j_m} \xrightarrow{\overline{\psi}_{j_{m+1},j_m} \otimes z_u} Y_{j_{m+1}},
	\end{gather}
	in the following sense: 
	When one applies $\tau = \tau_u$, a nonzero chain of arrow of type $3$ can contribute to the chain of arrows in \eqref{eq target chain of arrows} only by giving a $z$-arrow from $Y_{j_m}$ to $Y_{j_{m+1}}$.
	Let $\overline{\psi}_{\text{type 3}} \otimes z_u : Y_{j_m} \to Y_{j_{m+1}}$ denote the $z$-arrow obtained from the nonzero chain of arrows of type 3. 
	However, we can show that the composition of two linear maps $\overline{\psi}_{j_m, j_{m-1}}$ and $\overline{\psi}_{\text{type 3}}$ is zero.
	
	Before showing that a nonzero chain of arrows of type 3 cannot have any contribution, let us remark some facts.
	\begin{itemize}
		\item If there exists a nonzero chain of arrows of type 3, one can easily check that $w_m \neq w'$.
		If not, $\psi_b \circ \psi_a$ should be zero because of Lemma \ref{lem noxyyx}.
		Moreover, since $w' \sim u \sim w_m$ and $w' \sim u' \sim w_m$, considering that $T$ is a tree, we have $u = u'$.
		\item On the other hand, the existence of $y$-arrow from $X_c$ to $X_{i_{m+1}}$ implies that $i_{m+1} \leq K$. 
		Thus, for all $i \leq i_{m+1} \leq K$, $X_i$ is a part of a {\em simplified} twisted complex $A$. 
		\item If $m=1$, then $X_{i_{m-1}}= X_{i_0} = V_{i_0} \otimes S_w$.
		Moreover, since there exists a nonzero chain of arrows from $X_{i_0}$ to $X_{i_1} = V_{i_1} \otimes S_{w_1} [N-2 =1]$, one can easily check that $w_1 \neq w$.
		Moreover, the only possible nonzero chain of arrows from $X_{i_0}$ to $X_{i_1}$ should be  of the following form:
		\[X_{i_0} \xrightarrow{\psi_{i_{0.5},i_0}\otimes y_{w,u_1}} X_{i_{0.5}} = V_{i_{0.5}} \otimes S_{u_1}[1] \xrightarrow{\psi_{i_1,i_{0.5}} \otimes x_{u_1,w_1}} X_{i_1},\]
		with $w \sim u_1 \sim w_1$.
		Since $w \sim u_1\sim w_1$ and $w \neq w_1$, $u_1$ should be the same as $u$. 
		Because $A$ is simplified as mentioned in the second fact, the above is the only possible chain of arrows from $X_{i_0}$ to $X_{i_1}$, which can contribute on $\overline{\psi}_{j_1,j_0}$.
		Thus, $\overline{\psi}_{j_1,j_0}$ is determined by $\psi_{i_1,i_{0.5}} \circ \psi_{i_{0.5},i_0}$. 
		\item If $m\geq 2$, then the pair $\left(X_{i_{m-1}, X_{i_m}}\right)$ should satisfy \textcircled{2} or \textcircled{4}. 
		Since \textcircled{4} does not happen, \textcircled{2} should happen between $X_{i_{m-1}}$ and $X_{i_m}$ and $X_{i_{m-1}} = V_{i_{m-1}} \otimes S_u[m(N-1)-1 = 2m-1]$.
		Then, from the degree argument, we can conclude that the only nonzero chain of arrows between them should be 
		\[X_{i_{m-1}} \xrightarrow{\psi_{i_m,i_{m-1}} \otimes x_{u,w_m}} X_{i_m}.\]
		In this case, $\psi_{i_m,i_{m-1}}$ solely determines $\overline{\psi}_{j_m,j_{m-1}}$. 
	\end{itemize} 
	
	Let us consider the case of $m=1$, first. 
	From the first- and third-listed items, we have the following chain of arrows:
	\[X_{i_0} \xrightarrow{\psi_{i_{0.5},i_0}\otimes y_{w,u}} X_{i_{0.5}} \xrightarrow{\psi_{i_1,i_{0.5}} \otimes x_{u,w_1}} X_{i_1} \xrightarrow {\psi_a \otimes y_{w_1,u}} X_a.\]
	Then, by Lemma \ref{lem noxyyx}, $\psi_a \circ \psi_{i_1,i_{0.5}} =0$. 
	Since $\overline{\psi}_{type 3}$ is determined by $\psi_d \circ \psi_c \circ \psi_b \circ \psi_a$ and $\overline{\psi}_{j_1,j_0}$ is determined by $\psi_{i_1,i_{0.5}} \circ \psi_{i_{0.5},i_0}$, one can easily check that $\overline{\psi}_{type 3} \circ \overline{\psi}_{j_1,j_0} =0$. 
	
	Let us consider the case of $m\geq 2$. 
	In the fourth-listed item, we observed that the only nonzero chain of arrows between $X_{i_m-1}$ and $X_{i_m}$ is the $x_{u,w_m}$-arrow. 
	Then, we have 
	\[X_{i_{m-1}} \xrightarrow{\psi_{i_m,i_{m-1}} \otimes x_{u,w_m}} X_{i_m} \xrightarrow {\psi_a \otimes y_{w_m,u}} X_a,\]
	and by Lemma \ref{lem noxyyx}, $\psi_a \circ \psi_{i_m,i_{m-1}} =0$. 
	As same as the case of $m=1$, $\overline{\psi}_{type 3} \circ \overline{\psi}_{j_m,j_{m-1}} =0$. 
	
	We note that $\overline{\psi}_{j_{m+1},j_m} \circ \overline{\psi}_{j_m,j_{m-1}} \neq 0$ because there exists a chain of arrows 
	\[Y_{j_0} \xrightarrow{\overline{\psi}_{j_1,j_0} \otimes y_{w,u}} Y_{j_1} \xrightarrow{\overline{\psi}_{j_2,j_1} \otimes z_u} \dots \xrightarrow{\overline{\psi}_{j_k,j_{k-1}} \otimes z_u} Y_{j_k},\]
	satisfying the conditions of (b). 
	Thus, there exists a nonzero chain of arrows of type $2$.
	
	However, if there exists a nonzero chain of arrows of type $2$, then 
	\[X_c \to X_{i_{m+1}} \to \dots \to X_{i_k}\]
	induces a nonzero chain of arrows in \eqref{eq assumption}. 
	Thus, it contradicts to the assumption that there exists no $j$ satisfying \eqref{eq assumption}. 
	It proves that \textcircled{3} cannot happen even if $N=3$.
	And it completes the proof of the case (i) of (b).
	
	To prove the part (b) for the case (ii), let us recall that, in the case (ii),
	\begin{itemize}
		\item $\tau = \tau_{u_0}, u_0 \neq u, X_{i_0} = V_{i_0} \otimes S_w$, and $X_{i_m} = V_{i_m} \otimes S_u[m(N-1)-1]$ for all $m =1, \dots, k$. 
	\end{itemize}
	Because of the degree reason, the only possible nonzero chain of arrows from $X_{i_0}$ to $X_{i_1}$ consists of one $y_{w,u}$-arrow. 
	Similarly, by the same degree reason, possible nonzero chains of arrows from $X_{i_m}$ to $X_{i_{m+1}}$, for $m = 1, \dots, k-1$, are of the one of the following form: 
	\begin{itemize}
		\item A chain of arrows consisting of one $z_u$-arrow.
		\item When $N=3$, a chain of arrows consisting of two $y$-arrows and two $x$-arrows.
	\end{itemize}
	We observe that by Lemma \ref{lem noxyyx}, the second-listed chain of arrows cannot exist.
	
	From the above argument, we conclude that there exists a chain of arrows from $X_{i_0}$ to $X_{i_k}$
	\[X_{i_0} \xrightarrow{\psi_{i_1,i_0} \otimes y_{w,u}} X_{i_1} \xrightarrow{\psi_{i_2,i_1} \otimes z_u} \dots \xrightarrow{\psi_{i_k,i_{k-1}} \otimes z_u} X_{i_k}.\]
	Moreover, since $\psi_{i_m,i_{m-1}}$ solely determines $\overline{\psi}_{j_m,j_{m-1}}$ for all $m = 1, \dots, k$, the condition of (b), i.e., $\overline{\psi}_{j_k,j_{k-1}} \circ \dots \circ \overline{\psi}_{j_1,j_0} \neq 0$ implies that $\psi_{i_k,i_{k-1}} \circ \dots \circ \psi_{i_1,i_0} \neq 0$.
	
	We recall that $E$ is partially carried-by.
	It implies that $i_k \leq K$, or equivalently, $X_{i_k}$ is a part of $A$. 
	Then, $Y_{j_k}$, which is a part of $\tau(X_{i_k})$, must be a part of $\overline{A}$. 
	It proves (b), case (ii).
	\vskip0.2in
	
	\noindent{Proof of (c)}:
	Under the same assumption as the proof of (b), let us assume that
	\begin{gather*}
		Y_{j_0} = \overline{V}_{j_0} \otimes S_w\xrightarrow{\overline{\psi}_{j_1,j_0} \otimes y_{w,u}} Y_{j_1}=\overline{V}_{j_1} \otimes S_u[N-2] \xrightarrow{\overline{\psi}_{j_2,j_1} \otimes z_u} \dots \xrightarrow{\overline{\psi}_{j_k,j_{k-1}} \otimes z_u} 
		Y_{j_k}=\overline{V}_{j_k} \otimes S_u[k(N-1)-1]  \\ \xrightarrow{\overline{\psi}_{j_{k+1},j_k} \otimes x_{u, w'}} Y_{j_{k+1}}=\overline{V}_{j_{k+1}} \otimes S_{w'}[k(N-1)-1]
	\end{gather*}
	is a chain of arrows satisfying the conditions of (c).
	As we did in the proof of (b), we say that $Y_{j_m}$ is obtained from $\tau(X_{i_m})$. 
	
	We recall that $\tau = \tau_{u_0}$ in the proof. 
	Thus, $X_{i_0}$ and $X_{i_{k+1}}$ should satisfy
	\[X_{i_0} = V_{i_0} \otimes S_w, X_{i_{k+1}} = V_{i_{k+1}} \otimes S_{w'}[k(N-1)-1].\]
	For $X_{i_m}$ with $m = 1, \dots, k$, similar to (b), we need to consider the following two cases:
	\begin{enumerate}
		\item[Case (i).] $u_0 =u, X_{i_0} = V_{i_0} \otimes S_w$, and for all $m =1, \dots, k$,
		\[X_{i_m} = \begin{cases}
			V_{i_m} \otimes S_u[(m+1)(N-1)-1], \\
			V_{i_m} \otimes S_{w_m}[m(N-1)-1] \text{  for  a  } w_m \in V_-(T) \text{  such that  } u \sim w_m.
		\end{cases} \]
		\item[Case (ii).] $u_0 \neq u, X_{i_0} = V_{i_0} \otimes S_w$, and $X_{i_m} = V_{i_m} \otimes S_u[m(N-1)-1]$ for all $m =1, \dots, k$. 
	\end{enumerate}
	
	First, let us consider the case (i). 
	We note that since $\tau= \tau_u$, $i_k$ and $i_{k+1}$ could be the same, or equivalently, $Y_{j_k}$ and $Y_{j_{k+1}}$ are obtained from the same $X_{i_k}$. 
	If $i_k = i_{k+1}$, since $i_k \leq K$ by the proof of (b), it implies that $Y_{j_{k+1}}$ is a part of $\overline{A}$. 
	In other words, (c) holds for the case. 
	Thus, we assume that $i_k \neq i_{k+1}$ in order to complete the proof. 
	
	We note that in the case (i), $X_{i_k}$ is either $V_{i_k} \otimes S_u[(k+1)(N-1)-1]$ or $V_{i_k} \otimes S_{w_k}[k(N-1)-1]$. 
	Since $X_{i_{k+1}} = V_{i_{k+1}} \otimes S_{w'}[k(N-1)-1]$, for both cases of $X_{i_k} = V_{i_k} \otimes S_u[(k+1)(N-1)-1]$ or $V_{i_k} \otimes S_{w_k}[k(N-1)-1]$, 
	one can observe that there exists no possible nonzero chain of arrows from $X_{i_k}$ to $X_{i_{k+1}}$ by the degree reason.
	It is a contradict to the existence of nonzero $x$-arrow from $Y_{j_k}$ to $Y_{j_{k+1}}$, and thus the assumption $i_k \neq i_{k+1}$ cannot be true. 
	It completes the proof of (c) for the case (i). 
	
	Let us assume the case (ii), especially, $u_0 \neq u$. 
	Then, from the proof of the case (ii) of (b), we have a nonzero chain of arrows 
	\[X_{i_0} \xrightarrow{\psi_{i_1,i_0} \otimes y_{w,u}} X_{i_1} \xrightarrow{\psi_{i_2,i_1} \otimes z_u} \dots \xrightarrow{\psi_{i_k,i_{k-1}} \otimes z_u} X_{i_k},\]
	i.e., $\psi_{i_k,i_{k-1}} \circ \dots \circ \psi_{i_1,i_0} \neq 0$.
	Moreover, since $X_{i_k} =  V_{i_k} \otimes S_u[k(N-1)-1]$ and $X_{i_{k+1}} = V_{i_{k+1}} \otimes S_{w'}[k(N-1)-1]$, the only possible nonzero chain of arrows from $X_{i_k}$ to $X_{i_{k+1}}$ consists of one $x_{u,w'}$-arrow by the degree reason. 
	It extends the above chain of arrows to 
	\[X_{i_0} \xrightarrow{\psi_{i_1,i_0} \otimes y_{w,u}} X_{i_1} \xrightarrow{\psi_{i_2,i_1} \otimes z_u} \dots \xrightarrow{\psi_{i_{k+1},i_k} \otimes z_u} X_{i_{k+1}}.\]
	Moreover, similar to the proof of (b), case (ii), we can conclude that $\psi_{i_{k+1},i_k} \circ \dots \circ \psi_{i_1,i_0} \neq 0$.
	It implies that $X_{i_{k+1}}$ is a part of $A$, and thus, $Y_{j_{k+1}}$ is a part of $\overline{A}$. 
	It completes the proof of (c), the case (ii).
\end{proof}

\subsection{Step 5 of the sketch of proof of Theorem \ref{thm pseudo-Anosov}}
\label{subsection step 5}
Now, we prove Step 5 of Section \ref{subsection sketch of the proof}.
In other words, we prove Lemma \ref{lem partially carried-by}

\begin{lem}
	\label{lem partially carried-by}
	Let $T$ be a tree and $\phi$ be an autoequivalence of Penner type. 
	Moreover, we assume that $\Phi$ is a product of 
	\[\left\{\tau_u, \tau_w^{-1} | u \in V_+(T), w \in V_-(T)\right\}.\]
	Then, for any object $E_0 \in \Fuk$, there exists a natural number $N \in \mathbb{N}$ such that for any $n \geq N$, $\phi^n E_0$ is partially carried-by. 
\end{lem}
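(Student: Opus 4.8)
The plan is to reduce everything to a single quantitative invariant, the $y$-rank $r(E)$ introduced in Definition \ref{def y-rank}, and to show that if $E_0$ is \emph{not} already partially carried-by then applying $\Phi$ strictly decreases $r$. More precisely, by Lemma \ref{lem y-rank} we already know $r(\Phi^n E_0)$ is non-increasing in $n$; since it is a non-negative integer, it eventually stabilizes, say $r(\Phi^n E_0) = r_\infty$ for all $n \geq N_0$. The main step is then to prove that once the $y$-rank has stabilized, the twisted complex $\Phi^n E_0$ automatically satisfies the three conditions (a)--(c) of Definition \ref{definition partially carried-by}. The intuition, which matches Remark \ref{rmk meaning of Lemma}, is that any $y$-arrow contributing to a chain of arrows of the ``bad'' type (one that produces new $y$-rank downstream in some later $\Phi$-iterate) is exactly the kind of chain that Lemma \ref{lem y-arrow2} traces backward; if such bad chains persisted, one could exhibit a strict drop in $r$ at the next step, contradicting stabilization.

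First I would fix a minimal, simplified, well-ordered twisted complex representative $E_0 = \left(\bigoplus_{i=1}^n X_i, F_{j,i}\right)$ and set $K$ to be the largest index $j$ such that some $X_{i'} \to X_j$ with $i' < j$ is reachable from a $y$-arrow by a (possibly empty) string of $z_u$-arrows followed by at most one $x$-arrow — i.e. $K$ is forced by conditions (b) and (c). Then condition (a) needs $K$ to also dominate all indices that are targets of $y$-arrows; this can be arranged by reordering (using Lemma \ref{lem minimal twisted complex} and the degree constraints of Lemma \ref{lem grading}) so that all objects ``infected'' by $y$-arrows are pushed to the front. The claim to establish is: if $\Phi^{n+1} E_0$ still has $y$-rank $r_\infty$, then for $\Phi^n E_0$ the index $K$ so defined satisfies $K < n$, i.e. $\overline{B} \neq 0$. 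Suppose instead that $K$ fails to be a proper splitting index, so some ``bad'' chain of arrows reaches an object $X_\ell$ with $\ell$ maximal (equal to $n$, essentially). Applying $\Phi = \tau_{v_1} \circ \cdots \circ \tau_{v_m}$ one factor at a time and invoking Lemmas \ref{lem x-morphism}--\ref{lem the zero morphism}, \ref{lem y-arrow2}, \ref{lem z}, one traces how this chain propagates; the key point is that a chain which is \emph{not} confined to the front produces, after one application of $\Phi$, a genuinely new $y$-arrow with nonzero linear part that was not already accounted for, forcing $r(\Phi^{n+1} E_0) > r_\infty$ — contradiction. Combining this with the already-established fact (Lemma \ref{lem partially carried-by preserved}) that being partially carried-by is preserved under $\Phi$, once $\Phi^{N_0} E_0$ is partially carried-by we are done for all $n \geq N_0$, so the desired $N$ is $N_0$.

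The hard part will be the bookkeeping in the contrapositive step: precisely formalizing ``a bad chain that is not confined to the front contributes to a strict increase of the next $y$-rank.'' The difficulty is that $y$-rank is defined via $\sum \dim \operatorname{Im}\psi_{j,i}$ over $y$-arrows of a \emph{minimal} representative, so one must argue that the new $y$-arrow produced by $\Phi$ is not subsequently cancelled when passing to the minimal, simplified form (cf. the construction preceding Notation \ref{notation 2}), and that its linear part does not collapse — this is where Lemma \ref{lem noxyyx} and the nonzero-chain conditions in Definition \ref{def nonzero chain of arrows} do the real work. A subsidiary technical point is that the number of $y$-arrows (not just the $y$-rank) in a minimal simplified representative is also uniformly bounded along the orbit, which follows because each $y$-arrow contributes at least $1$ to $r$; so once $r$ stabilizes, the combinatorial ``shape'' of the $y$-part also stabilizes, and one can then appeal directly to Lemma \ref{lem y-arrow2} case-by-case to see that every $y$-arrow of $\Phi^n E_0$ arises from a $y$-arrow of $\Phi^{n-1}E_0$ whose induced chain stays within the front block, which is exactly conditions (a)--(c). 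A clean alternative, if the direct argument gets unwieldy, is to run an induction on $r(E_0)$: either $E_0$ has no $y$-arrows at all (then it is categorically carried-by, hence partially carried-by with $B = E_0$, after splitting off any single vertex), or one isolates the sub-twisted-complex generated by the $y$-arrows, shows $\Phi$ eventually detaches it from the categorically-carried-by remainder, and applies the inductive hypothesis to the pieces.
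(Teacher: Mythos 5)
Your reduction to a stable $y$-rank via Lemma \ref{lem y-rank} matches the paper's opening move, and your choice of candidate splitting index $K$ (the largest index hit by a $y$-arrow followed by $z$-arrows and at most one $x$-arrow) is the right object. But the key step of your argument is wrong in direction: you claim that a "bad" chain running past $K$ to the tail of the complex would, after one application of $\Phi$, produce a \emph{new} $y$-arrow and force $r(\Phi^{n+1}E_0) > r_\infty$. This cannot happen. Lemma \ref{lem y-rank} shows $r$ is non-increasing under $\Phi$ unconditionally, and Lemma \ref{lem y-arrow2} makes this explicit: every $y$-arrow of $\tau(E)$ is traced back to a $y$-arrow of $E$; a long first-form chain of $z$-arrows does not beget new $y$-arrows. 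So no contradiction of the shape "$r$ increases" is available, and the argument stalls at exactly the place you flag as "the hard part."

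What the paper actually shows, using Lemmas \ref{lem no outside effect} and \ref{lem effect on C_1}--\ref{lem effect on C_3} (which your proposal does not engage with), is a positive monotone mechanism, not a blow-up contradiction. Fix a $y$-arrow $S_w \to S_u$ with its trailing $z_u$-chain. Applying $\tau_u$ shrinks that $z_u$-chain by one (Lemma \ref{lem effect on C_1}(2)); applying $\tau_{w'}^{-1}$ for $w' \sim u$ inserts $x$-arrows (Lemma \ref{lem effect on C_1}(6)). Because $\Phi$ is of Penner type — every generator appears at least once — the set $Y_x(\Phi^n E_0)$ of $y$-arrows admitting a second-form chain is monotone increasing in $n$ (item (i) of the proof), and eventually $Y_x = Y$ (item (ii)); one then isolates the block $B$ after the last such $x$-arrow (item (iii)). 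If you want a contradiction argument, the correct one is via $y$-rank \emph{decrease}: a first-form chain that never acquires an $x$-arrow keeps shrinking under the $\tau_u$ factors, and eventually the bare $y$-arrow is killed outright by $\tau_u$ (Lemma \ref{lem effect on C_1}(1)) or by $\tau_w^{-1}$ (Lemma \ref{lem effect on C_1}(5)), dropping $r$ below $r_\infty$. Even that sketch, though, still requires the case analysis of Lemmas \ref{lem effect on C_1}--\ref{lem effect on C_3} and the insulation result Lemma \ref{lem no outside effect} to control how the chain evolves inside the full complex. Your alternative "induction on $r$" plan has a similar gap: the $y$-arrows do not generate a sub-twisted-complex that $\Phi$ detaches from a categorically-carried-by remainder — the same $S_v$-factors participate in both — so there is no a priori direct-sum decomposition to recurse on.
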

\begin{proof}[Sketch of proof]
	We first give a sketch of proof, then the full proof will be given after proving Lemmas \ref{lem no outside effect}--\ref{lem effect on C_3} whose necessity will be justified by the sketch. 
	
	Let $E_0$ be an object of $\Fuk$.
	We recall that by Lemma \ref{lem y-rank}, there exists a natural number $N$ such that for any $n \geq N$, 
	\[r(\Phi^n E_0) = r(\Phi^N E_0).\]
	Thus, it is enough to prove Lemma \ref{lem partially carried-by} for $E_0$ such that 
	\[r(E_0) = r\left(\Phi^n E_0\right),\]
	for any $n \in \mathbb{N}$. 
	And, in the sketch, we let $\Phi^n E_0$ denote the minimal, simplified twisted complex equivalent to $\Phi^n E_0$. 
	
	We note that in order to prove that an object $E$ is partially carried-by, one needs to consider chains of arrows of the following two forms: 
\begin{align*}
	&X_{i_0} \xrightarrow{\psi_{i_1,i_0}\otimes y_{v_{i_0}, v_{i_1}}} X_{i_1} \xrightarrow{\psi_{i_2,i_1} \otimes z_{v_{i_1}}} X_{i_2} \xrightarrow{\psi_{i_3,i_2} \otimes z_{v_{i_2}}} \dots \xrightarrow{\psi_{i_k,i_{k-1}} \otimes z_{v_{i_{k-1}} }} X_{i_{k}},\\
&	X_{i_0} \xrightarrow{\psi_{i_1,i_0}\otimes y_{v_{i_0}, v_{i_1}}} X_{i_1} \xrightarrow{\psi_{i_2,i_1} \otimes z_{v_{i_1}}} X_{i_2} \xrightarrow{\psi_{i_3,i_2} \otimes z_{v_{i_2}}} \dots \xrightarrow{\psi_{i_k,i_{k-1}} \otimes z_{v_{i_{k-1}} }} X_{i_{k}} \xrightarrow{\psi_{i_{k+1},i_k} \otimes x_{v_{i_k},v_{i_{k+1}}}} X_{i_{k+1}},
\end{align*}
i.e., one form (the upper one) given in \eqref{eqn partially carried-by condition 1}, and the other form (the lower form) given in \eqref{eqn partially carried-by condition 2}. 
For convenience, in the rest of Section \ref{subsection step 5}, let the first and second forms mean the forms in \eqref{eqn partially carried-by condition 1} and \eqref{eqn partially carried-by condition 2}, respectively.

We first note that both of the above two forms start with a $y$-arrow, and we also note that because of the assumption that $r(E_0) = r\left(\Phi^n E_0\right)$, there exists one to one relations between the sets of $y$-arrows. 
From those two facts we noted, we will show that for a sufficiently large $N \in \mathbb{N}$, every $y$-arrow in $\Phi^N E$ admits a chain of arrows of the second form, starting with the given $y$-arrow. 
It is the main part of the proof and we will prove the main part with Lemmas \ref{lem no outside effect}--\ref{lem effect on C_3}.

After that, we consider the set of $x$-arrows that are the last arrows of chains of arrows of the second form.
Then, there exists the last one in the set, and we divide $\Phi^N E_0$ into two parts: one that comes before the last $x$-arrow, and the other one that comes after the last $x$-arrow. 
If $A$ and $B$ denote the parts coming before and after the last $x$-arrow, then $\Phi^N E_0$ is partially carried-by with a triple $\left(A, B, F\right)$. 
It will complete the proof. 
\end{proof}

Our strategy is to observe the effect of $\tau \in \left\{\tau_u, \tau_w^{-1} | u \in V_+(T), w \in V_-(T)\right\}$ on chains of arrows of the first and second forms. 
To do that, let us recall that by Lemma \ref{lem induced functor}, when we apply a functor $\tau$ to a twisted complex $E = \left(\oplus_{i \in I} X_i, F_{j,i}\right)$, we have a twisted complex
\[\tau(E) \simeq \left(\oplus_{i \in I} \tau(X_i), G_{j,i}\right).\]
And, $G_{j,i}$ is determined by $\tau$ and nonzero chain of arrows from $X_i$ to $X_j$. 
Based on this, we prove Lemma \ref{lem no outside effect}.

\begin{lem}
	\label{lem no outside effect}
	Let $E_0 = \left(\bigoplus_{i =1}^{n_0}X_i, F_{j,i}\right)$ be a minimal, simplified twisted complex, $\tau \in \left\{\tau_u, \tau_w^{-1} | u \in V_+(T), w \in V_-(T)\right\}$, and let 
	\[\tau(E_0) =\left(\oplus_{i=1}^{n_0} \tau(X_i), G_{j,i}\right)\]
	be the twisted complex obtained from Lemma \ref{lem induced functor}.
	Let us assume that there exist nonzero chains of arrows of the first and second forms in $E_0$, i.e., chains of arrows satisfying either one of the following three conditions: 
	\begin{enumerate}
		\item[(a)] The nonzero chain of arrows of the first form, i.e., 
		\begin{gather*}
			C_1 = X_{i_0} \xrightarrow{\psi_{i_1,i_0}\otimes y_{w,u}} X_{i_1} \xrightarrow{\psi_{i_2,i_1} \otimes z_u} X_{i_2} \xrightarrow{\psi_{i_3,i_2} \otimes z_u} \dots \xrightarrow{\psi_{i_k,i_{k-1}} \otimes z_u} X_{i_{k}}.
		\end{gather*}
		\item[(b)] The nonzero chain of arrows of the second form having at least one $z$-arrow, i.e., for a $k \geq 2$,
		\begin{gather*}
			C_2 = X_{i_0} \xrightarrow{\psi_{i_1,i_0}\otimes y_{w,u}} X_{i_1} \xrightarrow{\psi_{i_2,i_1} \otimes z_u} X_{i_2} \xrightarrow{\psi_{i_3,i_2} \otimes z_u} \dots \xrightarrow{\psi_{i_k,i_{k-1}} \otimes z_u} X_{i_{k}} \xrightarrow{\psi_{i_{k+1},i_k} \otimes x_{u,w'}} X_{i_{k+1}}.
		\end{gather*}
		\item[(c)] The nonzero chain of arrows of the second form having no $z$-arrow, i.e., 
		\begin{gather*}
			C_3 = X_{i_0} \xrightarrow{\psi_{i_1,i_0}\otimes y_{w,u}} X_{i_1} \xrightarrow{\psi_{i_2,i_1} \otimes x_{u,w'}} X_{i_2},	
		\end{gather*}
	\end{enumerate}
	Then, for any $i_m, i_\ell$, there exists no chain of arrows outside of $C_i$ contributing to $G_{i_\ell, i_m}$.
	Moreover, for the cases of (a) or (b), if $\ell - m \geq 2$, then $G_{i_\ell,i_m}=0$. 
\end{lem}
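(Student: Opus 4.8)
The plan is to exploit the very rigid degree structure of the morphisms in $\Fuk$ together with the explicit computations of Lemma \ref{lem simple computation}. Recall that by Lemma \ref{lem induced functor} and the formula \eqref{eqn induced functor}, the arrow $G_{i_\ell,i_m}$ in $\tau(E_0)$ is a sum of terms indexed by nonzero chains of arrows in $E_0$ running from $X_{i_m}$ to $X_{i_\ell}$, with each term built by applying the $\mathcal{A}_\infty$-functor components $\tau^k$ to the Floer morphisms along the chain. Thus to prove the first assertion it suffices to show that every nonzero chain of arrows from $X_{i_m}$ to $X_{i_\ell}$ (with $i_m, i_\ell$ among the indices occurring in $C_1$, $C_2$, or $C_3$) necessarily lies inside the displayed chain $C_i$ itself, i.e.\ is the sub-chain $\left(\psi_{i_{m+1},i_m}\otimes f_{i_{m+1},i_m}, \dots, \psi_{i_\ell,i_{\ell-1}}\otimes f_{i_\ell,i_{\ell-1}}\right)$.

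First I would pin down the degrees and vertex-labels of all objects $X_{i_0}, X_{i_1}, \dots$ appearing in $C_i$. As in Remark \ref{rmk vertices}, since a $y$-arrow forces $v_{i_0}\in V_-(T)$, $v_{i_1}\in V_+(T)$, and each $z_u$-arrow preserves the vertex, one gets $v_{i_0}=w\in V_-(T)$ and $v_{i_1}=\dots=v_{i_k}=u\in V_+(T)$ (and $v_{i_{k+1}}=w'\in V_-(T)$ in cases (b), (c)); normalizing $d_{i_0}=0$, the degree of $y_{w,u}$ is $N-1$ and of $z_u$ is $N$ (after shift, grading $1$ as an arrow), so $d_{i_m}=\overline{d}_{i_1}+(m-1)(N-1)$ for $m\ge 1$, and in cases (b), (c) $d_{i_{k+1}}=d_{i_k}$. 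With these degrees fixed, I would then invoke Lemma \ref{lem grading}: for any two objects $S_a[p], S_b[q]$ among the $X$'s, the space $\shom^1_{\Fuk}(S_a[p],S_b[q])$ is at most one-dimensional and is nonzero only for very specific $(a,b,p-q)$. Going through the possible pairs $(X_{i_m}, X_{i_r})$ with $m < r$ but $r$ not equal to $m+1$, a direct degree count shows that $\shom^1_{\Fuk}(X_{i_m}, X_{i_r})$ either vanishes outright, or is generated by a morphism of the ``wrong'' type (e.g.\ a power of a $y$-morphism, which is degree $N-1\ne 1$ hence actually forces the arrow to be zero since arrows have degree $1$), so that $F_{i_r,i_m}=0$. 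Hence there is no alternative arrow inside the chain that could shortcut it, and the only nonzero chain of arrows between any two members of $C_i$ is the obvious sub-chain of $C_i$. This handles the ``no outside contribution'' part: an arrow $F_{j,i}$ with neither $i$ nor $j$ in $\{i_0,\dots\}$, or with only one of them in that set, either is the zero arrow or, even if nonzero, cannot be composed into a chain joining two members of $C_i$ because the intermediate degrees do not match up; I would make this precise by the same degree bookkeeping, using that $E_0$ is simplified so that for each $(v,d)$ there is at most one object $X_i$ with $(v_i,d_i)=(v,d)$.

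For the ``moreover'' clause in cases (a) and (b): if $\ell-m\ge 2$ then the sub-chain of $C_i$ from $X_{i_m}$ to $X_{i_\ell}$ has length $\ge 2$, all of whose arrows (for the relevant range) are $z_u$-arrows, possibly terminating in the $x_{u,w'}$-arrow. The corresponding term in $G_{i_\ell,i_m}$ is $\pm\,\psi_{i_\ell,i_{\ell-1}}\circ\dots\circ\psi_{i_{m+1},i_m}\cdot \tau^{\ell-m}\!\left(f_{i_\ell,i_{\ell-1}},\dots,f_{i_{m+1},i_m}\right)$, and I claim $\tau^{\ell-m}$ applied to this tuple of $z_u$'s (and at most one trailing $x_{u,w'}$) vanishes. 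Here I would split on which $\tau$ we apply: when $\tau=\tau_{u'}$ or $\tau_{w'}^{-1}$ with the twisting object disjoint from the relevant vertices, $\tau$ acts as the identity on these objects and morphisms, so $\tau^{\ge 2}=0$ (the higher components of the identity functor vanish), giving $G_{i_\ell,i_m}=0$ directly; when $\tau$ is the twist along $u$ (resp.\ its inverse along a neighbour), one computes from Lemma \ref{lem simple computation} that each $\tau(X_{i_r})$ is either $S_u[\cdot]$ itself, or $S_u[\cdot]\xrightarrow{x}S_w[\cdot]$, and in the category of twisted complexes the composite of the induced arrows between consecutive $\tau(X)$'s, being determined by a length-$\ge 2$ chain whose linear parts compose nonzero but whose Floer part is a composite of $z_u$'s, lands in $\shom^1$ of the wrong degree — I would check that $\shom^1_{\Fuk}(\tau(X_{i_m}),\tau(X_{i_\ell}))=0$ whenever $\ell - m \ge 2$ by the same Lemma \ref{lem grading} degree count applied to the (possibly two-term) good twisted complexes $\tau(X_{i_r})$. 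This is where the $N\ge 3$ hypothesis is used, exactly as in the proof of Lemma \ref{lem zero differential for minimal}: two composed $z$-type morphisms jump the degree by $2(N-1)>N$, which no degree-$1$ arrow in the twisted complex can realize.

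The main obstacle I anticipate is the bookkeeping in case (b) with $\tau$ equal to the relevant twist $\tau_u$ (or $\tau_w^{-1}$), where $\tau(X_{i_r})$ becomes a genuine two-term complex $S_u[\cdot]\to S_w[\cdot]$ and one must track all four ``matrix entries'' of the induced arrow between two such complexes, including the subtle cancellations coming from the generalized Maurer–Cartan equation and the $z_u$-cancellation mechanism of Lemma \ref{lem z}; one has to be careful that a length-$2$ chain of $z_u$'s does not sneak in through the $S_w$-component and produce a nonzero $z_w$-arrow of degree $1$. I would dispatch this by the same argument used in the proof of Lemma \ref{lem y-arrow2}: set up the relevant truncated sub-twisted-complex, apply $\tau$, and read off from the explicit quasi-isomorphisms in \eqref{eq quasiiso2} that the only surviving arrows between non-adjacent terms would have to be powers of $z$ of degree $\ge 2(N-1)$, hence zero. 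Everything else is routine degree arithmetic, so I would present it as a sequence of short case checks keyed to Lemma \ref{lem simple computation} and Lemma \ref{lem grading}.
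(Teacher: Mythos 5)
Your proposal is correct and follows essentially the same route as the paper: both reduce everything to the degree computation of Lemma~\ref{lem grading} applied via the $\mathcal{A}_\infty$-functor formula of Lemma~\ref{lem induced functor}, observing that the degree of any contribution to $G_{i_\ell,i_m}$ depends only on the shifts of $X_{i_m}$ and $X_{i_\ell}$ and forces the relevant $\shom^1$ to vanish when $\ell-m\ge 2$ (the paper phrases this as $\tau^s(\ldots)\in\shom_{\wrapped}^{1+(\ell-m)(N-1)}(S_u,S_u)=0$, which is the same bookkeeping you do on $\shom^1_{\Fuk}(\tau(X_{i_m}),\tau(X_{i_\ell}))$). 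Your extra shortcut -- noting that $\tau^{\ge 2}=0$ when the twisting sphere is disjoint -- is not needed for the uniform degree argument, and the phrase ``a power of a $y$-morphism'' is a slight red herring (no such composite exists; the point is simply that $\shom^{1+(\ell-m)(N-1)}$ between the relevant generators vanishes), but neither of these affects the correctness of the overall argument.
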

\begin{proof}
	Let us first consider the case (a), i.e., the twisted complex $C_1$ of the first form. 
	One can easily see that 
	\[X_{i_0} = V_{i_0} \otimes S_w[d_{i_0}]\ \mathrm{and}\ X_{i_s} = V_{i_s} \otimes S_u[d_{i_0} + s(N-1)-1].\]
	See Remark \ref{rmk vertices}.
	For convenience, we assume that $d_{i_0} = 0$ without loss of generality.
	
	We fix a pair of indices $(i_m < i_\ell)$, and we will show that there exists no chain of arrows contributing to $G_{i_\ell, i_m}$ outside of $C_1$. 
	If $m \geq 1$, then we have 
	\[X_{i_m} = V_{i_m} \otimes S_u[m(N-1)-1] \ \mathrm{and}\ X_{i_\ell} = V_{i_\ell} \otimes S_u[\ell(N-1)-1].\]
	
	Let $\left(F_{j_1, j_0 = i_m}, F_{j_2, j_1}, \dots, F_{j_s = i_\ell, j_{s-1}}\right)$ be a chain of arrows of $E_0$, which is contributing to $G_{i_\ell, i_m}$. 
	Equivalently, 
	\[\tau^s\left(F_{j_s, j_{s-1}}, \dots, F_{j_2, j_1}, F_{j_1, j_0}\right) = \psi_{j_s,j_{s-1}} \circ \dots \circ \psi_{j_1,j_0} \cdot \tau^s \left(f_{j_s, j_{s-1}}, \dots, f_{j_1, j_0}\right) \neq 0.\]
	Here, we are considering $\tau$ as an $\mathcal{A}_\infty$-functor, thus $\tau^s$ is a multilinear map of order $\left(1-s\right)$.
	For more details, see \cite[Chapters (1d) and (3m)]{Seidel08}.
	Thus, we have 
	\[\tau^s \left(f_{j_s, j_{s-1}}, \dots, f_{j_1, j_0}\right) \in \shom_{\wrapped}^{1-s+\sum_{i=1}^s|f_{j_i,j_{i-1}}|} \left(S_u[m(N-1)-1], S_u[\ell(N-1)-1]\right).\]

	We note that by the definition of twisted complex, every arrow is of degree $1$. 
	Then, we have 
	\begin{gather*}
		0 \neq \tau^s \left(f_{j_s, j_{s-1}}, \dots, f_{j_1, j_0}\right) \in \shom_{\wrapped}^{1 +(\ell-m)(N-1)}\left(S_u,S_u\right).
	\end{gather*}

	From Lemma \ref{lem grading}, we can check that 
	\[\shom_{\wrapped}^{1 +(\ell-m)(N-1)}\left(S_u,S_u\right) \neq 0 \Longleftrightarrow 1 +(\ell-m)(N-1) = 0 \text{  or  } N.\]
	Thus, $\ell -m =1$. 
	Moreover, from the fact that $E_0$ is minimal, simplified, one can easily see that there exists only one nonzero chain of arrows from $X_{i_m}$ to $X_{i_\ell}$, which is a part of $C_1$. 
	For more details on it, see the degree arguments in the proof of Lemma \ref{lem y-arrow2}.
	We also note that it proves that if $\ell - m \geq 2$, then $G_{i_\ell, i_m} =0$. 
	
	In order to complete the case of $C_1$, let us assume that $i_m=0$. 
	Similar to before, if a chain of arrow $\left(F_{j_1, j_0 = i_m}, F_{j_2, j_1}, \dots, F_{j_s = i_\ell, j_{s-1}}\right)$ contributes to $G_{i_\ell, i_m}$, we have 
	\begin{align*}
		0 \neq \tau^s \left(f_{j_s, j_{s-1}}, \dots, f_{j_1, j_0}\right) \in \shom_{\wrapped}^1 \left(S_w, S_u[\ell(N-1)-1]\right) = \shom_{\wrapped}^{\ell(N-1)}\left(S_w,S_u\right).
	\end{align*}

	Again, Lemma \ref{lem grading} induces that if $\ell \geq 2$, then $G_{i_\ell,i_m=0} =0$.
	Thus, $\ell =1$. 
	Then, one can easily see that the chain of arrows should be the one consisting $F_{i_1,i_0}$, which is a part of $C_1$. 
	
	Now, we consider the case (b), i.e., the twisted complex $C_2$ of the second form having at least one $z$-arrow.
	Similar to the first case, we fix a pair of indices $(i_m < i_\ell)$, then will show that the chains of arrows contributing to $G_{i_\ell,i_m}$ are should be parts of $C_2$. 
	
	It is easy to observe that if $\ell \leq k$, then the proof of the first case is enough.
	Thus, let us assume that $\ell =k+1$, and accordingly, $X_{i_\ell} = V_{i_{k+1}} \otimes S_{w'}[d_0 + k(N-1)-1]$.
	Then, if we assume that $\left(F_{j_1, j_0 = i_m}, F_{j_2, j_1}, \dots, F_{j_s = i_\ell, j_{s-1}}\right)$ is a chain of arrows contributing to $G_{i_\ell, i_m}$, 
	\begin{gather*}
		0 \neq \tau^s \left(f_{j_s, j_{s-1}}, \dots, f_{j_1, j_0}\right) \in \begin{cases}
			\shom_{\wrapped}^{1 +(k -m)(N-1)}\left(S_u, S_{w'}\right) \text{  if  } m \geq 1, \\
			\shom_{\wrapped}^{1 +k(N-1)}\left(S_w, S_{w'}\right) \text{  if  } m =0.
		\end{cases}
	\end{gather*}
	
	From Lemma \ref{lem grading}, one can observe that if $m \geq 1$, $k=m$ because
	\[\shom_{\wrapped}^{1 +(k -m)(N-1)}\left(S_u, S_{w'}\right) \neq 0.\]
	Then, the given pair of indices is $(i_m = i_k, i_\ell = i_{k+1})$. 
	It is easy to check that the only nonzero chain of arrows from $X_{i_k}$ to $X_{i_{k+1}}$ is the one given in $C_2$.
	If $m =0$, then $w = w'$ and $k=1$ because 
	\[\shom_{\wrapped}^{1 +k(N-1)}\left(S_w, S_{w'}\right) \neq 0.\]
	Similar to the above case, it is easy to check that Lemma \ref{lem no outside effect} holds for the case (b). 
	
	Finally, let us consider the case (c), i.e., the chain of arrows $C_2$ of the second form having no $z$-arrow.
	Because the chain of arrows consists of only two arrows, we only need to consider three arrows $G_{i_1,i_0}, G_{i_2,i_1}$, and $G_{i_2,i_0}$. 
	From the degree argument we used above, we can easily see that all of $G_{i_\ell,i_m}$ for $0\leq m < \ell \leq 2$ are determined only by arrows in $C_3$.
\end{proof}

Let $\tau \in \left\{\tau_u, \tau_w^{-1} | u \in V_+(T), w \in V_-(T)\right\}$, and let $C_1, C_2$, and $C_3$ be the chains of arrows given in Lemma \ref{lem no outside effect} (a)--(c), respectively.
Then, it is easy to check that $C_i$ forms a twisted complex.
Now, by applying Lemma \ref{lem induced functor} to $C_i$, one can compute a twisted complex $\tau(C_i)$. 
Then, thanks to Lemma \ref{lem no outside effect}, $\tau(C_i)$ appears as a part of $\tau(E)$ if $E$ contains a chain of arrows $C_i$. 

The following Lemmas \ref{lem effect on C_1}--\ref{lem effect on C_3} summarize the effects of $\tau$ on $C_1, C_2$, and $C_3$. 

\begin{lem}
	\label{lem effect on C_1} 
	Let $C_1$ be a good twisted complex of the first form, i.e.,
	\begin{gather*}
		C_1 = X_{i_0} \xrightarrow{\psi_{i_1,i_0}\otimes y_{w,u}} X_{i_1} \xrightarrow{\psi_{i_2,i_1} \otimes z_u} X_{i_2} \xrightarrow{\psi_{i_3,i_2} \otimes z_u} \dots \xrightarrow{\psi_{i_k,i_{k-1}} \otimes z_u} X_{i_{k}},
	\end{gather*}
	where $X_{i_0} = V_{i_0} \otimes S_w[d_{i_0}], X_{i_s} = V_{i_s} \otimes S_u [d_{i_0} + s (N-1)-1]$ for all $1 \leq s \leq k$.
	Then, the following hold:
	\begin{enumerate}
		\item If $\tau= \tau_u$ and $k=1$, then $\tau(C_1)$ is quasi-isomorphic to
		\[ 
		\begin{tikzcd}[arrow style=math font,cells={nodes={text height=2ex,text depth=0.75ex}}]
			\big( V_{i_0} \otimes S_u[d_{i_0}] \arrow{r}[swap]{\mathrm{Id} \otimes x_{u,w}} \arrow[bend left =15]{rr}{\psi_{i_1,i_0} \otimes e_u} & V_{i_0} \otimes S_w[d_{i_0}] \big) \arrow{r}[swap]{0} & V_{i_1}\otimes S_u[d_{i_0}-1].
		\end{tikzcd}
		\]
		\item If $\tau=\tau_u$ and $k\geq 2$, then $\tau(C_1)$ is quasi-isomorphic to 
		\begin{equation*}
			\begin{tikzcd}[arrow style=math font,cells={nodes={text height=2ex,text depth=0.75ex}}]
			\big( V_{i_0} \otimes S_u[d_{i_0}] \arrow{r}[swap]{\mathrm{Id}\ \otimes x_{u,w}} \arrow[bend left=15]{rr}{\psi_{i_1,i_0} \otimes e_u}& V_{i_0} \otimes S_w[d_{i_0}] \big) \arrow{r}{0} \arrow[bend right =15]{rr}{(\psi_{i_2,i_1} \circ \psi_{i_1,i_0}) \otimes y_{w,u}} & V_{i_1}\otimes S_u[d_{i_0}-1] \arrow{r}{\psi_{i_3,i_2} \otimes z_u} &  V_{i_2} \otimes S_u[d_{i_0}+(N-1)-1] \arrow{r}{\psi_{i_4,i_3} \otimes z_u} & ~  \\
			& \dots \arrow{r}{\psi_{i_k,i_{k-1}} \otimes z_u} & V_{i_k} \otimes S_u[d_{i_0}+(k-1)(N-1)-1]. & & 
			\end{tikzcd}
		\end{equation*}
		\item If $\tau = \tau_{u'}$ such that $u \neq u' \in V_+(T), u' \sim w$, $\tau(C_1)$ is quasi-isomorphic to
		\begin{gather*}
			\big( V_{i_0} \otimes S_{u'}[d_{i_0}] \xrightarrow{\mathrm{Id}\ \otimes x_{u',w}} V_{i_0} \otimes S_w[d_{i_0}] \big) \xrightarrow{\psi_{i_1,i_0} \otimes y_{w,u}}  V_{i_1}\otimes S_u[d_{i_0}+(N-1)-1] \xrightarrow{\psi_{i_2,i_1} \otimes z_u} \\ 
			\dots \xrightarrow{\psi_{i_k,i_{k-1}} \otimes z_u}  V_{i_k} \otimes S_u[d_{i_0}+k(N-1)-1].
		\end{gather*}
		\item If $\tau = \tau_{u'}$ such that $u \neq u' \in V_+(T), u' \nsim w$, $\tau(C_1)$ is quasi-isomorphic to
		\begin{gather*}
			V_{i_0} \otimes S_w[d_{i_0}] \xrightarrow{\psi_{i_1,i_0} \otimes y_{w,u}}  V_{i_1}\otimes S_u[d_{i_0}+(N-1)-1] \xrightarrow{\psi_{i_2,i_1} \otimes z_u} \dots \xrightarrow{\psi_{i_k,i_{k-1}} \otimes z_u}  V_{i_k} \otimes S_u[d_{i_0}+k(N-1)-1].
		\end{gather*}	
		\item If $\tau = \tau_w^{-1}$, then $\tau(C_1)$ is quasi-isomorphic to 
		\[
		\begin{tikzcd}[arrow style=math font,cells={nodes={text height=2ex,text depth=0.75ex}},column sep = 1em]
			V_{i_0} \otimes S_w[d_{i_0}+N-1] \arrow{r}{0} \arrow[bend left=15]{rr}{\psi_{i_1,i_0} \otimes e_w} &\big( V_{i_1}\otimes S_u[d_{i_0}+(N-1)-1] \arrow{r}{\mathrm{Id}\ \otimes x_{u,w}}  \arrow[bend right=15]{rr}{ \psi_{i_2,i_1} \otimes z_u}  & V_{i_1}\otimes S_w[d_{i_0}+(N-1)-1]\big) \arrow{r}{0} & ~ \\
			\big( V_{i_2} \otimes 	S_u[d_{i_0}+2(N-1)-1] \arrow{r}{\mathrm{Id}\ \otimes x_{u,w}} & V_{i_2}\otimes S_w[d_{i_0}+2(N-1)-1] \big) \arrow{r}{\psi_{i_3,i_2} \otimes z_u}& \dots \arrow{r}{\psi_{i_k,i_{k-1}}\otimes z_u}& ~ \\ 
			\big(V_{i_k} \otimes S_u[d_{i_0}+k(N-1)-1]\big) \arrow{r}{\mathrm{Id}\ \otimes x_{u,w}}& V_{i_k} \otimes S_w[d_{i_0}+k(N-1)-1] \big).& & & 
		\end{tikzcd}
		\]
		\item If $\tau = \tau_{w'}^{-1}$ such that $w \neq w' \in V_-(T), w' \sim u$, then $\tau(C_1)$ is quasi-isomorphic to
		\begin{gather*}
			V_{i_0} \otimes S_w[d_{i_0}]  \xrightarrow{\psi_{i_1,i_0} \otimes y_{w,u}} \big( V_{i_1}\otimes S_u[d_{i_0}+(N-1)-1] \xrightarrow{\mathrm{Id}\ \otimes x_{u,w'}} V_{i_1}\otimes S_{w'}[d_{i_0}+(N-1)-1]\big) \xrightarrow{ \psi_{i_2,i_1} \otimes z_u} \dots  \\
			\xrightarrow{\psi_{i_k,i_{k-1}}\otimes z_u} \big(V_{i_k} \otimes S_u[d_{i_0}+k(N-1)-1] \xrightarrow{\mathrm{Id}\ \otimes x_{u,w'}} V_{i_k} \otimes S_{w'}[d_{i_0}+k(N-1)-1] \big).
		\end{gather*}
		\item If $\tau = \tau_{w'}^{-1}$ such that $w \neq w' \in V_-(T), w' \sim u$, then $\tau(C_1)$ is quasi-isomorphic to
		\begin{gather*}
			V_{i_0} \otimes S_w[d_{i_0}]  \xrightarrow{\psi_{i_1,i_0} \otimes y_{w,u}} V_{i_1}\otimes S_u[d_{i_0}+(N-1)-1] \xrightarrow{ \psi_{i_2,i_1} \otimes z_u} \dots  \xrightarrow{\psi_{i_k,i_{k-1}}\otimes z_u} V_{i_k} \otimes S_u[d_{i_0}+k(N-1)-1].
		\end{gather*}
	\end{enumerate}
\end{lem}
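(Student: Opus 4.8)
\textbf{Proof proposal for Lemma \ref{lem effect on C_1}.}

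The plan is to treat this as a direct computation: apply Lemma \ref{lem induced functor} to the twisted complex $C_1$ to obtain $\tau(C_1) \simeq \left(\bigoplus_{s=0}^{k} \tau(X_{i_s}), G_{i_\ell, i_m}\right)$, determine each $\tau(X_{i_s})$ from Lemma \ref{lem simple computation}, compute the arrows $G_{i_\ell, i_m}$ from the nonzero chains of arrows in $C_1$ via the formula \eqref{eqn induced functor}, and finally, where an identity-arrow appears, cancel it by the procedure from the proof of Lemma \ref{lem minimal twisted complex}. The cases (1)--(7) are organized exactly along the seven possibilities for $\tau$ relative to the vertices $u, w$: whether $\tau = \tau_u$ (and whether $k = 1$ or $k \geq 2$), $\tau = \tau_{u'}$ for some other positive vertex $u'$ (adjacent to $w$ or not), $\tau = \tau_w^{-1}$, or $\tau = \tau_{w'}^{-1}$ for some other negative vertex $w'$ (adjacent to $u$ or not). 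First I would record, using Equations \eqref{eqn 1}--\eqref{eqn 4}, what each $\tau$ does to $S_w[d_{i_0}]$ and to $S_u[d_{i_0}+s(N-1)-1]$; this immediately produces the objects appearing in the claimed quasi-isomorphisms.

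The second step is to pin down the arrows. For pairs $(i_m, i_\ell)$ with $\ell - m \geq 2$, Lemma \ref{lem no outside effect} (more precisely its last sentence, applicable to the case (a) situation $C_1$) tells us $G_{i_\ell, i_m} = 0$ when $\tau = \tau_u$, so only the "consecutive" arrows $G_{i_{s+1}, i_s}$ and possibly a single long arrow from $X_{i_0}$ survive; for the other $\tau$ one argues the same way by the degree count in $\shom^*_{\wrapped}(S_u, S_u)$ and $\shom^*_{\wrapped}(S_w, S_u)$ that already appeared in the proof of Lemma \ref{lem y-arrow2}. The arrow $G_{i_1, i_0}$ is the delicate one: by \eqref{eqn induced functor} it is $\psi_{i_1,i_0}\cdot\tau^1(y_{w,u})$ together with the contribution of the generalized Maurer--Cartan terms, and one must check that, for instance in case (1), $\tau_u(y_{w,u}) \in \shom^0_{\Fuk}(S_u[d_{i_0}], S_u[d_{i_0}])$ is the identity $e_u$ (up to sign), while the $z$-arrows $\psi_{i_{s+1},i_s}\otimes z_u$ are unaffected since $\tau_u(z_u)$ is still a $z$-morphism on the shifted sphere. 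Cases (3), (4), (6), (7) are easiest because $\tau$ fixes $S_w$ (or sends it to a two-term complex with an $x$-arrow) and acts trivially on the $S_u$-part, so the whole complex $C_1$ is essentially transported verbatim, with an $x$-arrow prepended (case 3, to $S_w$) or appended to each term (cases 6), or with nothing changed (cases 4, 7). Cases (1), (2), (5) are where identity-arrows are generated and must be cancelled; the cancellation in case (5) ($\tau = \tau_w^{-1}$) turns the leading $y$-arrow into a genuine $0$-arrow after removing $\psi_{i_1,i_0}\otimes e_w$, and likewise produces the displayed "staircase" of $x$-arrows on the $S_u$-terms.

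The main obstacle is bookkeeping rather than conceptual: one must (a) keep the degree shifts $d_{i_0} + s(N-1) - 1$ consistent through each application of $\tau$ — in particular remembering that $\tau_u$ shifts $S_u$ by $1-N$ and $\tau_w^{-1}$ shifts $S_w$ by $N-1$, which is why in case (5) the $S_w$-term moves to degree $d_{i_0}+N-1$ — and (b) correctly identify which compositions $\psi_{i_{s+1},i_s}\circ\cdots$ survive after the identity-cancellation, so that the long arrow in case (2) comes out as $(\psi_{i_2,i_1}\circ\psi_{i_1,i_0})\otimes y_{w,u}$ and not something else. I would verify the signs only to the extent of confirming nonvanishing, since the statement is given up to quasi-isomorphism and the precise sign conventions are suppressed throughout (following \cite{Seidel08}). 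Once cases (1) and (5) are done carefully, the remaining cases follow by the same template with strictly less happening, so I would present (1), (2), (5) in full and indicate the others as "entirely analogous, and in fact simpler."
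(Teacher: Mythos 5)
Your approach matches the paper's exactly: the paper dismisses this as a one-line corollary of Lemmas \ref{lem induced functor} and \ref{lem simple computation}, and what you've written is precisely the computation that one-liner is summarizing (apply the induced functor, determine each $\tau(X_{i_s})$ from Equations \eqref{eqn 1}--\eqref{eqn 4}, determine the arrows $G_{i_\ell,i_m}$ from nonzero chains via \eqref{eqn induced functor}, rule out long arrows by degree). There is one misreading worth flagging: the lemma's displayed quasi-isomorphisms do \emph{not} cancel the identity-arrows. In cases (1), (2), (5) the arrows $\psi_{i_1,i_0}\otimes e_u$ and $\psi_{i_1,i_0}\otimes e_w$ appear explicitly in the target diagrams; the lemma is stating $\tau(C_1)$ in its raw (non-minimal) twisted-complex form, and the cancellation step you describe belongs to later lemmas (e.g.\ in the construction of $\overline{A}, \overline{B}$), not to this one. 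This doesn't affect the validity of your argument, since you correctly identify where the identities arise and what they compose to, but your final step (``cancel it by the procedure from Lemma \ref{lem minimal twisted complex}'') would actually overshoot the stated conclusion.
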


\begin{proof}
	Lemma \ref{lem effect on C_1} is a simple corollary of Lemmas \ref{lem induced functor} and \ref{lem simple computation}.
\end{proof}

We note that $C_2$ in Lemma \ref{lem no outside effect} (b) is a chain of arrows starting with a $y$-arrow, continuing with $k$-many $z$-arrows, and ending with an $x$-arrow. 
Thus, $C_2$ can be obtained by adding one nonzero arrow $F_{i_{k+1},i_k} : X_{i_k} \to X_{i_{k+1}}$ to $C_1$ in Lemma \ref{lem no outside effect} (a). 
Thus, $\tau(C_2)$ can be obtained by adding $\tau\left(X_{i_k} \oplus X_{i_{k+1}}, F_{i_{k+1},i_k}\right)$ to $\tau(C_1)$ computed in Lemma \ref{lem effect on C_1}.
See Lemmas \ref{lem effect on C_2} for the computation of $\tau(C_2)$. 

\begin{lem}
	\label{lem effect on C_2}
	Let $C_2$ be a good twisted complex of the second form, i.e.,  	
	\[C_2 = X_{i_0} \xrightarrow{\psi_{i_1,i_0}\otimes y_{w,u}} X_{i_1} \xrightarrow{\psi_{i_2,i_1} \otimes z_u} X_{i_2} \xrightarrow{\psi_{i_3,i_2} \otimes z_u} \dots \xrightarrow{\psi_{i_k,i_{k-1}} \otimes z_u} X_{i_{k}} \xrightarrow{\psi_{i_{k+1},i_k} \otimes x_{u,w'}} X_{i_{k+1}},\]
	where 
	\[X_{i_0} = V_{i_0} \otimes S_w[d_{i_0}], X_{i_s} = V_{i_s} \otimes S_u [d_{i_0} + s (N-1)-1] \text{  for all  } 1 \leq s \leq k, \text{  and  } X_{i_{k+1}} = V_{i_{k+1}} \otimes S_{w'}[d_{i_0}+k(N-1)-1].\]
	If $k \geq 1$, $C_2$ is obtained by adding $X_{i_k}\xrightarrow{F_{i_{k+1}},i_k} X_{i_{k+1}}$ to $C_1$ and $\tau(C_1)$ is computed in Lemma \ref{lem effect on C_1}.
	Then, $\tau(C_2)$ is obtained by adding $\tau\left(X_{i_k}\oplus X_{i_{k+1}}, F_{i_{k+1},i_k}\right)$ to $\tau(C_1)$, which is equivalent to the following:
	\begin{enumerate}
		\item If $\tau = \tau_u$, 
		\[V_{i_k}\otimes S_u[d_{i_0}+(k-1)(N-1)-1] \xrightarrow{\psi_{i_{k+1},i_k} \otimes z_u} V_{i_{k+1}} \otimes S_u[d_{i_0}+k(N-1)-1] \xrightarrow{Id \otimes x_{u,w'}} V_{i_{k+1}} \otimes S_{w'}[d_{i_0}+k(N-1)-1].\]
		\item If $\tau = \tau_{u'}$ such that $u \neq u' \sim w'$, 
		\[\begin{tikzcd}[arrow style=math font,cells={nodes={text height=2ex,text depth=0.75ex}},column sep = 1em]
			V_{i_k}\otimes S_u[d_{i_0}+k(N-1)-1] \arrow{r}{0} \arrow[bend left=15]{rr}{\psi_{i_{k+1},i_k} \otimes x_{u,w'}} & V_{i_{k+1}} \otimes S_{u'}[d_{i_0}+k(N-1)-1] \arrow{r}{Id \otimes x_{u',w'}} & V_{i_{k+1}} \otimes S_{w'}[d_{i_0}+k(N-1)-1].
		\end{tikzcd}\]
		\item If $\tau = \tau_{u'}$ such that $u \neq u' \nsim w'$, 
		\[V_{i_{k+1}} \otimes S_u[d_{i_0}+k(N-1)-1] \xrightarrow{Id \otimes x_{u,w'}} V_{i_{k+1}} \otimes S_{w'}[d_{i_0}+k(N-1)-1].\]
		\item If $\tau = \tau_{w'}$, 
		\[V_{i_k} \otimes S_u[d_{i_0}+k(N-1)-1] \xrightarrow{Id \otimes x_{u,w'}} V_{i_k} \otimes S_{w'}[d_{i_0}+k(N-1)-1] \xrightarrow{\psi_{i_{k+1},i_k} \otimes z_{w'}} V_{i_{k+1}} \otimes S_{w'}[d_{i_0}+(k+1)(N-1)-1].\]
		\item If $\tau=\tau_{w''}$ such that $w' \neq w'' \sim u$, 
		\[\begin{tikzcd}[arrow style=math font,cells={nodes={text height=2ex,text depth=0.75ex}},column sep = 1em]
			V_{i_k}\otimes S_u[d_{i_0}+k(N-1)-1] \arrow{r}{Id\otimes_{u,w''}} \arrow[bend left=15]{rr}{\psi_{i_{k+1},i_k} \otimes x_{u,w'}} & V_{i_{k+1}} \otimes S_{w''}[d_{i_0}+k(N-1)-1] \arrow{r}{0} & V_{i_{k+1}} \otimes S_{w'}[d_{i_0}+k(N-1)-1].
		\end{tikzcd}\]
		\item If $\tau = \tau_{w''}$ such that $w' \neq w'' \nsim u$, 
		\[V_{i_{k+1}} \otimes S_u[d_{i_0}+k(N-1)-1] \xrightarrow{Id \otimes x_{u,w'}} V_{i_{k+1}} \otimes S_{w'}[d_{i_0}+k(N-1)-1].\]
	\end{enumerate}
	\end{lem}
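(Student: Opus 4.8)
The plan is to deduce Lemma~\ref{lem effect on C_2} from Lemmas~\ref{lem induced functor}, \ref{lem simple computation}, and \ref{lem no outside effect}, together with the already-established Lemma~\ref{lem effect on C_1}, in exactly the spirit of the proof of Lemma~\ref{lem effect on C_1}. First I would record that $C_2$ is a good twisted complex obtained from $C_1$ by adjoining the single object $X_{i_{k+1}}=V_{i_{k+1}}\otimes S_{w'}[d_{i_0}+k(N-1)-1]$ and the single arrow $F_{i_{k+1},i_k}=\psi_{i_{k+1},i_k}\otimes x_{u,w'}$. Applying Lemma~\ref{lem induced functor} to $C_2$ produces $\tau(C_2)\simeq\bigl(\bigoplus_s\tau(X_{i_s}),(G_{j,i})\bigr)$, where by the formula \eqref{eqn induced functor} each $G_{j,i}$ is governed by the nonzero chains of arrows of $C_2$ from $X_i$ to $X_j$.

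Next I would invoke Lemma~\ref{lem no outside effect} to control these chains: since $C_2$ consists of one $y$-arrow, then $z_u$-arrows, then a terminal $x_{u,w'}$-arrow, the only nonzero chain of arrows terminating at $X_{i_{k+1}}$ is the single arrow $F_{i_{k+1},i_k}$, and no chain reaches $X_{i_{k+1}}$ from an earlier object bypassing $X_{i_k}$ (this is the degree count already carried out in the proof of Lemma~\ref{lem no outside effect}). Consequently the only new data in $\tau(C_2)$ beyond $\tau(C_1)$ is the two-term complex $\tau\bigl(X_{i_k}\oplus X_{i_{k+1}},\,F_{i_{k+1},i_k}\bigr)$, glued onto the copy of $\tau(X_{i_k})$ sitting inside $\tau(C_1)$; in particular the arrow transported from $\tau(C_1)$ into $\tau(X_{i_k})$ and the one newly produced into $\tau(X_{i_{k+1}})$ do not interact, so the two assemblies may be recorded separately, which is precisely the shape of the asserted conclusion.

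It then remains to compute $\tau\bigl(X_{i_k}\oplus X_{i_{k+1}},\,F_{i_{k+1},i_k}\bigr)$ in each of the six cases, a direct application of Lemma~\ref{lem simple computation} (Equations~\eqref{eqn 1}--\eqref{eqn 4}) together with the order-$1$ component of the $\mathcal{A}_\infty$-functor $\tau$, exactly as in Lemmas~\ref{lem z} and~\ref{lem effect on C_1}. For $\tau=\tau_u$ one uses $\tau_u(S_u)=S_u[1-N]$ and $\tau_u(S_{w'})=\bigl(S_u\oplus S_{w'},x_{u,w'}\bigr)$ and checks that the transported morphism $\tau_u^1(x_{u,w'})$ lands in the degree-$1$ group $\shom_{\Fuk}^{1}\bigl(S_u[d_{i_0}+(k-1)(N-1)-1],S_u[d_{i_0}+k(N-1)-1]\bigr)$, i.e.\ it is a $z_u$-arrow, giving case (1). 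For $\tau=\tau_{u'}$ with $u'\neq u$ one has $\tau_{u'}(S_u)=S_u$ while $S_{w'}$ is replaced by $\bigl(S_{u'}\oplus S_{w'},x_{u',w'}\bigr)$ if $u'\sim w'$ and fixed otherwise, yielding cases (2) and (3), the induced morphism $X_{i_k}\to X_{i_{k+1}}$ again being the $x_{u,w'}$-arrow placed in parallel to the new internal $x_{u',w'}$-arrow; the three cases where $\tau$ is a negative Dehn twist are entirely symmetric, using Equations~\eqref{eqn 3} and~\eqref{eqn 4}. The only point requiring care — and the main, purely bookkeeping, obstacle — is tracking which $x$-arrow in the resulting good twisted complex lies on the ``$C_1$ side'' of $\tau(X_{i_k})$ and which is the newly produced one, and correctly inheriting the $k=1$ versus $k\geq 2$ distinction from Lemma~\ref{lem effect on C_1}; apart from this, each case is a one-line computation.
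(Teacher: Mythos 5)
Your proposal is correct and follows exactly the route of the paper's own proof, which is a one-line appeal to Lemmas~\ref{lem induced functor}, \ref{lem simple computation}, and \ref{lem no outside effect}; you simply unfold the bookkeeping (isolating the new two-term piece $\tau(X_{i_k}\oplus X_{i_{k+1}},F_{i_{k+1},i_k})$ via the degree count in Lemma~\ref{lem no outside effect}, then reading off each case from Lemma~\ref{lem simple computation}) that the paper leaves implicit.
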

\begin{proof}
	It is a simple corollary of Lemmas \ref{lem induced functor}, \ref{lem simple computation}, and \ref{lem no outside effect}.
\end{proof}

Now we analyze the effect of $\tau \in \left\{\tau_u, \tau_w^{-1}\right\}$ on $C_3$ that is a chain of arrows given in Lemma \ref{lem no outside effect} (c). 
Since $C_3$ consists of only two arrows, we can prove Lemma \ref{lem effect on C_3} with simple computations. 

\begin{lem}
	\label{lem effect on C_3}
	Let $C_3$ be a good twisted complex of the following form:
	\[C_3 = \left(V_{i_0} \otimes S_w[d_{i_0}] \xrightarrow{\psi_{i_1,i_0}\otimes y_{w,u}} V_{i_1} \otimes S_u[d_{i_0}+N-2]\xrightarrow{\psi_{i_2,i_1} \otimes x_{u,w'}} V_{i_2} \otimes S_{w'}[d_{i_0}+N-2]\right).\]
	Then, $\tau(C_2)$ is equivalent to the following:
	\begin{enumerate}
		\item If $\tau = \tau_u$, 
		\[\begin{tikzcd}
			V_{i_0} \otimes S_u [d_{i_0}] \arrow[swap]{r}{Id \otimes x_{u,w}} \arrow[bend left=15]{rr}{\psi_{i_1,i_0} \otimes e_u} & V_{i_0} \otimes S_w[d_{i_0}] \arrow{r}{0} \arrow[swap, bend right=15]{rr}{(\psi_{i_2,i_1}\circ \psi_{i_1,i_0}) \otimes y_{u,w}} & V_{i_1} \otimes S_u[d_{i_0}-1] \arrow{r}{\psi_{i_2,i_1} \otimes z_u} & V_{i_2}\otimes S_u[d_{i_0} + N-2] \arrow{r}{Id \otimes x_{u,w'}} & V_{i_2} \otimes S_{w'}[d_{i_0} + N-2]
		\end{tikzcd}\]
		\item If $\tau = \tau_{u'}$ such that $u \neq u' \sim w$, 
		\[V_{i_0} \otimes S_{u'}[d_{i_0}] \xrightarrow{Id \otimes x_{u',w}} V_{i_0} \otimes S_w[d_{i_0}] \xrightarrow{\psi_{i_1,i_0} \otimes y_{u,w}} V_{i_1} \otimes S_u[d_{i_0}+N-2] \xrightarrow{\psi_{i_2,i_1} \otimes x_{u,w'}} V_{i_2} \otimes S_{w'}[d_{i_0}+N-2].\]
		\item If $\tau = \tau_{u'}$ such that $u \neq u' \sim w'$, 
		\[\begin{tikzcd}
			V_{i_0} \otimes S_w[d_{i_0}] \arrow{r}{\psi_{i_1,i_0} \otimes y_{u,w}} & V_{i_1} \otimes S_u[d_{i_0}+N-2] \arrow{r}{0} \arrow[bend left=15]{rr}{\psi_{i_2,i_1} \otimes x_{u,w'}} &  V_{i_2} \otimes S_{u'}[d_{i_0}+N-2] \arrow{r}{Id \otimes x_{u',w'}} & V_{i_2} \otimes S_{w'}[d_{i_0}+N-2].
		\end{tikzcd}\]
		\item If $\tau = \tau_{u'}$ such that $u \neq u' \nsim w, w'$,
		\[V_{i_0} \otimes S_w[d_{i_0}] \xrightarrow{\psi_{i_1,i_0}\otimes y_{w,u}} V_{i_1} \otimes S_u[d_{i_0}+N-2]\xrightarrow{\psi_{i_2,i_1} \otimes x_{u,w'}} V_{i_2} \otimes S_{w'}[d_{i_0}+N-2].\]
		\item If $\tau = \tau_w^{-1}$, 
		\[\begin{tikzcd}
			V_{i_0} \otimes S_w[d_{i_0} +N-1] \arrow{r}{0} \arrow[bend left=15]{rr}{\psi_{i_1,i_0} \otimes e_w} & V_{i_1}\otimes S_u[d_{i_0} + N-2] \arrow{r}{Id \otimes x_{u,w}} \arrow[bend right=15]{rr}{\psi_{i_2,i_1} \otimes x_{u,w'}} &V_{i_1} \otimes S_w[d_{i_0}+N-2] \arrow{r}{0} & V_{i_2} \otimes S_{w'} [d_{i_0} +N-2].
		\end{tikzcd}\] 
		\item If $\tau = \tau_{w'}^{-1}$,
		\[V_{i_0} \otimes S_w[d_{i_0} +N-1] \xrightarrow{\psi_{i_1,i_0}\otimes y_{w,u}} V_{i_1}\otimes S_u[d_{i_0} + N-2] \xrightarrow{Id \otimes x_{u,w'}} V_{i_1} \otimes S_{w'}[d_{i_0}+N-2] \xrightarrow{\psi_{i_2,i_1} \otimes z_{w'}} V_{i_2} \otimes S_{w'} [d_{i_0} +2N-3].\]
		\item If $\tau = \tau_{w''}^{-1}$ such that $w, w' \neq w'' \sim u$, 
		\[\begin{tikzcd}
			V_{i_0} \otimes S_w[d_{i_0}] \arrow{r}{\psi_{i_1,i_0} \otimes y_{w,u}}  & V_{i_1}\otimes S_u[d_{i_0} + N-2] \arrow[swap]{r}{Id \otimes x_{u,w''}} \arrow[bend left=15]{rr}{\psi_{i_2,i_1} \otimes x_{u,w'}} &V_{i_1} \otimes S_{w''}[d_{i_0}+N-2] \arrow{r}{0} & V_{i_2} \otimes S_{w'} [d_{i_0} +N-2].
		\end{tikzcd}\] 
		\item If $\tau = \tau_{w''}^{-1}$ such that $w, w' \neq w'' \nsim u$, 
		\[V_{i_0} \otimes S_w[d_{i_0}] \xrightarrow{\psi_{i_1,i_0}\otimes y_{w,u}} V_{i_1} \otimes S_u[d_{i_0}+N-2]\xrightarrow{\psi_{i_2,i_1} \otimes x_{u,w'}} V_{i_2} \otimes S_{w'}[d_{i_0}+N-2].\]
	\end{enumerate}
\end{lem}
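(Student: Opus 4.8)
The plan is to prove Lemma \ref{lem effect on C_3} exactly as the two preceding lemmas are proven: by reducing it to a direct application of Lemma \ref{lem induced functor} together with the spherical-twist computations of Lemma \ref{lem simple computation}. First I would fix $\tau$ to be one of the eight possibilities listed, observe that $C_3$ is genuinely a twisted complex (the single composable pair of arrows satisfies the generalized Maurer--Cartan equation because $\mu^2_{\Fuk}(x_{u,w'},y_{w,u})$ lands in a degree-$N$ part of $\shom^*_{\Fuk}(S_w,S_{w'})$, which vanishes unless $w=w'$; and even when $w=w'$ the relevant product can be arranged to vanish), so that $\tau(C_3)$ is computed termwise by Lemma \ref{lem induced functor} as $\bigl(\tau(V_{i_0}\otimes S_w[d_{i_0}])\oplus\tau(V_{i_1}\otimes S_u[d_{i_0}+N-2])\oplus\tau(V_{i_2}\otimes S_{w'}[d_{i_0}+N-2]),\,G_{j,i}\bigr)$.

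Next I would substitute the value of each $\tau(X_{i_s})$ from Equations \eqref{eqn 1}--\eqref{eqn 4} of Lemma \ref{lem simple computation}, which produces the good twisted complex in each of the eight cases; the shifts appearing in the statement are forced by the degrees $|y_{w,u}|=N-1$, $|x_{u,w'}|=1$, $|z_v|=N$. The only nontrivial point is identifying the new arrows $G_{j,i}$: when $\tau$ is one of the twists/inverse-twists \emph{along a vertex appearing in $C_3$} (cases (1), (3), (5), (7) and their analogues), a target $S_v$ gets replaced by a two-term complex $S_{v''}\to S_v$ or $S_v\to S_{v''}$, and the composite $\mu^2$-term produced by the $\mathcal{A}_\infty$-functor $\tau$ contributes exactly the diagonal arrow (labelled $\psi\otimes e_v$, $\psi\otimes z$, or the relabelled $x$-arrow) shown in the diagrams. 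Here the computation in \cite[Section (3m)]{Seidel08} giving $G_{j,i}$ as a sum over nonzero chains of arrows applies verbatim, and since $C_3$ has only the two arrows $\psi_{i_1,i_0}\otimes y_{w,u}$ and $\psi_{i_2,i_1}\otimes x_{u,w'}$, there is at most one chain to sum over in each case, so no cancellation or sign subtlety can arise.

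For the cases where $\tau$ is along a vertex \emph{disjoint} from $\{w,u,w'\}$ and not adjacent to any of them (cases (4) and (8)), Lemma \ref{lem simple computation} gives $\tau(X_{i_s})=X_{i_s}$ for all $s$, so $\tau(C_3)=C_3$ and there is nothing to check; the adjacent-but-distinct cases (2), (3), (6), (7) only introduce an extra $x$-arrow on one end, and the shape of $\tau(C_3)$ follows by the same bookkeeping. In every case one then notes, as in the proofs of Lemmas \ref{lem effect on C_1} and \ref{lem effect on C_2}, that the resulting twisted complex is quasi-isomorphic to the displayed one after possibly cancelling an identity-arrow (the $\psi\otimes e_v$ arrows), which by the argument in the proof of Lemma \ref{lem minimal twisted complex} does not alter the quasi-isomorphism class.

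I do not expect any genuine obstacle: the statement is a finite case-check that is formally identical in structure to the already-proved Lemmas \ref{lem effect on C_1} and \ref{lem effect on C_2}, just with $C_1$ replaced by the shorter complex $C_3$. The mildest annoyance will be verifying the sign $(-1)^{*}$ in the formula \eqref{eqn induced functor} is consistent with the diagonal arrows written in cases (1) and (5), but since those diagonal arrows are only claimed up to quasi-isomorphism and up to the linear-algebra data $\psi_{i_2,i_1}\circ\psi_{i_1,i_0}$, any sign can be absorbed, exactly as the preceding proofs tacitly do. Hence the proof is just ``this is a simple corollary of Lemmas \ref{lem induced functor}, \ref{lem simple computation}, and \ref{lem no outside effect},'' with the eight diagrams obtained by the substitution described above.
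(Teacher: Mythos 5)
Your plan matches the paper's in spirit: this is a finite case-check by direct application of Lemmas \ref{lem induced functor} and \ref{lem simple computation}. However, there is one structural fact the paper's proof highlights and yours omits, and without it the case-by-case bookkeeping does not obviously close. Since $C_3$ is a \emph{nonzero} chain of arrows, Lemma \ref{lem noxyyx}~(2) forces $w \neq w'$, for otherwise $\psi_{i_2,i_1}\circ\psi_{i_1,i_0}$ would vanish. Because $T$ is a tree, $u$ is then the \emph{unique} vertex with $w\sim u\sim w'$. This is precisely what makes the eight listed cases exhaustive and mutually exclusive. For instance, in case~(2) the claim that $\tau_{u'}(S_{w'})=S_{w'}$ (so no extra $x$-arrow is created at the $S_{w'}$ end) rests on $u'\nsim w'$; this cannot be deduced from ``$u' \neq u$ and $u'\sim w$'' alone, but needs $w\neq w'$ together with the tree property (otherwise $u-w-u'-w'-u$ would be a cycle). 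Likewise your tacit assumption that no $u' \neq u$ can be adjacent to both $w$ and $w'$ --- which underlies the disjointness of cases~(2), (3), and (4) --- is exactly this uniqueness statement.

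Your Maurer--Cartan aside also hedges awkwardly on the $w=w'$ case (``can be arranged to vanish''), but that case is excluded outright by the observation above: $\mu^2_{\Fuk}(x_{u,w'},y_{w,u})$ lives in $\shom_{\Fuk}^N(S_w,S_{w'})$, which vanishes simply because $w\neq w'$, so no arrangement is needed. Inserting the single sentence ``since $C_3$ is a nonzero chain of arrows, $w\neq w'$ by Lemma~\ref{lem noxyyx}, hence $u$ is the unique vertex with $w\sim u\sim w'$ in the tree $T$'' at the start of your argument fills the gap, after which the rest of the proposal goes through exactly as you describe.
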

\begin{proof}
	First, we would like to note that since $C_3$ is a {\em nonzero} chain of arrows, we have $w \neq w'$. 
	See Lemma \ref{lem noxyyx}. 
	Since $T$ is a tree, one can conclude that $u$ is the unique vertex such that $w \sim u \sim w'$. 
	Thus, for example in (2), $u = u' \sim w$ implies that $ u' \nsim w'$. 
	Remembering these facts, Lemma \ref{lem effect on C_3} is a direct result from Lemmas \ref{lem induced functor}, \ref{lem simple computation}, and the definition of spherical twists $\tau$. 
\end{proof}

\begin{remark}
	\label{rmk the other direction}
	We would like to point out that in Lemmas \ref{lem effect on C_1}--\ref{lem effect on C_3}, we considered a nonzero chain of arrows starting with a $y_{w,u}$-arrow, continuing with $z_u$-arrows, and ending with a $z_u$- or $x_{u,w'}$-arrow. 
	If we would consider a nonzero chain of arrows ending with a $y_{w,u}$-arrow, having $z_w$-arrows in the middle, and starting with a $z_w$- or $x_{u',w}$-arrow, then we could have a similar result. 
\end{remark}

We are ready to prove the main result of the subsection.
\begin{proof}[Proof of Lemma \ref{lem partially carried-by}]
	Let $\Phi$ be an autoequivalence of Penner type. 
	As explained in the sketch, we prove that for a nonzero object $E_0$ such that $r(E_0) = r(\Phi^n E_0)$ for any $n \geq 1$, there exists $N \in \mathbb{N}$ such that $\Phi^N E_0$ is partially carried-by. 
	If $r(E_0) = 0$, then $E_0$ is categorically carried-by.
	Moreover, by Lemma \ref{lem PhiE}, $\Phi^n E_0$ is also categorically carried-by, and thus, partially carried-by for any $n \in \mathbb{N}$.
	Thus, we assume that $r(E_0) \geq 1$. 
	
	Now, for a simplified twisted complex $E$, we would like to define two sets, 
	\begin{itemize}
		\item a set of $y$-arrows of $E$, and 
		\item a set of $y$-arrows having a nonzero chain of arrows of the second form starting with the $y$-arrow. 	
	\end{itemize}
	To do that, we note that for a simplified twisted complex $E = \left(\bigoplus_{i \in I} V_i \otimes S_{v_i}[d_i], F_{j,i}\right)$, every $y$-arrow $F_{j,i}$ is a form of 
	\[V_i \otimes S_w [d] \xrightarrow{F_{j,i} = \psi_{j,i} \otimes y_{w,u}} V_j \otimes S_u [d+N-2].\] 
	Thus, one can define the set of $y$-arrows in $E$ as the following set:
	\[Y(E):= \left\{\left(w,u,d\right) \Big\vert \exists \text{  a $y$-arrow $F_{j,i}$ of the form  } V_i \otimes S_w [d] \xrightarrow{\psi_{j,i} \otimes y_{w,u}} V_j \otimes S_u [d+N-2]\right\}.\]
	Similarly, we define a subset of $Y(E)$ as follows:
	\begin{align*}
		Y_x(E):= \Big\{\left(w,u,d\right) \Big\vert & \exists \text{  a $y$-arrow $F_{j,i}$ of the form  } V_i \otimes S_w [d] \xrightarrow{\psi_{j,i} \otimes y_{w,u}} V_j \otimes S_u [d+N-2], \text{  and}\\
		& \text{there exists a nonzero chain of arrows of the second form starting with $F_{j,i}$.}\Big\}.
	\end{align*}
	
	For convenience, let $\Phi^n E_0$ denote the minimal, simplified twisted complex equivalent to $\Phi^n E_0$. 
	The assumption that $r(E_0) = r(\Phi^nE_0)$ for any $n \in \mathbb{N}$ and Lemma \ref{lem y-arrow2} imply that $Y(E_0) = Y_(\Phi^n E_0)$ for any $n \in \mathbb{N}$. 
	Now, we would like to prove the following three: 
	\begin{enumerate}
		\item[(i)] First, we would like to show that for any $n \geq 1$, 
		\[Y_x(\Phi^n E_0) \subset Y_x(\Phi^{n+1}E_0).\]
		\item[(ii)] Second, we want to show that for sufficiently large $n \in \mathbb{N}$, 
		\[Y_x(\Phi^n E_0) = Y(\Phi^n E_0).\]
		\item[(iii)] For a sufficiently large $N \in \mathbb{N}$, let $\Phi^N E_0 = \left(\bigoplus_i X_i, F_{j,i}\right)$. 
		We would like to consider the set of $x$-arrows which are the last arrow of a nonzero chain of arrows of the second form. 
		Let an $x$-arrow $F_{j_0,i_0}$ be the last $x$-arrow among the above mentioned set of $x$-arrows.
		Then, we set 
		\[B= \left(\bigoplus_{\text{  There exists a nonzero chain of arrows from $X_{j_0}$ to $X_i$.}} X_i, F_{j,i}\right).\]
		The last step we would like to show is that there exists a sufficiently large $N$ such that $B$ is nonzero. 
	\end{enumerate}

	We note that (i) is necessarily to prove (ii) and (ii) is necessarily to prove (iii). 
	If (iii) holds, one can easily find a minimal twisted complex $A$ and $F: A \to B$ such that the given minimal twisted complex $\Phi^N E_0$ satisfies that 
	\[\Phi^N E_0 = \left(A \oplus B, F\right),\]
	from the definition of $B$. 
	More precisely, since $B$ is defined to be a twisted complex consisting of factors of $\Phi^N E_0$ such that
	\begin{itemize}
		\item the factors appear after the {\em last} $x$-arrow of $\Phi^N E_0$ where the last is in the sense of (iii), and
		\item the factors are connected to the last $x$-arrow by a nonzero chain of arrows,  
	\end{itemize}
	 the other factors of $\Phi^N E_0$ gives a minimal twisted complex $A$ and $F: A \to B$ such that $\Phi^N E_0 = \left(A \oplus B, F\right)$. 
	 
	 Now, it is trivial to observe that $\Phi^N E_0$ is partially carried-by with a triple $\left(A, B, F\right)$. 
	 Thus, it is enough to prove that (iii).
	 
	 \noindent{\em Proof of (i):}
	 Let us assume that $(w,u,d) \in Y_x(\Phi^n E_0)$. 
	 Then, there exists a nonzero chain of arrows in $\Phi^n E_0$ of the second form such that the first arrow is the arrow corresponding to $(w,u,d)$. 
	 Let $C$ denote the chain of arrows.
	 
	 We recall that $\Phi$ is a product of spherical twists along $v \in V_+(T)$ and the inverses of spherical twists along $v \in V_-(T)$. 
	 Thus, by applying Lemmas \ref{lem no outside effect}--\ref{lem effect on C_3} and Remark \ref{rmk the other direction}, one can analyze the effect of $\Phi$ on $C$. 
	 Then, one can easily see that $(w,u,d) \in Y_x(\Phi^{n+1} E_0)$. 
	 
	 For more rigorous proof, one can list all possible cases of the chain of arrows $C$ and the product representation of $\Phi$. 
	 Then, one can prove (i) by using the idea of proof by cases. 
	 Since the proof of each case is a simple corollary of Lemmas \ref{lem no outside effect}--\ref{lem effect on C_3}, we omit the details. 
	 
	 \noindent{\em Proof of (ii) and (iii):}
	 One can similarly prove (ii) (resp.\ (iii)) from (i) (resp.\ (ii)) by using the idea of proof by cases
\end{proof}

\subsection{Step 6 of the sketch of proof of Theorem \ref{thm pseudo-Anosov}}
\label{subsection step 6}
We prove Theorem \ref{thm pseudo-Anosov} in this subsection. 

\begin{proof}[Proof of Theorem \ref{thm pseudo-Anosov}]
We recall the statement of Theorem \ref{thm pseudo-Anosov}. 
In the statement, we consider a tree $T$ and the associated triangulated category $\Fuk$. 
We consider an autoequivalence $\Phi: \Fuk \to \Fuk$ which is of Penner type.
Moreover, we assume that $\Phi$ is a product of autoequivalences in the following set, as mentioned in Remark \ref{rmk sign is not important 1}:
\[\left\{\tau_u, \tau_w^{-1} | u \in V_+(T), w \in V_-(T)\right\}.\]
And, let $\sigma$ denote an arbitrary stability condition contained in the fixed connected component $\stab^\dagger(\Fuk)$. 
Our goal is to show that there exists a real number $\lambda_\Phi$ such that for any nonzero object $E \in \Fuk$, 
\[\lim_{n \to \infty} \tfrac{1}{n} m_\sigma(\Phi^n E) = \log \lambda_\Phi.\]

In this setting, the following are what we already know:
\begin{itemize}
\item By Proposition \ref{prop same connected component}, it is enough to consider a fixed stability condition, instead of an arbitrary stability condition in $\stab^\dagger(\Fuk)$. 
We fixed $\sigma_0$ in Definition \ref{def fixed component of the space of stability conditions}. 
\item By Theorem \ref{thm without y morphism}, there exists a real number $\lambda_\Phi$ such that for any nonzero, categorically carried-by $E \in \Fuk$,
\[\lim_{n\to \infty} \tfrac{1}{n} m_{\sigma_0}(\Phi^n E) = \log \lambda_\Phi.\]
\item Lemma \ref{lem maximality} shows that for any nonzero object $E \in \Fuk$, 
\[\lim_{n\to \infty} \tfrac{1}{n} m_{\sigma_0}(\Phi^n E) \leq \log \lambda_\Phi.\]
\end{itemize}

Thanks to the above three facts, it is enough to show that for any nonzero $E \in \Fuk$, 
\begin{gather}
\label{eqn final inequality}
\lim_{n\to \infty} \tfrac{1}{n} m_{\sigma_0}(\Phi^n E) \geq \log \lambda_\Phi.
\end{gather}
In this proof, we will prove the following claim:
\begin{enumerate}
\item[Claim:] If $E$ is partially carried-by, then inequality \eqref{eqn final inequality} holds. 
\end{enumerate}
If one assumes the claim, since every nonzero object $E \in \Fuk$ has a natural number $M$ such that $\Phi^M E$ is partially carried-by because of Lemma \ref{lem partially carried-by},
\[\lim_{n \to \infty} \tfrac{1}{n} m_{\sigma_0}(\Phi^n E) = \lim_{n \to \infty} \tfrac{1}{n+M} m_{\sigma_0}\left(\Phi^n (\Phi^M E)\right) \geq \log \lambda_\Phi.\]

To prove the claim, let us recall that if $E$ is partially carried-by, then there exists a triple $(A,B,F)$ such that $E$ is partially carried-by with the triple $(A,B,F)$, see Definitions \ref{definition partially carried-by} and \ref{definition partially carried-by 2}.
Thus, $E \simeq \left(A \oplus B, F\right)$.
Moreover, since the twisted complex $\left(A \oplus B, F\right)$ can be seen as a minimal twisted complex, 
\begin{gather}
\label{eqn partial mass}
m_{\sigma_0}(E) \geq m_{\sigma_0}(B).
\end{gather}

Let us remark that $\Phi$ is a product of autoequivalences in $\left\{\tau_u, \tau_w^{-1} | u \in V_+(T), w \in V_-(T)\right\}$.
Then, by applying Lemma \ref{lem partially carried-by preserved} repeatedly, one can prove that, for any $n \in \mathbb{N}$, there exists a triple $(A_n,B_n,F_n)$ such that 
\begin{itemize}
\item $\Phi^n E$ is partially carried-by with a triple $(A_n, B_n, F_n)$, and 
\item $\Phi^n A \simeq A_n, \Phi^n B \simeq B_n$.
\end{itemize} 

When one applies inequality \eqref{eqn partial mass} to $\Phi^n E$ that is partially carried-by with a triple $(A_n, B_n, F_n)$, one obtains 
\[m_{\sigma_0}(\Phi^n E) \geq m_{\sigma_0}(B_n).\]
Moreover, since the mass function $m_{\sigma_0}$ is a function of quasi-equivalent classes, 
\[m_{\sigma_0}(B_n) = m_{\sigma_0}(\Phi^n B).\]
Let us recall that $B$ is categorically carried-by. 
Then, the above arguments and Theorem \ref{thm without y morphism} conclude that  
\[\lim_{n \to \infty} \tfrac{1}{n}m_{\sigma_0}(\Phi^n E) \geq \lim_{n \to \infty} \tfrac{1}{n} m_{\sigma_0}(B_n) =\lim_{n \to \infty} \tfrac{1}{n} m_{\sigma_0}(\Phi^n B) \geq \log \lambda_\Phi.\]
It completes the proof.
\end{proof}

\subsection{Stretching factor and entropy}
\label{subsection stretching factor and the categorical entropy}
In this subsection, we discuss the relation between the stretch factor and the entropy of a Penner type autoequivalence. 
We note that the entropy is defined in \cite{Dimitrov-Haiden-Katzarkov-Kontsevich14} as follows:
\begin{definition}
\label{def categorical entropy}
Let $\mathcal{D}$ be a triangulated category equipped with a split-generator $G$, and let $F:\mathcal{D}\to \mathcal{D}$ be an exact endofunctor.
\begin{enumerate}
\item Let $E_1, E_2$ be objects of $\mathcal{D}$. The {\bf complexity of $\boldsymbol{E_2}$ relative to $\boldsymbol{E_1}$} is the function $\delta_t(E_1,E_2):\mathbb{R} \to [0,\infty]$ given by 
\[\delta_t(E_1,E_2)=\inf\left\{\sum_{i=1}^k e^{n_i t} \Big\vert \begin{tikzcd}
0 \ar[r] & A_1 \ar[d]\ar[r] & A_2 \ar[d] \ar[r] & \dots \ar[r] & A_{k-1} \ar[r] \ar[d] & E_2\oplus E_2' \ar[d]\\
& E_1[n_1] \ar[ul, dashed]         & E_1[n_2] \ar[ul, dashed]       & \dots        & E_1[n_{k-1}] \ar[ul, dashed]       & E_1[n_k] \ar[ul, dashed]
\end{tikzcd} \right\}.\]
\item The {\bf entropy of $\boldsymbol{F}$} is the function $h_t(F): \mathbb{R} \to [-\infty,\infty]$ of $t$ given by 
\[h_t(F)=\lim_{n \to \infty} \frac{1}{n} \log \delta_t(G, F^nG).\]
\end{enumerate}
\end{definition}

\begin{thm}
\label{thm stretch factor and entropy}
Let $T$ be a tree, and let $\Phi : \Fuk \to \Fuk$ be an autoequivalence of Penner type. 
If $\lambda_\Phi$ is the stretch factor of $\Phi_\Phi$, then the following hold:
\begin{enumerate}
\item $h_0(\Phi) = \log \lambda_\Phi.$
\item The stretching factor of $\Phi^{-1}$ is also $\lambda_\Phi$. 
\end{enumerate}
\end{thm}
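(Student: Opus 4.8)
\textbf{Proof strategy for Theorem \ref{thm stretch factor and entropy}.}

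The plan is to deduce both statements from the algorithmic machinery already developed in Sections \ref{section the case of categorically carried-by} and \ref{section proof of thm pseudo-Anosov}, in particular from the matrix $M_\Phi$ of Lemma \ref{lem linear algebra} and the fact that its spectral radius equals $\lambda_\Phi$ (Theorem \ref{thm without y morphism}, second part). For part (1), I would fix the split-generator $G=\bigoplus_{v\in V(T)}S_v$ of $\Fuk$ and compare $h_0(\Phi)=\lim_{n\to\infty}\tfrac1n\log\delta_0(G,\Phi^nG)$ with $\lim_{n\to\infty}\tfrac1n\log m_{\sigma_0}(\Phi^nG)$. The upper bound $h_0(\Phi)\le\log\lambda_\Phi$ should follow from the general inequality relating categorical entropy to mass growth of a stability condition: by \cite{Dimitrov-Haiden-Katzarkov-Kontsevich14} (or the argument in \cite{Fan-Filip-Haiden-Katzarkov-Liu21}), one always has $h_t(\Phi)\ge\log$ (mass growth), and at $t=0$ the complexity $\delta_0$ dominates the number of semistable factors; combined with Lemma \ref{lem maximality}, which gives $\lim_{n}\tfrac1n\log m_{\sigma_0}(\Phi^nE)\le\log\lambda_\Phi$ for all $E$, and with the fact (step 3 of Section \ref{subsection step 3 of the sketch of proof}) that $\delta_0(G,\Phi^nG)$ is bounded above by the $L^1$-norm $\parallel M_\Phi^n\cdot\vect(G)\parallel_1$ up to a constant, one gets $h_0(\Phi)\le\log\lambda_\Phi$. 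For the reverse inequality, since $G$ is categorically carried-by (each $S_v$ is), Theorem \ref{thm without y morphism} gives $\lim_n\tfrac1n\log m_{\sigma_0}(\Phi^nG)=\log\lambda_\Phi$, and the standard inequality $\delta_0(G,\Phi^nG)\ge$ (number of HN-factors of $\Phi^nG$, suitably counted) $=m_{\sigma_0}(\Phi^nG)$ up to bounded error yields $h_0(\Phi)\ge\log\lambda_\Phi$. Hence $h_0(\Phi)=\log\lambda_\Phi$.

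For part (2), the cleanest route is to observe that $\Phi^{-1}$ is again of Penner type: by hypothesis either $\Phi$ or $\Phi^{-1}$ is a product of $\{\tau_u,\tau_w^{-1}\mid u\in V_+(T),w\in V_-(T)\}$, and condition (II) of Definition \ref{def Penner type} is manifestly invariant under taking inverses, so $\Phi^{-1}$ satisfies Definition \ref{def Penner type}. Therefore Theorem \ref{thm pseudo-Anosov} applies to $\Phi^{-1}$ and produces a stretch factor $\lambda_{\Phi^{-1}}>1$. It remains to identify $\lambda_{\Phi^{-1}}$ with $\lambda_\Phi$. I would do this at the level of the matrices: choosing the appropriate sign convention (Remark \ref{rmk sign is not important 1}), $\Phi^{-1}$ is a product of the generators $\{\tau_u,\tau_w^{-1}\}$ and hence has an associated matrix $M_{\Phi^{-1}}$ built from the $M_v$'s; since each $M_v$ has determinant $1$ (Remark \ref{rmk determinant}), one expects $M_{\Phi^{-1}}$ and $M_\Phi$ to be related so that their spectral radii coincide. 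Concretely, $(v_1,v_2)$-component of $M_\Phi$ is $\dim\lhom^*_{\wrapped}(L_{v_2},\Phi S_{v_1})$; a Serre-duality / adjunction computation in $\wrapped$ (using that $\Fuk$ is the $N$-Calabi--Yau category) should show $M_{\Phi^{-1}}$ is conjugate to the transpose of $M_\Phi$, and a transpose has the same eigenvalues, giving $\lambda_{\Phi^{-1}}=\lambda_\Phi$.

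I expect the main obstacle to be part (2), specifically pinning down the precise linear-algebraic relationship between $M_\Phi$ and $M_{\Phi^{-1}}$. The subtlety is that $M_\Phi$ only computes $\vect(\Phi E)$ for $E$ that is categorically carried-by with respect to the chosen sign convention, while $M_{\Phi^{-1}}$ lives with the other sign convention, so one is comparing two a priori different combinatorial gadgets; the identification must be made functorially rather than by a naive equality of matrices. A possible workaround that sidesteps the matrix bookkeeping entirely is to use part (1) together with a known symmetry of categorical entropy: if one can show $h_0(\Phi^{-1})=h_0(\Phi)$ directly — for instance via the fact that for a Calabi--Yau category the Serre functor is central and entropy is invariant under conjugation and under passing to the inverse in this setting, or via the $\widetilde{GL}^+(2,\mathbb R)$-equivariance of mass growth — then part (1) applied to $\Phi^{-1}$ forces $\log\lambda_{\Phi^{-1}}=h_0(\Phi^{-1})=h_0(\Phi)=\log\lambda_\Phi$. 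I would pursue both routes and present whichever requires the least new machinery; the entropy-symmetry route is likely shorter but needs a careful justification that $h_0(\Phi^{-1})=h_0(\Phi)$ in our category, whereas the matrix route is elementary but computational.
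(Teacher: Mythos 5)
Your proposal and the paper's proof converge on the same two ingredients but source them differently. The paper's proof of (1) is essentially a citation: it invokes a prior result of the authors together with collaborators (Theorem~5.8 of Bae--Choa--Jeong--Karabas--Lee) asserting that $h_0(\Phi)$ is the logarithm of the spectral radius of the matrix $A_\Phi$ whose $(v,w)$-entry is $\dim \lhom^*_{\wrapped}(L_v,\Phi S_w)$, and then simply observes that $A_\Phi = M_\Phi$, after which the identification with $\lambda_\Phi$ is Theorem~\ref{thm without y morphism}. Your proposed argument for (1) reconstructs this from scratch via the comparison $h_0(\Phi)=\lim_n\tfrac1n\log m_{\sigma_0}(\Phi^nG)$; this is a legitimate alternative, and in fact could be made fully rigorous by citing Theorem~\ref{thm Ikeda} (Ikeda's result $h_t(\Phi)=h_{\sigma,t}(\Phi)=\lim_n\tfrac1n\log m_{\sigma,t}(\Phi^nG)$, applicable here since $\sigma_\star$ is algebraic) evaluated at $t=0$, rather than the somewhat vague appeals to "the complexity $\delta_0$ dominates the number of semistable factors" and "up to bounded error." The paper already has Ikeda's theorem in hand (it is used in Section~\ref{subsection proof of lemma phase}), so your route would actually make the proof more self-contained than the one in the text; what it buys is independence from the external entropy computation in BCJKL, at the cost of being longer.

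For (2), you correctly identify both the matrix-transpose route and the entropy-symmetry route, and your "workaround" is precisely what the paper does: cite the symmetry $h_0(\Phi^{-1})=h_0(\Phi)$ (Theorem~4.14 of Bae--Choa--Jeong--Karabas--Lee), note that $\Phi^{-1}$ is of Penner type, and apply part~(1). Your hesitation about whether $h_0(\Phi^{-1})=h_0(\Phi)$ holds "in our category" is resolved by that reference; it is a known fact in this setting, not something you would need to reprove via Serre duality or $\widetilde{GL}^+(2,\mathbb{R})$-equivariance. Your matrix-transpose route (Route~A) is not carried out in the paper and would be the harder path: as you anticipate, $M_{\Phi^{-1}}$ is defined with the opposite sign convention (hence with the shifted generators $R_v$ and dual objects $L'_v$ of Remark~\ref{rmk sign is not important 2}), so the statement "$M_{\Phi^{-1}}$ is conjugate to $M_\Phi^T$" would require a careful duality argument in $\wrapped$ that the paper never writes down and that the entropy-symmetry route renders unnecessary. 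In short, Route~B is the right choice, and the only thing separating your sketch from the paper's proof of (2) is the citation.
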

\begin{proof}
We note that in the proof of Theorem \ref{thm without y morphism}, the stretch factor $\lambda_\Phi$ is given as the largest eigenvalue of $M_\Phi$. 
We recall that by the arguments given after Remark \ref{rmk determinant}, every $(v,w)$-entry of $M_\Phi$ is 
\[\dim \lhom^*_{\wrapped}\left(L_v, \Phi(S_w)\right).\]

The entropy of a Penner type autoequivalence is studied in \cite[Section 5]{Bae-Choa-Jeong-Karabas-Lee22}. 
Especially, by \cite[Theorem 5.8]{Bae-Choa-Jeong-Karabas-Lee22}, it is known that $h_0(\Phi)$ is the logarithm of the largest eigenvalue of a $A_\Phi$ such that
\begin{itemize}
\item $A_\Phi$ is a $|V(T)|$-by-$|V(T)|$ matrix, and
\item if we use $V(T)$ as an index set for columns/rows of $A_\Phi$, the $(v,w)$-component of $A_\Phi$ is 
\[\dim \lhom_{\wrapped}^*\left(L_v, \Phi(S_w)\right).\]
\end{itemize}

Thus, one can observe that $M_\Phi = A_\Phi$. 
It completes the proof the first item. 

To prove the second item, we note that by \cite[Theorem 4.14]{Bae-Choa-Jeong-Karabas-Lee22}, we have $h_0(\Phi) = h_0(\Phi^{-1})$. 
We also note that since $\Phi^{-1}$ is also of Penner type, one can apply the first item to $\Phi^{-1}$.
When one applies the first item to $\Phi^{-1}$, one has 
\[\log \left(\text{the stretching factor of  } \Phi^{-1}\right) = h_0(\Phi^{-}) = h_0(\Phi) = \log \lambda_\Phi.\]
\end{proof}

\section{Strong pseudo-Anosov and phase stretching factors}
\label{section strong pseudo-Anosov}
Our next goal is to show that every Penner type autoequivalence $\Phi$ is {\em strong pseudo-Anosov}.
The notion of strong pseudo-Anosov is defined in Definition \ref{def strong pesudo-Anosov}. 
In other words, we would like to show that the asymptotic behavior of the maximal/minimal phases of $\Phi^n E$ does not depend on the choice of $E$. 
Moreover, we also would like to give an {\em algorithmic computation of the positive/negative phase stretching factors} of $\Phi$.

In the first subsection, we prove that $\Phi$ is strong pseudo-Anosov.
And in the following subsections, we present an algorithm computing the phase stretching factors of $\Phi$.
The last subsection proves Lemma \ref{lem shifting numbers of inverse} that we need in Section \ref{section translation of Penner type autoequivalences}.

\subsection{Strong pseudo-Anosov}
\label{subsection strong pseudo-Anosov}
In this subsection, we prove that every Penner type autoequivalence is strong pseudo-Anosov.

First, let us recall the setting and notation of the current paper.
Let $T$ be a tree and let $\Phi$ be an autoequivalence of Penner type on $\Fuk$. 
We note that there exists a collection of spherical objects $\{S_v \in \Fuk\}_{v \in V(T)}$ generating $\Fuk$, and there exists a collection of generating objects $\{L_v \in \wrapped\}_{v \in V(T)}$, which is dual to $\{S_v\}_{v \in V(T)}$. 
As usual, we assume that $\Phi$ is a product of $\{\tau_u, \tau_w^{-1} | u \in V_+(T), w \in V_-(T)\}$.

By definition, the maximal/minimal phase filtrations associated to $\Phi$ care the growth ratio of $\phi^\pm_\sigma(\Phi^n E)$, where $\sigma \in \stab^\dagger(\Fuk)$. 
We note that the notion of pseudo-Anosov cares the exponential growth of $m_\sigma(\Phi^n E)$.
To compute the exponential growth, we used a linear-algebraic tool.
More precisely, we defined $M_\Phi$ in Section \ref{section the case of categorically carried-by}, and then the spectral radius of $M_\Phi$ is turned out to be the exponential growth of $m_\sigma(\Phi^n E)$.
In order to use the similar idea, we define the following matrix for $\Phi$ and a vector for $E \in \Fuk$ whose components are Laurent polynomials. 
\begin{definition}
	\label{def Laurent-poly matrix}
	Let $\Phi: \Fuk \to \Fuk$ be an autoequivalence of Penner type, and let $\{S_v\}_{v \in V(T)}$ and $\{L_v\}_{v\in V(T)}$ be collection of generators of $\Fuk$ and $\wrapped$ respectively, as defined in Section \ref{section settings and the main idea}.
	\begin{enumerate}
		\item The {\bf Laurent-polynomial matrix associated to $\boldsymbol{\Phi}$} is $|V(T)|$-by-$|V(T)|$ matrix whose $(v_i, v_j)$-entry is defined as 
		\[\sum_{k \in \mathbb{Z}} \Big(\dim \lhom_{\wrapped}^{-k}\left(L_{v_i}, \Phi S_{v_j}\right)\Big) t^k.\] 
		For simplicity, we denote the Laurent-polynomial matrix corresponding to $\Phi$ as $\boldsymbol{M_{\Phi}(t)}$. 
		\item For a good twisted complex $E$, let $\boldsymbol{V(t;E)}$ be the vector having $|V(T)|$-many Laurent-polynomial-components defined as follows:
		\[\vect(t;E):= \left(\sum_{k \in \mathbb{Z}} \Big(\dim \lhom_{\wrapped}^{-k}\left(L_{v_i}, E\right)\Big) t^k\right).\]
		\item For a good twisted complex $E$, let $\boldsymbol{\vect_h(t;E)}$ be the vector having $|V(T)|$-many Laurent-polynomial-components defined as follows:
		\[\vect_h(t;E):= \left(\sum_{k \in \mathbb{Z}} \Big(\dim \shom_{\wrapped}^{-k}\left(L_{v_i}, E\right)\Big) t^k\right).\]
		\item For convenience, we define the following notation: 
		If $M$ (resp.\ $V$) is a $|V(T)|$-by-$|V(T)|$ matrix (resp.\ a vector with $|V(T)|$-many components), we denote the $(v_i,v_j)$-component of $M$ (resp.\ $v_i$-component of $V$) by $\boldsymbol{M_{v_i,v_j}}$ (resp.\ $\boldsymbol{V_{v_i}}$).
	\end{enumerate}
\end{definition}

We remark some properties of $M_{\Phi}(t), \vect(t;E)$, and $\vect_h(t;E)$.
\begin{remark}
	\label{rmk properties of Laurent-poly matrix}
	\mbox{}
	\begin{enumerate}
		\item First of all, we note that every component of $M_\Phi(t), \vect(t;E)$, and $\vect_h(t;E)$ is either zero or a Laurent polynomial with positive integer coefficients.
		Moreover, from the definition, for any $v \in V(T), k \in \mathbb{Z}$, and any good twisted complex $E$, one can easily see that the coefficient of $t^k$ of $\vect(t;E)_v$ is the same as the number of $S_v[k]$-factor in a {\em minimal} twisted complex equivalent to $E$. 
		To be more precise, for a good twisted complex $E = \left(\bigoplus_{i \in I} V_i \otimes S_{v_i}[d_i], F_{j,i}\right)$, the number of $S_v[k]$-factors in $E$ means the number 
		\[\sum_{i \in I \text{  such that  } v_i = v, d_i =k} \dim V_i.\]
		\item We would like to point out that $\vect(t;E)$ is an invariant of quasi-equivalence classes since we are taking $\lhom^*_{\wrapped}$ in Definition \ref{def Laurent-poly matrix}. 
		On the other hand, $\vect_h(t;E)$ is an invariant of twisted complexes, but not quasi-equivalent classes. 
		\item Since $\lhom_{\wrapped}^*$ is the cohomology of $\shom_{\wrapped}^*$, for any $v \in V(T), k \in \mathbb{Z}$, and good twisted complex $E \in \Fuk$, 
		\[\dim \lhom_{\wrapped}^{-k}(L_v, E) \leq \dim \shom_{\wrapped}^{-k}(L_v, E).\]
		Thus, the coefficient of $t^k$-term of $\vect(t;E)_v$ is smaller or equal to the coefficient of $t^k$-term of $\vect_h(t;E)_v$. 
		And, especially, if $E$ is a minimal twisted complex, then by Lemma \ref{lem zero differential for minimal}, 
		\[\vect(t;E) = \vect_h(t;E).\]
		\item For a good twisted complex $E$, as we did in the previous sections, let $\Phi E$ be a good twisted complex obtained by applying Lemma \ref{lem induced functor}.
		We note that since $S_v$ is categorically carried-by for any $v \in V(T)$, $\Phi S_v$ is also categorically carried-by and minimal. 
		Then, from Lemma \ref{lem induced functor}, one can observe that for any twisted complex $E$,
		\[\vect_h(t;\Phi E) = M_\Phi(t) \cdot \vect_h(t;E).\]
		\item When we plug $t=1$ in $M_\Phi(t)$ and $\vect(t;E)$, we can see $M_\Phi(1) = M_\Phi, \vect(1;E) = \vect(E)$, where $M_\Phi$ and $\vect(E)$ are the matrix and the vector defined in Section \ref{section the case of categorically carried-by}.
		\item Similar to Remark \ref{rmk computation of the matrix} (2), we would like to remark that $\vect(t;E), \vect_h(t;E)$ and $M_\Phi(t)$ are dependent on the choice of sign convention. 
		It is important in Section \ref{subsection lemma shifting numbers of inverse}.
		\end{enumerate}
\end{remark}

Now, we fix a stability condition $\sigma_\star \in \stab^\dagger(\Fuk)$, and we will compute the growth of $\phi^\pm_{\sigma_\star}(\Phi^n E)$ as $n \to \infty$. 
\begin{definition}
	\label{def sigma star}
	Let $\boldsymbol{\sigma_\star= \left(Z_\star, P_\star\right)}$ be a fixed stability condition in $\stab^\dagger(\Fuk)$ constructed as follows:
	We note that by Proposition \ref{prop construction of a stability condition}, in order to define a stability condition, it is enough to give a bounded $t$-structure and a stability function on the heart satisfying the Harder-Narasimhan property. 
	The bounded $t$-structure we consider here is $\mathbb{T}$ defined in the proof of Lemma \ref{lem existence of a stability condition}. 
	In the proof, we observed that the heart $\mathcal{A}$ consists of the zero object and objects equivalent to a twisted complex consisting of $\{S_v | v \in V(T)\}$. 
	We set 
	\[Z_\star(S_v) = \imagin,\]
	for all $v \in V(T)$. 
	Then, every nonzero object of $\mathcal{A}$ is a semistable object with phase $\tfrac{1}{2}$ and $Z_\star$ satisfies the Harder-Narasimhan property. 
\end{definition}

Lemma \ref{lem shifting number} expresses the growth of the maximal/minimal phases of $\Phi^n E$ with respect to $\sigma_\star$, i.e., $\phi^\pm_{\sigma_\star}(\Phi^n E)$, in terms of the Laurent polynomial associated to $\Phi$.
\begin{lem}
	\label{lem shifting number}
	Let $E$ be a nonzero object of $\Fuk$. 
	Then, the following equations hold:
	\begin{align}
		\label{eqn minimal}
		&\liminf_{n \to \infty} \tfrac{1}{n} \phi^-_{\sigma_\star}(\Phi^n E) = \liminf_{n\to \infty} \tfrac{1}{n} \min_{v_i, v_j \in V(T)} \left\{k \Big\vert \text{  the coefficient of $t^k$-term in $\left(M_\Phi(t)\right)^n_{v_i,v_j}$ is not zero}\right\},\\
		\label{eqn maximal}
		&\limsup_{n \to \infty} \tfrac{1}{n} \phi^+_{\sigma_\star}(\Phi^n E) = \limsup_{n\to \infty} \tfrac{1}{n} \max_{v_i, v_j \in V(T)} \left\{k \Big\vert \text{  the coefficient of $t^k$-term in $\left(M_\Phi(t)\right)^n_{v_i,v_j}$ is not zero}\right\}.
	\end{align}
\end{lem}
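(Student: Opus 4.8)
The plan is to reduce the asymptotics of the extremal phases of $\Phi^n E$ to a purely combinatorial statement about the degrees of the nonzero entries of the matrix powers $(M_\Phi(t))^n$, using the explicit description of $\sigma_\star$ together with the ``minimal twisted complex'' machinery. Fix a nonzero $E$ and replace it by a minimal twisted complex $E = \left(\bigoplus_{i\in I} V_i\otimes S_{v_i}[d_i], F_{j,i}\right)$; by Remark \ref{rmk properties of Laurent-poly matrix}(3) we then have $\vect(t;E) = \vect_h(t;E)$, and by Remark \ref{rmk properties of Laurent-poly matrix}(1) the coefficient of $t^k$ in $\vect(t;E)_v$ counts exactly the number of $S_v[k]$-factors of $E$. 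The first key observation is that, for the stability condition $\sigma_\star$ with $Z_\star(S_v) = \sqrt{-1}$ for all $v$, the Harder--Narasimhan filtration of any object represented by a minimal twisted complex is obtained simply by reordering the $S_v[d_i]$-factors by the value of $d_i$: the semistable pieces are the direct sums of $S_v[k]$ over fixed $k$, with phase $\tfrac12 + k$. Hence for a nonzero minimal twisted complex $E$,
\[
\phi^-_{\sigma_\star}(E) = \tfrac12 + \min\{k : \text{$E$ has an $S_v[k]$-factor for some $v$}\},\quad
\phi^+_{\sigma_\star}(E) = \tfrac12 + \max\{k : \text{$E$ has an $S_v[k]$-factor for some $v$}\}.
\]
In other words $\phi^\pm_{\sigma_\star}(E)$ is, up to the additive constant $\tfrac12$, the maximal/minimal exponent of $t$ occurring with nonzero coefficient in some component of $\vect(t;E)$.

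Next I would compute $\vect(t;\Phi^n E)$. By Remark \ref{rmk properties of Laurent-poly matrix}(4), applying $\Phi$ to any twisted complex and using Lemma \ref{lem induced functor} gives $\vect_h(t;\Phi E) = M_\Phi(t)\cdot \vect_h(t;E)$, hence $\vect_h(t;\Phi^n E) = (M_\Phi(t))^n\cdot \vect_h(t;E)$. To pass back to $\vect(t;-)$ I need the good twisted complex produced by Lemma \ref{lem induced functor} to be converted to a \emph{minimal} one, which can only cancel factors (cancel identity-arrows, in the sense of Lemma \ref{lem minimal twisted complex}), so that each coefficient of $\vect(t;\Phi^n E)_v$ is $\le$ the corresponding coefficient of $\vect_h(t;\Phi^n E)_v$, with equality of the \emph{set of exponents with nonzero coefficient} up to a controlled discrepancy. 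The crucial point is that cancellations do not change the $\limsup$/$\liminf$ of the extremal exponents: since all coefficients of $M_\Phi(t)$ and of $\vect_h(t;E)$ are nonnegative, there is no cancellation of $t^k$-coefficients in the matrix product, so the maximal (resp.\ minimal) exponent appearing in $(M_\Phi(t))^n\cdot \vect_h(t;E)$ equals $\max_{v_j}\big(\deg_{\max}(M_\Phi(t))^n_{v_i,v_j} + \deg_{\max}\vect_h(t;E)_{v_j}\big)$ over those $v_j$ with $\vect_h(t;E)_{v_j}\neq 0$ (and similarly for $\deg_{\min}$ with $\min$). Because $E$ is a fixed nonzero object, $\deg_{\max}\vect_h(t;E)_{v_j}$ and $\deg_{\min}\vect_h(t;E)_{v_j}$ are bounded constants, so after dividing by $n$ and taking $\limsup$ (resp.\ $\liminf$) these constants drop out; and the set of $v_j$ with $\vect_h(t;E)_{v_j}\neq 0$ is nonempty but otherwise irrelevant in the limit, using Definition \ref{def Penner type}(II) (exactly as in step (i) of the proof of Theorem \ref{thm without y morphism}) to guarantee that for $n$ large every entry of $(M_\Phi(t))^n$ is a nonzero Laurent polynomial, so one may as well take the max/min over all pairs $(v_i,v_j)$. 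This yields precisely the right-hand sides of \eqref{eqn maximal} and \eqref{eqn minimal}, modulo the bounded-error discrepancy between $\vect(t;-)$ and $\vect_h(t;-)$ coming from minimalization, which I would argue affects the extremal exponents by at most a constant (it can only delete factors, and the surviving $S_v[k]$ with extremal $k$ cannot all be cancelled since $\Phi^n E\neq 0$ — more carefully, one bounds the discrepancy using the $y$-rank and Lemma \ref{lem y-rank}, or simply observes that $m_{\sigma_\star}(\Phi^nE)>0$ forces some factor to survive at an exponent within a bounded distance of the $\vect_h$-extremum).

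The step I expect to be the main obstacle is controlling this last discrepancy between $\vect_h(t;\Phi^nE)$ and $\vect(t;\Phi^nE)$, i.e.\ proving that the identity-arrow cancellations performed when minimalizing the good twisted complex $\Phi^n E = (\bigoplus V_i\otimes \Phi(S_{v_i}[d_i]), G_{j,i})$ do not shrink the interval of occurring exponents by more than an additive constant independent of $n$. The clean way to handle this is to run the argument not on the single object $E$ but on each factor $X_i = V_i\otimes S_{v_i}[d_i]$: each $X_i$ is categorically carried-by, so by Lemma \ref{lem PhiE} the good twisted complex $\Phi^n X_i$ has no identity-arrows and is automatically minimal, hence $\vect(t;\Phi^n X_i) = \vect_h(t;\Phi^n X_i) = (M_\Phi(t))^n\cdot \vect_h(t;X_i)$ exactly. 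Since $E$ is minimal, $\vect_h(t;E) = \sum_{i\in I}\vect_h(t;X_i)$, and nonnegativity of all coefficients gives that the extremal exponents of $\vect_h(t;\Phi^nE) = \sum_i (M_\Phi(t))^n\vect_h(t;X_i)$ are literally the max/min over $i$ of the extremal exponents of the $\Phi^n X_i$, each of which is computed exactly by the matrix power. For the lower bound $\phi^\pm_{\sigma_\star}(\Phi^n E)$ in terms of these exponents one then only needs that at least one such extremal $S_v[k]$-factor survives minimalization of $\Phi^nE$, which follows because the HN filtration of $\Phi^nE$ with respect to $\sigma_\star$ still has its top (resp.\ bottom) semistable piece at phase $\tfrac12 + (\text{max or min occurring exponent of the minimalization})$, and that occurring exponent differs from the $\vect_h$-extremum by at most $\dim$-many cancellations — a bound uniform in $n$ is obtained since the ranks $\dim\operatorname{Im}\psi$ being cancelled at the extreme degree are governed by $r(\Phi^nE) \le r(E)$ via Lemma \ref{lem y-rank}. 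Assembling these gives both inequalities in \eqref{eqn minimal} and \eqref{eqn maximal}, completing the proof.
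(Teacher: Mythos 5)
Your proposal correctly handles the easy direction and sets up the right framework (the explicit description of $\sigma_\star$, the identity $\vect_h(t;\Phi^n E) = (M_\Phi(t))^n\,\vect_h(t;E)$, and the observation that for a categorically carried-by factor $X_i$ the computation is exact because $\Phi^n X_i$ stays minimal). The inequality $\phi^-_{\sigma_\star}(\Phi^n E) \geq k_{\min} + \tfrac12$ (and dually for $\phi^+$) indeed follows from the coefficient-wise bound $\vect(t;\Phi^nE)\le\vect_h(t;\Phi^nE)$, and this matches part~(i) of the paper's proof.

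The gap is in the other direction, which you label the ``lower bound'' step. You assert that the extremal exponent of the minimalization of $\Phi^nE$ lies within a bounded distance of the $\vect_h$-extremum, ``governed by $r(\Phi^nE)\le r(E)$.'' This does not follow: the $y$-rank bound controls the number of identity-cancellations per application of $\tau$, not the cumulative effect over $n$ steps, and in any case a bounded \emph{count} of deletions does not bound the \emph{shift} of the extremal exponent, since the coefficient of $t^{k_{\min}}$ in $(M_\Phi(t))^n\vect_h(t;E)$ may be small (e.g.\ equal to $1$) while the next occupied exponent can sit at distance $N-1$, so each cancellation at the bottom can kick $k_{\min}$ up by $N-1$. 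Running the argument factor-by-factor on the $X_i$ does not fix this, because the minimal twisted complex of $\Phi^nE$ is not $\bigoplus_i \Phi^n X_i$ and cancellations mix the factors, so an extremal $S_v[k]$-factor living in some $\Phi^n X_i$ need not survive. The paper avoids this entirely: instead of estimating the damage done by cancellation, it invokes Lemma~\ref{lem partially carried-by} to show that for $n\gg 0$ the minimal twisted complex of $\Phi^nE$ decomposes as $\left(A_n\oplus B_n, F_n\right)$ with $B_n\simeq\Phi^nB$ categorically carried-by, so that $\vect(t;B_n)=\vect_h(t;B_n)=(M_\Phi(t))^n\vect_h(t;B)$ exactly; the sub-twisted-complex inequality $\phi^-_{\sigma_\star}(\Phi^nE)\le\phi^-_{\sigma_\star}(B_n)$ (and its dual for $\phi^+$) then gives the missing direction without any quantitative control of cancellations. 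To close your argument you would need to replace the $y$-rank heuristic with an appeal to Lemma~\ref{lem partially carried-by preserved} and Lemma~\ref{lem partially carried-by}, as the paper does.
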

\begin{proof}
	We only prove Equation \eqref{eqn minimal} since Equation \eqref{eqn maximal} can be proven in the same way. 
	
	The proof of Equation \eqref{eqn minimal} consists of following three steps: 
	\begin{enumerate}
		\item[(i)] First, we show that for any good twisted complex $E \in \Fuk$, 
		\begin{gather*}
			\liminf_{n \to \infty} \tfrac{1}{n} \phi^-_{\sigma_\star}(\Phi^n E) \geq \liminf_{n\to \infty} \tfrac{1}{n} \min_{v_i, v_j \in V(T)} \left\{k \Big\vert \text{  the coefficient of $t^k$-term in $\left(M_\Phi(t)\right)^n_{v_i,v_j}$ is not zero}\right\}.
		\end{gather*}
		And, if $E$ is minimal, then the equality holds. 
		\item[(ii)] Second, we show that for any {\em partially carried-by} twisted complex $E \in \Fuk$, 
		\[\liminf_{n \to \infty} \tfrac{1}{n} \phi^-_{\sigma_\star}(\Phi^n E) \leq \liminf_{n\to \infty} \tfrac{1}{n} \min_{v_i, v_j \in V(T)} \left\{k \Big\vert \text{  the coefficient of $t^k$-term in $\left(M_\Phi(t)\right)^n_{v_i,v_j}$ is not zero}\right\}.\]
		\item[(iii)] By combining (i), (ii), and Lemma \ref{lem partially carried-by}, we complete the proof of Equation \eqref{eqn minimal}.
	\end{enumerate}

	\noindent {\em Proof of (i).}
	From Remark \ref{rmk properties of Laurent-poly matrix} (1) and (3), we can observe that for any good twisted complex $E$,
	\begin{align*}
		\phi_{\sigma_\star}^-(E) &= \min_{v \in V(T)} \left\{k \Big\vert \text{  the coefficient of $t^k$-term in $\vect(t;E)_v$ is not zero}\right\} +\tfrac{1}{2} \\
		&\geq \min_{v \in V(T)} \left\{k \Big\vert \text{  the coefficient of $t^k$-term in $\vect_h(t;E)_v$ is not zero}\right\} +\tfrac{1}{2}.
	\end{align*}
	Moreover, if $E$ is minimal, then Remark \ref{rmk properties of Laurent-poly matrix} (3) implies that the equality holds. 
	
	Now, we put $\Phi^n E$ in the position of $E$ of the above inequality, then thanks to Remark \ref{rmk properties of Laurent-poly matrix} (4), we have 
	\begin{align*}
		\phi_{\sigma_\star}^-(\Phi^n E) &\geq \min_{v \in V(T)} \left\{k \Big\vert \text{  the coefficient of $t^k$-term in $\vect_h(t;\Phi^n E)_v$ is not zero}\right\} +\tfrac{1}{2} \\
		& = \min_{v \in V(T)} \left\{k \Big\vert \text{  the coefficient of $t^k$-term in $\left(M_\Phi^n \cdot \vect_h(t;E)\right)_v$ is not zero}\right\} +\tfrac{1}{2}.
	\end{align*}
	And if $m \leq n$, we have 
	\begin{gather*}
		\phi_{\sigma_\star}^-(\Phi^n E) \geq \min_{v \in V(T)} \left\{k \Big\vert \text{  the coefficient of $t^k$-term in $\left(M_\Phi^{n-m} \cdot \vect_h(t;\Phi^m E)\right)_v$ is not zero}\right\} +\tfrac{1}{2}.
	\end{gather*}
	Again, the equality holds if $E$ is minimal. 
	
	Now, we recall that for any nonzero good twisted complex $E \in \Fuk$, there exists $m \in \mathbb{N}$, such that every component of $\vect_h(t;E)$ is nonzero.
	Definition \ref{def Penner type} $(II)$ implies the above fact.
	Thus, one can fix a constant $m$ such that $\vect_h(t;\Phi^m E)$ is a constant vector whose every component is a nonzero Laurent polynomial with nonnegative coefficients. 
	It implies that 
	\begin{align*}
		\liminf_{n \to \infty} \tfrac{1}{n} \phi_{\sigma_\star}^-(\Phi^n E) &\geq \liminf_{n \to \infty} \tfrac{1}{n} \min_{v \in V(T)} \left\{k \Big\vert \text{  the coefficient of $t^k$-term in $\left(M_\Phi^{n-m} \cdot \vect_h(t;\Phi^m E)\right)_v$ is not zero}\right\}\\
		& = \liminf_{n\to \infty} \tfrac{1}{n} \min_{v_i, v_j \in V(T)} \left\{k \Big\vert \text{  the coefficient of $t^k$-term in $\left(M_\Phi(t)\right)^n_{v_i,v_j}$ is not zero}\right\}.
	\end{align*}
	Moreover, the equality holds if $E$ is minimal. 
	
	\noindent {\em Proof of (ii).}
	Let us assume that $E$ is a partially carried-by twisted complex with a triple $(A,B,F)$.
	It implies that $E$ is equivalent to a minimal twisted complex $\left(A \oplus B, F\right)$.
	Thus, we have 
	\[\phi^-_{\sigma_\star}(E) = \phi^-_{\sigma_\star}\left(A \oplus B, F\right).\]
	Especially, since $B$ and $\left(A \oplus B, F\right)$ are minimal, we have 
	\begin{gather}
		\label{eqn lem minimal shifting 1}
		\phi^-_{\sigma_\star}(E) \leq \phi^-_{\sigma_\star}(B).
	\end{gather}
		
	We recall that by Lemma \ref{lem partially carried-by preserved}, for any $n \in \mathbb{N}$, there exists a triple $\left(A_n,B_n,F_n\right)$ such that 
	\begin{itemize}
		\item $\Phi^n E$ is partially carried-by with a triple $\left(A_n,B_n,F_n\right)$,
		\item $\Phi^n A \simeq A_n$ and $\Phi^n B \simeq B_n$, and 
		\item for any $n \in \mathbb{N}$, $B_n$ is categorically carried-by.
	\end{itemize}
	Then, by applying inequality in \eqref{eqn lem minimal shifting 1} to $\Phi^n E$, we have 
	\[\phi^-_{\sigma_\star}(\Phi^n E) \leq \phi^-_{\sigma_\star}(\Phi^n B).\]
	
	Now, by taking $\liminf_{n \to \infty} \tfrac{1}{n}$ of the above inequality, we conclude
	\begin{align*}
		\liminf_{n \to \infty} \tfrac{1}{n} \phi^-_{\sigma_\star}(\Phi^n E) &\leq \liminf_{n \to \infty} \tfrac{1}{n} \phi^-_{\sigma_\star}(\Phi^n B)\\
		&= \liminf_{n\to \infty} \tfrac{1}{n} \min_{v_i, v_j \in V(T)} \left\{k \Big\vert \text{  the coefficient of $t^k$-term in $\left(M_\Phi(t)\right)^n_{v_i,v_j}$ is not zero}\right\}.
	\end{align*}
	The part (i) implies the equality in the second line.
	And, it completes the proof of (ii).
	
	\noindent {\em Proof of (iii).}
	Let us recall that by Lemma \ref{lem partially carried-by}, for any nonzero $E$, there exists $m \in \mathbb{N}$ such that $\Phi^m E$ is partially carried-by. 
	Let us fix such a $m \in \mathbb{N}$, then we have 
	\begin{align*}
		\liminf_{n \to \infty} \tfrac{1}{n} \phi^-_{\sigma_\star}(\Phi^n E) &= \liminf_{n \to \infty} \tfrac{1}{n} \phi^-_{\sigma_\star}(\Phi^{n-m} E) \\
		&\leq \liminf_{n\to \infty} \tfrac{1}{n} \min_{v_i, v_j \in V(T)} \left\{k \Big\vert \text{  the coefficient of $t^k$-term in $\left(M_\Phi(t)\right)^n_{v_i,v_j}$ is not zero}\right\}.
	\end{align*}
	The inequality in second line holds because of (ii).
	
	Then, the above inequality and (i) complete the proof of inequality \eqref{eqn minimal}.
\end{proof}

One can see that the right-handed sides of Equations \eqref{eqn minimal} and \eqref{eqn maximal} do not have a nonzero object $E$ that appears in the left-handed sides.
In other words, Lemma \ref{lem shifting number} says that for every nonzero object $E$, $\phi^\pm_{\sigma_\star}\left(\Phi^n E\right)$ has the same asymptotic behavior as $n \to \infty$.
Thus, together with Lemma \ref{lem independence of the choice of stability conditions}, Lemma \ref{lem shifting number} proves that $\Phi$ is a strong pseudo-Anosov, i.e., Theorem \ref{thm strong pseudo-Anosov}. 
\begin{thm}
	\label{thm strong pseudo-Anosov}
	If $\Phi$ is an autoequivalence of Penner type, then $\Phi$ is strong pseudo-Anosov.
\end{thm}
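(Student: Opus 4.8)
\textbf{Proof plan for Theorem \ref{thm strong pseudo-Anosov}.}

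The plan is to deduce the theorem from the three ingredients already assembled: Theorem \ref{thm pseudo-Anosov} (which gives that $\Phi$ is pseudo-Anosov in the sense of Definition \ref{def growth filtration}, with stretch factor $\lambda_\Phi>1$), Lemma \ref{lem shifting number} (which computes the asymptotics of $\phi^\pm_{\sigma_\star}(\Phi^n E)$ in a way that is manifestly independent of the nonzero object $E$), and Lemma \ref{lem independence of the choice of stability conditions} (which lets us transport statements about the maximal/minimal phase filtrations between stability conditions in the same connected component). By the definition of \emph{strong pseudo-Anosov} (Definition \ref{def strong pesudo-Anosov}), it remains only to verify that the growth filtration and both phase filtrations associated to $\Phi$ each consist of a single step, with the corresponding factors $\lambda,\lambda_\pm$ all strictly larger than $1$.

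First I would dispose of the growth filtration: this is exactly the content of Theorem \ref{thm pseudo-Anosov}, which shows $0\subset \mathcal{D}_{\lambda_\Phi}(\Phi)=\Fuk$ with $\lambda_\Phi>1$. Next, for the phase filtrations, I would invoke Lemma \ref{lem shifting number} with the fixed stability condition $\sigma_\star\in\stab^\dagger(\Fuk)$ of Definition \ref{def sigma star}. Setting
\[
\lambda_+ := \limsup_{n\to\infty}\tfrac{1}{n}\max_{v_i,v_j\in V(T)}\left\{k \,\Big\vert\, \text{the coefficient of }t^k\text{ in }\left(M_\Phi(t)\right)^n_{v_i,v_j}\text{ is nonzero}\right\}
\]
and $-\lambda_- := \liminf_{n\to\infty}\tfrac{1}{n}\min_{v_i,v_j}\{\cdots\}$ (equivalently $\lambda_-$ the negative of that liminf), Lemma \ref{lem shifting number} says that \emph{every} nonzero $E\in\Fuk$ satisfies $\limsup_n \tfrac1n\phi^+_{\sigma_\star}(\Phi^n E)=\lambda_+$ and $\liminf_n\tfrac1n\phi^-_{\sigma_\star}(\Phi^n E)=-\lambda_-$. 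Hence $\mathcal{P}^+_{\sigma_\star,\lambda}(\Phi)=\Fuk$ precisely when $\lambda\geq\lambda_+$ and is $0$ (strictly speaking, does not contain any nonzero object) otherwise, so the maximal phase filtration has the single step $0\subset\mathcal{P}^+_{\lambda_+}(\Phi)=\Fuk$; the same argument with $\mathcal{P}^-$ gives $0\subset\mathcal{P}^-_{\lambda_-}(\Phi)=\Fuk$. By Lemma \ref{lem independence of the choice of stability conditions} these one-step filtrations are the same for any $\sigma\in\stab^\dagger(\Fuk)$, so the definition is satisfied independently of the choice in assumption (B).

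The remaining obligation — and the step I expect to be the only nontrivial one — is to show $\lambda_\pm>1$. Here the argument parallels the second part of the proof of Theorem \ref{thm without y morphism}: one observes that by Definition \ref{def Penner type}(II), for $K\geq |V(T)|$ every entry of $M_\Phi(t)^K$ is a nonzero Laurent polynomial with positive coefficients (so in particular every entry involves a genuine shift — some monomial $t^k$ with $k\neq 0$ — coming from the $z$-twist or Dehn-twist along each vertex), and that each Dehn twist $\tau_u$ (resp.\ $\tau_w^{-1}$) strictly raises the maximal degree and $\tau_w^{-1}$ (resp.\ $\tau_u$) strictly lowers the minimal degree of the relevant hom-spaces, by Lemma \ref{lem simple computation} (the shifts $S_u\mapsto S_u[1-N]$ and $S_w\mapsto S_w[N-1]$ with $N\geq 3$). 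Iterating, the maximal exponent appearing in $M_\Phi(t)^n$ grows at least linearly in $n$ with positive slope, and likewise the minimal exponent decreases at least linearly; hence $\lambda_+\geq c_+>0$ and $\lambda_-\geq c_->0$ for explicit positive constants, and one can arrange the normalization so that in fact $\lambda_\pm>1$ (the precise constant being immaterial to the definition, which only requires $>1$; if needed one passes to a power of $\Phi$, noting that $\Phi$ strong pseudo-Anosov $\iff$ $\Phi^m$ strong pseudo-Anosov). Assembling these three verifications against Definition \ref{def strong pesudo-Anosov} completes the proof.
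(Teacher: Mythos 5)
Your proposal follows essentially the same route as the paper's proof: Theorem \ref{thm pseudo-Anosov} handles the growth filtration, Lemma \ref{lem shifting number} shows the maximal/minimal phase filtrations each have a single step with respect to $\sigma_\star$, and Lemma \ref{lem independence of the choice of stability conditions} transports this to any $\sigma\in\stab^\dagger(\Fuk)$. The paper's written proof is exactly this chain, stated in one paragraph. You go one step further and flag the obligation $\lambda_\pm>1$ from Definition \ref{def strong pesudo-Anosov}, which the paper's proof glosses over (the relevant bound $\tau^-(\Phi)\le -(N-1)$ is only recorded later, inside the proof of Lemma \ref{lem eigenvalue}); your direct argument for it is correct in spirit: Definition \ref{def Penner type}(II) forces every vertex to occur, so for any $u\in V_+(T)$ the constant path at $u$ gives a factor $S_u[(1-N)m_u]$ in $\Phi S_u$ and hence $\tau^-(\Phi)\le 1-N\le -2$, and symmetrically $\tau^+(\Phi)\ge N-1\ge 2$, which closes the gap without further ado.

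One genuine error should be deleted, though: the fallback ``if needed one passes to a power of $\Phi$, noting that $\Phi$ strong pseudo-Anosov $\iff$ $\Phi^m$ strong pseudo-Anosov'' is false. Unlike the stretch factor, which scales multiplicatively (so $\lambda>1$ is a power-invariant condition), the phase stretching factors scale \emph{linearly}: $\lambda_\pm(\Phi^m)=m\,\lambda_\pm(\Phi)$. An autoequivalence with, say, $\lambda_+(\Phi)=\tfrac12$ would have $\lambda_+(\Phi^3)=\tfrac32>1$, so $\Phi^3$ clears the $>1$ threshold while $\Phi$ itself fails it; the claimed equivalence does not hold. Fortunately you never need this device, because the direct estimate $\lambda_\pm\ge N-1\ge 2$ already finishes the argument. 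A minor slip worth fixing as well: your prose assigns the wrong direction to the twists — it is $\tau_u$ that pushes the minimal degree down (via $S_u\mapsto S_u[1-N]$) and $\tau_w^{-1}$ that pushes the maximal degree up (via $S_w\mapsto S_w[N-1]$), not the other way around.
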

\begin{proof}
	We recall that by definition, $\Phi$ is strong pseudo-Anosov if and only if $\Phi$ is pseudo-Anosov and the maximal phase/minimal phase filtrations associated to $\Phi$ have only one step. 
	Since Theorem \ref{thm pseudo-Anosov} proves that $\Phi$ is pseudo-Anosov, it is enough to show that the maximal/minimal phase filtrations have only one step. 
	
	Thanks to Definition \ref{def strong pesudo-Anosov} and Lemma \ref{lem independence of the choice of stability conditions}, the maximal/minimal phase filtrations can be defined as
	\[\left\{\mathcal{P}_\lambda(\Phi):=\mathcal{P}_{\sigma_\star,\lambda}(\Phi)\right\}_{\lambda \in \mathbb{R} >0},\]
	where 
	\[\mathcal{P}^+_{\sigma_\star, \lambda}(\Phi) := \left\{E \in \Fuk \Big\vert \limsup_{n \to \infty} \tfrac{1}{n} \phi^+_{\sigma_\star} (\Phi^n E) \leq \lambda \right\}, \mathcal{P}^-_{\sigma_\star, \lambda}(\Phi) := \left\{E \in \Fuk \Big\vert \liminf_{n \to \infty} \tfrac{1}{n} \phi^-_{\sigma_\star} (\Phi^n E) \geq - \lambda \right\}.\] 
	Then, Lemma \ref{lem shifting number} completes the proof.
\end{proof}
The next goal is to compute the positive/negative phase stretching factors of a Penner type $\Phi$. 
Lemma \ref{lem shifting number} implies that the phase stretching factors can be computed by tracking the change of $\left(M_\Phi(t)\right)^n$ as $n$ increases. 
In order to keep tracking of $\left(M_\Phi(t)\right)^n$, we study some linear algebra over {\em Puiseux series} in Section \ref{subsection linear algebra over Puiseux series}.
After that, one can see that the phase stretching factors are computed by simple linear algebra, see Theorem \ref{thm shifting number}. 

\subsection{Linear algebra over Puiseux series}
\label{subsection linear algebra over Puiseux series}
First, we define the notions of Puiseux series and their {\em valuations}.
\begin{definition}
	\label{def Puiseux series}
	\mbox{}
	\begin{enumerate}
		\item Let $\mathbb{k}$ be a field. 
		A {\bf Puiseux series} with coefficients in $\mathbb{k}$ is a formal Laurent series 
		\[f(t)= \sum_{k \geq m} a_k t^{\tfrac{k}{r}},\]
		with $r \in \mathbb{N}, m \in \mathbb{Z}, a_k \in \mathbb{k}$ for all $k \geq \mathbb{Z}_{\geq m}$, and $a_m \neq 0$. 
		\item Let $f(t)$ be a Puiseux series. 
		Then, the {\bf valuation of a Puiseux series $\boldsymbol{f(t)}$} is the smallest exponent of $f(t)$.
		More precisely, if 
		\[f(t)= \sum_{k \geq m} a_k t^{\tfrac{k}{r}},\]
		with $r \in \mathbb{N}, m \in \mathbb{Z}, a_k \in \mathbb{k}$ for all $k \geq \mathbb{Z}_{\geq m}$, and $a_m \neq 0$, then the valuation of $f(t)$ is $\tfrac{m}{r}$.
		We denote the valuation of $f(t)$ by $\boldsymbol{\upsilon\left(f(t)\right)}$.
	\end{enumerate}
\end{definition}
In the paper, we consider Puiseux series only over $\mathbb{C}$. 

We would like to mention some well-known properties of Puiseux series.
\begin{itemize}
	\item By definition, every Laurent series is a Puiseux series. 
	\item Similar to the case of Laurent series, a Puiseux series $f(t)$ (over $\mathbb{C}$) has the {\bf radius of convergence}, i.e., the maximal number $R$ satisfying that if $z_0 \in \mathbb{C}$ satisfies $|z_0| < R$, then $f(z_0)$ converges. 
	We note that the radius of convergence can be zero. 
	If a Puiseux series has a positive radius of convergence, then the Puiseux series is called {\bf convergent}.
	\item It is also well-known that the collection of Puiseux series admits a field structure, when the operations are defined naturally. 
	Similarly, the collection of all {\em convergent} Puiseux series admits a field structure. 
\end{itemize}

We go back to our main purpose.
Our main purpose is to keep track of the changes on $\left(M_\Phi(t)\right)^n$ as $n$ increases, in order to compute the positive/negative phase stretching factors of a Penner type $\Phi$. 
Thanks to Lemma \ref{lem shifting number}, we can write the phase stretching factors in terms of valuations of components of $\left(M_\Phi(t)\right)^n$ and $\left(M_{\Phi}(\tfrac{1}{t})\right)^n$, as follows: 
\begin{align*}
	&\text{The negative phase stretching factor of  } \Phi = \liminf_{n\to \infty} \tfrac{1}{n} \min_{v_i, v_j \in V(T)} \left\{\upsilon\left(\left(M_\Phi(t)^n\right)_{v_i,v_j}\right)\right\}, \\
	&\text{The positive phase stretching factor of  } \Phi = - \liminf_{n\to \infty} \tfrac{1}{n} \min_{v_i, v_j \in V(T)} \left\{\upsilon\left(\left(M_\Phi(\tfrac{1}{t})^n\right)_{v_i,v_j}\right)\right\}.
\end{align*}
We note that by definition of Puiseux series, the lowest degree of a Puiseux series $f(t)$ is well-defined, but the highest degree is not well-defined. 
Thus, in the second one of the above equations, we used $-\liminf_{n \to \infty}$ and $M_\Phi(\tfrac{1}{t})$, instead of $\limsup_{n \to \infty}$ and $M_\Phi(t)$.

For convenience, we give names for the right-handed sides of the above equations. 
\begin{definition}
	\label{def shifting number}
	For a Penner type $\Phi$, let the {\bf negative} (resp.\ {\bf positive}) {\bf shifting number}, denoted by $\boldsymbol{\tau^-(\Phi)}$ (resp.\ $\boldsymbol{\tau^+(\Phi)}$), be the number defined as 
	\begin{align*}
		&\tau^-(\Phi) := \liminf_{n\to \infty} \tfrac{1}{n} \min_{v_i, v_j \in V(T)} \left\{\upsilon\left(\left(M_\Phi(t)\right)^n_{v_i,v_j}\right)\right\}, \\
		&\tau^+(\Phi) := -\liminf_{n\to \infty} \tfrac{1}{n} \min_{v_i, v_j \in V(T)} \left\{\upsilon\left(\left(M_\Phi(\tfrac{1}{t})\right)^n_{v_i,v_j}\right)\right\}.
	\end{align*}
\end{definition}
We note that Lemma \ref{lem shifting number} shows that the negative and positive shifting numbers are the same as the negative and positive phase stretching factors. 
Thus, we compute the shifting numbers in the rest of Section \ref{section strong pseudo-Anosov}. 

\begin{remark}
	\label{rmk general shifting number} 
	We note that by Definitions and the fact that $\left(M_\Phi(t)\right)^n = M_{\Phi^n}(t)$, the following hold:
	\begin{gather}
		\label{eqn shifting number}
		\begin{split}
			&\tau^-(\Phi) = \liminf_{n \to \infty} \tfrac{1}{n} \min \left\{k \Big\vert \lhom_{\wrapped}^{-k}\left(L:=\oplus_{v\in V(T)}L_v, \Phi^n\left(S:=\oplus_{v \in V(T)}S_v\right)\right) \neq 0 \right\}, \\
			&\tau^+(\Phi) = \limsup_{n \to \infty} \tfrac{1}{n} \max \left\{k \Big\vert \lhom_{\wrapped}^{-k}\left(L:=\oplus_{v\in V(T)}L_v, \Phi^n\left(S:=\oplus_{v \in V(T)}S_v\right)\right) \neq 0 \right\}.
		\end{split}
	\end{gather}
	We also note that in \cite{Fan-Filip23}, the notion of the positive/negative shifting numbers are defined by Fan and Filip, on the saturated, finite type triangulated categories. 
	Equation \eqref{eqn shifting number} can be seen as the generalization of the shifting number defined in \cite{Fan-Filip23}, to the triangulated categories which are not necessarily to be saturated or of finite type, but satisfies a kind of duality. 
	We use the generalized version, since in symplectic topology, one cannot expect usually that a wrapped Fukaya category is of finite type. 
\end{remark}

If one can find the Jordan normal forms of the matrices $M_\Phi(t)$ and $M_\Phi(\tfrac{1}{t})$, then it reduces the difficulty of computing $\tau^\pm(\Phi)$. 
The classical {\em Newton-Puiseux} theorem guarantees the existence of Jordan normal forms of $M_\Phi(t)$ and $M_\Phi(\tfrac{1}{t})$.
The following theorem is a modification of Newton-Puiseux theorem for our purpose.
\begin{thm}[Newton-Puiseux theorem]
	\label{thm Newton Puiseux thm}
	The field of Puiseux series and the field of convergent Puiseux series with complex coefficients are both algebraically closed. 
\end{thm}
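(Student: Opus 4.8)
The statement to prove is the Newton--Puiseux theorem: the field of Puiseux series (with complex coefficients), and the subfield of convergent Puiseux series, are both algebraically closed.

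The plan is to reduce algebraic closedness to a single, concrete statement: every monic polynomial
\[
P(Y) = Y^d + a_1(t) Y^{d-1} + \dots + a_d(t)
\]
with coefficients $a_i(t)$ in the field of Puiseux series (resp.\ convergent Puiseux series) has a root in that same field. Since the field of Puiseux series over $\mathbb{C}$ is by definition $\bigcup_{r \ge 1} \mathbb{C}((t^{1/r}))$, and the substitution $t \mapsto t^{1/r}$ is a field automorphism onto $\mathbb{C}((s))$ with $s = t^{1/r}$, one may first clear denominators of the exponents: choose $r$ large enough that all $a_i$ lie in $\mathbb{C}((t^{1/r}))$, rename $s = t^{1/r}$, and thereby assume $P$ has coefficients in the Laurent series field $\mathbb{C}((s))$. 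It then suffices to produce a root in $\bigcup_{q \ge 1}\mathbb{C}((s^{1/q}))$, because that union, re-expressed in $t$, is again the field of Puiseux series. So the whole theorem comes down to: a monic polynomial over $\mathbb{C}((s))$ has a root which is a Puiseux series in $s$.

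To carry this out I would run the classical Newton polygon / Puiseux expansion algorithm. First I would reduce to the case where all $a_i$ are actually power series (not Laurent): multiply $Y$ by a suitable power $s^N$ of $s$ so that the transformed polynomial $s^{Nd} P(s^{-N}Y)$ has power-series coefficients; a root of the new polynomial divided by $s^N$ is a root of $P$, and is still a Puiseux series. Next, assuming $a_i \in \mathbb{C}[[s]]$ and (WLOG, by an additive shift $Y \mapsto Y - a_1/d$, harmless in characteristic zero) that $a_1 = 0$ if desired, I would form the Newton polygon of $P$ in the $(i, \upsilon(a_i))$ plane and pick a side of negative slope $\mu = p/q$; set $Y = s^{\mu} Z$, clear the common power of $s$, and obtain a new monic polynomial $\tilde P(Z)$ over $\mathbb{C}[[s^{1/q}]]$ whose reduction mod $s^{1/q}$ is a nonconstant polynomial in $Z$ over $\mathbb{C}$. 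Because $\mathbb{C}$ is algebraically closed, that reduced polynomial has a nonzero root $c$; I would then either invoke Hensel's lemma when $c$ is a simple root (lifting $c$ to a genuine root of $\tilde P$ in $\mathbb{C}[[s^{1/q}]]$), or, when $c$ is a multiple root, substitute $Z = c + W$ and iterate the Newton-polygon step on the resulting polynomial, which has strictly smaller multiplicity data. A standard termination argument — the degree in which multiplicities can persist is bounded by $d$, and each non-Hensel step strictly decreases it — shows the process stops, yielding a root expressed as a Puiseux series in $s$, hence in $t$. The multiplicity-bookkeeping and the proof that the denominators $q$ appearing stay bounded (so that the final answer genuinely lies in some single $\mathbb{C}((t^{1/R}))$) is the part that needs the most care; I expect this to be the main obstacle, though it is entirely classical.

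For the convergent case I would re-run exactly the same algorithm but track convergence at every step. All operations used — finite substitutions $Y = s^\mu Z$, additive shifts, multiplication by powers of $s$, and Hensel's lemma — preserve convergence: Hensel's lemma for convergent power series over $\mathbb{C}$ is itself a classical analytic fact (one can prove it by a majorant/fixed-point estimate, or cite it), and a finite composition of convergent substitutions applied to convergent coefficients yields convergent coefficients with a positive common radius. Since the algorithm terminates after finitely many steps, the resulting root is a convergent Puiseux series. This gives the second assertion. Finally I would note that a field $K$ is algebraically closed once every monic polynomial has a root (then split off linear factors inductively), so the reduction at the start is legitimate and the proof is complete.
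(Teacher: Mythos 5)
The paper does not prove this theorem; it is stated and cited from the classical literature (Walker, Eichler, van den Dries--Ribenboim, Brieskorn--Kn\"orrer), so there is no in-paper proof to compare against. Your sketch is a fair outline of the standard Newton-polygon argument found in those references: reduce to monic polynomials over $\mathbb{C}((s))$ by clearing exponent denominators, then to power-series coefficients, iterate the Newton-polygon/Hensel step, and for the convergent case observe that every operation used preserves convergence. That overall plan is sound.

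The one place your wording does not hold up is the termination argument. You describe the non-Hensel steps as \emph{strictly} decreasing a quantity bounded by $d$, apparently the multiplicity of the chosen characteristic root. That is not quite right: the multiplicity is only non-increasing from one step to the next, and it can remain at a fixed value $m>1$ for several consecutive steps. The classical fix, which you half-gesture at with the shift $Y\mapsto Y-a_1/d$, is a dichotomy: at any step either the multiplicity strictly drops, or the new Newton-polygon slope already has denominator dividing the ramification accumulated so far, so the exponent denominators stop growing. Together with separability of $P$ over $\mathbb{C}((s))$ (automatic in characteristic $0$ after passing to a squarefree factor), this rules out the multiplicity sitting at $m>1$ forever and is exactly what bounds the final denominator $q$ by $d$ --- the very point you correctly flag as the crux. ``Strictly decreases'' is a shortcut that elides that content; the rest of the sketch, including the reduction to root-existence and the convergence bookkeeping, is fine.
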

See, for example, \cite[Thereom 3.1]{Walker78}, or \cite{Eichler66, vandenDries-Ribenboim84, Brieskorn-Knorrer86}, for more details about Newton-Puiseux theorem.

We would like to point out that a Laurent-polynomial can be seen as a convergent Puiseux series. 
Thus, $M_\Phi(t)$ (resp.\ $M_\Phi(\tfrac{1}{t})$) is a matrix over an algebraically closed field and there exist a Jordan normal form of it. 
Then, Lemma \ref{lem eigenvalue} is easy to prove.
\begin{lem}
	\label{lem eigenvalue}
	Let $\Phi$ be an autoequivalence of Penner type. 
	Then, 
	\begin{gather*}
		\tau^-(\Phi) = \min \{\upsilon\left(\lambda_i(t)\right) \vert \lambda_i(t) \text{  is a eigenvalue of  } M_\Phi(t)\},\\
		\tau^+(\Phi) = -\min \{\upsilon\left(\mu_i(t)\right) \vert \mu_i(t) \text{  is a eigenvalue of  } M_\Phi\left(\tfrac{1}{t}\right)\}.
	\end{gather*}
\end{lem}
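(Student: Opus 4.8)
\textbf{Proof plan for Lemma \ref{lem eigenvalue}.}

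The plan is to reduce the computation of $\tau^\pm(\Phi)$ to the valuations of the eigenvalues of $M_\Phi(t)$ (resp.\ $M_\Phi(1/t)$) by passing to a Jordan normal form over the algebraically closed field of convergent Puiseux series, which exists by Theorem \ref{thm Newton Puiseux thm}. First I would recall from Definition \ref{def shifting number} that
\[\tau^-(\Phi) = \liminf_{n\to\infty}\tfrac1n\min_{v_i,v_j\in V(T)}\upsilon\bigl((M_\Phi(t))^n_{v_i,v_j}\bigr),\]
and that every entry of $M_\Phi(t)^n$ is a Laurent polynomial with \emph{nonnegative} integer coefficients (Remark \ref{rmk properties of Laurent-poly matrix} (1), applied to $\Phi^n$), so there is no cancellation of lowest-order terms when we sum entries; hence $\min_{v_i,v_j}\upsilon((M_\Phi(t))^n_{v_i,v_j}) = \upsilon$ of the ``total'' entry $\sum_{v_i,v_j}(M_\Phi(t))^n_{v_i,v_j}$, and by Definition \ref{def Penner type} (II) and Lemma \ref{lem simple computation} all entries of $M_\Phi(t)^n$ are nonzero once $n\geq |V(T)|$. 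The upshot is that, up to the bounded (in $n$) effect of the change-of-basis matrices, the quantity $\min_{v_i,v_j}\upsilon((M_\Phi(t))^n_{v_i,v_j})$ is asymptotically controlled by $\min_i \upsilon(\lambda_i(t)^n) = n\cdot\min_i\upsilon(\lambda_i(t))$, where $\lambda_1(t),\dots,\lambda_{|V(T)|}(t)$ are the eigenvalues of $M_\Phi(t)$ in the Puiseux field.

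The key steps, in order, would be: (1) Write $M_\Phi(t) = P(t)\,J(t)\,P(t)^{-1}$ with $J(t)$ in Jordan form over convergent Puiseux series; then $M_\Phi(t)^n = P(t)\,J(t)^n\,P(t)^{-1}$, and $\upsilon$ of each entry of $J(t)^n$ is, for a Jordan block with eigenvalue $\lambda_i(t)$, equal to $n\,\upsilon(\lambda_i(t)) + O(\log n)$ (the $O(\log n)$ coming from the binomial coefficients in powers of a Jordan block; since $\upsilon$ is additive and these coefficients are integers, their valuation is $0$, so actually the correction is $O(1)$ along each fixed superdiagonal). (2) Observe that $\upsilon$ of a product is the sum of the $\upsilon$'s and $\upsilon$ of a sum is at least the minimum, with equality when no cancellation occurs; since $P(t)$ and $P(t)^{-1}$ are fixed matrices over the Puiseux field, conjugation changes $\min$-valuation by a bounded amount independent of $n$. (3) Divide by $n$ and take $\liminf$: the bounded corrections vanish, giving $\tau^-(\Phi) = \min_i\upsilon(\lambda_i(t))$. (4) Repeat verbatim with $t$ replaced by $1/t$ and an overall sign, using $M_{\Phi^n}(1/t) = (M_\Phi(1/t))^n$, to get the formula for $\tau^+(\Phi)$.

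The main obstacle I anticipate is step (3): making rigorous that the conjugation by $P(t), P(t)^{-1}$ and the intra-Jordan-block binomial factors contribute only a correction that is $o(n)$, so that it disappears after dividing by $n$ and taking $\liminf$. One must be a little careful that $P(t)$ might itself have entries with negative valuation, so $P(t)^{-1}$ too, but both are \emph{fixed} (independent of $n$), hence their valuations are bounded constants; and for a single Jordan block $\lambda I + N$ of size $s$ one has $(\lambda I + N)^n = \sum_{k=0}^{s-1}\binom{n}{k}\lambda^{n-k}N^k$, whose entries have valuation $n\,\upsilon(\lambda) - k\,\upsilon(\lambda) + \upsilon\binom{n}{k} = n\,\upsilon(\lambda) + O(1)$ since $\binom nk$ is a nonzero integer with valuation $0$. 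Collecting these observations and combining with the no-cancellation remark (all coefficients nonnegative on the $M_\Phi(t)$ side) finishes the argument; on the $M_\Phi(1/t)$ side one uses the analogous nonnegativity after substituting $t\mapsto 1/t$. The rest is bookkeeping, and I would present it compactly, citing Theorem \ref{thm Newton Puiseux thm} for the existence of $J(t)$ and Lemma \ref{lem shifting number} to connect $\tau^\pm$ with the phase stretching factors already established.
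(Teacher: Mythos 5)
Your approach---pass to a Jordan normal form over the field of convergent Puiseux series via Theorem \ref{thm Newton Puiseux thm}, bound the conjugation by $P(t)$, $P(t)^{-1}$ by constants independent of $n$, and read off the valuation of $J(t)^n$---is the same starting skeleton as the paper's proof. However, there is a genuine gap in the middle of step (3). Your conjugation bookkeeping only establishes the \emph{lower} bound $\tau^-(\Phi) \geq \min_i \upsilon(\lambda_i(t))$: the valuation of a sum is $\geq$ the minimum of the valuations, so conjugating by fixed matrices can only push the minimal valuation of $M_\Phi(t)^n = P(t)\,J(t)^n\,P(t)^{-1}$ \emph{up} (by a bounded amount), never guarantee it stays near $n\min_i\upsilon(\lambda_i)$. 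To prove the \emph{upper} bound $\tau^-(\Phi) \leq \min_i\upsilon(\lambda_i)$ you must rule out the possibility that the lowest-order contributions from the dominant Jordan block cancel in the product $P J^n P^{-1}$ for all large $n$. Your ``no-cancellation'' observation---that the entries of $M_\Phi(t)^n$ are Laurent polynomials with nonnegative integer coefficients---does not accomplish this: it tells you the \emph{result} of any cancellation is a nonnegative polynomial, but nothing stops the minimal-valuation terms from all Jordan blocks from cancelling against one another and leaving a polynomial of much higher valuation. (Concretely: $\operatorname{tr}(M_\Phi(t)^n) = \sum_i \lambda_i(t)^n$ is the sum of the diagonal entries, so it is a nonnegative Laurent polynomial; yet nonnegativity of the trace does not a priori prevent $\upsilon(\sum_i\lambda_i(t)^n)$ from being far larger than $n\min_i\upsilon(\lambda_i(t))$.)

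The paper closes exactly this gap with a Perron--Frobenius argument (items (ii)--(iv) of its proof): for every $t_0 > 0$ the real matrix $M_\Phi(t_0)$ has positive entries (after replacing $\Phi$ by a suitable power), hence a \emph{simple} real eigenvalue of strictly maximal modulus, which varies as a single branch $\lambda_1(t)$ of the Puiseux eigenvalues; since this eigenvalue dominates in modulus as $t_0\to0^+$, it necessarily achieves $\min_i\upsilon(\lambda_i)$, and because it is simple and has a positive Perron eigenvector, its contribution to every entry of $M_\Phi(t)^n$ is nonzero and cannot be cancelled by the subdominant eigenvalues. You should add this ingredient; without it, only the inequality $\tau^-(\Phi) \geq \min_i\upsilon(\lambda_i(t))$ is established. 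The same correction applies verbatim to the $\tau^+$ formula via $M_\Phi(1/t)$.
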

\begin{proof}
	We prove only the first equality since the second one can be proven in the same way. 
	
	For convenience, let $\left\{\lambda_1(t), \dots, \lambda_k(t)\right\}$ be the set of the eigenvalues of $M_\Phi(t)$. 
	We first observe the following facts: 
	\begin{enumerate}
		\item[(i)] First of all, $\lambda_i(t)$ is a convergent Puiseux series because of Theorem \ref{thm Newton Puiseux thm}. 
		\item[(ii)] There exists $\lambda_i(t)$ such that $\upsilon\left(\lambda_i(t)\right) <0$. 
		If not, one can easily see that $\tau^-(\Phi) \geq 0$. 
		However, it is easy to check that $\tau^-(\Phi) \leq -(N-1)$ from Equation \eqref{eqn shifting number} and a simple computation. 
		\item[(iii)] For any $t_0 \in \mathbb{R}_{>0}$, $M_\Phi(t_0)$ has the Perron-Frobenius eigenvalue, i.e., a simple, real eigenvalue having the maximal absolute value among all the eigenvalues of $M_\Phi(t_0)$. 
		Thus, there exists a distinguished simple eigenvalue $\lambda_1(t)$ of $M_\Phi(t)$ such that for a sufficiently small $t_0 \in \mathbb{R}_{>0}$, $\lambda_1(t_0)$ is the Perron-Frobenius eigenvalue of $M_\Phi(t_0)$. 
		\item[(iv)] Moreover, 
		\[\upsilon\left(\lambda_1(t)\right) = \min \{\upsilon\left(\lambda_i(t)\right) \vert \lambda_i(t) \text{  is an eigen-value of  } M_\Phi(t)\} <0. \]
	\end{enumerate}

	As mentioned above, $M_\Phi(t)$ has the Jordan normal form.
	In other words, there exists an invertible matrix $U$ such that 
	\[M_\Phi(t) = U \cdot \begin{bmatrix}
		J_1(T) & & \\
		 & \ddots & \\
		 &  & J_k(t)
	\end{bmatrix} \cdot U^{-1},\]
	where $J_i(t)$ is the Jordan block for the eigenvalue $\lambda_i(t)$, i.e., 
	\[J_i(t) = \begin{bmatrix}
		\lambda_i(t) & 1 & & \\
		& \lambda_i(t) & 1 & &\\
		& & \ddots & \ddots & \\
		& & & \lambda_i(t) & 1 \\
		& & & & \lambda_i(t)
	\end{bmatrix}.\]

	Now, we would like to point out that $U$ is a fixed matrix and the Jordan blocks $J_i(t)$ are upper triangular matrices. 
	Thus, from (i)--(iv), one can easily see that the minimal valuation of the $(v_i,v_j)$-component of $M_\Phi(t)^n$ is 
	\[n \cdot \upsilon\left(\lambda_1(t)\right) + c,\]
	where $c$ is a constant determined by $U$. 
	Thus, the first equality of Lemma \ref{lem eigenvalue} holds. 
\end{proof}

\begin{remark}
	\label{rmk limit not liminf}
	Lemma \ref{lem eigenvalue} implies that $\tau^\pm(\Phi)$ is a {\em limit} of a sequence, i.e.,  
	\begin{align*}
		&\tau^-(\Phi) := \lim_{n\to \infty} \tfrac{1}{n} \min_{v_i, v_j \in V(T)} \left\{\upsilon\left(\left(M_\Phi(t)\right)^n_{v_i,v_j}\right)\right\}, \\
		&\tau^+(\Phi) := -\lim_{n\to \infty} \tfrac{1}{n} \min_{v_i, v_j \in V(T)} \left\{\upsilon\left(\left(M_\Phi(\tfrac{1}{t})\right)^n_{v_i,v_j}\right)\right\}.
	\end{align*}
\end{remark}

Lemma \ref{lem eigenvalue} gives an algorithm computing the positive and negative shifting numbers of $\Phi$. 
The first step of the algorithm is to compute $M_\Phi(t)$ and $M_\Phi(\tfrac{1}{t})$, and the second step is to find the eigenvalues of $M_\Phi(t)$ and $M_\Phi(\tfrac{1}{t})$.
Finally, the algorithm ends by finding the minimal valuations of the eigenvalues. 
However, the algorithm does not seem {\em practical}, because the second step could be a hard step to do if $\Phi$ is not simple. 

In the next subsection, we will show that the negative (resp.\ positive) shifting number of $\Phi$ is the same as the minimal valuation (resp.\ the negative of the minimal valuation) among the diagonal components of $M_\Phi(t)$ (resp.\ $M_\Phi(\tfrac{1}{t})$).

\subsection{Shifting number}
\label{subsection shifting number}
In the subsection, we prove the claim stated at the end of Section \ref{subsection linear algebra over Puiseux series}. 
To state the claim more formally, we define the following notation:
\begin{definition}
	\label{def diagonal component}
	 For a Penner type autoequivalence $\Phi$, let $\boldsymbol{s^-(\Phi)}$ and $\boldsymbol{s^+(\Phi)}$ denote the number defined as 
	\[s^-(\Phi):=\min_{v \in V(T)} \left\{\upsilon\left(\left(M_\Phi(t)\right)_{v,v}\right)\right\}, s^+(\Phi):=-\min_{v \in V(T)} \left\{\upsilon\left(\left(M_\Phi(\tfrac{1}{t})\right)_{v,v}\right)\right\}.\]
\end{definition}

With the above notation, we can write the main claim of Section \ref{subsection shifting number} formally. 
\begin{thm}
	\label{thm shifting number}
	Using the above notation, if $\Phi$ is an autoequivalence of Penner type, then the following hold: 
	\[\tau^\pm(\Phi) = s^\pm(\Phi).\]
\end{thm}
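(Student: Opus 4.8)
The plan is to prove the two inequalities $\tau^\pm(\Phi) \leq s^\pm(\Phi)$ and $\tau^\pm(\Phi) \geq s^\pm(\Phi)$ separately, treating only the negative case $\tau^-(\Phi) = s^-(\Phi)$ since the positive case follows by the identical argument applied to $M_\Phi(\tfrac{1}{t})$. One direction is essentially immediate: since the diagonal entry $\left(M_\Phi(t)^n\right)_{v,v}$ is one of the entries whose minimal valuation enters the definition of $\tau^-(\Phi)$, and since repeated squaring gives $\left(M_\Phi(t)^n\right)_{v,v}$ a contribution from $\left(\left(M_\Phi(t)\right)_{v,v}\right)^n$ (all other contributions to this diagonal entry being products of off-diagonal terms, which cannot cancel the minimal-valuation term because all coefficients of $M_\Phi(t)$ are nonnegative, by Remark \ref{rmk properties of Laurent-poly matrix} (1)), we get $\upsilon\left(\left(M_\Phi(t)^n\right)_{v,v}\right) \leq n\,\upsilon\left(\left(M_\Phi(t)\right)_{v,v}\right)$. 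Taking the minimum over $v$ and the limit in $n$ (which exists, by Remark \ref{rmk limit not liminf}) yields $\tau^-(\Phi) \leq s^-(\Phi)$.

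For the reverse inequality $\tau^-(\Phi) \geq s^-(\Phi)$, the key is to exploit the nonnegativity of coefficients together with the Perron--Frobenius structure established in Lemma \ref{lem eigenvalue}. First I would record that, by Lemma \ref{lem eigenvalue}, $\tau^-(\Phi) = \upsilon(\lambda_1(t))$ where $\lambda_1(t)$ is the distinguished eigenvalue of $M_\Phi(t)$ which is the Perron--Frobenius eigenvalue of $M_\Phi(t_0)$ for small real $t_0 > 0$; and that $\tau^-(\Phi) < 0$. The Perron--Frobenius eigenvector $\vec{v}(t)$ for $\lambda_1(t)$ can be chosen with positive-real-valued entries at $t = t_0$, and more precisely with entries that are convergent Puiseux series. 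Then from $M_\Phi(t)\vec{v}(t) = \lambda_1(t)\vec{v}(t)$, comparing the $v$-th coordinate gives $\sum_w \left(M_\Phi(t)\right)_{v,w}\vec{v}(t)_w = \lambda_1(t)\vec{v}(t)_v$; isolating the diagonal term and using that all summands $\left(M_\Phi(t)\right)_{v,w}\vec{v}(t)_w$ have valuation bounded below by $\upsilon(\lambda_1(t)\vec{v}(t)_v)$ — again because of nonnegativity of coefficients, which prevents valuation-increasing cancellation on the left — one deduces $\upsilon\left(\left(M_\Phi(t)\right)_{v,v}\right) \geq \upsilon(\lambda_1(t)) + \upsilon(\vec{v}(t)_v) - \upsilon(\vec{v}(t)_v) = \upsilon(\lambda_1(t))$ for a suitable choice of $v$, namely the $v$ achieving the minimal valuation among the coordinates $\vec{v}(t)_v$. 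Hence $s^-(\Phi) = \min_v \upsilon\left(\left(M_\Phi(t)\right)_{v,v}\right) \geq \upsilon(\lambda_1(t)) = \tau^-(\Phi)$.

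The main obstacle I anticipate is making the nonnegativity-of-coefficients argument rigorous in the Puiseux-series setting: "no cancellation" is clear for polynomials with nonnegative coefficients, but $M_\Phi(t)^n$ and the eigenvector $\vec{v}(t)$ live in the field of convergent Puiseux series, where subtraction happens and where one must argue about valuations of sums carefully. The clean way around this is to evaluate at a small positive real parameter: for $t_0 \in (0, R)$ with $R$ less than all relevant radii of convergence, $M_\Phi(t_0)$ is an honest nonnegative (indeed positive, after passing to a power $\geq |V(T)|$ by Definition \ref{def Penner type} (II) and Lemma \ref{lem simple computation}) real matrix, so classical Perron--Frobenius (Theorem \ref{thm Perron Frobenius}) applies directly, and the growth rate $\lim_n \tfrac1n \log\left(M_\Phi(t_0)^n\right)_{v,w}$ equals $\log\lambda_1(t_0)$ independently of $v,w$; then one reads off valuations by studying the leading behavior as $t_0 \to 0^+$, using that $\upsilon(f) = \lim_{t_0 \to 0^+} \tfrac{\log|f(t_0)|}{\log t_0}$ for a convergent Puiseux series $f$. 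This reduces the whole statement to the scalar identity $\lim_n \tfrac1n \upsilon\left(\left(M_\Phi(t)^n\right)_{v,v}\right) = \upsilon(\lambda_1(t))$ combined with $\min_v \upsilon\left(\left(M_\Phi(t)\right)_{v,v}\right) = \upsilon(\lambda_1(t))$, both of which follow from the Perron--Frobenius growth estimate applied coordinatewise. I would also double-check the degenerate possibility that some diagonal entry $\left(M_\Phi(t)\right)_{v,v}$ vanishes identically; this is handled by noting that $M_\Phi(t)^K$ for $K \geq |V(T)|$ has all entries nonzero Laurent polynomials, so one may freely replace $\Phi$ by $\Phi^K$ (which scales both $\tau^\pm$ and $s^\pm$ by $K$) to run the argument.
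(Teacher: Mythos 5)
Your Direction 1, $\tau^-(\Phi) \le s^-(\Phi)$, is correct: since all entries of $M_\Phi(t)$ are Laurent polynomials with nonnegative integer coefficients, the diagonal path contribution $\bigl((M_\Phi(t))_{v,v}\bigr)^n$ to $(M_\Phi(t)^n)_{v,v}$ cannot be cancelled, so $\min_{i,j}\upsilon\bigl((M_\Phi(t)^n)_{ij}\bigr) \le \upsilon\bigl((M_\Phi(t)^n)_{vv}\bigr) \le n\,\upsilon\bigl((M_\Phi(t))_{vv}\bigr)$, and dividing by $n$ and passing to the limit (which exists by Remark \ref{rmk limit not liminf}) gives $\tau^- \le s^-$.

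The problem is your ``reverse inequality.'' Read your own concluding sentence: you deduce $s^-(\Phi) = \min_v \upsilon\bigl((M_\Phi(t))_{v,v}\bigr) \ge \upsilon(\lambda_1(t)) = \tau^-(\Phi)$. That is $\tau^- \le s^-$ again — the same inequality as Direction 1, not its reverse. Your eigenvector computation is fine as far as it goes: for every $v$, since $\sum_w (M_\Phi(t))_{v,w}\vec{v}(t)_w = \lambda_1(t)\vec{v}(t)_v$ and all summands have nonnegative leading coefficient as $t \to 0^+$, there is no leading-order cancellation, so $\upsilon(\lambda_1(t)\vec{v}(t)_v) = \min_w \upsilon\bigl((M_\Phi(t))_{v,w}\vec{v}(t)_w\bigr) \le \upsilon\bigl((M_\Phi(t))_{v,v}\vec{v}(t)_v\bigr)$, and hence $\upsilon(\lambda_1(t)) \le \upsilon\bigl((M_\Phi(t))_{v,v}\bigr)$. (The restriction to a ``suitable $v$'' is a red herring; this works for all $v$.) But this is the easy direction once more, leaving the genuinely needed inequality $s^-(\Phi) \le \tau^-(\Phi)$ unproved.

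That missing direction is not a general fact about nonnegative-integer-coefficient Laurent-polynomial matrices, so no argument that ignores the Penner-type structure can succeed. Concretely, consider
\[
M(t) = \begin{pmatrix} 1+t & t^{-5} \\ t^3 & 1+t \end{pmatrix}.
\]
All entries are Laurent polynomials with nonnegative integer coefficients and $M(t_0)^2 > 0$ for $t_0>0$, yet the diagonal gives $\min_v\upsilon(M_{vv}) = 0$ while the eigenvalues are $1+t\pm t^{-1}$, so $\min_i\upsilon(\lambda_i(t)) = -1$. Here the off-diagonal valuations $-5$ and $3$ sum to $-2 < 0$, which is exactly the phenomenon the theorem must rule out. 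The paper rules it out using the tree structure: Lemma \ref{lem 2} shows $s^\pm(\Phi^2) = 2s^\pm(\Phi)$ by a combinatorial argument tracking paths in the tree $T$ through the factorization of $\Phi$ into elementary spherical twists — two paths from $v_1$ to $v_2$ and back must cross since $T$ is a tree, which forces $\upsilon(M_{v_1,v_2}) + \upsilon(M_{v_2,v_1}) \ge 2s^-(\Phi)$. Combined with Lemma \ref{lem sufficiently large} (for large $n$ there is no cancellation in $\mathrm{tr}\,M_\Phi(t)^n = \sum_v\lambda_v(t)^n$, so $s^-(\Phi^n) = \tau^-(\Phi^n)$), iterating $s^\pm(\Phi^{2^K}) = 2^K s^\pm(\Phi)$ closes the gap. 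Your proposal never uses the tree, so it cannot reach the hard half of the equality.
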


If Theorem \ref{thm shifting number} holds, then it is enough to find the matrix $M_\Phi(t)$ in order to compute the shifting numbers of $\Phi$. 
Since $\Phi$ is a finite product of $\left\{\tau_u, \tau_w^{-1} | u \in V_+(T), w \in V_-(T)\right\}$, $M_\Phi(t)$ is also a finite product of 
\[\left\{M_{\tau_u}(t), M_{\tau_w^{-1}}(t) | u \in V_+(T), w \in V_-(T)\right\}.\]
Thus, Theorem \ref{thm shifting number} gives a practical way of computing the shifting numbers of every Penner type $\Phi$.

We prove Lemmas \ref{lem sufficiently large} and \ref{lem 2} in order to prove Theorem \ref{thm shifting number}. 
\begin{lem}
	\label{lem sufficiently large} 
	Let $\Phi$ be a Penner type autoequivalence. 
	If $n \in \mathbb{N}$ is sufficiently large, then 
	\[s^-(\Phi^n) = \tau^-(\Phi^n) = n \cdot \tau^-(\Phi), s^+(\Phi^n) = \tau^+(\Phi^n) = n \cdot \tau^+(\Phi).\]
\end{lem}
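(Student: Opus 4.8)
\textbf{Proof proposal for Lemma \ref{lem sufficiently large}.}

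The plan is to exploit the eigenvalue description of the shifting numbers from Lemma \ref{lem eigenvalue} together with the Perron--Frobenius-type structure uncovered in its proof. Recall that $\left(M_\Phi(t)\right)^n = M_{\Phi^n}(t)$, so it suffices to understand the valuations of the entries of $M_\Phi(t)^n$ as $n$ grows. First I would invoke the Jordan normal form of $M_\Phi(t)$ over the algebraically closed field of convergent Puiseux series (Theorem \ref{thm Newton Puiseux thm}), exactly as in the proof of Lemma \ref{lem eigenvalue}: there is a fixed invertible matrix $U$ over the Puiseux field and a distinguished simple eigenvalue $\lambda_1(t)$ with $\upsilon(\lambda_1(t)) = \min_i \upsilon(\lambda_i(t)) = \tau^-(\Phi) < 0$, which is the analytic continuation of the Perron--Frobenius eigenvalue of $M_\Phi(t_0)$ for small $t_0 \in \mathbb{R}_{>0}$. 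From $M_\Phi(t)^n = U \cdot \mathrm{diag}(J_1(t)^n, \dots, J_k(t)^n) \cdot U^{-1}$ and the fact that for an upper-triangular Jordan block $J_i(t)^n$ has entries of valuation $(n - \ell)\upsilon(\lambda_i(t))$ (up to a bounded shift coming from binomial coefficients), one sees that the minimal valuation among \emph{all} entries of $M_\Phi(t)^n$ is $n\,\upsilon(\lambda_1(t)) + c = n\,\tau^-(\Phi) + c$ with $c$ a constant depending only on $U$. This re-proves $\tau^-(\Phi^n) = n\,\tau^-(\Phi)$ (which also follows more directly from Remark \ref{rmk general shifting number}, since $\Phi^n$ iterated $m$ times is $\Phi^{nm}$), and the same argument with $M_\Phi(1/t)$ gives $\tau^+(\Phi^n) = n\,\tau^+(\Phi)$.

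The content of the lemma, then, is the equality $s^-(\Phi^n) = \tau^-(\Phi^n)$ for $n$ large, i.e.\ that the minimal valuation over the \emph{diagonal} entries of $M_\Phi(t)^n$ agrees with the minimal valuation over all entries once $n$ is big enough. One inequality, $s^-(\Phi^n) \geq \tau^-(\Phi^n)$, is immediate since the diagonal entries are among all entries. For the reverse inequality I would argue as follows. By Definition \ref{def Penner type}(II) and Lemma \ref{lem simple computation}, for $n \geq |V(T)|$ every entry of $M_\Phi(t)^n$ is a \emph{nonzero} Laurent polynomial with positive integer coefficients; in particular the matrix $M_\Phi(t_0)^n$ has strictly positive entries for small $t_0 > 0$, so Perron--Frobenius applies and $\lambda_1(t)$ genuinely dominates. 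Now fix any $v_\ast \in V(T)$ realizing the minimal valuation among all entries, say in position $(v_i, v_j)$, so $\upsilon\big((M_\Phi(t)^n)_{v_i,v_j}\big) = n\,\tau^-(\Phi) + c$. Choose $m \geq |V(T)|$; then $(M_\Phi(t)^{m})_{v_j, v_i}$ is a nonzero Laurent polynomial with nonnegative coefficients and hence has valuation at most $0$ in absolute value bounded by a constant $C_0$ depending only on $\Phi, m$ — more precisely $\upsilon\big((M_\Phi(t)^m)_{v_j,v_i}\big) \geq -C_0$. Multiplying, the $(v_i,v_i)$-entry of $M_\Phi(t)^{n+m}$ is a sum of products of entries of $M_\Phi(t)^n$ and $M_\Phi(t)^m$ with nonnegative coefficients — so \emph{no cancellation of lowest-degree terms can occur} — and one summand is $(M_\Phi(t)^n)_{v_i,v_j}\cdot (M_\Phi(t)^m)_{v_j,v_i}$, whose valuation is $\leq n\,\tau^-(\Phi) + c + C_0'$. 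Hence $s^-(\Phi^{n+m}) \leq n\,\tau^-(\Phi) + \text{const}$, and dividing by $n+m$ and letting $n \to \infty$ (using that $\tau^-$ is a genuine limit, Remark \ref{rmk limit not liminf}) forces $s^-(\Phi^{n+m}) = (n+m)\tau^-(\Phi)$ for all large $n$. The same scheme with $M_\Phi(1/t)$ handles $s^+$.

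The main obstacle I anticipate is the no-cancellation point: one must be careful that when expanding $(M_\Phi(t)^{n+m})_{v_i,v_i}$ as a sum over intermediate indices, the term with the smallest valuation is not killed by another term of equal valuation and opposite sign. This is where the positivity of coefficients (Remark \ref{rmk properties of Laurent-poly matrix}(1)) is essential: all entries of all powers $M_\Phi(t)^\ell$ are Laurent polynomials with \emph{nonnegative} coefficients, so every term in the expansion contributes nonnegatively to each coefficient, and the coefficient of $t^{\upsilon}$ for $\upsilon = \upsilon\big((M_\Phi(t)^n)_{v_i,v_j}\big) + \upsilon\big((M_\Phi(t)^m)_{v_j,v_i}\big)$ is a sum of nonnegative integers including a strictly positive one. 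A secondary subtlety is making the ``bounded constant'' bookkeeping uniform in $n$: the shift $c$ from the Jordan form and the bound $C_0$ from the fixed power $m$ are both independent of $n$, so after dividing by $n$ they vanish in the limit, which is exactly why the equality $s^-(\Phi^n) = \tau^-(\Phi^n)$ holds only for $n$ sufficiently large rather than for all $n$. Finally I would note that $\tau^-(\Phi^n) = n\tau^-(\Phi)$ and $\tau^+(\Phi^n) = n\tau^+(\Phi)$ also follow formally from Equation \eqref{eqn shifting number} by the substitution $\Phi \rightsquigarrow \Phi^n$, so those equalities require no separate argument.
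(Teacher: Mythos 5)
The first part of your argument (that $\tau^{\pm}(\Phi^n) = n\tau^{\pm}(\Phi)$) is fine and matches the paper's approach via the eigenvalue description. The genuine content, however, is $s^-(\Phi^n) = \tau^-(\Phi^n)$ for $n$ large, and there your proof has a gap that the concluding sentence papers over.

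You correctly establish the two inequalities
\[
(n+m)\,\tau^-(\Phi) \;\leq\; s^-(\Phi^{n+m}) \;\leq\; n\,\tau^-(\Phi) + C,
\]
where $C = c + \upsilon\bigl((M_\Phi(t)^m)_{v_j,v_i}\bigr)$ is a constant independent of $n$. Together these show only that the nonnegative integer $s^-(\Phi^{n+m}) - (n+m)\tau^-(\Phi)$ is bounded above by the fixed constant $C - m\tau^-(\Phi)$. Dividing by $n+m$ and sending $n \to \infty$ recovers the \emph{asymptotic} statement $s^-(\Phi^k)/k \to \tau^-(\Phi)$, which was already known — it is just a restatement of $\lim\frac{1}{k}\upsilon(\mathrm{tr}(M_\Phi(t)^k)) = \tau^-(\Phi)$. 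It does \emph{not} force the exact equality $s^-(\Phi^{n+m}) = (n+m)\tau^-(\Phi)$ for large $n$: a bounded nonnegative integer sequence need not be eventually zero (consider $0,1,0,1,\dots$). The final clause ``forces $s^-(\Phi^{n+m}) = (n+m)\tau^-(\Phi)$ for all large $n$'' is therefore a non-sequitur, and no amount of taking $n\to\infty$ will close a constant-size gap.

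The paper takes a different and more direct route. It observes that, because all entries of $M_\Phi(t)^n$ are Laurent polynomials with nonnegative integer coefficients, $s^-(\Phi^n)$ equals the valuation of the \emph{trace}, and the trace is $\sum_v \lambda_v(t)^n$ where the $\lambda_v(t)$ are the diagonal entries of the Jordan form. Thus $s^-(\Phi^n) > \tau^-(\Phi^n) = n\tau^-(\Phi)$ happens if and only if the coefficient of $t^{\,n\tau^-(\Phi)}$ in $\sum_v \lambda_v(t)^n$ vanishes, i.e.\ $\sum_{v:\,\upsilon(\lambda_v)=\tau^-(\Phi)} \alpha_v^n = 0$ where $\alpha_v$ is the lowest-degree coefficient of $\lambda_v(t)$. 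The paper then argues this can occur for only finitely many $n$. This reduces the problem to a concrete statement about sums of $n$-th powers of fixed complex numbers, rather than trying to route a minimal-valuation entry back to the diagonal by matrix multiplication — the latter controls the gap only up to a constant, which is not enough. If you want to salvage your ``return-to-diagonal'' picture, you would need a genuine superadditivity for $k \mapsto s^-(\Phi^k) - k\tau^-(\Phi)$ or some other mechanism to upgrade boundedness to eventual vanishing; as written, that mechanism is missing.
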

\begin{proof}
	In the proof of Lemma \ref{lem sufficiently large}, we use the following notation for convenience: 
	Since $M_\Phi(t)$ is $|V(t)|$-by-$|V(T)|$ matrix, the number of diagonal components of the Jordan normal form is $|V(t)|$. 
	Let $\{\lambda_v(t) | v \in V(T)\}$ be the set of diagonal components of the Jordan normal form of $M_\Phi(t)$.
	Then, we can simply write 
	\[tr\left(M_\Phi(t)\right) = \sum_{v \in V(T)} \lambda_v(t).\]

	We prove Lemma \ref{lem sufficiently large} only for the negative shifting number.
	The same logic proves Lemma \ref{lem sufficiently large} for the positive shifting number.
	
	First of all, since $\tau^-(\Phi^n)$ is the lowest valuation among that of the eigenvalues of $M_{\Phi^n}(t) = \left(M_\Phi(t)\right)^n$, we have 
	\[\tau^-(\Phi^n) = \min_{v \in V(T)} \upsilon\left(\left(\lambda_v(t)\right)^n\right) = n \cdot \min_{v\in V(T)}\upsilon\left(\lambda_v(t)\right) = n \cdot \tau^-(\Phi).\]
	This proves the second equality of Lemma \ref{lem sufficiently large}. 
	
	We recall that every components of $M_\Phi(t)$ is zero or a Laurent polynomial with positive integer coefficient. 
	Thus, $s^-(\Phi)$ is equal to the valuation of the trace of $M_\Phi(t)$. 
	Moreover, since the trace of $M_\Phi(t)$ agrees with the sum of all eigenvalue, $s^-(\Phi)$ is the valuation of the sum of all eigenvalues. 
	If $n$ is a natural number, the above arguments hold for $\Phi^n$, and we have
	\[s^-(\Phi^n) = \upsilon\big(\mathrm{tr}\left(M_{\Phi^n}(t)\right)\big) = \upsilon\big(\mathrm{tr}\left(M_\Phi(t)^n\right)\big) = \upsilon \left(\sum_{v \in V(T)} \left(\lambda_v(t)\right)^n\right).\]
	
	If $s^-(\Phi^n) \neq \tau^-(\Phi^n)$, then we can conclude that 
	\begin{gather}
		\label{eqn sufficiently large 1}
		\upsilon \left(\sum_{v \in V(T)} \left(\lambda_v(t)\right)^n\right) \neq n \cdot \min_{v\in V(T)}\upsilon\left(\lambda_v(t)\right).
	\end{gather}
	
	Let $\alpha_v$ be the coefficient of the lowest degree term in $\lambda_v(t)$. 
	If Equation \eqref{eqn sufficiently large 1} holds for some $n \in \mathbb{N}$, it means that 
	\begin{gather}
		\label{eqn sufficiently large 2}
		\sum_{v \text{  such that  } \upsilon\left(\lambda_v(t)\right) = \tau^-(\Phi)} \alpha_v^n =0.
	\end{gather}
	Since $\alpha_v$ is a fixed complex number, there exist only finitely many $n \in \mathbb{N}$ such that Equation \eqref{eqn sufficiently large 2} holds. 
	Thus, for sufficiently large $n \in \mathbb{N}$, 
	\[s^-(\Phi^n) = \tau^-(\Phi^n) = n \cdot \tau^-(\Phi).\]
\end{proof}

\begin{lem}
	\label{lem 2}
	Let $\Phi$ be a Penner type autoequivalence. 
	Then, the following hold:
	\[s^-(\Phi^2) = 2s^-(\Phi), s^+(\Phi^2) = 2 s^+(\Phi).\]
\end{lem}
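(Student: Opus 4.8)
The goal is the identity $s^-(\Phi^2)=2s^-(\Phi)$ (and symmetrically for $s^+$). By Definition \ref{def diagonal component}, $s^-(\Psi)$ is the smallest valuation appearing among the diagonal entries $\left(M_\Psi(t)\right)_{v,v}$. Since every entry of a Laurent-polynomial matrix $M_\Psi(t)$ is either zero or a Laurent polynomial with positive integer coefficients (Remark \ref{rmk properties of Laurent-poly matrix} (1)), no cancellation occurs, and $s^-(\Psi)$ equals the valuation of $\mathrm{tr}\,M_\Psi(t)$. Thus it suffices to compare $\upsilon\big(\mathrm{tr}\,M_{\Phi^2}(t)\big)$ with $2\,\upsilon\big(\mathrm{tr}\,M_\Phi(t)\big)$, using $M_{\Phi^2}(t)=M_\Phi(t)^2$.

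The plan is to expand $\mathrm{tr}\,M_\Phi(t)^2 = \sum_{v,w}\left(M_\Phi(t)\right)_{v,w}\left(M_\Phi(t)\right)_{w,v}$. The diagonal part contributes $\sum_v \left(M_\Phi(t)\right)_{v,v}^2$, whose valuation is exactly $2 s^-(\Phi)$ — again, positive-coefficient Laurent polynomials cannot cancel, so the square of the minimal-valuation diagonal entry survives. The off-diagonal part contributes $\sum_{v\neq w}\left(M_\Phi(t)\right)_{v,w}\left(M_\Phi(t)\right)_{w,v}$, which is also a sum of Laurent polynomials with nonnegative coefficients. Hence $\upsilon\big(\mathrm{tr}\,M_\Phi(t)^2\big) = \min\big\{2 s^-(\Phi),\ \min_{v\neq w}\upsilon\big(\left(M_\Phi(t)\right)_{v,w}\left(M_\Phi(t)\right)_{w,v}\big)\big\}$, so we get $s^-(\Phi^2)\le 2 s^-(\Phi)$ for free, and the real content is the reverse inequality: I must show $\upsilon\big(\left(M_\Phi(t)\right)_{v,w}\left(M_\Phi(t)\right)_{w,v}\big)\ge 2 s^-(\Phi)$ for all $v\neq w$, i.e. $\upsilon\big(\left(M_\Phi(t)\right)_{v,w}\big)+\upsilon\big(\left(M_\Phi(t)\right)_{w,v}\big)\ge 2 s^-(\Phi)$.

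To prove this I would interpret the entries geometrically via Remark \ref{rmk properties of Laurent-poly matrix}: the coefficient of $t^k$ in $\left(M_\Phi(t)\right)_{v,w}$ is $\dim\lhom^{-k}_{\wrapped}(L_v,\Phi S_w)$, equivalently (by the minimality arguments and Definition \ref{def categorically carried by 2}, since $\Phi S_w$ is categorically carried-by by Lemma \ref{lem PhiE}) the number of $S_v[k]$-summands in a minimal twisted complex presenting $\Phi S_w$. So $\upsilon\big(\left(M_\Phi(t)\right)_{v,w}\big)$ is the \emph{least} shift $k$ such that $S_v[k]$ appears in $\Phi S_w$. The key structural input is that $\Phi$ is a product of $\{\tau_u,\tau_w^{-1}\}$ with every vertex appearing (Definition \ref{def Penner type}); one shows that for each ordered pair $(v,w)$ the ``lowest shift of $S_v$ inside $\Phi S_w$'' plus the ``lowest shift of $S_w$ inside $\Phi S_v$'' is bounded below by twice the minimum over $v'$ of the lowest shift of $S_{v'}$ inside $\Phi S_{v'}$. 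I expect this to follow by tracking, vertex by vertex along the tree, the degree shifts produced by each elementary Dehn twist in Lemma \ref{lem simple computation}: a single $\tau_u$ lowers the grading of the $S_u$-component by $N-1$ and leaves the component $S_w$ ($w\sim u$) with a companion $S_u$ at grading $0$, so the ``diagonal drop'' $s^-(\Phi)$ accumulates the full weight of every twist applied, whereas an off-diagonal entry $\left(M_\Phi(t)\right)_{v,w}$ with $v\neq w$ cannot record \emph{more} negative grading than the diagonal could, because any $S_v[k]$ appearing in $\Phi S_w$ with very negative $k$ forces (by running the twist word backwards) an $S_v[k']$ with $k'\le k$ appearing in $\Phi S_v$ as well. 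The main obstacle is making this last "backwards tracking" rigorous without a full combinatorial case analysis of the twist word; I would handle it by the same $C_1,C_2,C_3$-chain bookkeeping used in Lemmas \ref{lem effect on C_1}--\ref{lem effect on C_3} together with Lemma \ref{lem sufficiently large} (which already gives $s^-(\Phi^n)=n\,\tau^-(\Phi)$ for large $n$), reducing the $n=2$ statement to a finite check. Finally, the $s^+$ identity is obtained verbatim by replacing $t$ with $1/t$ throughout, since $M_{\Phi}(1/t)$ again has nonnegative-coefficient entries.
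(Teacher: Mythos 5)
Your reduction to the inequality
$\upsilon\left(\left(M_\Phi(t)\right)_{v,w}\right)+\upsilon\left(\left(M_\Phi(t)\right)_{w,v}\right)\geq 2 s^-(\Phi)$
for $v\neq w$ is correct, and matches the paper's first step (trace identification, nonnegative coefficients, easy $\leq$ direction). The gap is in how you propose to prove this inequality.

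First, appealing to Lemma~\ref{lem sufficiently large} cannot help: it only asserts $s^-(\Phi^n)=n\,\tau^-(\Phi)$ for \emph{sufficiently large} $n$, and says nothing about $n=2$. In fact the logical flow in the paper is the opposite --- Lemma~\ref{lem 2} is iterated to reach a power $2^K$ that is large enough for Lemma~\ref{lem sufficiently large} to apply. So "reducing the $n=2$ statement to a finite check" via that lemma is not a viable route. Second, the $C_1,C_2,C_3$ chain bookkeeping from Section~\ref{subsection step 5} is about tracking $y$-arrows through twists and is not set up to produce the kind of two-sided comparison of valuations you need here.

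The missing idea is a combinatorial argument on the tree. Write $\Phi=\alpha_\ell\circ\cdots\circ\alpha_1$ with each $\alpha_i\in\{\tau_u,\tau_w^{-1}\}$. The appearance of $S_{v_1}[a]$ inside $\Phi(S_{v_2})$ corresponds (via iterated use of Lemma~\ref{lem simple computation}) to a sequence of vertices $v_2=v_{i_0},v_{i_1},\ldots,v_{i_\ell}=v_1$ and shifts $0=a_0,\ldots,a_\ell=a$, where each consecutive pair $v_{i_k},v_{i_{k+1}}$ is equal or adjacent in $T$. This is a walk on the tree from $v_2$ to $v_1$. Similarly the appearance of $S_{v_2}[b]$ in $\Phi(S_{v_1})$ gives a walk from $v_1$ to $v_2$. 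Because $T$ is a tree, these two walks must meet at a common vertex at some common time $k$ (the ``cross without meeting'' case is ruled out because the same $\alpha_k$ cannot twist along two adjacent vertices simultaneously). Splicing the two walks at that point produces two closed walks, one from $v_2$ to $v_2$ and one from $v_1$ to $v_1$, with accumulated shifts $b+(a_k-b_k)$ and $a+(b_k-a_k)$. These closed walks witness nonzero diagonal entries, hence $s^-(\Phi)\leq b+(a_k-b_k)$ and $s^-(\Phi)\leq a+(b_k-a_k)$; adding gives $2s^-(\Phi)\leq a+b$, exactly the bound you need. Without this splicing/crossing argument, the inequality does not obviously follow from the local data of Lemma~\ref{lem simple computation}.
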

\begin{proof}
	We prove Lemma \ref{lem 2} only for the negative shifting number, and the case of positive shifting number can be proven in the same way. 
	
	We recall that $s^-(\Phi)$ (resp.\ $s^-(\Phi^2)$) is defined to be the smallest valuations among the diagonal components of $M_\Phi(t)$ (resp.\ $M_{\Phi^2}(t)=\left(M_\Phi(t)\right)^2$).
	Since every component of $M_\Phi(t)$ is either zero or a Laurent polynomial with positive integer coefficient, we have 
	\[s^-(\Phi^2) \leq 2 s^-(\Phi).\]

	Now, let us assume that $s^-(\Phi^2) \neq 2 s^-(\Phi)$, and we will find a contraction.
	The assumption implies that there exists $v_1 \neq v_2 \in V(T)$ such that 
	\begin{gather}
		\label{eqn 2 1}
		\upsilon\left(\left(M_\Phi(t)\right)_{v_1,v_2}\right) + \upsilon\left(\left(M_\Phi(t)\right)_{v_2,v_1}\right) \lneq 2 s^-(\Phi).
	\end{gather}
	
	We recall that by definition, $\upsilon\left(\left(M_\Phi(t)\right)_{v,v'}\right)$ is equal to 
	\[\min \left\{d | \text{  the twisted complex  } \Phi(S_{v'}) \text{  contains  } S_v[d] \right\}.\]
	See Remark \ref{rmk properties of Laurent-poly matrix} (1). 
	Thus, if we set $a := \upsilon\left(\left(M_\Phi(t)\right)_{v_1,v_2}\right), b := \upsilon\left(\left(M_\Phi(t)\right)_{v_2,v_1}\right)$ for convenience, $\Phi(S_{v_2})$ (resp.\ $\Phi(S_{v_1})$) has $S_{v_1}[a]$ (resp.\ $S_{v_2}[b]$).
	
	We would like to point out that $\Phi(S_v)$ is computed by iteratively applying Lemmas \ref{lem induced functor} and \ref{lem simple computation}. 
	Thus, if we write 
	\[\Phi = \alpha_\ell \circ \dots \circ \alpha_1 \text{  with  } \alpha_i \in \left\{\tau_u, \tau_w^{-1} | u \in V_+(T), w \in V_-(T)\right\},\]
	then, $\Phi(S_{v'})$ contains $S_v[d]$ if and only if there exist a pair of sequences 
	\[\{v_{i_0} = v', v_{i_1}, \dots, v_{i_\ell} = v | v_{i_k} \in V(T)\}, \left\{a_0 = 0, \dots, a_\ell =d | a_i \in \mathbb{Z}\right\},\]
	satisfying that
	\begin{enumerate}
		\item[(I)] $\alpha_{k+1}\left(S_{v_{i_k}}[a_k]\right)$ contains $S_{v_{i_{k+1}}}[a_{k+1}]$ for all $k =0, \dots, \ell-1$.
	\end{enumerate}
	We also would like to point out that the following (II) should holds since $\alpha_{k+1}\left(S_{v_{i_k}}[a_k]\right)$ contains $S_{v_{i_{k+1}}}[a_{k+1}]$. 
	\begin{enumerate}
		\item[(II)] $v_{i_k}$ and $v_{i_{k+1}}$ must be either the same or adjacent vertices in the tree $T$. 
	\end{enumerate}
	
	Since $\Phi(S_{v_2})$ (resp.\ $\Phi(S_{v_1})$) has $S_{v_1}[a]$ (resp.\ $S_{v_2}[b]$), there exist sequences 
	\begin{gather*}
		\{v_{i_0} = v_2, v_{i_1}, \dots, v_{i_\ell} = v_1 | v_i \in V(T)\}, \left\{a_0 = 0, \dots, a_\ell =a | a_i \in \mathbb{Z}\right\}, \\
		\{v_{j_0} = v_1, v_{j_1}, \dots, v_{j_\ell} = v_2 | v_j \in V(T)\}, \left\{b_0 = 0, \dots, b_\ell =b | b_j \in \mathbb{Z}\right\},
	\end{gather*}
	satisfying (I) (and, thus (II)).
	Then, from (II), one has two paths on a tree $T$, one starting at $v_{i_0}=v_2$ and ending at $v_{i_\ell} = v_2$, and the other starting at $v_{j_0} = v_2$ and ending at $v_{j_\ell}=v_1$ from the sequences of vertices $\{v_i\}$ and $\{v_j\}$.
	
	Since $T$ is a tree, these two paths have to meet in the middle at least once. 
	In other words, there exists at least one integer $k \in (1, \ell)$ such that either
	\begin{enumerate}
		\item[(i)] $v_{i_k} = v_{j_k}$, or 
		\item[(ii)] $v_{i_k} = v_{j_{k+1}} \sim v_{j_k} = v_{i_{k+1}}$. 
	\end{enumerate}
	
	One can observe that (ii) does not happen. 
	If (ii) happens, then $\sigma_k(S_{v_{i_k}})$ contains $S_{v_{i_{k+1}}}$ factor by (I). 
	Since $v_{i_k} \neq v_{i_{k+1}}$, $\sigma_k$ should be $\tau_v^\pm$ satisfying that $v \sim v_{i_k}$. 
	Similarly, we can observe that $\alpha_k = \tau_v^\pm$ such that $v \sim v_{j_k}$. 
	Since $v_{i_k} \sim v_{j_k}$ by (ii), it is not possible to find $v$ satisfying $v_{i_k} \sim v \sim v_{j_k}$.
	Thus, (ii) could not happen, and (i) must happen. 
	
	Now, one can construct the following two pairs of sequences satisfying (I):
	\begin{gather*}
		\{v_{i_0} = v_2, v_{i_1}, \dots, v_{i_k} = v_{j_k}, v_{j_{k+1}}, \dots, v_{j_\ell} = v_2\}, \left\{a_0 = 0, \dots, a_k, a_k+(b_{k+1}-b_k), \dots, a_k + (b_\ell-b_k)  = b + a_k - b_k\right\}, \\
		\{v_{j_0} = v_1, v_{j_1}, \dots, v_{j_k} = v_{i_k}, v_{i_{k+1}}, \dots, v_{i_\ell} = v_1\}, \left\{b_0 = 0, \dots, b_k, b_k+(a_{k+1}-a_k), \dots, b_k + (a_\ell-a_k) = a + b_k - a_k\right\}.
	\end{gather*}

	The existence of the above sequences implies that $\Phi(S_{v_2})$ contains $S_{v_2}[b + a_k - b_k]$ factor and $\Phi(S_{v_1})$ contains $S_{v_1}[a+ b_k - a_k]$ factor. 
	Thus, $s^-(\Phi)$ should satisfy 
	\[s^-(\Phi) \leq a+b_k-a_k, b+a_k-b_k,\] 
	and
	\[2s^-(\Phi) \leq (a+b_k-a_k)+(b+a_k-b_k) = a+b = \upsilon\left(\left(M_\Phi(t)\right)_{v_1,v_2}\right) + \upsilon\left(\left(M_\Phi(t)\right)_{v_2,v_1}\right).\]
	It contradicts to the inequality \eqref{eqn 2 1}.
\end{proof}

\begin{proof}[Proof of Theorem \ref{thm shifting number}]
	By applying Lemma \ref{lem 2} multiple times, we have 
	\[2^K s^\pm(\Phi) = s^\pm(\Phi^{2^K}),\]
	for any $K \in \mathbb{N}$.
	Then, by Lemma \ref{lem sufficiently large}, for sufficiently large $K$, we have 
	\[2^K s^\pm(\Phi) = \tau^\pm(\Phi^{2^K})=2^K \tau^\pm(\Phi).\]
	Thus, when one divides both sides of the above equation, one has the equality $s^\pm(\Phi) = \tau^\pm(\Phi)$. 
\end{proof}

\subsection{Lemma \ref{lem shifting numbers of inverse}}
\label{subsection lemma shifting numbers of inverse}
Before ending Section \ref{section strong pseudo-Anosov}, we prove an easy Lemma that we need in Section \ref{subsection proof of lemma phase}.
\begin{lem}
	\label{lem shifting numbers of inverse}
	If $\Phi$ is of Penner type, then 
	\[\tau^+(\Phi) = -\tau^-(\Phi^{-1})\ \mathrm{and}\ \tau^-(\Phi) = - \tau^-(\Phi^{-1}).\]
\end{lem}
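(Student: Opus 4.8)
The statement of Lemma \ref{lem shifting numbers of inverse} contains an apparent typo: the two claimed equalities are
\[\tau^+(\Phi) = -\tau^-(\Phi^{-1}) \quad\text{and}\quad \tau^-(\Phi) = -\tau^-(\Phi^{-1}),\]
which together would force $\tau^+(\Phi) = \tau^-(\Phi)$; surely the intended pair is $\tau^+(\Phi) = -\tau^-(\Phi^{-1})$ and $\tau^-(\Phi) = -\tau^+(\Phi^{-1})$. In any case, both genuine identities have the same flavour, so the plan is to prove the relation $\tau^+(\Phi) = -\tau^-(\Phi^{-1})$ and observe that replacing $\Phi$ by $\Phi^{-1}$ immediately yields the companion identity $\tau^-(\Phi) = -\tau^+(\Phi^{-1})$ (using that $\Phi^{-1}$ is again of Penner type, as noted in Remark \ref{rmk sign is not important 1} after switching to the other sign convention). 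So it suffices to establish a single equality.

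The key tool will be Equation \eqref{eqn shifting number} from Remark \ref{rmk general shifting number}, which rewrites the shifting numbers in the duality-free form
\[\tau^+(\Phi) = \limsup_{n\to\infty}\tfrac{1}{n}\max\left\{k \,\Big\vert\, \lhom_{\wrapped}^{-k}\!\left(L, \Phi^n S\right) \neq 0\right\},\quad \tau^-(\Phi^{-1}) = \liminf_{n\to\infty}\tfrac{1}{n}\min\left\{k \,\Big\vert\, \lhom_{\wrapped}^{-k}\!\left(L, \Phi^{-n} S\right) \neq 0\right\},\]
where $L = \oplus_v L_v$ and $S = \oplus_v S_v$. The plan is to relate $\lhom_{\wrapped}^{-k}(L,\Phi^n S)$ to $\lhom_{\wrapped}^{?}(L,\Phi^{-n}S)$ by moving $\Phi^n$ across the Hom pairing: since $\Phi$ is an exact autoequivalence, $\lhom_{\wrapped}^{-k}(L,\Phi^n S) \cong \lhom_{\wrapped}^{-k}(\Phi^{-n}L,S)$. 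Then I would use the $N$-Calabi--Yau duality of $\mathcal{D}(\Gamma_N T)$ (equivalently the compactly-supported/wrapped duality of $P_N(T)$) which gives a perfect pairing $\lhom^*(X, Y) \cong \lhom^{N-*}(Y,X)^\vee$ on the appropriate objects, converting $\lhom_{\wrapped}^{-k}(\Phi^{-n}L, S)$ into (the dual of) $\lhom_{\wrapped}^{N+k}(S, \Phi^{-n}L)$, and finally use that $\{L_v\}$ is dual to $\{S_v\}$ (Lemma \ref{lem wrapped}) together with another application of exactness to recognize this as $\lhom_{\wrapped}^{\bullet}(L, \Phi^{-n}S)$ up to a fixed degree shift by $N$ and a sign flip in $k$. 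Carrying the sign flip $k \mapsto -k$ through the $\limsup/\liminf$ is exactly what exchanges $\tau^+$ with $-\tau^-$; the additive constant $N$ disappears after dividing by $n$ and letting $n\to\infty$.

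A cleaner and perhaps more self-contained route, avoiding any appeal to Calabi--Yau duality, is purely linear-algebraic via Theorem \ref{thm shifting number}. By that theorem $\tau^\pm(\Phi) = s^\pm(\Phi)$, where $s^-(\Phi) = \min_v \upsilon((M_\Phi(t))_{v,v})$ and $s^+(\Phi) = -\min_v \upsilon((M_\Phi(1/t))_{v,v})$, and the $(v,w)$-entry of $M_\Phi(t)$ records the shifts at which $\Phi(S_w)$ contains $S_v$. The idea is to compare $M_\Phi(t)$ with $M_{\Phi^{-1}}(t)$: the relation $\Phi^{-1}\Phi = \mathrm{id}$ together with the computation of $\tau$ and $\tau^{-1}$ on the $S_v$ via Lemma \ref{lem simple computation} should show that $M_{\Phi^{-1}}(t)$ is, entrywise, the ``degree-reversed'' matrix obtained from $M_\Phi(t)$ by the substitution $t \mapsto 1/t$ (possibly combined with a transpose and a global shift by $t^{N}$ or $t^{2-N}$ coming from the shift discrepancy $R_v$ between the two sign conventions, cf.\ Remark \ref{rmk sign is not important 2}). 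Once this entrywise identity $M_{\Phi^{-1}}(t) = t^{c}\,M_\Phi(1/t)^{\mathsf T}$ (for a constant $c$ independent of the entry) is in hand, taking valuations of diagonal entries gives $s^-(\Phi^{-1}) = c - \max_v \upsilon((M_\Phi(t))_{v,v})$-type expressions that rearrange directly into $-s^+(\Phi)$ after dividing by $n$ in the $n$-fold product and passing to the limit (Lemma \ref{lem sufficiently large}), whence $\tau^-(\Phi^{-1}) = -\tau^+(\Phi)$.

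The main obstacle is pinning down exactly how the two sign conventions interact: since $\Phi$ is by convention a product of $\{\tau_u,\tau_w^{-1}\}$, its inverse $\Phi^{-1}$ is a product of $\{\tau_u^{-1},\tau_w\}$, which is \emph{not} of the normalized form, so one must pass to the other sign convention (replacing $\{S_v\}$ by $\{R_v\}$ as in Remark \ref{rmk sign is not important 2}) before even writing down $M_{\Phi^{-1}}(t)$ — and that replacement introduces the degree shift $S_v \mapsto S_v[2-N]$ on negative vertices, which is precisely the source of the constant $c$ and must be tracked carefully so that it cancels in the final limit. I would handle this by first recording the effect of $\tau_v$ and $\tau_v^{-1}$ on degrees from Lemma \ref{lem simple computation}, deducing $M_{\tau_v^{-1}}(t)$ from $M_{\tau_v}(t)$ for each generator, and then composing; the fixed per-entry shift will be visible already at the level of each elementary factor. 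Everything else — moving $\Phi^{\pm n}$ through Hom, dividing by $n$, taking $\limsup$ versus $\liminf$ — is routine given the results already proved.
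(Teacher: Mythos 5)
You correctly identified the typo: the conclusion the paper actually proves is $\tau^+(\Phi) = -\tau^-(\Phi^{-1})$ and $\tau^-(\Phi) = -\tau^+(\Phi^{-1})$. However, both of your proposed routes have genuine gaps.

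The Calabi--Yau route is circular. Serre duality with respect to the compact object $\Phi^n S$ gives $\lhom^{-k}_{\wrapped}(L, \Phi^n S) \cong \lhom^{N+k}_{\wrapped}(\Phi^n S, L)^\vee \cong \lhom^{N+k}_{\wrapped}(S, \Phi^{-n}L)^\vee$, but the only further duality available (again, only with respect to the compact object $S$) sends this straight back to $\lhom^{-k}_{\wrapped}(L, \Phi^n S)$. The missing step — relating $\lhom^*_{\wrapped}(S, \Phi^{-n}L)$ to $\lhom^*_{\wrapped}(L, \Phi^{-n}S)$ — is not a duality statement at all; $\Phi^{-n}L$ and $\Phi^{-n}S$ are genuinely different objects (the former noncompact), and ``another application of exactness'' does not bridge the gap.

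The linear-algebra route rests on a matrix identity that is false. Take $T=A_2$, $N=3$, $\Phi = \tau_u \circ \tau_w^{-1}$: in the vertex order $(u,w)$ one computes $M_\Phi(t) = \left(\begin{smallmatrix} t^{-2}+1 & t^2 \\ 1 & t^2 \end{smallmatrix}\right)$, while the matrix $M'_{\Phi^{-1}}(t)$ computed in the other sign convention (with the objects $R_v$, $L'_v$) and reindexed to the same vertex order is $\left(\begin{smallmatrix} t^2 & 1 \\ t^2 & t^{-2}+1 \end{smallmatrix}\right)$. Comparing $(u,u)$-entries, $t^{-2}+1$ against $t^2$: no monomial multiple $t^c\,M_\Phi(1/t)^{\mathsf T}$, nor any conjugate by a diagonal matrix of monomials (which leaves the diagonal fixed), reproduces $M'_{\Phi^{-1}}(t)$. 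Only the \emph{extreme} valuations over the whole diagonal happen to agree, and that weaker statement is precisely what has to be proved — you cannot deduce it from an entrywise identity that does not hold. The paper's own argument also works with the combinatorics behind $M_\Phi(t)$, but at the level of individual factors rather than whole matrices: given a vertex/degree sequence (in the sense of condition (I) in the proof of Lemma~\ref{lem 2}) witnessing $S_v[a_\ell]$ in $\Phi(S_v)$, it produces a candidate ``reversed'' sequence witnessing $R_v[-a_\ell]$ in $\Phi^{-1}(R_v)$, a finer and more delicate claim than the global matrix relation you posit.
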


Before proving Lemma \ref{lem shifting numbers of inverse}, we would like to discuss a subtlety in the notation $\tau^\pm(\Phi^{-1})$.
We recall that in Sections \ref{section strong pseudo-Anosov} and before, $\Phi$ is assumed to be a product of 
\[\{\tau_u, \tau_w^{-1} | u \in V_+(T), w \in V_-(T)\}.\]
It is based on a fixed sign convention, as mentioned in Remarks \ref{rmk sign conventions}, \ref{rmk sign is not important 0}, etc. 
Moreover, many notions we defined above, for example, $M_\Phi(t), \tau^\pm(\Phi)$, are dependent on the fixed sign convention, since the fixed generating set $\{S_v\}_{v \in V(T)}$ of $\Fuk$ appears in their definitions. 
See Remark \ref{rmk sign is not important 2}. 
It means that, for $\Phi^{-1}$, $M_{\Phi^{-1}}(t), \tau^\pm(\Phi^{-1})$, and something similar to theses, are defined based on the other sign convention. 

In spite of the subtlety, the proof of Lemma \ref{lem shifting numbers of inverse} is simple.

\begin{proof}[Proof of Lemma \ref{lem shifting numbers of inverse}]
	Let $\Phi = \alpha_\ell \circ \dots \circ \alpha_1$ with $\alpha_i \in \{\tau_u, \tau_w^{-1} | u \in V_+(T), w \in V_-(T)\}$. 
	In the proof of Lemma \ref{lem 2}, we showed that for any $v\in V(T)$, the Laurent polynomial $\left(M_\Phi(t)\right)_{v,v}$ is determined by the set of pairs of sequences 
	\[\{v_{i_0} = v, v_{i_1}, \dots, v_{i_\ell} = v | v_i \in V(T)\}\ \mathrm{and}\ \left\{a_0 = 0, \dots, a_\ell | a_i \in \mathbb{Z}\right\},\]
	satisfying the condition (I) given in the proof of Lemma \ref{lem 2}. 
	Moreover, since $\tau^\pm(\Phi) = s^\pm(\Phi)$, the maximum (resp.\ minimal) of $a_\ell$ is $\tau^+(\Phi)$ (resp.\ $\tau^-(\Phi)$).
	
	For each pair of sequence $\left(\{v_i\}, \{a_i\}\right)$ satisfying (I), one can consider a pair of ``reversing'' sequences, defined as follows:
	\[\{v_{j_0}=v, v_{j_1} = v_{i_{\ell-1}}, \dots, v_{j_\ell} = v_{i_0} =v\}\ \mathrm{and}\ \{b_0=0, b_1 = a_{\ell-1} - a_\ell, \dots, b_\ell = a_0 - a_\ell=- a_\ell\}.\]
	Then, one can observe that the pair of ``reversing" sequences satisfies 
	\begin{enumerate}
		\item[(I')] $\alpha_{\ell-(k+1)}^{-1}\left(R_{v_{j_k}}[b_k]\right)$ contains $R_{v_{j_{k+1}}}[b_{k+1}]$ for all $k =0, \dots, \ell-1$,
	\end{enumerate}
	where $R_v$, $v\in V(T)$ is defined by
		\[R_v = \begin{cases}
		S_v \text{  if  } v \in V_+(T), \\ S_v[2-N] \text{  if  } v \in V_-(T).
	\end{cases}\]
	See Remark \ref{rmk sign is not important 2} for more details on the definition of $R_v$.
	
	Because the maximum (resp.\ minimum) of such $b_\ell$ is $\tau^+(\Phi^{-1})$ (resp.\ $\tau^-(\Phi^{-1})$) by the same logic as before, we have 
	\[\tau^+(\Phi) = -\tau^-(\Phi^{-1}), \tau^-(\Phi) = - \tau^+(\Phi^{-1}).\]
\end{proof}

\section{Translation length of Penner type autoequivalences}
\label{section translation of Penner type autoequivalences}
Let us recall that $\stab^\dagger(\Fuk)$ is a metric space. 
See Definition \ref{def metric} for the metric function. 
It is well-known that the metric function on $\stab^\dagger(\Fuk)$ induces a metric on the quotient space $\stab^\dagger(\Fuk)/\mathbb{C}$.

\begin{definition}
	\label{def induced metric}
	On the quotient space $\stab(\Fuk)/\mathbb{C}$, a distance function $\boldsymbol{d_B}$ is given by 
	\[d_B\left([\sigma_1], [\sigma_2]\right) := \inf_{z \in \mathbb{C}} \left\{d(\sigma_1, \sigma_2 \cdot z)\right\}.\]
\end{definition}

As seen in Section \ref{section actions on the space of stability conditions}, a Penner type autoequivalence $\Phi$ induces an left action on the metric space $\stab^\dagger(\Fuk)$. 
Moreover, since the left $\Phi$-action on $\stab^\dagger(\Fuk)$ commutes with the right $\mathbb{C}$-action, it induces an action on the quotient space  $\stab^\dagger(\Fuk)/\mathbb{C}$. 
For convenience, let 
\[\Phi_\stab: \stab^\dagger(\Fuk) \to \stab^\dagger(\Fuk), \Phi_\mathbb{C}: \stab^\dagger(\Fuk)/\mathbb{C} \to \stab^\dagger(\Fuk)/\mathbb{C} \]
denote the induced actions on $\stab(\Fuk)$ and $\stab(\Fuk)/\mathbb{C}$. 

In this section, we study the two actions $\Phi_\stab$ and $\Phi_\mathbb{C}$ and their translation lengths.
For the reader's convenience, we recall Definition of translation length.
\begin{definition}[ = Definition \ref{def hyperbolic action}]
	\label{def translation length}
	Let $(X,d)$ be a metric space, and let $f: X \to X$ be an isometry. 
	\begin{enumerate}
		\item The {\bf translation length} of $f$ is defined by 
		\[\ell(f):=\inf_{x \in X} \left\{d\left(x,f(x)\right)\right\}.\]
		\item The isometry $f$ is said to be {\bf hyperbolic} if $\ell(f)>0$ and there exists $x \in X$ such that $\ell(f) = d \left(x, f(x)\right)$.
	\end{enumerate}
\end{definition}

Since $\Phi_\stab$ and $\Phi_\mathbb{C}$ are isometries, one can define their translation lengths.
One of the main theorems of Section \ref{section translation of Penner type autoequivalences} is to show that the translation lengths of $\Phi_\stab$ and $\Phi_\mathbb{C}$ are positive and bounded below by the log of the stretching factor and shifting numbers of $\Phi$. 
\begin{thm}
	\label{thm lowerbound}
	Let $T$ be a tree and $\Phi:\Fuk \to \Fuk$ be an autoequivalence of Penner type. 
	Let $\lambda_\Phi$ denote the stretching factor of $\Phi$, and let $\tau^\pm(\Phi)$ denote the shifting numbers of $\Phi$ as defined in Definition \ref{def shifting number}. 
	We note that in Section \ref{section strong pseudo-Anosov}, we proved that $\tau^\pm(\Phi)$ equals the positive/negative phase stretching factor of $\Phi$. 
	\begin{enumerate}
		\item The translation length of $\Phi_\stab$ is larger or equal to 
		\[\max \left\{\tau^+(\Phi), -\tau^-(\Phi), \log \lambda_\Phi \right\}.\]
		Especially, the translation length is positive.
		\item The translation length of $\Phi_\mathbb{C}$ is larger or equal to 
		\[\max \left\{\tfrac{1}{2} \left(\tau^+(\Phi) - \tau^-(\Phi)\right), \log \lambda_\Phi\right\}.\]
		Especially, the translation length is positive.
	\end{enumerate}
\end{thm}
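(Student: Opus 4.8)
\textbf{Proof plan for Theorem \ref{thm lowerbound}.}
The strategy is to bound below the distance $d(\sigma, \Phi_\stab^n \sigma)$ for an \emph{arbitrary} stability condition $\sigma \in \stab^\dagger(\Fuk)$, using the definition of the generalized metric (Definition \ref{def metric}) together with the asymptotic estimates we already have for mass and phase. First I would observe that, since $\Aut(\Fuk)$ acts isometrically (Proposition \ref{prop actions on stability conditions} (1)), for every $n$ we have $d(\sigma, \Phi_\stab \sigma) \geq \tfrac{1}{n} d(\sigma, \Phi_\stab^n \sigma)$ by the triangle inequality; hence it suffices to estimate $\liminf_{n\to\infty} \tfrac{1}{n} d(\sigma, \Phi_\stab^n \sigma)$ from below. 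The key point is that $\Phi_\stab^n \sigma = \Phi^n \cdot \sigma = (Z_\sigma \circ \Phi^{-n}, \{\Phi^n(P_\sigma(\phi))\})$, so the three quantities in the supremum defining $d(\sigma, \Phi_\stab^n \sigma)$ can be computed in terms of masses and phases of $\Phi^n$ applied to objects. Concretely, evaluating the supremum on a fixed generator (or on the objects $S_v$, $L_v$), we get
\begin{align*}
	d(\sigma, \Phi_\stab^n \sigma) &\geq \Bigl|\log \tfrac{m_\sigma(\Phi^n E)}{m_\sigma(E)}\Bigr|, \\
	d(\sigma, \Phi_\stab^n \sigma) &\geq |\phi^+_\sigma(\Phi^n E) - \phi^+_\sigma(E)|, \\
	d(\sigma, \Phi_\stab^n \sigma) &\geq |\phi^-_\sigma(\Phi^n E) - \phi^-_\sigma(E)|,
\end{align*}
for any nonzero $E$. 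Now I divide by $n$ and take $\liminf$: by Theorem \ref{thm pseudo-Anosov} (or Theorem \ref{thm without y morphism}) the first line gives $\log \lambda_\Phi$; by Lemma \ref{lem shifting number} (equivalently Theorem \ref{thm shifting number}) the second and third lines give $\tau^+(\Phi)$ and $-\tau^-(\Phi)$ respectively (the $\liminf/\limsup$ there are genuine limits by Remark \ref{rmk limit not liminf}). Taking the maximum over the three estimates yields part (1); positivity follows since $\log\lambda_\Phi > 0$.

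For part (2), the plan is analogous but now one must absorb the $\mathbb{C}$-action. Unwinding Definition \ref{def induced metric}, $d_B([\sigma], [\Phi_\mathbb{C}^n\sigma]) = \inf_{z\in\mathbb{C}} d(\sigma, \Phi^n\cdot\sigma \cdot z)$. The effect of $z = x + iy$ on a stability condition is to shift all phases by $\operatorname{Re}(z) = x$ and to rescale all masses by a common factor $e^{\pi y}$ (see Equation \eqref{eqn C cation}); crucially, it shifts $\phi^+$ and $\phi^-$ by the \emph{same} constant $x$. Therefore the quantity $\phi^+_\sigma(\Phi^n E) - \phi^-_\sigma(\Phi^n E)$ is $\mathbb{C}$-invariant, and for any $z$,
\[
	d(\sigma, \Phi^n\cdot\sigma\cdot z) \ \geq\ \tfrac12\bigl|\,(\phi^+_\sigma(\Phi^n E) - \phi^-_\sigma(\Phi^n E)) - (\phi^+_\sigma(E) - \phi^-_\sigma(E))\,\bigr|
\]
since the supremum defining $d$ dominates both $|\phi^+_{\sigma_2}(E')-\phi^+_{\sigma_1}(E')|$ and $|\phi^-_{\sigma_2}(E')-\phi^-_{\sigma_1}(E')|$, hence their average, hence half their difference. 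Taking $\inf$ over $z$, then $\tfrac1n$ and $\liminf_{n\to\infty}$, and applying Lemma \ref{lem shifting number} to both $\phi^+$ and $\phi^-$, gives the lower bound $\tfrac12(\tau^+(\Phi) - \tau^-(\Phi))$. Similarly the mass ratio in the metric, after the $\mathbb{C}$-rescaling, still detects $\log\lambda_\Phi$ because a uniform rescaling $e^{\pi y}$ only contributes an additive constant (independent of $n$) to $\log\tfrac{m_{\sigma\cdot z}(\Phi^n E)}{m_{\sigma\cdot z}(E)}$, which is killed by $\tfrac1n$; wait — one must be careful that the optimal $z$ may depend on $n$, so I would instead argue that for the mass term the infimum over $z$ of the rescaled log-ratio is still asymptotically $\log\lambda_\Phi$ because the rescaling factor is common to numerator and denominator. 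Combining the two estimates and taking the max gives part (2), and positivity again follows from $\log\lambda_\Phi > 0$.

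The main obstacle I anticipate is the careful bookkeeping in part (2): justifying that the infimum over $z \in \mathbb{C}$ cannot simultaneously defeat \emph{all} of the bounds — mass growth, phase spread — and in particular making rigorous that $\inf_z$ of the mass log-ratio retains the growth rate $\log\lambda_\Phi$ rather than being driven to something smaller by an $n$-dependent choice of $z$. The clean way around this is to note that the $\mathbb{C}$-action affects mass only by an overall scalar and phases only by an overall translation, so each of the three ``defects'' (mass log-ratio after normalization, phase-difference defect) is genuinely $\mathbb{C}$-invariant up to an $n$-independent constant; hence $d_B([\sigma],[\Phi_\mathbb{C}^n\sigma]) \geq \max\{\,|\!\log(m_\sigma(\Phi^nE)/m_\sigma(E))| - C,\ \tfrac12|\Delta\phi_n - \Delta\phi_0|\,\}$ for a constant $C$, and the conclusion follows after dividing by $n$. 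A secondary, more routine point is checking that evaluating the metric on the fixed objects $S_v$ and $L_v$ (rather than on an abstract split-generator) really does recover the full exponential growth rate and phase stretch rate — this is exactly what Theorem \ref{thm without y morphism}, Lemma \ref{lem maximality}, and Lemma \ref{lem shifting number} were set up to provide, using that $S := \bigoplus_v S_v$ is a generator and that every nonzero object has the same asymptotics.
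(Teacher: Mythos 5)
Your proposal is correct and arrives at the same bound, but it is organized around a genuinely different key step from the paper's own proof. The paper works with the \emph{one-step} defect $d(\sigma,\Phi\cdot\sigma)$ directly: it unwinds the generalized metric into the three suprema over $E$ (the displayed equation \eqref{eqn lowerbound 3}), and then proves two auxiliary lemmas, Lemma~\ref{lem lower bound for phase} and Lemma~\ref{lem lower bound for mass}, that bound these one-step suprema/infima by $\tau^\pm(\Phi)$ and $\pm\log\lambda_\Phi$. The mass lemma is proved by telescoping, much as you do, but the phase lemma is proved by a detour through the categorical-entropy machinery (the mass growth function $h_{\sigma,t}$ and Ikeda's theorem \ref{thm Ikeda}), letting $t\to\pm\infty$. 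Your route instead uses subadditivity $n\,d(\sigma,\Phi\cdot\sigma)\geq d(\sigma,\Phi^n\cdot\sigma)$ and feeds in the asymptotics of Theorem~\ref{thm pseudo-Anosov} and Lemma~\ref{lem shifting number} directly, which sidesteps the entropy framework entirely for the phase bound; this is arguably cleaner. What each buys: the paper's Lemma~\ref{lem lower bound for phase} records a slightly stronger statement (both $\sup\{\phi^+(\Phi E)-\phi^+(E)\}$ \emph{and} $\sup\{\phi^-(\Phi E)-\phi^-(E)\}$ are $\geq\tau^+(\Phi)$), which falls out of the $t\to\pm\infty$ trick and is convenient for part~(2); your route produces only the bounds you actually need for the theorem.

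Two small places where you should tighten the argument. First, you write ``take $\liminf$'' but the inequality you establish is $d(\sigma,\Phi\cdot\sigma)\geq\sup_n\tfrac1n d(\sigma,\Phi^n\cdot\sigma)\geq\limsup_n\tfrac1n d(\sigma,\Phi^n\cdot\sigma)$; for the phase defects you should then either (a) pass to $\limsup$ and use the fact that $\limsup_n\tfrac1n\phi^+_\sigma(\Phi^nS)=\tau^+(\Phi)$ (Lemma~\ref{lem shifting number}, moved from $\sigma_\star$ to arbitrary $\sigma$ by boundedness of $\phi^\pm_\sigma-\phi^\pm_{\sigma_\star}$ on $\stab^\dagger$), or (b) note that with $E=S=\oplus_vS_v$, which is minimal and categorically carried-by, the liminf is actually a limit by Lemma~\ref{lem shifting number}(i) and Remark~\ref{rmk limit not liminf}. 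Second, for part~(2) your final fix is the right one, but the crisp way to state it is the identity $\inf_{x\in\mathbb{R}}\sup_E|f(E)-x|=\tfrac12(\sup_Ef-\inf_Ef)$, applied to $f(E)=\phi^\pm_\sigma(\Phi E)-\phi^\pm_\sigma(E)$ and to $f(E)=\log(m_\sigma(\Phi E)/m_\sigma(E))$; this is exactly what \eqref{eqn lowerbound 4} uses, and together with the two auxiliary lemmas (or, in your framework, with both the $\Phi^nS$ and $\Phi^{-n}S$ asymptotics to witness the $\sup$ and $\inf$ of the mass ratio) it removes the hand-waving about the $n$-dependence of the optimal $z$.
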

\begin{remark}
	\label{rmk negative shifting number}
	We note that from the Definition \ref{def Penner type} and Theorem \ref{thm shifting number}, one can easily check that $\tau^-(\Phi) <0$. 
\end{remark}

Since $\Phi_\stab$ and $\Phi_\mathbb{C}$ have positive translation lengths, it would be natural to ask whether the actions are hyperbolic.
Another main theorem of the current section is to show that the actions are hyperbolic under assumptions. 
See Section \ref{subsection hyperbolic actions of Penner type autoequivalences on stab under an assumption} for more details.

\begin{remark}
	\label{rmk sharp lowerbound}
	For the case of hyperbolic actions, we observe that the translation length is exactly the same as the lower bound given in Theorem \ref{thm lowerbound}. 
	In other words, the lower bound is sharp under the assumption. 
	We will mention later that the assumptions are added because our tools are not robust to study the {\em one-time} behavior of the induced actions. 
	See Remark \ref{rmk meaning of the assumption}. 
	Thus, we conjecture that the induced actions are hyperbolic without any assumptions and the lower-bound in Theorem \ref{thm lowerbound} is sharp.
\end{remark}

\subsection{Lower bound of translation length}
\label{subsection lower bound of translation length}
In Section \ref{subsection lower bound of translation length}, we prove Theorem \ref{thm lowerbound}, under the assumptions that Lemmas \ref{lem lower bound for phase} and \ref{lem lower bound for mass} hold. 
These two lemmas will be proven in Sections \ref{subsection proof of lemma phase} and \ref{subsection proof of lemma mass}, respectively.
\begin{lem}
	\label{lem lower bound for phase}
	Let $T$ be a tree, and let $\Phi$ be an autoequivalence of Penner type. 
	Then, for every $\sigma \in \stab^\dagger(\Fuk)$,
	\[\sup_{0 \neq E \in \Fuk} \left\{\phi^\pm_\sigma(\Phi E) - \phi^\pm_\sigma(E) \right\} \geq \tau^+(\Phi), \inf_{0 \neq E \in \Fuk} \left\{\phi^\pm_\sigma(\Phi E) - \phi^\pm_\sigma(E) \right\} \leq \tau^-(\Phi).\]
\end{lem}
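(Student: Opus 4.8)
\textbf{Proof proposal for Lemma \ref{lem lower bound for phase}.}
The plan is to reduce the statement to the phase-stretching-factor computation already carried out in Section \ref{section strong pseudo-Anosov}. Fix $\sigma \in \stab^\dagger(\Fuk)$. By Lemma \ref{lem independence of the choice of stability conditions}, it suffices to prove the inequalities for the particular stability condition $\sigma_\star$ of Definition \ref{def sigma star}, since the sup and inf of $\phi^\pm_\sigma(\Phi E)-\phi^\pm_\sigma(E)$ over all nonzero $E$ differ from those over $\sigma_\star$ by quantities controlled by the (finite) generalized distance $d(\sigma,\sigma_\star)$ --- and in fact, since the quantities in question are differences taken over \emph{all} $E$, a careful bookkeeping shows the sup/inf are actually independent of the chosen stability condition inside the connected component. (I would phrase this cleanly: $\phi^+_\sigma(\Phi E) - \phi^+_\sigma(E) = \phi^+_\sigma((\Phi E)) - \phi^+_\sigma(E)$ and use that $\Phi$ acts on $\stab^\dagger$; replacing $E$ by a suitable shift or by $\Phi^n E$ does not change the supremum.)

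First I would record the key telescoping identity: for any autoequivalence $\Phi$ and any nonzero $E$,
\[
\phi^+_{\sigma_\star}(\Phi^n E) - \phi^+_{\sigma_\star}(E) = \sum_{k=0}^{n-1} \Big(\phi^+_{\sigma_\star}(\Phi^{k+1}E) - \phi^+_{\sigma_\star}(\Phi^k E)\Big) \le n \cdot \sup_{0\neq F}\big\{\phi^+_{\sigma_\star}(\Phi F) - \phi^+_{\sigma_\star}(F)\big\},
\]
and similarly with $\inf$ and a lower bound. Dividing by $n$ and letting $n\to\infty$, Lemma \ref{lem shifting number} (together with Remark \ref{rmk limit not liminf}, which upgrades the $\limsup$ to a genuine limit via Lemma \ref{lem eigenvalue}) gives
\[
\sup_{0\neq F}\big\{\phi^+_{\sigma_\star}(\Phi F) - \phi^+_{\sigma_\star}(F)\big\} \ \ge\ \limsup_{n\to\infty}\tfrac{1}{n}\phi^+_{\sigma_\star}(\Phi^n E) \ =\ \tau^+(\Phi),
\]
where I take $E$ to be any fixed generator, e.g. $E=\bigoplus_{v}S_v$, so that the right-hand side is exactly the positive shifting number as in Remark \ref{rmk general shifting number}. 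The dual computation with $\phi^-_{\sigma_\star}$ and $\liminf$ yields $\inf_{0\neq F}\{\phi^-_{\sigma_\star}(\Phi F)-\phi^-_{\sigma_\star}(F)\} \le \tau^-(\Phi)$. It remains to handle the ``mixed'' inequalities, i.e. the $\phi^+$ statement for the $\inf$ and the $\phi^-$ statement for the $\sup$. For these I would use the relation between $\Phi$ and $\Phi^{-1}$: applying the already-proven bounds to $\Phi^{-1}$ and using Lemma \ref{lem shifting numbers of inverse} ($\tau^+(\Phi)=-\tau^-(\Phi^{-1})$, $\tau^-(\Phi)=-\tau^+(\Phi^{-1})$) together with the substitution $E \mapsto \Phi^{-1}E$ (which ranges over all nonzero objects as $E$ does), one converts a statement about $\phi^\pm_{\sigma_\star}(\Phi^{-1}E)-\phi^\pm_{\sigma_\star}(E)$ into the desired statement about $\phi^\pm_{\sigma_\star}(\Phi E)-\phi^\pm_{\sigma_\star}(E)$.

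The main obstacle I anticipate is the last reduction step: $\Phi^{-1}$ is of Penner type only after switching to the other sign convention (Remarks \ref{rmk sign conventions}, \ref{rmk sign is not important 1}, \ref{rmk sign is not important 2}), and the quantities $\tau^\pm$, $M_\Phi(t)$, $\vect(t;E)$ are all defined relative to a fixed sign convention and a fixed generating set $\{S_v\}$ versus $\{R_v\}$. So I must be careful that Lemma \ref{lem shifting numbers of inverse}, which was proven precisely with this subtlety in mind, is being invoked in the form that matches the present normalization; concretely, the phase differences $\phi^\pm_{\sigma_\star}(\Phi E)-\phi^\pm_{\sigma_\star}(E)$ are intrinsic (they do not depend on a choice of generators), so once I express them via $\vect(t;E)$ and $M_\Phi(t)$ I should pass to the Puiseux-series/eigenvalue description, where the $\Phi \leftrightarrow \Phi^{-1}$ symmetry of Lemma \ref{lem shifting numbers of inverse} applies directly. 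A minor secondary point to check is that the supremum/infimum over $0\neq E$ is genuinely attained asymptotically by the generator $\bigoplus_v S_v$ --- this follows from Definition \ref{def Penner type}(II) exactly as in the proof of Lemma \ref{lem shifting number}, since after finitely many applications of $\Phi$ every generator $S_v[k]$ appears, so no object can do asymptotically better than the full generator.
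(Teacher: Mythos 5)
Your approach is correct and, for the two ``unmixed'' inequalities, genuinely more elementary than the paper's. The paper does not invoke Lemma \ref{lem shifting number} at all in this proof: instead it passes through the $\sigma$-mass-growth function $h_{\sigma,t}(\Phi)$, applies Ikeda's theorem (Theorem \ref{thm Ikeda}) to identify $h_{\sigma,t}(\Phi)$ with the categorical entropy $h_t(\Phi)$, uses a result from \cite{Bae-Choa-Jeong-Karabas-Lee22} to compute $h_t(\Phi)$ in terms of $\dim \lhom_{\wrapped}^{-k}(L,\Phi^n S)$, deduces $t\cdot\tau^\pm(\Phi)\le h_t(\Phi)$, bounds $h_{\sigma,t}(\Phi)$ above by $\log\lambda_\Phi + t\cdot\lim_n\tfrac1n\phi^\pm_\sigma(\Phi^n S)$, applies exactly your telescoping inequality (their Eq.~\eqref{eqn lem 1 3}), and finally divides by $t$ and sends $t\to\pm\infty$. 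Your route bypasses all of the entropy machinery: the telescoping estimate plus Lemma \ref{lem shifting number} and Lemma \ref{lem independence of the choice of stability conditions} give
\[
\tau^+(\Phi) = \limsup_{n\to\infty}\tfrac1n\phi^+_\sigma(\Phi^n E) \le \sup_{0\neq F}\bigl\{\phi^+_\sigma(\Phi F)-\phi^+_\sigma(F)\bigr\},
\]
and dually for $\phi^-$ and the infimum, with no detour through $h_t$. For the two ``mixed'' inequalities you and the paper do essentially the same thing: substitute $E\mapsto\Phi^{-1}E$ and invoke Lemma \ref{lem shifting numbers of inverse}, taking care that for $\Phi^{-1}$ the whole normalization ($\sigma_\star$, $M_\Phi(t)$, $\tau^\pm$) must be set up in the other sign convention; your concern there is exactly the subtlety Lemma \ref{lem shifting numbers of inverse} is designed to absorb.

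One caveat worth flagging in your write-up: your opening paragraph claims the quantities $\sup_{0\neq E}\{\phi^\pm_\sigma(\Phi E)-\phi^\pm_\sigma(E)\}$ are ``actually independent of the chosen stability condition inside the connected component.'' That is not obviously true (and you don't need it). What \emph{is} $\sigma$-independent, by Lemma \ref{lem independence of the choice of stability conditions}, is the asymptotic quantity $\limsup_n\tfrac1n\phi^+_\sigma(\Phi^n E)$. The clean fix is to drop the reduction to $\sigma_\star$ entirely and run the telescoping argument directly with the given $\sigma\in\stab^\dagger(\Fuk)$; the only place you need Lemma \ref{lem shifting number} is to identify the $n\to\infty$ limit with $\tau^\pm(\Phi)$, and Lemma \ref{lem independence of the choice of stability conditions} transports that identification from $\sigma_\star$ to $\sigma$. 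With that adjustment the proof is complete and self-contained.
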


\begin{lem}
	\label{lem lower bound for mass}
	Let $T$ be a tree, and let $\Phi$ be an autoequivalence of Penner type.
	Let $\lambda_\Phi$ denote the stretching factor of $\Phi$. 
	Then, for every $\sigma \in \stab^\dagger(\Fuk)$,
	\[\sup_{0 \neq E \in \Fuk}\left\{\log m_\sigma(\Phi E) - \log m_\sigma (E)\right\} \geq \log \lambda_\Phi, \inf_{0 \neq E \in \Fuk}\left\{\log m_\sigma(\Phi E) - \log m_\sigma (E)\right\} \leq -\log \lambda_\Phi. \]
\end{lem}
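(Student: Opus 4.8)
\textbf{Proof proposal for Lemma \ref{lem lower bound for mass}.}

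The plan is to recognize that this lemma is, up to logarithms, a statement comparing the mass functions $m_\sigma$ and $m_\sigma \circ \Phi$, and that the relevant supremum and infimum over nonzero objects encode precisely the asymptotic mass-growth rates already computed in Theorems \ref{thm without y morphism} and \ref{thm pseudo-Anosov}. First I would fix $\sigma \in \stab^\dagger(\Fuk)$ and recall the two quantities
\[
a(\Phi) := \sup_{0 \neq E} \big\{\log m_\sigma(\Phi E) - \log m_\sigma(E)\big\}, \quad
b(\Phi) := \inf_{0 \neq E} \big\{\log m_\sigma(\Phi E) - \log m_\sigma(E)\big\}.
\]
The key observation is that these are ``one-step'' Lipschitz-type constants for $\log m_\sigma$ under $\Phi$, and by iterating one gets, for every nonzero $E$ and every $n$,
\[
n\, b(\Phi) \leq \log m_\sigma(\Phi^n E) - \log m_\sigma(E) \leq n\, a(\Phi),
\]
which follows by telescoping the inequalities applied to $E, \Phi E, \dots, \Phi^{n-1}E$ (each intermediate object is nonzero since $\Phi$ is an autoequivalence). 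Dividing by $n$ and letting $n \to \infty$, the middle term converges to $\log \lambda_\Phi$ by Theorem \ref{thm pseudo-Anosov}. Hence $b(\Phi) \leq \log \lambda_\Phi \leq a(\Phi)$, which gives the first-displayed inequality $a(\Phi) \geq \log \lambda_\Phi$ directly.

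For the second inequality, $b(\Phi) \leq -\log \lambda_\Phi$, I would apply the same telescoping argument to $\Phi^{-1}$. Since $\Phi$ is of Penner type, so is $\Phi^{-1}$ (it satisfies Definition \ref{def Penner type} with the roles of (I) interchanged, cf.\ Remark \ref{rmk sign is not important 1}), and by Theorem \ref{thm stretch factor and entropy} the stretch factor of $\Phi^{-1}$ is again $\lambda_\Phi$. Applying the already-proven inequality $a(\Phi^{-1}) \geq \log \lambda_\Phi$ and writing $E' = \Phi^{-1}E$ as $E$ ranges over nonzero objects, one gets
\[
\sup_{0 \neq E'} \big\{\log m_\sigma(E') - \log m_\sigma(\Phi E')\big\} = a(\Phi^{-1}) \geq \log \lambda_\Phi,
\]
and taking negatives converts this supremum into $-b(\Phi)$, yielding $b(\Phi) \leq -\log \lambda_\Phi$. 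This completes both halves. I should double-check here that $a(\Phi^{-1})$ really equals that supremum: substituting $E' = \Phi^{-1}E$ is a bijection on nonzero objects, and $\log m_\sigma(\Phi E') - \log m_\sigma(E') = \log m_\sigma(E) - \log m_\sigma(\Phi^{-1}E)$, so the identification is immediate.

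The only genuine subtlety — and the step I expect to need the most care — is the claim that $a(\Phi)$ and $b(\Phi)$ are finite, so that the telescoping inequality is not vacuous. Finiteness of $a(\Phi)$ follows because $\Phi$ acts on $\stab^\dagger(\Fuk)$ by isometries (Proposition \ref{prop actions on stability conditions}), so $d(\sigma, \Phi^{-1}\cdot\sigma)$ is a finite constant $C$, and by Definition \ref{def metric} this bounds $|\log(m_{\Phi^{-1}\cdot\sigma}(E)/m_\sigma(E))|$ uniformly in $E$; but $m_{\Phi^{-1}\cdot\sigma}(E) = m_\sigma(\Phi E)$ by the definition of the $\Aut$-action, so $|a(\Phi)|, |b(\Phi)| \leq C < \infty$. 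Once finiteness is in hand, the rest is the elementary telescoping estimate and the two invocations of Theorem \ref{thm pseudo-Anosov} (for $\Phi$ and $\Phi^{-1}$). No new hard input is needed beyond what the excerpt already establishes; the argument is essentially the standard ``asymptotic rate is squeezed between one-step sup and inf'' lemma, specialized to the categorical mass function.
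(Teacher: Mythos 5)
Your proposal is correct and follows essentially the same route as the paper's proof: telescope the one-step mass ratios, identify the asymptotic rate $\lim_n \tfrac{1}{n}\log m_\sigma(\Phi^n E) = \log\lambda_\Phi$ via Theorem \ref{thm pseudo-Anosov}, and obtain the infimum bound by running the same argument for $\Phi^{-1}$ using $\lambda_{\Phi^{-1}}=\lambda_\Phi$ from Theorem \ref{thm stretch factor and entropy}. Your extra discussion of finiteness of the sup/inf is harmless but not needed, since the required inequalities hold trivially in the infinite cases.
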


\begin{proof}[Proof of Theorem \ref{thm lowerbound}]
	The first item of Theorem \ref{thm lowerbound} can be restated as the following inequality:
	For any $\sigma \in \stab^\dagger(\Fuk)$, 
	\begin{gather}
		\label{eqn lowerbound 1} 
		d(\sigma, \Phi \cdot \sigma) \geq \max \left\{\tau^+(\Phi), -\tau^-(\Phi), \log \lambda_\Phi \right\}.
	\end{gather}
	
	To prove \eqref{eqn lowerbound 1}, we recall that for any $\sigma = \left(Z_\sigma,P_\sigma = \left\{P_\sigma(\phi)\right\}_{\phi \in \mathbb{R}}\right) \in \stab(\Fuk)$, 
	\[\Phi \cdot \sigma = \left(Z_\sigma \cdot \Phi^{-1}, \left\{\Phi\left(P_\sigma(\phi)\right)\right\}_{\phi \in \mathbb{R}}\right).\]
	Thus,
	\begin{align}
		\label{eqn lowerbound 3}
		\begin{split}
			d\left(\sigma, \Phi \cdot \sigma\right) &= \sup_{0 \neq E \in \Fuk} \left\{|\phi^+_\sigma(E) - \phi^+_{\Phi \cdot \sigma}(E)|, |\phi^-_\sigma(E) - \phi^-_{\Phi \cdot \sigma}(E)|, | \log\frac{m_\sigma(E)}{m_{\Phi \cdot \sigma}(E)}|\right\} \\ 
			&= \sup_{0 \neq E \in \Fuk} \left\{|\phi^+_\sigma(E) - \phi^+_\sigma(\Phi^{-1}E)|, |\phi^-_\sigma(E) - \phi^-_\sigma(\Phi^{-1}E)|, | \log\frac{m_\sigma(E)}{m_\sigma(\Phi^{-1}E)}|\right\} \\
			&= 	\sup_{0 \neq E \in \Fuk} \left\{|\phi^+_\sigma(\Phi E) - \phi^+_\sigma(E)|, |\phi^-_\sigma(\Phi E) - \phi^-_\sigma(E)|, | \log\frac{m_\sigma(\Phi E)}{m_\sigma(E)}|\right\}.
		\end{split}
	\end{align}
	
	By \eqref{eqn lowerbound 3}, it is enough to prove 
	\[\sup_{0 \neq E \in \Fuk}\left\{|\phi^\pm_\sigma(\Phi E) - \phi^\pm_\sigma(E)|\right\} \geq \max \left\{\tau^+(\Phi), -\tau^-(\Phi)\right\}, \sup_{0 \neq E \in \Fuk}\left\{|\log\frac{m_\sigma(\Phi E)}{m_\sigma(E)}|\right\} \geq \log \lambda_\Phi.\]
	The above inequalities hold because of the following computations: 
	\begin{align*}
		\sup_{0 \neq E \in \Fuk}\left\{|\phi^\pm_\sigma(\Phi E) - \phi^\pm_\sigma(E)|\right\} &= \max \left\{\sup_{0 \neq E \in \Fuk}\left\{\phi^\pm_\sigma(\Phi E) - \phi^\pm_\sigma(E)\right\}, -\left(\inf_{0 \neq E \in \Fuk}\left\{\phi^\pm_\sigma(\Phi E) - \phi^\pm_\sigma(E)\right\}\right)\right\} \\
		&\geq \max \left\{\tau^+(\Phi), - \tau^-(\Phi)\right\} \text{  (by Lemma \ref{lem lower bound for phase})},
	\end{align*}
	and 
	\begin{align*}
		\sup_{0 \neq E \in \Fuk}\left\{|\log\frac{m_\sigma(\Phi E)}{m_\sigma(E)}|\right\} &= \max \left\{\sup_{0 \neq E \in \Fuk}\left\{\log m_\sigma(\Phi E) - \log m_\sigma(E)\right\}, -\left(\inf_{0 \neq E \in \Fuk}\left\{\log m_\sigma(\Phi E) - \log m_\sigma(E)\right\}\right)\right\} \\
		&\geq \log \lambda_\Phi \text{  (by Lemma \ref{lem lower bound for mass})}.
	\end{align*}

	As similar to the first item, the second item of Theorem \ref{thm lowerbound} can be restated as the following inequality: 
	\begin{gather}
		\label{eqn lowerbound 2} 
		d_B([\sigma], [\Phi \cdot \sigma]) \geq \max \left\{\tfrac{1}{2} \left(\tau^+(\Phi) - \tau^-(\Phi)\right), \log \lambda_\Phi\right\}.
	\end{gather}
	By definition of $d_B$, we have 
	\begin{gather}
		\label{eqn lowerbound 4}
		d_B\left([\sigma], [\Phi \cdot \sigma]\right) = \inf_{x, y \in \mathbb{R}} \sup_{0 \neq E \in \Fuk}  \left\{|\phi^+_\sigma(\Phi E) - \phi^+_\sigma(E)-x|, |\phi^-_\sigma(\Phi E) - \phi^-_\sigma(E)-x|, |\log\frac{m_\sigma(\Phi E)}{m_\sigma(E)}-y|\right\}.
	\end{gather}
	Then, similar to the proof of the first item, it is enough to prove 
	\begin{gather*}
		\inf_{x \in \mathbb{R}} \sup_{0 \neq E \in \Fuk}  \left\{|\phi^\pm_\sigma(\Phi E) - \phi^\pm_\sigma(E)-x|\right\} \geq \tfrac{1}{2} \left(\tau^+(\Phi) + \tau^-(\Phi)\right), \\
		\inf_{y \in \mathbb{R}} \sup_{0 \neq E \in \Fuk} \left\{|\log\frac{m_\sigma(\Phi E)}{m_\sigma(E)}-y|\right\} \geq \log \lambda_\Phi.
	\end{gather*} 
	These two inequalities hold because of Lemmas \ref{lem lower bound for phase}, \ref{lem lower bound for mass}, and the following equations:
	\begin{gather*}
		\inf_{x \in \mathbb{R}} \sup_{0 \neq E \in \Fuk}  \left\{|\phi^\pm_\sigma(\Phi E) - \phi^\pm_\sigma(E)-x|\right\} = \frac{1}{2}\left( \sup_{0 \neq E \in \Fuk}\left\{\phi^\pm_\sigma(\Phi E) - \phi^\pm_\sigma(E)\right\}-\inf_{0 \neq E \in \Fuk}\left\{\phi^\pm_\sigma(\Phi E) - \phi^\pm_\sigma(E)\right\}\right), \\
		\inf_{y \in \mathbb{R}} \sup_{0 \neq E \in \Fuk} \left\{|\log\frac{m_\sigma(\Phi E)}{m_\sigma(E)}-y|\right\} = \frac{1}{2}\left( \sup_{0 \neq E \in \Fuk}\left\{\log\frac{m_\sigma(\Phi E)}{m_\sigma(E)}\right\}-\inf_{0 \neq E \in \Fuk}\left\{\log\frac{m_\sigma(\Phi E)}{m_\sigma(E)}\right\}\right).
	\end{gather*}
\end{proof}

\subsection{Proof of Lemma \ref{lem lower bound for phase}}
\label{subsection proof of lemma phase}
In Section \ref{subsection proof of lemma phase}, we prove Lemma \ref{lem lower bound for phase}.
To do that, we first introduce Definition \ref{def sigma entropy}.

\begin{definition}
	\label{def sigma entropy}
	Let $T$ be a tree. 
	\begin{enumerate}
		\item Let $\sigma = (Z_\sigma, P_\sigma) \in \stab(\Fuk)$, and let $E$ be an object of $\Fuk$. 
		Then, there exists a Harder-Narasimhan filtration of $E$ with respect to $\sigma$. 
		Let the Harder-Narasimhan filtration be 
		\[\begin{tikzcd}
			0 =E_0 \arrow{r}  & E_1 \arrow{d}\arrow{r} & E_2 \arrow{d} \ar[r] & \dots \ar[r] & E_{m-1} \ar[r] \ar[d] & E \ar[d]\\
			& A_1 \arrow[ul, dashed]         & A_2 \arrow[ul, dashed]       & \dots        & A_{m-1} \ar[ul, dashed]       & A_m \ar[ul, dashed].
		\end{tikzcd} \]
		The {\bf mass function of $\boldsymbol{E}$ with respect to $\boldsymbol{\sigma}$} is the function of the $t$-variable $m_{\sigma,t}(E): \mathbb{R}\to \mathbb{R}_{\geq 0}$ given by 
		\[m_{\sigma, t}(E) := \sum_{k=1}^m|Z_\sigma(A_k)|e^{\phi_\sigma(A_k)t}.\]
		\item Let $\Phi: \Fuk \to \Fuk$ be an endofunctor of $\Fuk$. 
		The {\bf mass growth function of $\boldsymbol{F}$ with respect to $\boldsymbol{\sigma}$} is the function $h_{\sigma,t}: \mathbb{R} \to [-\infty, \infty)$ in variable $t$ given by 
		\[h_{\sigma,t}(\Phi) := \sup_{E \in \Fuk}\left\{\limsup_{n \to \infty} \frac{\log m_{\sigma,t}\left(\Phi^n E\right)}{n}\right\}.\]
	\end{enumerate}
\end{definition}

One can find a similarity of Definitions \ref{def categorical entropy} and \ref{def sigma entropy}. 
As one can expect from the similarity, the notions defined in Definitions \ref{def categorical entropy} and \ref{def sigma entropy} are strongly related. 
For example, see \cite{Ikeda21}. 
The following Theorem is one of the results in \cite{Ikeda21}. 

\begin{thm}[Theorem 1.3 of \cite{Ikeda21}]
	\label{thm Ikeda}
	Let $G$ be a split-generator of a triangulated category $\mathcal{D}$ and $\Phi: \mathcal{D} \to \mathcal{D}$ be an endofunctor. 
	If a connected component $\stab^\dagger(\mathcal{D}) \subset \stab(\mathcal{D})$ contains an algebraic stability condition, then for any $\sigma \in \stab^\dagger(\mathcal{D})$, we have 
	\[h_t(\Phi) = h_{\sigma,t}(\Phi) = \lim_{n \to \infty} \tfrac{1}{n} \log \left(m_{\sigma, t}(\Phi^nG)\right).\]
\end{thm}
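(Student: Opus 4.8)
The statement to prove, Theorem \ref{thm Ikeda}, asserts the coincidence of three numbers attached to $\Phi$: the categorical entropy $h_t(\Phi)$ of Definition \ref{def categorical entropy}, the mass growth $h_{\sigma,t}(\Phi)$ of Definition \ref{def sigma entropy}, and the restricted growth $\lim_{n\to\infty}\tfrac1n\log m_{\sigma,t}(\Phi^n G)$. The plan is to establish this by a chain of inequalities, each direction resting on elementary subadditivity properties of the two ``sizes'' $\delta_t$ and $m_{\sigma,t}$, with the algebraicity hypothesis used essentially at exactly one step. First I would record the calculus of $m_{\sigma,t}$: it is additive on direct sums, it scales as $m_{\sigma,t}(A[k])=e^{kt}m_{\sigma,t}(A)$, it is strictly positive on nonzero objects, and for an exact triangle $A\to B\to C\to A[1]$ one has the triangle inequality $m_{\sigma,t}(B)\le m_{\sigma,t}(A)+m_{\sigma,t}(C)$ (obtained by concatenating Harder--Narasimhan filtrations and applying the triangle inequality in $\mathbb C$). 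The complexity $\delta_t$ has the same shift/additivity/triangle behaviour built into Definition \ref{def categorical entropy}, and is moreover submultiplicative, $\delta_t(G,\Phi^{n+m}G)\le\delta_t(G,\Phi^nG)\,\delta_t(G,\Phi^mG)$, since applying the exact functor $\Phi^n$ to a tower resolving $\Phi^mG$ by shifts of $G$ and splicing with a tower resolving $\Phi^nG$ by shifts of $G$ yields a tower resolving $\Phi^{n+m}G$ by shifts of $G$. Hence $\tfrac1n\log\delta_t(G,\Phi^nG)$ converges by Fekete's lemma, and its limit is $h_t(\Phi)$ (also because $G$ split-generates, so $h_t(\Phi)$ may be computed with $E_1=E_2=G$).

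Next I would compare the two sizes. Resolving $\Phi^nG$ by shifts of $G$ so as to nearly realize $\delta_t(G,\Phi^nG)=\inf\{\sum_s e^{n_st}\}$ and using the triangle inequality and shift formula for $m_{\sigma,t}$ repeatedly gives $m_{\sigma,t}(\Phi^nG)\le m_{\sigma,t}(G)\cdot\delta_t(G,\Phi^nG)$, so $\limsup_n\tfrac1n\log m_{\sigma,t}(\Phi^nG)\le h_t(\Phi)$. For $h_{\sigma,t}(\Phi)$: since $G$ is a split-generator, every $E$ is a direct summand of an iterated cone of finitely many shifts $G[a_1],\dots,G[a_N]$, so the triangle inequality for $m_{\sigma,t}$ gives $m_{\sigma,t}(\Phi^nE)\le\bigl(\sum_s e^{a_st}\bigr)m_{\sigma,t}(\Phi^nG)$ uniformly in $n$; taking $\tfrac1n\log$ and $\limsup$ and then the supremum over $E$ yields $h_{\sigma,t}(\Phi)\le\limsup_n\tfrac1n\log m_{\sigma,t}(\Phi^nG)$, while the reverse is immediate by taking $E=G$. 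So far this shows $h_{\sigma,t}(\Phi)=\limsup_n\tfrac1n\log m_{\sigma,t}(\Phi^nG)\le h_t(\Phi)$; the limit (not merely $\limsup$) will follow once the opposite inequality is in place, by sandwiching $m_{\sigma,t}(\Phi^nG)$ between $C^{-1}\delta_t(G,\Phi^nG)$ and $m_{\sigma,t}(G)\,\delta_t(G,\Phi^nG)$ and using convergence of $\tfrac1n\log\delta_t(G,\Phi^nG)$.

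The remaining and genuinely hard step is the uniform bound $\delta_t(G,\Phi^nG)\le C\cdot m_{\sigma,t}(\Phi^nG)$ with $C$ independent of $n$, which combined with the above forces $h_t(\Phi)=\lim_n\tfrac1n\log\delta_t(G,\Phi^nG)\le\limsup_n\tfrac1n\log m_{\sigma,t}(\Phi^nG)=h_{\sigma,t}(\Phi)\le h_t(\Phi)$ and closes the circle. This is where the algebraicity hypothesis is used: because $\stab^\dagger(\mathcal D)$ contains an algebraic stability condition, and because moving $\sigma$ within a connected component changes $m_{\sigma,t}$ and $\delta_t$ only by bounded factors (controlled by the generalized metric of Definition \ref{def metric}, via which phases shift by at most the distance and masses by at most its exponential), we may assume $\sigma$ itself has heart $\mathcal A$ of finite length with finitely many simples, so that each slice $\mathcal P(\phi)$ is finite-length with finitely many simple objects, themselves shifts of finitely many ``base'' stable objects, all of mass bounded below by a fixed positive constant. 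For $A$ semistable of phase $\phi$, additivity of $|Z_\sigma|$ along a composition series of constant phase bounds the number of composition factors of $A$ by $c\,|Z_\sigma(A)|$, and hence $\delta_t(G,A)\le c\,|Z_\sigma(A)|\cdot e^{\phi t}\cdot\max(\text{base }\delta_t(G,-))\le C\,m_{\sigma,t}(A)$, where the shift of each simple is absorbed into the $e^{\phi t}$ weight; summing over the Harder--Narasimhan pieces of $\Phi^nG$ and using subadditivity of $\delta_t$ gives the claimed uniform bound. I expect this last step — dominating complexity by mass uniformly, i.e. $\delta_t(G,-)\lesssim m_{\sigma,t}(-)$ on all of $\mathcal D$ — to be the main obstacle, precisely because it is the one point where an arbitrary stability condition does not suffice and one must exploit the finiteness packaged into an algebraic one.
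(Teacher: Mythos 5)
The paper itself contains no proof of this statement: it is imported verbatim as Theorem 1.3 of Ikeda's paper \cite{Ikeda21}, so there is no internal argument to compare against. Your reconstruction follows the same route as Ikeda's actual proof: the two easy inequalities via subadditivity/shift-equivariance of $m_{\sigma,t}$ and submultiplicativity of $\delta_t$ (Fekete), deformation-invariance of the $t$-weighted mass across the connected component via the generalized metric, and the key reverse bound $\delta_t(G,-)\le C\, m_{\sigma,t}(-)$ for an algebraic $\sigma$ via Jordan--H\"older series in the finite-length heart, followed by the sandwich argument that upgrades $\limsup$ to $\lim$. (Minor point: $\delta_t$ does not depend on $\sigma$ at all, so only the masses need the metric comparison; and that comparison is the quantitative one $m_{\sigma,t}(E)\le e^{(1+|t|)d(\sigma,\tau)}m_{\tau,t}(E)$, which your parenthetical gestures at correctly.)

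One local justification is wrong as stated, although the claim it supports is true. You bound the number of composition factors of a semistable object $A$ by $c\,|Z_\sigma(A)|$ by invoking ``additivity of $|Z_\sigma|$ along a composition series of constant phase.'' The Jordan--H\"older factors of a semistable object of the (shifted) heart are simples of the heart and in general do \emph{not} share its phase --- e.g.\ a nonsplit extension of a phase-$1$ simple by a phase-$\tfrac12$ simple is semistable of phase $\tfrac34$ --- so $|Z_\sigma|$ is not additive along the series; only $Z_\sigma$ is. The correct argument is that the central charges of the finitely many simples all lie in the semi-closed upper half-plane $H$, hence $0$ is not in their convex hull, and therefore there is $\delta>0$ with $|Z_\sigma(A)|=\bigl|\sum_j Z_\sigma(S_{i_j})\bigr|\ge \delta\cdot(\text{number of factors})$; a lower bound on the masses of the simples alone is not what does the work. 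With that repair (and noting that for $A\in P(\phi)$ with $\phi=\phi_0+k$, $\phi_0\in(0,1]$, the shift factor $e^{kt}$ differs from $e^{\phi t}$ only by a factor bounded by $\max(1,e^{|t|})$), your estimate $\delta_t(G,A)\le C\,m_{\sigma,t}(A)$ for semistable $A$, summation over Harder--Narasimhan pieces, and the sandwich argument give exactly the stated theorem.
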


We do not introduce the notion of algebraic stability condition appeared in Theorem \ref{thm Ikeda}. 
The definition is following:
\begin{definition}[Definitions 2.7 and 2.11 of \cite{Ikeda21}]
	\label{def algebraic stability condition}
	A stability condition $\sigma$ is called {\bf algebraic}, if its heart is a finite length abelian category with finitely many isomorphism classes of simple objects.
\end{definition}

It is easy to check that $\sigma_\star$ defined in Definition \ref{def sigma star} is an algebraic stability condition.
Thus, for any $\sigma \in \stab^\dagger(\Fuk)$, one can apply Theorem \ref{thm Ikeda}, and as an application of Theorem \ref{thm Ikeda}, we prove Lemma \ref{lem lower bound for phase}.
\begin{proof}[Proof of Lemma \ref{lem lower bound for phase}]
	Let $L:=\oplus_{v \in V(T)}L_v$ and $S:= \oplus_{v \in V(T)}S_v$.
	We would like to remark the following facts: 
	\begin{enumerate}
		\item[(i)] By Remarks \ref{rmk general shifting number} and \ref{rmk limit not liminf}, the following hold:
		\begin{align*}
			&\tau^-(\Phi) = \lim_{n \to \infty} \tfrac{1}{n} \min \left\{k \Big\vert \lhom_{\wrapped}^{-k}\left(L,\Phi^n S\right) \neq 0 \right\}, \\
			&\tau^+(\Phi) = \lim_{n \to \infty} \tfrac{1}{n} \max \left\{k \Big\vert \lhom_{\wrapped}^{-k}\left(L,\Phi^n S\right) \neq 0 \right\}.
		\end{align*}
		\item[(ii)] It is known by \cite[Theorem 4.15]{Bae-Choa-Jeong-Karabas-Lee22} that 
		\[h_t(\Phi) := \lim_{n \to \infty} \tfrac{1}{n} \log \sum_{k \in \mathbb{Z}} \dim \lhom_{\wrapped}^{-k}\left(L, \Phi^n S\right) e^{kt}.\]
		\item[(iii)] We also note that for any $n \in \mathbb{N}$, 
		\begin{align*}
			e^{t \cdot \min \left\{k | \lhom_{\wrapped}^{-k}\left(L,\Phi^n S\right) \neq 0 \right\}}  \leq \sum_{k \in \mathbb{Z}}\dim \lhom_{\wrapped}^{-k}(L,\Phi^nS) e^{kt},\\
			e^{t \cdot \max \left\{k | \lhom_{\wrapped}^{-k}\left(L,\Phi^n S\right) \neq 0 \right\}} \leq \sum_{k \in \mathbb{Z}}\dim \lhom_{\wrapped}^{-k}(L,\Phi^nS) e^{kt}.
		\end{align*}
	\end{enumerate}
	
	By taking $\lim_{n \to \infty} \tfrac{1}{n} \log$ of the inequalities in (iii), one has
	\begin{gather}
		\label{eqn lem 1 1} t \cdot \tau^\pm(\Phi) \leq h_t(\Phi).
	\end{gather}

	Now, let us assume that $t >0$ (resp.\ $t<0$).
	Then, for any $n \in \mathbb{N}$, we have 
	\[m_{\sigma,t}(\Phi^n S)\leq m_{\sigma,0}(\Phi^n S) \cdot e^{\phi^+_{\sigma}(\Phi^n S)t} \text{  $\Big($resp.\  } m_{\sigma,t}(\Phi^n S)\leq m_{\sigma,0}(\Phi^n S) \cdot e^{\phi^-_{\sigma}(\Phi^n S)t}\Big),\]
	by definition of $m_{\sigma, t}(\Phi^n S)$. 
	When one takes $\lim_{n \to \infty}\tfrac{1}{n}\log$ to the both sides of the above inequalities, by Theorem \ref{thm Ikeda}, one has,
	\begin{gather}
		\label{eqn lem 1 2}
		\begin{split}
		\text{for  } t>0, h_{\sigma, t}(\Phi) \leq \log \lambda_\Phi + t\cdot \lim_{n \to \infty}\tfrac{1}{n} \phi^+_\sigma(\Phi^n S), \\
		\text{for  } t<0, h_{\sigma, t}(\Phi) \leq \log \lambda_\Phi + t\cdot \lim_{n \to \infty}\tfrac{1}{n} \phi^-_\sigma(\Phi^n S).
		\end{split}
	\end{gather}
	where $\lambda_\Phi$ is the stretching factor of $\Phi$. 
	We note that $m_{\sigma,0}(\Phi^n S)$ equals $m_\sigma(\Phi^n S)$ defined in Definition \ref{def stability mass function}. 
	Thus, we have 
	\[\lim_{n \to \infty} \tfrac{1}{n} m_{\sigma,0}(\Phi^n S) =\lim_{n \to \infty} \tfrac{1}{n} m_\sigma(\Phi^n S) = \log \lambda_\Phi. \]
	
	On the other hand, one has,
	\begin{align}	
		\label{eqn lem 1 3}
		\begin{split}
			\lim_{n \to \infty} \tfrac{1}{n} \phi^+_\sigma(\Phi^n S) &= \lim_{n \to \infty} \left(\tfrac{1}{n} \left(\sum_{k=1}^n \phi^+_\sigma(\Phi^k S) - \phi^+_\sigma(\Phi^{k-1} S)\right) + 	\tfrac{1}{n} \phi^+_\sigma(S)\right) \\
			&\leq \sup_{0 \neq E \in \Fuk} \left\{\phi^+_\sigma(\Phi E) - \phi^+_\sigma(E)\right\},\\
			\lim_{n \to \infty} \tfrac{1}{n} \phi^-_\sigma(\Phi^n S) &= \lim_{n \to \infty} \left(\tfrac{1}{n} \left(\sum_{k=1}^n \phi^-_\sigma(\Phi^k S) - \phi^-_\sigma(\Phi^{k-1} S)\right) + 	\tfrac{1}{n} \phi^-_\sigma(S)\right) \\
			&\geq \inf_{0 \neq E \in \Fuk} \left\{\phi^-_\sigma(\Phi E) - \phi^-_\sigma(E)\right\}.
		\end{split}
	\end{align}
	
	By combining inequalities in \eqref{eqn lem 1 2} and \eqref{eqn lem 1 3}, 
	\begin{align}
		\label{eqn lem 1 4} 
		\begin{split}
			\text{for  } t>0, h_{\sigma,t}(\Phi) \leq \log \lambda_\Phi + t\cdot\sup_{0 \neq E \in \Fuk} \left\{\phi^+_\sigma(\Phi E) - \phi^+_\sigma(E)\right\},\\
			\text{for  } t<0, h_{\sigma,t}(\Phi) \leq \log \lambda_\Phi + t\cdot\inf_{0 \neq E \in \Fuk} \left\{\phi^-_\sigma(\Phi E) - \phi^-_\sigma(E)\right\}.
		\end{split}
	\end{align}

	Now, inequalities in \eqref{eqn lem 1 1}, \eqref{eqn lem 1 4}, and Theorem \ref{thm Ikeda} conclude that 
	\begin{align*}
		\text{for  } t>0, t \cdot \tau^+(\Phi) \leq h_t(\Phi) = h_{\sigma,t}(\Phi) \leq \log \lambda_\Phi + t\cdot\sup_{0 \neq E \in \Fuk} \left\{\phi^+_\sigma(\Phi E) - \phi^+_\sigma(E)\right\}, \\
		\text{for  } t<0, t \cdot \tau^-(\Phi) \leq h_t(\Phi) = h_{\sigma,t}(\Phi) \leq \log \lambda_\Phi + t\cdot\inf_{0 \neq E \in \Fuk} \left\{\phi^-_\sigma(\Phi E) - \phi^-_\sigma(E)\right\}.
	\end{align*}	
	When one divides by $t$ and take the limit as $t \to \pm \infty$, one can conclude that 
	\begin{gather}
		\label{eqn lem 1 5}
		\tau^+(\Phi) \leq \sup_{0\neq E \in \Fuk}\left\{\phi^+_\sigma(\Phi E) - \phi^+_\sigma(E)\right\}, \tau^-(\Phi) \geq \inf_{0\neq E \in \Fuk}\left\{\phi^-_\sigma(\Phi E) - \phi^-_\sigma(E)\right\}.
	\end{gather}

	Now, thanks to Lemma \ref{lem shifting numbers of inverse} and \eqref{eqn lem 1 5}, 
	\begin{align}
		\label{eqn lem 1 6}
		\begin{split}
			\inf_{0\neq E \in \Fuk} \left\{\phi^+_\sigma(\Phi E) - \phi^+_\sigma(E)\right\} &= -\sup_{0\neq E \in \Fuk} \left\{\phi^+_\sigma(E) - \phi^+_\sigma(\Phi E) \right\} \\
			&= -\sup_{0\neq E \in \Fuk} \left\{\phi^+_\sigma(\Phi^{-1} E) - \phi^+_\sigma(E) \right\} \\
			& \leq - \tau^+(\Phi^{-1}) =\tau^-(\Phi). \\
			\sup_{0 \neq E \in \Fuk} \left\{\phi^-_\sigma(\Phi E) - \phi^-_\sigma(E)\right\} &= -\inf_{0\neq E \in \Fuk} \left\{\phi^-_\sigma(E) - \phi^-_\sigma(\Phi E) \right\} \\
			&= -\inf_{0\neq E \in \Fuk} \left\{\phi^-_\sigma(\Phi^{-1} E) - \phi^-_\sigma(E) \right\} \\
			& \geq - \tau^-(\Phi^{-1}) =\tau^+(\Phi). 
		\end{split}
	\end{align}
	
	Then, \eqref{eqn lem 1 5} and \eqref{eqn lem 1 6} complete the proof.
\end{proof}

\subsection{Proof of Lemma \ref{lem lower bound for mass}}
\label{subsection proof of lemma mass}
Lemma \ref{lem lower bound for mass} is easier to prove than Lemma \ref{lem lower bound for phase}.
\begin{proof}[Proof of Lemma \ref{lem lower bound for mass}]
	Let $E$ be a nonzero object of $\Fuk$. 
	Then, for any $n \in \mathbb{N}$ and $\sigma \in \stab^\dagger(\Fuk)$,
	\begin{align}
		\label{eqn mass 1}
		\begin{split}
			\tfrac{1}{n}\left(\log m_\sigma(\Phi^n E) - \log m_\sigma(E)\right) &= \tfrac{1}{n}\sum_{k=1}^n \left(\log m_\sigma(\Phi^k E) - \log m_\sigma(\Phi^{k-1} E)\right)\\
			&\leq \sup_{0 \neq F \in \Fuk} \left\{\log m_\sigma(\Phi F) - \log m_\sigma(F)\right\}.
		\end{split}
	\end{align}
	
	By taking $\limsup_{n \to \infty}$ on both sides of \eqref{eqn mass 1}, one has 
	\[\log \lambda_\Phi = \limsup_{n \to \infty} \log m_\sigma(\Phi^n E) = \limsup_{n \to \infty} \tfrac{1}{n}\left(\log m_\sigma(\Phi^n E) - \log m_\sigma(E)\right) \leq \sup_{0 \neq F \in \Fuk} \left\{\log m_\sigma(\Phi F) - \log m_\sigma(F)\right\}.\]
	We note that the first equality, i.e., $\log \lambda_\Phi = \limsup_{n \to \infty} \log m_\sigma(\Phi^n E)$, holds because of Theorem \ref{thm pseudo-Anosov}, Proposition \ref{prop same connected component}, and Definition \ref{def growth filtration}.
	
	The above proves the first inequality of Lemma \ref{lem lower bound for mass}, i.e., 
	\[\sup_{0 \neq F \in \Fuk} \left\{\log m_\sigma(\Phi F) - \log m_\sigma(F)\right\} \geq \log \lambda_\Phi.\]
	To prove the second inequality of Lemma \ref{lem lower bound for mass}, we note that one can apply the above arguments to another Penner type autoequivalence $\Phi^{-1}$.
	Then, one has 
	\[\sup_{0 \neq E \in \Fuk}\left\{\log m_\sigma(\Phi^{-1} E) - \log m_\sigma(E)\right\} \geq \log \lambda_\Phi.\]
	We note that by Theorem \ref{thm stretch factor and entropy}, $\Phi$ and $\Phi^{-1}$ have the same stretching factor $\lambda_\Phi = \lambda_{\Phi^{-1}}$. 
	Thus,
	\begin{gather*}
		\inf_{0 \neq E \in \Fuk}\left\{\log m_\sigma(\Phi E) - \log m_\sigma(E)\right\} = - \sup_{0 \neq E \in \Fuk} \left\{\log m_\sigma(E) - \log m_\sigma(\Phi E)\right\} \\ 
		= - \sup_{0 \neq E \in \Fuk}\left\{\log m_\sigma(\Phi^{-1} E) - \log m_\sigma(E)\right\} \leq - \log \lambda_\Phi.
	\end{gather*}
	It completes the proof.
\end{proof}

\subsection{Hyperbolic actions of Penner type autoequivalences on $\stab(\Fuk)$ under an assumption}
\label{subsection hyperbolic actions of Penner type autoequivalences on stab under an assumption}
Given our demonstration that each autoequivalence of Penner type induces actions with positive translation lengths on $\stab^\dagger(\Fuk)$ and $\stab^\dagger(\Fuk)/\mathbb{C}$, a natural inquiry emerges: are these induced actions hyperbolic?
More directly, one might inquire whether there exists an element $\sigma \in \stab^\dagger(\Fuk)$ such that the distances $d(\sigma, \Phi \sigma)$ or $d_B([\sigma], [\Phi \sigma])$ are equal to the translation length of these induced actions.

While this question arises inherently, its nature diverges from the primary focus of our present work. 
Our current research centers on elucidating the asymptotic behavior of Penner type autoequivalences, or equivalently, delving into the consequences of iterating $\Phi$.
On the other hand, the question pertains to the one-time behavior of $\Phi$.
Due to this disparity in focus, addressing this question through the techniques employed in our current paper does not appear to be a straightforward challenge.

In spite of the difficulty mentioned above, one can show  through the tools used in the paper that if $\Phi$ satisfies a certain condition that will be introduced in Theorem \ref{thm hyperbolic action}, then it induces a hyperbolic action.
For the convenience of stating the condition, we define the following notation: 
For a Penner type autoequivalence $\Phi$,
\[h(\Phi):=- \min_{v_i, v_j \in V(T)} \left\{ \upsilon\left(M_\Phi\left(\tfrac{1}{t}\right)_{v_i, v_j}\right)\right\} \ \mathrm{and}\ \ell(\Phi) := \min_{v_i, v_j \in V(T)} \left\{ \upsilon\left(M_\Phi(t)_{v_i, v_j}\right)\right\}.\]
In other words, $h(\Phi)$ (resp.\ $\ell(\Phi)$) means the highest (resp.\ lowest) degree among degrees of components of $M_\Phi(t)$.
  
\begin{remark}
	Similar to Remarks \ref{rmk computation of the matrix} and \ref{rmk properties of Laurent-poly matrix}, we would like to remark that $h(\Phi)$ and $\ell(\Phi)$ are dependent on the choice of a sign convention.
\end{remark}

\begin{thm}
	\label{thm hyperbolic action}
	Let $T$ be a tree and let $\Phi: \Fuk \to \Fuk$ be a Penner type autoequivalence with the stretching factor $\lambda_\Phi$. 
	Let conditions (A) and (B) mean the following:
	\begin{enumerate}
		\item[(A)] $\log \lambda_\Phi \geq \max \left\{ h(\Phi), - \ell(\Phi)\right\}+N-2$.
		\item[(B)] $\log \lambda_\Phi \geq \tfrac{1}{2}\left(h(\Phi) - \ell(\Phi) + N-2\right)$.
	\end{enumerate}
	Then, the following hold:
	\begin{enumerate}
		\item If $\Phi$ satisfies the condition (A), then $\Phi$ induces a hyperbolic action on $\stab^\dagger(\Fuk)$.
		Especially, $d\left(\sigma_\star, \Phi \cdot \sigma_\star\right)$ agrees with the translation length. 
		\item If $\Phi$ satisfies the condition (B), then $\Phi$ induces a hyperbolic action on $\stab^\dagger(\Fuk)/\mathbb{C}$. 
		Especially, $d_B\left([\sigma_\star],[\Phi \cdot \sigma_\star]\right)$ agrees with the translation length.
	\end{enumerate}
\end{thm}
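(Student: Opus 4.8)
The plan is to compute both distances $d(\sigma_\star, \Phi\cdot\sigma_\star)$ and $d_B([\sigma_\star],[\Phi\cdot\sigma_\star])$ explicitly at the specific stability condition $\sigma_\star$ defined in Definition \ref{def sigma star}, and show that under condition (A) (resp.\ (B)) this value equals the lower bound from Theorem \ref{thm lowerbound}, forcing the infimum in the translation length to be attained. Recall from \eqref{eqn lowerbound 3} that
\[d(\sigma_\star, \Phi\cdot\sigma_\star) = \sup_{0\neq E\in\Fuk}\left\{|\phi^+_{\sigma_\star}(\Phi E)-\phi^+_{\sigma_\star}(E)|,\ |\phi^-_{\sigma_\star}(\Phi E)-\phi^-_{\sigma_\star}(E)|,\ \left|\log\tfrac{m_{\sigma_\star}(\Phi E)}{m_{\sigma_\star}(E)}\right|\right\}.\]
The key point is that $\sigma_\star$ has the especially simple heart $\mathcal{A}$ with all simple objects $S_v$ of phase $\tfrac12$, so for a good twisted complex $E$, the Harder--Narasimhan data of $E$ is read off directly from the degrees $d_i$ appearing in a minimal twisted complex equivalent to $E$: namely $\phi^+_{\sigma_\star}(E) = \max_i d_i + \tfrac12$, $\phi^-_{\sigma_\star}(E) = \min_i d_i + \tfrac12$, and $m_{\sigma_\star}(E) = \sum_i \dim V_i$. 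Thus the three quantities above, applied to $E$ and $\Phi E$, are governed respectively by how far $\Phi$ can spread the top degree, the bottom degree, and the total dimension of a minimal twisted complex.

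First I would establish the precise relation between these spreads and the quantities $h(\Phi), \ell(\Phi), \log\lambda_\Phi$. For the mass part, Lemma \ref{lem linear algebra} and Lemma \ref{lem maximality} already give $m_{\sigma_\star}(\Phi E) \le \|M_\Phi\cdot\vect(E)\|_1$ with equality for categorically carried-by $E$, and the spectral analysis in Section \ref{subsection step 3 of the sketch of proof} identifies $\log\lambda_\Phi$ as the relevant growth; for the one-step behavior one shows $\sup_E\{\log m_{\sigma_\star}(\Phi E)/m_{\sigma_\star}(E)\}$ and its negative are controlled, and one needs that this sup does not exceed $\log\lambda_\Phi$ — this follows from the Perron--Frobenius structure of $M_\Phi$ together with the fact that every column of $M_\Phi$ is nonzero. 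For the phase parts, the top degree of $\Phi S_v$ relative to $S_v$ is exactly $h(\Phi)$ and the bottom degree is $\ell(\Phi)$ by the definition of $M_\Phi(t)$ (Definition \ref{def Laurent-poly matrix}, Remark \ref{rmk properties of Laurent-poly matrix}(1)); the extra $N-2$ correction in conditions (A), (B) accounts for the worst case when $E$ itself is a twisted complex whose top/bottom stratum gets pushed an additional $N-2$ by an $x$- or $y$-morphism appearing in $\Phi E$, as in Lemmas \ref{lem effect on C_1}--\ref{lem effect on C_3}. So $\sup_E\{\phi^+_{\sigma_\star}(\Phi E)-\phi^+_{\sigma_\star}(E)\} \le h(\Phi)+N-2$ and $\inf_E\{\phi^-_{\sigma_\star}(\Phi E)-\phi^-_{\sigma_\star}(E)\} \ge \ell(\Phi)-(N-2)$, with the matching lower bounds $\tau^+(\Phi)\le h(\Phi)+N-2$ etc.\ coming from Lemma \ref{lem lower bound for phase} and Theorem \ref{thm shifting number}.

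Next, combining these estimates: under condition (A) we get $d(\sigma_\star,\Phi\cdot\sigma_\star) \le \log\lambda_\Phi$ (since each of the three sup-terms is $\le\log\lambda_\Phi$), while Theorem \ref{thm lowerbound}(1) gives $\ell(\Phi_\stab)\ge\log\lambda_\Phi$ and trivially $\ell(\Phi_\stab)\le d(\sigma_\star,\Phi\cdot\sigma_\star)$; these three inequalities force all of them to be equalities, so the translation length is attained at $\sigma_\star$ and equals $\log\lambda_\Phi$, i.e.\ $\Phi_\stab$ is hyperbolic. The same argument with $d_B$: by \eqref{eqn lowerbound 4} one has $d_B([\sigma_\star],[\Phi\cdot\sigma_\star]) = \inf_{x,y}\sup_E\{\dots\}$, and choosing the optimal shift $x = \tfrac12(h(\Phi)+\ell(\Phi))$ symmetrizes the phase spread to $\tfrac12(h(\Phi)-\ell(\Phi)+N-2)$, while $y=0$ is optimal for the mass term; under condition (B) this gives $d_B \le \log\lambda_\Phi$, and Theorem \ref{thm lowerbound}(2) plus $\ell(\Phi_\mathbb{C})\le d_B$ again squeezes everything to equality.

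The main obstacle I anticipate is the sharp control of the one-time phase and mass spreads at $\sigma_\star$ — i.e.\ proving that $\sup_E\{\phi^\pm_{\sigma_\star}(\Phi E)-\phi^\pm_{\sigma_\star}(E)\}$ is genuinely bounded by $h(\Phi)+N-2$ (resp.\ $\ell(\Phi)-(N-2)$ from below) for \emph{all} nonzero $E$, not just generators or carried-by objects. For a general $E$ one must pass to a minimal, well-ordered twisted complex and track, stratum by stratum, how $\Phi = \alpha_\ell\circ\cdots\circ\alpha_1$ acts, using Lemma \ref{lem simple computation} and the cancellation lemmas; the subtlety is that cancelling identity- and $z_u$-arrows (Lemmas \ref{lem z}, \ref{lem minimal twisted complex}) can in principle shift degrees, and one must verify that the net effect never exceeds a single $N-2$ correction beyond the entry-degrees of $M_\Phi(t)$. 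This is essentially a finite case analysis of the same flavor as the proofs of Lemmas \ref{lem x-morphism}--\ref{lem the zero morphism} and Lemmas \ref{lem effect on C_1}--\ref{lem effect on C_3}, but it needs to be carried out uniformly; similarly the mass bound $\sup_E\{m_{\sigma_\star}(\Phi E)/m_{\sigma_\star}(E)\}\le\lambda_\Phi$ requires the Perron--Frobenius eigenvector of $M_\Phi$ to dominate, which is where the nondegeneracy from $\det M_v = 1$ (Remark \ref{rmk determinant}) and positivity of entries enter.
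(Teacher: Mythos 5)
The high-level strategy in your proposal — show that $d(\sigma_\star,\Phi\cdot\sigma_\star)$ (resp.\ $d_B$) is itself $\le\log\lambda_\Phi$ and then sandwich against the lower bound of Theorem \ref{thm lowerbound} — matches the paper exactly. But your account of the phase bounds has a real gap. The two inequalities you write down, $\sup_E\{\phi^+_{\sigma_\star}(\Phi E)-\phi^+_{\sigma_\star}(E)\}\le h(\Phi)+N-2$ and $\inf_E\{\phi^-_{\sigma_\star}(\Phi E)-\phi^-_{\sigma_\star}(E)\}\ge \ell(\Phi)-(N-2)$, are in fact the \emph{easy} directions and hold without any $N-2$ correction: applying $\Phi$ to a minimal twisted complex only produces factors at shifts in the range $[\ell(\Phi),h(\Phi)]$, and passing to a minimal model afterwards can only shrink the degree spread (this is Equation \eqref{eqn hyperbolic 1}). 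The correction you attribute to ``$E$ getting pushed by an $x$- or $y$-morphism'' is a misdiagnosis. What you never address — and what is genuinely hard — are the other halves: $\sup_E\{\phi^-_{\sigma_\star}(\Phi E)-\phi^-_{\sigma_\star}(E)\}\le h(\Phi)+(N-2)$ and $\inf_E\{\phi^+_{\sigma_\star}(\Phi E)-\phi^+_{\sigma_\star}(E)\}\ge \ell(\Phi)-(N-2)$. These say that after applying $\Phi$ and \emph{cancelling identity and $z$-arrows}, the lowest (resp.\ highest) stratum of the minimal model for $\Phi E$ cannot have disappeared: some low-degree factor of $\Phi E$ must survive. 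A forward case analysis of the kind you propose would have to control exactly those cancellations, which is precisely why you correctly flag it as the main obstacle — but then you never carry it out.

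The paper closes this gap with a different idea, not a case analysis at all: it applies the same \emph{easy} inequalities to $\Phi^{-1}$, which is also of Penner type but only with respect to the other sign convention and the corresponding stability condition $\sigma'_\star$. Combining this with the modified Lemma \ref{lem shifting numbers of inverse} identifying $\ell'(\Phi^{-1})=-h(\Phi)$ and $h'(\Phi^{-1})=-\ell(\Phi)$, and with the uniform comparison $\phi^\pm_{\sigma_\star}(E)\le\phi^\pm_{\sigma'_\star}(E)\le\phi^\pm_{\sigma_\star}(E)+(N-2)$ (which is where the $N-2$ truly originates), converts the easy inequality for $\Phi^{-1}$ at $\sigma'_\star$ into the hard inequality for $\Phi$ at $\sigma_\star$. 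This inverse-plus-change-of-convention trick is the key device you did not anticipate, and it is what makes the $N-2$ in conditions (A) and (B) exact rather than a pessimistic padding.
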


\begin{remark}
	\label{rmk meaning of the assumption}
	Before proving Theorem \ref{thm hyperbolic action}, we would like to discuss the meaning of the statement of Theorem \ref{thm hyperbolic action}.
	\begin{enumerate}
		\item It is important to acknowledge that the tools employed throughout this paper, especially the matrix $M_\Phi(t)$, were carefully designed to be well-suited for the fixed stability condition $\sigma_\star$. 
		However, it is essential to recognize that the choice of this fixed stability condition, denoted as $\sigma_\star$, is not canonical. 
		Consequently, the applicability of our tools may be somewhat constrained due to this variability in the choice of $\sigma_\star$. 
		Nonetheless, when it comes to studying the {\em asymptotic} behaviors of the induced actions, our tools are proven to be robust. 
		This is thanks to the valuable support provided by Proposition \ref{prop independence of the choice of stability conditions} and Lemma \ref{lem independence of the choice of stability conditions}. 
		However, when we delve into the examination of the {\em one-time} behaviors of the actions induced by $\Phi$ through the application of our tools, we encounter a limitation. 
		The tools do not yield precise measurements of the changes in maximal/minimal phases. 
		In order to overcome the problem, we have introduced the assumptions (A) and (B) in Theorem \ref{thm hyperbolic action}.
		\item We would like to emphasize that the definition of {\em hyperbolic action} is inherently reliant on the underlying metric structure. 
		Consequently, by opting for a different metric structure on $\stab(\Fuk)$ and $\stab^\dagger(\Fuk)$, we have the flexibility to relax the conditions (A) and (B) of Theorem \ref{thm hyperbolic action}. 
		As an illustrative example, let us consider the introduction of a positive real number, denoted as $c$. 
		In this context, we define a distance function, $d_c$, on $\stab(\Fuk)$ as follows:
		\[d_c(\sigma_1, \sigma_2) := \sup_{0 \neq E \in \mathcal{D}} \left\{c|\phi^-_{\sigma_2}(E) - \phi^-_{\sigma_1}(E)|, c|\phi^+_{\sigma_2}(E)- \phi^+_{\sigma_1}(E)|, |\log \frac{m_{\sigma_2}(E)}{m_{\sigma_1}(E)}|\right\}.\]
		Now, let us introduce (A') as a modification of (A).
		\begin{enumerate}
			\item[(A')] $\log \lambda_\Phi \geq c\left(|\tau^\pm(\Phi)|+N-2\right)$,
		\end{enumerate}
		With these adjustments, we can establish that for $\Phi$ satisfying (A'), it induces a hyperbolic action, but this time on the metric space ``$\left(\stab^\dagger(\Fuk), d_c\right)$."
	\end{enumerate}
\end{remark}

\begin{proof}[Proof of Theorem \ref{thm hyperbolic action}]
	We prove $(1)$ only and $(2)$ can be proven in a similar way. 
	
	We start the proof by pointing out that 
	\[h(\Phi) \geq \tau^+(\Phi) \geq 0 \geq \tau^-(\Phi) \geq \ell(\Phi).\] 
	Thus, if (A) holds, then 
	\[\max \left\{ \tau^+(\Phi), -\tau^-(\Phi), \log \lambda_\Phi\right\} = \log \lambda_\Phi.\]

	We also recall that by Theorem \ref{thm lowerbound}, for any $\sigma \in \stab^\dagger(\Fuk)$, 
	\[d\left(\sigma, \Phi \cdot \sigma\right) \geq \log \lambda_\Phi = \max \left\{\tau^+(\Phi), -\tau^-(\Phi), \log \lambda_\Phi \right\}.\]
	Thus, it is enough to show that 
	\begin{gather}
		\label{eqn hyperbolic goal}
		d(\sigma_\star, \Phi \cdot \sigma_\star) \leq \log \lambda_\Phi.
	\end{gather}
	
	We would like to recall Equations \eqref{eqn lowerbound 3}, i.e., for any $\sigma \in \stab(\Fuk)$,
	\begin{gather*}
		d(\sigma, \Phi \cdot \sigma) = \sup_{0 \neq E \in \Fuk} \left\{|\phi^+_\sigma(\Phi E) - \phi^+_\sigma(E)|, |\phi^-_\sigma(\Phi E) - \phi^-_\sigma(E)|, | \log\frac{m_\sigma(\Phi E)}{m_\sigma(E)}|\right\}.
	\end{gather*}
	Thus, in order to prove Equation \eqref{eqn hyperbolic goal}, it is enough to show the following: For any nonzero object $E \in \Fuk$, 
	\begin{enumerate}
		\item[(i)] $\ell(\Phi) \leq \phi^-_{\sigma_\star}(\Phi E) - \phi^-_{\sigma_\star}(E) \leq h(\Phi)+(N-2).$
		\item[(ii)] $\ell(\Phi) - (N-2) \leq \phi^+_{\sigma_\star}(\Phi E) - \phi^+_{\sigma_\star}(E) \leq h(\Phi).$
		\item[(iii)] $-\log \lambda_\Phi \leq \log m_{\sigma_\star}(\Phi E) - \log m_{\sigma_\star}(E) \leq \log \lambda_\Phi$.
	\end{enumerate}

	\noindent {\em Proof of (i) and (ii):}
	We first note that by the definition of $M_\Phi(t)$, one can easily observe that for any nonzero $E \in \Fuk$,
	\begin{gather}
		\label{eqn hyperbolic 1}
		\begin{split}
			\phi^-_{\sigma_\star}(\Phi E) - \phi^-_{\sigma_\star}(E) \geq \min_{v_i, v_j \in V(T)} \left\{ \upsilon\left(M_\Phi(t)_{v_i,v_j}\right) \right\} = \ell(\Phi), \\
		\phi^+_{\sigma_\star}(\Phi E) - \phi^+_{\sigma_\star}(E) \leq -\min_{v_i, v_j \in V(T)} \left\{ \upsilon\left(M_\Phi(\tfrac{1}{t})_{v_i,v_j}\right) \right\} = h(\Phi).
		\end{split}
	\end{gather}
	It proves a half of (i) and (ii). 
	
	We would like to apply the inequalities in \eqref{eqn hyperbolic 1} to $\Phi^{-1}$. 
	To do that, we should remark that the inverse $\Phi^{-1}$ requires us to consider a new fixed stability condition $\sigma_\star'$ using the other sign convention different from that used to define the stability condition $\sigma_\star$. 
	We note that the sign convention means a way decomposing $V(T)$ into positive and negative vertices, thus there exist two different possible sign conventions. 
	For more details, see Remarks \ref{rmk sign conventions}--\ref{rmk sign is not important 0}, \ref{rmk sign is not important 1}, and \ref{rmk sign is not important 2}.
	
	Let $\sigma_\star' = (Z'_\star, P'_\star)$ denote the new fixed stability condition for the other sign convention.
	More precisely, $\sigma_\star'$ is the stability condition such that 
	\begin{itemize}
		\item every semi-simple object with phase in $(0,1]$ is a direct sum of $\{R_v | v \in V(T)\}$, where 
			\[R_v = \begin{cases}
			S_v \text{  if  } v \in V_+(T), \\ S_v[2-N] \text{  if  } v \in V_-(T),
			\end{cases}\]
		\item for every $v \in V(T)$, $Z'_\star(R_v) = \sqrt{-1}$. 
	\end{itemize}
	
	Since the inverse $\Phi^{-1}$ behaves well with the new spherical objects $R_v$'s, we define a new matrix $M'_{\Phi^{-1}}(t)$ following the idea of $M_{\Phi}(t)$ (Definition \ref{def Laurent-poly matrix})) accordingly:
	\[ M'_{\Phi^{-1}} (t)_{v_i,v_j} =\sum_{k\in \mathbb{Z}} \left( \dim \mathrm{Hom}^{-k}_{\mathcal{W}_T} (L'_{v_i}, \Phi^{-1} R_{v_j}) \right), \]
	where $\{L'_{v}\}_{v\in V(T)}$ are objects of $\mathcal{W}_T$ dual to $\{R_v\}_{v\in V(T)}$.
	In other words, $\{L'_v\}_{v \in V(T)}$ and $\{R_v\}_{v \in V(T)}$ can replace $\{L_v\}_{v \in V(T)}$ and $\{S_v\}_{v \in V(T)}$ in Lemma \ref{lem wrapped}.
	Moreover, we define
	\[h'(\Phi^{-1}):=- \min_{v_i, v_j \in V(T)} \left\{ \upsilon\left(M'_{\Phi^{-1}}\left(\tfrac{1}{t}\right)_{v_i, v_j}\right)\right\} \ \mathrm{and}\ \ell'(\Phi^{-1}) := \min_{v_i, v_j \in V(T)} \left\{ \upsilon\left(M'_{\Phi^{-1}}(t)_{v_i, v_j}\right)\right\}\]
	analogously to $h(\Phi)$ and $\ell(\Phi)$.
		
	Then, for every nonzero object $E \in \Fuk$, we have 
	\begin{gather}
		\label{eqn hyperbolic 2}
		\begin{split}
			\phi^-_{\sigma'_\star}(\Phi^{-1} E) - \phi^-_{\sigma'_\star}(E) \geq \min_{v_i, v_j \in V(T)} \left\{ \upsilon\left(M_{\Phi^{-1}}(t)_{v_i,v_j}\right) \right\} = \ell'(\Phi^{-1}), \\
			\phi^+_{\sigma'_\star}(\Phi^{-1} E) - \phi^+_{\sigma'_\star}(E) \leq -\min_{v_i, v_j \in V(T)} \left\{ \upsilon\left(M_{\Phi^{-1}}(\tfrac{1}{t})_{v_i,v_j}\right) \right\} = h'(\Phi^{-1}).
		\end{split}
	\end{gather}
	
	Now, a slight modification of the proof of Lemma \ref{lem shifting numbers of inverse} proves that
	\[\ell'(\Phi^{-1}) = - h(\Phi) \ \mathrm{and}\ -h'(\Phi^{-1}) = \ell(\Phi).\] 
	Then, since the inequalities in \eqref{eqn hyperbolic 2} hold for every nonzero $E \in \Fuk$, one can deduce the following:
	\begin{gather}
		\label{eqn hyperbolic 3}
		\phi^-_{\sigma'_\star}(\Phi E) - \phi^-_{\sigma'_\star}(E) \leq h(\Phi)\ \mathrm{and}\ \phi^+_{\sigma'_\star}(\Phi E) - \phi^+_{\sigma'_\star}(E) \geq \ell(\Phi).
	\end{gather}
	
	We would like to point out that, because of definitions of $\sigma_\star$ and $\sigma'_\star$, 
	\[\phi_{\sigma'_\star}(S_u) = \phi_{\sigma_\star}(S_u) \text{  for all  } u \in V_+(T), \phi_{\sigma'_\star}(S_w) = \phi_{\sigma_\star}(S_w) + N-2.\]
	Thus, for any nonzero $E \in \Fuk$, 
	\[\phi^\pm_{\sigma_\star}(E) \leq \phi^\pm_{\sigma'_\star}(E) \leq \phi^\pm_{\sigma_\star}(E)+ N-2.\]
	It allows us to modify inequalities in \eqref{eqn hyperbolic 3}, and we obtain
	\[\phi^-_{\sigma_\star}(\Phi E) - \phi^-_{\sigma_\star}(E) \leq h(\Phi) + (N-2)\ \mathrm{and}\ \phi^+_{\sigma_\star}(\Phi E) - \phi^+_{\sigma_\star}(E) \geq \ell(\Phi) -(N-2).\]
	It is the left half of (i) and (ii). 
	
	\noindent {\em Proof of (iii):} We remark the following facts: 
	\begin{itemize}
		\item From the construction of fixed stability conditions $\sigma_\star, \sigma'_\star, \sigma_0$, one can easily see that, for any nonzero $E \in \Fuk$, 
		\[m_{\sigma_\star}(E) = m_{\sigma'_\star}(E) = m_{\sigma_0}(E).\] 
		\item As mentioned in Equation \eqref{eqn vector for minimal}, for any object $E \in \Fuk$, 
		\[m_{\sigma_0} (E) = \parallel \vect(E) \parallel_1.\]
		\item From Equation \eqref{eqn spectral radius}, 
		\[\parallel \vect(\Phi E) \parallel_1 \leq \parallel M_\Phi \cdot \vect(E) \parallel_1.\]
		\item Since $\lambda_\Phi$ is the spectral radius of $M_\Phi$, we have 
		\[\parallel M_\Phi \cdot \vect(E) \parallel_1 \leq \lambda_\Phi \cdot \parallel \vect(E) \parallel_1.\]
	\end{itemize}
	From the above facts, we conclude that 
	\begin{gather*}
		\log m_{\sigma_\star} (\Phi E) - \log m_{\sigma_\star} (E) \leq \log \lambda_\Phi.
	\end{gather*}
	
	Similarly, for $\Phi^{-1}$, one has 
	\[\log m_{\sigma'_\star} (\Phi^{-1} E) - \log m_{\sigma'_\star} (E) \leq \log \lambda_{\Phi^{-1}} = \log \lambda_\Phi.\]
	Since it holds for every nonzero $E \in \Fuk$, by replacing $E$ (resp.\ $\sigma'_\star$) with $\Phi E$ (resp.\ $\sigma_\star$), one has 
	\[- \log \lambda_\Phi \leq \log m_{\sigma_\star} (\Phi E) - \log m_{\sigma_\star} (E) \leq \log \lambda_\Phi.\]
\end{proof}

\section{Examples}
\label{section examples}
In the last section, we provide illustrative examples. In
Section \ref{subsection computations of stretching factors} we give computations of the stretching factor and shifting numbers of a provided $\Phi$. Additionally, in Section \ref{subsection example of pseudo-Anosov but not strong pseudo-Anosov}, we give an example of pseudo-Anosov, but not strong pseudo-Anosov autoequivalence. 

\subsection{Computations of stretching factors} 
\label{subsection computations of stretching factors}
In this subsection, we will give computations of stretching factors and shifting numbers where Penner type autoequivalences are provided. 
We consider three autoequivalences $\Phi_1, \Phi_2$ and $\Phi_3$.

The first two examples $\Phi_1$ and $\Phi_2$ are autoequivalences on $\Fuk$, where the tree $T$ is the Dynkin diagram of type $D_5$ and $N \geq 3$ is a fixed integer. 
Because our tree $T$ is $D_5$, we can choose a sign convention having three positive vertices, labeled by $V_+(T) = \{u_1, u_2, u_3\}$, and two negative vertices, labeled by $V_-(T) = \{w_1, w_2\}$, see Figure \ref{figure D_5} (a).

\begin{figure}[h]
	\centering
	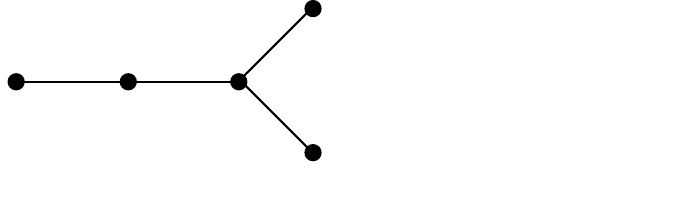		
	\caption{(a) is the Dynkin tree of type $D_5$. We consider the tree in (a) for the first two examples $\Phi_1$ and $\Phi_2$. (b) is the tree with one positive vertex and $n$-many negative vertices. We consider the tree in (b) for the third example $\Phi_3$.}
	\label{figure D_5}
\end{figure}

For a vertex $v \in V(T)$, let $\tau_v$ denote the spherical twist along $S_v$. 
Our examples are 
\[\Phi_1 := \tau_{u_1} \circ \tau_{u_2} \circ \tau_{u_3}\circ \tau_{w_1}^{-1} \circ \tau_{w_2}^{-1} \text{  and  } \Phi_2 = \tau_{u_3}^3 \circ \tau_{u_2}^3 \circ \tau_{w_2}^{-1} \circ \tau_{u_1}^3 \circ \tau_{w_1}^{-1}.\] 
Then, Definition \ref{def Laurent-poly matrix} (1) and Lemma \ref{lem simple computation} give us the Laurent-polynomial matrices associated to $\tau_{u_i}$ and $\tau_{w_j}^{-1}$. 
For convenience, let us assume that $V(T) = \left\{u_1, u_2, u_3, w_1, w_2\right\}$ is an ordered set, i.e., $u_1$ is the first vertex, $u_2$ is the second, and so on.
With the given order, the Laurent-polynomial matrices are 
\begin{gather*}
	M_{\tau_{u_1}}(t) = \begin{bmatrix}
		t^{-(N-1)} & 0 & 0 & 1 & 1 \\
		0 & 1 & 0 & 0 & 0 \\
		0 & 0 & 1 & 0 & 0 \\
		0 & 0 & 0 & 1 & 0 \\
		0 & 0 & 0 & 0 & 1 \\
	\end{bmatrix}, 
	M_{\tau_{u_2}}(t) = \begin{bmatrix}
		1 & 0 & 0 & 0 & 0 \\
		0 & t^{-(N-1)} & 0 & 0 & 1 \\
		0 & 0 & 1 & 0 & 0 \\
		0 & 0 & 0 & 1 & 0 \\
		0 & 0 & 0 & 0 & 1 \\
	\end{bmatrix}, 
	M_{\tau_{u_2}}(t) = \begin{bmatrix}
	1 & 0 & 0 & 0 & 0 \\
	0 & 1 & 0 & 0 & 0 \\
	0 & 0 & t^{-(N-1)} & 0 & 1 \\
	0 & 0 & 0 & 1 & 0 \\
	0 & 0 & 0 & 0 & 1 \\
	\end{bmatrix}, \\
	M_{\tau_{w_1}^{-1}}(t) = \begin{bmatrix}
		1 & 0 & 0 & 0 & 0 \\
		0 & 1 & 0 & 0 & 0 \\
		0 & 0 & 1 & 0 & 0 \\
		1 & 0 & 0 & t^{N-1} & 0 \\
		0 & 0 & 0 & 0 & 1 \\
	\end{bmatrix},
	M_{\tau_{w_2}^{-1}}(t) = \begin{bmatrix}
		1 & 0 & 0 & 0 & 0 \\
		0 & 1 & 0 & 0 & 0 \\
		0 & 0 & 1 & 0 & 0 \\
		0 & 0 & 0 & 1 & 0 \\
		1 & 1 & 1 & 0 & t^{N-1} \\
	\end{bmatrix}.
\end{gather*}

For the first example $\Phi_1$, by multiplying the above five matrices, one can compute $M_{\Phi_1}(t)$.
More precisely, 
\begin{align*}
	M_{\Phi_1}(t) :=& M_{\tau_{u_1}}(t) \circ M_{\tau_{u_2}}(t) \circ M_{\tau_{u_3}}(t) \circ M_{\tau_{w_1}^{-1}}(t) \circ M_{\tau_{w_2}^{-1}}(t) \\ 
	 			=& \begin{bmatrix}
	 				t^{-(N-1)}+2 & 1 & 1 & t^{N-1} & t^{N-1} \\
	 				1 & t^{-(N-1)}+1 & 1 & 0 & t^{N-1} \\
	 				1 & 1 & t^{-(N-1)}+1 & 0 & t^{N-1} \\
	 				1 & 0 & 0 & t^{N-1} & 0 \\
	 				1 & 1 & 1 & 0 & t^{N-1} \\
	 			\end{bmatrix}.
\end{align*}
Moreover, by putting $t=1$, one has 
\begin{gather*}
	M_{\Phi_1}(1) = \begin{bmatrix}
		3 & 1 & 1 & 1 & 1 \\
		1 & 2 & 1 & 0 & 1 \\
		1 & 1 & 2 & 0 & 1 \\
		1 & 0 & 0 & 1 & 0 \\
		1 & 1 & 1 & 0 & 1 \\
	\end{bmatrix}.
\end{gather*}

From $M_{\Phi_1}(1)$, one can compute the spectral radius and it is the stretching factor of $\Phi_1$. 
Thus, if we denote the stretching factor by $\lambda_{\Phi_1}$, 
\[\lambda_{\Phi_1} = 2 + \tfrac{1}{\sqrt{2}} + \sqrt{\tfrac{1}{2}\left(7 + 4 \sqrt{2}\right)}.\]
Moreover, by Theorems \ref{thm strong pseudo-Anosov} and \ref{thm shifting number}, the positive/negative phase stretching factors, or equivalently shifting numbers, of $\Phi_1$ are
\begin{gather*}
	\tau^-(\Phi_1) = \text{  the minimal valuation among the diagonal components of  } M_{\Phi_1}(t) = -(N-1), \\
	\tau^+(\Phi_1) = -\text{  the minimal valuation among the diagonal components of  } M_{\Phi_1}(\tfrac{1}{t}) = N-1.
\end{gather*}

Moreover $h(\Phi_1)$ and $\ell(\Phi_1)$ are also easily computed and $h(\Phi_1) = N-1, \ell(\Phi_1) = -(N-1)$. 
We note that $5 < \lambda_{\Phi_1} < 6$, and thus $\log \lambda_{\Phi_1} <2$. 
Thus, for any $N\geq 3$, $\Phi_1$ does not satisfy the conditions (A) and (B) of Theorem \ref{thm hyperbolic action}. 
Theorem \ref{thm hyperbolic action} does not answer a question asking the hyperbolicity of the action induced from $\Phi_1$.
However, as mentioned in Remark \ref{rmk meaning of the assumption}, after modifying the metric structures of $\stab^\dagger(\Fuk)$ and $\stab^\dagger(\Fuk)/\mathbb{C}$, one can see that $\Phi_1$ acts hyperbolically.

One can do similar computations for $\Phi_2$ and the results of computations are the following: 
For convenience, let us define a notation 
\[f_n(t) := 1 + t^1 + \dots + t^n, T := t^{N-1}.\]
Then, one has 
\begin{gather*}
	M_{\Phi_2}(t) = M_{\tau_{u_2}}(t)^3 \circ M_{\tau_{u_3}}(t)^3 \circ M_{\tau_{w_1}^{-1}}(t) \circ M_{\tau_{u_1}}(t)^3 \circ M_{\tau_{w_1}^{-1}}(t) \\ 
	= \begin{bmatrix}
		f_3(T^{-1}) & 0 & 0 & T \cdot f_2(T^{-1}) & f_2(T^{-1}) \\
		\left(T^{-3}+f_2(T^{-1})\right) \cdot f_2(T^{-1}) & f_3(T^{-1}) & f_2(T^{-1}) & T \cdot \left(f_2(T^{-1})\right)^2 & \left(T+f_2(T^{-1})\right) \cdot f_2(T^{-1}) \\
		\left(T^{-3}+f_2(T^{-1})\right) \cdot f_2(T^{-1}) & f_2(T^{-1}) & f_3(T^{-1}) & T \cdot \left(f_2(T^{-1})\right)^2 & \left(T+f_2(T^{-1})\right) \cdot f_2(T^{-1}) \\
		1 & 0 & 0 & T & 0 \\
		f_3(T^{-1}) & 1 & 1 & T \cdot f_2(T^{-1}) & T + f_2(T^{-1}) \\
	\end{bmatrix}, \\
	M_{\Phi_2}(1) =\begin{bmatrix}
		4 & 0 & 0 & 3 & 3 \\
		12 & 4 & 3 & 9 & 12 \\
		12 & 3 & 4 & 9 & 12 \\
		1 & 0 & 0 & 1 & 0 \\
		4 & 1 & 1 & 3 & 4 \\
	\end{bmatrix}.
\end{gather*}
Thus, the stretching factor and shifting numbers of $\Phi_2$ are 
\[\lambda_{\Phi_2} = 4 + \tfrac{3}{\sqrt{2}} + \sqrt{\tfrac{3}{2} \left(13 + 8 \sqrt{2}\right)}\ \mathrm{and}\ \tau^-(\Phi_2) = -3(N-1), \tau^+(\Phi_2) = N-1. \]

Moreover, as similar to the case of $\Phi_1$, $\Phi_2$ does not satisfy the conditions of Theorem \ref{thm hyperbolic action}.
However, similar to $\Phi_1$, on the modified metric spaces, $\Phi_2$ induces hyperbolic actions. 
Remark \ref{rmk meaning of the assumption} explains how to modify the metric structures in order to induce hyperbolic actions. 

For the last example, we consider the tree given in Figure \ref{figure D_5} (b), having one positive vertex $v_0$ and $n$-many negative vertices $v_1, \dots, v_n$. 
Let $\tau_k$ denote the spherical twist along $S_{v_k}$.
The last example we will consider is 
\[\Phi_3 = \tau_0 \circ \tau_1^{-1} \circ \dots \tau_n^{-1}.\]

One can easily check that the Laurent-polynomial matrix associated to $\tau_k$ is given as, for $k=0$, 
\[M_{\tau_0} = \begin{bmatrix}
	t^{-(N-1)} & 1 & \dots & 1 \\
	0 & 1 & \dots & 0 \\
	\vdots & \vdots & \ddots & \vdots \\
	0 & 0 & \dots & 1
\end{bmatrix},
\]
and for $k \geq 1$, 
\[M_{\tau_k^{-1}} = \begin{bmatrix}
	1 & 0 & \cdots & 0 & 0 & 0 & \cdots & 0 & 0 &\\
	0 & 1 & \cdots & 0 & 0 & 0 & \cdots & 0 & 0 & \\
	\vdots & \vdots & \ddots & \vdots & \vdots & \vdots & \ddots & \vdots & \vdots \\
	0 & 0 & \cdots & 1 & 0 & 0 & \cdots & 0 & 0\\
	1 & 0 & \cdots & 0 & t^{N-1} & 0 & \cdots & 0 & 0\\
	0 & 0 & \cdots & 0 & 0 & 1 & \cdots & 0 & 0\\
	\vdots & \vdots & \ddots & \vdots & \vdots & \vdots & \ddots & \vdots & \vdots\\
	0 & 0 & \cdots & 0 & 0 & 0 & \cdots & 1 & 0\\
	0 & 0 & \cdots & 0 & 0 & 0 & \cdots & 0 & 1\\
\end{bmatrix},\]
where $t^{N-1}$ is the $\left(k+1, k+1\right)$-component.

Then, a simple matrix multiplication gives us 
\[M_{\Phi_3} = \begin{bmatrix}
	t^{-(N-1)}+n & t^{N-1} & \cdots & t^{N-1} & t^{N-1} & t^{N-1} & \cdots & t^{N-1} & t^{N-1} &\\
	1 & t^{N-1} & \cdots & 0 & 0 & 0 & \cdots & 0 & 0 & \\
	\vdots & \vdots & \ddots & \vdots & \vdots & \vdots & \ddots & \vdots & \vdots \\
	1 & 0 & \cdots & t^{N-1} & 0 & 0 & \cdots & 0 & 0\\
	1 & 0 & \cdots & 0 & t^{N-1} & 0 & \cdots & 0 & 0\\
	1 & 0 & \cdots & 0 & 0 & t^{N-1} & \cdots & 0 & 0\\
	\vdots & \vdots & \ddots & \vdots & \vdots & \vdots & \ddots & \vdots & \vdots\\
	1 & 0 & \cdots & 0 & 0 & 0 & \cdots & t^{N-1} & 0\\
	1 & 0 & \cdots & 0 & 0 & 0 & \cdots & 0 & t^{N-1}\\
\end{bmatrix}.\]
Thus, the stretching factor and the shifting numbers are 
\[\lambda_{\Phi_3} = \tfrac{1}{2}\left(n +2 + \sqrt{n^2 + 4n}\right), \tau^-(\Phi_3)=-(N-1), \tau^+(\Phi_3) = N-1.\]

From the above computation, for a fixed $N \geq 3$, one can see that if $n$ is sufficiently large, then $\Phi_3$ satisfies the conditions of Theorem \ref{thm hyperbolic action}.
Thus, for such a $n$, $\Phi_3$ induces hyperbolic actions.

\subsection{Example of pseudo-Anosov but not strong pseudo-Anosov}
\label{subsection example of pseudo-Anosov but not strong pseudo-Anosov}
Now, we describe an example of autoequivalence that is pseudo-Anosov, but not strong pseudo-Anosov. 
To construct an example, we will first consider a tree $T$ of Dynkin type $A_2$, i.e., a tree having two vertices connected by one edge. 
Or equivalently, we consider the tree given in Figure \ref{figure D_5} (b), where $n = 1$. 
Thus, we will use the notation used in Section \ref{subsection computations of stretching factors}, for example, $\tau_{v_0}$ and $\tau_{v_1}$ will denote the spherical twists. 

In order to define $\Fuk$ for the tree $T$ of Dynkin type $A_2$, we need to fix an integer $N \geq 3$. 
In the subsection, we consider two different integers $3 \leq N_1 \lneq N_2$. 
And, let $\mathcal{C}_i$ denote the category associated to our tree $T$ and the fixed integer $N_i$ for $i =1 ,2$. 

For each $i =1 ,2$, one can define an autoequivalence $\Phi_i = \tau_{v_0} \circ \tau_{v_1}^{-1} : \mathcal{C}_i \to \mathcal{C}_i$. 
Then, by the computation given in Section \ref{subsection computations of stretching factors}, the stretching factor and the shifting numbers of $\Phi_i$ are given as 
\[\lambda_{\Phi_1} = \lambda_{\Phi_2} = \tfrac{1}{2}\left(3 + \sqrt{5}\right), \tau^\pm(\Phi_i) = \pm (N_i-1).\]

Now, let us define another category $\mathcal{C}$ from $\mathcal{C}_1$ and $\mathcal{C}_2$ as follows: 
\begin{itemize}
	\item The set of objects of $\mathcal{C}$ is the disjoint union of the sets of objects of $\mathcal{C}_i$, i.e., 
	\[\mathrm{Ob}(\mathcal{C}) = \mathrm{Ob}(\mathcal{C}_1) \sqcup \mathrm{Ob}(\mathcal{C}_2).\]
	\item Let $X$ and $Y$ be two objects of $\mathcal{C}$. 
	If $X$ and $Y$ are objects of $\mathcal{C}_i$ for $i =1$ or $2$, then the morphism space from $X$ to $Y$ in $\mathcal{C}$ is the same as the morphism space from $X$ to $Y$ in $\mathcal{C}_i$. 
	If not, the morphism space from $X$ to $Y$ is the zero space. 
\end{itemize}
Similarly, we define our example $\Phi : \mathcal{C} \to \mathcal{C}$ as follows:
\begin{itemize}
	\item When we restrict $\Phi$ on $\mathcal{C}_i$, then it agrees with $\Phi_i$, i.e., 
	\[\Phi|_{\mathcal{C}_1} = \Phi_1, \Phi|_{\mathcal{C}_2} = \Phi_2.\]
\end{itemize}

Then, one can easily see that $\Phi$ is a pseudo-Anosov autoequivalence with the stretching factor 
\[\lambda_\Phi = \lambda_{\Phi_1} = \lambda_{\Phi_2} = \tfrac{1}{2}\left(3 + \sqrt{5}\right).\]
However, $\Phi$ is not strong pseudo-Anosov, since the maximal and minimal phase filtrations associated to $\Phi$ have two steps 
\[0 \subsetneq \mathcal{P}^\pm_{N_1-1} (\Phi) \subsetneq \mathcal{P}^\pm_{N_2-1} (\Phi) = \mathcal{C}.\]

We end the paper by pointing out that we artificially construct the category $\mathcal{C}$ for providing an example, and that one could say that $\mathcal{C}$ lacks a sense of naturalness. 
For example, $\mathcal{C}$ is decomposable into $\mathcal{C}_1$ and $\mathcal{C}_2$. 
As mentioned in Remark \ref{rmk the shifting number 1}, we remain uncertain whether the notions of pseudo-Anosov and strong pseudo-Anosov coincide under a reasonable assumption, for example, indecomposable underlying category.

\bibliographystyle{amsalpha}
\bibliography{pA_autoequivalence}
\end{document}